\documentclass[a4paper,reqno]{amsart}

\usepackage[T1]{fontenc}
\usepackage{amssymb, mathtools, enumitem}
\usepackage[numbers]{natbib}
\usepackage[dvipsnames]{xcolor}
\usepackage{relsize}
\usepackage{todonotes}
\usepackage[colorlinks=true,unicode]{hyperref}

\hypersetup{
linktoc=page,
linkcolor=Violet,
urlcolor=NavyBlue,
citecolor=NavyBlue}
\allowdisplaybreaks

\newtheorem{theorem}{Theorem}[section]
\newtheorem{lemma}[theorem]{Lemma}
\newtheorem*{lemma*}{Lemma}
\newtheorem{proposition}[theorem]{Proposition}
\newtheorem{corollary}[theorem]{Corollary}
\theoremstyle{definition}
\newtheorem{definition}[theorem]{Definition}
\theoremstyle{remark}
\newtheorem{remark}[theorem]{Remark}
\numberwithin{equation}{section}

\newcommand{\itref}[2]{%
  \hyperref[#2]{\ref*{#1}\ref*{#2}}%
}

\def\mb{\mathbb}
\def\mc{\mathcal}
\def\mbf{\mathbf}

\def\mcT{\mathcal{T}}
\def\mcP{\mathcal{P}}
\def\mcQ{\mathcal{Q}}
\def\wh{\widehat}
\def\pp{\perp}
\def\ti{\widetilde}

\def\for{\quad\mbox{for}\quad}
\def\fors{\mbox{for }}
\def\andd{\quad\mbox{and}\quad}
\def\with{\quad\mbox{with}\quad}
\def\where{\quad\mbox{where}\quad}
\def\step[#1,#2]{\textbf{Step #1}:{\it#2}.}

\def\mbN{\mb{N}}
\def\mbR{\mb{R}}
\def\mbZ{\mb{Z}}
\def\mbS{\mb{S}}
\def\mcS{\mc{S}}

\def\sch{Schr{\"o}dinger}
\def\one{\mbf{1}}
\def\les{\lesssim}
\def\gtr{\gtrsim}

\def\pr{\partial}
\def\prr{\pr_r}
\def\prt{\pr_t}
\def\prs{\pr_s}
\def\pry{\pr_y}
\def\vg{\Gamma}

\def\la{\lambda}
\def\vla{\Lambda}
\def\tha{\theta}
\def\vtha{\Theta}

\def\vv{\mathrm{v}}
\def\hv{\wh{\vv}}

\def\w{{\mathrm{w}}}
\def\wp{\w^{\pp}}
\def\hw{\wh{\w}}
\def\wz{\w_{0}}
\def\wzp{\wz^{\pp}}
\def\wzt{\ti{\w}_{0}}
\def\wztp{\ti{\w}_{0}^{\pp}}
\def\wzto{\wzt^{1}}
\def\wztt{\wzt^{2}}
\def\ws{\w_{\ast}}
\def\ra{r_{\alpha}}
\def\rb{r_{\beta}}
\def\rc{r_{\gamma}}
\def\rs{r_{\ast}}
\def\bs{b_{1}^{\ast}}
\def\dbs{\delta(\bs)}
\def\W{W}
\def\Wp{\W^{\pp}}
\def\hW{\wh{\W}}
\def\hp{\wh{\rm p}}
\def\lphi{\vla\phi}
\def\mod{{\rm Mod}}
\def\modt{\ti{\mod}}
\def\psiz{\Psi_{0}}
\def\psizone{\Psi_{0,1}}
\def\psiztwo{\Psi_{0,2}}
\def\psizt{\ti{\Psi}_{0}}
\def\QQ{\mathcal{Q}}
\def\smw{\mbox{\larger[-2]$\mathcal{W}$}}
\def\hs[#1]{\hspace{#1em}}
\def\mhso{\hs[-0.1]}
\def\jbk[#1]{\langle #1\rangle}

\DeclareMathOperator{\diag}{diag}
\DeclareMathOperator{\spann}{span}
\DeclareMathOperator{\id}{Id}
\DeclareMathOperator{\GL}{GL}
\DeclareMathOperator{\SO}{SO}
\DeclareMathOperator{\supp}{supp}
\DeclareMathOperator{\R}{\hs[-.05]R\hs[-.05]}
\DeclareMathOperator{\hj}{\hs[-.05]J\hs[-.05]}
\DeclareMathOperator{\PP}{P\hs[-.05]}
\DeclareMathOperator{\T}{T\hs[-.05]}
\DeclareMathOperator{\ma}{\hs[-.05]\mb{A}\hs[-.05]}
\DeclareMathOperator{\mas}{\hs[-.05]\mb{A}^{\ast}\hs[-.05]}
\DeclareMathOperator{\mham}{\hs[-.02]\mb{H}\hs[-.02]}
\DeclareMathOperator{\mhamp}{\hs[-.02]\vphantom{\mham}\smash{\mb{\mathring{H}}}\hs[-.02]}
\DeclareMathOperator{\mfg}{\mathfrak{g}}
\newcommand{\oa}{A}
\newcommand{\oaz}{A_{0}}
\newcommand{\as}{A^{\ast}}
\newcommand{\asz}{A_{0}^{\ast}}
\newcommand{\ham}{H}
\def\mhaml{\mb{H}_{\la}}

\def\dd{{\rm{d}}}
\def\z{Z}
\def\zt{\bar{Z}}

\def\mcA{\mc{A}}
\def\mcAb{\mc{A}_{b}}
\def\mcAg{\mc{A}_{g}}
\def\mcH{\mc{H}}
\def\mcHb{\mc{H}_{b}}
\def\mcHg{\mc{H}_{g}}
\def\mcD{\mc{D}}
\def\mcN{\mc{N}}

\def\mF{\mc{F}}
\def\mFb{\bar{\mF}}
\def\vr{\mc{V}}
\def\mcJ{\mc{J}}
\def\mora{\mc{M}}
\def\E{\mc{E}}

\def\xm{X}
\def\up{\Upsilon}

\def\eone{e_{1}}
\def\etwo{e_{2}}
\def\ethr{e_{3}}

\def\sd[#1]{_{(#1)}}
\def\lgb{\left|\log b_{1}\right|}
\def\cb{c_{b_{1}}}
\def\vp{\Phi}
\def\x{\Xi}
\def\xx[#1]{\x_{#1}}
\def\phia[#1]{\vp_{#1,0}}
\def\phib[#1]{\vp_{0,#1}}
\def\phiat[#1]{\ti{\vp}_{#1,0}}
\def\phibt[#1]{\ti{\vp}_{0,#1}}
\def\adb{admissible}

\def\homo{homogeneous}
\def\rrr{\mc{R}}
\def\ve{\varepsilon}
\def\si{\ve} 
\def\sib{\bar{\ve}} 
\def\vsig{\Sigma}
\def\tea{E}

\def\Eplus{\biggl(\E_{2L+2}+\frac{b_1^{2L+2}}{\lgb^2}\biggr)}
\def\EplusM{\biggl(\frac{\E_{2L+2}}{\sqrt{\log M}} +C(M)\frac{b_1^{2L+2}}{\lgb^2}\biggr)}

\begin{document}

\title[Quantized slow blowup dynamics]{Quantized slow blowup dynamics for the energy-critical Schr{\"o}dinger map flow}

\author{Jacek Jendrej}
\address{Sorbonne Universit\'e, Institut de Math\'ematiques de Jussieu-Paris Rive Gauche, 4 place Jussieu, 75005 Paris, France
  \& Faculty of Applied Mathematics, AGH University of Krak\'ow, al. Adama Mickiewicza 30, 30-059 Krak\'ow, Poland.}
\email{jendrej@imj-prg.fr}

\author{Jitao Xu}
\address{Morningside Center of Mathematics, Chinese Academy of Sciences, Beijing 100190, P. R. China.}
\email{xu.jitao@amss.ac.cn}

\author{Xu Yuan}
\address{State Key Laboratory of Mathematical Sciences, Academy of Mathematics and Systems Science, Chinese Academy of Sciences, Beijing 100190, P. R. China.}
\email{xu.yuan@amss.ac.cn}

\author{Lifeng Zhao}
\address{School of Mathematical Sciences, University of Science and Technology of China, Hefei, Anhui,
  230026, P. R. China.}
\email{zhaolf@ustc.edu.cn}

\subjclass[2020]{Primary 35B44; Secondary 35Q55, 35B40}

\begin{abstract}
  We study the Cauchy problem for the energy-critical Schr\"{o}dinger map into the 2-sphere:
  \[
    \partial_{t}u = u\times\Delta u,
    \quad (t,x)\in\mathbb{R}^{1+2},
    \quad u(t,x)\in\mathbb{S}^2
  \]
  for equivariant initial data of homotopy degree $k=1$.
  Given any integer $L\in\mathbb{N}_+$, we exhibit a set of initial data arbitrarily close to the ground state harmonic map in the energy norm such that the corresponding solution blows up in finite time with a quantized rate
  \[
    \lambda(t)\sim \frac{\left(T-t\right)^L}{\left|\log(T-t)\right|^{\frac{2L}{2L-1}}}
    \quad\mbox{as}\ \ t\uparrow T.
  \]
  Our proof is inspired by the techniques developed for the energy-critical harmonic map heat flow in a similar context by Rapha{\"e}l and Schweyer~[Anal. PDE 7 (2014), 1713--1805]. However, its adaptation to a Schr\"{o}dinger-type equation requires the introduction of a new energy-Virial-Morawetz Lyapunov functional tailored to quantized slow blowup dynamics.
\end{abstract}

\maketitle

\tableofcontents

\section{Introduction}
\label{sec:intro}
\subsection{Setting of the problem}
\label{sub:setting}
Consider the energy-critical \sch\ map equation,
\begin{equation}
  \label{eq:sm}
  \left\{\begin{aligned}
    & \partial_{t}u = u\times\Delta u, \\
    & u|_{t=0}=u_{0}\in \dot{H}^{1},
  \end{aligned}\right.
  \quad (t,x)\in\mbR^{1+2},
  \quad u(t,x)\in\mbS^2,
\end{equation}
where $\times$ denotes the cross product and $\mbS^{2}\subset\mbR^{3}$ is the 2-sphere. This equation is a special case of the Landau--Lifshitz equation for ferromagnets. It can also be viewed as a geometric analogue of the \sch\ equation.

Let us recall some basic facts on the \sch\ map. For any $\dot{H}^{1}$ solution $u$, \eqref{eq:sm} admits the conserved Dirichlet energy
\begin{equation}
  \label{eq:dirichlet}
  E(u) = \frac12\int|\nabla u(t,x)|^2\dd x=E(u_{0}).
\end{equation}
It also admits the scaling, rotation and translation symmetries: the mixed transformation
\[
  u \mapsto O u\biggl(\frac{t}{\la^2},\frac{x-\bar{x}}{\la}\biggr)
  \with (O,\la,\bar{x})\in\SO(3)\times\mbR_+\times\mbR^2,
\]
maps solutions of \eqref{eq:sm} to solutions. Since the Dirichlet energy is invariant under the scaling transformation, \eqref{eq:sm} is said to be \emph{energy-critical}.

This article focuses on $k$-equivariant maps, which take the form
\[
  u(t,r,\tha) = e^{k\tha\R}
  \begin{pmatrix}
    u_1(t,r) \\ u_2(t,r) \\ u_3(t,r)
  \end{pmatrix}.
\]
Here, $(r,\tha)$ are polar coordinates on $\mbR^2$, and $\R$ is the generator of the horizontal rotation given by
\[
  \R = e_3\times = \begin{pmatrix}
    0 & -1 & 0 \\ 1 & 0 & 0\\ 0 & 0 & 0
  \end{pmatrix},
\]
where $(e_1,e_2,e_3)$ denotes the standard orthonormal basis of $\mbR^{3}$. For $k\neq 0$, we restrict attention to the finite-energy class of maps with pole limits
\[
  u(t,0) = e_3, \quad
  \lim_{r\to+\infty}u(t,r,\theta) = -e_3.
\]
In this class, $k$ marks the homotopy degree:
\[
  k = \frac{1}{4\pi}\int u\cdot(\partial_{x_1}u\times\partial_{x_2}u)\,\dd x.
\]
A well-known example of such maps is the harmonic map $Q_k$, which satisfies
\begin{equation}
  \label{eq:grd-state-eq}
  \Delta Q_k = -|\nabla Q_k|^2Q_k.
\end{equation}
Its explicit expression is
\[
  Q_k(r,\tha) = e^{k\tha\R}
  \begin{pmatrix}
    \sin\phi_k(r) \\ 0 \\ \cos\phi_k(r)
  \end{pmatrix}
  \with
  \phi_k(r)=2\arctan(r^{|k|}).
\]
By the Bogomol'nyi factorization \citep{bogomolnyi_stability_1976}, $Q_k$ minimizes the Dirichlet energy \eqref{eq:dirichlet} (up to symmetries) in its homotopy class, with $E(Q_k)=4\pi|k|$. For notational convenience, we use the conventions $Q=Q_1$ and $\phi=\phi_1$ in this article.

\subsection{Related results}
\label{sub:related-results}
Extensive literature has been devoted to the \sch\ map flow. Early foundational works developed moving-frame and gauge formulations for the equation and the associated well-posedness theory \citep{chang_schrodinger_2000,nahmod_schrodinger_2003}. Further local well-posedness results were established in \citep{ding_local_2001,mcgahagan_approximation_2007,sulem_continuous_1986}. For small data in the critical space with target $\mbS^2$, Bejenaru, Ionescu, Kenig, and Tataru \citep{bejenaru_global_2011} proved global well-posedness and scattering. Li \citep{li_global_2021,li_global_2022} then extended these results to general compact K{\"a}hler targets. At the energy-critical level relevant to the present work, namely the $\dot H^1$ scaling in two spatial dimensions, Smith \citep{smith_geometric_2012} developed the caloric-gauge renormalization below the ground-state energy, while Dodson and Smith \citep{dodson_controlling_2015} identified a critical space-time norm controlling continuation of the flow. In the equivariant setting, Bejenaru, Ionescu, Kenig, and Tataru \citep{bejenaru_equivariant_2013} proved that solutions with energy below $E(Q)=4\pi$ are global and scatter.

The long-time behavior near the ground states $Q_k$ displays a sharp dependence on the degree $k$. For $|k|\ge 3$, the harmonic maps are stable in the energy space, and in fact asymptotically stable; see \citep{gustafson_schrodinger_2007,gustafson_asymptotic_2008,gustafson_asymptotic_2010}. Consequently, no blowup can occur near $Q_k$ in these equivariance classes. For $k=1$, Bejenaru and Tataru \citep{bejenaru_soliton_2010} showed that $Q$ is stable in a stronger topology $X\subset\dot{H}^{1}$, but unstable in $\dot{H}^{1}$, so that stability alone cannot rule out blowup. Very recently, Bejenaru, Pillai, and Tataru \citep{bejenaru_soliton_2024} settled the remaining 2-equivariant case. They proved that small $\dot{H}^{1}$ perturbations of $Q_2$ give rise to global solutions with dispersive bounds, and that $Q_2$ is stable in a slightly stronger topology. The near-ground-state picture does not preclude more complicated dynamics at higher energy. In particular, Yin \citep{yin_bubble_tower_2026} constructed, for $k\ge 3$, infinite-time bubble towers with an arbitrary number of asymptotically decoupled harmonic maps.

The \sch\ map problem is closely related to the energy-critical wave map equation, where some of the earliest rigorous type II blowup constructions were obtained. Krieger, Schlag, and Tataru \citep{krieger_renormalization_2008} constructed charge-one equivariant solutions concentrating a harmonic map at a continuum of polynomial rates. Rodnianski and Sterbenz \citep{rodnianski_formation_2010} subsequently exhibited a stable resonant concentration mechanism in the equivariance classes $k\ge 4$, with a logarithmically modified self-similar upper bound. Rapha{\"e}l and Rodnianski \citep{raphael_stable_2012} later constructed stable blowup for the critical 1-corotational wave map and equivariant Yang--Mills problems. These works provide an important geometric precedent for the concentration mechanisms considered here.

The instability of $Q$ motivates the study of blowup solutions. This is part of the broader question of near-soliton dynamics discussed in \citep[Section~6, Open Problem~6]{tataru_schrodinger_2013}. In the $1$-equivariant setting, the relevant symmetries of \eqref{eq:sm} are scaling and rotation, so it is natural to seek solutions of the form
\[
  e^{\vtha(t)\R}Q\biggl(\frac{x}{\la(t)}\biggr)+u_0^{\ast}
  \with \|u_0^{\ast}\|_{\dot{H}^1}\ll 1,
\]
where $\la(t)\to0$ near the blowup time. Such finite-time blowup solutions were constructed by Merle, Rapha{\"e}l, and Rodnianski \citep{merle_blowup_2013}. They exhibited a codimension-one set of $1$-equivariant initial data that blow up with the rate
\begin{equation}
  \label{eq:l1-rate}
  \la(t)=C(u_0)(1+o(1))\frac{T-t}{\left|\log(T-t)\right|^2}
  \quad\mbox{as}\quad t\uparrow T,
\end{equation}
where $C(u_0)$ is a constant depending on the initial data and $T\in\mbR_+$ is the blowup time; the phase $\vtha(t)$ converges to a fixed value. In a complementary direction, Perelman \citep{perelman_blow_2014} constructed a solution that blows up backwards in time on $(0,t_0]$, with the polynomial rate
\begin{equation}
  \label{eq:cont-rate}
  \la(t)=t^{\frac12+\nu}
  \quad\mbox{as}\quad t\downarrow0
\end{equation}
for a prescribed constant $\nu>1$ (in the normalization of \citep{perelman_blow_2014}, $\la(t)=t^{-\nu}$ with $\nu>3/2$). The phase exhibits the accelerated shift
\[
  \vtha(t)=\vtha_0\log t
  \quad\mbox{as}\quad t\downarrow0.
\]
Given these two mechanisms, the following question becomes natural:
\begin{gather*}
  \mbox{\emph{Do there exist blowup solutions to~\eqref{eq:sm} with rates}}\\
  \mbox{\emph{other than~\eqref{eq:l1-rate} and~\eqref{eq:cont-rate}?}}
\end{gather*}
The present article gives an affirmative answer: for each positive integer $L$, there exist blowup solutions with the quantized rate~\eqref{th:main-thm-quant-rate} below, which corresponds to \eqref{eq:l1-rate} when $L=1$. Related logarithmically modified concentration dynamics also occur in the self-dual Chern--Simons--Schr{\"o}dinger equation: Kim, Kwon, and Oh \citep{kim_blowup_2024} constructed smooth finite-energy radial solutions blowing up near the ground state and developed a nonlinear covariant conjugation framework adapted to that problem.

Methodologically, the construction in \citep{merle_blowup_2013} is an instance of the \emph{energy method}, in which a blowup profile is modulated by a finite system of parameters driven by an underlying spectral problem. In the mass-critical NLS setting, Bourgain and Wang \citep{bourgain_wang_blowup_1997} constructed special blowup solutions at the pseudoconformal scale, while Merle and Rapha{\"e}l developed the universal log--log blowup dynamics and obtained sharp bounds on the blowup rate \citep{merle_universality_2004,merle_blowup_dynamic_2005,merle_raphael_lower_2006}. These works supplied several of the modulation, spectral, and localized energy ideas that underlie later geometric constructions. The energy method also has roots in the critical gKdV theory developed by Martel and Merle \citep{martel_stability_2002,martel_blow_2014}. It was subsequently adapted to several energy-supercritical equations: the nonlinear \sch\ equation by Merle, Rapha{\"e}l, and Rodnianski \citep{merle_type_2015}, the wave equation by Collot \citep{collot_type_2018}, and the 1-corotational wave maps by Ghoul, Ibrahim, and Nguyen \citep{ghoul_construction_2018}. A more recent breakthrough of this kind is due to Merle, Rapha{\"e}l, Rodnianski, and Szeftel \citep{merle_blow_2022}. Their construction relies on a front mechanism developed in the companion articles \citep{merle_implosion_2022a,merle_implosion_2022} on the implosion of the 3D compressible Euler equations. Analogous mechanisms appear in parabolic settings as well, for instance in the energy-supercritical heat equation \citep{collot_strongly_2020}.

The present work belongs to the \emph{quantized-rate} blowup constructions initiated by Rapha{\"e}l and Schweyer \citep{raphael_quantized_2014}. They extended their stable blowup construction for the corotational energy-critical harmonic heat flow \citep{raphael_stable_2013} to a discrete family of rates indexed by a natural number. Since then, the quantized mechanism has been established in various contexts. We list some representative results. In the energy-critical dispersive setting, Jeong \citep{jeong_quantized_2023} exhibited quantized blowup for the 1-corotational wave maps with the rate
\[
  \la(t)=C(u_0,\dot{u}_0)(1+o(1))
  \frac{(T-t)^\ell}{\left|\log(T-t)\right|^{\ell/(\ell-1)}},
  \quad 2\le\ell\in\mbN_+.
\]
Jeong and Kim \citep{jeong_quantized_2024} obtained quantized blowup for the Calogero--Moser derivative nonlinear \sch\ equation. In that problem, the control of high-order energies relies on the hierarchy of conservation laws furnished by the Lax pair structure. In the parabolic setting, del Pino, Musso, Wei, Q. Zhang, and Y. Zhou \citep{pino_type_2020} constructed type II blowup for the 3D energy-critical heat equation by the inner-outer gluing method, with the quantized rates $\|u\|_{L^\infty}\sim(T-t)^{-k}$, $k\in\mbN_+$. Higher speeds were subsequently obtained for the five-dimensional energy-critical heat equation by Harada \citep{harada_higher_2020}. Quantized dynamics also arise in free-boundary problems: Had{\v{z}}i{\'c} and Rapha{\"e}l \citep{hadzic_melting_2019} treated the 2D radial Stefan problem, while Zhang \citep{zhang_melting_2025} constructed both the stable fundamental melting regime and a sequence of codimension-$k$ excited regimes for the 3D radial Stefan problem. For the $\SO(d)$-equivariant Yang--Mills heat flow in high dimensions, Yi \citep{yi_stable_2025} constructed blowup concentrating a universal stationary profile with quantized polynomial rates. Bensouilah, Duong, and Ghoul \citep{bensouilah_nonself_2025} constructed non-self-similar blowup with a quantized family of polynomial rates.
\subsection{Main result}

We now state the main theorem.

\begin{theorem}\label{th:main-thm}
  For any $L\in\mbN_+$, there exists a codimension-$(2L-1)$ set of smooth $1$-equivariant initial data $u_0\in\dot{H}^{1}$ arbitrarily close to the ground state $Q$ in the energy norm, such that the corresponding solution to~\eqref{eq:sm} blows up in finite time $T=T(u_0)\in\mbR_+$ by bubbling off $Q$ near the blowup time, in the sense that
  \[
    u(t,x) - e^{\vtha(t)\R} Q\biggl(\frac{x}{\la(t)}\biggr) \to u^{\ast}\in\dot{H}^{1}
    \quad\mbox{as}\ \ t\uparrow T,
  \]
  where the geometric parameters $(\la,\vtha)\in C^1([0,T),\mbR_+\times\mbR)$ yield both the quantized blowup rate
  \begin{equation}
    \label{th:main-thm-quant-rate}
    \la(t) = C(u_0)(1+o(1))\frac{(T-t)^L}{\left|\log(T-t)\right|^{\frac{2L}{2L-1}}}
    \quad\mbox{as}\ \ t\uparrow T,
  \end{equation}
  and the convergence of the phase
  \[
    \vtha(t)\to \vtha(u_0)\in\mbR
    \quad\mbox{as}\ \ t\uparrow T.
  \]
\end{theorem}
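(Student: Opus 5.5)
We follow the Raphaël--Schweyer scheme \citep{raphael_quantized_2014}, adapted to the Schrödinger map as in \citep{merle_blowup_2013}.

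\textbf{Reduction and profile.} First we pass to the 1-equivariant reduced formulation. Writing $u=e^{\vtha\R}\bigl(Q+\text{perturbation}\bigr)(x/\la)$ in a Frenet-type frame adapted to $Q$, we obtain a complex scalar equation for $\w$. Its linearized operator is $\ham=\as\oa$, with $\oa=-\prr+\frac{\cos\phi}{r}$ and kernel spanned by $\lphi=r\phi'$. We then introduce the rescaled time $\frac{ds}{dt}=\la^{-2}$ and build an approximate profile
\[
Q_b=Q+\sum_{j=1}^{L} b_j T_j+\sum_{j} S_j ,
\]
where $b=(b_1,\dots,b_L)$ are complex parameters (real part for scaling, imaginary part for rotation). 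The $T_j$ are defined by the generalized kernel iteration $\ham T_{j+1}=-T_j$ (with the factor $i$ from the Schrödinger structure), starting from $T_0=\lphi$. Since $\lphi\sim r^{-1}$ is not in $L^2$, the iteration grows like $r^{2j-1}\log r$ at infinity. The profile therefore has to be localized at a scale $B_1\sim b_1^{-1/2}|\log b_1|^{c}$, and the localization generates the logarithmic corrections to the dynamical system
\[
(b_k)_s+\Bigl(2k-1+\tfrac{2}{|\log b_1|}\Bigr)b_1b_k-b_{k+1}=0,\qquad \frac{\la_s}{\la}+b_1=0,\qquad \vtha_s \text{ small}.
\]
This system has an explicit special solution $b_k^e(s)\approx c_k s^{-k}\bigl(1+O(1/\log s)\bigr)$ with $c_1=\frac{L}{2L-1}$ (log-corrected). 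Integrating $\la_s/\la=-b_1$ and returning to $t$ gives exactly $\la\sim (T-t)^L|\log(T-t)|^{-2L/(2L-1)}$. Linearizing the system around $b^e$ yields $L-1$ unstable directions among the real parts. The imaginary parts contribute $L$ further directions, of which one (the phase/modulation direction) is neutral. Together these give the codimension $2L-1$.

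\textbf{Bootstrap and Lyapunov control.} We decompose $u=(Q_b+\ve)$ (rescaled and rotated) and impose $2L$ orthogonality conditions on $\ve$ against localized versions of $\ham^{*j}\Phi_M$, which fixes $(\la,\vtha,b)$. The central estimate controls the high-order energy
\[
\E_{2L+2}=\int |\ham^{L+1}\ve|^2 ,
\]
ideally in the form
\[
\frac{d}{ds}\Bigl\{\frac{\E_{2L+2}}{\la^{4L+2}}\Bigr\}\le \frac{b_1}{\la^{4L+2}}\Bigl(\frac{\E_{2L+2}}{\sqrt{\log M}}+C(M)\frac{b_1^{2L+2}}{|\log b_1|^2}\Bigr)
\]
together with lower-order energies $\E_{2k}$. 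In the parabolic case this is derived by conjugating with $\oa$ and using the repulsivity of $\tilde\ham=\oa\as$. For the Schrödinger flow that dissipation is absent, so we must manufacture the required coercive term. We therefore add to the energy a Virial/Morawetz correction $b_1\,\mathrm{Im}\int \partial_r(\ham^{L}\ve_2)\,\overline{\ham^{L}\ve_2}\,\chi$ on the conjugated variable $\ve_2=\oa\ve$. The time derivative of this correction produces a positive localized term $b_1\int |\nabla \ham^L\ve|^2/(1+r^2)$, which absorbs the quadratic terms coming from $\partial_s\ham$. This is the hardest step and the one I expect to be the main obstacle: we must check the sign of the Morawetz commutator for the conjugated operator, and that the correction is dominated by $\E_{2L+2}$. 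Lower energies and the boundary terms at $B_1$ are handled by interpolation and Hardy inequalities adapted to $\ham$. The nonlinear terms are estimated using the geometric constraint $|u|=1$.

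\textbf{Conclusion.} With these bounds, the modulation equations hold up to errors of order $b_1^{L+1}/|\log b_1|+\sqrt{\E_{2L+2}}$. We close the bootstrap by a topological (Brouwer) argument on the $2L-1$ unstable and neutral directions, choosing their initial values so that they stay small. Integrating the parameter equations gives finite-time blowup with rate \eqref{th:main-thm-quant-rate}, and shows that $\vtha_s$ is integrable, so that $\vtha$ converges. Finally, the $\dot H^1$ boundedness of $\ve$ and the smallness of higher norms outside the blowup region give the strong convergence of the remainder to $u^\ast$.
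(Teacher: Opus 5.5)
\paragraph{Main gap: the top-order energy estimate.} The difficulty goes beyond the unverified sign you flag. In the Frenet formulation the flow is quasilinear: $\prs\hw+\hj[\hw]\mham\hw+\dots=0$ with $\hj[\hw]=(e_3+\hw)\times$. Since the profile satisfies $\hw\approx b_1T_1e_2$, linearizing around the profile rather than around $Q$ puts $b_1T_1e_2\times\mham\w$ into the equation. Consequently the energy identity for $\E_{2L+2}$ contains, up to powers of $\la$, the term $-2\int\w_{2L+2}\cdot\hj\mham\w_{2L+2}$.

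The best available bound for this term is the gain-of-two-derivatives lemma of \citep{merle_blowup_2013}, which gives $2b_1\int f|\w_{2L+2}|^2$ with $f=(1+\z)\oa T_1\in[0,1)$. This is of size $b_1\E_{2L+2}$ with a non-small constant. Integrating along the trapped dynamics multiplies the bootstrap constant $K$ by a factor of order $L\|f\|_{L^\infty}$, so the bound on $\E_{2L+2}$ cannot be improved for large $L$; the paper notes that the direct estimate fails for $L>2$. Handling the nonlinear terms ``via $|u|=1$'' does not touch this term, and your plan never isolates it.

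Your correction does have the right form. The paper's $\vr=b_1\int\ma_\la\W^0_{2L}\cdot\prr(q_\la)\W^0_{2L+2}$ is, up to sign and scaling, $b_1\,\mathrm{Im}\int\pry v\,\bar v\,q'$ with $v=\oa\ham^L\ve=\ti{\ham}^L\oa\ve$. Its job, however, is to cancel exactly this quasilinear term on $\{y\le R\}$, where $f$ is not small; outside that region $f\le 1/K$. Everything then rests on the weight $q$:
\begin{itemize}
\item $q=y^2/2$ on $y\le R$, so that $\frac{d\vr}{dt}$ yields $-2b_1\int_{y\le R}|\w^0_{2L+2}|^2$;
\item $q''\ge-c$ and $q'/y\ge-c$ everywhere;
\item $(\T\pry)^2q\le c\jbk[y]^{-2}\jbk[\log y]^{-2}$, because the error $-\tfrac12\int(\T\pry)^2q\,|\w^0_{2L+1}|^2$ can only be absorbed through the coercivity of $\ti{\ham}=\oa\as$, which loses exactly $\jbk[\log y]^{-2}$.
\end{itemize}
A naive cutoff of $y^2/2$ violates the last two properties. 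Lemma~\ref{le:vr-weig} instead builds a $C^{3,1}$ weight from $1,\log y,y^2,y^2\log y$ with coefficients varying in $\log y$.

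Conversely, you propose to absorb the $\partial_s\ham$ commutator terms $b_1\sum_j\int\w^0_{2L+2}\cdot\R^{L+1}\mhamp^j(g_2\mhamp^{L-j}\wp)$, with $g_2=\vla V/y^2$, by a positive localized term. These terms are neither small nor localized in $\{y\le R\}$. The paper cancels them algebraically, with a favorable sign, by a second functional $\mora_L$. It is built from $g_1=\vla\z/y$ and $g_2$ with coefficients $(-1)^{L-\ell}(\ell+1)$, and uses $-\T g_1=2\vla\z/y^2\le0$ and $\PP g_1=g_2$. So two distinct corrections with distinct roles are needed, and your sketch establishes neither.

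\paragraph{Secondary gap: the linearized parameter system.} Your system drops the phase coupling, and this changes the spectral picture on which the Brouwer argument rests. Since $\vtha_s\approx-a_1$ and $\vtha_s\z\R$ acts on the profile, the true system contains $-a_1b_k$ and $+a_1a_k$ (see $A_k,B_k$ in \eqref{eq:msf-ab}). Linearized around $(0,b^e)$, the term $-a_1b^e_k$ contributes $-c_kV_1/(s^{k+1}(\log s)^{5/4})$, the same order as $\prs a_k$. With it, the rotational block has eigenvalues $\frac{1}{2L-1},\dots,\frac{L}{2L-1}$, all strictly positive (Lemma~\ref{le:a-sys-lin}). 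Without it, the matrix is bidiagonal with diagonal entries $\frac{L-\ell}{2L-1}$, and the entry for $\ell=L$ is the zero eigenvalue behind your ``neutral'' direction.

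A neutral mode would break the exit argument. Brouwer needs a strictly outgoing flow on $\partial\bar B(0,\tfrac12)$, while the errors are only $O((\log s)^{-1/4})$. The correct count is $L-1$ unstable scaling modes plus $L$ unstable rotation modes, with one stable mode of eigenvalue $-1$. The scaling and phase symmetries are removed by the normalization $\la(0)=1$, $\vtha(0)=0$, not by a neutral mode.

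Finally, fixing the $2L+2$ real parameters $(\la,\vtha,a,b)$ requires $2L+2$ orthogonality conditions, namely $(\alpha,\ham^k\xm_M)=(\beta,\ham^k\xm_M)=0$ for $0\le k\le L$, not $2L$.
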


\begin{remark}[Generalization to quantized-rate regime]
  The blowup solution constructed in~\citep{merle_blowup_2013} corresponds to the case $L=1$ of Theorem~\ref{th:main-thm}. Hence our result generalizes it to a higher-order quantized family of blowup rates. The main novelty of our construction is twofold: the new Virial functional and the higher codimension of the initial data.

\end{remark}

\begin{remark}[Codimension of the initial data]
  We use the term \emph{codimension-$(2L-1)$ set of initial data} in the following parametrized sense. With the initial normalization $\la(0)=1$, $\vtha(0)=0$, for any smooth 1-equivariant perturbation $\vv_0\in\dot{H}^1$ satisfying the initial smallness conditions of Subsection~\ref{sub:mod-boot}, there exist parameters $(a_{\ell})_{\ell=1}^L$, $(b_{\ell})_{\ell=1}^L$ such that the initial data
  \[
    u_0=Q+\vv_0 + (e_r,e_\tau,Q) \Biggl( \sum_{\ell=1}^{L}a_{\ell}(\vv_0)\phiat[\ell] + \sum_{\ell=1}^{L}b_{\ell}(\vv_0)\phibt[\ell] + \text{h.o.t.} \Biggr)
  \]
  leads to blowup in the regime given by Theorem~\ref{th:main-thm}. Here,
  \(\phiat[\ell], \phibt[\ell]\) are certain directions (the localized resonances; see Corollary~\ref{co:resona} and \eqref{eq:local-phi}), and the $h.o.t.$ represents the higher order terms. The choice of $(a_1,\dots,a_L,b_1,\dots,b_L)$ can be traced back to that of parameters $(U,V)$. The linearized system for \((U,V)\) has one stable direction and \(2L-1\) unstable directions. We specify the initial data in the unstable directions while leaving the stable parameter free, which yields the parametrized notion of codimension; see~Proposition~\ref{pro:boot} and Remark~\ref{re:fixed-scaling}.
\end{remark}

\medskip
\noindent\textbf{Comments on the proof.}
The overall strategy is the energy method, as in \citep{merle_blowup_2013, raphael_quantized_2014, merle_type_2015, collot_type_2018, ghoul_construction_2018}. One seeks the solution in the form of an explicit blowup profile plus a corrector (the radiation $\w$ in our case). The existence of the corrector is proved by a bootstrap on adapted energies that control its $\dot{H}^{1}\cap\dot{H}^{2L+2}$ regularity. The main points of the proof are the following.

(i) \emph{Approximate solution}.
Since $L\in\mbN_+$ is arbitrary, we need a blowup profile accurate to the order prescribed by $L$. To this end, we review the notion of \adb\ functions from \citep{raphael_quantized_2014} and extend this notion into a systematic framework of function classes. This enables us to construct a high-order approximate solution and efficiently estimate the nonlinear effects of~\eqref{eq:sm}.

(ii) \emph{Energy-Virial-Morawetz functional}.
When $L>2$, the direct estimate \citep[Lemma~4.7]{merle_blowup_2013} for the leading quadratic term in the energy estimate is no longer sufficient to close the bootstrap. This failure cannot be remedied by minor adjustments within the trapped regime. To overcome this, we introduce the Virial and Morawetz functionals to modify the energy. This is inspired by the works of the first author~\citep{jendrej_construction_2017a, jendrej_construction_2019}. The critical novelty is the weight $q(y)$ in the definition of the Virial correction. It exhibits an extra logarithmic decay: $(\T\pry)^2 q \le c \jbk[y]^{-2}\jbk[\log y]^{-2}$ for $y\ge1$, where the linear operator $\T$ is given by \eqref{eq:t-p} and $(\T\pry)^2\approx \Delta_{\rm rad}^2$. This decay is required by the coercivity estimate for the conjugate operator $\ti{\ham}$ (see Lemma~\ref{le:coer-ti-ham}).

(iii) \emph{Integrability near the origin}.
Because of the complexity of geometric flows, the error terms involve the iterated operator $(\hj\mham)^{k}$ for $k\le L+1$, which can generate singular behavior at $y=0$ and render the resulting integrals non-integrable. To restore integrability, we introduce the so-called Dore series and track their order under these operators; see Definition~\ref{def:dore} and Lemma~\ref{le:preserv}.

\medskip
\noindent\textbf{Notation.} We collect here the notation used throughout the article.
\noindent
Given a vector $\w\in\mbR^3$, we denote by $\wp$ its projection onto the plane spanned by $e_1$ and $e_2$:
\[
  \wp = -\R^2\w = \w^{(1)}e_1 +\w^{(2)}e_2.
\]
\noindent
Given a function $f(y)$ of the radial variable, we denote by $\vla$ the generator of dilations:
\[
  \vla f = y\pry f.
\]
\noindent
Given a parameter $0<b_1\ll 1$, we define the scales\footnotemark
\[
  S_0 = \frac{1}{\sqrt{b_1}},
  \quad S_1 = \frac{\lgb}{\sqrt{b_1}}.
\]
Given infinitesimal $b_1$, we denote by $\delta(b_1)$ a generic positive quantity satisfying $\delta(b_1)\to 0$ as $b_1\to 0$. \footnotetext{In~\cite{merle_blowup_2013}, the scales $S_0, S_1$ are denoted by $B_0, B_1$.}

\noindent
Given positive numbers $a$ and $b$, we write $a\simeq b$ if $a = (1+o(1))b$, and $a\sim b$ if $C_1b \le a \le C_2b$ for some constants $C_2>C_1>0$.

\noindent
Given a real number $y$, we write $\jbk[y] = (1+y^2)^{1/2}$ for the Japanese bracket.

\noindent
We denote by $\chi$ a fixed radial smooth cutoff function satisfying
\[
  \chi(y) = 1 \quad \mbox{for}\;\; y\le 1,
  \qquad
  \chi(y) = 0 \quad \mbox{for}\;\; y\ge 2,
\]
and write $\chi_M = \chi\bigl(\frac{\cdot}{M}\bigr)$ for any $M>0$.

\noindent
We denote by $\one_A$ the indicator function of a set $A$. If the set $A$ is defined by an inequality, for instance $A = \{y\le 10\}$, then we also use the abbreviation $\one_{y\le 10}$.

\noindent
Finally, we write $\delta_{j,k}$ for the Kronecker delta.

\subsection{Outline of the proof}
\label{sub:outline}
We now sketch the proof.

\step[1, Normalization] We seek a blowup solution of the form
\[
  u = e^{\vtha\R}(Q+\hv) \Bigl(t,\frac{r}{\la}\Bigr),
\]
where the phase $\vtha=\vtha(t)$ and the scale $\la=\la(t)$ are modulation parameters to be determined, and $\hv$ is a corrector to be determined. The solution satisfies the equation~\eqref{eq:sm} and remains $\mbS^2$-valued\footnote{This is a geometric property of the \sch\ map flow; see Lemma~\ref{le:geo-prop}.}, and hence
\[
  u(t,x)\in\mbS^2
  \quad\Longrightarrow\quad
  |Q +\hv| = 1, \quad
  \forall (t,r)\in\mbR\times\mbR_+.
\]
Since $1$-equivariance is preserved by the flow, we use the Frenet basis $(e_r, e_\tau, Q)$, whose three vectors are $1$-equivariant, and express the corrector $\hv$ in this basis, with coefficient vector $\hw\in\mbR^3$. The latter satisfies
\begin{equation}
  \label{eq:constra}
  |e_3 +\hw| =1.
\end{equation}
In the self-similar variables
\begin{equation}
  \label{eq:ss-trans}
  \frac{\dd s}{\dd t} = \frac{1}{\la^2},
  \quad y = \frac{r}{\la},
\end{equation}
the equation \eqref{eq:sm} reduces to the normalized equation
\begin{equation}
  \label{eq:vec-eq-outline}
  \prs\hw -\frac{\la_s}{\la}\vla\hw +\vtha_s\z\R\hw +\hj[\hw](\mham\hw +\hp\lphi) =0.
\end{equation}
Here $\z(y)=(1-y^2)/(1+y^2)$, $\hj[\hw]\approx\R$ is the complex structure from \eqref{eq:hj}, and $\hp$ denotes a vector depending on $\vtha_s$ and $\la_s$. The operator $\mham$, defined in \eqref{eq:mham}, is essentially the diagonal operator
\[
  \mham \approx
  \begin{pmatrix}
    \ham & 0    & 0       \\
    0    & \ham & 0       \\
    0    & 0    & -\Delta
  \end{pmatrix}
  \with
  \ham = -\Delta +\frac{y^4-6y^2+1}{y^2(1+y^2)^2}.
\]
Thus the leading part of the dynamics is governed by the self-adjoint \sch\ operator $\ham$. This is the normalization adopted in \citep[Section~2]{merle_blowup_2013}.

\step[2, Construction of an approximate solution] To solve \eqref{eq:vec-eq-outline}, we seek an approximate solution in the spirit of separation of variables. We introduce the modulation parameters
\[
  b_1 = -\frac{\la_s}{\la}
  \andd
  a_1 = -\vtha_s,\footnotemark
\]
and make the ansatz
\footnotetext{This is only a heuristic approximation; see Lemma~\ref{le:mod-eq} for the rigorous statement.}
\[
  \hw = \wz :=a_1 f_{1,0}(y) +b_1 f_{0,1}(y) +o(|a_1|+|b_1|)\,f_{h.o.t.}(y).
\]
At the formal level, the derivatives of $a_1$ and $b_1$ contribute only lower-order terms, in view of the expected asymptotics $\la\downarrow 0$ and $\vtha\to \vtha_{\infty}\in\mbR$ as $s\to +\infty$. The leading-order part of \eqref{eq:vec-eq-outline} is therefore
\[
  \mhamp\wz +\hp\lphi = a_1(\mhamp f_{1,0} -\lphi e_1) +b_1(\mhamp f_{0,1} -\lphi e_2).
\]
Requiring this expression to vanish, we choose
\[
  f_{1,0} = \phia[1], \quad f_{0,1} = \phib[1],
\]
where $\phia[1]$ and $\phib[1]$ are the first iterated resonances of $\mhamp$; see Corollary~\ref{co:resona}. Higher blowup rates $\la_L(t)$, with $L\ge 2$, are obtained by including the higher-order resonances $\phia[k]$ and $\phib[k]$, and hence
\[
  \wz = \sum_{k=1}^{L}(a_k \phia[k] +b_k \phib[k]) +o(|a_1|+|b_1|)\,f_{h.o.t.}(y).
\]
All of these profiles are \adb\ functions; see Definition~\ref{def:adb}. The actions of $\ham$ and $\ham^{-1}$ on \adb\ functions are given in Lemma~\ref{le:ham-act-adb}.

The higher-order part of $\wz$ is chosen to control the quasilinear error $\hj[\hw]\mham\wz$ to the required order while enforcing the constraint \eqref{eq:constra} with high precision. To organize the construction and track nonlinear interactions, we introduce suitable classes of admissible functions (and also homogeneous functions) that encode both the parameter dependence and the spatial behavior of the profiles. Within this framework, we solve for the planar correction $\x$ and the radial correction $\gamma_0$; see Subsection~\ref{sub:homo-func} and Lemma~\ref{le:app-sol}.

\step[3, Construction of exact solutions] We add a radiation corrector and seek solutions to \eqref{eq:vec-eq-outline} of the form
\[
  \hw = \wzt +\w,
\]
where $\wzt$ is a localized version of $\wz$. The existence of $\w$ is proved using an energy method combined with a bootstrap argument; see Proposition~\ref{pro:boot}. We introduce adapted energies $\E_1$ and $\E_{2k}$, $1\le k\le L+1$, which are comparable to the squared $\dot{H}^1$, $\dot{H}^{2k}$ norms of $\wp$, and propagate the bootstrap bounds on the rescaled time interval $[s_0,+\infty)$. This yields $\w\in C([s_0,+\infty),\dot{H}^1)$. The key point is a Gronwall-type estimate for the top energy $\E_{2L+2}$; see Lemma~\ref{le:eng}. Its proof requires controlling three types of terms: the leading quadratic term, the high-order commutator term, and the remaining errors.

A Morawetz-type estimate analogous to those in \citep{merle_blowup_2013,xu_blowup_2025} cancels the commutator term. To control the leading quadratic term, we introduce the virial functional
\[
  \vr(t) = b_1 \int\ma_{\la}\W_{2L}^0\cdot \prr (q_{\la})\W_{2L+2}^0,
\]
where $\ma_{\la}\approx -\prr +\z_{\la}/r$ acts on the first two components and $\W_{2k}(t,r)$ denotes the renormalized radiation. The novelty lies in the weight $q_{\la}(r)=q(y)$, which, for any prescribed $0<c<1$ and all sufficiently large $R>1$, satisfies the following properties:
\begin{enumerate}[label={\rm (\roman*)}]
  \item $q(y) = y^2/2$ for $y\le R$; \label{eq:inner-strat}
  \item $q''(y)\ge -c$ and $\frac{1}{y}q'(y)\ge -c$ for $y>0$;  \label{eq:outer-strat}
  \item $(\T\pry)^2 q(y) \le c \jbk[y]^{-2}\jbk[\log y]^{-2}$ for a.e. $y>0$. \label{eq:delta2-strat}
\end{enumerate}
Property~\ref{eq:inner-strat} yields the virial-type cancellation in $\vr$, while property~\ref{eq:outer-strat} keeps the leading quadratic term small for $y\ge R$.
This construction can be traced back to those in \citep{jendrej_construction_2017a, jendrej_twobubble_2018, jendrej_construction_2019}, which use localized, almost convex weights $q(y)$ satisfying a biharmonic upper bound $\Delta_{\rm rad}^2 q\le c/y^2$, with the radial Laplacian taken in the relevant spatial dimension. In our setting, the commutator structure involves $(\T\pry)^2$ instead of $\Delta_{\rm rad}^2$, and property~\ref{eq:delta2-strat} requires the additional logarithmic decay $\jbk[\log y]^{-2}$, which is necessary in view of the sharp coercivity of the conjugate operator $\ti{\ham}$ (Lemma~\ref{le:coer-ti-ham}). These requirements motivate the construction of the new piecewise-smooth weight $q(y)$ in Lemma~\ref{le:vr-weig}.

The remaining error terms involve iterates $(\hj\mham)^K$, where $\hj = (e_3+\wzt+\w)\times$ and the potential $V/y^2\sim y^{-2}$ of $\ham=-\Delta+V/y^2$ may develop a singularity at the origin. We treat these errors systematically as in Subsection~\ref{sub:eng-func}. For $y\le1$, we use finite-order Dore expansions (Definition~\ref{def:dore} and Lemma~\ref{le:preserv}) and the highest-order energy bound. For $y\ge1$, the integrand is reformulated via the pointwise bounds for $\Omega^k_\sigma$ (Lemma~\ref{le:omega-pw-bnd}). Combining these estimates with the Morawetz and virial corrections, we obtain the mixed energy estimate of Proposition~\ref{pro:mix-eng} and close the bootstrap.

\step[4, Blowup dynamics] The asymptotic behavior of the parameters is governed by the modulation equations. We project the equation for $\w$ onto the first two components and test it against the truncated profiles $\xm_{k,0}$ and $\xm_{0,k}$, $1\le k\le L$. This yields the modulation system (Lemma~\ref{le:mod-eq}), which reads
\begin{equation}\begin{split}
    \label{eq:mod-eq-strat}
    & \bigl|\prs a_k +(2k-1+\cb)b_1a_k -a_1b_k -a_{k+1}\bigr|   \\
    & + \bigl|\prs b_k +(2k-1+\cb)b_1b_k +a_1a_k -b_{k+1}\bigr|
    \les \sqrt{\E_{2L+2}} +\frac{b_1^{L+1}}{\lgb},
  \end{split}\end{equation}
where $\cb$ is given by \eqref{eq:cb} and satisfies $\cb\approx 2\lgb^{-1}$.
Using the refined modulation equations in Lemma~\ref{le:refined-mod-eq}, we determine the asymptotics of $a_1$ and $b_1$ as $s\to+\infty$, and hence the blowup rate
\[
  \la(t) \sim \frac{(T-t)^L}{\left|\log (T-t)\right|^{\frac{2L}{2L-1}}}.
\]
We analyze the stability\footnote{A blowup solution is said to be stable if it preserves its blowup dynamics under perturbations of the initial data.} of the construction by linearizing the modulation system near the explicit approximate solution $(a^e,b^e)$ of \eqref{eq:mod-eq-strat}. The corresponding linearized system of Subsection~\ref{sub:ab-sys} has one negative eigenvalue, corresponding to the stable mode, and $2L-1$ positive eigenvalues, corresponding to the unstable modes. Together with the two symmetry directions of the $1$-equivariant flow, $\la$ and $\vtha$, this shows that the blowup data form a codimension-$(2L-1)$ family in $\dot{H}^1$, in the sense of the remark following Theorem~\ref{th:main-thm}.

\medskip
\noindent\textbf{Structure of this article.} The rest of the article is organized as follows. In Section~\ref{sec:prelim}, we introduce the Frenet basis and derive the vectorial form of~\eqref{eq:sm}. Section~\ref{sec:app-sol} constructs the approximate solution, while Section~\ref{sec:trap-reg} sets up the trapped regime and the bootstrap argument. In Section~\ref{sec:energy-est}, we establish the energy, virial, and Morawetz estimates. Section~\ref{sec:compl-constr} closes the bootstrap and completes the proof of Theorem~\ref{th:main-thm}. The appendices collect the geometric identities and technical estimates used in the argument.

\section{Vectorial formulation}
\label{sec:prelim}
In this section, we introduce the Frenet basis associated with the ground state $Q=Q_1$, and derive the vectorial equation~\eqref{eq:vect-form} for the anticipated solution. We then study the linear Hamiltonian arising therein. This section follows the treatment in~\citep{merle_blowup_2013}, which we recall for completeness.

\subsection{The Frenet basis}
\label{sub:frenet-basis}
We recall some basics of the Frenet basis associated with the ground state $Q$.
Define the following functions:
\begin{equation}
  \label{eq:vla-phi-z}
  \begin{gathered}
    \lphi(r) = r\prr(2\arctan r) =\frac{2r}{1+r^2} =\sin\phi(r), \\
    \z(r) = \frac{1-r^2}{1+r^2} =\cos\phi(r), \quad
    \zt(r) = 2(1+\z).
  \end{gathered}
\end{equation}
Then from \eqref{eq:grd-state-eq}, we rewrite $Q$ as
\[
  Q(r,\tha) = e^{\tha\R}
  \begin{pmatrix}
    \lphi(r) \\ 0 \\ \z(r)
  \end{pmatrix}.
\]
The Frenet basis $(e_r, e_\tau, Q)$ is given by
\begin{equation}
  \label{eq:frenet}
  e_r = \frac{\prr Q}{|\prr Q|} = e^{\tha\R}
  \begin{pmatrix}
    \z \\ 0 \\ -\lphi
  \end{pmatrix}
  \andd
  e_\tau = \frac{\pr_\tau Q}{|\pr_\tau Q|} = e^{\tha\R}
  \begin{pmatrix}
    \hs[.1]0\hs[.1] \\ 1 \\ 0
  \end{pmatrix}.
\end{equation}

\begin{lemma}[Properties of the Frenet basis \citep{merle_blowup_2013}]
  \label{le:frenet-basis}
  The following hold:
  \begin{enumerate}[label={\rm{(\roman*)}}]
    \item derivatives:
      \begin{gather*}
        \prr e_r = -\frac{\zt}{2} Q,\quad
        \prr e_\tau = 0,     \quad
        \prr Q = \frac{\zt}{2} e_r, \\
        \vla e_r = -\lphi\,Q, \quad
        \vla e_\tau = 0, \quad
        \vla Q = \lphi\,e_r, \\
        \pr_{\tau} e_r =\frac{\z}{r}e_\tau, \quad
        \pr_{\tau} e_\tau =-\frac{\z}{r}e_r - \frac{\zt}{2} Q, \quad
        \pr_{\tau} Q = \frac{\zt}{2} e_\tau, \\
        \Delta e_r =-\frac{1}{r^2} e_r -\frac{\z\zt}{r}Q, \quad
        \Delta e_\tau = -\frac{1}{r^2} e_\tau, \quad
        \Delta Q =- \frac{\zt^2}{2} Q.
      \end{gather*}
    \item rotations:
      \[
        \R e_r = \z e_\tau, \quad
        \R e_\tau = -\z e_r -\lphi Q,\quad
        \R Q = \lphi e_\tau.
      \]
    \item infinitesimal generators: for $Q_{\vtha,\la}(r,\tha) := e^{\vtha\R}Q(\tfrac{r}{\la},\tha)$, we have
      \[
        \frac{\dd}{\dd \la}(Q_{\vtha,\la})\big|_{\la=1,\vtha=0}= -\lphi e_r,\quad
        \frac{\dd}{\dd \vtha}(Q_{\vtha,\la})\big|_{\la=1,\vtha=0}= \lphi e_\tau.
      \]
  \end{enumerate}
\end{lemma}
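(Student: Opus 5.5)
The plan is to verify every identity by direct computation in the rotating frame. The key observation is that $e^{\tha\R}$ depends only on $\tha$, commutes with $\R$, and satisfies $\pr_\tha e^{\tha\R}=\R e^{\tha\R}$. We therefore write each vector field as $e^{\tha\R}v(r)$ with $v(r)\in\mbR^3$ given by \eqref{eq:frenet}, and reduce everything to ODE identities for the scalar functions $\lphi=\sin\phi$, $\z=\cos\phi$ and $\zt=2(1+\z)$.

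First, the basic scalar facts. Since $\phi=2\arctan r$, we have $\phi'=\frac{2}{1+r^2}=\frac{1+\z}{1}\cdot 1=\frac{\zt}{2}$, because $1+\z=\frac{2}{1+r^2}$. Also $r\phi'=\lphi$. Hence
\[
\prr\z=-\frac{\zt}{2}\lphi,\qquad \prr\lphi=\frac{\zt}{2}\z .
\]

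\textbf{Radial derivatives.} The radial derivatives follow immediately, since $\prr(\sin\phi,0,\cos\phi)=\phi'(\cos\phi,0,-\sin\phi)$ and $\prr(\cos\phi,0,-\sin\phi)=-\phi'(\sin\phi,0,\cos\phi)$. This gives the $\prr$ row, and multiplying by $r$ gives the $\vla$ row. Clearly $e_\tau$ has a constant profile.

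\textbf{Rotations.} For (ii), apply $\R(x,y,z)=(-y,x,0)$ to the profiles:
\[
\R(\z,0,-\lphi)=(0,\z,0)=\z e_\tau,\qquad \R(\lphi,0,\z)=\lphi e_\tau,\qquad \R(0,1,0)=(-1,0,0).
\]
For the last one, decompose $(-1,0,0)$ in the orthonormal basis, using $\z^2+\lphi^2=1$:
\[
(-1,0,0)=-\z(\z,0,-\lphi)-\lphi(\lphi,0,\z).
\]

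\textbf{Angular derivatives.} Interpreting $\pr_\tau=\frac1r\pr_\tha$, we have $\pr_\tau(e^{\tha\R}v)=\frac1r e^{\tha\R}\R v$. The formulas for $\pr_\tau e_r$ and $\pr_\tau e_\tau$ then follow from (ii), using $\frac{\lphi}{r}=\frac{2}{1+r^2}=\frac{\zt}{2}$. The same identity gives $\pr_\tau Q=\frac{\lphi}{r}e_\tau=\frac{\zt}{2}e_\tau$.

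\textbf{Laplacians.} Write $\Delta=\prr^2+\frac1r\prr+\frac1{r^2}\pr_\tha^2$, where $\pr_\tha^2$ acts as $\R^2=-\mathrm{diag}(1,1,0)$ on profiles. For $e_\tau$, only the angular term survives, giving $-\frac1{r^2}e_\tau$. For $Q$, we can either invoke \eqref{eq:grd-state-eq} with $|\nabla Q|^2=|\prr Q|^2+|\pr_\tau Q|^2=\frac{\zt^2}{2}$, or compute directly. For $e_r$, we compute
\[
\prr^2 e_r=-\tfrac12\zt' Q-\tfrac{\zt^2}{4}e_r,
\]
with $\zt'=2\prr\z=-\zt\lphi$, together with the angular term $-\frac{1}{r^2}\z(\z,0,0)$ in the rotating profile. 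Re-expressing everything in the Frenet basis and using $\frac{\lphi}{r}=\frac{\zt}{2}$ should collapse the result to $-\frac1{r^2}e_r-\frac{\z\zt}{r}Q$. This bookkeeping is the only mildly tedious step, and it is where an error is most likely. I would double-check it by taking inner products with $e_r$, $e_\tau$ and $Q$ separately.

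\textbf{Infinitesimal generators.} For (iii), differentiate $Q(r/\la)$ in $\la$ at $\la=1$, which gives $-r\prr Q=-\vla Q=-\lphi e_r$ by (i). Differentiating in $\vtha$ gives $\R Q=\lphi e_\tau$ by (ii).
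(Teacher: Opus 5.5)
Your approach is correct and is the natural one. Every identity follows by direct computation in the rotating frame, using $\phi'=\zt/2$, $r\phi'=\lphi$ and $\lphi/r=1+\z=\zt/2$; the paper gives no proof and simply quotes \citep{merle_blowup_2013}. Your radial, angular, rotation and generator identities are fine, and so are $\Delta e_\tau$ and $\Delta Q$.

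The one step you only asserted, $\Delta e_r$, contains a slip that makes it fail as written. The angular contribution is
\[
\frac{1}{r^2}\pr_\tha^2 e_r=\frac{1}{r^2}e^{\tha\R}\R^2(\z,0,-\lphi)=-\frac{\z}{r^2}e^{\tha\R}(1,0,0)=-\frac{\z^2}{r^2}e_r-\frac{\z\lphi}{r^2}Q,
\]
not $-\frac{1}{r^2}\z(\z,0,0)$. With your extra factor $\z$, the $e_r$-coefficient cannot equal $-1/r^2$. You must also keep the first-order term $\frac1r\prr e_r=-\frac{1+\z}{r}Q$. Adding $\prr^2e_r=(1+\z)\lphi\,Q-(1+\z)^2e_r$ gives
\[
\Delta e_r=-\Bigl((1+\z)^2+\frac{\z^2}{r^2}\Bigr)e_r+\Bigl((1+\z)\lphi-\frac{1+\z}{r}-\frac{\z\lphi}{r^2}\Bigr)Q.
\]
Since $(1+\z)^2=\lphi^2/r^2=(1-\z^2)/r^2$, the first coefficient is $-1/r^2$. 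For the second, substitute $\lphi=r(1+\z)$ to get $(1+\z)^2(r^2-1)/r$. Because $(1+\z)(r^2-1)=-2\z$, this equals $-2\z(1+\z)/r=-\z\zt/r$. With this correction the proof is complete.
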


\subsection{Derivation of the vectorial equation}
\label{sub:norm-eq}
Following the strategy outlined in Subsection~\ref{sub:outline}, we seek solutions of the form
\begin{equation}
  \label{eq:sol-form}
  u = \mcS (Q+\hv)
  := e^{\vtha(t)\R}(Q+\hv)\biggl(t,\frac{r}{\la(t)}\biggr),
\end{equation}
where $\vtha, \la$ are time-dependent functions to be determined, and $\mcS$ denotes the mixed transformation induced by them. Expressed in the Frenet basis, the vector $\hv$ reads
\[
  \hv = (e_r,e_\tau,Q)\hw
  \with
  \hw = \begin{pmatrix}
    \hs[.1]\wh\alpha\hs[.1] \\ \wh\beta \\ \wh\gamma
  \end{pmatrix}.
\]
Then the \sch\ map equation \eqref{eq:sm} reduces to the equation for $\hw$, which we now derive. In view of~\eqref{eq:ss-trans}, we work in the self-similar variables and define the rescaled vector by
\begin{equation}
  \label{eq:la-trans}
  \hv_\la(t,r) = \hv(s,y).
\end{equation}
We first consider the term $\prt u$. By Lemma~\ref{le:frenet-basis},
\begin{align*}
  \prt u
  & = \frac{1}{\la^2} \mcS \Bigl(\prs\hv +\vtha_s\R(Q+\hv) -\frac{\la_s}{\la}\vla(Q+\hv) \Bigr)                                                               \\
  & = \frac{1}{\la^2} \mcS \biggl\{ \biggl( \prs\wh\alpha -\frac{\la_s}{\la}\vla\wh\alpha -\vtha_s \z\wh\beta -\frac{\la_s}{\la}\lphi(1+\wh\gamma) \biggr)e_r \\
  & \qquad\qquad + \biggl( \prs\wh\beta -\frac{\la_s}{\la} \vla\wh\beta +\vtha_s \z\wh\alpha +\vtha_s\lphi(1+\wh\gamma) \biggr)e_\tau                         \\
  & \qquad\qquad + \biggl( \prs\wh\gamma -\frac{\la_s}{\la}\vla\wh\gamma +\frac{\la_s}{\la}\lphi\wh\alpha -\vtha_s\lphi\wh\beta \biggr)\, Q\biggr\}           \\
  & = \frac{1}{\la^2} \mcS \biggl\{(e_r, e_\tau, Q) \biggl(\prs\hw -\frac{\la_s}{\la}\vla\hw +\vtha_s\z\R\hw +(e_3+\hw)\times\hp\lphi\biggr)\biggr\},
\end{align*}
where we set the $\mbR^3$-vector
\[
  \hp = \begin{pmatrix}
    \vtha_s \\ \la_s/\la \\ 0
  \end{pmatrix}.
\]
Consider next the term $u\times\Delta u$. Again, from Lemma~\ref{le:frenet-basis},
\[
  \Delta Q = -\frac{\zt^2}{2}Q
  \quad\Longrightarrow\quad
  (Q+\hv)\times\Delta(Q+\hv)
  = (Q+\hv)\times\biggl(\Delta\hv+\frac{\zt^2}{2} \hv\biggr),
\]
where
\begin{align*}
  \Delta\hv+\frac{\zt^2}{2} \hv
  & = \biggl( \Delta\wh\alpha +\biggl( \frac{\zt^2}{2} -\frac{1}{r^2} \biggr)\wh\alpha +\zt\pr_r\wh\gamma \biggr) e_r                                                                    \\
  & \quad +\biggl( \Delta\wh\beta +\biggl( \frac{\zt^2}{2} -\frac{1}{r^2} \biggr)\wh\beta \biggr) e_\tau +\biggl( \Delta\wh\gamma -\zt\prr\wh\alpha -\frac{\z\zt}{r}\wh\alpha \biggr) Q.
\end{align*}
Observe that the leading linear parts in the $e_1, e_2$ coordinates are governed by $\Delta -\frac{1}{r^2}$. This motivates the definition of the linear Hamiltonian
\[
  \ham = -\Delta +\frac{V(r)}{r^2}
  \where
  V(r) = -r^2\biggl(\frac{\zt^2}{2} -\frac{1}{r^2}\biggr) = 2\vla\z +1.
\]
Note that for any $\mbR^3$-valued functions $\vv, \w$,
\[
  \Delta \vv_{\la} =\frac{1}{\la^2} (\Delta \vv)_{\la}, \quad
  \mcS \vv\times\mcS\w = \mcS (\vv\times\w).
\]
Therefore,
\[
  u\times\Delta u
  = \frac{1}{\la^2}\mcS \Bigl\{ (Q+\hv)\times\Delta(Q+\hv) \Bigr\}
  = -\frac{1}{\la^2}\mcS \Bigl\{ (e_r, e_\tau, Q) \Bigl((e_3+\hw)\times\mham\hw\Bigr) \Bigr\},
\]
where the vectorial Hamiltonian is defined by
\begin{equation}
  \label{eq:mham}
  \mham\hw = \begin{pmatrix*}[l]
    \ham\wh\alpha -\zt\pry\wh\gamma \\
    \ham\wh\beta                   \\
    -\Delta\wh\gamma +\zt\bigl(\pry+\tfrac{\z}{y}\bigr)\wh\alpha
  \end{pmatrix*},
\end{equation}
and the scalar Hamiltonian by
\begin{equation}
  \label{eq:ham}
  \ham = -\Delta +\frac{V(y)}{y^2},
  \quad
  V(y) = -y^2\biggl(\frac{\zt^2}{2} -\frac{1}{y^2}\biggr)
  =\frac{y^4-6y^2+1}{{(1+y^2)}^2}.
\end{equation}
In addition, for any vector-valued function $\w$, we define the corresponding complex structure
\begin{equation}
  \label{eq:hj}
  \hj[\w] =(e_3 +\w)\times.
\end{equation}
Then, gathering the above, we obtain the vectorial equation
\begin{equation}
  \label{eq:vect-form}
  \prs\hw -\frac{\la_s}{\la}\vla\hw +\vtha_s\z\R\hw +\hj[\hw](\mham\hw +\hp\lphi) =0.
\end{equation}

\begin{remark}
  In the presence of the self-similar transform~\eqref{eq:ss-trans}, the spatial variable in \eqref{eq:vect-form} is now $y=\frac{r}{\la}$ instead of $r$.

\end{remark}

\subsection{The linear Hamiltonian}
\label{sub:lin-ham}
We collect some properties of the linear Hamiltonian $\ham$, which are used in the following sections. These are taken from \citep{merle_blowup_2013}.

The operator $\ham$ admits the decomposition
\begin{equation}
  \label{eq:ham-factori}
  \ham = \as\oa,
\end{equation}
where the first-order linear operators are defined by
\[
  \oa=-\pry+\frac{\z}{y} = -\lphi\pry\biggl(\frac{\cdot}{\lphi}\biggr),
  \quad \as=\pry+\frac{1+\z}{y} = \frac{1}{y\lphi}\pry(y\lphi\,\cdot).
\]
Here $\z$ and $\lphi$ are given in~\eqref{eq:vla-phi-z}, and $\as$ is the adjoint of $\oa$.
For radial functions, the operators $\ham, \oa, \as$ are viewed as ODE operators on $\mbR_+$.
A direct computation gives
\[
  \ker \oa =\spann(\lphi)
  \andd
  \ker \as =\spann\biggl(\frac{1}{y\lphi}\biggr).
\]
Correspondingly,
\[
  \ker\ham =\spann(\lphi, \vg)
  \with
  \vg(y) := \lphi(y)\int_1^y\frac{\dd x}{x \lphi^2(x)}.
\]
The asymptotics are as follows:
\begin{equation}
  \begin{split}
    \label{eq:lphi}
    \lphi(y) & = \begin{cases}
      2y+O(y^3)                               & \mbox{as $y\to0$},       \\
      \frac{2}{y}+ O\bigl(\frac{1}{y^3}\bigr) & \mbox{as $y\to+\infty$},
    \end{cases}    \\
    \vg(y)   & = \begin{cases}
      -\frac{1}{4y}+ O(y\log y)                  & \mbox{as $y\to0$},       \\
      \frac{y}{4} +O\bigl(\frac{\log y}{y}\bigr) & \mbox{as $y\to+\infty$}.
    \end{cases}
  \end{split}
\end{equation}
The inhomogeneous equation $\ham u = f$ admits the following explicit solution, which can be expressed using the formal inverse of $\ham$ by
\begin{equation}
  \label{eq:inverse-ham}
  u = \ham^{-1} f
  := \lphi\int_0^y f(x)\vg(x) x\dd x - \vg\int_0^y f(x)\lphi(x) x\dd x.
\end{equation}
The iterated resonances of $\ham$ are defined by
\begin{equation}
  \label{eq:reson}
  T_k = \ham^{-k}\lphi \quad\mbox{for}\quad k\ge 1.
\end{equation}
In particular, for $k=1$, $T_1$ takes the form
\begin{equation}
  \label{eq:t1-asymp}
  T_1(y) =
  \begin{cases}
    -\frac{y^3}{4} +O(y^5)                         & \mbox{as $y\to0$},       \\
    -y\log y +y +O\bigl(\frac{(\log y)^2}{y}\bigr) & \mbox{as $y\to+\infty$}.
  \end{cases}
\end{equation}
The leading part $\mhamp$ of the vectorial Hamiltonian $\mham$ is defined by
\[
  \mhamp\begin{pmatrix}
    \alpha \\ \beta \\ \gamma
  \end{pmatrix}
  = \begin{pmatrix}
    \ham\alpha \\ \ham\beta \\ 0
  \end{pmatrix}.
\]
The vectorial iterated resonances associated with $\mhamp$ are defined by
\begin{equation}
  \label{eq:vec-reson}
  \phia[k] = T_k\R^{k-1}\eone, \quad
  \phib[k] = T_k\R^{k-1}\etwo \quad\mbox{for}\quad k\ge 1.
\end{equation}
Since $\mhamp$ is diagonal,
\begin{equation}
  \label{eq:mham-r}
  \R\mhamp = \mhamp\R
  \quad\mbox{and}\quad
  \R\mham\R = -\mhamp.
\end{equation}
Thus by \eqref{eq:reson},
\[
  \mhamp\phia[k]
  = \begin{cases}
    \R\phia[k-1], & k\ge 2, \\
    \lphi e_1,    & k=1,
  \end{cases}
  \quad\mbox{and}\quad
  \mhamp\phib[k]
  = \begin{cases}
    \R\phib[k-1], & k\ge 2, \\
    \lphi e_2,    & k=1.
  \end{cases}
\]
In addition, from \eqref{eq:ham-factori} we have $\mhamp = \mas\ma$, where
\[
  \ma\begin{pmatrix} \alpha\\ \beta\\ \gamma\end{pmatrix}
  = \begin{pmatrix} \oa\alpha\\ \oa\beta\\ 0 \end{pmatrix},
  \quad \mas\begin{pmatrix} \alpha\\ \beta\\ \gamma\end{pmatrix}
  = \begin{pmatrix} \as\alpha\\ \as\beta\\ 0 \end{pmatrix}.
\]

\section{The approximate solution}
\label{sec:app-sol}
In this section, we construct an approximate solution to \eqref{eq:vect-form}. We begin with a formal computation based on \eqref{eq:constra}, then introduce the admissible/homogeneous function spaces to characterize the approximate profiles and bound the error. Finally, we localize the profiles and study the ODE system satisfied by the geometric parameters.

\subsection{A formal computation}
\label{sub:formal-comp}
We first present a formal computation to motivate the construction of the approximate solution $\hw = \wz$.

Let $[s_0,s_1]$ be a rescaled time interval, and $b_{1}^{\ast} \in (0,1)$ be a sufficiently small constant to be fixed. Let
\[
  a = (a_k)_{k=1}^{L}, \;
  b = (b_k)_{k=1}^{L}: [s_0,s_1] \to\mbR^L
\]
be $C^1$ functions
satisfying the a priori bounds
\[
  0 <b_1 \le b_{1}^{\ast} \andd
  |a_k| +|b_k| \les b_1^k \for 1\le k\le L.
\]
We define
\begin{equation}
  \label{eq:cb}
  \cb = \frac{4}{\int\chi_{\frac{S_0}{4}}(x)\lphi^2(x)x\dd x} = \frac{2}{\lgb}(1+o_{b_1\to 0}(1)).
\end{equation}
Then for $1\le k\le L$, we set
\begin{equation}
  \label{eq:msf-ab}
  \begin{aligned}
    A_k & = (2k-1+\cb)b_1a_k -a_1b_k -a_{k+1}, \\
    B_k & = (2k-1+\cb)b_1b_k +a_1a_k -b_{k+1},
  \end{aligned}
\end{equation}
where the convention $a_{L+1} = b_{L+1} = 0$ is used. We look for an approximate solution of the form
\begin{equation}
  \label{eq:wz-form}
  \wz = \vp +\x +\gamma_0 e_3,
  \where
  \vp = \sum_{k=1}^L \bigl(a_k\phia[k] +b_k\phib[k]\bigr).
\end{equation}
The functions $\x$ and $\gamma_0$ serve as corrections for the profile $\vp$, where $\x$ accounts for the $e_1,e_2$-components (we let $\x \cdot e_3 = 0$), and $\gamma_0$ accounts for the $e_3$-component. Note that we denote by $(\alpha_0,\beta_0,\gamma_0)$ the components of $\wz$ in the Frenet basis, and that we use the notation $\wzp = \vp +\x$.

Because of the constraint~\eqref{eq:constra}, the functions $\x$ and $\gamma_0$ should be chosen so that $e_3 + \wz$ lies approximately on $\mbS^2$. We define the corresponding $\rrr$ by
\begin{equation}
  \label{def:rrr}
  |\wzp|^2 +|1+\gamma_0|^2 = 1 + 2\rrr.
\end{equation}
Then the following holds.

\begin{lemma}[Formal computation]
  \label{le:form-comp}
  For any vector-valued function $\x=\x(a,b,y)$ satisfying $\x\cdot e_3=0$, there exists a scalar function $\gamma_0 = \gamma_0(\vp, \x)$ such that the approximate solution $\wz$ defined by~\eqref{eq:wz-form} satisfies
  \[
    \prs\wz -\frac{\la_s}{\la}\vla\wz +\vtha_s\z\R\wz +\hj[\wz](\mham\wz +\hp\lphi)
    = \mod +\psiz,
  \]
  where $\mod$ is the modulation vector defined by
  \begin{align*}
    \mod
    & = \sum_{k=1}^L (\prs a_k +A_k)\frac{\pr\wz}{\pr a_k}
    +\sum_{k=1}^L (\prs b_k +B_k)\frac{\pr\wz}{\pr b_k}                              \\
    & \quad +(\vtha_s +a_1)(\lphi(e_2-e_1\times\wz) +\z\R\wz)                       \\
    & \quad -\Bigl(\frac{\la_s}{\la} +b_1\Bigr) (\lphi(e_1+e_2\times\wz) +\vla\wz).
  \end{align*}
  Moreover, the error decomposes as $\psiz = \psizone + \psiztwo$ with
  \begin{align*}
    \psizone
    & = -\hj\R\si_{L+1}\Bigl\{ b_1\mcT\wzp +a_1\mcP\x -a_1(1+\z)\R\vp -\gamma_0\mcQ\vp +\mcQ\x \Bigr\}  \notag              \\
    & \quad -\hj\Bigl\{\zt\pry\gamma_0 e_1
    +\si_{L+1} \Bigl(\zt\Bigl(\pry+\tfrac{\z}{y}\Bigr)\alpha_0 -\Delta\gamma_0\Bigr)\wzp \Bigr\} + \hj\mhamp\x, \notag       \\
    \psiztwo
    & = \si_{L+1}(b_1\mcT\rrr +a_1\mcP\rrr +\mcQ\rrr) e_3   \notag                                                          \\
    & \quad +{(-\gamma_0)}^{L+2} \Bigl\{ b_1\mcT\wz +a_1\mcP\wz  -\gamma_0\mcQ\wz -\hj\R\mcQ\x +\mcQ\rrr e_3 \Bigr\} \notag \\
    & \quad +{(-\gamma_0)}^{L+2} \hj\Bigl(\zt\Bigl(\pry+\tfrac{\z}{y}\Bigr)\alpha_0 -\Delta\gamma_0\Bigr)e_3,
  \end{align*}
  and $\mcT$, $\mcP$, $\mcQ$ are linear operators defined by
  \begin{align*}
    \mcT & = \vla -\sum_{k=1}^L (2k-1+\cb)\biggl(a_k\frac{\pr}{\pr a_k} +b_k\frac{\pr}{\pr b_k}\biggr), \\
    \mcP & = -\z\R +\sum_{k=1}^L\biggl(b_k\frac{\pr}{\pr a_k} -a_k\frac{\pr}{\pr b_k}\biggr),\quad
    \mcQ  = \sum_{k=1}^L\biggl(a_{k+1}\frac{\pr}{\pr a_k} +b_{k+1}\frac{\pr}{\pr b_k}\biggr).
  \end{align*}
\end{lemma}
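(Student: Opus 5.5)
This is a direct algebraic computation, organized around how each operator acts on $\vp$ and on the constraint. We substitute $\wz=\vp+\x+\gamma_0e_3$ into the left-hand side of \eqref{eq:vect-form} and write
\[
  \frac{\la_s}{\la}=-b_1+\Bigl(\frac{\la_s}{\la}+b_1\Bigr), \qquad
  \vtha_s=-a_1+(\vtha_s+a_1).
\]
We also write $\prs\wz=\sum_k(\prs a_k\,\pr_{a_k}\wz+\prs b_k\,\pr_{b_k}\wz)$ and add and subtract $A_k,B_k$.

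\textbf{Collecting $\mod$.} The terms carrying $\prs a_k+A_k$, $\prs b_k+B_k$, $\vtha_s+a_1$ and $\la_s/\la+b_1$ assemble into $\mod$. For the last two, we use that $\hj[\wz]\hp\lphi=(e_3+\wz)\times\hp\lphi$, together with $e_3\times e_1=e_2$ and $e_3\times e_2=-e_1$.

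\textbf{The remainder.} What is left, with $-A_k,-B_k$ replacing the time derivatives and $(-a_1,-b_1)$ replacing $(\vtha_s,\la_s/\la)$, is
\[
  b_1\mcT\wz+a_1\mcP\wz-\mcQ\wz+\hj[\wz]\bigl(\mham\wz-b_1\lphi e_2\ \text{etc.}\bigr).
\]
The operators $\mcT,\mcP,\mcQ$ are exactly the bookkeeping of $-\sum_k(A_k\pr_{a_k}+B_k\pr_{b_k})$ plus the $\vla$ and $\z\R$ terms. Here we use the identities for the $a_1b_k$ and $a_1a_k$ pieces of $A_k,B_k$, which assemble into $\sum_k(b_k\pr_{a_k}-a_k\pr_{b_k})$.

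\textbf{Resonance cancellation.} We expand $\mham\wz=\mhamp\vp+\mhamp\x+(\text{cross terms in }\gamma_0,\alpha_0)$. By \eqref{eq:vec-reson},
\[
  \mhamp\vp=\sum_k\bigl(a_k\R\phia[k-1]+b_k\R\phib[k-1]\bigr)+a_1\lphi e_1+b_1\lphi e_2,
\]
which equals $\R\mcQ\vp+a_1\lphi e_1+b_1\lphi e_2$, since $\mcQ\phia[k]$-type shifts reproduce index lowering. The $\lphi e_{1,2}$ pieces cancel against $\hp\lphi$. The remaining $\R\mcQ\vp$ should combine with the $\e_3\times$ part of the time-derivative terms. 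The key observation is that $b_1\mcT\vp+a_1\mcP\vp-\mcQ\vp$ equals $-\hj\R(\ldots)$ up to the correction coming from $\hj=e_3\times+\wz\times$. Since $e_3\times=\R$ on the horizontal components and $\R^2=-\mathrm{Id}$ there, the horizontal part of the remainder is formally $\R^{-1}=-\R$ applied to itself.

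\textbf{Role of $\gamma_0$ and the truncated series.} The nonlinear correction $\wz\times$ produces the factor $(1+\gamma_0)$ in front of the horizontal terms. This is the reason for the truncated geometric series
\[
  \si_{L+1}=\sum_{j=0}^{L+1}(-\gamma_0)^j, \qquad (1+\gamma_0)\si_{L+1}=1-(-\gamma_0)^{L+2}.
\]
Using it, we rewrite $X=(1+\gamma_0)\si_{L+1}X+(-\gamma_0)^{L+2}X$ for each residual term $X$ in $b_1\mcT\wz+a_1\mcP\wz-\mcQ\wz$. The factor $(1+\gamma_0)$ is then absorbed as the $e_3$-component of $\hj$ acting via $\R$. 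This produces the $-\hj\R\si_{L+1}\{\cdots\}$ term in $\psizone$, while the $(-\gamma_0)^{L+2}$ remainders go into $\psiztwo$.

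\textbf{Vertical component and the constraint.} The $e_3$ components are handled by differentiating the constraint \eqref{def:rrr}. Applying $\mcT,\mcP,\mcQ$, which are first-order derivations in $(a,b,y)$ up to the $\vla$ and $\R$ parts, gives
\[
  \wzp\cdot D\wzp+(1+\gamma_0)D\gamma_0=D\rrr \qquad \text{for } D\in\{\mcT,\mcP,\mcQ\}.
\]
Solving for $D\gamma_0$, again with $\si_{L+1}$, yields the terms $\si_{L+1}(b_1\mcT\rrr+\cdots)e_3$. We also use $\wzp\cdot\mcP\wzp$, where $\R$ is antisymmetric. The cross terms $\zt\pry\gamma_0e_1$ and $\zt(\pry+\z/y)\alpha_0-\Delta\gamma_0$ of $\mham$ are kept explicitly.

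\textbf{Choice of $\gamma_0$.} We may take $\gamma_0=\sqrt{1-|\wzp|^2}-1$, or any choice consistent with the statement; the identity holds for arbitrary $\gamma_0$ once $\rrr$ is defined by \eqref{def:rrr}. So existence is trivial, and the content lies in the decomposition.

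\textbf{Main obstacle.} The hardest step is the precise bookkeeping of the cross products $\wz\times(\cdot)$ against the $\R$-structure: verifying that every non-cancelled term lands in exactly one of the listed groups with the right signs. In particular, the terms $-a_1(1+\z)\R\vp$ and $-\gamma_0\mcQ\vp$, and the identification of the vertical parts via the constraint, need care. I would do this by computing horizontal and vertical projections separately, using the identity $w\times v=\R$-type relations for horizontal vectors and $e_3\times e_3=0$.
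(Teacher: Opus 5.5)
Your route is the paper's: split off the modulation terms, use $\mhamp\vp=a_1\lphi e_1+b_1\lphi e_2+\R\mcQ\vp$, and invert the factor $1+\gamma_0$ coming from $-\hj\R v=(1+\gamma_0)v-(\wzp\cdot v)e_3$ with the truncated series $\si_{L+1}$. Vertical components are then handled through the differentiated constraint \eqref{def:rrr}. Deriving $v=-\hj\R\si_{L+1}v+\si_{L+1}(\wzp\cdot v)e_3+(-\gamma_0)^{L+2}v$ in one line from $(1+\gamma_0)\si_{L+1}=1-(-\gamma_0)^{L+2}$ is a fine substitute for the paper's induction. You are also right that the algebra never uses the form of $\gamma_0$, so existence is trivial. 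The paper's choice $\gamma_0=f_{L+1}(|\wzp|^2)$ matters only later, where $\gamma_0$ must be a polynomial in $\wzp$ with $\rrr=O(|\wzp|^{2(L+2)})$; the non-polynomial $\sqrt{1-|\wzp|^2}-1$ would not serve there.

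As written, however, the computation does not close, because of a sign error exactly where the resonance structure enters. Since $-A_k$ contains $+a_{k+1}$ and $-B_k$ contains $+b_{k+1}$, the non-modulation remainder of $\prs\wz-\frac{\la_s}{\la}\vla\wz+\vtha_s\z\R\wz$ is $b_1\mcT\wz+a_1\mcP\wz+\mcQ\wz$, not $-\mcQ\wz$. The $\mcQ\vp$ part is not treated like the $\mcT,\mcP$ terms. It cancels at leading order against the $\hj\R\mcQ\vp$ coming from $\hj\mhamp\vp$, via the exact identity $v+\hj\R v=-\gamma_0v+(\wzp\cdot v)e_3$ for horizontal $v$. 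This identity produces $-\gamma_0\mcQ\vp$ inside $\psizone$. After combining the vertical parts with $\mcQ\gamma_0e_3$, it also produces the factor $\si_{L+2}=\si_{L+1}+(-\gamma_0)^{L+2}$ in front of $\mcQ\rrr e_3$, which is why that term appears in both lines of $\psiztwo$. With your sign you get $-(2+\gamma_0)\mcQ\vp$ instead. That leaves an extra term $2\hj\R\si_{L+1}\mcQ\vp$ of parameter degree two (nonzero once $L\ge2$), and the stated formula fails.

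Two rewritings required by the statement are also missing. First, the identity $\R\vp+\sum_k(b_k\pr_{a_k}-a_k\pr_{b_k})\vp=0$ turns $a_1\mcP\wzp$ into $a_1\mcP\x-a_1(1+\z)\R\vp$. Second, the vertical forcing $\hj(D_0e_3)$, with $D_0=\zt(\pry+\z/y)\alpha_0-\Delta\gamma_0$, cannot simply be kept explicitly. It must be converted using $\hj\wzp=-(1+\gamma_0)\hj e_3$, that is, $\hj e_3=-\si_{L+1}\hj\wzp+(-\gamma_0)^{L+2}\hj e_3$. This is the source of $-\hj\si_{L+1}D_0\wzp$ in $\psizone$ and of the last line of $\psiztwo$.
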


\begin{remark}
  \label{re:psiz-explain}
  (i) The term $\psizone$ will be  canceled, up to terms of sufficiently high parameter degree, by a suitable choice of $\x$. The term $\psiztwo$ is controllable, since each term in $\psiztwo$ involves either $\rrr$ or $(-\gamma_0)^{L+2}$. \\
  (ii) The component $\gamma_0$ is explicitly given in Step~2 of the following proof, but the choice of $\gamma_0$ is not unique. Indeed, any Taylor polynomial for $-1+\sqrt{1-|\cdot|^2}$ of sufficiently high degree also serves the purpose.

\end{remark}

\begin{proof}
  We first derive $\mod$ (and hence $\psiz$) for a general $\wz$, then choose a suitable $\gamma_0$, as explained previously, and finally use the complex structure $\hj[\wz]$ to obtain the explicit decomposition of $\psiz$. For brevity, we write $\hj=\hj[\wz]$ throughout this proof.

  \step[1, Deriving $\mod$]
  From \eqref{eq:msf-ab} we compute
  \begin{align*}
    \prs\wz -\frac{\la_s}{\la}\vla\wz +\vtha_s\z\R\wz
    & = \sum_{k=1}^L (\prs a_k +A_k)\frac{\pr\wz}{\pr a_k} +\sum_{k=1}^L (\prs b_k +B_k)\frac{\pr\wz}{\pr b_k} \\
    & \quad -\Bigl(\frac{\la_s}{\la} + b_1\Bigr)\vla\wz +(\vtha_s + a_1)\z\R\wz                                \\
    & \quad +b_1\mcT\wz +a_1\mcP\wz +\mcQ\wz,
  \end{align*}
  where the first and second lines correspond to $\mod$, and the last line belongs to $\psiz$. Recalling the definition of $\mham$ and $\hj$, and using the facts that $\mhamp\phia[k+1] = \R\phia[k]$ and $\mhamp\phib[k+1] = \R\phib[k]$, we have
  \begin{align*}
    \hj(\mham\wz +\hp\lphi)
    & = (\vtha_s+a_1)\lphi(e_2 -e_1\times\wz) -\Bigl(\frac{\la_s}{\la}+b_1\Bigr)\lphi(e_1 +e_2\times\wz)            \\
    & \quad +\hj \biggl(\mhamp\x -\Delta\gamma_0 e_3 -\zt e_2\times\pry\wz +\frac{\z\zt}{y}(\wz\cdot e_1)e_3\biggr) \\
    & \quad +\hj \sum_{k=1}^L\R(a_{k+1}\phia[k] +b_{k+1}\phib[k]),
  \end{align*}
  where the first line also corresponds to $\mod$, and the remaining lines correspond to $\psiz$. Collecting these identities, we obtain the desired $\mod$, and also
  \begin{equation}
    \begin{split}
      \label{pf:form-comp=psiz-formal}
      \psiz
      & = b_1\mcT\wz + a_1\mcP\wz + \mcQ\wz +\hj\R\mcQ \vp                                                                 \\
      & \quad +\hj\biggl( -\Delta\gamma_0 e_3 -\zt e_2\times\pry\wz +\frac{\z\zt}{y}(\wz\cdot e_1)e_3\biggr) +\hj\mhamp\x.
    \end{split}
  \end{equation}

  \step[2, Choice of $\gamma_0$]
  Let $f(x) = -1 + \sqrt{1-x}$. Denote by $f_n$ its Taylor polynomial of degree $n$ at the origin:
  \begin{equation}
    \label{pf:form-comp=f}
    f_n(x) = \sum_{k=1}^n c_k x^{k},
    \where c_k = (-1)^k\binom{1/2}{k}.
  \end{equation}
  We choose $\gamma_0$ to be
  \begin{equation}
    \label{pf:form-comp=gamma}
    \gamma_0 = f_{L+1}(|\wzp|^2) = \sum_{k=1}^{L+1} c_k |\wzp|^{2k}.
  \end{equation}
  In view of~\eqref{def:rrr}, this specifies $\rrr$.
  We claim that there exist constants $(d_k)_{k=L+2}^{2(L+1)}$ depending on $c_1,\dots,c_{L+1}$, such that
  \begin{equation}
    \label{pf:form-comp=rrr-expr}
    \rrr = \sum_{k=L+2}^{2(L+1)} d_k |\wzp|^{2k}.
  \end{equation}
  Indeed, on the one hand, we have that $\rrr$ is a polynomial in $|\wzp|^2$ of degree at most $2(L+1)$, since $f_{L+1}(|\wzp|^2)$ is a polynomial in $|\wzp|^2$ of degree $(L+1)$. On the other hand, if $\|\wzp\|_{L^{\infty}} \ll 1$, then by the Taylor series of $f$ at the origin,
  \begin{align*}
    |\rrr|
    & = \frac12 \Bigl| |\wzp|^2 +|1+f_{L+1}(|\wzp|^2)|^2 -1 \Bigr|        \\
    & = \frac12 \Bigl| (1+f_{L+1}(|\wzp|^2))^2 - (1+f(|\wzp|^2))^2 \Bigr| \\
    & \les |f_{L+1}(|\wzp|^2) - f(|\wzp|^2)|                              \\
    & \les |\wzp|^{2(L+2)}.
  \end{align*}
  Thus $\rrr$ has vanishing coefficients for monomials of degree less than $L+2$. This shows \eqref{pf:form-comp=rrr-expr} if $\|\wzp\|_{L^{\infty}} \ll 1$ holds. Since \eqref{pf:form-comp=rrr-expr} is merely an algebraic identity, it holds independently of the extra condition $\|\wzp\|_{L^{\infty}} \ll 1$.

  \step[3, Identities related to $\hj$]
  We denote
  \begin{equation}
    \label{pf:form-comp=si}
    \si_k = \sum_{i=0}^k (-\gamma_0)^i
    \for
    k\in\mbN.
  \end{equation}
  Then for all $k\in\mbN$ and any vector $v\in\mbR^3$ with $v\cdot e_3=0$, the following holds:
  \begin{align}
    v          & = -\hj\R\si_{k}v +\si_{k}(\wzp\cdot v)e_3 +(-\gamma_0)^{k+1}v,
    \label{pf:form-comp=hj-claim-1}                                                        \\
    v +\hj\R v & = \gamma_0\si_{k}\hj\R v +\si_{k+1} (\wzp\cdot v)e_3 +(-\gamma_0)^{k+2}v,
    \label{pf:form-comp=hj-claim-2}                                                        \\
    \hj e_3    & = -\si_{k}\hj\wzp +(-\gamma_0)^{k+1}\hj e_3.
    \label{pf:form-comp=hj-claim-3}
  \end{align}
  These are direct consequences of the complex structure $\hj$. We prove them by induction.
  By the double wedge formula,
  \[
    \hj(-\R v)
    = -(e_3 +\wz)\times(e_3\times v)
    = v -(\wzp\cdot v)e_3 +\gamma_0 v,
  \]
  which is the initial case of \eqref{pf:form-comp=hj-claim-1}. Then by induction, the result follows
  \begin{align*}
    v
    & = \hj(-\R\si_{k}v) +\si_{k}(\wzp\cdot v)e_3 +(-\gamma_0)^{k+1}(\hj(-\R v) +(\wzp\cdot v)e_3 -\gamma_0 v) \\
    & = \hj(-\R\si_{k+1}v) +\si_{k+1}(\wzp\cdot v)e_3 +(-\gamma_0)^{k+2}v.
  \end{align*}
  The identity~\eqref{pf:form-comp=hj-claim-2} follows by replacing $k$ with $k+1$ in \eqref{pf:form-comp=hj-claim-1} and using $\si_{k+1}=1+(-\gamma_0)\si_k$. To show~\eqref{pf:form-comp=hj-claim-3}, a direct computation gives the case $k=0$:
  \[
    \hj e_3
    = (e_3+\wz)\times e_3
    = -(e_3+\wz)\times\wz
    = -\hj\wz = -\hj\wzp +(-\gamma_0)\hj e_3,
  \]
  and then by induction, we obtain
  \begin{align*}
    \hj e_3
    & = -\si_{k}\hj\wzp +(-\gamma_0)^{k+1}(-\hj\wzp +(-\gamma_0)\hj e_3) \\
    & = -\si_{k+1}\hj\wzp +(-\gamma_0)^{k+2}\hj e_3.
  \end{align*}
  \step[4, Computing $\psiz$]
  From \eqref{def:rrr} and $(1+\gamma_0)\si_{L+1} = 1-(-\gamma_0)^{L+2}$ we have
  \begin{align*}
    \si_{L+1}(\wzp\cdot\vla\wzp)
    & = \si_{L+1} \vla\biggl(\frac{|\wzp|^2}{2}\biggr)                     \\
    & = \si_{L+1} \vla\biggl(\frac12+\rrr -\frac{|1+\gamma_0|^2}{2}\biggr) \\
    & = \si_{L+1}\vla\rrr - \vla\gamma_0 + (-\gamma_0)^{L+2}\vla\gamma_0.
  \end{align*}
  Similarly, for $1\le k\le L$,
  \begin{equation}
    \begin{split}
      \label{pf:form-comp=wzp-dot}
      \si_{L+1}\biggl(\wzp\cdot \frac{\pr\wzp}{\pr a_k}\biggr)
      & = \si_{L+1}\frac{\pr\rrr}{\pr a_k} -\frac{\pr\gamma_0}{\pr a_k} +(-\gamma_0)^{L+2}\frac{\pr\gamma_0}{\pr a_k}, \\
      \si_{L+1}\biggl(\wzp\cdot \frac{\pr\wzp}{\pr b_k}\biggr)
      & = \si_{L+1}\frac{\pr\rrr}{\pr b_k} -\frac{\pr\gamma_0}{\pr b_k} +(-\gamma_0)^{L+2}\frac{\pr\gamma_0}{\pr b_k}.
    \end{split}
  \end{equation}
  Therefore from \eqref{pf:form-comp=hj-claim-1},
  \begin{align*}
    \vla\wz
    & = \hj(-\R\si_{L+1} \vla\wzp) +\si_{L+1}(\wzp\cdot \vla\wzp)e_3 +(-\gamma_0)^{L+2}\vla\wzp +\vla\gamma_0 e_3 \\
    & = \hj(-\R\si_{L+1}\vla\wzp) +\si_{L+1}\vla\rrr e_3 +(-\gamma_0)^{L+2}\vla\wz.
  \end{align*}
  Similarly,
  \begin{align*}
    \frac{\pr\wz}{\pr a_k}
    & = \hj\biggl(-\R\si_{L+1}\frac{\pr\wzp}{\pr a_k}\biggr) +\si_{L+1}\frac{\pr\rrr}{\pr a_k} e_3 +(-\gamma_0)^{L+2}\frac{\pr\wz}{\pr a_k}, \\
    \frac{\pr\wz}{\pr b_k}
    & = \hj\biggl(-\R\si_{L+1}\frac{\pr\wzp}{\pr b_k}\biggr) +\si_{L+1}\frac{\pr\rrr}{\pr b_k} e_3 +(-\gamma_0)^{L+2}\frac{\pr\wz}{\pr b_k}.
  \end{align*}
  We apply these to compute each term of~\eqref{pf:form-comp=psiz-formal}.

  First, for the $\mcT$ term, a direct application of the above gives
  \[
    \mcT\wz
    = \hj( -\R\si_{L+1}\mcT\wzp )
    +\si_{L+1}\mcT\rrr e_3 +(-\gamma_0)^{L+2}\mcT\wz.
  \]
  Then, for the $\mcP$ term, using \eqref{pf:form-comp=hj-claim-1} again,
  \begin{align*}
    \mcP\wz
    & = \mcP\wzp +\mcP\gamma_0 e_3                                                                                \\
    & = \hj(-\R\si_{L+1}\mcP\wzp) +\si_{L+1}(\wzp\cdot\mcP\wzp)e_3 +(-\gamma_0)^{L+2}\mcP\wzp + \mcP\gamma_0 e_3.
  \end{align*}
  Note that from the definition of $\Phi$,
  \[
    \R\vp + \sum_{k=1}^{L}\biggl(b_k\frac{\pr\vp}{\pr a_k} - a_k\frac{\pr\vp}{\pr b_k}\biggr) = 0
    \quad\Longrightarrow\quad
    \mcP\wzp = -(1+\z)\R\vp + \mcP\x,
  \]
  and in addition, the term $\si_{L+1}(\wzp\cdot\mcP\wzp)$ is computed by using~\eqref{def:rrr} in the same fashion as \eqref{pf:form-comp=wzp-dot}, which yields
  \[
    \mcP\wz
    = \hj\bigl(-\R\si_{L+1}\bigl(-(1+\z)\R\vp + \mcP\x\bigr)\bigr) +\si_{L+1}\mcP\rrr e_3 +(-\gamma_0)^{L+2}\mcP\wz.
  \]
  Next, for the $\mcQ$ terms, by definition,
  \[
    \mcQ\wz +\hj\R\mcQ\vp = \sum_{k=1}^{L} a_{k+1}\biggl(\frac{\pr\wz}{\pr a_k} +\hj\R\phia[k]\biggr) + \sum_{k=1}^{L} b_{k+1}\biggl(\frac{\pr\wz}{\pr b_k} +\hj\R\phib[k]\biggr).
  \]
  We compute the first parenthesis; the second is treated analogously. From the definition of $\wz$,
  \[
    \frac{\pr\wz}{\pr a_k} +\hj\R\phia[k] = \phia[k] +\hj\R\phia[k] +\frac{\pr\x}{\pr a_k} + \frac{\pr\gamma_0}{\pr a_k}e_3.
  \]
  Applying~\eqref{pf:form-comp=hj-claim-2} to the terms involving $\phia[k]$ and~\eqref{pf:form-comp=hj-claim-1} to the $\x$ term, we obtain the combination
  \[
    \si_{L+2} (\wzp\cdot\phia[k]) + \si_{L+2}\biggl(\wzp\cdot\frac{\pr\x}{\pr a_k}\biggr)
    = \si_{L+2} \biggl(\wzp\cdot\frac{\pr\wzp}{\pr a_k}\biggr),
  \]
  to which we apply \eqref{pf:form-comp=wzp-dot} again, and then
  \begin{align*}
    \frac{\pr\wz}{\pr a_k} +\hj\R\phia[k]
    & = \gamma_0\si_{L+1}\hj\R\phia[k] -\si_{L+1}\hj\R\frac{\pr\x}{\pr a_k} +\si_{L+2}\frac{\pr\rrr}{\pr a_k}e_3 \\
    & \quad -(-\gamma_0)^{L+2}\hj\R\frac{\pr\x}{\pr a_k} +(-\gamma_0)^{L+3}\frac{\pr\wz}{\pr a_k}.
  \end{align*}
  Thus we have
  \begin{align*}
    \mcQ\wz +\hj\R\mcQ\vp
    & = \hj\bigl(-\R\si_{L+1}(-\gamma_0\mcQ\vp +\mcQ\x)\bigr) +\si_{L+2}\mcQ\rrr e_3 \\
    & \quad -{(-\gamma_0)}^{L+2}(\hj\R\mcQ\x +\gamma_0\mcQ\wz).
  \end{align*}
  Finally, for the second line of~\eqref{pf:form-comp=psiz-formal}, using \eqref{pf:form-comp=hj-claim-3},
  \begin{align*}
    & \quad \hj\biggl(-\Delta\gamma_0 e_3 -\zt e_2\times\pry\wz +\frac{\z\zt}{y}(\wz\cdot e_1)e_3 \biggr)                             \\
    & = \hj\biggl\{ -\zt\pry\gamma_0 e_1 -\si_{L+1}\Bigl(\zt\Bigl(\pry+\tfrac{\z}{y}\Bigr)\alpha_0 -\Delta\gamma_0\Bigr)\wzp \biggr\} \\
    & \quad +{(-\gamma_0)}^{L+2} \hj\Bigl(\zt\Bigl(\pry+\tfrac{\z}{y}\Bigr)\alpha_0 -\Delta\gamma_0\Bigr)e_3.
  \end{align*}
  In particular, the rotational contribution inside the first braces of
  $\psizone$ is $-a_1(1+\z)\R\vp$. Also, on setting
  $D_0=\zt(\pry+\z/y)\alpha_0-\Delta\gamma_0$, the geometric contribution is
  $-\hj\{\zt\pry\gamma_0 e_1+\si_{L+1}D_0\wzp\}$, with a plus sign
  inside the braces. Both signs follow directly from the identities above.
  \par
  Gathering these identities, we obtain the desired decomposition $\psiz = \psizone + \psiztwo$. This completes the proof.
\end{proof}

\begin{remark}
  \label{re:mhamp-x-preview}
  Observe in $\psiz$ that the expressions enclosed in braces constitute an elliptic equation of the form $\mhamp\x = f$, where $f$ is the source term given by products of the profiles in $\wz$. Thus $\psizone$ can be controlled if we let $\x$ be the solution of this equation. This is rigorously treated in Subsection~\ref{sub:app-prof}.

\end{remark}

\subsection{Admissible functions}
\label{sub:adm-func}
In this subsection, we introduce some function spaces needed to estimate $\psizone$ and $\psiztwo$: the admissible functions\footnote{Here, the subscript $b$ stands for the dependence on the variable $b_1$, and the subscript $g$ means ``generalized''} $\mcA$, $\mcAb$, and $\mcAg$. This is based on~\citep[Section 2]{raphael_quantized_2014}.

For any smooth scalar function $f(y)$, we define its suitable derivatives by
\begin{equation}
  \label{eq:adb-seq}
  f_{(0)} = f, \andd
  f\sd[k+1] =
  \begin{cases}
    \as f\sd[k] & \mbox{for $k$ odd},  \\
    \oa f\sd[k] & \mbox{for $k$ even},
  \end{cases}
  \quad k\ge 0.
\end{equation}
This notion also applies to vector-valued functions, where the suitable derivatives are defined in a componentwise manner.\footnote{Throughout this subsection, function classes are defined explicitly for scalar functions, but their vector-valued counterparts are also valid, by applying the definitions componentwise.}

\begin{definition}[Function space $\mcA$]
  \label{def:adb}
  Given $p_1, p_2\in\mbZ$, we define the function space $\mcA(p_1,p_2)$ as the set of all analytic functions $f\in C^{\infty}(\mbR_+;\mbR)$ such that
  \begin{enumerate}[label={\rm (\roman*)}]
    \item \label{it:def-adb=1}
      for $y\le 1$, $f$ admits a Taylor expansion at the origin to all orders:
      \[
        f(y) = y^{p_1}\Biggl(\sum_{k=0}^{\ell} c_k y^{2k} + O(y^{2\ell+2})\Biggr);
      \]
    \item \label{it:def-adb=2}
      for $y\ge1$, the suitable derivatives of $f$ satisfy, for all $k\in\mbN$,
      \[
        |f\sd[k](y)|\les
        \begin{cases}
          y^{p_2-k}\jbk[\log y]
          & \mbox{for $p_2-k\ge 0$}, \\
          y^{p_2-k} & \mbox{for $p_2-k < 0$}.
        \end{cases}
      \]
  \end{enumerate}
\end{definition}

Note that the collection of all scalar (or vector-valued) $\mcA$-functions of a certain degree forms a linear space.
The construction of the approximate profiles requires a variant of $\mcA$ with slower growth at infinity. For $j\in\mbZ$, we first introduce the following auxiliary functions:
\[
  \left\{\begin{aligned}
    F_j(y)       & = \bigl(\jbk[\log(\sqrt{b_1}y)] \one_{j\ge 0} + \one_{j<0}\bigr)\one_{y\le 2S_0}, \\
    \bar{F}_j(y) & = \frac{\jbk[\log y]}{\lgb} \one_{y\le 2S_0},
  \end{aligned}\right.
  \andd
  K_j(y) = \frac{\one_{j\ge 2}\one_{y\ge 2S_0}}{y^2 b_1}.
\]

\begin{definition}[Function space $\mcAb$]
  \label{def:badb}
  Given $p_1, p_2\in\mbZ$, we define the function space $\mcAb(p_1,p_2)$ as the set of all functions $f\in C^\infty(\mbR_+\times\mbR_+;\mbR)$ such that
  \begin{enumerate}[label={\rm (\roman*)}]
    \item \label{it:def-badb=1}
      for $y\le 1$, there exists a constant $I\in\mbN_+$ such that $f = \sum_{i=1}^{I}\wh{\delta}_{i}\wh{f}_{i}$ for some $\wh{f}_{i}(y)$ of the form specified in Definition~\itref{def:adb}{it:def-adb=1}, with size $y^{p_1}$, and some $\wh{\delta}_{i}(b_1)$ with
      \[
        |\wh{\delta}_{i}| \les 1
        \andd
        \bigg|\frac{\pr^{\ell} \wh{\delta}_{i}}{\pr b_1^{\ell}}\bigg| \les \frac{1}{b_1^{\ell}\lgb}
        \for \ell\ge 1;
      \]
    \item \label{it:def-badb=2}
      for $y\ge1$, there exists a constant $c_{p_2}\in\mbN$ such that the suitable derivatives of $f$ satisfy the following bounds: for $k\in\mbN$,
      \[
        |f\sd[k]|
        \les \frac{y^{p_2-k}}{\lgb}(F_{p_2-k} + K_{p_2-k}) +y^{p_2-k-2}\jbk[\log y]^{c_{p_2}},
      \]
      and for $\ell\ge 1$,
      \[
        \bigg|\frac{\pr^\ell f\sd[k]}{\pr b_1^\ell}\bigg|
        \les \frac{y^{p_2-k}}{b_1^\ell\lgb} (\bar{F}_{p_2-k} + K_{p_2-k}) + \frac{y^{p_2-k-2}\jbk[\log y]^{c_{p_2}}}{b_1^\ell\lgb}.
      \]
  \end{enumerate}
\end{definition}

\begin{remark}
  (i) Compared with $\mcA$, the functions in $\mcAb$ admit better spatial decay as $y\to\infty$, thanks to the extra $\lgb$ in the denominator. \\
  (ii) If $f\in\mcAb(p_1,p_2)$, then the following rough bounds hold for $y\ge1$:
  \begin{equation}
    \label{eq:badb-rough-bnd}
    |f\sd[k]| \les \jbk[y]^{p_2-k},
    \quad
    \bigg|\frac{\pr^\ell f\sd[k]}{\pr b_{1}^\ell}\bigg|
    \les \frac{\jbk[y]^{p_2-k}}{b_1^{\ell}\lgb},
    \quad k\ge 0, \; \ell\ge 1.
  \end{equation}
\end{remark}

We further introduce the following generalized admissible class $\mcAg$.

\begin{definition}[Function space $\mcAg$]
  \label{def:ag}
  Given $p_1, p_2\in\mbZ$, we define the function space $\mcAg(p_1,p_2)$ as the set of all functions $f\in C^{\infty}(\mbR_+\times\mbR_+;\mbR)$ such that
  \begin{enumerate}[label={\rm (\roman*)}]
    \item \label{it:def-ag=1}
      for $y\le 1$, the function $f$ admits a series expansion of the form specified in Definition~\itref{def:badb}{it:def-badb=1}.
    \item \label{it:def-ag=2}
      for $y\ge 1$, there exists $c_{p_2}\in\mbN$ such that the suitable derivatives of $f$ satisfy the following bounds: for $k\in\mbN$,
      \[
        |f\sd[k]| \les y^{p_2-k}\jbk[\log y]^{c_{p_2}},
      \]
      and for $\ell \ge 1$,
      \[
        \bigg|\frac{\pr^\ell f\sd[k]}{\pr b_1^\ell}\bigg|
        \les \frac{y^{p_2-k}\jbk[\log y]^{c_{p_2}}}{b_1^\ell\lgb}.
      \]
  \end{enumerate}
\end{definition}

We now collect the main closure properties of these spaces, which will be used constantly in the sequel. We first record the action of the linear Hamiltonian $\ham$ and of its formal inverse $\ham^{-1}$ on the function spaces $\mcA$ and $\mcAb$; the proofs are postponed to Appendix~\ref{ap:adm-lemma}.

\begin{lemma}[Action of $\ham$ on $\mcA$]
  \label{le:ham-act-adb}
  If $f\in\mcA(p_1,p_2)$, then
  \begin{enumerate}[label={\rm (\roman*)}]
    \item \label{it:ham-act-adb=ham}
      if $p_1\ge 0$, then for all $k\ge 1$,
      \[
        \ham^k f \in
        \begin{cases}
          \mcA(\max(p_1-2k,1),p_2-2k) & \mbox{for $p_1$ odd},  \\
          \mcA(p_1-2k,p_2-2k)         & \mbox{for $p_1$ even};
        \end{cases}
      \]
    \item \label{it:ham-act-adb=ham-inv}
      if $p_1\ge0$ and $p_2\ge-1$, then for all $k\ge 1$, $\ham^{-k}f\in\mcA(p_1+2k, p_2+2k)$.

  \end{enumerate}
\end{lemma}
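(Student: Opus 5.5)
By induction on $k$ it suffices to treat $k=1$. Indeed, the output degrees in both parts again satisfy the hypotheses ($p_1\ge 0$, resp. $p_2+2\ge -1$), so the case $k=1$ can be iterated. For part \ref{it:ham-act-adb=ham} one has to check that the odd rule composes correctly. If $p_1$ is odd, then $\max(p_1-2,1)$ is again odd and $\ge 1$, and $\max(\max(p_1-2,1)-2,1)=\max(p_1-4,1)$. If $p_1$ is even, $p_1-2$ is even, but may become negative; I would handle that by noting that even $\mcA$ functions with $p_1\ge0$ start at $y^{p_1}$ and only the leading even coefficients can cancel. The plan is therefore to prove the single-step statements and iterate, treating the region $y\le1$ and the region $y\ge1$ separately.

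\emph{Near the origin.} I use the Taylor structure $f=y^{p_1}\sum c_j y^{2j}$ together with the expansion of the potential. Since $V(y)=\frac{y^4-6y^2+1}{(1+y^2)^2}$ is even and analytic with $V(0)=1$, we have $\frac{V}{y^2}=y^{-2}\bigl(1-8y^2+O(y^4)\bigr)$ with only even powers. Applying $-\Delta=-\partial_y^2-\frac1y\partial_y$ to $y^{m}$ gives $-m^2y^{m-2}$. Hence
\[
\ham\bigl(y^m\bigr)=(1-m^2)\,y^{m-2}+\sum_{j\ge0} e_j\,y^{m+2j},
\]
so $\ham f=y^{p_1-2}\bigl(\sum \tilde c_j y^{2j}\bigr)$, which preserves parity. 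For $p_1=1$ the leading coefficient $1-m^2$ vanishes, so the expansion starts at $y^{1}$. This is exactly why the odd case carries $\max(p_1-2,1)$: odd expansions can never drop below $y^1$, because every odd term $y^{2j+1}$ with $j\ge0$ maps into odd powers $\ge y^{1}$ ($y^{1}$ produces no $y^{-1}$ term). For $\ham^{-1}$ I use \eqref{eq:inverse-ham}. Since $\lphi$ and $\vg$ have the expansions $2y+O(y^3)$ and $-\frac1{4y}+O(y\log y)$, a plain Taylor computation is delicate because of the $\log$ in $\vg$. Instead I argue by uniqueness: the unique solution of $\ham u=f$ with $u=O(y^{p_1+2})$ at the origin is given by the formal power series $u=y^{p_1+2}\sum d_jy^{2j}$. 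Its coefficients are solved recursively, since $(1-(p_1+2+2j)^2)\neq0$ for $p_1\ge0$, and the series converges by analyticity and Frobenius theory, the indicial roots being $\pm1$. Formula \eqref{eq:inverse-ham} yields a solution of this size, because the integrals $\int_0^y f\lphi x\,dx=O(y^{p_1+3})$ and $\int_0^y f\vg x\,dx=O(y^{p_1+1})$. The difference between the two solutions lies in $\ker\ham$ and is $o(y)$, hence it vanishes; this also shows that no logarithms appear.

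\emph{For $y\ge1$.} I use the factorization $\ham=\as\oa$, which matches the definition of suitable derivatives. Indeed $(\ham f)\sd[k]=f\sd[k+2]$, since $f\sd[1]=\oa f$ and $f\sd[2]=\as\oa f=\ham f$; the alternation continues as the parity pattern shifts by two. The required bounds $y^{p_2-2-k}$ (with $\log$ when $p_2-2-k\ge0$) are therefore precisely the bounds for $f\sd[k+2]$. For $\ham^{-1}$, set $u=\ham^{-1}f$. Then $u\sd[k]=f\sd[k-2]$ for $k\ge2$, so only $u$ and $u\sd[1]=\oa u$ need estimates. Using $\oa\lphi=0$, I write $\oa u=\oa\vg\cdot\bigl(-\int_0^yf\lphi x\,dx\bigr)$, where $\oa\vg=-\frac{1}{y\lphi}\sim -\frac12$. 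Alternatively I invert directly: $\oa u=\frac{1}{y\lphi}\int_0^y f\lphi x\,dx$, and $u=-\lphi\int_1^y\frac{\oa u}{\lphi}$ up to a kernel term fixed by the normalization. With $|f|\les y^{p_2}\jbk[\log y]$ and $\lphi\sim 2/y$, the integral $\int_1^y f\lphi x\,dx$ is bounded by $y^{p_2+1}\jbk[\log y]$ for $p_2\ge0$, and for $p_2=-1$ it produces $\log y$. Hence $|\oa u|\les y^{p_2+1}\jbk[\log y]$ and $|u|\les y^{p_2+2}\jbk[\log y]$. The parts of $\int_0^1$ are constants times kernel-type growth, namely $\vg\sim y/4$ and $\lphi\sim 2/y$, which are within the bounds since $p_2+2\ge1$.

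\emph{Main obstacle.} I expect the bookkeeping of the logarithm at the threshold cases to be the hardest point: checking that exactly one $\jbk[\log y]$ appears when $p_2+2-k\ge0$, and none when $p_2+2-k<0$. This is relevant for $k=0,1$, in particular when $p_2=-1$, so that $u$ has degree $1$ and $\oa u$ degree $0$. The case $p_2=-1$ is borderline: $\int^y x^{-1}\cdot x^{-1}\cdot x\,dx=\log y$ gives $\oa u\les\jbk[\log y]$, which is acceptable exactly because the exponent is $0\ge0$. I would state a small integration lemma, $\int_1^y x^{m}\jbk[\log x]\,dx\les y^{m+1}\jbk[\log y]$ for $m\ge-1$, and use it twice.
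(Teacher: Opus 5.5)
Your proposal is correct and matches the paper's proof in structure. Both reduce to a single step and use the bounce-back identity $\ham y^m=(1-m^2)y^{m-2}+O(y^m)$ near the origin. Both use the shifts $(\ham f)_{(i)}=f_{(i+2)}$ and $(\ham^{-1}f)_{(i)}=f_{(i-2)}$ for $y\ge1$. Both then use the formulas $\oa u=\frac{1}{y\lphi}\int_0^y f\lphi x\,\dd x$ and $u=-\lphi\int_0^y\frac{\oa u}{\lphi}\,\dd x$ for the two remaining suitable derivatives, with $p_2=-1$ as the borderline case producing exactly one logarithm.

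The real difference is how you obtain the Taylor expansion of $\ham^{-1}f$ at the origin. You build a formal series, invoke Frobenius theory for its convergence, and identify it with \eqref{eq:inverse-ham} by uniqueness. This is valid, since a kernel element $c_1\lphi+c_2\Gamma$ that is $o(y)$ must vanish. The paper instead reads the expansion directly off the two factorized formulas above, which you already use for $y\ge1$. They never involve $\Gamma$. Moreover $\lphi=y\cdot\frac{2}{1+y^2}$ and $y\lphi=y^2\cdot\frac{2}{1+y^2}$, with $\frac{2}{1+y^2}$ even, analytic and nonvanishing. Hence multiplication by these factors and integration from $0$ preserve the form $y^{q}\times(\text{even series})$, only shifting $q$. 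So the logarithm you found delicate never arises. No convergence argument is needed either (hence no genuine analyticity of $y^{-p_1}f$ at $0$), only termwise manipulation of expansions to all orders. Your route reaches the same conclusion with more machinery.

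Your justification for iterating the even case is vague, but there is no cancellation to track. The identity $\ham y^m=(1-m^2)y^{m-2}+\sum_{j\ge0}e_jy^{m+2j}$ holds for every $m\in\mathbb{Z}$. So the one-step statement is valid for all even $p_1\in\mathbb{Z}$, and the $y\ge1$ part never uses $p_1$. This is what lets the induction continue once $p_1-2k<0$.
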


\begin{lemma}[Action of $\ham, \vla$ on $\mcAb$]
  \label{le:ham-act-badb}
  If $f\in\mcAb(p_1, p_2)$, then
  \begin{enumerate}[label={\rm (\roman*)}]
    \item \label{it:ham-act-badb=ham}
      if $p_1\ge 0$, then for all $k\ge 1$,
      \[
        \ham^k f \in \begin{cases}
          \mcAb(\max(p_1-2k,1),p_2-2k) & \mbox{for $p_1$ odd},  \\
          \mcAb(p_1-2k,p_2-2k)         & \mbox{for $p_1$ even};
        \end{cases}
      \]
    \item \label{it:ham-act-badb=ham-inv}
      if $p_1\ge0$ and $p_2\ge1$, then for all $k\ge 0$, $\ham^{-k}f\in\mcAb(p_1+2k, p_2+2k)$;
    \item \label{it:ham-act-badb=vla}
      the function $\vla f\in\mcAb(p_1, p_2)$;
    \item \label{it:ham-act-badb=b1}
      the function $b_1 \frac{\pr f}{\pr b_1}\in\mcAb(p_1, p_2)$.

  \end{enumerate}
\end{lemma}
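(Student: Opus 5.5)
The statement to prove is Lemma~\ref{le:ham-act-badb}. I would treat the four items separately, reducing each to the two regimes $y\le1$ and $y\ge1$ of Definition~\ref{def:badb}, and mimic the proof of Lemma~\ref{le:ham-act-adb}.

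\textbf{Item (i), $y\le 1$.} Write $f=\sum_i\wh\delta_i\wh f_i$ with $\wh f_i$ of the form in Definition~\itref{def:adb}{it:def-adb=1}. Since $\ham$ does not involve $b_1$, we have $\ham^k f=\sum_i\wh\delta_i\ham^k\wh f_i$. The Taylor-expansion part of Lemma~\ref{le:ham-act-adb} then applies to each $\wh f_i$. The relevant fact is that $V(y)=1-8y^2+O(y^4)$ is even, so $\ham$ maps $y^{p}(\text{even series})$ to $y^{p-2}(\text{even series})$, except that the leading coefficient is killed when $p=1$. This gives the $\max(p_1-2k,1)$ rule in the odd case.

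\textbf{Item (i), $y\ge1$.} Use the factorization $\ham=\as\oa$. By \eqref{eq:adb-seq}, $(\ham f)\sd[k]=f\sd[k+2]$. The bounds for $f\sd[k+2]$ at exponent $p_2-k-2$ are exactly those required of $(\ham f)\sd[k]$ at degree $p_2-2$. The auxiliary weights $F_j,K_j$ are indexed by $p_2-k$, and this index is unchanged under the shift, so the bounds match. The $\pr_{b_1}$ bounds follow the same way, since $\pr_{b_1}$ commutes with $\oa,\as$. Iterating gives the claim for $\ham^k$.

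\textbf{Item (iii).} For $\vla$ I would use the commutator identities
\[
\oa\vla=\vla\oa+\oa+\frac{\vla\z}{y},\qquad
\as\vla=\vla\as+\as-\frac{\vla\z}{y}.
\]
Here $\vla\z/y=O(y^{-3})$ at infinity and is odd near $0$. By induction, $(\vla f)\sd[k]$ is a combination of $\vla f\sd[k]$, $f\sd[k]$ and lower terms $y^{-2}f\sd[j]$. The only point to check is that $\vla$ preserves the weights. This holds because $\vla\jbk[\log(\sqrt{b_1}y)]$ is bounded, $\vla(y^{-2})\sim y^{-2}$, and the jump of the indicators at $2S_0$ is harmless since $f$ is smooth and the bound is an inequality. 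Near $0$, $\vla$ preserves the form $y^{p_1}(\text{even series})$.

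\textbf{Item (iv).} $b_1\pr_{b_1}$ preserves the small-$y$ form, since $|b_1\pr_{b_1}\wh\delta_i|\les\lgb^{-1}\les1$ and its higher derivatives have the same structure. For $y\ge1$, the $\ell=1$ derivative bound, multiplied by $b_1$, gives $\lgb^{-1}y^{p_2-k}(\bar F+K)+\dots$. This is dominated by the required $\lgb^{-1}y^{p_2-k}(F+K)$, because $\bar F_j\le F_j$ up to constants for $j\ge0$ and $\bar F_j\les 1=F_j$ for $j<0$ on $y\le 2S_0$; this uses $\jbk[\log y]\les \lgb$ there. Higher $\pr_{b_1}$ derivatives of $b_1\pr_{b_1}f$ are handled by the Leibniz rule.

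\textbf{Item (ii), the main obstacle.} I would use the formula \eqref{eq:inverse-ham}. Near $0$, $\ham^{-1}$ raises the order by $2$ while preserving parity; this is the same computation as in Lemma~\ref{le:ham-act-adb}\ref{it:ham-act-adb=ham-inv}. For $y\ge1$, the suitable derivatives are computed through $\oa\ham^{-1}f=\frac{1}{y\lphi}\int_0^y f\lphi x\,\dd x$ and $\as\oa\ham^{-1}f=f$. So only the $0$-th and $1$-st derivatives are new; the higher ones reduce to those of $f$. The delicate step is integrating the weighted bounds. For example,
\[
\frac{1}{y\lphi}\int_1^y x^{p_2-1}\frac{F_{p_2}(x)}{\lgb}\,\dd x\les \frac{y^{p_2+1}F_{p_2+1}(y)}{\lgb}
\]
for $y\le 2S_0$. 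This requires $p_2\ge1$, so that no extra $\log$ is produced at the threshold case; that is why the hypothesis is $p_2\ge1$ rather than $-1$. Beyond $2S_0$, one uses the $K$ term to absorb the contribution of the integral over $[0,2S_0]$, which is of size $S_0^{p_2+1}/\lgb$. I expect this bookkeeping of logarithms and of the cutoff at $2S_0$, including the $\pr_{b_1}$-derivatives that hit the indicator and the $\jbk[\log(\sqrt{b_1}y)]$ weights, to be the main technical work. The case $\ham^{-k}$ then follows by iteration.
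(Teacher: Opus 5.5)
Your items (i), (ii) and (iv) follow the paper's proof in Appendix~\ref{ap:adm-lemma}. This includes the shift $(Hf)_{(i)}=f_{(i+2)}$, the representation \eqref{pf:adm=au-u} of $Au$ and $u$ with the $K_{p_2+1}$ weight absorbing the boundary term $S_0^{p_2+1}/|\log b_1|$ beyond $2S_0$, and the comparison $\bar{F}_j\lesssim F_j$ on $y\le 2S_0$.

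The gap is in item (iii). Your commutator induction leaves the leading term $\Lambda f_{(k)}=y\partial_y f_{(k)}$. You propose to control it by saying that $\Lambda$ ``preserves the weights'', because $\Lambda\langle\log(\sqrt{b_1}y)\rangle$ is bounded, $\Lambda(y^{-2})\sim y^{-2}$, and the jumps of the indicators at $2S_0$ are harmless. This step is not valid. A pointwise bound $|g|\le B$ gives no information on $y\partial_y g$, however regular $B$ is, so you cannot apply $\Lambda$ to the bounding weights.

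The missing idea is that Definition~\ref{def:badb} controls \emph{every} suitable derivative, so you should trade $y\partial_y f_{(k)}$ for the next one. From \eqref{eq:adb-seq}, $\Lambda f_{(k)}=-yf_{(k+1)}+Zf_{(k)}$ for $k$ even, and $\Lambda f_{(k)}=yf_{(k+1)}-(1+Z)f_{(k)}$ for $k$ odd. The bound on $f_{(k+1)}$ then gives
\[
  |yf_{(k+1)}|\lesssim \frac{y^{p_2-k}}{|\log b_1|}\bigl(F_{p_2-k-1}+K_{p_2-k-1}\bigr)+y^{p_2-k-2}\langle\log y\rangle^{c_{p_2}}.
\]
This has the correct power of $y$, but the weights are indexed by $p_2-k-1$ instead of $p_2-k$. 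What closes the argument is the monotonicity $F_{j-1}\le F_j$ and $K_{j-1}\le K_j$. For the $\partial_{b_1}$-bounds, use that $\partial_{b_1}$ commutes with $\Lambda$ and that $\bar{F}_j$ does not depend on $j$.

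This is exactly the paper's route. It writes $\Lambda f=-f-yf_{(1)}+(1+Z)f$ and expands
\[
  (yf_{(1)})_{(i)}=c_{i+1}yf_{(i+1)}+c_{i+2}f_{(i)}+\sum_j P_{j,i}f_{(j)},\qquad |P_{j,i}|\lesssim\langle y\rangle^{-(2+i-j)},
\]
by \citep[Lemma D.1]{raphael_quantized_2014}, and then invokes this monotonicity. Your lower-order terms must likewise carry coefficients $O(\langle y\rangle^{-(2+k-j)})$, not merely $y^{-2}$, to be absorbed into $y^{p_2-k-2}\langle\log y\rangle^{c_{p_2}}$ via \eqref{eq:badb-rough-bnd}. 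Your commutator induction does produce such coefficients.

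Smaller points:
\begin{enumerate}
  \item The first commutator identity should read $A\Lambda=\Lambda A+A-\frac{\Lambda Z}{y}$.
  \item In (ii), the integrand is $x^{p_2}F_{p_2}/|\log b_1|$, not $x^{p_2-1}F_{p_2}/|\log b_1|$, since $x\Lambda\phi(x)\to 2$.
  \item What forces $p_2\ge 1$, already failing at $p_2=0$, is not a logarithmic loss. It is the need to absorb the $O(1)$ contribution of $\int_0^1$ into $y^{p_2-1}\langle\log y\rangle^{c_{p_2}+1}$, and the term $S_0^{p_2+1}/|\log b_1|$ into $y^{p_2+1}K_{p_2+1}/|\log b_1|$, which is nonzero only when $p_2+1\ge 2$.
  \item No $\partial_{b_1}$ ever falls on an indicator, since $\partial_{b_1}$ commutes with $H^{-1}$ and the indicators appear only in the weights.
\end{enumerate}
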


An important example of $\mcA$-functions is given by the iterated resonances of $\ham$ and the vectorial resonances of $\mhamp$, as defined in~\eqref{eq:reson}, \eqref{eq:vec-reson}.

\begin{corollary} \label{co:resona}
  For all $k\in \mbN_+$, the functions $T_k, \phia[k], \phib[k]\in\mcA(2k+1,2k-1)$.

\end{corollary}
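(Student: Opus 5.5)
The plan is to prove the statement for $T_k$ by induction on $k$, using Lemma~\ref{le:ham-act-adb}\ref{it:ham-act-adb=ham-inv}. The vectorial statement then follows at once: by \eqref{eq:vec-reson}, $\phia[k]=T_k\R^{k-1}\eone$ and $\phib[k]=T_k\R^{k-1}\etwo$ are $T_k$ times a constant vector. Since $\R^{k-1}\eone,\R^{k-1}\etwo\in\{\pm\eone,\pm\etwo\}$, each component is either $0$ or $\pm T_k$. The vector-valued classes are defined componentwise, the suitable derivatives act componentwise, and $0$ belongs to every class. Hence $\phia[k],\phib[k]\in\mcA(2k+1,2k-1)$ as soon as $T_k$ is.

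\textbf{Base step.} We first check that $\lphi\in\mcA(1,-1)$. Near the origin, $\lphi=2y/(1+y^2)=y\sum_{j\ge0}2(-1)^jy^{2j}$ is analytic, odd, and has the Taylor form of Definition~\itref{def:adb}{it:def-adb=1} with $p_1=1$. For $y\ge1$ we compute the suitable derivatives. We have $\lphi\sd[1]=\oa\lphi=0$ because $\ker\oa=\spann(\lphi)$, so all higher suitable derivatives also vanish. The only bound left is $|\lphi|\les y^{-1}$, which holds since $p_2-k=-1<0$ requires no logarithm. Thus $\lphi\in\mcA(1,-1)$.

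\textbf{Inductive step.} We have $p_1=1\ge0$ and $p_2=-1\ge-1$, so Lemma~\ref{le:ham-act-adb}\ref{it:ham-act-adb=ham-inv} applies. With $k$ applications of $\ham^{-1}$ it gives $T_k=\ham^{-k}\lphi\in\mcA(1+2k,-1+2k)$ directly. Alternatively one can induct one step at a time: $T_{k}\in\mcA(2k+1,2k-1)$ has $p_1\ge0$ and $p_2\ge1\ge-1$, so $T_{k+1}=\ham^{-1}T_k\in\mcA(2k+3,2k+1)$.

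\textbf{Consistency check.} The result agrees with \eqref{eq:t1-asymp}. There $T_1=-y^3/4+O(y^5)$, giving $p_1=3$, and $T_1\sim -y\log y$ at infinity, giving $p_2=1$ with the logarithmic loss allowed when $p_2-k\ge0$.

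The only substantive point is hidden in Lemma~\ref{le:ham-act-adb}\ref{it:ham-act-adb=ham-inv}. Its proof is in the appendix and involves the cancellations in \eqref{eq:inverse-ham}, namely $\oa\ham^{-1}f=\frac{1}{y\lphi}\int_0^y f\lphi x\,\dd x$, together with the threshold $p_2\ge-1$ at which logarithms appear. That lemma is assumed here, so the remaining work is only to verify its hypotheses at each step, which is routine.
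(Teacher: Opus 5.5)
Your proof is correct and follows the paper's argument exactly. You verify $\lphi\in\mcA(1,-1)$, apply Lemma~\itref{le:ham-act-adb}{it:ham-act-adb=ham-inv} with $p_1=1\ge0$ and $p_2=-1\ge-1$, and pass to $\phia[k],\phib[k]$ componentwise; your explicit check that $\oa\lphi=0$ makes all higher suitable derivatives vanish is a detail the paper leaves implicit in~\eqref{eq:lphi}.
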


\begin{proof}
  Recall from \eqref{eq:vec-reson} that $\phia[k] = T_k\R^{k-1}e_1$ and $\phib[k] = T_k\R^{k-1}e_2$. Recall from~\eqref{eq:reson} that $T_k = \ham^{-k}\lphi$. Observe from~\eqref{eq:lphi} that $\lphi\in\mcA(1,-1)$. Hence, applying Lemma~\itref{le:ham-act-adb}{it:ham-act-adb=ham-inv} to $f=\lphi$, we obtain $T_k\in\mcA(2k+1,2k-1)$.
\end{proof}

We next record the inclusion and product rules, which translate the degree calculus of the scalar spaces into the algebra of admissible functions.

\begin{lemma}[Inclusions involving $\mcAg$]
  \label{le:inclu-madb}
  The following inclusions hold:
  \begin{enumerate}[label={\rm (\roman*)}]
    \item \label{it:inclu-madb=A}
      $\mcA(p_1,p_2) \subset \mcAg(p_1,p_2)$;
    \item \label{it:inclu-madb=Ab}
      $\mcAb(p_1,p_2) \subset \mcAg(p_1,p_2) \subset \mcAb(p_1,p_2+2)$.

  \end{enumerate}
\end{lemma}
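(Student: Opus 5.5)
The plan is to verify each inclusion directly from the definitions, comparing the conditions for $y\le1$ and $y\ge1$ separately.

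\emph{Inclusion \ref{it:inclu-madb=A}.} Let $f\in\mcA(p_1,p_2)$; it does not depend on $b_1$. For $y\le1$, take $I=1$, $\wh{\delta}_1\equiv1$ and $\wh f_1=f$. Then $|\wh\delta_1|\les1$, and all $b_1$-derivatives vanish, so Definition~\itref{def:badb}{it:def-badb=1} holds. For $y\ge1$, Definition~\itref{def:adb}{it:def-adb=2} gives $|f\sd[k]|\les y^{p_2-k}\jbk[\log y]$ or $y^{p_2-k}$. Both are bounded by $y^{p_2-k}\jbk[\log y]^{c_{p_2}}$ with $c_{p_2}=1$. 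The derivatives $\pr_{b_1}^\ell f\sd[k]$ vanish identically, so the $\ell\ge1$ bounds hold trivially.

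\emph{First half of \ref{it:inclu-madb=Ab}.} Take $f\in\mcAb(p_1,p_2)$. The condition for $y\le1$ is literally the same in both definitions. For $y\ge1$ I would bound the auxiliary functions pointwise. On $1\le y\le2S_0$ we have $|\log(\sqrt{b_1}y)|\le \lgb/2+\log2$, since $y\ge1$ and $y\le 2/\sqrt{b_1}$. Hence $F_j\les\lgb$ and $\bar F_j\les1$. On $y\ge2S_0$, $K_j=1/(y^2b_1)\le1/4$.

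\begin{itemize}
\item For the $f\sd[k]$ bound, these give $\frac{y^{p_2-k}}{\lgb}(F+K)\les y^{p_2-k}$.
\item For the $\ell\ge1$ bounds, the right side is controlled by $\frac{y^{p_2-k}}{b_1^\ell\lgb}$.
\item The remaining term $y^{p_2-k-2}\jbk[\log y]^{c_{p_2}}$ is at most $y^{p_2-k}\jbk[\log y]^{c_{p_2}}$.
\end{itemize}

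Altogether we obtain the $\mcAg(p_1,p_2)$ bounds, with $c_{p_2}$ replaced by $\max(c_{p_2},0)$.

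\emph{Second half of \ref{it:inclu-madb=Ab}.} Take $f\in\mcAg(p_1,p_2)$ and aim for $\mcAb(p_1,p_2+2)$. Again the condition for $y\le1$ is identical. For $y\ge1$, the $\mcAb(p_1,p_2+2)$ right-hand side contains the term $y^{(p_2+2)-k-2}\jbk[\log y]^{c}=y^{p_2-k}\jbk[\log y]^c$. Choosing $c=c_{p_2}$, this term alone dominates $|f\sd[k]|$; the nonnegative $F,K$ terms can be dropped.

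For $\ell\ge1$, the $\mcAb$ requirement contains $\frac{y^{p_2-k}\jbk[\log y]^{c}}{b_1^\ell\lgb}$. This matches the $\mcAg$ bound exactly.

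The suitable derivatives $f\sd[k]$ are the same objects in both definitions. Only the index of the right side shifts: the degree $p_2+2$ paired with $k$ gives exponent $p_2+2-k-2=p_2-k$, so no index shift in $k$ is needed.

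\emph{Main obstacle.} The only point needing care is the uniform bound $F_j\les\lgb$ on $[1,2S_0]$ used in the first half of \ref{it:inclu-madb=Ab}. This is where the cutoff $\one_{y\le2S_0}$ and the scale $S_0=b_1^{-1/2}$ enter. I would state it explicitly, noting that $\jbk[\log(\sqrt{b_1}y)]\le 1+\lgb$ for $\sqrt{b_1}\le\sqrt{b_1}y\le2$.
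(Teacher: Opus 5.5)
Your proposal is correct and follows essentially the same route as the paper. You use the bounds $F_j\les\lgb$ and $\bar F_j+K_j\les 1$ on $y\ge1$ to get $\mcAb(p_1,p_2)\subset\mcAg(p_1,p_2)$, and you observe that the $\mcAg(p_1,p_2)$ far-field bound is exactly the last term of the $\mcAb(p_1,p_2+2)$ bound. Your explicit check of these auxiliary-function bounds and of part (i) only fills in details that the paper leaves to the reader.
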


\begin{proof}
  (i) follows directly from the definitions of $\mcA$ and $\mcAg$. To prove~\ref{it:inclu-madb=Ab}, note that
  \begin{equation}
    \label{pf:inclu-madb=f-k-bnd}
    |F_j| \les \lgb, \quad |\bar{F}_j| + |K_j| \les 1,
    \quad y\ge1, \quad j\in\mbZ,
  \end{equation}
  and thus any $f\in\mcAb(p_1,p_2)$ admits
  \begin{align*}
    |f\sd[k]|
    & \les \frac{y^{p_2-k}}{\lgb} (\lgb + 1) + y^{p_2-k-2}\jbk[\log y]^{c_{p_2}} \les y^{p_2-k}\jbk[\log y]^{c_{p_2}}, \\
    \bigg|\frac{\pr^\ell f\sd[k]}{\pr b_1^\ell}\bigg|
    & \les \frac{y^{p_2-k}}{b_1^\ell\lgb} + \frac{y^{p_2-k-2}\jbk[\log y]^{c_{p_2}}}{b_1^\ell\lgb}
    \les \frac{y^{p_2-k}\jbk[\log y]^{c_{p_2}}}{b_1^\ell\lgb},
  \end{align*}
  for all $k\in\mbN$ and $\ell\ge 1$. This shows that $f\in\mcAg(p_1,p_2)$. The inclusion $\mcAg(p_1,p_2) \subset \mcAb(p_1,p_2+2)$ follows from the definitions of the two classes, since the $y\ge 1$ bound of $\mcAg(p_1,p_2)$ coincides with the last term in Definition~\itref{def:badb}{it:def-badb=2}.
\end{proof}

\begin{remark}
  \label{re:trivial-inclu}
  (i) All three classes satisfy the following trivial inclusion: for all $p_1, p_2\in\mbZ$,
  \begin{equation}
    \label{re:trivial-inclu-form}
    \mcA(p_1,p_2) \subset \mcA(p_1-2, p_2) \cap \mcA(p_1, p_2+2)
  \end{equation}
  and the same holds with $\mcA$ replaced by $\mcAb$ or $\mcAg$. Indeed, for $y\le 1$ it suffices to factor out $y^2$ in Definition~\itref{def:adb}{it:def-adb=1}, the constant term then vanishing; for $y\ge1$ the bounds in Definitions~\ref{def:adb}, \ref{def:badb} and \ref{def:ag} only improve as the second index increases. Iterating~\eqref{re:trivial-inclu-form} thus yields $\mcA(p_1,p_2)\subset\mcA(p_1-2k,p_2+2k)$ for every $k\in\mbN$, and similarly for $\mcAb$ and $\mcAg$.

  (ii) By Lemma~\itref{le:inclu-madb}{it:inclu-madb=Ab}, $\mcAg(p_1,p_2)\subset\mcAb(p_1,p_2+2)$, so that $\mcAg$ may be viewed as a subclass of $\mcAb$ up to a harmless shift of the second index. The same-index rules for $\ham$, $\vla$ and $b_1\partial_{b_1}$ on $\mcAg$ follow directly from its differentiated definition and the coefficient bounds for these operators; they are not inferred from the shifted inclusion.

\end{remark}

\begin{lemma}[Products in $\mcAg$]
  \label{le:produ-madb}
  Let $f\in\mcAg(p_1,p_2)$. The following hold:
  \begin{enumerate}[label={\rm (\roman*)}]
    \item \label{it:produ-madb=A}
      if $g\in\mcA(q_1, q_2)$, then $fg\in\mcAg(p_1+q_1, p_2+q_2)$;
    \item \label{it:produ-madb=Ab}
      if $g\in\mcAb(q_1, q_2)$, then $fg\in\mcAg(p_1+q_1, p_2+q_2)$;
    \item \label{it:produ-madb=Ag}
      if $g\in\mcAg(q_1, q_2)$, then $fg\in\mcAg(p_1+q_1, p_2+q_2)$.

  \end{enumerate}
\end{lemma}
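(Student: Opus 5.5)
First I would reduce (i) and (ii) to (iii). By Lemma~\itref{le:inclu-madb}{it:inclu-madb=A} we have $\mcA(q_1,q_2)\subset\mcAg(q_1,q_2)$, and by Lemma~\itref{le:inclu-madb}{it:inclu-madb=Ab} we have $\mcAb(q_1,q_2)\subset\mcAg(q_1,q_2)$. So it suffices to prove that $\mcAg(p_1,p_2)\cdot\mcAg(q_1,q_2)\subset\mcAg(p_1+q_1,p_2+q_2)$. I would check the two regions $y\le1$ and $y\ge1$ of Definition~\ref{def:ag} separately.

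\emph{Region $y\le 1$.} Write $f=\sum_i\wh\delta_i\wh f_i$ and $g=\sum_j\wh\delta'_j\wh g_j$. Then
\[
fg=\sum_{i,j}(\wh\delta_i\wh\delta'_j)(\wh f_i\wh g_j).
\]
The product of two expansions $y^{p_1}\sum c_ky^{2k}$ and $y^{q_1}\sum c'_ky^{2k}$ is an expansion of the form $y^{p_1+q_1}\sum c''_ky^{2k}$, with remainders multiplying as expected. The coefficient $\wh\delta_i\wh\delta'_j$ is bounded by $1$. By Leibniz, its $\ell$-th $b_1$-derivative is a sum of terms $\pr^{m}\wh\delta_i\,\pr^{\ell-m}\wh\delta'_j$. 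Each such term is bounded by $b_1^{-\ell}\lgb^{-1}$, or by $b_1^{-\ell}\lgb^{-2}$ when both factors are differentiated, and the latter is smaller still. This gives part (i) of the definition.

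\emph{Region $y\ge1$.} The main step is a Leibniz rule for the suitable derivatives of \eqref{eq:adb-seq}. Since $\oa=-\pry+\z/y$ and $\as=\pry+(1+\z)/y$, each suitable derivative acting on a product takes the form
\[
(fg)\sd[k+1]=\pm f\sd[k+1]\,g \pm f\,\pry g+\text{(potential)}\cdot fg.
\]
Here the potential terms differ by coefficients $\z/y$ or $(1+\z)/y$. Because of this, I would first establish a lemma: for $y\ge1$, the $\mcAg$ bounds on the suitable derivatives $f\sd[j]$, $j\le k$, are equivalent to the bounds $|\pry^j f|\les y^{p_2-j}\jbk[\log y]^{c}$, $j\le k$, possibly with a larger power $c$. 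This follows by induction, writing $\pry=\mp\oa+O(y^{-1})$ (or the analogue for $\as$), and using that $\z,\ \pry^m\z$ are bounded by $y^{-m}$ for $y\ge1$. Since the $\log$ power $c_{p_2}$ is only required to exist, the loss in $c$ is harmless.

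With this equivalence, the ordinary Leibniz rule gives
\[
|\pry^k(fg)|\les\sum_j y^{p_2-j}y^{q_2-k+j}\jbk[\log y]^{c_{p_2}+c_{q_2}},
\]
which is the required bound. For the $b_1$-derivatives, I apply Leibniz in $b_1$ as well. The terms where both $f$ and $g$ carry $b_1$-derivatives produce a product of two factors $\lgb^{-1}$, which is better than needed. The terms where only one factor is differentiated give exactly the single $\lgb^{-1}$ that the definition allows.

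I expect the main obstacle to be the equivalence between suitable derivatives and plain derivatives, carried out uniformly in $b_1$. The remaining work, including smoothness, is routine bookkeeping.
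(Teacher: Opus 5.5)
Your proposal is correct and follows the paper's proof. You reduce (i)--(ii) to (iii) via Lemma~\ref{le:inclu-madb}, treat $y\le1$ by multiplying the Taylor expansions and applying Leibniz in $b_1$ to the coefficients, and treat $y\ge1$ by a Leibniz rule for suitable derivatives together with Leibniz in $b_1$.

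The only difference is that the paper cites that Leibniz rule from \citep[Lemma~D.1]{raphael_quantized_2014}, while you rederive it by passing through ordinary derivatives. That step is routine: $\oa,\as$ have $b_1$-independent coefficients, so $\pr_{b_1}$ commutes with them and the uniformity issue you anticipate does not arise. You also correctly record the logarithmic exponent of the product as $c_{p_2}+c_{q_2}$.
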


\begin{proof}
  In view of Lemma~\ref{le:inclu-madb}, it suffices to prove the claim~\ref{it:produ-madb=Ag} and then~\ref{it:produ-madb=A} and~\ref{it:produ-madb=Ab} follow.
  For $y\le 1$, from Definition~\itref{def:badb}{it:def-badb=1}, the function $fg$ is a sum of terms of the form $\wh{\delta}_{i}\wh{\delta}'_{j}\wh{f}_{i}\wh{g}_{j}$, where by Leibniz,
  \[
    \bigg|\frac{\pr^{\ell}}{\pr b_1^{\ell}}(\wh{\delta}_{i}\wh{\delta}'_{j})\bigg|
    \les \sum_{\ell_1+\ell_2=\ell} \frac{1}{b_1^{\ell_1+\ell_2}\lgb}
    \les \frac{1}{b_1^{\ell}\lgb},
  \]
  and from Definition~\itref{def:adb}{it:def-adb=1}, the spatial factors satisfy $\wh{f}_{i}\wh{g}_{j} = y^{p_1+q_1}(C+O(y^2))$ for some real constant $C$. Thus $fg$ satisfies Definition~\itref{def:ag}{it:def-ag=1}. For $y\ge1$, we apply Leibniz for suitable derivatives (see \citep[Lemma D.1]{raphael_quantized_2014}) together with~\eqref{pf:inclu-madb=f-k-bnd}. For all $k\in\mbN$, this yields
  \[
    |(fg)\sd[k]|
    \les \sum_{0\le l\le k} \frac{1}{y^{k-l}} \sum_{l_1+l_2=l} |f\sd[l_1]|\,|g\sd[l_2]|
    \les y^{p_2+q_2-k}\jbk[\log y]^{c_{p_2}}.
  \]
  Similarly, for all $\ell \ge 1$,
  \[
    \bigg|\frac{\pr^\ell}{\pr b_1^\ell}(f g)\sd[k]\bigg|
    \les \sum_{0\le l\le k} \frac{1}{y^{k-l}} \sum_{\substack{l_1+l_2=l\\ \ell_1+\ell_2=\ell}} \bigg|\frac{\pr^{\ell_1} f\sd[l_1]}{\pr b_1^{\ell_1}}\bigg| \, \bigg|\frac{\pr^{\ell_2} g\sd[l_2]}{\pr b_1^{\ell_2}} \bigg|\les \frac{y^{p_2+q_2-k}\jbk[\log y]^{c_{p_2}}}{b_1^\ell \lgb}.
  \]
  Thus $fg$ satisfies Definition~\itref{def:ag}{it:def-ag=2}. This concludes the proof.
\end{proof}

\begin{lemma}[$L^2$ norm of $\mcA,\mcAb,\mcAg$]
  \label{le:l2-norm-adb}
  Let $p_1\ge 0, p_2\ge 1$. The following estimates hold:
  \begin{enumerate}[label={\rm (\roman*)}]
    \item \label{it:l2-norm-adb=A}
      if $f\in\mcA(p_1,p_2)\cup\mcAg(p_1,p_2)$, then
      \[ \|f\|_{L^2(y\le 2S_1)}^2 \les \frac{\lgb^{C}}{b_1^{p_2+1}}; \]
    \item \label{it:l2-norm-adb=Ab}
      if $f\in\mcAb(p_1,p_2)$, then
      \[ \|f\|_{L^2(y\le 2S_1)}^2 \les
        \frac{\lgb^{2p_2-4}}{b_1^{p_2+1}}.
      \]
  \end{enumerate}
\end{lemma}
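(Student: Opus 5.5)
The estimate is a direct integration of the pointwise bounds in the definitions, splitting the domain $\{y\le 2S_1\}$ into $\{y\le1\}$ and $\{1\le y\le 2S_1\}$. The $L^2$ norm is taken with the two-dimensional radial measure $y\,\dd y$.

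On $\{y\le 1\}$, Definitions~\itref{def:adb}{it:def-adb=1} and~\itref{def:badb}{it:def-badb=1} give $|f|\les y^{p_1}$ with coefficients $|\wh\delta_i|\les1$. Since $p_1\ge0$, this region contributes $O(1)$, which is harmless because $b_1^{-(p_2+1)}\gg1$.

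\textbf{Case (i).} On $\{1\le y\le 2S_1\}$, take $k=0$ in Definition~\itref{def:adb}{it:def-adb=2} or Definition~\itref{def:ag}{it:def-ag=2}. Using $p_2\ge1$, this gives $|f|\les y^{p_2}\jbk[\log y]^{c}$ for some $c\ge 1$. Hence
\[
  \int_1^{2S_1}|f|^2y\,\dd y \les \jbk[\log S_1]^{2c}\,S_1^{2p_2+2}\les \lgb^{2c}\,\frac{\lgb^{2p_2+2}}{b_1^{p_2+1}},
\]
since $S_1=\lgb/\sqrt{b_1}$ and $\log S_1\les\lgb$. This is the claimed bound with $C=2c+2p_2+2$.

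\textbf{Case (ii).} Here the sharper power of $\lgb$ must be tracked, and this is the only real point. With $k=0$, Definition~\itref{def:badb}{it:def-badb=2} gives
\[
  |f|\les \frac{y^{p_2}}{\lgb}(F_{p_2}+K_{p_2}) + y^{p_2-2}\jbk[\log y]^{c_{p_2}}.
\]
I would estimate the three pieces separately.

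\emph{The $F$ piece.} It is supported in $y\le 2S_0$, where $F_{p_2}=\jbk[\log(\sqrt{b_1}y)]\les1$, since $\sqrt{b_1}y\le2$ and $\log(\sqrt{b_1}y)$ is bounded below only logarithmically. Its contribution is
\[
  \frac{1}{\lgb^{2}}\int_1^{2S_0}y^{2p_2}\jbk[\log(\sqrt b_1y)]^2\,y\,\dd y.
\]
Substituting $z=\sqrt{b_1}y$ gives $b_1^{-(p_2+1)}\lgb^{-2}\int_0^2 z^{2p_2+1}\jbk[\log z]^2\,\dd z\les b_1^{-(p_2+1)}\lgb^{-2}$.

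\emph{The $K$ piece.} It is nonzero only when $p_2\ge2$. There $|f|\les y^{p_2-2}/(b_1\lgb)$ on $2S_0\le y\le 2S_1$, so it contributes
\[
  \frac{S_1^{2p_2-2}}{b_1^2\lgb^2}=\frac{\lgb^{2p_2-4}}{b_1^{p_2+1}}.
\]

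\emph{The remainder piece.} It contributes $S_1^{2p_2-2}\lgb^{2c_{p_2}}=\lgb^{2p_2-2+2c_{p_2}}\,b_1^{-(p_2-1)}$. This is bounded by $\lgb^{2p_2-4}/b_1^{p_2+1}$ because the gain $b_1^2$ beats any power of $\lgb$.

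Summing the pieces: the $F$ piece is dominated by the $K$ piece when $p_2\ge2$. When $p_2=1$, $K_{p_2}$ vanishes and the total is $b_1^{-2}\lgb^{-2}=b_1^{-(p_2+1)}\lgb^{2p_2-4}$, so the same formula holds. The only delicate point is the $K$ piece, which fixes the exponent $2p_2-4$. I expect no genuine obstacle beyond bookkeeping the supports of $F_j$ and $K_j$.
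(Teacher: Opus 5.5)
Your proof is correct and follows the paper's argument: you split off $y\le1$ as an $O(1)$ contribution and integrate the $k=0$ bounds on $1\le y\le 2S_1$ piece by piece, with the $K_{p_2}$ term fixing the exponent $2p_2-4$; for (i), the paper routes $\mcAg(p_1,p_2)$ through the inclusion $\mcAg(p_1,p_2)\subset\mcAb(p_1,p_2+2)$ and reuses the last piece, which is the same computation you carry out directly. One small slip: $F_{p_2}=\jbk[\log(\sqrt{b_1}y)]$ is not $\les1$ on $1\le y\le 2S_0$ (at $y=1$ it is of size $\sim\lgb$), but you never use that claim, since your substitution $z=\sqrt{b_1}y$ keeps the logarithm inside the integral and $\int_0^2 z^{2p_2+1}\jbk[\log z]^2\,\dd z<\infty$ handles it.
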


\begin{proof}
  We first show~\ref{it:l2-norm-adb=Ab}. Using the bound in Definition~\itref{def:badb}{it:def-badb=2}, we obtain the following upper bound for $\|f\|_{L^2(y\le 2S_1)}^2$:
  \begin{align*}
    & 1+ \int_{1}^{2S_1} \biggl\{\frac{y^{2p_2}}{\lgb^2} \biggl[\jbk[\log(\sqrt{b_1}y)]^2\one_{y\le 2S_0} + \frac{\one_{y\ge 2S_0}\one_{p_2\ge 2}}{y^4b_1^2} \biggr] + y^{2p_2-4}\jbk[\log y]^{2c_{p_2}} \biggr\}y\dd y \\
    & \les \frac{1}{b_1^{p_2+1}\lgb^2} + \frac{\lgb^{2p_2-4}}{b_1^{p_2+1}}\one_{p_2\ge 2} + \frac{\lgb^{2c_{p_2}+2p_2-1}}{b_1^{p_2-1}}                                                                                 \\
    & \les \frac{\lgb^{2p_2-4}}{b_1^{p_2+1}}.
  \end{align*}
  To prove~\ref{it:l2-norm-adb=A}, it suffices to consider $\mcAg(p_1,p_2)$, since $\mcA(p_1,p_2)\subset\mcAg(p_1,p_2)$. Note that by Lemma~\ref{le:inclu-madb}, we have $\mcAg(p_1,p_2)\subset\mcAb(p_1,p_2+2)$. Thus the bound for $\|f\|_{L^2(y\le 2S_1)}^2$ reduces to the last part of the integral on the right-hand side above with $p_2\to p_2+2$, which yields $b_1^{-(p_2+1)}\lgb^{C}$, as desired.
\end{proof}

\subsection{Tail manipulation}

This subsection is devoted to the tail manipulation. More precisely, we introduce a radial profile $\vsig_{b_1}$ that cancels the slowly decaying part of $\vla T_1 - T_1$.

\begin{lemma}[Function $\vsig_{b_1}$]
  \label{le:vsig}
  There exists a radial function $\vsig_{b_1}(y)$ such that
  \[
    \vsig_{b_1} =
    \begin{cases}
      \cb T_1  & \mbox{for}\;\;y\le\frac{S_0}{4}, \\
      -4\Gamma & \mbox{for}\;\;y\ge 2S_0
    \end{cases},
  \]
  where $S_0 = b_1^{-\frac12}$. Furthermore, it holds that
  \begin{equation}
    \label{le:vsig-asymp}
    \vsig_{b_1}(y)
    = -y \frac{\int_{0}^{y}\chi_{\frac{S_0}{4}}\lphi^2(x)x\dd x}{\int \chi_{\frac{S_0}{4}}\lphi^2(x)x\dd x} + O\biggl(\frac{\jbk[y]}{\lgb}\biggr)
    \for 1\le y\le 2S_0.
  \end{equation}
\end{lemma}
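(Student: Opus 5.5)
We construct $\vsig_{b_1}$ explicitly via the variation-of-constants formula \eqref{eq:inverse-ham}, following Rapha\"el--Schweyer. Set
\[
  \vsig_{b_1} := \ham^{-1}\Bigl(\cb\,\chi_{\frac{S_0}{4}}\lphi - \cb\,\ham\bigl((1-\chi_{3S_0})\ldots\bigr)\Bigr)
\]
or, more simply, define first
\[
  \ti\vsig(y) := \cb\,\ham^{-1}\bigl(\chi_{\frac{S_0}{4}}\lphi\bigr)
  = \cb\Bigl(\lphi\int_0^y\chi_{\frac{S_0}{4}}\lphi\vg\,x\dd x - \vg\int_0^y \chi_{\frac{S_0}{4}}\lphi^2 x\dd x\Bigr).
\]
For $y\le S_0/4$ the cutoff equals $1$, so $\ti\vsig=\cb\ham^{-1}\lphi=\cb T_1$. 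For $y\ge 2\cdot\frac{S_0}{4}\cdot 2 = S_0$ the integrals are constant in $y$. By the normalization \eqref{eq:cb}, $\cb\int\chi_{\frac{S_0}{4}}\lphi^2x\dd x=4$, so there
\[
  \ti\vsig = -4\vg + c_{b_1}\lphi,
  \qquad c_{b_1}:=\cb\int\chi_{\frac{S_0}{4}}\lphi\vg\,x\dd x .
\]
The coefficient $c_{b_1}$ is a kernel element of $\ham$. We therefore define $\vsig_{b_1}:=\ti\vsig - c_{b_1}\,(1-\chi_{S_0})\lphi$, or any smooth modification on $[S_0,2S_0]$ removing the $\lphi$ component. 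Then $\vsig_{b_1}=-4\vg$ for $y\ge 2S_0$. For $y\le S_0/4$ nothing changes, since $1-\chi_{S_0}=0$ there. This gives the two-sided identity in the statement.

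For the asymptotics \eqref{le:vsig-asymp} on $1\le y\le 2S_0$, estimate each piece using \eqref{eq:lphi}. The main term is $-\cb\vg\int_0^y\chi_{\frac{S_0}{4}}\lphi^2x\dd x$. Write $\vg=\frac y4+O(\frac{\log y}{y})$ and use $\cb\int_0^y\chi\lphi^2x\dd x=4\frac{\int_0^y\chi\lphi^2}{\int\chi\lphi^2}\le 4$. This yields
\[
  -y\frac{\int_0^y\chi_{\frac{S_0}{4}}\lphi^2x\dd x}{\int\chi_{\frac{S_0}{4}}\lphi^2x\dd x}+O\Bigl(\frac{\log y}{y}\Bigr),
\]
and the error is $O(\jbk[y]/\lgb)$ for $y\le 2S_0$.

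The other piece is $\cb\lphi\int_0^y\chi\lphi\vg\,x\dd x$. Since $\lphi\vg x\sim x/2$ for large $x$, the integral is $O(\min(y,S_0)^2)$. Combined with $\lphi\les 1/y$ and $\cb\les\lgb^{-1}$, this gives $O(\min(y,S_0)^2/(y\lgb))\les y/\lgb$. The same bound holds for the correction term $c_{b_1}(1-\chi_{S_0})\lphi$. There $c_{b_1}=O(S_0^2/\lgb)$ and $\lphi\les 1/y$ with $y\sim S_0$, so this term is $O(S_0/\lgb)=O(\jbk[y]/\lgb)$.

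The main (mild) obstacle is exactly this last bookkeeping. One has to check that the kernel correction $c_{b_1}\lphi$, whose coefficient is large ($\sim b_1^{-1}\lgb^{-1}$), only contributes $O(\jbk[y]/\lgb)$ in the transition region. This works because $\lphi$ decays like $1/y$ and the correction is supported at $y\gtrsim S_0$. One also needs smoothness of $\vsig_{b_1}$, which is immediate since all cutoffs are smooth.
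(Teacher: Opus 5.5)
Your construction is the paper's. Your $c_{b_1}$ is its $d_{b_1}$. Since $(1-\chi_{S_0})\lphi$ vanishes near the origin, $\ham^{-1}\ham[(1-\chi_{S_0})\lphi]=(1-\chi_{S_0})\lphi$, so your $\ti\vsig-c_{b_1}(1-\chi_{S_0})\lphi$ coincides with the paper's $\ham^{-1}\{\cb\chi_{\frac{S_0}{4}}\lphi-d_{b_1}\ham[(1-\chi_{S_0})\lphi]\}$. The two identities are correct. So are your bound for the $\cb\lphi\int_0^y\chi_{\frac{S_0}{4}}\lphi\vg\,x\,\dd x$ piece and the bound $O(S_0/\lgb)$ for the kernel correction on $[S_0,2S_0]$.

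The step that fails as written is the main term. Using only $\cb\int_0^y\chi_{\frac{S_0}{4}}\lphi^2x\,\dd x\le 4$, the contribution of $\vg-\frac y4$ is bounded by something of size $\jbk[\log y]/y$. That bound is \emph{not} $O(\jbk[y]/\lgb)$ uniformly on $[1,2S_0]$. At $y=2$ it is a fixed positive number, while $\jbk[y]/\lgb\to0$ as $b_1\to0$, and the comparison fails on the whole range $2\le y\le\lgb^{1/2}$.

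The missing point is that this factor is itself small for moderate $y$. Since $\lphi^2(x)x=\frac{4x^3}{(1+x^2)^2}\le\frac{4x}{1+x^2}$, you get
$\cb\int_0^y\chi_{\frac{S_0}{4}}\lphi^2x\,\dd x\le 2\cb\log(1+y^2)\lesssim\jbk[\log y]/\lgb$. The error is therefore $O\bigl(\jbk[\log y]^2/(y\lgb)\bigr)=O(\lgb^{-1})$, which lies within $O(\jbk[y]/\lgb)$. This is what the paper's one-line computation uses implicitly. With this replacement your argument is complete.
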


\begin{proof}
  Recall the definition of $\cb$. We set
  \[
    d_{b_1} = \cb\int\chi_{\frac{S_0}{4}}\lphi(x)\Gamma(x)x\dd x = O\biggl(\frac{1}{b_1\lgb}\biggr),
  \]
  and define
  \[
    \vsig_{b_1} =\ham^{-1}\Bigl\{ \cb\chi_{\frac{S_0}{4}} \lphi -d_{b_1}\ham\bigl[(1-\chi_{S_0})\lphi\bigr] \Bigr\}.
  \]
  Let us verify the asymptotics in three regions. First for $y\le \frac{S_0}{4}$, we have $\chi_{\frac{S_0}{4}} = 1$ and $1-\chi_{S_0} =0$, which implies
  \[
    \vsig_{b_1} = \ham^{-1}(\cb\lphi) = \cb T_1.
  \]
  Then for $y\ge 2S_0$, we have $1-\chi_{S_0} = 1$. Thus from the definitions of $\cb, d_{b_1}$ and~\eqref{eq:inverse-ham},
  \begin{align*}
    \vsig_{b_1}
    & = \cb\lphi \int_{0}^{y}\chi_{\frac{S_0}{4}} \Gamma(x) \lphi(x) x\dd x - \cb\Gamma \int_{0}^{y} \chi_{\frac{S_0}{4}} \lphi^2(x) x\dd x - d_{b_1} (1-\chi_{S_0})\lphi \\
    & = d_{b_1} \lphi -4 \Gamma - d_{b_1} \lphi                                                                                                                         \\
    & = -4 \Gamma.
  \end{align*}
  Finally for $1\le y\le 2S_0$, note that
  \[
    \ham\bigl[(1-\chi_{S_0})\lphi\bigr] = O\biggl(\frac{\one_{S_0\le y\le 2S_0}}{y^3}\biggr).
  \]
  Using this together with~\eqref{eq:lphi} and~\eqref{eq:inverse-ham}, we have
  \begin{align*}
    \vsig_{b_1}
    & = \cb O\biggl(\frac{1}{y}\int_{0}^{y} \chi_{\frac{S_0}{4}} x\dd x\biggr) - \cb\biggl(\frac{y}{4}+O\biggl(\frac{\log y}{y}\biggr)\biggr) \int_{0}^{y}\chi_{\frac{S_0}{4}} \lphi^2 x\dd x \\
    & \quad + O\biggl( \frac{d_{b_1}}{y} \int_{0}^{y} \frac{\one_{S_0\le x\le 2S_0}}{x}\dd x \biggr) + O\biggl( d_{b_1} y \int_{0}^{y} \frac{\one_{S_0\le x\le 2S_0}}{x^3} \dd x \biggr)      \\
    & = -y \frac{\int_{0}^{y}\chi_{\frac{S_0}{4}}\lphi^2(x)x\dd x}{\int \chi_{\frac{S_0}{4}}\lphi^2(x)x\dd x} + O\biggl(\frac{\jbk[y]}{\lgb}\biggr).
  \end{align*}
  This completes the proof.
\end{proof}

We define for $k\ge 1$ the scalar profiles
\begin{equation}
  \label{def:upsilon}
  \up_k = \vla T_k -(2k-1)T_k -\vsig_k
  \with
  \vsig_k = \ham^{-(k-1)}\vsig_{b_1}.
\end{equation}
Denote their vectorial counterparts by
\[
  \begin{cases}
    \up_{k,0} = \up_k\R^{k-1}e_1, \\
    \up_{0,k} = \up_k\R^{k-1}e_2,
  \end{cases}
  \andd
  \begin{cases}
    \vsig_{k,0} = \vsig_k\R^{k-1}e_1, \\
    \vsig_{0,k} = \vsig_k\R^{k-1}e_2.
  \end{cases}
\]

\begin{lemma}[Tail manipulation]
  \label{le:upsilon-deg}
  For all $k\in\mbN_+$, the functions $\up_k,\up_{k,0},\up_{0,k}\in\mcAb(2k+1,2k-1)$.

\end{lemma}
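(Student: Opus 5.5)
We reduce to the scalar statement $\up_k\in\mcAb(2k+1,2k-1)$. The vectorial versions then follow at once, since $\up_{k,0},\up_{0,k}$ are $\up_k$ times the constant vectors $\R^{k-1}e_1,\R^{k-1}e_2$ and the classes are defined componentwise. For the scalar statement we argue by induction on $k$, using the intertwining identity
\[
  \ham\vla f=\vla\ham f+2\ham f-\frac{\vla V}{y^2}f,
\]
which follows from $[\Delta,\vla]=2\Delta$ and $[V/y^2,\vla]=2V/y^2-\vla V/y^2$. Since $\ham T_{k+1}=T_k$, this gives
\[
  \ham\up_{k+1}=\up_k+\frac{\vla V}{y^2}\,T_{k+1}\;\Bigl(-\tfrac{\vla V}{y^2}\text{ correction}\Bigr),
\]
where we use $\ham\vsig_{k+1}=\vsig_k$. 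Hence
\[
  \up_{k+1}=\ham^{-1}\up_k+\ham^{-1}\Bigl(-\frac{\vla V}{y^2}T_{k+1}\Bigr)
\]
up to an element of $\ker\ham$. We fix that kernel element by comparing behaviour at the origin: all terms are $O(y^{2k+3})$ near $0$, and $\ham^{-1}$ defined by \eqref{eq:inverse-ham} picks the solution vanishing to high order. So the identity holds exactly.

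\textbf{Base case $k=1$.} For $y\le S_0/4$ we have $\up_1=\vla T_1-T_1-\cb T_1$. By \eqref{eq:t1-asymp} and Corollary~\ref{co:resona}, the expansion near $0$ is odd with leading power $y^3$, and the coefficient $\cb$ satisfies the bounds required of $\wh\delta_i$ in Definition~\itref{def:badb}{it:def-badb=1}. For large $y$, write $\vla T_1-T_1=-y+O((\log y)^2/y)$, using $T_1=-y\log y+y+\dots$. By \eqref{le:vsig-asymp},
\[
  \vsig_{b_1}=-y\,\frac{\int_0^y\chi_{S_0/4}\lphi^2x\,\dd x}{\int\chi_{S_0/4}\lphi^2x\,\dd x}+O\biggl(\frac{\jbk[y]}{\lgb}\biggr)\quad\text{on }[1,2S_0].
\]
The ratio equals $1-O\bigl(\jbk[\log(\sqrt{b_1}y)]/\lgb\bigr)$, because $\int_y\lphi^2x\,\dd x\sim\log(S_0/y)$. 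This gives $|\up_1|\les y\,F_1/\lgb+y^{-1}\jbk[\log y]^2$. For $y\ge 2S_0$ we have $\up_1=\vla T_1-T_1+4\Gamma=-y+y+O(\log^2y/y)$, which is consistent with the bound since $K_1=0$. We then check the suitable derivatives $\up_{1\,(j)}$. Here $\oa$ annihilates the leading profile $\lphi$-type behaviour and $\as$ lowers degree by one, and we use the exact formulas for $\vsig_{b_1}$ via $\ham^{-1}$ to differentiate. The $\pr_{b_1}$ derivatives come only through $\cb$, $\chi_{S_0/4}$ and $d_{b_1}$, and each produces a factor $1/(b_1\lgb)$ localized to where $\bar F$ lives.

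\textbf{Induction step.} Assume $\up_k\in\mcAb(2k+1,2k-1)$. Lemma~\itref{le:ham-act-badb}{it:ham-act-badb=ham-inv} applies since $p_2=2k-1\ge1$, and gives $\ham^{-1}\up_k\in\mcAb(2k+3,2k+1)$. For the second term, note $\vla V/y^2\in\mcA(0,-4)$, as $V$ is even and analytic with $\vla V=O(y^2)$ at $0$ and $O(y^{-2})$ at infinity; in fact it lies in $\mcA(2,-4)$. Then $T_{k+1}\vla V/y^2\in\mcA(2k+3,2k-3)$, and by Lemma~\ref{le:ham-act-adb}, $\ham^{-1}$ of it lies in $\mcA(2k+5,2k-1)\subset\mcAg(2k+5,2k-1)\subset\mcAb(2k+3,2k+1)$, using Lemma~\ref{le:inclu-madb} and Remark~\ref{re:trivial-inclu}. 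Summing the two terms closes the induction.

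\textbf{Main obstacle.} The hardest part is the base case, specifically the bounds on suitable derivatives and $b_1$-derivatives of $\up_1$ in the transition region $S_0/4\le y\le 2S_0$. There the explicit integral representation must be differentiated. The plan is to write $\up_1=(\vla T_1-T_1+4\Gamma)+(-4\Gamma-\vsig_{b_1})$. The first bracket lies in $\mcA(3,-1)$-type classes up to the $y^{-1}\log^2$ tail. For the second, $\ham(-4\Gamma-\vsig_{b_1})$ is explicit, namely $-\cb\chi_{S_0/4}\lphi$ plus a term supported in $[S_0,2S_0]$. We then estimate the suitable derivatives via $\oa$, $\as$ acting on the $\ham^{-1}$ formula, using $\oa\ham^{-1}f=\frac{1}{y\lphi}\int_0^yf\lphi x\,\dd x$.
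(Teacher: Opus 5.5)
Your overall strategy is the paper's: reduce to the scalar $\up_k$, induct using $\ham\vla f=\vla\ham f+2\ham f-\frac{\vla V}{y^2}f$, and build the base case on \eqref{eq:t1-asymp} and \eqref{le:vsig-asymp}. Your observation that the $\ker\ham$ ambiguity disappears is a precision the paper leaves implicit: both sides are $O(y^3)$ at the origin, while $\Gamma\sim-\frac{1}{4y}$ and $\lphi\sim 2y$. The garbled displayed recurrence should simply read $\ham\up_{k+1}=\up_k-\frac{\vla V}{y^2}T_{k+1}$.

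The step that fails is your treatment of the source term $\frac{\vla V}{y^2}T_{k+1}$. You assert $\frac{\vla V}{y^2}T_{k+1}\in\mcA(2k+3,2k-3)$ and then invert with Lemma~\ref{le:ham-act-adb}. No lemma of the paper gives products inside $\mcA$; Lemma~\ref{le:produ-madb} only lands in $\mcAg$. Moreover, the claim is false for $k=1$. At infinity $T_2=\frac18 y^3\log y+O(y^3)$ and $\frac{\vla V}{y^2}\sim 16y^{-4}$, so the product behaves like $2y^{-1}\log y$. But $\mcA(5,-1)$ demands $|f|\les y^{-1}$ with no logarithm.

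This is exactly the borderline case $p_2=-1$ of Lemma~\itref{le:ham-act-adb}{it:ham-act-adb=ham-inv}, whose proof needs the log-free bound. With the extra logarithm, $\oa\ham^{-1}f=\frac{1}{y\lphi}\int_0^y f\lphi x\,\dd x$ grows like $(\log y)^2$, so $\ham^{-1}f\notin\mcA(7,1)$. Separately, your side remark $\frac{\vla V}{y^2}\in\mcA(2,-4)$ is wrong, since this function equals $-16$ at the origin; you never use it, though.

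The repair is the paper's route. Since $\frac{\vla V}{y^2}\in\mcAg(0,-4)$, Lemma~\ref{le:produ-madb} gives $\frac{\vla V}{y^2}T_{k+1}\in\mcAg(2k+3,2k-3)$. Lemma~\itref{le:inclu-madb}{it:inclu-madb=Ab} and Remark~\ref{re:trivial-inclu} then give $\mcAg(2k+3,2k-3)\subset\mcAb(2k+3,2k-1)\subset\mcAb(2k+1,2k-1)$. Because $2k-1\ge 1$, Lemma~\itref{le:ham-act-badb}{it:ham-act-badb=ham-inv} applies to $\up_k-\frac{\vla V}{y^2}T_{k+1}\in\mcAb(2k+1,2k-1)$ and yields $\up_{k+1}\in\mcAb(2k+3,2k+1)$. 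The point is that $\mcAb$ tolerates the extra powers of $\jbk[\log y]$ that $\mcA$ forbids once the spatial degree is negative.
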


\begin{proof}
  It suffices to prove $\up_k\in\mcAb(2k+1,2k-1)$ for all $k\ge 1$, since $\R^{k-1}e_1, \R^{k-1}e_2$ are fixed rotations, which do not affect the regularity. We proceed by induction.

  \emph{Case $k=1$}. First for $y\le 1$, by Lemma~\ref{le:vsig},
  \[
    \up_1 = \vla T_1 -(1+\cb)T_1.
  \]
  Since $T_1\in\mcA(3,1)$, the function $\up_1$ satisfies Definition~\itref{def:badb}{it:def-badb=1} with $I=2$, and $\up_1\in\mcAb(3,1)$ for $y\le 1$.
  Then for $1\le y\le 2S_0$, from~\eqref{eq:t1-asymp} we find
  \[
    \vla T_1 - T_1
    = - y + O\biggl(\frac{(\log y)^2}{y}\biggr),
  \]
  which together with~\eqref{le:vsig-asymp} yields
  \[
    \up_1(y)
    = -y +y\frac{\int_0^y\chi_{\frac{S_0}{4}}\lphi^2(x) x \dd x}{\int\chi_{\frac{S_0}{4}}\lphi^2(x) x \dd x} +O\biggl(\frac{\jbk[y]}{\lgb}\biggr) +O\biggl(\frac{(\log y)^2}{y}\biggr).
  \]
  Observe that
  \begin{equation}
    \label{pf:upsilon-deg=frac-cancel}
    1 - \frac{\int_0^y\chi_{\frac{S_0}{4}}\lphi^2(x) x \dd x}{\int\chi_{\frac{S_0}{4}}\lphi^2(x) x \dd x}
    = O\Biggl(\frac{\int_y^{S_0}\chi_{\frac{S_0}{4}}\lphi^2(x) x \dd x}{\lgb}\Biggr)
    = O\biggl(\frac{\left|\log (\sqrt{b_1}y)\right|}{\lgb} \biggr).
  \end{equation}
  Thus for $y\le 2S_0$,
  \[
    |\up_1(y)| \les \frac{y}{\lgb}\,F_1 + \frac{(\log y)^2}{y}.
  \]
  Now for $y\ge 2S_0$, notice that the left-hand side of~\eqref{pf:upsilon-deg=frac-cancel} vanishes, and hence the above bound remains valid.

  Next, we estimate the suitable derivatives. From the definition of $\vsig_{b_1}$, \eqref{pf:adm=au-u} and~\eqref{pf:upsilon-deg=frac-cancel}, we have for $1\le y\le 2S_0$ that
  \begin{align*}
    \oa \vsig_{b_1}
    & = \frac{1}{y\lphi} \int_0^y \Bigl\{ \cb\chi_{\frac{S_0}{4}}\lphi -d_{b_1}\ham\bigl[(1-\chi_{S_0})\lphi\bigr] \Bigr\} \lphi(x)x \dd x                                              \\
    & = \frac{4}{y\lphi} \frac{\int_0^y\chi_{\frac{S_0}{4}}\lphi^2(x) x \dd x}{\int \chi_{\frac{S_0}{4}}\lphi^2(x) x \dd x} + d_{b_1} O\biggl(\frac{\one_{S_0\le y\le 2S_0}}{y^2}\biggr) \\
    & = 2 +O\biggl(\frac{\left|\log (\sqrt{b_1}y)\right|}{\lgb}\biggr)+O\biggl(\frac{1}{y^2}\biggr).
  \end{align*}
  For $y\ge 2S_0$,
  \[
    \oa \vsig_{b_1} = \frac{4}{y\lphi} = 2 +O\biggl(\frac{1}{y^2}\biggr).
  \]
  Moreover, again by~\eqref{eq:t1-asymp},
  \[
    \oa(\vla T_1 - T_1)
    = 2+O\biggl(\frac{(\log y)^2}{y^2}\biggr)
    \for y> 1.
  \]
  Inserting these into~\eqref{def:upsilon}, we obtain
  \[
    |\oa \up_1|
    = |\oa(\vla T_1 - T_1) - \oa\vsig_{b_1}|
    \les \frac{1}{\lgb}\,F_0 + \frac{(\log y)^2}{y^2}
    \for y>1.
  \]
  The results for higher-order suitable derivatives are obtained similarly by a recursion argument.

  Finally, we treat the $b_1$-derivatives. It suffices to bound the $b_1$-derivatives of $\vsig_{b_1}$, since $\vsig_{b_1}$ is the only part of $\up_1$ that depends on $b_1$. For $y\ge 2S_0$, from the asymptotics in Lemma~\ref{le:vsig},
  \[
    \frac{\pr^\ell \up_1}{\pr b_1^\ell} = -\frac{\pr^\ell \vsig_{b_1}}{\pr b_1^\ell}
    = \frac{\pr^\ell}{\pr b_1^\ell} (-4\Gamma) = 0
    \for \ell\ge 1,
  \]
  and thus the desired bound holds. For $1\le y\le 2S_0$, by definition,
  \begin{align*}
    \frac{\pr^\ell \up_1}{\pr b_1^\ell} = -\frac{\pr^\ell \vsig_{b_1}}{\pr b_1^\ell}
    & = -\lphi\int_0^y \frac{\pr^\ell}{\pr b_1^\ell}\Bigl\{\cb\chi_{\frac{S_0}{4}}\Bigr\} \Gamma(x)\lphi(x) x\dd x                                                                          \\
    & \quad +\Gamma\int_0^y \frac{\pr^\ell}{\pr b_1^\ell} \Bigl\{\cb\chi_{\frac{S_0}{4}}\Bigr\} \lphi^2(x) x\dd x + \frac{\pr^\ell}{\pr b_1^\ell}\Bigl\{d_{b_1}(1-\chi_{S_0})\Bigr\} \lphi,
  \end{align*}
  where
  \begin{gather*}
    \biggl|\frac{\pr^\ell}{\pr b_1^\ell}\Bigl\{\cb\chi_{\frac{S_0}{4}}\Bigr\}\biggr|
    \les \frac{\one_{y\le 2S_0}}{b_1^\ell\lgb^2} +\frac{\one_{S_0/4 \le y\le 2S_0/2}}{b_1^{\ell}\lgb}, \\
    \biggl|\frac{\pr^\ell}{\pr b_1^\ell}\Bigl\{ d_{b_1}(1-\chi_{S_0}) \Bigr\}\biggr|
    \les \frac{\one_{S_0\le y\le 2S_0}}{b_1^{\ell+1}\lgb}.
  \end{gather*}
  Using~\eqref{eq:inverse-ham} and Leibniz, we have
  \[
    \biggl|\frac{\pr^\ell \up_1}{\pr b_1^\ell}\biggr|
    \les \frac{y\jbk[\log y]}{b_1^{\ell} \lgb^2} \one_{y\le 2S_0}
    \les \frac{y}{b_1^{\ell}\lgb}\bar{F}_1.
  \]
  The $b_1$-derivatives for $(\up_1)\sd[k]$ are obtained similarly by performing the estimates on separate regions, which gives
  \[
    \biggl|\frac{\pr^\ell (\up_1)\sd[k]}{\pr b_1^\ell}\biggr|
    \les \frac{y^{1-k}\jbk[\log y]}{b_1^{\ell} \lgb^2} \one_{y\le 2S_0}
    \les \frac{y^{1-k}}{b_1^{\ell}\lgb}\bar{F}_{1-k},
    \for k\ge 0.
  \]
  This yields the estimate for $y\ge1$ and proves that $\up_1\in\mcAb(3,1)$.

  \emph{Case $k \to k+1$}. For any smooth function $f$, we have
  \[
    \ham (\vla f) = \vla\ham f +2\ham f -\frac{\vla V}{y^2} f.
  \]
  This yields the recurrence formula:
  \begin{align*}
    \ham \up_{k+1}
    & = \ham(\vla T_{k+1}) -(2k+1)\ham T_{k+1} -\ham\vsig_{k+1}                   \\
    & = \vla\ham T_{k+1} -(2k-1)\ham T_{k+1} -\frac{\vla V}{y^2} T_{k+1} -\vsig_k \\
    & = \up_k -\frac{\vla V}{y^2}T_{k+1}.
  \end{align*}
  Then $\up_{k+1}$ is obtained via an application of~\eqref{eq:inverse-ham}. Thus, in view of Lemma~\itref{le:ham-act-badb}{it:ham-act-badb=ham-inv}, it suffices to show that
  \[
    \up_k,\; \frac{\vla V}{y^2}T_{k+1} \in \mcAb(2k+1,2k-1).
  \]
  The function $\up_k$ satisfies this by induction. For the other term, note that
  \[
    \frac{\vla V}{y^2}
    = -\frac{16(1-y^2)}{(1+y^2)^3}
    = \begin{cases}
      -16 + O(y^2)
      & \mbox{for $y\le 1$}, \\
      \frac{16}{y^4}\bigl(1+O\bigl(\frac{1}{y^2}\bigr)\bigr)
      & \mbox{for $y\ge1$}.
    \end{cases}
  \]
  Hence $y^{-2}\vla V\in\mcAg(0,-4)$. This, together with $T_{k+1}\in\mcA(2k+3, 2k+1)$ by Corollary~\ref{co:resona} and Lemma~\ref{le:produ-madb}, implies the desired result
  \[
    \frac{\vla V}{y^2}T_{k+1}
    \in\mcAg(2k+3, 2k-3)\subset\mcAb(2k+1, 2k-1).
  \]
  This concludes the proof.
\end{proof}

\subsection{Homogeneous functions}
\label{sub:homo-func}
In this subsection, we introduce the homogeneous function spaces $\mcH, \mcHb, \mcHg$, and also collect some basic tools for the error estimate in the construction of the approximate solution of Subsection~\ref{sub:app-prof}.

Let $k\in\mbZ_+$. For any $\rho\in\mbN^k$, we define the quantities
\[
  |\rho|_1 = \sum_{\ell=1}^{k} \rho_{\ell},
  \andd
  |\rho|_2 = \sum_{\ell=1}^{k} \ell\rho_{\ell}.
\]

\begin{definition}[Function spaces $\mcH, \mcHb, \mcHg$]
  \label{def:homo}
  \label{def:homo-b}
  \label{def:homo-g}
  Given $(p_1,p_2,p_3)\in\mbZ\times\mbZ\times\mbZ$, we define the function spaces $\mcH(p_1,p_2,p_3)$, $\mcHb(p_1,p_2,p_3)$ and $\mcHg(p_1,p_2,p_3)$ as the sets of all smooth radial functions $f(a, b, y)$ satisfying
  \[
    f(a, b, y) = \sum_{|\rho|_2+|\tau|_2=p_1} a^{\rho} b^{\tau} \wh{f}_{\rho,\tau}(b_1,y), \footnotemark
  \]
  where $\rho\in\mbN^L, \tau\in\mbZ\times\mbN^{L-1}$ and the coefficients $\wh{f}_{\rho,\tau}$ belong to $\mcA(p_2,p_3)$, $\mcAb(p_2,p_3)$ and $\mcAg(p_2,p_3)$, respectively, in the sense of Definitions~\ref{def:adb}, \ref{def:badb} and \ref{def:ag}. We call $p_1$ the parameter degree of $f$ and $(p_2,p_3)$ its spatial degree.
  \footnotetext{The sum contains only finitely many nonzero terms.}
\end{definition}

\begin{lemma}[Inclusion for $\mcH,\mcHb$ and $\mcHg$]
  \label{le:incl-homo}
  The following inclusions hold:
  \begin{enumerate}[label={\rm (\roman*)}]
    \item \label{it:incl-homo=triv}
      trivial inclusion (the same inclusion holds for $\mcHb$ and $\mcHg$):
      \[
        \mcH(p_1,p_2,p_3) \subset \mcH(p_1,p_2-2,p_3) \cap \mcH(p_1,p_2,p_3+2);
      \]
    \item \label{it:incl-homo=incl}
      inclusion between spaces:
      \begin{align*}
        \mcH(p_1,p_2,p_3)  & \subset \mcHg(p_1,p_2,p_3),                              \\
        \mcHb(p_1,p_2,p_3) & \subset \mcHg(p_1,p_2,p_3) \subset \mcHb(p_1,p_2,p_3+2).
      \end{align*}
  \end{enumerate}
\end{lemma}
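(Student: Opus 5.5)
The plan is to reduce both statements to the corresponding inclusions for the coefficient classes. By Definition~\ref{def:homo}, every $f\in\mcH(p_1,p_2,p_3)$ (resp.\ $\mcHb$, $\mcHg$) is a finite sum
\[
  f(a,b,y)=\sum_{|\rho|_2+|\tau|_2=p_1} a^{\rho}b^{\tau}\wh f_{\rho,\tau}(b_1,y),
\]
with coefficients $\wh f_{\rho,\tau}$ in $\mcA(p_2,p_3)$ (resp.\ $\mcAb$, $\mcAg$). Membership in the homogeneous classes depends only on the parameter degree $p_1$ and on the class of the coefficients. So it suffices to leave the monomials $a^\rho b^\tau$ and the index constraint $|\rho|_2+|\tau|_2=p_1$ unchanged, and to move each coefficient into the target coefficient class. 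Since the sum is finite, this requires no uniformity beyond finitely many implicit constants.

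For \ref{it:incl-homo=triv}, I will apply Remark~\ref{re:trivial-inclu}, i.e.\ \eqref{re:trivial-inclu-form} and its analogues for $\mcAb$ and $\mcAg$, to each coefficient. This gives $\wh f_{\rho,\tau}\in\mcA(p_2-2,p_3)\cap\mcA(p_2,p_3+2)$. Reassembling the same finite sum then shows $f\in\mcH(p_1,p_2-2,p_3)$ and $f\in\mcH(p_1,p_2,p_3+2)$. The argument is identical with $\mcA$ replaced by $\mcAb$ or $\mcAg$.

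For \ref{it:incl-homo=incl}, I will apply Lemma~\ref{le:inclu-madb} coefficientwise. Part \ref{it:inclu-madb=A} of that lemma gives $\mcA(p_2,p_3)\subset\mcAg(p_2,p_3)$, which yields $\mcH\subset\mcHg$. Part \ref{it:inclu-madb=Ab} gives $\mcAb(p_2,p_3)\subset\mcAg(p_2,p_3)\subset\mcAb(p_2,p_3+2)$, which yields the chain $\mcHb\subset\mcHg\subset\mcHb(p_1,p_2,p_3+2)$.

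One point needs a check. Coefficients in $\mcA$ do not depend on $b_1$, while the classes $\mcAb$ and $\mcAg$ allow $b_1$-dependence and impose bounds on $\pr_{b_1}^\ell$. For a $b_1$-independent coefficient these bounds are trivially satisfied, since all $b_1$-derivatives vanish, and this is already implicit in Lemma~\ref{le:inclu-madb}\ref{it:inclu-madb=A}. No real obstacle is expected: the only thing to verify is that the decomposition is kept verbatim, so that the multi-index sets $\rho\in\mbN^L$, $\tau\in\mbZ\times\mbN^{L-1}$ and the parameter degree are unaffected.
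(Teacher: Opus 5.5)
Your proposal is correct and follows the argument the paper intends; the paper states Lemma~\ref{le:incl-homo} without a written proof, as an immediate consequence of Remark~\ref{re:trivial-inclu} and Lemma~\ref{le:inclu-madb} applied to each coefficient of the finite monomial decomposition in Definition~\ref{def:homo}, with the parameter degree left unchanged. Your check that $b_1$-independent $\mcA$-coefficients satisfy the $\pr_{b_1}$-bounds trivially is exactly what part (i) of Lemma~\ref{le:inclu-madb} relies on.
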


\begin{lemma}[Products with $\mcHg$]
  \label{le:prod-homo}
  Let $f\in\mcHg(p_1,p_2,p_3)$. We have:
  \begin{enumerate}[label={\rm (\roman*)}]
    \item \label{it:prod-homo=H}
      if $g\in\mcH(q_1,q_2,q_3)$, then $fg\in\mcHg(p_1+q_1, p_2+q_2, p_3+q_3)$;
    \item \label{it:prod-homo=Hb}
      if $g\in\mcHb(q_1,q_2,q_3)$, then $fg\in\mcHg(p_1+q_1, p_2+q_2, p_3+q_3)$.

  \end{enumerate}
\end{lemma}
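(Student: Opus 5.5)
The plan is to reduce Lemma~\ref{le:prod-homo} to the scalar product rules of Lemma~\ref{le:produ-madb} by expanding both factors in monomials of the parameters and checking that the degrees add. By Definition~\ref{def:homo}, write
\[
  f = \sum_{|\rho|_2+|\tau|_2=p_1} a^{\rho}b^{\tau}\wh f_{\rho,\tau},
  \qquad
  g = \sum_{|\rho'|_2+|\tau'|_2=q_1} a^{\rho'}b^{\tau'}\wh g_{\rho',\tau'},
\]
where both sums are finite, $\wh f_{\rho,\tau}\in\mcAg(p_2,p_3)$, and $\wh g_{\rho',\tau'}\in\mcA(q_2,q_3)$ in case~\ref{it:prod-homo=H} or $\wh g_{\rho',\tau'}\in\mcAb(q_2,q_3)$ in case~\ref{it:prod-homo=Hb}.

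Multiplying out gives
\[
  fg=\sum a^{\rho+\rho'}b^{\tau+\tau'}\,\wh f_{\rho,\tau}\wh g_{\rho',\tau'}.
\]
The multi-indices $\rho+\rho'$ lie in $\mbN^L$ and $\tau+\tau'$ lie in $\mbZ\times\mbN^{L-1}$, since both index sets are closed under addition. The weighted degree $|\cdot|_2$ is linear, so each monomial satisfies $|\rho+\rho'|_2+|\tau+\tau'|_2=p_1+q_1$. Several pairs may produce the same monomial; I will regroup them, and the new coefficient is a finite sum of products $\wh f\wh g$. Since each $\mcAg(p_2+q_2,p_3+q_3)$ is a linear space, it suffices to treat one product at a time.

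For a single product, Lemma~\itref{le:produ-madb}{it:produ-madb=A} in case~\ref{it:prod-homo=H}, or Lemma~\itref{le:produ-madb}{it:produ-madb=Ab} in case~\ref{it:prod-homo=Hb}, gives $\wh f_{\rho,\tau}\wh g_{\rho',\tau'}\in\mcAg(p_2+q_2,p_3+q_3)$. This yields $fg\in\mcHg(p_1+q_1,p_2+q_2,p_3+q_3)$.

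The main subtlety is linearity of $\mcAg$, which is not recorded explicitly. For $y\ge1$ it is clear: the bounds are preserved under finite sums once one takes the maximum of the constants $c_{p_2}$, since all the bounds are monotone in that exponent. For $y\le1$, one concatenates the finite expansions $\sum_i\wh\delta_i\wh f_i$ from Definition~\itref{def:badb}{it:def-badb=1}. A second point to check is that the coefficients $\wh f_{\rho,\tau}$ depend on $b_1$ while $b_1$ also appears in the monomial $b^{\tau}$. This causes no difficulty, because the definition allows $\tau_1\in\mbZ$ and treats $\wh f$ as a function of $(b_1,y)$, so no separation of the $b_1$ dependence is needed.
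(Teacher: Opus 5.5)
Your proposal is correct and matches what the paper intends. The paper states Lemma~\ref{le:prod-homo} without proof, as an immediate consequence of Lemma~\ref{le:produ-madb}: multiply out the monomials $a^{\rho}b^{\tau}$, use additivity of $|\cdot|_2$, and apply the scalar product rule to each coefficient. Your check that $\mcAg(p_2+q_2,p_3+q_3)$ is a linear space, which is needed to regroup coefficients of equal monomials, supplies the one detail the paper leaves implicit.
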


\begin{lemma}[$L^2$ norm of $\mcH,\mcHb,\mcHg$]
  \label{le:l2-norm-homo}
  Let $p_1, p_2\ge0$, and $p_3\ge1$. The following estimates hold:
  \begin{enumerate}[label={\rm (\roman*)}]
    \item \label{it:l2-norm-homo=H}
      if $f\in\mcH(p_1,p_2,p_3)\cup\mcHg(p_1,p_2,p_3)$, then
      \[
        \|f\|_{L^2(y\le 2S_1)}^2 \les b_1^{2p_1-p_3-1}\lgb^{C};
      \]
    \item \label{it:l2-norm-homo=Hb}
      if $f\in\mcHb(p_1,p_2,p_3)$, then
      \[
        \|f\|_{L^2(y\le 2S_1)}^2 \les
        b_1^{2p_1-p_3-1}\lgb^{2p_3-4}.
      \]
  \end{enumerate}
\end{lemma}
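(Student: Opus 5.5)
The plan is to reduce Lemma~\ref{le:l2-norm-homo} to the scalar estimates of Lemma~\ref{le:l2-norm-adb}, using only the a priori bounds $|a_k|+|b_k|\les b_1^k$ and the finiteness of the sum in Definition~\ref{def:homo}. Write $f=\sum_{|\rho|_2+|\tau|_2=p_1}a^{\rho}b^{\tau}\wh f_{\rho,\tau}$.

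First, I would bound each monomial coefficient. Since $|a_\ell|\les b_1^{\ell}$, we get $|a^{\rho}|\les\prod_\ell b_1^{\ell\rho_\ell}=b_1^{|\rho|_2}$. For $b^{\tau}$, the index set $\tau\in\mbZ\times\mbN^{L-1}$ allows a negative power of $b_1$ only. This is still consistent, because $b_1^{\tau_1}$ is exactly of size $b_1^{1\cdot\tau_1}$ (the bound is an equality), while the other factors satisfy $|b_\ell^{\tau_\ell}|\les b_1^{\ell\tau_\ell}$. Hence $|a^{\rho}b^{\tau}|\les b_1^{|\rho|_2+|\tau|_2}=b_1^{p_1}$, uniformly in $y$. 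Squaring gives $b_1^{2p_1}$. By the triangle inequality and the finiteness of the sum, this yields
\[
\|f\|_{L^2(y\le 2S_1)}^2\les b_1^{2p_1}\max_{\rho,\tau}\|\wh f_{\rho,\tau}\|_{L^2(y\le 2S_1)}^2 .
\]

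Second, I would apply Lemma~\ref{le:l2-norm-adb} with $(p_2,p_3)$ in place of $(p_1,p_2)$; the hypotheses $p_2\ge0$ and $p_3\ge1$ match exactly. In case~(i), each $\wh f_{\rho,\tau}\in\mcA(p_2,p_3)\cup\mcAg(p_2,p_3)$ (for $\mcH$ the coefficients lie in $\mcA\subset\mcAg$ by Lemma~\ref{le:inclu-madb}). Then $\|\wh f_{\rho,\tau}\|^2\les b_1^{-p_3-1}\lgb^{C}$, and multiplying by $b_1^{2p_1}$ gives the claim. In case~(ii) the coefficients lie in $\mcAb(p_2,p_3)$, so Lemma~\ref{le:l2-norm-adb}\ref{it:l2-norm-adb=Ab} gives $b_1^{-p_3-1}\lgb^{2p_3-4}$, and the claim follows the same way.

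The only point needing care is that the implicit constants are uniform over the finitely many multi-indices and that the $\mcAb$, $\mcAg$ coefficients depend on $b_1$. However, the bounds in Lemma~\ref{le:l2-norm-adb} are already uniform in $b_1$ by definition, so no genuine obstacle arises. If the class convention meant a sum over infinitely many indices, one would additionally need summability, but the footnote to Definition~\ref{def:homo} rules this out.
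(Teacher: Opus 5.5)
Your proposal is correct and is the argument the paper relies on. The paper states Lemma~\ref{le:l2-norm-homo} without a separate proof, as an immediate consequence of Lemma~\ref{le:l2-norm-adb} applied to each coefficient, combined with $|a^{\rho}b^{\tau}|\lesssim b_1^{|\rho|_2+|\tau|_2}=b_1^{p_1}$ from the a priori bounds $|a_k|+|b_k|\lesssim b_1^k$; your treatment of the possibly negative exponent $\tau_1$ and of the finiteness of the sum covers the only points that need checking.
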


\subsection{Approximate profiles}
\label{sub:app-prof}
In this subsection, we construct the \homo\ profile $\x$, which completes the construction of the approximate solution~$\wz$. The resulting error is also estimated. This is based on the formal computation of Lemma~\ref{le:form-comp} and a systematic use of the admissible and homogeneous spaces introduced in Subsections~\ref{sub:adm-func}--\ref{sub:homo-func}.

\begin{lemma}[Approximate profiles]
  \label{le:app-sol}
  Given any $L\in\mbN_+$, let $M=M(L)>0$ be a large enough constant. There exist a constant $b_{1}^{\ast} = b_{1}^{\ast}(M)>0$ and a vector-valued function $\x = \sum_{k=2}^{L+2} \xx[k]$ with
  \[
    \xx[k] \in \mcHb(k,2k+1,2k-1)
    \for
    2\le k\le L+2,
  \]
  such that the vector $\wz = \vp + \x + \gamma_0 e_3$ is an approximate solution to \eqref{eq:vect-form} in the sense that
  \[
    \prs\wz -\frac{\la_s}{\la}\vla\wz +\vtha_s\z\R\wz +\hj[\wz](\mham\wz+\hp\lphi) = \mod + \psiz,
  \]
  where $\mod$ and $\psiz$ are given in Lemma~\ref{le:form-comp}. Furthermore, $\psiz$ admits:
  \begin{enumerate}[label={\rm (\roman*)}]
    \item \label{it:app-sol=sob}
      the Sobolev bounds:
      \begin{gather*}
        \int_{y\le 2S_1}|(\R\mham)^{L+1}\psiz|^2
        \les \frac{b_1^{2L+4}}{\lgb^2},  \\
        \int_{y\le 2S_1}|\ma\mhamp^L\psiz|^2
        \les \delta(b_{1}^{\ast})\frac{b_1^{2L+2}}{\lgb^2}, \\
        \int_{y\le 2S_1}|\mhamp^k\psiz|^2 \les b_1^{2k+2}\lgb^C
      \end{gather*}
      for $1\le k\le L$ and some universal constant $C>0$;
    \item \label{it:app-sol=weig}
      weighted bound:
      \[
        \sum_{\ell=0}^{k} \int_{y\le 2S_1} \frac{|\pry^{\ell}\psiz|^2}{\jbk[y]^{2(k-\ell)}}
        \les \begin{cases}
          b_1^{2L+4}               & \mbox{for}\;\; k= 2L+2, \\
          b_1^{k+2}\lgb^{4L+2-2k}, & 0\le k\le 2L+1;
        \end{cases}
      \]
    \item \label{it:app-sol=local}
      local bound: let $\xm_M$ and $\xm_{\ell,0},\xm_{0,\ell}$ be defined by \eqref{eq:local-reson} and \eqref{eq:vec-local-reson}. Then
      \begin{align*}
        |(\psiz, \xm_M\eone)| & + |(\psiz, \xm_M\etwo)|                                                      \\
        & + \sum_{\ell=1}^L \Bigl(|(\psiz,\xm_{\ell,0})| +|(\psiz,\xm_{0,\ell})|\Bigr)
        \les M^C b_1^{L+\frac32}.
      \end{align*}
  \end{enumerate}
\end{lemma}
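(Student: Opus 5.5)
The plan is to construct $\x$ order by order in the parameter degree, using the formal computation of Lemma~\ref{le:form-comp} and the degree calculus of the homogeneous classes (Lemmas~\ref{le:incl-homo}, \ref{le:prod-homo}). By Remark~\ref{re:mhamp-x-preview}, $\psizone$ has the form $\hj\{\mhamp\x - F(\vp,\x)\}$, where $F$ collects the bracketed terms. These are: the rotated combination $\R\si_{L+1}(b_1\mcT\wzp + a_1\mcP\x - a_1(1+\z)\R\vp - \gamma_0\mcQ\vp + \mcQ\x)$, and the geometric terms $\zt\pry\gamma_0 e_1$ and $\si_{L+1}D_0\wzp$.

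First we check the parameter-degree bookkeeping. Note that $\vp\in\mcH(k,2k+1,2k-1)$ summed over $k$. The operators $b_1\mcT$, $a_1\mcP$ and $\mcQ$ raise the parameter degree by one while preserving spatial degree; this uses Lemma~\itref{le:ham-act-badb}{it:ham-act-badb=vla} and \itref{le:ham-act-badb}{it:ham-act-badb=b1}. Also $\gamma_0 = f_{L+1}(|\wzp|^2)$ has parameter degree at least $2$, with spatial degrees $(4k+2,4k-2)$ at degree $2k$.

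The key cancellation is at the linear level. The term $b_1\mcT\vp$ is, by the definition of $\mcT$, exactly $b_1\sum_k(a_k\up_{k,0}+b_k\up_{0,k})$ up to the $\vsig_k$ pieces. Those pieces are absorbed because the choice of $\cb$ in \eqref{eq:cb} produces the tail $\vsig_{b_1}$. Lemma~\ref{le:upsilon-deg} then puts this term in $\mcHb(k+1,2k+1,2k-1)$, which is the source of the $\lgb^{-1}$ gain.

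We then solve $\mhamp\xx[k] = F_k$ inductively for $k=2,\dots,L+2$. Here $F_k$ is the degree-$k$ part of $F$; it depends only on $\vp$ and on $\xx[j]$ with $j<k$, because every term of $F$ other than $\mhamp\x$ carries at least one extra parameter factor. For this step:
\begin{itemize}
\item We would verify $F_k\in\mcHb(k,2k-1,2k-3)$, using Lemma~\ref{le:prod-homo}. The product terms (e.g.\ $\gamma_0\mcQ\vp$ and $D_0\wzp$) are a priori only in $\mcHg$. However, their spatial growth at infinity is lower by at least two powers, since $\zt\sim y^{-2}$ and $\gamma_0$ has the extra factors. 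So they fit into $\mcHb$ with the required indices via $\mcHg(p_1,p_2,p_3)\subset\mcHb(p_1,p_2,p_3+2)$.
\item We then set $\xx[k]=\mhamp^{-1}F_k$ componentwise via \eqref{eq:inverse-ham}, i.e.\ $\ham^{-1}$ on the first two components. Lemma~\itref{le:ham-act-badb}{it:ham-act-badb=ham-inv} gives $\xx[k]\in\mcHb(k,2k+1,2k-1)$, and this needs $2k-3\ge1$, which holds for $k\ge2$.
\end{itemize}
After this, $\psizone$ reduces to terms of parameter degree at least $L+3$, together with truncation remainders. The remainder $\psiztwo$ involves $\rrr$, which has degree at least $2(L+2)$ by \eqref{pf:form-comp=rrr-expr}, or $(-\gamma_0)^{L+2}$, which has degree at least $2L+4$. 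So all of $\psiz$ lies in a finite sum of $\mcHg(p,2p-1,2p-3)$ classes with $p\ge L+3$, plus the $\mcHb$ leftovers at degree $L+2$ and $L+3$.

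For the bounds, we apply $(\R\mham)^{L+1}$, $\ma\mhamp^L$ and $\mhamp^k$ using Lemma~\itref{le:ham-act-badb}{it:ham-act-badb=ham}. Since $\mham - \mhamp$ couples in $\gamma$ through $\zt\sim y^{-2}$, it only improves decay. We then estimate $L^2(y\le 2S_1)$ by Lemma~\ref{le:l2-norm-homo}. For instance, a degree-$(L+2)$ $\mcHb$ term with spatial degree $2L+3$, hit by $2L+2$ derivatives, lands in spatial degree $1$. Then Lemma~\itref{le:l2-norm-homo}{it:l2-norm-homo=Hb} gives $b_1^{2L+4-2}\lgb^{-2}$ times $b_1^{2}$ from the $a,b$ scaling, which is consistent with $b_1^{2L+4}\lgb^{-2}$.

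The main obstacle is the first Sobolev bound. It needs the sharp $\lgb^{-2}$ with no extra logarithmic loss, so it can only be obtained from $\mcHb$ pieces with exponent $2p_3-4\le -2$, i.e.\ top spatial degree $p_3=1$. Terms landing in $\mcHg$ at the top order must instead gain a full extra $b_1$ from higher parameter degree. We expect this case analysis, and the delicate term $\ma\mhamp^L\psiz$ with its $\delta(\bs)$, to be the hardest; for the latter we would exploit that $\oa$ annihilates the leading $y^{-1}$ tail, where $\lphi$ is in the kernel.

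The weighted bound follows from the same class memberships, using pointwise bounds on $\pry^\ell$. The inner-product bound is a consequence of the small support of $\xm_M$ and $\xm_{\ell,0}$ (radius $\les M$ or $S_0$-localized), which gives $M^C b_1^{L+3/2}$ from the leading degree-$(L+1)$ terms. These leading terms vanish after the solve, except for the tail, which contributes at most $b_1^{L+1}\cdot b_1^{1/2}$.
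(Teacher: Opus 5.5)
Your construction of $\x$ matches the paper's: you solve $\xx[k]=\ham^{-1}$ of the degree-$k$ source recursively, track classes with Lemma~\ref{le:prod-homo} and the inclusions, and use $2k-3\ge1$ for the inverse. The error analysis, however, has two genuine gaps.

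\textbf{The tail pieces are not absorbed.} Rewriting $b_1\mcT\vp$ through $\up_k$ leaves the pieces $b_1a_k(\vsig_k-\cb T_k)\R^{k-1}e_1$ and $b_1b_k(\vsig_k-\cb T_k)\R^{k-1}e_2$, $1\le k\le L$, and nothing absorbs them. The choice \eqref{eq:cb} of $\cb$ only makes $\vsig_k-\cb T_k$ vanish on $y\le S_0/4$; beyond that it has size up to $y^{2k-1}$. Putting it into the source of the solve would reinstate exactly the slow tail beyond $2S_0$ that $\up_k$ was introduced to remove, and would push $\xx[k+1]$ out of $\mcHb(k+1,2k+3,2k+1)$. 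So these terms, multiplied by $\hj\R\sib$, remain in $\psizone$ at parameter degrees $2,\dots,L+1$. Your claim that $\psizone$ reduces to terms of degree $\ge L+3$ leaves them unestimated. They sit exactly at the threshold of the first Sobolev bound. What you need is that $\vsig_{b_1}=-4\Gamma\in\ker\ham$ for $y\ge 2S_0$ and $\ham^{k+1}T_k=0$. These give $|\ham^{\ell}(\vsig_k-\cb T_k)|\lesssim y^{2(k-\ell)-1}\lgb^{-1}\one_{y\sim S_0}$ for $\ell\ge k+1$, hence a contribution $b_1^{2k+2}\lgb^{-2}\int_{y\sim S_0}y^{4k-4L-6}\,y\,\dd y\lesssim b_1^{2L+4}\lgb^{-2}$, with no room to spare. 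The support in $y\ge S_0/4$ is what gives (ii) for these terms; the pieces carrying $\sib_\ell$ with $\ell\ge2$, or a factor $\wz$, are smaller. The same support shows they do not enter (iii) at all, since $\xm_M,\xm_{\ell,0},\xm_{0,\ell}$ are all supported in $y\le 2M$ (none is $S_0$-localized). Then (iii) follows from (ii) by Cauchy--Schwarz.

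\textbf{The high-degree remainder class is too weak.} You place it in $\mcHg(p,2p-1,2p-3)$ for all $p\ge L+3$, and propose that such terms ``gain a full extra $b_1$ from higher parameter degree.'' They do not. Lemma~\ref{le:l2-norm-homo} gives $b_1^{2p_1-p_3-1}\lgb^C$, which depends only on $2p_1-p_3$. After $(\R\mham)^{L+1}$, every term of $\mcHg(p,2p-1,2p-3)$ has second spatial index $2p-2L-5$ and contributes $b_1^{2L+4}\lgb^{C}$ for every $p$. This violates the $\lgb^{-2}$ in the first Sobolev bound and the log-free case $k=2L+2$ of (ii). What is needed is two extra powers of decay: $\tea_p\in\mcHg(p,2p-1,2p-5)$ for $p\ge L+4$. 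This holds for a structural reason. Since $\vp$ stops at degree $L$ and $\x$ at $L+2$, the linear terms $b_1\mcT\x$, $a_1\mcP\x$, $\mcQ\x$ reach only degree $L+3$. Hence every term of degree $\ge L+4$ carries one of the following:
\begin{itemize}
\item a factor $\sib_{k_1}$ with $k_1\ge2$, or a factor $\gamma_{0,\ell}$, each in $\mcHg(\cdot,2\cdot+2,2\cdot-2)$;
\item the operator $\zt(\pry+\z/y)$, which gains three powers.
\end{itemize}
You already use this extra decay to fit the product terms into $\mcHb$ for $k\le L+2$. Beyond degree $L+3$ it must be kept in the $\mcHg$ index rather than spent on the inclusion $\mcHg(p_1,p_2,p_3)\subset\mcHb(p_1,p_2,p_3+2)$. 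The same bookkeeping is needed for $\psiztwo$: each factor $|\wzp|^2$ or $\gamma_0$ in $\rrr$ and $(-\gamma_0)^{L+2}$ gains two powers.

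\textbf{Smaller corrections.} The only term at the threshold is $\R b_1\mcT\xx[L+2]\in\mcHb(L+3,2L+5,2L+3)$, which gives exactly $b_1^{2L+4}\lgb^{-2}$. Your sample computation assigns it degree $L+2$ and then adds a spurious $b_1^2$, whereas Lemma~\ref{le:l2-norm-homo} already contains the $a,b$ scaling; there are also no degree-$(L+2)$ leftovers after the solve. Finally, the last two bounds in (i) follow directly from (ii): for instance $k=2L+1$ gives $b_1^{2L+3}\le\dbs\,b_1^{2L+2}\lgb^{-2}$. No $\oa\lphi=0$ mechanism is involved, since $\psiz$ has no $\lphi$ tail before localization.
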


\begin{proof}
  For brevity, we denote $\hj = \hj[\wz]$, $\sib = \si_{L+1}$ and
  \[
    \Phi_k = a_k\phia[k] +b_k\phib[k]
    \for 1\le k\le L.
  \]

  \step[1, Choice of $\x$]
  Reformulating $\psizone$ by~\eqref{def:upsilon}, we have
  \[
    \psizone
    = \hj(\mhamp\x-\tea) - \sum_{k=1}^L \hj\R\sib \Bigl\{ b_1a_k(\vsig_{k,0}-\cb\phia[k]) +b_1b_k(\vsig_{0,k} -\cb\phib[k]) \Bigr\},
  \]
  with
  \begin{align*}
    \tea
    & = \R\sib \Bigl\{ b_1\mcT\x +a_1\mcP\x +\mcQ\x -a_1(1+\z)\R\vp -\gamma_0\mcQ\vp \Bigr\}                                                                                        \\
    & \quad + \R\sib\sum_{k=1}^L\bigl(b_1a_k\up_{k,0} +b_1b_k\up_{0,k}\bigr) +\zt\pry\gamma_0 e_1 +\sib \Bigl(\zt\Bigl(\pry +\tfrac{\z}{y}\Bigr)\alpha_0 -\Delta\gamma_0\Bigr)\wzp.
  \end{align*}
  Formally, we let $\x$ be a linear combination of monomials of the form $a^{\rho}b^{\tau}\x_{\rho,\tau}$, where $\rho\in\mbN^L$, $\tau\in\mbZ\times\mbN^{L-1}$ and $\x_{\rho,\tau} = O(1)$ when $y\sim 1$. Thus, determining $\x$ amounts to finding the functions $\x_{\rho,\tau}$ for all $\rho,\tau$. We group $\x_{\rho, \tau}$ by the value $|\rho|_2 + |\tau|_2$, which we call the parameter degree of $\x_{\rho, \tau}$.

  Since the function $\gamma_0$ is a polynomial in $\wzp$, as in the proof of Lemma~\ref{le:form-comp}, the monomials in $\gamma_0$ can also be classified by parameter degrees. In view of~\eqref{pf:form-comp=si} and the expression of $\tea$, the same holds for $\sib, \tea$. Thus we let
  \[
    \x = \sum_{k=2}^{L+2} \xx[k], \quad
    \gamma_0 = \sum_{k=2}^{C_{\gamma_0}} \gamma_{0,k}, \quad
    \sib = \sum_{k=0}^{C_{\sib}} \sib_k, \quad
    \tea = \sum_{k=2}^{C_{\tea}} \tea_k,
  \]
  where $C_{\gamma_0} = 2(L+1)(L+2)$,
  $C_{\sib} = 2{(L+1)}^2(L+2)$, $C_{\tea} = (L+2)(2L^2+6L+5)$. All homogeneous components are understood to vanish outside their prescribed index ranges. We claim that there exists a sequence ${(\xx[k])}_{k=2}^{L+2}$ such that
  \begin{align}
    \xx[k]       & \in \mcHb(k, 2k+1, 2k-1)
    \for 2\le k\le L+2,
    \label{pf:app-sol=xk-deg}                \\
    \gamma_{0,k} & \in \mcHg(k, 2k+2, 2k-2)
    \for 2\le k\le C_{\gamma_0},
    \label{pf:app-sol=gamma-0-k-deg}
  \end{align}
  and
  \begin{equation}
    \label{pf:app-sol=tea-k-deg}
    \tea_k \in \begin{cases}
      \mcHb(k, 2k-1, 2k-3) &
      \mbox{for}\quad 2\le k\le L+3, \\
      \mcHg(k, 2k-1, 2k-5) &
      \mbox{for}\quad L+4 \le k\le C_{\tea}.
    \end{cases}
  \end{equation}
  From these estimates, we have
  \begin{equation}
    \label{pf:app-sol=wz-l-infty}
    \bigl\|\jbk[\log y]^{C}y^{k}\pry^{k}\wz\bigr\|_{L^\infty(y\le 2S_1)} \les b_1^{1/2}\lgb^C
    \for k\ge 0.
  \end{equation}
  The construction is recursive: at degree $k$, substitute the already
  constructed $\xx[2],\ldots,\xx[k-1]$ into the expression for
  $\tea$ above, extract its degree-$k$ source $\tea_k$, and set
  \[
    \xx[k]=\ham^{-1}\tea_k
    \for 2\le k\le L+2,
  \]
  where the inverse in~\eqref{eq:inverse-ham} acts componentwise on the
  first two components. Indeed, the linear terms involving $\x$ in
  $\tea$ raise the parameter degree by one, while its remaining
  $\x$-dependent terms contain at least one additional positive-degree
  factor. Thus $\tea_k$ does not involve $\xx[k]$.
  All subsequent occurrences of $\xx[k]$, $\gamma_0$, $\sib$, and
  $\wz$ refer to this recursive construction.

  \step[2, Proof of the claim]
  We prove \eqref{pf:app-sol=xk-deg}--\eqref{pf:app-sol=tea-k-deg} by induction.

  \emph{Case $k=2$}. By definition, $\sib_0 = 1$ and $\sib_1 = 0$. Expanding $\tea$ at parameter degree $2$ gives
  \begin{align*}
    \tea_2
    & = a_1(1+\z)(a_1\phia[1] +b_1\phib[1]) + \R(b_1a_1\up_{1,0} +b_1^2\up_{0,1})                            \\
    & \quad +\zt \pry \gamma_{0,2} e_1 +\zt\Bigl(\pry +\tfrac{\z}{y}\Bigr) a_1T_1(a_1\phia[1] +b_1\phib[1]).
  \end{align*}
  Since $\phib[1], \up_{0,1}$ belong to the same spaces as $\phia[1], \up_{1,0}$ respectively, it suffices to treat $\phia[1], \up_{1,0}$ and  $\gamma_{0,2}$. First, note that
  \[
    1+\z = \frac{\zt}{2}
    = \begin{cases}
      2 + O(y^2),
      & y\le 1, \\
      \frac{2}{y^2}\bigl(1+O\bigl(\frac{1}{y^2}\bigr)\bigr),
      & y\ge1,
    \end{cases}
    \quad\Longrightarrow\quad
    1+\z \in \mcAg(0,-2).
  \]
  Using Lemma~\ref{le:produ-madb} and Corollary~\ref{co:resona}, we find $(1+\z)\phia[1] \in \mcAg(3,-1)$. Together with Lemma~\ref{le:inclu-madb}, this implies
  \[
    a_1^2(1+\z)\phia[1] \in \mcHb(2,3,1).
  \]
  Then, directly from Lemma~\ref{le:upsilon-deg}, we have $b_1a_1\up_{1,0} \in \mcHb(2,3,1)$.
  Next, for the term involving $\gamma_{0,2}$, by the definition of $\gamma_0$,
  \[
    \gamma_{0,2}
    = -\frac{1}{2}(a_1^2 +b_1^2) T_1^2.
  \]
  In view of the product $T_1^2$, we take one $T_1\in\mcAg(3,1)$ and the other $T_1\in\mcA(3,1)$. Then by Lemma~\ref{le:produ-madb}, $\gamma_{0,2} \in \mcHg(2,6,2)$. Observe that
  \[
    \zt\pry = -\zt\oa + \frac{\z\zt}{y},
    \with
    \frac{\z\zt}{y}
    = \begin{cases}
      \frac{4}{y}(1+O(y^2)),
      & y\le 1, \\
      -\frac{4}{y^3}\bigl(1+O\bigl(\frac{1}{y^2}\bigr)\bigr),
      & y>1.
    \end{cases}
  \]
  Thus using Lemma~\itref{le:ham-act-adb}{it:ham-act-adb=ham} and Lemma~\ref{le:produ-madb} again, we find $\zt\pry\gamma_{0,2} \in \mcHg(2,3,-1)$, hence $\zt\pry\gamma_{0,2} \in \mcHb(2,3,1)$ by Lemma~\ref{le:inclu-madb}. Finally, for the last term, we note that $\zt(\pry + \z/y) = -\zt\oa + 2\z\zt/y$. Both terms on the right can be treated as above. The same argument shows that the last term belongs to $\mcHb(2,3,1)$. Collecting these estimates, we solve $\mhamp\xx[2] = \tea_2$ componentwise by~\eqref{eq:inverse-ham}. This gives
  \[
    \tea_2 \in \mcHb(2,3,1), \quad
    \xx[2] \in \mcHb(2,5,3).
  \]

  \emph{Case $2,\dots ,k\to k+1$}. Again from the expression of $\tea$,
  \begin{align*}
    \tea_{k+1}
    & = \sum_{k_1+k_2=k} \R\sib_{k_1} \tea_{k+1,k_2}  +\zt\pry\gamma_{0,k+1} e_1 \\
    & \quad +\sum_{\substack{k_1+k_2+k_3                                         \\=k+1}} \sib_{k_1} \Bigl[\zt\Bigl(\pry + \tfrac{\z}{y}\Bigr)(\Phi_{k_2}^{(1)} + \xx[k_2]^{(1)}) -\Delta\gamma_{0,k_2}\Bigr](\Phi_{k_3} +\xx[k_3]),
  \end{align*}
  where
  \begin{align*}
    \tea_{k+1,k_2}
    & = b_1\mcT\xx[k_2] +a_1\mcP\xx[k_2] +\mcQ\xx[k_2] -a_1(1+\z)\R\Phi_{k_2}                                              \\
    & \quad\, -\sum_{\ell_1+\ell_2=k_2} \gamma_{0,\ell_1}\mcQ\Phi_{\ell_2} + b_1a_{k_2}\up_{k_2,0} + b_1b_{k_2}\up_{0,k_2}.
  \end{align*}
  Here, the summation indices satisfy $k_1\ge 0$ and $k_2, k_3\ge 1$.

  Let us consider the case $2\le k\le L+1$. We begin with the estimate for $\tea_{k+1,k_2}$. First, by induction, $\xx[k_2] \in \mcHb(k_2,2k_2+1,2k_2-1)$. By Lemma~\itref{le:ham-act-badb}{it:ham-act-badb=vla}, \ref{it:ham-act-badb=b1}, the actions of $b_1\mcT, a_1\mcP, \mcQ$ only increase the parameter degree by one, and hence
  \[
    b_1\mcT\xx[k_2] +a_1\mcP\xx[k_2] +\mcQ\xx[k_2] \in
    \mcHb(k_2+1,2k_2+1,2k_2-1).
  \]
  Then, as in the previous case, we have
  \[
    -a_1(1+\z)\R\Phi_{k_2} + b_1a_{k_2}\up_{k_2,0} + b_1b_{k_2}\up_{0,k_2}\in \mcHb(k_2+1,2k_2+1,2k_2-1).
  \]
  Next, by~\eqref{pf:form-comp=gamma}, $\gamma_0$ is a polynomial in the pairings generated by the homogeneous components of $\wzp$. More precisely, every monomial in $\gamma_0$ is a product of factors of the form
  \[
    \Phi_{i}\cdot\Phi_{j}, \quad
    \Phi_{i}\cdot\xx[j], \quad
    \xx[i]\cdot\xx[j]
    \where
    1\le i,j\le L+2.
  \]
  A direct use of Lemma~\ref{le:produ-madb} implies
  \begin{equation}
    \label{pf:app-sol=gamma-0-si}
    \gamma_{0,\ell},\ \sib_{\ell}
    \in \mcHg(\ell, 2\ell+2, 2\ell-2)
    \for
    2\le \ell\le k.
  \end{equation}
  This shows~\eqref{pf:app-sol=gamma-0-k-deg} for $2,\dots,k$.
  Using again Lemma~\ref{le:produ-madb},
  \[
    \gamma_{0,\ell_1}\mcQ\Phi_{\ell_2}
    \in \mcHb(k_2+1, 2k_2+1, 2k_2-1).
  \]
  Therefore, we obtain
  \[
    \tea_{k+1,k_2}\in\mcHb(k_2+1, 2k_2+1, 2k_2-1).
  \]
  From Lemma~\ref{le:produ-madb} again,
  \[
    \sum_{k_1+k_2=k} \R\sib_{k_1} \tea_{k+1,k_2}\in\mcHb(k+1, 2k+1, 2k-1).
  \]
  Last, for the remaining terms of $\tea_{k+1}$, using the fact that
  \[
    \zt\pry = -\zt\oa + \frac{\z\zt}{y}
    \andd
    -\Delta = \ham - \frac{V}{y^2}
    \with
    \frac{V}{y^2} \in \mcAg(-2,-2),
  \]
  we obtain the same membership of $\mcHb(k+1, 2k+1, 2k-1)$, and hence the estimate for $\tea_{k+1}$.  Solving $\mhamp\xx[k+1] = \tea_{k+1}$ by \eqref{eq:inverse-ham}, we have
  \begin{align*}
    \tea_{k+1} & \in \mcHb(k+1, 2(k+1)-1, 2(k+1)-3), \\
    \xx[k+1]   & \in \mcHb(k+1, 2(k+1)+1, 2(k+1)-1).
  \end{align*}
  This shows~\eqref{pf:app-sol=xk-deg} and~\eqref{pf:app-sol=tea-k-deg} with $k\le L+2$. Moreover, given the choice of $\x$ and \eqref{pf:app-sol=gamma-0-si}, the estimates for $\gamma_{0,k}$, $\sib_k$ hold for $k\le C_{\gamma_0}$, $k\le C_{\sib}$ respectively. This proves~\eqref{pf:app-sol=gamma-0-k-deg}.

  We turn to the case $k\ge L+2$. First, for $k=L+2$, the estimate for $\tea_{k+1} = \tea_{L+3}$ is similar to the estimate above, resulting in the same estimate
  \begin{align*}
    \tea_{L+3}
    & \in \mcHb(L+3, 2L+5, 2L+3)        \\
    & = \mcHb(k+1, 2(k+1)-1, 2(k+1)-3),
  \end{align*}
  which is sharp, since in $\tea_{L+3}$,
  \[
    \R b_1 \mcT\xx[L+2] \in \mcHb(L+3, 2L+5, 2L+3).
  \]
  Then, for $k\ge L+3$, since $\xx[k_2]$, $\Phi_{k_2}$ are defined for $k_2\le L+2$, $k_2\le L$ respectively, the terms in the first sum and those involving $\gamma_0$ contain a positive-degree factor of the form
  \[
    \sib_{k_1},\,\gamma_{0,\ell_1},\,\gamma_{0,\ell_2}
    \in \mcHg(\cdot, 2\cdot+2, 2\cdot-2).
  \]
  The remaining terms in the last triple sum have the form
  \[
    \zt\Bigl(\pry+\frac{\z}{y}\Bigr)
    \bigl(\Phi_{k_2}^{(1)}+\xx[k_2]^{(1)}\bigr)
    \bigl(\Phi_{k_3}+\xx[k_3]\bigr),
    \quad k_2+k_3=k+1.
  \]
  Here the required far-field decay comes from the three powers of $y$ gained by $\zt(\pry+\z/y)$.
  With an application of Lemma~\ref{le:produ-madb}, we obtain
  \[
    \tea_{k+1} \in \mcHg(k+1, 2(k+1)-1, 2(k+1)-5)
    \for k\ge L+3,
  \]
  yielding a refined (compared with $k=L+3$) bound in \eqref{pf:app-sol=tea-k-deg} for $k\ge L+4$. This completes the proof of \eqref{pf:app-sol=xk-deg}--\eqref{pf:app-sol=tea-k-deg}.

  \step[3, Error estimation]
  The error $\psiz = \psizone + \psiztwo$ is now given by
  \begin{align*}
    \psizone
    & = -\sum_{k=L+3}^{C_{\tea}} \hj\tea_{k} - \sum_{k=1}^L \hj\R^k \sib (\vsig_{k}-\cb T_k)(b_1a_k e_1 +b_1b_k e_2), \\
    \psiztwo
    & = \sib\,\bigl\{ b_1\mcT\rrr +a_1\mcP\rrr +\mcQ\rrr \bigr\} e_3                                                  \\
    & \quad +{(-\gamma_0)}^{L+2} \Bigl\{ b_1\mcT\wz +a_1\mcP\wz -\gamma_0\mcQ\wz -\hj\R\mcQ\x +\mcQ\rrr e_3 \Bigr\}   \\
    & \quad +{(-\gamma_0)}^{L+2} \hj\Bigl(\zt\Bigl(\pry+\tfrac{\z}{y}\Bigr)\alpha_0 -\Delta\gamma_0\Bigr) e_3.
  \end{align*}
  For brevity, we define the norms
  \[
    \mcN_1[\eta] = \int_{y\le 2S_1}|(\R\mham)^{L+1}\eta|^2,
    \quad
    \mcN_{2,n}[\eta] = \sum_{\ell=0}^{n}\int_{y\le 2S_1}\frac{|\pry^{\ell}\eta|^2}{\jbk[y]^{2(n-\ell)}},
    \quad 0\le n\le 2L+2.
  \]
  In the rest of this proof, we shall prove for $\eta=\psizone,\psiztwo$ that
  \begin{equation}
    \label{pf:app-sol=mcN-norm}
    \mcN_1[\eta] \les \frac{b_1^{2L+4}}{\lgb^2}
    \quad
    \mcN_{2,n}[\eta] \les
    \begin{cases}
      b_1^{2L+4}               & \mbox{for}\;\; n= 2L+2, \\
      b_1^{n+2}\lgb^{4L+2-2n}, & 0\le n\le 2L+1.
    \end{cases}
  \end{equation}
  These give the first bound in~\ref{it:app-sol=sob} and the bound in~\ref{it:app-sol=weig}. The remaining two bounds in \ref{it:app-sol=sob} are direct consequences of \ref{it:app-sol=weig}. Moreover, by using ~\ref{it:app-sol=weig} and allocating the weight, we have
  \begin{align*}
    |(\psiz, \xm_M\eone)| + |(\psiz, \xm_M\etwo)|\les \biggl(\int_{y\le 2M}\frac{|\psiz|^2}{\jbk[y]^{4L+4}}\biggr)^{1/2} \|\jbk[y]^{2L+2}\xm_M\|_{L^2}
    \les M^C b_1^{L+\frac32}.
  \end{align*}
  The same holds for $\xm_{\ell,0}$ and $\xm_{0,\ell}$, which gives~\ref{it:app-sol=local}. We now estimate $\mcN_1$ and $\mcN_2$ for each part of $\psiz$ in turn.

  \step[4, $\psizone$ contribution, Part I]
  We consider the contribution of $\hj\tea_k$, which is the critical one in error estimation. By definition we have
  \[
    (\R\mham)^{L+1}\hj\tea_{k}
    = (\R\mham)^{L+1}\R\tea_{k} +(\R\mham)^{L+1}(\wz\times\tea_{k}).
  \]
  First, from \eqref{eq:mham-r}, the leading part is
  \[
    (\R\mham)^{L+1}\R\tea_{k} = \R^{L+2}\mhamp^{L+1}\tea_{k},
  \]
  where from \eqref{pf:app-sol=tea-k-deg}, Lemma~\itref{le:ham-act-badb}{it:ham-act-badb=ham},
  \[
    \mhamp^{L+1}\tea_{k} \in
    \begin{cases}
      \mcHb(L+3,3,1),             & k=L+3,    \\
      \mcHg(k,2(k-L)-3,2(k-L)-7), & k\ge L+4.
    \end{cases}
  \]
  Then, using Lemma~\ref{le:l2-norm-homo}, we obtain
  \[
    \int_{y\le 2S_1}|(\R\mham)^{L+1}\R\tea_{L+3}|^2
    \les \frac{b_1^{2L+4}}{\lgb^2}.
  \]
  Similarly, the $k\ge L+4$ part enjoys a better bound.
  Next, for the term involving $\wz\times\tea_k$, we split the estimate into two regions: $y\le 1$ and $y\ge1$. Note that $\wz\times\tea_k$ carries a factor $b_1^{k+1}$, and hence by $k\ge L+3$,
  \[
    \int_{y\le 1} |(\R\mham)^{L+1}(\wz\times\tea_k)|^2
    \les b_1^{2k+2}
    \les b_1^{2L+8}.
  \]
  Now for $y\ge1$, using~\eqref{pf:app-sol=wz-l-infty} and Leibniz,
  \begin{align*}
    & \int_{1\le y\le 2S_1} |(\R\mham)^{L+1}(\wz\times\tea_k)|^2                                  \\
    & \les \sum_{\ell=0}^{2L+2} \|\jbk[\log y]^C y^{\ell}\pry^{\ell}\wz\|_{L^\infty(y\le 2S_1)}^2
    \sum_{\ell=0}^{2L+2} \int_{1\le y\le 2S_1} \frac{|(\tea_k)\sd[\ell]|^2}{y^{2(2L+2-\ell)}}      \\
    & \les b_1^{2L+5}\lgb^C.
  \end{align*}
  Therefore, the first bound in~\ref{it:app-sol=sob} follows
  \[
    \mcN_1[\hj\tea_{k}] \les \frac{b_1^{2L+4}}{\lgb^2}.
  \]
  Last, we treat $\mcN_{2,n}[\hj\tea_k]$. For $y\le1$, from~\eqref{pf:app-sol=tea-k-deg}, we see $\hj\tea_k$ carries a factor $b_1^k$ for $k\ge L+3$, and thus a direct integration yields the bound $b_1^{2L+6}$. For $y\ge1$, using~\eqref{pf:app-sol=wz-l-infty} and Leibniz, we compute
  \[
    \sum_{\ell=0}^{n} \int_{1\le y\le 2S_1} \frac{|\pry^{\ell}(\hj\tea_k)|^2}{\jbk[y]^{2(n-\ell)}}
    \les \bigl(1+b_1\lgb^C\bigr) \sum_{\ell=0}^{n}\int_{1\le y\le 2S_1} \frac{|(\tea_k)\sd[\ell]|^2}{y^{2(n-\ell)}}.
  \]
  Using Lemma~\ref{le:l2-norm-homo} and Definition~\itref{def:badb}{it:def-badb=2}, we obtain~\ref{it:app-sol=weig}:
  \[
    \mcN_{2,n}[\hj\tea_{k}]\les
    \begin{cases}
      b_1^{2L+4}               & \mbox{for}\;\; n=2L+2, \\
      b_1^{n+2}\lgb^{4L+2-2n}, & 0\le n\le 2L+1.
    \end{cases}
  \]

  \step[5, $\psizone$ contribution, Part II]
  We now estimate the $\vsig_{k} -\cb T_k$ part of $\psizone$. To avoid duplication, we only treat the $b_1b_k e_2$ direction, the other direction being estimated in the same way using $|a_k|\les b_1^k$. First, note that from Lemma~\ref{le:vsig},
  \[
    \supp(\vsig_k -\cb T_k) \subset \biggl\{y\ge \frac{S_0}{4}\biggr\}.
  \]
  Moreover, we have
  \[
    \vsig_{b_1} = -4\Gamma\in\ker\ham \for y\ge 2S_0,
    \andd T_1\in\ker\ham^2.
  \]
  Therefore
  \[
    \bigl|\ham^{\ell}(\vsig_{k} -\cb T_{k})\bigr|
    \les \frac{y^{2(k-\ell)-1}}{\lgb} \one_{y\sim S_0},
    \quad \ell\ge k+1.
  \]
  Then, by $\sib_0=1$, the leading error is bounded by
  \begin{align*}
    & \sum_{k=1}^{L} \int_{y\le 2S_1} \bigl|(\R\mham)^{L+1}\R^{k+1}\bigl[b_1b_k(\vsig_{k} -\cb T_k)e_2\bigr]\bigr|^2 \\
    & \les \sum_{k=1}^{L} \frac{b_1^{2k+2}}{\lgb^2}\int_{y\sim S_0} \frac{y\dd y}{y^{4L-4k+6}}
    \les \frac{b_1^{2L+4}}{\lgb^2}.
  \end{align*}
  Next, the remaining error involving $\vsig_{b_1}$ contains a factor of either $\wz$ or $\sib_{\ell}$ with $\ell\ge 2$ (since $\sib_0=1$ has been considered and $\sib_1=0$), both of which are small, in view of \eqref{pf:app-sol=wz-l-infty} and
  \[
    \bigl\|\jbk[\log y]^{C}y^{k}\pry^{k}\sib_{\ell}\bigr\|_{L^{\infty}(y\le 2S_1)}^2 \les b_1^2\lgb^C,
    \quad k\ge 0,\;\; \ell\ge2.
  \]
  Combining these estimates with the rough bound
  \[
    \bigl|\pry^{\ell} (\vsig_{k}-\cb T_k) \bigr|
    \les y^{2k-\ell-1} \one_{y\ge \frac{S_0}{4}},
    \quad 0\le \ell\le 2L+2,\quad y\le 2S_1,
  \]
  we see the remaining error yields a better estimate than the leading one. Summing over $1\le k\le L$ yields the first part of \ref{it:app-sol=sob},
  \[
    \sum_{k=1}^{L} \mcN_1\bigl[\hj\R^{k} \sib (\vsig_{k}-\cb T_k) b_1b_k e_2\bigr] \les \frac{b_1^{2L+4}}{\lgb^2}.
  \]
  Last, the weighted bound follows by a similar computation,
  \begin{align*}
    & \sum_{\ell=0}^n\sum_{k=1}^{L}\int_{y\le 2S_1}\frac{\bigl|\pry^{\ell}\bigl[\hj\R^k \sib(\vsig_{k}-\cb T_{k})b_1b_k e_2\bigr]\bigr|^2}{\jbk[y]^{2(n-\ell)}} \\
    & \les \sum_{k=1}^{L} b_1^{2(k+1)} \int_{\frac{S_0}{4}\le y\le 2S_1} y^{2(2k-1-n)} y\dd y                                                                    \\
    & \les \begin{cases}
      b_1^{n+2}                & \mbox{for}\; n=2L+2, \\
      b_1^{n+2}\lgb^{4L-2n+2}, & 0\le n\le 2L+1.
    \end{cases}
  \end{align*}
  This yields~\ref{it:app-sol=weig}, and thus finishes the estimates for $\psizone$.

  \step[6, $\psiztwo$ contribution]
  Observe that each term of $\psiztwo$ carries a factor of $\rrr$ or of $(-\gamma_0)^{L+2}$. From the proof of Lemma~\ref{le:form-comp}, both of these are polynomials in $|\wzp|$ of degree at least $2(L+2)$,
  \[
    \rrr, \;(-\gamma_0)^{L+2} = O\bigl(|\wzp|^{2(L+2)}\bigr).
  \]
  Thus by the product rule in Lemma~\ref{le:prod-homo}, for some large constant $C$,
  \begin{align*}
    \sib\bigl\{ b_1\mcT\rrr +a_1\mcP\rrr +\mcQ\rrr \bigr\} e_3 +(-\gamma_0)^{L+2}\mcQ \rrr e_3
    & \in \sum_{k=2L+5}^C \mcHg(k,2k,2k-4), \\
    {(-\gamma_0)}^{L+2} \Bigl\{ b_1\mcT\wzp +a_1\mcP\wzp -\gamma_0\mcQ\wzp \Bigr\}
    & \in \sum_{k=2L+6}^C\mcHg(k,2k-1,2k-7), \\
    {(-\gamma_0)}^{L+2} \Bigl\{ b_1\mcT\gamma_0 +a_1\mcP\gamma_0 -\gamma_0\mcQ\gamma_0 \Bigr\}
    & \in \sum_{k=2L+7}^C \mcHg(k,2k+2,2k-6).
  \end{align*}
  In addition, using the estimates in Step~2 about $\zt\bigl(\pry+\tfrac{\z}{y}\bigr)$ and $-\Delta$, we have
  \begin{align*}
    & {(-\gamma_0)}^{L+2} \Bigl\{ \hj\Bigl(\zt\Bigl(\pry+\tfrac{\z}{y}\Bigr)\alpha_0 -\Delta\gamma_0\Bigr) e_3 -\hj\R\mcQ\x \Bigr\} \\
    & \in \sum_{i=1}^2\sum_{k=2L+6}^C\mcHg(k,2k+1,2k-5)e_i +\sum_{k=2L+8}^C\mcHg(k,2k+2,2k-6)e_3.
  \end{align*}
  Gathering these estimates, we use Lemma~\ref{le:l2-norm-homo} and obtain
  \[
    \mcN_1[\psiztwo] \les b_1^{2L+5}\lgb^C \andd
    \mcN_{2,n}[\psiztwo] \les b_1^{n+3}\lgb^C
  \]
  for all $0\le n\le 2L+2$.
  These estimates yield the first bound in~\ref{it:app-sol=sob} and the bound in~\ref{it:app-sol=weig}, and complete the proof.
\end{proof}

\subsection{Localization of the profiles}
\label{sub:local-prof}
In this subsection, we localize the approximate solution~$\wz$, restricting its support to the region $y\le 2S_1$. This is done via a direct multiplication of the profiles by a smooth cutoff $\chi_{S_1}(y) = \chi\bigl(\frac{y}{S_1}\bigr)$.

We define
\[
  \wzt := \chi_{S_1}\wz = \chi_{S_1}(\vp + \x +\gamma_0 e_3),
\]
and correspondingly,
\begin{equation}
  \label{eq:local-phi}
  \phiat[k] = \chi_{S_1}\phia[k]
  \quad\mbox{and}\quad
  \phibt[k] = \chi_{S_1}\phib[k].
\end{equation}
Then the following holds.

\begin{lemma}[Localized profiles]
  \label{le:local-app-sol}
  With the notations of Lemma~\ref{le:app-sol}, the localized vector $\wzt=\chi_{S_1}\wz$ is an approximate solution to~\eqref{eq:vect-form} in the sense that
  \begin{equation}
    \label{le:local-app-sol=wzt-eq}
    \prs\wzt -\frac{\la_s}{\la}\vla\wzt +\vtha_s\z\R\wzt +\hj[\wzt](\mham\wzt+\hp\lphi) = \modt + \psizt,
  \end{equation}
  where the localized modulation vector is given by
  \begin{equation}
    \begin{split}
      \label{le:local-app-sol=modt}
      \modt & =
      \sum_{k=1}^{L}(\prs a_k +A_k)\biggl(\phiat[k] +\chi_{S_1}\biggl(\frac{\pr\x}{\pr a_k} +\frac{\pr\gamma_0}{\pr a_k}e_3\biggr)\biggr)                                                                 \\
      & \quad +\sum_{k=1}^{L}(\prs b_k +B_k)\biggl(\phibt[k] +\chi_{S_1}\biggl(\frac{\pr\x}{\pr b_k} +\frac{\pr\gamma_0}{\pr b_k}e_3\biggr) + \delta_{1,k} \frac{\pr\chi_{S_1}}{\pr b_1}\wz \biggr) \\
      & \quad +(\vtha_s+a_{1})\bigl(\lphi\bigl(\etwo -\eone\times\wzt \bigr) +\z\R\wzt \bigr)                                                                                                       \\
      & \quad -\Bigl(\frac{\la_s}{\la} +b_1\Bigr)\bigl(\lphi\bigl(\eone +\etwo\times\wzt\bigr) +\vla\wzt \bigr),
    \end{split}
  \end{equation}
  and the corresponding error satisfies:
  \begin{enumerate}[label={\rm (\roman*)}]
    \item \label{it:local-app-sol=sob}
      the Sobolev bounds:
      \begin{gather*}
        \int |(\R\mham)^{L+1}\psizt|^2
        \les \frac{b_1^{2L+4}}{\lgb^2},  \\
        \int |\ma\mhamp^L\psizt|^2
        \les \delta(b_{1}^{\ast})\frac{b_1^{2L+2}}{\lgb^2}, \\
        \int |\mhamp^k\psizt|^2 \les b_1^{2k+2}\lgb^C
      \end{gather*}
      for $1\le k\le L$ and some universal constant $C>0$;
    \item \label{it:local-app-sol=weig}
      weighted bound:
      \[
        \sum_{\ell=0}^{k} \int \frac{|\pry^{\ell}\psizt|^2}{\jbk[y]^{2(k-\ell)}}
        \les \begin{cases}
          b_1^{2L+4}               & \mbox{for}\;\; k= 2L+2, \\
          b_1^{k+2}\lgb^{4L+2-2k}, & 1\le k\le 2L+1;
        \end{cases}
      \]
    \item \label{it:local-app-sol=local}
      local bound: let $\xm_M$ and $\xm_{\ell,0},\xm_{0,\ell}$ be defined by \eqref{eq:local-reson} and \eqref{eq:vec-local-reson}. Then
      \begin{align*}
        |(\psizt, \xm_M\eone)| & + |(\psizt, \xm_M\etwo)|                                                       \\
        & + \sum_{\ell=1}^L \Bigl(|(\psizt,\xm_{\ell,0})| +|(\psizt,\xm_{0,\ell})|\Bigr)
        \les M^C b_1^{L+\frac32}.
      \end{align*}
  \end{enumerate}
\end{lemma}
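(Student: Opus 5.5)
We insert $\wzt=\chi_{S_1}\wz$ into the left-hand side of \eqref{le:local-app-sol=wzt-eq} and compare it with $\chi_{S_1}$ times the identity of Lemma~\ref{le:app-sol}. Every difference is either a commutator with the cutoff, supported in $S_1\le y\le 2S_1$, or a nonlinear discrepancy between $\hj[\wzt]$ and $\hj[\wz]$, which also lives only on $y\ge S_1$ since $\chi_{S_1}=1$ for $y\le S_1$.

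Concretely, the time derivative gives
\[
\prs\wzt=\chi_{S_1}\prs\wz+(\prs b_1)\tfrac{\pr\chi_{S_1}}{\pr b_1}\wz .
\]
We write $\prs b_1=(\prs b_1+B_1)-B_1$. The first piece produces the extra term $\delta_{1,k}\frac{\pr\chi_{S_1}}{\pr b_1}\wz$ in $\modt$; the second, $-B_1\frac{\pr\chi_{S_1}}{\pr b_1}\wz$, goes into $\psizt$. Next, $\vla\wzt=\chi_{S_1}\vla\wz+(\vla\chi_{S_1})\wz$, and the phase term commutes with $\chi_{S_1}$. For the nonlinear part we use
\[
\hj[\wzt]\mham\wzt=\chi_{S_1}\hj[\wz]\mham\wz+(1-\chi_{S_1})\bigl(e_3\times\mham\wzt\bigr)+\chi_{S_1}\wz\times[\mham,\chi_{S_1}]\wz+\chi_{S_1}(\chi_{S_1}-1)\wz\times\mham\wz ,
\]
and similarly for the $\hp\lphi$ term. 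In the $\hp\lphi$ term we split $\hp$ into $(\vtha_s+a_1,\frac{\la_s}{\la}+b_1)$ and $(-a_1,-b_1)$. The pieces multiplied by $\vtha_s+a_1$ and $\frac{\la_s}{\la}+b_1$ are exactly the last two lines of $\modt$ written with $\wzt$. Hence
\[
\psizt=\chi_{S_1}\psiz+\text{(cutoff terms)},
\]
where every cutoff term carries a derivative of $\chi_{S_1}$, a factor $1-\chi_{S_1}$, or $-b_1\vla\chi_{S_1}$, $-B_1\pr_{b_1}\chi_{S_1}$. We also note $|B_1|\les b_1^2$.

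The estimates then proceed term by term.

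\begin{itemize}
\item $\chi_{S_1}\psiz$: the bounds \ref{it:app-sol=sob}--\ref{it:app-sol=local} of Lemma~\ref{le:app-sol} transfer directly. By Leibniz, derivatives falling on $\chi_{S_1}$ gain $S_1^{-1}$, and this is compatible with the weighted norms in \ref{it:app-sol=weig}. The local bound \ref{it:local-app-sol=local} is unchanged, since $\xm_M$, $\xm_{\ell,0}$, $\xm_{0,\ell}$ are supported in $y\le 2M\ll S_1$ (assuming their support is at most of order $M$), where $\psizt=\psiz$.
\item Cutoff terms: we bound these pointwise on $S_1\le y\le 2S_1$ using the degree information $\Phi_k\in\mcH(k,2k+1,2k-1)$ from Corollary~\ref{co:resona}, $\xx[k]\in\mcHb(k,2k+1,2k-1)$, and \eqref{pf:app-sol=gamma-0-k-deg}. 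The worst piece is the linear one, $e_3\times\mham$ applied to $(1-\chi_{S_1})\Phi$, plus the commutators $[\mham,\chi_{S_1}]\Phi$. Under $(\R\mham)^{L+1}$ these give terms like $b_1^k\,\pry^j\chi_{S_1}\,\pry^{2L+2-j}T_k$.
\end{itemize}

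The key bound is
\[
|\pry^{m}T_k|\les y^{2k-1-m}\jbk[\log y] \quad\text{for } y\sim S_1 .
\]
Since $S_1^2=\lgb^2/b_1$, the top-order contribution is
\[
b_1^{2k}\,S_1^{2(2k-1-(2L+2))+2}\lgb^2\sim b_1^{2L+2}\lgb^{C}\,b_1^{\,?} .
\]
This is the main obstacle. The naive count gives only $b_1^{2L+2}$-type bounds with log losses, not $b_1^{2L+4}/\lgb^2$. To close it, we would first observe that $T_k\in\mcA(2k+1,2k-1)$ satisfies $\mhamp^{j}\Phi_k=\R^{j}\Phi_{k-j}$. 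In $(\R\mham)^{L+1}$ most $\mham$'s therefore fall on $\Phi$ itself, reducing to $\ham^{k}T_k=\lphi\sim y^{-1}$ and then $\ham\lphi=0$. The only surviving terms are commutators $[\ham,\chi_{S_1}]$ hitting low-degree resonances, which decay like $y^{-1}$ times $S_1^{-2(L+1)}$. This yields
\[
\int_{y\sim S_1}b_1^{2}\,S_1^{-4L-4}\,S_1^{2(2k-1)-4k+2}\les \frac{b_1^{2L+4}}{\lgb^{4L+2}},
\]
which is acceptable. The $\delta(b_1^\ast)$ gain in the second bound of \ref{it:local-app-sol=sob} comes the same way from the extra $\lgb^{-1}$ powers.

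The weighted bound \ref{it:local-app-sol=weig} for the cutoff terms follows from
\[
\int_{S_1\le y\le 2S_1}b_1^{2k}\,y^{2(2k-1)-2n}\,y\,\dd y\les b_1^{n+2}\lgb^{4L+2-2n},
\]
using $|a_k|+|b_k|\les b_1^k$ and taking $k\le L$. For $k=L$ and $n=2L+2$ this gives exactly $b_1^{2L+4}$ up to log factors. We expect these logs to be removable by using the sharper $\mcHb$ bounds for $\x$ and by using that $\Phi$ has no $\log$ loss at degree $2k-1-n<0$. The nonlinear cutoff terms carry an extra factor of $\wz$, which costs at least $b_1^{1/2}\lgb^C$ by \eqref{pf:app-sol=wz-l-infty}, so they are handled exactly as Step~4 of the proof of Lemma~\ref{le:app-sol}.
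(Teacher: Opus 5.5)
Your architecture is the same as the paper's: substitute $\wzt=\chi_{S_1}\wz$ and compare with $\chi_{S_1}$ times the identity of Lemma~\ref{le:app-sol}. Move the pieces multiplied by $\prs b_1+B_1$, $\vtha_s+a_1$ and $\frac{\la_s}{\la}+b_1$ into $\modt$, treat $\chi_{S_1}\psiz$ by Leibniz, and get (iii) from $\supp\xm_M\subset\{y\le 2M\}$, where $\psizt=\psiz$.

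The gap is in the size of the cutoff errors, which is the whole content of (i)--(ii). You count them as $\chi_{S_1}\Phi$ with derivatives falling on the cutoff, i.e.\ terms $b_k\,\pry^j\chi_{S_1}\,\pry^{2L+2-j}T_k$. This drops the two derivatives that the error term itself already carries, through $\mham$ or $[\mham,\chi_{S_1}]$. What you computed, $b_1^{2L+2}\lgb^{C}$, is the size of $(\R\mham)^{L+1}\wzt$ near $y\sim S_1$, and for terms of that size no cancellation can help. The identity $\ham^{L+1}T_k=0$ only shows that $(\R\mham)^{L+1}(\chi_{S_1}\Phi)$ is supported in $S_1\le y\le 2S_1$. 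On that annulus, terms such as $b_L\,\pry\chi_{S_1}\,\pry^{2L+1}T_L\sim b_1^{L}S_1^{-3}$ survive with their naive size, so $\|(\R\mham)^{L+1}(b_L\phiat[L])\|_{L^2}^2$ is genuinely of order $b_1^{2L+2}\lgb^{-2}$. Your proposed reduction via $\ham^kT_k=\lphi$, $\ham\lphi=0$ therefore cannot produce $b_1^{2L+4}\lgb^{-2}$.

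The same missing factor makes your weighted display false. In fact $\int_{S_1}^{2S_1}b_1^{2L}y^{2(2L-1)-2n}\,y\,\dd y\sim b_1^{n}\lgb^{4L-2n}$, not $b_1^{n+2}\lgb^{4L+2-2n}$. At $n=2L+2$ it also contradicts your own Sobolev count.

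The missing observation is that every term of $\psizt-\chi_{S_1}\psiz$ living near $S_1$ carries, relative to $\wz$, an extra factor of size $b_1$:
\begin{enumerate}
\item[(a)] write $-\frac{\la_s}{\la}=b_1-\bigl(\frac{\la_s}{\la}+b_1\bigr)$ in front of $(\vla\chi)_{S_1}\wz$;
\item[(b)] $B_1\frac{\pr\chi_{S_1}}{\pr b_1}=O(b_1)\one_{y\sim S_1}$, since $b_1\frac{\pr\chi_{S_1}}{\pr b_1}=\bigl(\frac12+\frac{1}{\lgb}\bigr)\vla\chi_{S_1}$ and $|B_1|\les b_1^2$ (you noted this bound but never used it);
\item[(c)] the commutator $[\mham,\chi_{S_1}]$ gains $S_1^{-2}=b_1\lgb^{-2}$;
\item[(d)] the nonlinear terms contain an additional factor $\wzt$ or $\mham\wz$.
\end{enumerate}
With $|y^\ell\pry^\ell\wz|\les b_1^{1/2}\lgb^{2L}$ on $y\sim S_1$, this gives $\|y^\ell\pry^\ell\psi^{\perp}\|_{L^\infty(y\sim S_1)}^2\les b_1^3\lgb^{4L}$, and the $e_3$-component is even better. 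Direct integration then gives $\sum_{\ell\le n}\int|\pry^\ell\psi|^2\jbk[y]^{-2(n-\ell)}\les b_1^{n+2}\lgb^{4L+2-2n}$. The case $n=2L+2$ gives $b_1^{2L+4}\lgb^{-2}$ and controls $(\R\mham)^{L+1}$, $\ma\mhamp^L$ and $\mhamp^k$ on the annulus, with no cancellation needed.

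You also overlooked the one error term that is not compactly supported. The $(-a_1,-b_1)$ part of $(1-\chi_{S_1})\R\hp\lphi$ leaves $(1-\chi_{S_1})\lphi(b_1\eone-a_1\etwo)$ on all of $y\ge S_1$, which your estimates on $S_1\le y\le 2S_1$ do not cover. Its weighted norms are $\les b_1^{n+2}\lgb^{-2n}$ for $n\ge 1$. It is not in $L^2$, because of a logarithmic divergence, and this is exactly why (ii) starts at $k=1$. Its Sobolev norms localize to $y\sim S_1$ because $\ham\lphi=0$ and $\R\mham(\lphi e_i)=0$.

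Finally, your identity for $\hj[\wzt]\mham\wzt$ omits the term $\chi_{S_1}\,e_3\times\mham\bigl((\chi_{S_1}-1)\wz\bigr)$. The correct linear discrepancy is $e_3\times[\mham,\chi_{S_1}]\wz$; this is harmless, being another cutoff term.
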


\begin{proof}
  By a direct computation,
  \begin{align*}
    & \prs\wzt -\frac{\la_s}{\la}\vla\wzt +\vtha_s\z\R\wzt +\hj[\wzt](\mham\wzt+\hp\lphi)                                     \\
    & = \chi_{S_1} (\mod + \psiz) +\prs\chi_{S_1}\wz -\frac{\la_s}{\la}(\vla\chi)_{S_1}\wz +(1-\chi_{S_1})\hj[\wzt](\hp\lphi) \\
    & \quad -(1-\chi_{S_1})\chi_{S_1} \wz\times(\mham\wz +\hp\lphi)                                                           \\
    & \quad -\hj[\wzt]\Bigl(2\pry\chi_{S_1}\pry\wz +\Delta\chi_{S_1}\wz +\zt\pry\chi_{S_1}\etwo\times\wz\Bigr).
  \end{align*}
  Since $\wzt = \chi_{S_1}\wz$,
  \begin{align*}
    & (1-\chi_{S_1})\hj[\wzt](\hp\lphi) - (1-\chi_{S_1})\chi_{S_1}\wz\times\hp\lphi
    = (1-\chi_{S_1})\R\hp\lphi \\
    & = (1-\chi_{S_1})\lphi \Bigl[(\vtha_s+a_1)e_2 -\Bigl(\frac{\la_s}{\la}+b_1\Bigr)e_1\Bigr] +(1-\chi_{S_1})\lphi(b_1 e_1-a_1 e_2).
  \end{align*}
  In addition,
  \[
    \prs\chi_{S_1}\wz = \frac{\pr\chi_{S_1}}{\pr b_1}(\prs b_1+B_1)\wz -\frac{\pr\chi_{S_1}}{\pr b_1} B_1\wz.
  \]
  We then include the terms containing $\prs b_1+B_1$, $\la_s/\la+b_1$ and $\vtha_s+a_1$ in $\modt$. Note that
  \begin{align*}
    & \chi_{S_1}\bigl(\lphi(\etwo-\eone\times\wz)+\z\R\wz\bigr) +(1-\chi_{S_1})\lphi\etwo \\
    & =\lphi(\etwo-\eone\times\wzt)+\z\R\wzt, \\
    & \chi_{S_1}\bigl[\lphi(\eone+\etwo\times\wz)+\vla\wz\bigr]
    +(1-\chi_{S_1})\lphi\eone +(\vla\chi)_{S_1}\wz \\
    & =\lphi(\eone+\etwo\times\wzt)+\vla\wzt,
  \end{align*}
  which gives $\modt$. The remaining error is now
  \[
    \psizt = \chi_{S_1}\psiz +(1-\chi_{S_1})(b_1\eone-a_1\etwo)\lphi +\psi,
  \]
  where $\psi$ denotes the part localized in $y\sim S_1$:
  \begin{align*}
    \psi
    & = \biggl(b_1\vla\chi_{S_1}-B_1\frac{\pr\chi_{S_1}}{\pr b_1}\biggr)\wz
    -(1-\chi_{S_1})\chi_{S_1}\wz\times\mham\wz \\
    & \quad -\hj[\wzt]\bigl(2\pry\chi_{S_1}\pry\wz +\Delta\chi_{S_1}\wz +\zt\pry\chi_{S_1}\etwo\times\wz\bigr).
  \end{align*}
  As in the proof of Lemma~\ref{le:app-sol}, it remains to prove the $\mcN_1$ bound in \eqref{pf:app-sol=mcN-norm} and the corresponding $\mcN_{2,n}$ bounds for $\eta=\psizt$ and $1\le n\le 2L+2$.
  Note that from Lemma~\itref{le:app-sol}{it:app-sol=sob}--\ref{it:app-sol=weig}, the leading error in $\psizt$ satisfies
  \begin{align*}
    & \int|(\R\mham)^{L+1}(\chi_{S_1}\psiz)|^2                                                                                                                                         \\
    & \les \int_{y\le 2S_1} \hs[-.2] |(\R\mham)^{L+1}\psiz|^2 + \sum_{k=1}^{2L+2}\frac{1}{S_1^{2k}} \hs[-.2] \sum_{\ell=0}^{2L+2-k} \hs[-.2] \int_{y\le 2S_1}\frac{|\pry^{\ell}\psiz|^2}{\jbk[y]^{2(2L+2-k-\ell)}}
    \les \frac{b_1^{2L+4}}{\lgb^2}.
  \end{align*}
  Moreover, the far-out error $(1-\chi_{S_1})(b_1 e_1-a_1 e_2)\lphi$ satisfies
  \[
    \sum_{\ell=0}^{n}\int_{y\ge S_1}\frac{|\pry^{\ell}(b_1\lphi)|^2}{\jbk[y]^{2(n-\ell)}} \les \frac{b_1^{n+2}}{\lgb^{2n}}
    \for 1\le n\le 2L+2,
  \]
  where the $a_1$ part also admits the above bound, since $|a_1|\les b_1$.
  For the remaining error $\psi$, using~\eqref{pf:form-comp=gamma} and \eqref{pf:app-sol=gamma-0-k-deg}, we have for $0\le\ell\le 2L+2$ that
  \begin{align*}
    \|y^{\ell}\pry^{\ell}\psi^\pp\|_{L^{\infty}(y\sim S_1)}^2
    & \les b_1^3\lgb^{4L}, \\
    \|y^{\ell}\pry^{\ell}(\psi\cdot e_3)\|_{L^{\infty}(y\sim S_1)}^2
    & \les b_1^4\lgb^{8L},
  \end{align*}
  and hence
  \[
    \sum_{\ell=0}^{n}\int\frac{|\pry^\ell\psi|^2}{\jbk[y]^{2(n-\ell)}}
    \les b_1^{n+2}\lgb^{4L+2-2n}
    \for 1\le n\le 2L+2.
  \]
  In particular, taking $n=2L+2$ in the above bound gives $b_1^{2L+4}\lgb^{-2}$, and thus these errors do not deteriorate the estimates in Lemma~\ref{le:app-sol}. Consequently, the bounds~\ref{it:local-app-sol=sob}--\ref{it:local-app-sol=local} follow from Lemma~\itref{le:app-sol}{it:app-sol=sob}--\ref{it:app-sol=local}. This completes the proof.
\end{proof}

\subsection{The model system for \texorpdfstring{$(a,b)$}{(a,b)}}
\label{sub:ab-sys}
In this subsection, we study the model ODE system for the geometric parameters $(a,b)$ on the time interval $[s_{0},s_{1}]$ where $s_{1}>s_{0}\gg 1$. More precisely, we consider
\begin{equation}
  \label{eq:ode}
  \left\{\begin{aligned}
    & \begin{aligned}
      & \prs a_\ell +\left(2\ell-1+\frac{2}{\log s}\right)b_1a_\ell -a_1b_\ell -a_{\ell+1} =0,   \\
      & \prs b_\ell +\left(2\ell-1+\frac{2}{\log s}\right)b_1b_\ell \,+a_1a_\ell -b_{\ell+1} =0,
    \end{aligned}\quad \  \mbox{for} \ \ 1\le \ell\le L,
  \end{aligned}\right.
\end{equation}
where the convention $a_{L+1}=b_{L+1}=0$ is adopted. This system serves as an approximation to the modulation equations for the parameters $(a,b)$ (see Lemma~\ref{le:mod-eq}).

We define the sequences $(c_{\ell})_{\ell=1}^{L}, (d_{\ell})_{\ell=1}^{L}$ by
\begin{equation}
  \label{eq:c1}
  \left\{\begin{aligned}
    & c_1 = \frac{L}{2L-1},                       \\
    & c_{\ell} = \frac{\ell-1-L}{2L-1}c_{\ell-1},
  \end{aligned}\right.
  \ \ \mbox{and}\ \
  \left\{\begin{aligned}
    & d_1 = -\frac{2L}{(2L-1)^2},                                                       \\
    & d_{\ell} =\frac{4L(L-\ell+1)}{(2L-1)^2}c_{\ell-1}+\frac{\ell-1-L}{2L-1}d_{\ell-1}
  \end{aligned}\right.
\end{equation}
for $2\le \ell\le L$,
and the vector-valued functions $a^{e},b^{e}\in C^1([s_0,s_1];\mbR^L)$ by
\begin{equation}
  \label{eq:ae-be}
  a_{\ell}^{e}(s)\equiv 0
  \andd
  b_{\ell}^{e}(s) = \frac{c_\ell}{s^\ell} +\frac{d_\ell}{s^\ell \log s}
  \for
  1\le \ell\le L.
\end{equation}
A direct computation gives the following.

\begin{lemma}
  \label{le:ode-app-sol}
  The map $(a^e, b^e)$ is an approximate solution to the system \eqref{eq:ode} in the sense that
  \[
    \left\{
    \begin{aligned}
      \prs a_{\ell}^{e} +\left(2\ell-1+\frac{2}{\log s}\right) b_{1}^{e}a_{\ell}^{e} -a_{1}^{e}b_{\ell}^{e} -a_{\ell+1}^{e} & = 0,                                            \\
      \prs b_{\ell}^{e} +\left(2\ell-1+\frac{2}{\log s}\right) b_{1}^{e}b_{\ell}^{e} +a_{1}^{e}a_{\ell}^{e} -b_{\ell+1}^{e} & = O\biggl(\frac{1}{s^{\ell+1}(\log s)^2}\biggr)
    \end{aligned}
    \right.
  \]
  for all $1\le \ell\le L$.

\end{lemma}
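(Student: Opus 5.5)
The approach is a direct substitution of \eqref{eq:ae-be} into \eqref{eq:ode}, followed by matching coefficients in the expansion in powers of $s^{-1}$ and $(\log s)^{-1}$. Since $a^e\equiv0$, every term of the first equation contains a factor $a^e_\ell$, $a^e_1$ or $a^e_{\ell+1}$. The first equation therefore vanishes identically. In the second equation the term $a_1^ea_\ell^e$ also vanishes, so it remains to show, for $1\le\ell\le L$,
\[
  \prs b^e_\ell+\Bigl(2\ell-1+\frac{2}{\log s}\Bigr)b^e_1b^e_\ell-b^e_{\ell+1}=O\biggl(\frac{1}{s^{\ell+1}(\log s)^2}\biggr),
\]
with the convention $b^e_{L+1}=0$.

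\textbf{Step 1: expand each term.} Differentiating gives
\[
  \prs b^e_\ell=-\frac{\ell c_\ell}{s^{\ell+1}}-\frac{\ell d_\ell}{s^{\ell+1}\log s}+O\Bigl(\frac{1}{s^{\ell+1}(\log s)^2}\Bigr).
\]
For the product,
\[
  b^e_1b^e_\ell=\frac{c_1c_\ell}{s^{\ell+1}}+\frac{c_1d_\ell+d_1c_\ell}{s^{\ell+1}\log s}+\frac{d_1d_\ell}{s^{\ell+1}(\log s)^2}.
\]
Multiplying by $2/\log s$ contributes $2c_1c_\ell\,s^{-\ell-1}(\log s)^{-1}$ plus terms of order $s^{-\ell-1}(\log s)^{-2}$. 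Finally, $b^e_{\ell+1}=c_{\ell+1}s^{-\ell-1}+d_{\ell+1}s^{-\ell-1}(\log s)^{-1}$ exactly. All leftover terms are $O(s^{-\ell-1}(\log s)^{-2})$, so only the two leading orders need to cancel.

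\textbf{Step 2: cancel the order $s^{-\ell-1}$.} The coefficient at this order is $c_\ell\bigl((2\ell-1)c_1-\ell\bigr)-c_{\ell+1}$. With $c_1=L/(2L-1)$ one computes
\[
  (2\ell-1)c_1-\ell=\frac{\ell-L}{2L-1},
\]
which is exactly the recursion \eqref{eq:c1} with index shifted by one. At $\ell=L$ this factor vanishes, so the requirement $c_{L+1}=0$ (the convention $b_{L+1}=0$) is consistent. The value of $c_1$ is precisely what makes the top equation close.

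\textbf{Step 3: cancel the order $s^{-\ell-1}(\log s)^{-1}$.} The coefficient at this order is
\[
  d_\ell\bigl((2\ell-1)c_1-\ell\bigr)+c_\ell\bigl((2\ell-1)d_1+2c_1\bigr)-d_{\ell+1}.
\]
With $d_1=-2L/(2L-1)^2$,
\[
  (2\ell-1)d_1+2c_1=\frac{2L(2L-2\ell)}{(2L-1)^2}=\frac{4L(L-\ell)}{(2L-1)^2},
\]
so the vanishing of this coefficient is exactly the recursion for $d_{\ell+1}$ in \eqref{eq:c1}. At $\ell=L$ both brackets vanish, giving $d_{L+1}=0$, again consistent with the convention.

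The main point to check is this top-index consistency $c_{L+1}=d_{L+1}=0$, since it is what fixes $c_1$ and $d_1$; everything else is bookkeeping of the remainders of size $s^{-\ell-1}(\log s)^{-2}$.
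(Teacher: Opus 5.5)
Your proposal is correct and is exactly the direct computation the paper has in mind. The first equation is trivial since $a^e\equiv0$, and the two coefficient identities you verify at orders $s^{-\ell-1}$ and $s^{-\ell-1}(\log s)^{-1}$ are precisely the relations $\ell c_{\ell}-(2\ell-1)c_{1}c_{\ell}+c_{\ell+1}=0$ and $\ell d_{\ell}-2c_{1}c_{\ell}-(2\ell-1)(c_{1}d_{\ell}+c_{\ell}d_{1})+d_{\ell+1}=0$ (with $c_{L+1}=d_{L+1}=0$) that the paper records in the proof of Lemma~\ref{le:a-sys-lin}.
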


We linearize the system around $(a^e,b^e)$. Denote
\[
  U =(U_\ell)_{\ell=1}^{L}, \;\; V =(V_\ell)_{\ell=1}^{L}\in C^{1}([s_{0},s_{1}];\mbR^{L}).\footnotemark
\]
We let
\begin{equation}\label{eq:ab-abe}
  b_\ell = b^e_\ell+\frac{U_\ell}{s^\ell(\log s)^{\frac54}},
  \quad
  a_\ell = a^e_\ell +\frac{V_\ell}{s^\ell(\log s)^{\frac54}},
  \quad 1\le \ell\le L.
\end{equation}
Then we have the following lemmas.
\footnotetext{This $V$ should not be confused with the function $V$ in the definition of the Hamiltonian $\ham = -\Delta + \frac{V}{y^2}$.}

\begin{lemma}[\cite{raphael_quantized_2014}]
  \label{le:b-sys-lin}
  The following estimates hold.
  \begin{enumerate}[label={\rm (\roman*)}]
    \item \emph{Linearization.}
      There exists a matrix $M_U \in\mbR^{L\times L}$ such that
      \label{it:b-sys-lin=lin}
      \[
        \begin{aligned}
          & \prs b_\ell +\biggl(2\ell-1+\frac{2}{\log s}\biggr)b_1b_\ell +a_1a_\ell -b_{\ell+1} \\
          & = \frac{1}{s^{\ell+1}(\log s)^{\frac54}}
          \left(s\prs U_\ell -(M_U U)_\ell+
          O\biggl(\frac{|U| +|U|^{2}+|V|^{2}}{\log s}\biggr)
          +O\biggl(\frac{1}{(\log s)^{\frac34}}\biggr)
          \right).
        \end{aligned}
      \]
    \item \emph{Diagonalization.}
      The matrix $M_U$ is diagonalizable. More precisely, there exists $P_U\in \GL(L,\mbR)$ such that $M_U = P_U^{-1}D_U P_U$ with
      \label{it:b-sys-lin=diag}
      \[
        D_U = \diag\left\{-1,\frac{2}{2L-1},\frac{3}{2L-1},\cdots,\frac{L}{2L-1}\right\}.
      \]
  \end{enumerate}
\end{lemma}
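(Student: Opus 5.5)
The plan is a direct substitution of \eqref{eq:ab-abe} into the $b_\ell$-equation, followed by a first-order Taylor expansion in $(U,V)$, with $M_U$ read off from the terms of relative size $O(1)$. The diagonalization then reduces to computing the characteristic polynomial of a lower-bidiagonal-plus-first-column matrix, as in \citep{raphael_quantized_2014}.

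\textbf{Linearization.} Write $b_\ell=b_\ell^e+\epsilon_\ell$ and $a_\ell=\epsilon'_\ell$ with $\epsilon_\ell=U_\ell s^{-\ell}(\log s)^{-5/4}$ and $\epsilon'_\ell=V_\ell s^{-\ell}(\log s)^{-5/4}$. First,
\[
\prs\epsilon_\ell=\frac{1}{s^{\ell+1}(\log s)^{\frac54}}\Bigl(s\prs U_\ell-\ell U_\ell+O\bigl(\tfrac{|U|}{\log s}\bigr)\Bigr).
\]
Second, the quadratic term expands as
\[
\Bigl(2\ell-1+\tfrac{2}{\log s}\Bigr)(b_1^e+\epsilon_1)(b_\ell^e+\epsilon_\ell)
\]
around the approximate solution. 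By Lemma~\ref{le:ode-app-sol}, the zeroth-order part cancels up to $O(s^{-\ell-1}(\log s)^{-2})$. After factoring out $s^{-\ell-1}(\log s)^{-5/4}$, this remainder is $O((\log s)^{-3/4})$, which is the claimed error.

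Next come the linear terms $(2\ell-1)(b_1^e\epsilon_\ell+b_\ell^e\epsilon_1)$. Using $b_1^e=c_1/s+O(1/(s\log s))$ and $b_\ell^e=c_\ell/s^\ell+O(1/(s^\ell\log s))$, they contribute
\[
(2\ell-1)\bigl(c_1U_\ell+c_\ell U_1\bigr)
\]
at leading order, with corrections $O(|U|/\log s)$. The $2/\log s$ factor also only produces $O(|U|/\log s)$ corrections. The coupling $-\epsilon_{\ell+1}$ contributes $-U_{\ell+1}$, since $s^{-\ell-1}=s^{-\ell}\cdot s^{-1}$ matches the normalization exactly. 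The genuinely quadratic terms $\epsilon_1\epsilon_\ell$ and $\epsilon'_1\epsilon'_\ell$ are of size $s^{-\ell-1}(\log s)^{-5/2}(|U|^2+|V|^2)$. They therefore land in $O((|U|^2+|V|^2)/\log s)$ with room to spare, and the cross term $a_1a_\ell$ is of this type because $a^e\equiv0$. Collecting terms, with $U_{L+1}:=0$, gives
\[
(M_UU)_\ell=\bigl(\ell-(2\ell-1)c_1\bigr)U_\ell-(2\ell-1)c_\ell U_1+U_{\ell+1}.
\]
Since $c_1=L/(2L-1)$, the diagonal coefficient equals $\frac{L-\ell}{2L-1}$.

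\textbf{Diagonalization.} $M_U$ is upper bidiagonal (ones on the superdiagonal), plus a first column. The plan is to compute its characteristic polynomial by expanding along the first column, or equivalently by looking for eigenvectors recursively. Fix $\mu$ and try to solve $M_UU=\mu U$ with $U_1=1$. For $\ell=1,\dots,L-1$, the rows determine $U_{\ell+1}$ in terms of $U_\ell$ and $U_1$. The last row then becomes a scalar polynomial equation $p(\mu)=0$ of degree $L$.

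The next task is to show, using the recursion $c_\ell=\frac{\ell-1-L}{2L-1}c_{\ell-1}$, that
\[
p(\mu)\propto(\mu+1)\prod_{j=2}^{L}\Bigl(\mu-\tfrac{j}{2L-1}\Bigr).
\]
A convenient route is to change variables to $\nu=(2L-1)\mu$. The recursion then becomes a hypergeometric-type product, and the roots $\nu=-(2L-1)$ and $\nu=2,\dots,L$ can be checked by a telescoping identity, as in \citep[Lemma~3.7]{raphael_quantized_2014}. The $L$ roots are distinct, so $M_U$ is diagonalizable over $\mbR$, and $P_U$ is built from the eigenvectors.

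\textbf{Main obstacle.} I expect the main difficulty to be the closed-form factorization of $p(\mu)$. The sensible fallback is to verify each claimed eigenvalue directly by exhibiting its eigenvector from the recursion and checking that the last row holds. This reduces to the same binomial-type identity satisfied by the $c_\ell$ that guaranteed $(a^e,b^e)$ solves the leading-order system. The linearization bookkeeping, in particular confirming that the $d_\ell$ corrections only enter at $O(1/\log s)$, is routine.
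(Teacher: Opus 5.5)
Your part (i) is correct. The formula $(M_UU)_\ell=\bigl(\ell-(2\ell-1)c_1\bigr)U_\ell-(2\ell-1)c_\ell U_1+U_{\ell+1}$ is right. The $d_\ell$ corrections, the $2/\log s$ factor and the derivative of $(\log s)^{-5/4}$ all fall into $O(|U|/\log s)$, and the residual of Lemma~\ref{le:ode-app-sol} gives the $O((\log s)^{-3/4})$ term. Keep in mind that for $\ell=1$ both cross terms hit $U_1$, so $(M_U)_{11}=1-2c_1=-\frac{1}{2L-1}$; for $L=1$ this is exactly the eigenvalue $-1$. For $\ell\ge2$ the diagonal entry is $\frac{L-\ell}{2L-1}$. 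The matrix is upper (not lower) bidiagonal plus a first column.

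The gap is in (ii). Its whole content is the factorization of the characteristic polynomial, and you only announce it. Neither reduction you offer works as stated:
\begin{itemize}
\item the recursion $c_{\ell+1}=-\frac{L-\ell}{2L-1}c_\ell$ alone does not produce the factorization;
\item the two-term minor recursion used for $M_V$ in Lemma~\ref{le:a-sys-lin} requires the first-column entries to obey that same ratio recursion. This fails for $m_\ell=-(2\ell-1)c_\ell$: one gets $m_{\ell+2}+\frac{L-\ell-1}{2L-1}m_{\ell+1}=\frac{2(L-\ell-1)}{2L-1}c_{\ell+1}$, which is nonzero for $\ell\le L-2$.
\end{itemize}
A genuine summation identity is needed, and here is one way to supply it. Expand along the first column; each complementary minor splits into two triangular blocks. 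With $\beta_k=\mu-\frac{L-k}{2L-1}$ this gives
\[
  \det(\mu\id-M_U)=\prod_{k=1}^{L}\beta_k+\sum_{\ell=1}^{L}(2\ell-1)c_\ell\prod_{k=\ell+1}^{L}\beta_k .
\]
Put $\nu=(2L-1)\mu$ and $(\nu)_n=\nu(\nu-1)\cdots(\nu-n+1)$. Use $c_\ell=(-1)^{\ell-1}\frac{L!}{(L-\ell)!\,(2L-1)^{\ell}}$ and $\prod_{k=\ell+1}^{L}(2L-1)\beta_k=(\nu)_{L-\ell}$. The determinant then reads
\[
  (2L-1)^{L}\det(\mu\id-M_U)=\sum_{\ell=0}^{L}(-1)^{\ell}(1-2\ell)\frac{L!}{(L-\ell)!}\,(\nu)_{L-\ell}.
\]
Now apply the Chu--Vandermonde identity $(\nu+y)_n=\sum_{m=0}^{n}\binom{n}{m}(y)_m(\nu)_{n-m}$ to each part of the sum:
\begin{itemize}
\item to the part without the factor $\ell$, with $y=-1$, using $(-1)^\ell\frac{L!}{(L-\ell)!}=\binom{L}{\ell}(-1)_\ell$;
\item to the part with the factor $\ell$, with $y=-2$, after writing $\ell\binom{L}{\ell}=L\binom{L-1}{\ell-1}$ and $(-1)_\ell=-(-2)_{\ell-1}$.
\end{itemize}
The right-hand side becomes
\[
  (\nu-1)_L+2L(\nu-2)_{L-1}=(\nu+2L-1)\prod_{j=2}^{L}(\nu-j).
\]
So the spectrum is $\{-1\}\cup\{\frac{j}{2L-1}:2\le j\le L\}$, all eigenvalues are simple, and $M_U$ is diagonalizable over $\mbR$. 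The same computation with first column $+c_\ell$ gives $(\nu-1)_L$ and recovers $D_V$ in Lemma~\ref{le:a-sys-lin}.
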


\begin{proof}
  This proof is based on an argument similar to that used in the proof of Lemma~\ref{le:a-sys-lin}.
  We refer to~\cite[Lemma 2.15]{raphael_quantized_2014} for more details.
\end{proof}

\begin{lemma}
  \label{le:a-sys-lin}
  The following estimates hold.
  \begin{enumerate}[label={\rm (\roman*)}]
    \item \emph{Linearization.}
      There exists a matrix $M_V \in\mbR^{L\times L}$ such that
      \label{it:a-sys-lin=lin}
      \[
        \begin{aligned}
          & \prs a_\ell +\biggl(2\ell-1+\frac{2}{\log s}\biggr)b_1a_\ell -a_1b_\ell -a_{\ell+1} \\
          & = \frac{1}{s^{\ell+1}(\log s)^{\frac54}}
          \left(s\prs V_\ell -(M_V V)_\ell+
          O\biggl(\frac{|V| +|U||V|}{\log s}\biggr) \right).
        \end{aligned}
      \]
    \item \emph{Diagonalization}.
      The matrix $M_V$ is diagonalizable. More precisely, there exists $P_V\in \GL(L,\mbR)$ such that $M_V = P_V^{-1}D_V P_V$ with
      \label{it:a-sys-lin=diag}
      \[
        D_V = \diag\biggl\{\frac{1}{2L-1},\frac{2}{2L-1},\dots,\frac{L}{2L-1}\biggr\}.
      \]
  \end{enumerate}
\end{lemma}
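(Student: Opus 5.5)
The plan is a direct substitution of \eqref{eq:ab-abe} with $a^e\equiv0$, that is $a_\ell = V_\ell\,s^{-\ell}(\log s)^{-5/4}$, into the $a$-equation of \eqref{eq:ode}. We collect the terms of exact order $s^{-\ell-1}(\log s)^{-5/4}$ and put everything else into the error. This is parallel to \cite[Lemma~2.15]{raphael_quantized_2014}, and is simpler because $a^e\equiv 0$: there is no source term, and hence no $O((\log s)^{-3/4})$ remainder.

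\textbf{Step 1 (the linear terms).} We compute each term of the $a$-equation in turn.
\begin{itemize}
\item[] The time derivative gives
\[
\prs a_\ell=\frac{1}{s^{\ell+1}(\log s)^{5/4}}\Bigl(s\prs V_\ell-\ell V_\ell-\tfrac54\tfrac{V_\ell}{\log s}\Bigr).
\]
\item[] For the damping term we write $b_1=\frac{c_1}{s}+\frac{d_1}{s\log s}+\frac{U_1}{s(\log s)^{5/4}}$. Then
\[
\Bigl(2\ell-1+\tfrac{2}{\log s}\Bigr)b_1a_\ell=\frac{1}{s^{\ell+1}(\log s)^{5/4}}\Bigl((2\ell-1)c_1V_\ell+O\bigl(\tfrac{|V|+|U||V|}{\log s}\bigr)\Bigr).
\]
\item[] Using $b_\ell=c_\ell s^{-\ell}+O(s^{-\ell}(\log s)^{-1})+O(|U|s^{-\ell}(\log s)^{-5/4})$, the coupling term gives
\[
-a_1b_\ell=\frac{1}{s^{\ell+1}(\log s)^{5/4}}\bigl(-c_\ell V_1+O(\tfrac{|V|+|U||V|}{\log s})\bigr).
\]
\item[] The shift term is simply $-a_{\ell+1}=-V_{\ell+1}\,s^{-\ell-1}(\log s)^{-5/4}$, with $V_{L+1}=0$.
\end{itemize}
Summing these, we obtain (i) with
\[
(M_VV)_\ell=\bigl(\ell-(2\ell-1)c_1\bigr)V_\ell+c_\ell V_1+V_{\ell+1}=\frac{L-\ell}{2L-1}V_\ell+c_\ell V_1+V_{\ell+1},
\]
where the second equality uses $c_1=\frac{L}{2L-1}$. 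So $M_V$ is upper bidiagonal, with diagonal entries $\frac{L-\ell}{2L-1}$ and ones on the superdiagonal, plus the rank-one first column $(c_\ell)_\ell$.

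\textbf{Step 2 (spectrum).} Since $D_V$ has $L$ distinct entries, it suffices to show that the characteristic polynomial of $M_V$ has the $L$ roots $\mu_k=\frac{k}{2L-1}$, $1\le k\le L$. Diagonalizability then follows.

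Solving $(M_V-\mu)V=0$ with $V_1=1$ determines $V_2,\dots,V_{L+1}$ recursively through
\[
V_{\ell+1}=\Bigl(\mu-\tfrac{L-\ell}{2L-1}\Bigr)V_\ell-c_\ell .
\]
Hence $\mu$ is an eigenvalue exactly when $V_{L+1}(\mu)=0$, and $V_{L+1}$ is a monic polynomial of degree $L$ in $\mu$.

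Set $\mu=\frac{m}{2L-1}$ and rescale by $W_\ell=(2L-1)^{\ell-1}V_\ell$. The recursion becomes $W_{\ell+1}=(m-L+\ell)W_\ell-\tilde c_\ell$, where $\tilde c_\ell=(2L-1)^{\ell}c_\ell$. From \eqref{eq:c1} we have
\[
\tilde c_\ell=L\prod_{j=2}^{\ell}(j-1-L)=L(-1)^{\ell-1}\frac{(L-1)!}{(L-\ell)!}.
\]
Unrolling the recursion gives
\[
W_{L+1}=\prod_{j=1}^{L}(m-L+j)-\sum_{\ell=1}^{L}\tilde c_\ell\prod_{j=\ell+1}^{L}(m-L+j).
\]
The goal is to recognize this as a constant multiple of $\prod_{k=1}^L(m-k)$. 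I would verify this either by checking $W_{L+1}(m)=0$ at $m=1,\dots,L$, or via a hypergeometric/binomial identity. At $m=k$, the products $\prod_{j=\ell+1}^{L}(k-L+j)$ are ratios of factorials, so each vanishing reduces to an alternating binomial sum.

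\textbf{Main obstacle.} I expect the main difficulty to be this combinatorial identity. I would first test it for $L=1,2$ by hand: for $L=1$, $M_V=(c_1)=(1)=\frac{1}{1}$, which is correct. For the general case I would then try the ansatz $V_\ell=\binom{\cdot}{\cdot}$ suggested by the $b$-system analysis in \cite{raphael_quantized_2014}, where the same bidiagonal structure appears.
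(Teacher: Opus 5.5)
Your Step~1 is the paper's own computation for part~(i). The matrix you obtain is exactly the paper's $M_V$: the $(1,1)$ entry is $\frac{L-1}{2L-1}+c_1=1$, the diagonal entries are $\frac{L-\ell}{2L-1}$ for $\ell\ge2$, the first column is $(c_\ell)$, and there are ones on the superdiagonal. You are also right that $a^e\equiv0$ makes the identities between the $c_\ell,d_\ell$ unnecessary here.

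Your reduction in Step~2 is valid as well. An eigenvector must have $V_1\neq0$, since otherwise the rows force $V_2=\dots=V_L=0$. The last row is equivalent to $V_{L+1}(\mu)=0$, and $V_{L+1}$ is monic of degree $L$. So it suffices to show $W_{L+1}(m)=0$ for $m=1,\dots,L$.

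The gap is that you never prove this identity, and it is the whole content of part~(ii); the proposal only announces it and suggests an ansatz. It closes in a few lines without any ansatz. Your formula for $\tilde c_\ell$ also holds at $\ell=0$ with $\tilde c_0=-1$, so
\[
W_{L+1}(m)=-\sum_{\ell=0}^{L}\tilde c_\ell\prod_{j=\ell+1}^{L}(m-L+j).
\]
For an integer $1\le m\le L$, the product equals $\prod_{i=m-L+\ell+1}^{m}i$. It contains the factor $i=0$ when $\ell\le L-m-1$, and equals $m!/(m-L+\ell)!$ otherwise. Substituting $n=L-\ell$,
\[
W_{L+1}(m)=L!\,m!\sum_{\ell=L-m}^{L}\frac{(-1)^{\ell}}{(L-\ell)!\,(m-L+\ell)!}
=(-1)^{L}L!\sum_{n=0}^{m}(-1)^{n}\binom{m}{n}=(-1)^{L}L!\,(1-1)^{m}=0.
\]
Since $W_{L+1}$ is monic of degree $L$ in $m$, this gives $W_{L+1}(m)=\prod_{k=1}^{L}(m-k)$. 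Hence the eigenvalues are the $L$ distinct numbers $\frac{k}{2L-1}$, and diagonalizability follows. With this added, your proof is complete.

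The paper argues differently. It expands the leading principal minors $D_\ell(x)$ of $M_V-x\,\mathrm{Id}$ and uses $c_\ell=\frac{\ell-1-L}{2L-1}c_{\ell-1}$ to get
\[
(-1)^{\ell+2}c_{\ell+2}+xD_{\ell+1}(x)=\Bigl(\frac{L-\ell-1}{2L-1}-x\Bigr)\bigl((-1)^{\ell+1}c_{\ell+1}+xD_{\ell}(x)\bigr).
\]
This telescopes directly to $D_L(x)=\prod_{\ell=1}^{L}\bigl(\frac{\ell}{2L-1}-x\bigr)$. That route yields the factorization in one sweep, whereas yours checks the roots one at a time; both are elementary and of similar length.
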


\begin{proof}
  Proof of~\ref{it:a-sys-lin=lin}. Using the definition of ${(c_{\ell})}_{\ell=1}^{L}, {(d_{\ell})}_{\ell=1}^{L}$, we compute:
  \begin{align*}
    & \quad \ell c_{\ell}-(2\ell-1)c_{1}c_{\ell}+c_{\ell+1}                            \\
    & = \ell d_{\ell}-2c_{1}c_{\ell}-(2\ell-1)(c_{1}d_{\ell}+c_{\ell}d_{1})+d_{\ell+1} \\
    & =0,
  \end{align*}
  where the convention $c_{L+1}=d_{L+1}=0$ is used.
  Together with Lemma~\ref{le:ode-app-sol}, this implies
  \begin{align*}
    & s^{\ell+1}(\log s)^{\frac54}
    \left(\prs a_\ell +\biggl(2\ell-1+\frac{2}{\log s}\biggr)b_1a_\ell -a_1b_\ell -a_{\ell+1} \right) \\
    & =
    s\prs V_\ell  -\frac{L-\ell}{2L-1} V_\ell -c_\ell V_1 -V_{\ell+1} +O\biggl(\frac{|V|+|U||V|}{\log s}\biggr).
  \end{align*}
  It follows directly that the matrix $
    M_V =
    (M_{i,j})_{L\times L}$ is given by $M_{1,1}=1$ and
  \[
    \left\{\begin{aligned}
      & M_{i,1} = c_i,               & \quad & 2\le i\le L,      \\
      & M_{i,i} = \tfrac{L-i}{2L-1}, & \quad & 2\le i\le L,      \\
      & M_{i-1,i} = 1,               & \quad & 2\le i\le L,      \\
      & M_{i,j} = 0,                 & \quad & \mbox{otherwise}.
    \end{aligned}\right.
  \]
  Proof of~\ref{it:a-sys-lin=diag}. We compute the characteristic polynomial. The cases $L=1,2,3$ follow by direct inspection. Let us assume $L\ge 4$.
  For any $1\le\ell\le L$, we denote by $D_\ell(x)$ the determinant of the submatrix of $M_V-x\id$ obtained from the first $\ell$ rows and the first $\ell$ columns.
  In particular, we have $D_1(x)=1-x$ and $D_L(x) = \det(M_V-x\id)$. From~\eqref{eq:c1}, for any $1\le \ell\le L-2$, we compute
  \[
    (-1)^{\ell+2}c_{\ell+2}+xD_{\ell+1}(x)=\left(\frac{L-\ell-1}{2L-1}-x\right)((-1)^{\ell+1}c_{\ell+1}+xD_{\ell}(x)).
  \]
  It follows that
  \[
    \begin{aligned}
      (-1)^{L}c_{L}+xD_{L-1}(x)= \left(c_{2}+xD_{1}(x)\right)\prod_{\ell=1}^{L-2}\left(\frac{\ell}{2L-1}-x\right)=
      -\prod_{\ell=1}^{L}\left(\frac{\ell}{2L-1}-x\right).
    \end{aligned}
  \]
  Expanding along the last row of $M_V-x\id$, we obtain
  \[
    D_{L}(x)=(-1)^{L+1}c_{L}-xD_{L-1}(x)=\prod_{\ell=1}^{L}\left(\frac{\ell}{2L-1}-x\right).
  \]
  This completes the proof.
\end{proof}

\section{Trapped regime}
\label{sec:trap-reg}
In this section, we introduce the main dynamical tools for our analysis, namely modulation theory and the bootstrap regime (Proposition~\ref{pro:boot}). These are the key components of the energy method, and of the proof of Theorem~\ref{th:main-thm}. The equation for the corrector $\w$ and the modulation equations are also derived.

\subsection{Localized resonances}
\label{sub:local-reson}
In this subsection, we introduce a localized version of $\lphi$, which is useful for the modulation analysis below in Subsection~\ref{sub:mod-eq}.

For any $L\in\mbN_+$ and any sufficiently large $M$, we define
\begin{equation}
  \label{eq:local-reson}
  \xm_M(y) = \sum_{k=0}^{L}\rho_k \ham^k(\chi_M\lphi),
\end{equation}
with the coefficients ${(\rho_k)}_{k=0}^{L}$ given by
\[
  \rho_0 = 1 \andd
  \rho_k = -\sum_{i=0}^{k-1}\frac{\rho_i(\ham^{i}(\chi_M\lphi),T_k)}{(\chi_M\lphi,\lphi)}
  \quad\mbox{for}\quad 1\le k\le L.
\]
We further define
\begin{equation}
  \label{eq:vec-local-reson}
  \xm_{k,0} = \ham^{k}\xm_{M}\R^{k-1}e_1, \quad
  \xm_{0,k} = \ham^{k}\xm_{M}\R^{k-1}e_2
  \quad\mbox{for}\quad 1\le k\le L.
\end{equation}

\begin{lemma}[Localized resonance]
  \label{le:local-reson}
  The following identities hold:
  \begin{enumerate}[label={\rm (\roman*)}]
    \item non-vanishing: $\|\xm_M\|_{L^2}^2 =4(1+o(1))\log M$;
      \label{it:local-reson=non-van}
    \item non-degeneracy: $(\lphi, \xm_M) =4(1+o(1))\log M$;
      \label{it:local-reson=non-deg}
    \item orthogonality: $(T_j, \xm_M) =0$ for $1\le j\le L$.
      \label{it:local-reson=orth}
  \end{enumerate}
\end{lemma}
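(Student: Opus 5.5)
The plan is to treat the three claims of Lemma~\ref{le:local-reson} by first establishing (iii), which is built into the definition of the coefficients $\rho_k$, and then showing that the corrections $\rho_k\ham^k(\chi_M\lphi)$ for $k\ge1$ are lower order. This reduces (i) and (ii) to the computation for $\chi_M\lphi$ alone.

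For the orthogonality (iii), I will use the self-adjointness of $\ham$ together with $\ham T_j = T_{j-1}$ (with $T_0=\lphi$). For $y\ge 2M$ the function $\chi_M\lphi$ vanishes, so $\ham^i(\chi_M\lphi)$ is compactly supported. The integrations by parts are therefore legitimate, and they give
\[
(\ham^i(\chi_M\lphi),T_j) = (\chi_M\lphi,T_{j-i}) \quad (i\le j),
\]
which vanishes for $i>j$ since $\ham^{i-j}\lphi=0$. Hence
\[
(\xm_M,T_j)=\sum_{i=0}^{j}\rho_i(\ham^i(\chi_M\lphi),T_j),
\]
and the last term is $\rho_j(\ham^j(\chi_M\lphi),T_j)=\rho_j(\chi_M\lphi,\lphi)$. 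The recursive formula for $\rho_j$ is exactly the one making this sum vanish, so (iii) holds for $1\le j\le L$.

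Next I will estimate the sizes involved. From \eqref{eq:lphi}, $\lphi^2\sim 4/y^2$ at infinity, so
\[
(\chi_M\lphi,\lphi)=\int\chi_M\lphi^2\,y\,\dd y=4\log M(1+o(1)),
\]
and the same holds for $\|\chi_M\lphi\|_{L^2}^2$. By Corollary~\ref{co:resona}, $|T_k|\les y^{2k-1}\jbk[\log y]$. Moreover, $\ham^i(\chi_M\lphi)$ for $i\ge1$ is supported in $M\le y\le 2M$ with size $M^{-1-2i}$, because $\ham\lphi=0$ and every derivative falls on the cutoff. An induction then gives $|\rho_k|\les M^{2k}\log M/\log M\les M^{2k}$. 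The $i=0$ term, $(\chi_M\lphi,T_k)\les M^{2k}\log M$, divided by $\log M$, gives this bound, and the terms with $i\ge1$ give at most $M^{2k}$. Consequently
\[
\rho_k\ham^k(\chi_M\lphi)=O(M^{-1})\,\one_{M\le y\le 2M},
\]
whose $L^2$ norm is $O(1)$.

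Now (ii) follows. The $k=0$ term gives $(\lphi,\chi_M\lphi)=4\log M(1+o(1))$. For $k\ge1$, $(\lphi,\ham^k(\chi_M\lphi))=(\ham^k\lphi,\chi_M\lphi)=0$ by self-adjointness and the compact support. Hence $(\lphi,\xm_M)=(\lphi,\chi_M\lphi)$ exactly.

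For (i), I will expand the square:
\[
\|\xm_M\|^2=\|\chi_M\lphi\|^2+O\bigl(\|\chi_M\lphi\|_{L^2(M\le y\le 2M)}\bigr)+O(1).
\]
The middle cross terms are $O(1)$, because $\chi_M\lphi$ has $O(1)$ $L^2$ mass on the annulus $M\le y\le 2M$. This gives $4\log M(1+o(1))$.

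The main obstacle is the size bound $|\rho_k|\les M^{2k}$. It must be checked carefully through the recursion, using the logarithmic factor in $T_k$ only in the $i=0$ term; beyond that step the argument is routine.
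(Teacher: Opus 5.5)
Your proposal is correct and follows essentially the paper's argument: (iii) comes from the self-adjointness of $\ham$, the relations $\ham^{j}T_j=\lphi$ and $\ham^{j+1}T_j=0$, and the recursive choice of $\rho_j$; (ii) is exact because $\ham\lphi=0$; and (i) follows from $|\rho_k|\les M^{2k}$, which gives $\xm_M-\chi_M\lphi=O(\one_{y\sim M}/\jbk[y])$. The only differences are cosmetic: you verify orthogonality directly rather than by induction on the partial sums $f_k$, and you bound $(\ham^i(\chi_M\lphi),T_k)$ using the annular support of $\ham^i(\chi_M\lphi)$, whereas the paper moves $\ham^i$ onto $T_k$.
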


\begin{lemma}[Vectorial localized resonance]
  \label{le:vec-local-reson}
  For $1\le j,k \le L$, we have
  \begin{enumerate}[label={\rm (\roman*)}]
    \item non-degeneracy: $(\phia[j], \xm_{k,0}) = (\phib[j], \xm_{0,k}) = 4\delta_{j,k}\,(1+o(1))\log M$;
      \label{it:vec-local-reson=non-deg}
    \item orthogonality: $(\phia[j], \xm_{0,k}) = (\phib[j], \xm_{k,0}) = 0$.
      \label{it:vec-local-reson=orth}
  \end{enumerate}
\end{lemma}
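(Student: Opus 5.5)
The plan is to reduce both statements to the scalar facts of Lemma~\ref{le:local-reson}, using the definitions \eqref{eq:vec-reson} and \eqref{eq:vec-local-reson}. We have $\phia[j]=T_j\R^{j-1}e_1$ and $\xm_{k,0}=\ham^k\xm_M\,\R^{k-1}e_1$, and similarly with $e_2$. Hence
\[
  (\phia[j],\xm_{k,0}) = (T_j,\ham^k\xm_M)\,\bigl(\R^{j-1}e_1\cdot\R^{k-1}e_1\bigr),
\]
and analogous identities hold for the other three pairings. The constant vector factor is elementary. Restricted to $\spann(e_1,e_2)$, $\R$ is rotation by $\pi/2$, so $\R^{m}e_1\cdot\R^{n}e_1=\cos\bigl((m-n)\pi/2\bigr)$ and $\R^{m}e_1\cdot\R^{n}e_2=\sin\bigl((n-m)\pi/2\bigr)$, up to a sign convention.

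The core step is to compute the scalar pairing $(T_j,\ham^k\xm_M)$. Since $\xm_M$ is compactly supported (it is a finite sum of $\ham^i(\chi_M\lphi)$), we may integrate by parts freely. This gives $(T_j,\ham^k\xm_M)=(\ham^kT_j,\xm_M)$, with no boundary terms at the origin because $T_j\in\mcA(2j+1,2j-1)$ vanishes to order $y^{2j+1}$ there. By \eqref{eq:reson}, $\ham^kT_j=T_{j-k}$ for $k<j$, $\ham^kT_j=\lphi$ for $k=j$, and $\ham^kT_j=\ham^{k-j}\lphi=0$ for $k>j$ since $\lphi\in\ker\ham$. Lemma~\ref{le:local-reson}~\ref{it:local-reson=orth} then gives $0$ in the first case, Lemma~\ref{le:local-reson}~\ref{it:local-reson=non-deg} gives $4(1+o(1))\log M$ in the second, and the third case is trivially $0$. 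So $(T_j,\ham^k\xm_M)=4\delta_{j,k}(1+o(1))\log M$.

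Combining the two steps finishes the proof. For part~\ref{it:vec-local-reson=non-deg}, only $j=k$ survives, and there the vector factor is $|\R^{k-1}e_1|^2=1$, since $\R$ is an isometry on the horizontal plane; the same holds with $e_2$. For part~\ref{it:vec-local-reson=orth}, the scalar factor kills every term with $j\neq k$. When $j=k$, the vector factor is $\R^{k-1}e_1\cdot\R^{k-1}e_2=e_1\cdot e_2=0$, because $\R$ preserves inner products on $\spann(e_1,e_2)$.

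The only delicate point is justifying the self-adjointness step for $j<k$, where $T_j$ grows like $y^{2j-1}$ at infinity. Compact support of $\xm_M$ handles infinity. Near $y=0$ one must check that $\ham^k\xm_M$ is regular enough for the boundary terms $y(f'g-fg')|_{y\to0}$ to vanish. This follows because $\chi_M\lphi\in\mcA(1,\cdot)$, so its $\ham$-iterates stay bounded near the origin by Lemma~\ref{le:ham-act-adb}~\ref{it:ham-act-adb=ham}, while $T_j$ and its derivatives vanish there.
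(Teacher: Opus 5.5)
Your proposal is correct and follows the paper's argument essentially step for step. You factor each pairing into $(T_j,\ham^k\xm_M)$ times a constant vector factor, move $\ham^k$ onto $T_j$ by self-adjointness, and conclude from $\ham^kT_j\in\{T_{j-k},\lphi,0\}$ and Lemma~\ref{le:local-reson}, with the $j=k$ orthogonality coming from $e_1\cdot e_2=0$; your boundary-term check near the origin is even easier than you suggest, since $\xm_M=\lphi$ for $y\le M$, so $\ham^k\xm_M$ vanishes identically there for $k\ge1$.
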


Indeed, Lemma~\ref{le:vec-local-reson} is a direct consequence of Lemma~\ref{le:local-reson}, since by the definition in~\eqref{eq:vec-reson} and the self-adjointness of $\ham$, we have
\[
  (\phia[j], \xm_{k,0})
  = \begin{cases}
    (\ham^{k-j}\lphi e_1, \xm_M e_1) = 0,           & j<k, \\
    (\lphi e_1, \xm_M e_1) = (\lphi, \xm_M) \neq 0, & j=k, \\
    (\phia[j-k], \xm_M e_1) = (T_{j-k}, \xm_M) = 0, & j>k.
  \end{cases}
\]
An analogous result holds for $(\phib[j], \xm_{0,k})$, which yields Lemma~\itref{le:vec-local-reson}{it:vec-local-reson=non-deg}. Moreover, Lemma~\itref{le:vec-local-reson}{it:vec-local-reson=orth} follows from the orthogonality of the unit vectors:
\[
  (\phia[j], \xm_{0,j}) = (\phib[j], \xm_{j,0}) = \int \lphi \xm_M (e_1 \cdot e_2) = 0.
\]
Now we turn to proving Lemma~\ref{le:local-reson}.

\begin{proof}[Proof of Lemma~\ref{le:local-reson}]
  We define
  \[
    f_0 = \chi_M\lphi,
    \andd
    f_k = f_{k-1} +\rho_{k}\ham^{k}(\chi_M\lphi)
    \quad\mbox{for}\quad k\ge 1,
  \]
  and claim that
  \[
    (f_k,\lphi) = 4(1+o(1))\log M,
    \andd
    (f_k, T_\ell) = 0
    \quad\mbox{for}\quad 1\le \ell\le k.
  \]
  Then Lemma~\itref{le:local-reson}{it:local-reson=non-deg}, \ref{it:local-reson=orth} follow immediately by taking $\xm_{M} = f_{L}$. We now prove the claim by induction.

  \emph{Case $k=1$}. From $\ham\lphi = 0$ and the asymptotics~\eqref{eq:lphi} of $\lphi$,
  \begin{align*}
    (f_1,\lphi)
    & = (\chi_M\lphi,\lphi) +\rho_1(\chi_M\lphi,\ham\lphi) \\
    & = \int \chi_M \lphi^2
    = 4(1+o(1))\log M.
  \end{align*}
  Similarly,
  \[
    (f_1,T_1)
    = (\chi_M\lphi,T_1) -\frac{(\chi_M\lphi,T_1)}{(\chi_M\lphi,\lphi)}(\chi_M\lphi,\lphi)
    = 0.
  \]
  \emph{Case $k\to k+1$}. By induction and $\ham\lphi=0$,
  \begin{align*}
    (f_{k+1},\lphi)
    & = (f_k,\lphi) +\rho_{k+1}(\ham^{k}(\chi_M\lphi),\ham\lphi) \\
    & = 4(1+o(1))\log M.
  \end{align*}
  For $1\le \ell\le k$, since $\ham^{k+1}T_\ell = \ham^{k+1}(\ham^{-\ell}\lphi) = 0$, we have
  \begin{align*}
    (f_{k+1}, T_\ell)
    & = (f_k,T_\ell) +\rho_{k+1}(\ham^{k+1}(\chi_M\lphi),T_\ell) \\
    & = \rho_{k+1}(\chi_M\lphi,\ham^{k+1}T_\ell)
    = 0.
  \end{align*}
  In addition, for $\ell = k+1$, since $\ham^{k+1}T_{k+1} = \lphi$, from the definition of $\rho_{k+1}$ and ${(f_k)}_k$, we compute:
  \begin{align*}
    (f_{k+1},T_{k+1})
    & = (f_k,T_{k+1}) +\rho_{k+1}(\ham^{k+1}(\chi_M\lphi),T_{k+1})                                                                   \\
    & = (f_k,T_{k+1}) -\sum_{i=0}^{k} \frac{\rho_i(\ham^i(\chi_M\lphi),T_{k+1})}{(\chi_M\lphi,\ham^{k+1}T_{k+1})}(\chi_M\lphi,\lphi) \\
    & = \left(f_k-\sum_{i=0}^{k} \rho_i\ham^i(\chi_M\lphi),\, T_{k+1}\right)                                                         \\
    & = 0,
  \end{align*}
  which completes the proof of the claim.
  It remains to show~\ref{it:local-reson=non-van} for $\xm_M = f_L$. From the asymptotics~\eqref{eq:lphi} and Corollary~\ref{co:resona}, it holds that
  \[
    |(\ham^i(\chi_M\lphi), T_k)|  = |(\chi_M\lphi, T_{k-i})|
    \les M^{2(k-i)}\log M,
    \quad 0\le i\le k-1.
  \]
  Then, together with $\rho_0=1$, a direct induction yields
  \[
    |\rho_{k}| \les M^{2k}
    \for k\ge 0
    \quad\Longrightarrow\quad
    |f_{L}(y) - f_0(y)|
    \les \frac{\one_{y\sim M}}{\jbk[y]},
  \]
  where we use $\ham(\chi_M \lphi) = 0$ for $y\le M$ in the last estimate. Consequently,
  \begin{align*}
    \|\xm_M\|_{L^2}^2 = \|f_L\|_{L^2}^2
    & = (f_0,f_0) + 2(f_0,f_L-f_0) + (f_L-f_0,f_L-f_0)                             \\
    & = \int\chi_M^2 \lphi^2 + O\left(\int_{y\sim M}\frac{y\dd y}{\jbk[y]^2}\right) \\
    & = 4(1+o(1))\log M.
  \end{align*}
  This completes the proof of Lemma~\ref{le:local-reson}.
\end{proof}

\subsection{Modulation and the bootstrap setup}
\label{sub:mod-boot}
In this subsection, we develop modulation theory and the bootstrap setup. Recall from Lemma~\ref{le:local-app-sol} that $\wzt$ is an approximate solution to the equation~\eqref{eq:vect-form}. To obtain an actual solution, we add to $\wzt$ an extra corrector $\w$, which is commonly referred to as the radiation. Specifically, we let
\begin{equation}
  \label{eq:w}
  \hw = \wzt +\w
  \with
  \w = \begin{pmatrix}
    \alpha \\
    \beta  \\
    \gamma
  \end{pmatrix},
\end{equation}
and thus
\begin{equation}
  \label{eq:u-decomp}
  u(t,r) = e^{\vtha\R}\Bigl(Q+(e_r,e_\tau,Q)(\wzt+\w)\Bigr)_{\la}.
\end{equation}
Then, by standard modulation theory, the decomposition \eqref{eq:u-decomp} is unique on a sufficiently small time interval $[0,t_1]$, with the geometric parameters $\la,b,\vtha,a$ being $C^1$ maps and the radiation satisfying the orthogonality conditions
\begin{equation}
  \label{eq:ortho}
  (\alpha,\ham^{k}\xm_M) = (\beta,\ham^{k}\xm_M) = 0, \quad 0\le k\le L,
\end{equation}
and the smallness condition
\[
  \|\nabla\w\|_{L^2} +\Big\|\frac{\w}{y}\Big\|_{L^2} +|b_1| +|a_1| \ll 1.
\]
The existence of such a decomposition is ensured by the nonvanishing of the determinant (we denote $T_0 = \lphi$):
\[
  \det\Bigl(\bigl\{(T_j, \ham^{k}\xm_M)\bigr\}_{0\le j,k\le L}\Bigr)
  = {(\lphi,\xm_M)}^{L+1} \neq 0,
\]
the matrix being diagonal with non-zero diagonal entries $(\lphi, \xm_M)$, by Lemma~\ref{le:local-reson}.
Thus the decomposition~\eqref{eq:u-decomp} and the orthogonality conditions~\eqref{eq:ortho} are preserved as long as $\w$ remains small in the $\dot{H}^1$ topology. Thus the original radiation $\w$ need not satisfy the orthogonality conditions; see~\citep[Remark~4.1]{merle_blowup_2013} for the corresponding reparametrization argument.

We measure $\w$ by the energy norm $\E_1$ and the high-order Sobolev norms $\E_{2k}$ ($1\le k\le L+1$) adapted to the Hamiltonian $\mham$, which are defined as follows:
\begin{equation}
  \begin{gathered}
    \label{eq:e2k}
    \E_1 =\|\nabla\wp\|_{L^2}^2 +\Big\|\frac{\wp}{y}\Big\|_{L^2}^2, \\
    \E_{2k} = \|\mhamp^{k}\w\|_{L^2}^2
    \quad\mbox{for}\quad 1\le k\le L+1.
  \end{gathered}
\end{equation}
With this setup, we now state the bootstrap argument to control the radiation $\w$ and the parameters $(\la, b, \vtha, a)$. Recall from~\eqref{eq:ss-trans} the rescaled variables:
\[
  y =\frac{r}{\la(t)}, \quad
  s = s_0 +\int_0^{t}\frac{d\tau}{\la^2(\tau)},
\]
where the map $t \mapsto s$ is a bijection satisfying
\[
  s|_{t=0} =s_0,
  \quad s|_{t\uparrow T} \to +\infty.
\]
Here $T\in\mbR_+$ is the expected blowup time. The existence of such radiation $\w$ on the actual time interval $[0, T)$ is now equivalent to the long-time existence on the rescaled time interval $[s_0,\infty)$ \footnote{For convenience we sometimes use the actual time $t$, especially for the initial time $t=0$. The reader should distinguish it from the rescaled time $s$.}. Recall that in Subsection~\ref{sub:ab-sys} we have imposed
\[
  b_k = b_k^e +\frac{U_k}{s^k(\log s)^{\frac54}}, \quad
  a_k = a_k^e +\frac{V_k}{s^k(\log s)^{\frac54}}, \quad
  1\le k\le L.
\]
Define the transformed versions of $(U, V)$ by
\[
  U' = P_U U, \quad V' = P_V V,
\]
where $P_U, P_V\in\GL(L,\mbR)$ are given in Lemma~\itref{le:b-sys-lin}{it:b-sys-lin=diag} and~\itref{le:a-sys-lin}{it:a-sys-lin=diag}. Then the existence of $(a,b)$ reduces to that of $(U', V')$. We now state the assumptions made on the initial data:
\begin{enumerate}[label={\rm (\roman*)}]
  \item control of the modes:
    \begin{equation}
      \label{ini:modes-bnd}
      |U'_1|(0) \le \frac14, \quad
      \bigl(U'_2, \dots, U'_L, V'_1, \dots, V'_L\bigr)(0) \in \bar{B}(0,1).
    \end{equation}
  \item initial normalization:
    \begin{equation}
      \label{ini:fixed-scaling}
      \vtha(0)=0, \quad \la(0)=1.
    \end{equation}
  \item control of the adapted norms:
    \begin{align}
      \E_1(0)    & \le b_1(0)^{1/2},
      \label{ini:e1-bnd}                                  \\
      \E_{2k}(0) & \le b_1(0)^{10L}, \quad 1\le k\le L+1.
      \label{ini:e2k-bnd}
    \end{align}
\end{enumerate}

\begin{remark}
  \label{re:fixed-scaling}
  (i) $\bar{B}(0,1)$ denotes the closed unit ball in $\mbR^{2L-1}$. \\
  (ii) The initial normalization~\eqref{ini:fixed-scaling} is a legitimate assumption. Indeed, since the \sch\ map equation~\eqref{eq:sm} admits the scaling and rotational symmetries, any fixed transformation
  \[
    u \to u_{\vtha,\la}
    = e^{\vtha\R} u\biggl(\frac{t}{\la^2}, \frac{r}{\la}\biggr),
    \with (\vtha,\la) \in\mbR\times\mbR_+
  \]
  preserves solutions. In other words, if a solution $u$ blows up in finite time, then so does the transformed one $u_{\vtha, \la}$. Thus an a priori transformation ensures~\eqref{ini:fixed-scaling}.
\end{remark}

We claim that the radiation $\w$ is trapped in a regime where $\E_1, \E_{2k}$ ($1\le k\le L+1$) are bounded, and hence remains in the associated Sobolev space. More precisely, we have the following.

\begin{proposition}[Bootstrap regime]
  \label{pro:boot}
  Given any $L\in \mbN_+$, there exist sufficiently large universal constants $M, K, s_0>1$ depending on $L$ and an initial datum
  \[
    (U'_2, \dots, U'_L, V'_1, \dots, V'_L)(0) \in \bar{B}(0,1),
  \]
  such that the following bounds hold for $s\in[s_0,+\infty)$:
  \begin{enumerate}[label={\rm (\roman*)}]
    \item control of the adapted norms:
      \begin{align}
        \E_1(s)      & \le 10b_1(0)^{1/4},
        \label{pro:boot-e1-bnd}                                                                      \\
        \E_{2k}(s)   & \le \bigl\{ b_1^{(2k-1)\frac{2L}{2L-1}}\lgb^{K}\bigr\}(s), \quad 1\le k\le L,
        \label{pro:boot-e2k-bnd}                                                                     \\
        \E_{2L+2}(s) & \le \frac{K b_1^{2L+2}}{\lgb^2}(s),
        \label{pro:boot-e2l2-bnd}
      \end{align}
    \item control of the stable mode:
      \begin{equation}
        \label{pro:boot-u1-bnd}
        |U'_1|(s) \le 1,
      \end{equation}
    \item control of the unstable modes:
      \begin{equation}
        \label{pro:boot-u2-bnd}
        \bigl(U'_2, \dots, U'_L, V'_1, \dots, V'_L\bigr)(s) \in \bar{B}(0,1).
      \end{equation}
  \end{enumerate}
\end{proposition}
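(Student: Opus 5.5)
The plan is the standard bootstrap argument. Its only topological ingredient is a Brouwer-type exit lemma for the unstable modes. Fix $M$, then $K$, then $s_0$, in that order. For each initial choice of $(U'_2,\dots,U'_L,V'_1,\dots,V'_L)(0)\in\bar B(0,1)$, define the exit time $s^\ast\in[s_0,+\infty]$ as the supremum of times on which the bounds \eqref{pro:boot-e1-bnd}--\eqref{pro:boot-u2-bnd} hold. By continuity of the flow in $\dot H^1\cap\dot H^{2L+2}$ and the initial assumptions \eqref{ini:modes-bnd}--\eqref{ini:e2k-bnd}, we have $s^\ast>s_0$. We argue by contradiction, assuming $s^\ast<\infty$ for every choice. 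The goal is to show that on $[s_0,s^\ast]$ every bound except \eqref{pro:boot-u2-bnd} is strictly improved. Then exit can only happen through the sphere $\partial\bar B(0,1)$ in the unstable modes, and we show this exit is transversal.

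\textbf{Step 1: modulation equations.} Project the equation for $\w$ onto $\xm_{k,0},\xm_{0,k}$ and $\xm_M e_{1,2}$, using the orthogonality conditions \eqref{eq:ortho}, Lemma~\ref{le:vec-local-reson} and the local bound of Lemma~\ref{le:local-app-sol}\ref{it:local-app-sol=local}. This yields \eqref{eq:mod-eq-strat}, with right-hand side $\sqrt{\E_{2L+2}}+b_1^{L+1}\lgb^{-1}$, together with the same bounds for $\la_s/\la+b_1$ and $\vtha_s+a_1$. For $a_L,b_L$ we need the improved version of Lemma~\ref{le:refined-mod-eq}, obtained by integrating by parts in time against a $\log M$-localized pairing, which gains $\lgb^{-1}$ more. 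Inserting $b_1\simeq c_1/s$ (valid in the bootstrap) and Lemmas~\ref{le:b-sys-lin}--\ref{le:a-sys-lin}, we get $s\prs U'=D_UU'+O((\log s)^{-1/4})$ and $s\prs V'=D_VV'+O((\log s)^{-1/4})$. In particular the stable mode obeys $s\prs U'_1=-U'_1+o(1)$, which improves \eqref{pro:boot-u1-bnd} to $|U'_1|\le\frac12$. On $\partial \bar B(0,1)$ the unstable modes satisfy $\frac{d}{ds}|(U'_{\ge2},V')|^2\ge c/s>0$, so the exit is strict outgoing.

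\textbf{Step 2: energy estimates.} The main obstacle is the top-order bound \eqref{pro:boot-e2l2-bnd}. I would follow Section~\ref{sec:energy-est} and apply the mixed energy-Virial-Morawetz Proposition~\ref{pro:mix-eng}. It gives a differential inequality of the form
\[
  \frac{d}{ds}\Bigl\{\frac{\E_{2L+2}+\vr+\text{(Morawetz)}}{\la^{4L+2}}\Bigr\}\le \frac{b_1}{\la^{4L+2}}\EplusM .
\]
Here the Virial weight $q$ of Lemma~\ref{le:vr-weig} and the coercivity of Lemma~\ref{le:coer-ti-ham} absorb the leading quadratic term. The error bound $b_1^{2L+4}\lgb^{-2}$ comes from Lemma~\ref{le:local-app-sol}\ref{it:local-app-sol=sob}. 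Integrating from $s_0$ using $\la\sim s^{-\dots}$ from Step~1 (so $\la^{4L+2}$ behaves like $b_1^{2L+1}$ up to logarithms), together with the smallness of the initial data, gives $\E_{2L+2}\le \frac K2 b_1^{2L+2}\lgb^{-2}$ once $M$ and then $K$ are chosen large. The intermediate bounds \eqref{pro:boot-e2k-bnd} then follow by interpolation between $\E_1$ and $\E_{2L+2}$, with coercivity of $\mhamp^k$ under \eqref{eq:ortho}, or by weaker monotonicity for $\E_{2k}$, with a loss of $\lgb^{K/2}$. The bound \eqref{pro:boot-e1-bnd} follows from conservation of the Dirichlet energy and the smallness of $\wzt$ in $\dot H^1$, which is of size $\delta(b_1)$.

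\textbf{Step 3: topological conclusion.} Steps 1--2 show that the map sending the initial unstable data to the rescaled exit point $(U'_{\ge2},V')(s^\ast)\in\partial\bar B(0,1)$ is continuous, by transversality. It restricts to the identity on the boundary. This is impossible by Brouwer's theorem, so some choice has $s^\ast=\infty$.
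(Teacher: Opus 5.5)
The contradiction/Brouwer architecture and the use of Proposition~\ref{pro:mix-eng} for $\E_{2L+2}$ match the paper. The intermediate bounds \eqref{pro:boot-e2k-bnd}, however, do not follow from what you propose.

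\textbf{Interpolation cannot give \eqref{pro:boot-e2k-bnd}.} $\E_1$ is small but does not decay. Interpolating between it and $\E_{2L+2}\lesssim Kb_1^{2L+2}\lgb^{-2}$ gives at best $\E_{2k}\lesssim b_1^{(2k-1)\frac{2L+2}{2L+1}}$ up to logarithms. Since $\frac{2L+2}{2L+1}<\frac{2L}{2L-1}$, this misses the required $b_1^{(2k-1)\frac{2L}{2L-1}}\lgb^{K}$ by a positive power of $b_1$. In physical variables, \eqref{pro:boot-e2k-bnd} says $\E_{2k}\approx\la^{4k-2}$: the unrescaled $\dot H^{2k}$ norm stays bounded up to logarithms. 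By contrast, the unrescaled squared $\dot H^{2L+2}$ norm $\E_{2L+2}/\la^{4L+2}$ grows like $s^{\frac{2}{2L-1}}$, so no scale-invariant interpolation with it yields boundedness.

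\textbf{What is actually needed.} Each $\E_{2k+2}$, $0\le k\le L-1$, needs its own energy estimate (Lemma~\ref{le:mix-low-eng}). There the leading quadratic term vanishes by skew-symmetry of $\R$. The commutator $[\prt,(\R\mhaml)^{k+1}]$, however, produces $\frac{b_1}{\la^{4k+4}}\sum_{\ell=0}^{k}\int\w_{2k+2}^0\cdot\R^{k+1}\mhamp^{\ell}\bigl(g_2\mhamp^{k-\ell}\wp\bigr)$. This term can only be bounded by $C(M)b_1\E_{2k+2}/\la^{4k+4}$, with no small factor. Integrating it against the bootstrap bound returns $C(M)b_1^{(2k+1)\frac{2L}{2L-1}}\lgb^{K+1}$, one logarithm worse than assumed. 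The paper removes this term exactly with the lower-order Morawetz functional $\mora_k$ of \eqref{def:mora-func}. After that, every remaining term is either $\sqrt{\E_{2k+2}}$ times a $K$-independent quantity or carries an extra power of $b_1$. This produces the loss $\lgb^{K/2+C}$, which is absorbed using $(4k+2)c_1=(2k+1)\frac{2L}{2L-1}$ and $K$ large. Your alternative, ``weaker monotonicity with a loss of $\lgb^{K/2}$'', names this outcome but omits the Morawetz correction at each intermediate level, which is what makes it true.

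\textbf{A smaller gap in the mode analysis.} The law $s\prs U'=D_UU'+O((\log s)^{-1/4})$, $s\prs V'=D_VV'+O((\log s)^{-1/4})$ does not hold for your $U',V'$. Lemma~\ref{le:refined-mod-eq} improves the law only for the modified parameters $\bar a_L,\bar b_L$ of \eqref{eq:ti-b}. These are obtained by pairing at the $b_1$-dependent scale $B_\eta=b_1^{-\eta}$, not at scale $M$, and the gain is $\lgb^{-1/2}$. For $a_L,b_L$ themselves the error is $(\log M)^{-1/2}\bigl(\sqrt{\E_{2L+2}}+b_1^{L+1}\lgb^{-1}\bigr)$. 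After multiplication by $s^{L+1}(\log s)^{5/4}$ this is of size $\sqrt{K/\log M}\,(\log s)^{1/4}$ and grows. So neither the improvement of $U'_1$ nor outgoing flow on $\partial\bar B(0,1)$ follows in your variables. You must pass to $\bar U',\bar V'$ and use $|\bar U'-U'|+|\bar V'-V'|\lesssim s^{-1/4}$. Then run the exit argument for $X=(\bar U'_2,\dots,\bar U'_L,\bar V'_1,\dots,\bar V'_L)$, checking that the map from the original unstable data to $X(s_0)$ is a small $C^1$ perturbation of the identity.

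\textbf{A scaling slip.} Up to logarithms, $\la^{4L+2}$ behaves like $b_1^{(2L+1)\frac{2L}{2L-1}}$, not $b_1^{2L+1}$. The top-order integration closes precisely because $(4L+2)c_1-(2L+2)=\frac{2}{2L-1}>0$. With the scaling you wrote, both the initial-data term and the time integral would exceed $b_1^{2L+2}\lgb^{-2}$.
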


Indeed, for any small enough initial data $(\w,U',V')(0)$, the local well-posedness of the \sch\ flow guarantees that the bounds~\eqref{pro:boot-e1-bnd}--\eqref{pro:boot-u2-bnd} hold on a small time interval $[s_0, s_1]$, possibly with $s_1=s_0$ for boundary initial data. We therefore define the \emph{exit time} by
\begin{equation}
  \label{eq:exit-time}
  s^{\ast}(u_0)
  = \sup\left\{s\ge s_0: \mbox{\eqref{pro:boot-e1-bnd}--\eqref{pro:boot-u2-bnd} hold on $[s_0, s]$}\right\},
\end{equation}
which describes in terms of rescaled variables the moment $s^{\ast}$ when the solution $u$ leaves the bootstrap regime. Then Proposition~\ref{pro:boot} asserts the existence of an initial datum $u_0$ such that $s^{\ast}(u_0) =+\infty$. Together with the coercivity of $\E_1$ (Lemma~\ref{le:coer-ham}) and the orthogonality~\eqref{eq:ortho}, this ensures the existence of the radiation $\w$. Hence, in view of~\eqref{eq:u-decomp}, the existence of the solution follows.

\begin{remark}
  Proposition~\ref{pro:boot} is crucial in proving Theorem~\ref{th:main-thm}. Proposition~\ref{pro:boot} is proved in Subsection~\ref{sub:clos-boot}, and Theorem~\ref{th:main-thm} is proved in Subsection~\ref{sub:proof-main-thm}.
\end{remark}

\subsection{Equation for the radiation}
\label{sub:w-eq}
This subsection is devoted to the equation satisfied by the radiation $\w$ and to the associated high-order derivatives.

From~\eqref{eq:vect-form} and \eqref{le:local-app-sol=wzt-eq}, the equation for $\w$ is
\begin{equation}
  \label{eq:w-eq}
  \begin{cases}
    \prs\w -\frac{\la_s}{\la}\vla\w +\hj\mham\w +\mF = 0, \\
    \mF = \psizt +\modt +\vtha_s\z\R\w +\w\times(\mham\wzt +\hp\lphi),
  \end{cases}
\end{equation}
where, in the rest of this article, we use the notation
\[
  \hj = \hj[\hw].
\]
We let $\W(t,r) = \w(s,y)$. Then in view of~\eqref{eq:w}, its components read
\begin{equation}
  \label{eq:scale-w-w}
  \W(t,r) = \begin{pmatrix}
    \alpha_\la(t,r) \\
    \beta_\la(t,r)  \\
    \gamma_\la(t,r)
  \end{pmatrix},
  \with
  \begin{cases}
    \alpha_\la(t,r)=\alpha(s,y), \\
    \beta_\la(t,r)=\beta(s,y),   \\
    \gamma_\la(t,r)=\gamma(s,y),
  \end{cases}
\end{equation}
where we use the subscript notation $(\cdot)_{\la}$ as in~\eqref{eq:la-trans}. From~\eqref{eq:w-eq},
\begin{equation}
  \label{eq:W-eq}
  \prt\W +\hj\mhaml\W +\frac{1}{\la^2}\mF_{\la} = 0,
\end{equation}
where the renormalized vectorial Hamiltonian $\mhaml$ is
\begin{align*}
  \mhaml\begin{pmatrix*} \alpha_{\la} \\ \beta_{\la} \\ \gamma_{\la} \end{pmatrix*}
  & = \begin{pmatrix*}[l]
    \ham_{\la}\alpha_{\la} -\frac{\zt_{\la}}{\la}\prr\gamma_{\la}\\
    \ham_{\la}\beta_{\la} \\
    -\Delta_{r}\gamma_{\la} +\frac{\zt_{\la}}{\la}\bigl(\prr +\tfrac{\z_{\la}}{r}\bigr)\alpha_{\la}
  \end{pmatrix*},
\end{align*}
with the scalar Hamiltonian
\[
  \ham_{\la} = -\Delta_{r} +\frac{V_{\la}(r)}{r^2}.
\]
We define the leading part of $\mhaml$ by
\[
  \mhamp_{\la}\begin{pmatrix*} \alpha_{\la} \\ \beta_{\la} \\ \gamma_{\la} \end{pmatrix*}
  = \begin{pmatrix*}
    \ham_{\la}\alpha_{\la} \\
    \ham_{\la}\beta_{\la} \\
    0
  \end{pmatrix*}.
\]
From~\eqref{eq:ham-factori}, it admits the decomposition $\mhamp_{\la} = \ma_{\la}^{\ast} \ma_{\la}$ with
\[
  \ma_{\la} \begin{pmatrix} \alpha_{\la}\\ \beta_{\la}\\ \gamma_{\la}\end{pmatrix}
  = \begin{pmatrix} \oa_{\la}\alpha_{\la}\\ \oa_{\la}\beta_{\la}\\ 0 \end{pmatrix},
  \quad
  \ma_{\la}^{\ast} \begin{pmatrix} \alpha_{\la}\\ \beta_{\la}\\ \gamma_{\la}\end{pmatrix}
  = \begin{pmatrix} \as_{\la}\alpha_{\la}\\ \as_{\la}\beta_{\la}\\ 0 \end{pmatrix}.
\]
Now we define the derivatives of $\w$ associated with $\hj\mham$ and $\R\mham$. Let
\[
  \w_{2k} = (\hj\mham)^{k}\w
  \andd
  \w_{2k}^{0} = (\R\mham)^{k}\wp,
\]
for all $k\in\mbN$. Accordingly,
\begin{equation}
  \label{eq:scaling-w-w}
  \W_{2k} = (\hj\mhaml)^k\W
  \andd
  \W_{2k}^0 = (\R\mhaml)^k\Wp.
\end{equation}
Since $\hj\mham = \R\mham + \hw\times\mham$, where $\hw$ is expected to be small, the operator $\R\mham$ serves as the dominating part of $\hj\mham$. Thus $\w_{2k}^0$ ($\W_{2k}^0$ resp.) is the leading term of $\w_{2k}$ ($\W_{2k}$ resp.). We denote the corresponding remainders by
\begin{equation}
  \label{def:w2k1}
  \w_{2k}^{1} = \w_{2k} -\w_{2k}^{0}
  \andd
  \W_{2k}^1 = \W_{2k} -\W_{2k}^0.
\end{equation}
From the definition of $\W$, we have
\[
  \W_{2k}^{0} = \frac{\w_{2k}^{0}}{\la^{2k}}, \quad
  \W_{2k}^{1} = \frac{\w_{2k}^{1}}{\la^{2k}}.
\]
In view of~\eqref{def:w2k1} and~\eqref{eq:mham-r}, the adapted energy $\E_{2k}$ can be rewritten as
\begin{equation}
  \label{eq:w2k0-e2k}
  \E_{2k} = \int|(\R\mham)^{k}\wp|^2 =\|\w_{2k}^0\|_{L^2}^2.
\end{equation}
Moreover, using \eqref{eq:W-eq}, we have
\begin{equation}
  \label{eq:w2k-eq}
  \begin{split}
    \prt\W_{2k} +\W_{2k+2}
    & = [\prt, (\hj\mhaml)^k]\W -(\hj\mhaml)^{k}\Bigl(\frac{1}{\la^2}\mF_{\la}\Bigr),                                 \\
    \prt\W_{2k}^0 +\W_{2k+2}^0
    & = [\prt, (\R\mhaml)^k]\Wp + (\R\mhaml)^k(\R^2\W_2^1) -(\R\mhaml)^{k}\Bigl(\frac{1}{\la^2}\mF^{\pp}_{\la}\Bigr).
  \end{split}
\end{equation}

\subsection{Modulation equations}
\label{sub:mod-eq}
In this subsection, we derive the dynamical system of the geometric parameters $(\vtha,\la, a,b)$. This is done by projecting~\eqref{eq:w-eq} on fixed directions, especially $\xm_{k,0}, \xm_{0,k}$ defined by~\eqref{eq:vec-local-reson}.

We let
\[
  D(s)
  = |\vtha_s+a_1| +\Big|\frac{\la_s}{\la}+b_1\Big|
  +\sum_{k=1}^L \Bigl(|\prs a_k +A_k| + |\prs b_k +B_k|\Bigr),
\]
where $A_k, B_k$ are given in~\eqref{eq:msf-ab}.

\begin{lemma}[Modulation equations]
  \label{le:mod-eq}
  For $s\in[s_0,s^{\ast})$, we have
  \begin{gather}
    |\vtha_s+a_1| +\Big|\frac{\la_s}{\la}+b_1\Big| +\sum_{k=1}^{L-1} \Bigl(|\prs a_k +A_k| + |\prs b_k +B_k|\Bigr)
    \les \delta(b_{1}^{\ast})\frac{b_1^{L+1}}{\lgb},
    \label{le:mod-eq-1}\\
    |\prs a_L +A_L| +|\prs b_L +B_L|
    \les \frac{1}{\sqrt{\log M}}\biggl(\sqrt{\E_{2L+2}} +\frac{b_1^{L+1}}{\lgb}\biggr).
    \label{le:mod-eq-2}
  \end{gather}
\end{lemma}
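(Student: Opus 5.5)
The plan is the standard modulation argument of \citep{merle_blowup_2013,raphael_quantized_2014}: differentiate the orthogonality conditions \eqref{eq:ortho} in $s$ and insert the equation \eqref{eq:w-eq} for $\w$. Projected onto the first two components, \eqref{eq:w-eq} reads $\prs\wp -\frac{\la_s}{\la}\vla\wp +\R\mhamp\w + (\hw\times\mham\w)^{\pp} + \mF^{\pp}=0$.

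\textbf{Step 1: the linear system.} We test against the $2L+2$ directions $\xm_M e_i$ and $\xm_{k,0},\xm_{0,k}$ ($1\le k\le L$), with $i=1,2$. By \eqref{eq:ortho}, $\frac{\dd}{\dd s}(\wp,\xm_{k,0})=0$. Since $\xm_M$ depends only on $M$, this gives $(\prs\wp,\xm_{k,0})=0$. Hence the pairing of $\modt$ with each direction equals the pairing of the remaining terms $\frac{\la_s}{\la}\vla\wp$, $\R\mhamp\w$, $\hw\times\mham\w$, $\psizt$, $\vtha_s\z\R\w$ and $\w\times(\mham\wzt+\hp\lphi)$.

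The leading part of $\modt$ in \eqref{le:local-app-sol=modt} is
\[
\sum_k(\prs a_k+A_k)\phiat[k]+(\prs b_k+B_k)\phibt[k]+(\vtha_s+a_1)\lphi e_2-(\tfrac{\la_s}{\la}+b_1)\lphi e_1.
\]
By Lemma~\ref{le:vec-local-reson} and Lemma~\ref{le:local-reson}, its pairings with the test directions form a diagonal matrix with entries $4(1+o(1))\log M$. The cutoff $\chi_{S_1}$ is harmless, since the test functions are supported in $y\le 2M$. The remaining parts of $\modt$ involve $\pr\x/\pr a_k$, $\pr\gamma_0/\pr a_k$, $\wzt$ and $\frac{\pr\chi_{S_1}}{\pr b_1}\wz$. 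These are $O(b_1)$ in the region $y\le 2M$ by Lemma~\ref{le:app-sol} and \eqref{pf:app-sol=wz-l-infty}, so they contribute $O(M^Cb_1 D(s))$. This perturbation can be absorbed, and we obtain
\[
D(s)\log M\les \text{(pairings of the error terms)}.
\]

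\textbf{Step 2: estimating the pairings.} We take the terms one at a time.
\begin{enumerate}[label=(\alph*)]
\item The error $\psizt$ contributes $M^Cb_1^{L+3/2}$ by Lemma~\itref{le:local-app-sol}{it:local-app-sol=local}.
\item The linear term is handled by self-adjointness: $(\R\mhamp\w,\xm_{k,0})=\pm(\w,\mhamp\xm_{\cdot})$ with $\mhamp\xm_{k,0}=\ham^{k+1}\xm_M\R^{k-1}e_1$. By \eqref{eq:ortho} this vanishes for $k\le L-1$ and for $\xm_M$. For $k=L$ it does not vanish, and this is exactly why \eqref{le:mod-eq-2} is weaker.
\item The terms $\frac{\la_s}{\la}\vla\wp$ and $\vtha_s\z\R\w$ are written as $(\frac{\la_s}{\la}+b_1)$ and $(\vtha_s+a_1)$ plus $b_1,a_1$ parts. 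The first pieces are bounded by $D(s)\cdot o(1)$. The second pieces are bounded by $M^C b_1\|\w\|_{L^2_{loc}}$, using the localized coercivity of $\E_{2k}$ and the bootstrap bounds \eqref{pro:boot-e2k-bnd}--\eqref{pro:boot-e2l2-bnd}.
\item The nonlinear terms $\hw\times\mham\w$ and $\w\times\mham\wzt$ are controlled by $M^Cb_1\sqrt{\E_{2L+2}}$ plus quadratic terms in $\w$, all of which are small.
\end{enumerate}

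\textbf{Step 3: the $k\le L-1$ rows.} For these directions every pairing involves $\w$ localized in $y\le 2M$, measured in a weighted norm controlled by $\E_{2L+2}$. Together with the bootstrap bound $\E_{2L+2}\le Kb_1^{2L+2}\lgb^{-2}$, this gives contributions of size $M^C b_1\sqrt{\E_{2L+2}}+M^Cb_1^{L+3/2}$, which is $\les\delta(b_1^\ast)b_1^{L+1}/\lgb$ once $b_1^\ast=b_1^\ast(M)$ is small. This yields \eqref{le:mod-eq-1}.

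\textbf{Step 4: the top row $k=L$, the main obstacle.} Here the term $(\w,\ham^{L+1}\xm_M)$ must be bounded by $\sqrt{\E_{2L+2}}\,\|\ham^{L+1}\xm_M\|$ with a constant that produces, after dividing by the diagonal entry $4\log M$, the gain $1/\sqrt{\log M}$. The plan is to write $(\w,\ham^{L+1}\xm_M)=(\mhamp^{L+1}\w,\xm_M)$ via self-adjointness. Then Cauchy--Schwarz and Lemma~\itref{le:local-reson}{it:local-reson=non-van} give the bound $\sqrt{\E_{2L+2}}\sqrt{\log M}$. Dividing by $\log M$ gives $\sqrt{\E_{2L+2}}/\sqrt{\log M}$.

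The $\psizt$ pairing with $\xm_{L,0}$ must be treated in the same way. We use the sharp bound $\int|\mhamp^{L+1}\psizt|^2\les b_1^{2L+4}\lgb^{-2}$ rather than the $M^C$ local bound, which gives $\sqrt{\log M}\,b_1^{L+2}/\lgb$. All remaining terms carry an extra factor of $b_1$ and are absorbed. This yields \eqref{le:mod-eq-2}.
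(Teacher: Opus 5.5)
Your proposal follows the paper's argument. You test \eqref{eq:w-eq} against $\xm_M e_i$, $\xm_{k,0}$, $\xm_{0,k}$. You use \eqref{eq:ortho} to remove $\prs\w$ and, except in the row $k=L$, the linear term. You read off the diagonal $4(1+o(1))\log M$ structure of $\modt$ from Lemmas~\ref{le:local-reson} and~\ref{le:vec-local-reson}. In the top row you bound $(\mhamp^{L+1}\wp,\xm_M)$ by $\sqrt{\log M}\,\sqrt{\E_{2L+2}}$, so that dividing by $4\log M$ produces the factor $1/\sqrt{\log M}$. The $M^Cb_1D(s)$ cross terms are absorbed as in the paper's final step. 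You move $\vla$ and the derivatives in the nonlinear pairings onto the smooth, compactly supported test functions, whereas the paper moves $\mhamp^L$ onto $\vla\w$ and uses the Dore expansion near the origin. This is a harmless and slightly simpler variant; note that $\xm_{k,0},\xm_{0,k}$ even vanish for $y\le M$.

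The one step that fails as written is your separate treatment of $\psizt$ in the top row.
\begin{itemize}
\item Since $\xm_{L,0}=\ham^L\xm_M\R^{L-1}e_1$, you can transfer only $L$ powers of $\ham$ onto $\psizt$. The corresponding bound $\int|\mhamp^L\psizt|^2\les b_1^{2L+2}\lgb^C$ is too weak.
\item The bound $\int|(\R\mham)^{L+1}\psizt|^2\les b_1^{2L+4}\lgb^{-2}$ is not available for this pairing. One more transfer would require an $L^2$ preimage of $\xm_M$ under $\ham$, and there is none: any solution of $\ham u=\xm_M$ that is regular at the origin behaves like $-(\xm_M,\lphi)\vg$ at infinity, and $(\xm_M,\lphi)=4(1+o(1))\log M\neq0$.
\item The step is also unnecessary. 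The local bound $|(\psizt,\xm_{L,0})|\les M^Cb_1^{L+3/2}$ from your Step~2(a) (Lemma~\ref{le:local-app-sol}(iii)), divided by $4\log M$, is already $\les\frac{1}{\sqrt{\log M}}\frac{b_1^{L+1}}{\lgb}$ once $\bs$ is taken small depending on $M$. This is exactly how the paper handles it.
\end{itemize}

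Finally, $(\R\mham\w)^{\pp}=\R\mhamp\w-\zt\pry\gamma\,e_2$, so your projected equation drops the term $-\zt\pry\gamma\,e_2$. This term, like $\w\times\hp\lphi$, is lower order and can be handled like your other nonlinear pairings.
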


\begin{remark}
  (i) The ODE system~\eqref{eq:ode} is an approximation of the $(a,b)$ part of the above inequalities. \\
  (ii) The proof of this lemma relies on some definitions and bounds established in Subsection~\ref{sub:eng-func} and Appendix~\ref{ap:interp-bnd}, which are independent of this lemma.

\end{remark}

\begin{proof}
  We carry out the proof in detail only for $(\vtha,a)$.  The case of $(\la,b)$ follows by a symmetric argument.

  \step[1, Laws for $a_L$]
  Choose the bootstrap threshold small enough that $2M<S_1$ throughout the interval. Taking the inner product of~\eqref{eq:w-eq} with $\xm_{L,0}$,
  \begin{align*}
    -(\modt, \xm_{L,0})
    & = -\frac{\la_s}{\la}(\vla\w, \xm_{L,0}) +\vtha_s(\z\R\w, \xm_{L,0}) +(\hj\mham\w, \xm_{L,0}) \\
    & \quad +(\psizt, \xm_{L,0}) +(\w\times(\mham\wzt+\hp\lphi), \xm_{L,0}).
  \end{align*}
  Here, the time derivative term vanishes due to the orthogonality~\eqref{eq:ortho}. For the left-hand side, from Lemma~\ref{le:vec-local-reson} and the fact that $\supp \xm_{M} \subset \{y\le 2M\} \subset \{y<S_1\}$,
  \begin{align*}
    (\xm_{L,0}, \phiat[L]) & = 4(1+o(1))\log M,
    \;\;\mbox{and}\;\;
    \begin{cases}
      (\xm_{L,0}, \phiat[k]) = 0, \; 1\le k\le L-1, \\
      (\xm_{L,0}, \phibt[k]) = 0, \; 1\le k\le L
    \end{cases}.
  \end{align*}
  By $\ham\lphi = 0$, we have
  \[
    (\xm_{L,0}, \lphi e_i) = 0
    \for i=1,2.
  \]
  These, together with the explicit expression~\eqref{le:local-app-sol=modt} of $\modt$, give
  \[
    (\modt, \xm_{L,0})
    = 4(1+o(1))\log M \, (\prs a_L +A_L) +O(M^C b_1 D(s)).
  \]
  Let us treat the right-hand side. First, by Cauchy-Schwarz,
  \[
    \left|\frac{\la_s}{\la}(\vla\w,\xm_{L,0})\right|
    \les (b_1 +D(s)) M^C\biggl( \int\frac{|\mhamp^L(\vla\w)|^2}{\jbk[y]^4 \jbk[\log y]^2} \biggr)^{1/2}.
  \]
  We split the integration domain into $y\ge1$ and $y\le 1$.
  For $y\ge1$, from Lemma~\ref{le:coer-e-2k2}, the integral is bounded by
  \[
    \sum_{j=0}^{2L+2}\int_{y\ge 1} \frac{|\pry^{j}\wp|^2}{\jbk[y]^{2(2L+2-j)}\jbk[\log y]^2}
    \les C(M)\E_{2L+2}.
  \]
  By Corollary~\itref{co:dore}{it:dore=w}, $\wp\in\mcD_{L+1}$ with factor $b_1^{L+1}\lgb^{-1}$ for $y\le1$. We control the polynomial part by Lemma~\itref{le:preserv}{it:preserv=op}. Since $\vla=y\pry$, we use only derivatives up to order $2L+1$ of the remainder. Hence
  \[
    |\mhamp^L(\vla\wp)|
    \les C(M,K)\frac{b_1^{L+1}}{\lgb}y\jbk[\log y],
  \]
  from which we obtain the squared $L^2$ bound
  \[
    \int_{y\le 1} \frac{|\mhamp^L(\vla\w)|^2}{\jbk[y]^2\jbk[\log y]^2}
    \le \|\mhamp^L(\vla\wp)\|_{L^2(y\le1)}^2
    \les C(M,K)\frac{b_1^{2L+2}}{\lgb^2}.
  \]
  These, together with the bootstrap bound~\eqref{pro:boot-e2l2-bnd}, yield
  \begin{align*}
    \Big|\frac{\la_s}{\la}(\vla\w,\xm_{L,0})\Big|
    & \les C(M,K)(b_1 +D(s))b_1^{L+1}\lgb^{-1} \\
    & \les b_1 D(s) +C(M)\frac{b_1^{L+2}}{\lgb}.
  \end{align*}
  Note that the $\vtha_s$ term yields the identical bound. Then, for the quasilinear term $\hj\mham\w$, from~\eqref{def:w2k1}, $\hj\mham\w = \w_2^0 +\w_2^1$. For the $\w_2^0$ part, from Lemma~\itref{le:local-reson}{it:local-reson=non-van},
  \begin{align*}
    |(\w_2^0, \xm_{L,0})|
    \les \|\mhamp^{L+1}\wp\|_{L^2} \|\xm_M\|_{L^2}
    \les \sqrt{\log M}\,\sqrt{\E_{2L+2}}.
  \end{align*}
  Note that this bound leads to the final bound in~\eqref{pf:mod-eq=mod-compute-main-1}. For the $\w_2^1$ part, by the definition in~\eqref{def:w2k1}, $\w_2^1 = \R\mham(\gamma\ethr) +\hw\times\mham\w$. From the proof of Lemma~\ref{le:w2l2-e},
  \[
    |(\w_{2}^1, \xm_{L,0})|
    \les \delta(b_{1}^{\ast}) \sqrt{\E_{2L+2}}+ b_1^{\frac{1}{4L}} \frac{b_1^{L+1}}{\lgb}.
  \]
  Next, the cross terms involving $\wzt, \w$ are bounded in the same fashion. Last, from~Lemma~\itref{le:local-app-sol}{it:local-app-sol=local}, we have
  \[
    |(\psizt, \xm_{L,0})| \les M^C b_1^{L+\frac32}.
  \]
  Inserting these back, we obtain
  \begin{equation}
    \label{pf:mod-eq=mod-compute-main-1}
    |\prs a_L +A_L|
    \les \frac{\sqrt{\E_{2L+2}}}{\sqrt{\log M}} +M^C \biggl(b_1 D(s) +b_1^{\frac{1}{4L}}\frac{b_1^{L+1}}{\lgb}\biggr).
  \end{equation}

  \step[2, Laws for $a_k$ {\rm (}$1\le k\le L-1${\rm )}] Taking the inner product of \eqref{eq:w-eq} with $\xm_{k,0}$, we have
  \begin{align*}
    -(\modt, \xm_{k,0})
    & = -\frac{\la_s}{\la}(\vla\w, \xm_{k,0}) +\vtha_s(\z\R\w, \xm_{k,0}) +(\hj\mham\w, \xm_{k,0}) \\
    & \quad +(\psizt, \xm_{k,0}) +(\w\times(\mham\wzt+\hp\lphi), \xm_{k,0}),
  \end{align*}
  where from Lemma~\ref{le:vec-local-reson},
  \[
    (\modt, \xm_{k,0})
    = (\prs a_k +A_k)(\lphi, \xm_M) +O(M^C b_1 D(s)).
  \]
  We apply Lemma~\ref*{le:coer-e-2k2} and \eqref{pro:boot-e2l2-bnd} to estimate:
  \[
    \Big|\frac{\la_s}{\la}(\vla\w,\xm_{k,0})\Big| +|\vtha_s(\z\R\w,\xm_{k,0})|
    \les C(M) D(s) \sqrt{\E_{2L+2}} +b_1^{L+2}.
  \]
  From the orthogonality~\eqref{eq:ortho},
  \[
    |(\w_{2}^0, \xm_{k,0})| = 0.
  \]
  Note that this cancellation yields a sharper bound than \eqref{pf:mod-eq=mod-compute-main-1}.
  Other terms are small perturbations, as illustrated previously. Consequently,
  \begin{equation}
    \label{pf:mod-eq=mod-compute-main-2}
    |\prs a_k +A_k|
    \les \delta(b_{1}^{\ast})\sqrt{\E_{2L+2}} +M^C \biggl(b_1 D(s) +b_1^{\frac{1}{4L}}\frac{b_1^{L+1}}{\lgb}\biggr).
  \end{equation}

  \step[3, Laws for $\vtha$] Taking the inner product of~\eqref{eq:w-eq} with $\xm_M e_2$, we obtain a similar identity with the left-hand side
  \[
    (\modt, \xm_{M}e_2)
    = (\vtha_s +a_1)(\lphi, \xm_M) +O(M^C b_1 D(s)).
  \]
  We estimate the right-hand side in turn, which gives
  \begin{equation}
    |\vtha_s +a_1|
    \les \delta(b_{1}^{\ast})\sqrt{\E_{2L+2}} +M^C \biggl(b_1 D(s) +b_1^{\frac{1}{4L}}\frac{b_1^{L+1}}{\lgb}\biggr).
    \label{pf:mod-eq=mod-compute-main-3}
  \end{equation}

  \step[4, Conclusion]
  We collect~\eqref{pf:mod-eq=mod-compute-main-1}, \eqref{pf:mod-eq=mod-compute-main-2}, \eqref{pf:mod-eq=mod-compute-main-3}, and also the corresponding estimates for $(\la,b)$. The $M^C b_1 D(s)$ on the right-hand side is absorbed by the left-hand side, as we will choose $s_0\gg M^C$ so that $M^C b_1 \ll 1$. Therefore,
  \begin{equation}
    \label{pf:mod-eq=d}
    |D(s)| \les \frac{1}{\sqrt{\log M}}\biggl(\sqrt{\E_{2L+2}} +\frac{b_1^{L+1}}{\lgb}\biggr).
  \end{equation}
  This gives~\eqref{le:mod-eq-2}. We insert \eqref{pf:mod-eq=d} into~\eqref{pf:mod-eq=mod-compute-main-2}--\eqref{pf:mod-eq=mod-compute-main-3}, and then use the bootstrap bound~\eqref{pro:boot-e2l2-bnd}. The constant $K$ is absorbed by the infinitesimal $\dbs$. This yields~\eqref{le:mod-eq-1}, and hence completes the proof.
\end{proof}

We slightly modify the definitions of $a_L, b_L$ to derive a refined modulation equation for $a_L, b_L$, which will be used in Subsection~\ref{sub:clos-boot}. For any $\eta\in(0,\tfrac14)$, we let
\[
  B_{\eta} = b_1^{-\eta}.
\]
We define
\[
  \ti{\xm}_{L,0} = \ham^{L}(\chi_{ B_{\eta}}\lphi)\R^{L-1}e_1, \quad
  \ti{\xm}_{0,L} = \ham^{L}(\chi_{ B_{\eta}}\lphi)\R^{L-1}e_2.
\]
Then,  a direct computation yields
\begin{align*}
  (\ti{\xm}_{L,0}, \phia[k])
  & = (\ti{\xm}_{0,L}, \phib[k]) = \delta_{L,k}\bigl(4\eta\lgb +O(1)\bigr), \\
  (\ti{\xm}_{L,0}, \phib[k])
  & = (\ti{\xm}_{0,L}, \phia[k]) = 0.
\end{align*}
Define the modified $a_L, b_L$ by
\begin{equation}
  \label{eq:ti-b}
  \bar{a}_L = a_L +\frac{(\w,\ti{\xm}_{L,0})}{4\eta\lgb},\quad
  \bar{b}_L = b_L +\frac{(\w,\ti{\xm}_{0,L})}{4\eta\lgb}.
\end{equation}

\begin{lemma}[Refined modulation equations]
  \label{le:refined-mod-eq}
  Let $0<\eta<\frac14$ be sufficiently small. For all $s\in[s_0,s^{\ast})$, the following hold:
  \begin{enumerate}[label={\rm (\roman*)}]
    \item almost equivalences: \label{it:refined-mod-eq=diff}
      \[
        |\bar{a}_L -a_L| +|\bar{b}_L -b_L| \les b_1^{L+\frac12},
      \]
    \item refined equations: \label{it:refined-mod-eq=eq}
      \begin{align*}
        & \bigl|\prs\bar{a}_L  + (2L -  1 + \cb)b_1\bar{a}_L  -  a_1\bar{b}_L\bigr|                                                                                         \\
        & +\bigl|\prs\bar{b}_L  + (2L -  1 + \cb)b_1\bar{b}_L  +  a_1\bar{a}_L\bigr| \les  \frac{1}{\sqrt{\lgb}}\biggl(  \sqrt{\E_{2L+2}}  + \frac{b_1^{L+1}}{\lgb}\biggr).
      \end{align*}
  \end{enumerate}
\end{lemma}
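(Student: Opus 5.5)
We argue as for Lemma~\ref{le:mod-eq}, but now project onto the wider localized directions $\ti{\xm}_{L,0},\ti{\xm}_{0,L}$ and differentiate the correction $(\w,\ti{\xm}_{L,0})$ in time, instead of using an orthogonality condition. We treat $\bar a_L$ only; $\bar b_L$ is symmetric.

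\textbf{Part (i).} Write $\ti{\xm}_{L,0}=\mhamp^{L}(\chi_{B_\eta}\lphi\R^{L-1}e_1)$ and integrate by parts using self-adjointness. Since $\ham\lphi=0$, the function $\ham(\chi_{B_\eta}\lphi)$ is supported in $B_\eta\le y\le 2B_\eta$, where it is of size $B_\eta^{-3}$. Hence $(\w,\ti{\xm}_{L,0})$ equals $(\mhamp^{L}\wp,\chi_{B_\eta}\lphi\R^{L-1}e_1)$, up to boundary-type terms that are bounded by $L^2$ quantities of $\mhamp^{L-1}\wp$ on $y\sim B_\eta$. Cauchy--Schwarz gives
\[
|(\w,\ti{\xm}_{L,0})|\les \sqrt{\E_{2L}}\,\|\chi_{B_\eta}\lphi\|_{L^2}\les \sqrt{\E_{2L}}\sqrt{\eta\lgb}.
\]
With the bootstrap bound \eqref{pro:boot-e2k-bnd} for $k=L$ this is at most $b_1^{L}\lgb^{C}$. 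If needed, we can instead pair with $\ham^{L+1}$ and use the weighted coercivity (Lemma~\ref{le:coer-e-2k2}) together with $\E_{2L+2}\les K b_1^{2L+2}\lgb^{-2}$ to get $b_1^{L+1}B_\eta\lgb^C$. Since $\eta<\tfrac14$, this gives $b_1^{L+1-\eta}\lgb^{C}\le b_1^{L+\frac12}$ after dividing by $4\eta\lgb$. This proves (i).

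\textbf{Part (ii).} Take the inner product of \eqref{eq:w-eq} with $\ti{\xm}_{L,0}$. Now $\prs$ of the correction appears, and
\[
\tfrac{d}{ds}\frac{(\w,\ti{\xm}_{L,0})}{4\eta\lgb}
=\frac{(\prs\w,\ti{\xm}_{L,0})+(\w,\prs\ti{\xm}_{L,0})}{4\eta\lgb}-\frac{(\w,\ti{\xm}_{L,0})\,\prs\lgb}{4\eta\lgb^{2}}.
\]
The last two terms are $O(b_1\cdot b_1^{L+\frac12})$, since $\prs b_1=O(b_1^2)$ and $\prs B_\eta\sim b_1B_\eta$. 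The term $(\prs\w,\ti{\xm}_{L,0})$ is replaced by the equation.
\begin{itemize}
\item The modulation term gives $(\modt,\ti{\xm}_{L,0})=(\prs a_L+A_L)\bigl(4\eta\lgb+O(1)\bigr)+O(b_1^{C}D(s))$, using the pairings stated before \eqref{eq:ti-b}. The cross terms with $(\lphi e_i,\ti{\xm}_{L,0})$ vanish, since $\ham\lphi=0$.
\item The main linear term is $(\w_2^0,\ti{\xm}_{L,0})=\pm(\mhamp^{L+1}\wp,\chi_{B_\eta}\lphi\R^{L}e_1)$. It is bounded by $\sqrt{\E_{2L+2}}\,\|\chi_{B_\eta}\lphi\|_{L^2}\les\sqrt{\E_{2L+2}}\sqrt{\eta\lgb}$. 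After division by $4\eta\lgb$ it gives the gain $\sqrt{\E_{2L+2}}/\sqrt{\lgb}$. This is the point of using $B_\eta$ instead of $M$: it replaces $1/\sqrt{\log M}$ by $1/\sqrt{\lgb}$.
\item The terms $\frac{\la_s}{\la}(\vla\w,\ti{\xm}_{L,0})$ and $\vtha_s(\z\R\w,\ti{\xm}_{L,0})$ are combined with the time derivative. After one factor of $b_1$ they reproduce $(2L-1)b_1\bar a_L-a_1\bar b_L$ acting on the correction, up to an error $b_1\cdot b_1^{L+\frac12}$. We prove this by commuting $\vla$ through $\ham^L$ via $\ham\vla=\vla\ham+2\ham-\frac{\vla V}{y^2}$. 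The discrepancy between $(2L-1)b_1$ and $(2L-1+\cb)b_1$ is $\cb b_1\cdot b_1^{L+\frac12}$, which is harmless.
\item The nonlinear terms $\w_2^1$ and $\w\times(\mham\wzt+\hp\lphi)$ are handled as in the proof of Lemma~\ref{le:mod-eq}. The error $\psizt$ is handled with Lemma~\itref{le:local-app-sol}{it:local-app-sol=weig}, now with support $y\le 2B_\eta$; since $B_\eta\ll S_0$, this gives $b_1^{L+1+\delta}$.
\end{itemize}
Dividing by $4\eta\lgb$ and inserting \eqref{le:mod-eq-2} to bound $D(s)$ gives the refined equation with right-hand side $\lgb^{-1/2}\bigl(\sqrt{\E_{2L+2}}+b_1^{L+1}\lgb^{-1}\bigr)$.

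\textbf{Main obstacle.} The delicate step is the flux term $\psizt$ together with the quadratic interaction terms paired against $\ti{\xm}_{L,0}$. Their size has to fall below $b_1^{L+1}\lgb^{-3/2}$ rather than merely $b_1^{L+1}$. The plan is to use that $\ti{\xm}_{L,0}$ is supported at $y\lesssim b_1^{-\eta}$, far inside $S_0$. There the tail corrections $\vsig_k-\cb T_k$ vanish, and the error $\psizt$ is a finite sum of homogeneous terms of degree at least $L+3$, up to the $b_1^{L+1}\lgb^{-1}$ part. Choosing $\eta$ small then leaves room for the required logarithmic gain.
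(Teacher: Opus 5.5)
Your proposal is correct and follows essentially the paper's argument. You project onto $\ti{\xm}_{L,0}$, control the correction and the $\prs\chi_{B_\eta}$, $\la_s/\la$, $\vtha_s$ pairings by the weighted coercivity of Lemma~\ref{le:coer-e-2k2}, and extract the $\lgb^{-1/2}$ gain from $\|\chi_{B_\eta}\lphi\|_{L^2}\sim\sqrt{\lgb}$ in $(\w_2^0,\ti{\xm}_{L,0})$. The paper simply bounds the $\vla$ and $\vtha_s$ pairings as $O(b_1^{L+3/2})$ errors instead of matching them with $(2L-1)b_1\bar a_L-a_1\bar b_L$; this is equivalent, since $b_1|\bar a_L-a_L|$ is already that small.

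One point in (i): your first estimate via $\E_{2L}$ only gives $b_1^{L}\lgb^{C}$, which does not reach $b_1^{L+\frac12}$. So the weighted-coercivity route with $\E_{2L+2}$ is the actual proof, not a fallback. It yields $b_1^{L+1-2\eta}$, because the weight $\jbk[y]^2\jbk[\log y]$ produces $B_\eta^2$ rather than $B_\eta$; this still suffices for $\eta<\frac14$.
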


\begin{proof}
  The strategy of this proof is parallel to that of Lemma~\ref{le:mod-eq}. Let us sketch the proof for $a_L$.

  Proof of~\ref{it:refined-mod-eq=diff}. From Lemma~\ref{le:coer-e-2k2},
  \begin{align*}
    |(\w,\ti{\xm}_{L,0})|
    & \les \Biggl(\int\frac{|\mhamp^L\w|^2}{\jbk[y]^4\jbk[\log y]^2}\Biggr)^{1/2} \bigl\|\jbk[y]^2\jbk[\log y]\chi_{ B_{\eta}}\lphi\bigr\|_{L^2} \\
    & \les C(M)\sqrt{\E_{2L+2}} \,b_1^{-C\eta}\lgb                                                                                                   \\
    & \les \bigl[ C(M)b_1^{\frac12-C\eta} \bigr] b_1^{L+\frac12}
  \end{align*}
  for some constant $C>0$ independent of $\eta$. Taking $\eta$ sufficiently small, we observe that the factor in the bracket above is of size $O(1)$. This yields~\ref{it:refined-mod-eq=diff}.

  Proof of~\ref{it:refined-mod-eq=eq}. Taking the inner product of~\eqref{eq:w-eq} with $\ti{\xm}_{L,0}$,
  \begin{align*}
    -(\modt, \ti{\xm}_{L,0})
    & = (\prs\w, \ti{\xm}_{L,0}) -\frac{\la_s}{\la}(\vla\w, \ti{\xm}_{L,0}) +(\hj\mham\w, \ti{\xm}_{L,0})                         \\
    & \quad +(\psizt, \ti{\xm}_{L,0}) +\vtha_s(\z\R\w, \ti{\xm}_{L,0}) +\bigl(\w\times(\mham\wzt+\hp\lphi), \ti{\xm}_{L,0}\bigr).
  \end{align*}
  First, using the properties of $\ti{\xm}_{L,0}$ and~\eqref{le:mod-eq-2},
  \begin{align*}
    (\modt, \ti{\xm}_{L,0})
    & = (4\eta\lgb +O(1)) (\prs a_L +A_L)
    +O\bigl(b_1^{L+\frac32-C\eta}\bigr)                                     \\
    & = 4\eta\lgb (\prs a_L +A_L) + O\biggl(\frac{b_1^{L+1}}{\lgb}\biggr).
  \end{align*}
  Then, commuting $\prs$ with $\ham$,
  \[
    (\prs\w, \ti{\xm}_{L,0})
    = \frac{\dd}{\dd s}(\w,\ti{\xm}_{L,0}) -\bigl(\mhamp^{L}\w, \prs\chi_{ B_{\eta}}\lphi\R^{L-1} e_1\bigr).
  \]
  From~\eqref{le:mod-eq-1}, we have
  \[
    |\prs\chi_{ B_{\eta}}|
    \les b_1 \one_{y\sim B_{\eta}}.
  \]
  Thus by setting $\eta$ sufficiently small, we use Lemma~\ref{le:coer-e-2k2} and obtain
  \[
    \bigl|\bigl(\mhamp^L\w,\prs\chi_{B_{\eta}}\lphi\R^{L-1}e_1\bigr)\bigr|
    \les C(M)\sqrt{\E_{2L+2}}\, b_1^{1-C\eta}\lgb
    \les b_1^{L+\frac32}.
  \]
  The terms involving $\la_s/\la, \vtha_s$ share the same bounds.
  Next, for the linear term, in view of the definition in~\eqref{eq:e2k},
  \[
    |(\w_2^0, \ti{\xm}_{L,0})|
    \les \|\mhamp^{L+1}\w\|_{L^2} \|\chi_{B_{\eta}}\lphi\|_{L^2}
    \les \sqrt{\E_{2L+2}} \sqrt{\lgb}.
  \]
  Note that this bound leads to the dominant bound in~\ref{it:refined-mod-eq=eq}. The remaining terms are estimated in the same fashion. Consequently,
  \begin{align*}
    & \left|\frac{\dd}{\dd s}(\w,\ti{\xm}_{L,0})  +  \bigl(4\eta\lgb  +  O(1)\bigr)(\prs a_L  +  A_L)\right| \\
    & \les  \sqrt{\lgb}\biggl( \sqrt{\E_{2L+2}}  + \frac{b_1^{L+1}}{\lgb}\biggr).
  \end{align*}
  The equation~\ref{it:refined-mod-eq=eq} now follows from a straightforward reformulation of the above using~\eqref{eq:ti-b}. This completes the proof.
\end{proof}

\section{Energy estimates}
\label{sec:energy-est}
In this section, we establish the energy estimates needed for the proof of Proposition~\ref{pro:boot}, which is postponed to Section~\ref{sec:compl-constr}. We first give an equivalent form of the high-order energy $\E_{2L+2}$, and then derive the mixed energy estimate for $\E_{2L+2}$, in which the Virial and Morawetz functionals play a central role. Throughout this section, we work on the rescaled time interval $[s_0, s^{\ast})$, where $s^{\ast}$ is the exit time given by~\eqref{eq:exit-time}, on which the bootstrap bounds~\eqref{pro:boot-e1-bnd}--\eqref{pro:boot-u2-bnd} and Lemmas~\ref{le:mod-eq} and~\ref{le:refined-mod-eq} are in force. The estimates of Appendices~\ref{ap:coer-bnd} and~\ref{ap:interp-bnd} are also used.

\subsection{The energy functional}
\label{sub:eng-func}
In this subsection, we present an equivalent form of the energy functional $\E_{2L+2}$.

\begin{lemma}[Almost energy functional]
  \label{le:w2l2-e}
  For $s\in[s_0,s^{\ast})$, we have
  \[
    \|\w_{2L+2}^1\|_{L^2}^2 \les \dbs \E_{2L+2} +b_1^{ \frac{1}{4L}} \frac{b_1^{2L+2}}{\lgb^2}.
  \]
\end{lemma}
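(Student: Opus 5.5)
The plan is to expand $\w_{2L+2}^1 = (\hj\mham)^{L+1}\w - (\R\mham)^{L+1}\wp$ telescopically. Since $\hj = \R + \hw\times$ with $\hw = \wzt+\w$, we write
\[
  (\hj\mham)^{L+1}\w - (\R\mham)^{L+1}\w
  = \sum_{j=0}^{L} (\hj\mham)^{L-j}\,(\hw\times\mham)\,(\R\mham)^{j}\w,
\]
and add the contribution $(\R\mham)^{L+1}(\gamma e_3)$, which comes from $\w - \wp = \gamma e_3$. Each summand therefore carries at least one factor of $\hw$, or else involves $\gamma$. By the constraint $|e_3+\hw|=1$, the component $\gamma$ is quadratic in the planar part: $\gamma = -\tfrac12|\hw^\pp|^2 - \gamma_0 + O(\cdot)$, up to the already controlled $\wzt$-terms. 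The strategy is to show that every such term is either $\delta(\bs)$ times a quantity controlled by $\E_{2L+2}$, or is $b_1^{1/(4L)}\, b_1^{2L+2}\lgb^{-2}$.

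We split each term by the Leibniz rule into a product $\pr_y^{i}\hw \cdot \pr_y^{m}\w$, with $i+m\le 2L+2$, modulo coefficients of $\mham$ that are $O(\jbk[y]^{-2})$. The argument then treats the regions $y\ge1$ and $y\le1$ separately.

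\emph{Region $y\ge1$.} When all derivatives fall on $\w$, we use the pointwise smallness \eqref{pf:app-sol=wz-l-infty}, $\|y^k\pr_y^k\wzt\|_{L^\infty}\les b_1^{1/2}\lgb^C$, together with the interpolation/pointwise bounds on $\w$ from Appendix~\ref{ap:interp-bnd} (built on \eqref{pro:boot-e1-bnd}--\eqref{pro:boot-e2l2-bnd}). Combined with the weighted coercivity of Lemma~\ref{le:coer-e-2k2},
\[
  \sum_{j\le 2L+2}\int \frac{|\pr_y^j\wp|^2}{\jbk[y]^{2(2L+2-j)}\jbk[\log y]^2}\les C(M)\E_{2L+2},
\]
this yields $\delta(\bs)\E_{2L+2}$. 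When many derivatives fall on $\wzt$, we pair $\w$ in a low weighted norm (interpolated between $\E_1$ and $\E_{2L+2}$) with the $\mcHb$ bounds on $\wzt$. The $\w\times\w$ interactions are handled by putting the lower-order factor in $L^\infty$ via interpolation between $\E_1\le 10 b_1(0)^{1/4}$ and $\E_{2k}$. The exponent bookkeeping should produce the gain $b_1^{1/(4L)}$; the interpolated $L^\infty$ norms of $y^k\pr_y^k\w$ give a small positive power of $b_1$ beyond $b_1^{L+1}\lgb^{-1}$.

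\emph{Region $y\le1$.} Here the singular potential $V/y^2$ and the iterates $(\hj\mham)^k$ threaten integrability. We invoke the Dore expansion of Corollary~\ref{co:dore}: $\wp\in\mcD_{L+1}$ with factor $b_1^{L+1}\lgb^{-1}$. Lemma~\ref{le:preserv} then ensures that applying $\hj\mham$ preserves the Dore order, so the products stay regular at the origin, and the quadratic structure supplies an extra small factor. The $\gamma e_3$ part is treated through the third row of $\mham$, namely $-\Delta\gamma + \zt(\pr_y+\z/y)\alpha$, using that $\gamma$ is quadratic.

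The main obstacle is the top-order quasilinear term, in which all $2L+2$ derivatives fall on $\w$ and the multiplier is $\hw\times$ (including the $\w\times\mham(\R\mham)^L\w$ self-interaction). There is no derivative gain there, so we must rely on $\|\hw\|_{L^\infty}\les\delta(\bs)$; this follows from $\E_1$ small together with the $\dot H^1$ radial embedding and the bounds on $\wzt$. Ensuring that the lower-order error from commuting $\hw\times$ past $\mham$ does not lose the log weight near $y\sim S_1$ is where the argument is most delicate. For this step I would first try to absorb that loss using the $\jbk[\log y]^{-2}$-weighted coercivity and the $\lgb^{-1}$ smallness of $\wzt$ in $\mcHb$.
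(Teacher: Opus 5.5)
Your skeleton matches the paper's: split off $(\R\mham)^{L+1}(\gamma e_3)$, expand the rest so every term carries a factor $\hw$, use Dore expansions on $y\le1$ and pointwise/interpolation bounds on $y\ge1$. The gap is that the step you flag as the main obstacle is left open, and the remedy you propose does not work.

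For a $\wzt$ multiplier your argument is fine, since $\|\jbk[\log y]^C\wzt\|_{L^\infty}\les b_1^{1/2}\lgb^C$ pays for any logarithmic loss. The critical terms are instead the pure-radiation products on $y\ge1$ in which no derivative falls on the multiplying factors, e.g.\ $\w\,\pry^{2L+2}\w$. No $\wzt$ appears in these, so its $\lgb^{-1}$ smallness is irrelevant, and the difficulty is spread over all of $y\ge1$, not localized near $y\sim S_1$.

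The multiplier $\w$ is only $\dbs$-small in $L^\infty$, via $\E_1\le 10b_1(0)^{1/4}$. It has no decay in $b_1(s)$ and no $\jbk[\log y]$ decay, so the $\jbk[\log y]^{-2}$-weighted coercivity is unusable. You need the unweighted $\int_{y\ge1}|\pry^{2L+2}\w|^2$. Writing $\pry^{2L+2}\wp$ through $\w^0_{2L+2}$ gives $\|\w\|_{L^\infty}^2\E_{2L+2}\les\dbs\E_{2L+2}$, plus lower-order terms $\int_{y\ge1}|\pry^\ell\w|^2y^{-2(2L+2-\ell)}$, $\ell\le 2L+1$, and analogous $\gamma$ terms. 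These are only known to be $O(b_1^{2L+2}\lgb^C)$, so $\dbs$ times them is not bounded by $b_1^{\frac1{4L}}b_1^{2L+2}\lgb^{-2}$. Putting the lower-order factor in $L^\infty$ gains nothing when that factor is undifferentiated.

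The missing idea is to move one power of the weight onto that factor:
\[
  \|\w\|_{L^\infty}^2\int_{y\ge1}\frac{|\pry^\ell\w|^2}{y^{2(2L+2-\ell)}}
  \le\Bigl\|\frac{\w}{y}\Bigr\|_{L^\infty(y\ge1)}^2\int_{y\ge1}\frac{|\pry^\ell\w|^2}{y^{2(2L+1-\ell)}}
  \les b_1^{\frac{2L}{2L-1}}\,b_1^{2L+1}\lgb^C.
\]
This equals $b_1^{2L+2+\frac1{2L-1}}\lgb^C$. Since $\frac1{2L-1}>\frac1{4L}$, this is exactly where the $b_1^{1/(4L)}$ gain comes from. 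The same device handles every product with undifferentiated multipliers, and it is also what drives the weighted estimate for $\gamma$ in Lemma~\ref{le:interp-gamma}.

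Two further steps fail as written. First, near the origin, contrary to your claim, $\hj\mham$ does not preserve the Dore order. By Lemma~\ref{le:preserv} it maps $\mcD_{p+1}$ into $\mcD_p$, and $\w\in\mcD_{L+1}$ only controls derivatives up to order $2L+1$. So the Dore structure is exhausted after $L$ applications and cannot reach the $(2L+2)$-th derivative. You must isolate $\pry^{2L+2}\w$ in $\Omega^{L+1}_\rho\w$ with $|\rho|_1\ge1$, and use that its coefficient contains $\hw=O(b_1y)$. Then bound $b_1^2\|\pry^{2L+2}\w\|_{L^2(y\le1)}^2$ by a local elliptic estimate for $\ham^{L+1}$ together with the bootstrap bound on $\E_{2L+2}$.

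Second, for the standalone term $(\R\mham)^{L+1}(\gamma e_3)$ the third row of $\mham$ plays no role, because $\R$ annihilates the $e_3$-component: $\R\mham(\gamma e_3)=-\zt\pry\gamma\,e_2$. The decay $\zt=O(y^{-2})$ of this coupling gives the pointwise bound $\sum_{i\le 2L+2}|\pry^i\gamma|\,y^{-(2L+2-i)}$ on $y\ge1$. That is what reduces the term to the weighted Sobolev bound for $\gamma$.
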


\begin{remark}\label{re:w2l2-e}
  In view of \eqref{eq:w2k0-e2k} and \eqref{def:w2k1}, we use H\"older and find
  \[
    \Bigl|\|\w_{2L+2}\|_{L^2}^2 -\E_{2L+2}\Bigr|
    \les \frac{1}{K}\|\w_{2L+2}^0\|_{L^2}^2 +K\|\w_{2L+2}^1\|_{L^2}^2,
  \]
  where the constant $K$ is the one given in the bootstrap bound~\ref{pro:boot-e2l2-bnd}.
  Together with Lemma~\ref{le:w2l2-e} and \ref{pro:boot-e2l2-bnd}, this yields for some constant $C$ independent of $K$ that
  \begin{align*}
    \Bigl| \|\w_{2L+2}\|_{L^2}^2 - \E_{2L+2} \Bigr|
    \le \frac{Cb_1^{2L+2}}{\lgb^2}.
  \end{align*}
\end{remark}

Before proving Lemma~\ref{le:w2l2-e}, let us introduce two notions useful in estimating cross terms.

\begin{definition}[Dore series $\mcD_p$]
  \label{def:dore}
  For any $p\in\mbN_+$, we define the function space $\mcD_p$ as the set of all vector-valued functions $\smw\in C^\infty(\mbR_+\times\mbR_+;\mbR^3)$ such that there exist an infinitesimal $\delta(b_1)>0$ and coefficients $(c_{1,k})_{k=1}^{p}$, $(c_{2,k})_{k=1}^{p}$, $(c_{3,k})_{k=1}^{p}$ with
  \[
    |c_{1,k}| + |c_{2,k}| + |c_{3,k}|
    \le C\delta(b_1)
    \for 1\le k\le p,
  \]
  such that $\smw$ admits the decomposition for $0<y\le 1$:
  \[
    \smw = \sum_{k=1}^{p}
    \begin{pmatrix}
      c_{1,k} \hs[.1] y^{2k-1} \\
      c_{2,k} \hs[.1] y^{2k-1} \\
      c_{3,k} \hs[.1] y^{2k}
    \end{pmatrix}
    + \begin{pmatrix}
      r_{1,p} \\
      r_{2,p} \\
      r_{3,p}
    \end{pmatrix},
  \]
  with the remainders satisfying
  \[
    |\pry^k r_{1,p}|+|\pry^k r_{2,p}|+\frac{|\pry^k r_{3,p}|}{y}
    \le C\delta(b_1)y^{2p-1-k}\jbk[\log y] \for 0\le k\le 2p-1.
  \]
  The constant $C$ may depend on $k,p$, but not on $b_1$.
  By truncation, we have $\mcD_{p+1}\subset\mcD_p$ with the same factor.
  For brevity, we say that $\smw\in\mcD_p$ is of Dore type of order $p$ with factor $\delta(b_1)$.
\end{definition}

The Dore series preserve their structure under the action of various operators, with a loss of order for differential operators.

\begin{lemma}[Action of $\mham$, $\times$ on $\mcD_p$]
  \label{le:preserv}
  For $p\in\mbN_+$, we let $\smw\in\mcD_{p+1}$. Then the following hold:
  \begin{enumerate}[label={\rm (\roman*)}]
    \item \label{it:preserv=mult}
      the functions $\R\smw, \z\smw, \zt\smw\in\mcD_{p+1}$;
    \item \label{it:preserv=op}
      the functions  $\R\mham\smw, \mhamp\smw, \vla\smw\in\mcD_p$;
    \item \label{it:preserv=cross}
      if $\vv\in\mcD_{p+1}$, then $\vv\times\smw,\vv\times\mham\smw\in\mcD_p$.
  \end{enumerate}
\end{lemma}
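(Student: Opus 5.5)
We verify the three claims directly from Definition~\ref{def:dore}, treating the polynomial part and the remainder separately. The structural fact to keep in view is the parity pattern: the first two components carry odd powers $y^{2k-1}$, and the third carries even powers $y^{2k}$. Every operator in the statement has coefficients that are even analytic functions of $y$ near $0$, up to explicit factors $y^{\pm1}$. Such operators map odd-in-the-planar and even-in-the-vertical expansions to the same type, possibly shifting degree.

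For~\ref{it:preserv=mult}, we use that $\z=(1-y^2)/(1+y^2)$ and $\zt=2(1+\z)$ are even and analytic on $[0,1]$, with bounded derivatives. Multiplying the polynomial part by their Taylor expansions therefore gives terms of the same parity and degree $\ge$ the original. The terms of degree $>2p+1$ are absorbed into the new remainder, and these satisfy the bound $y^{2p+1-k}$ for $k\le 2p+1$. The old remainder times $\z$ or $\zt$ obeys the same bound by Leibniz. The rotation $\R$ only swaps the first two components and kills the third, so the pattern is preserved trivially.

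For~\ref{it:preserv=op}, we write $\ham=-\pry^2-\frac1y\pry+\frac{V}{y^2}$ with $V$ even analytic and $V(0)=1$. On a planar monomial $y^{2k-1}$ this gives
\[
\ham y^{2k-1}=(1-(2k-1)^2)y^{2k-3}+(\text{even analytic})\,y^{2k-1}.
\]
The coefficient vanishes for $k=1$, so the lowest term $y^1$ yields no singular $y^{-1}$. The output is thus again an odd series starting at $y^1$, with coefficients bounded by $C\delta(b_1)$. In the off-diagonal terms, $\zt\pry$ applied to $c_{3,k}y^{2k}$ produces odd powers, and $\zt(\pry+\z/y)$ applied to $c_{1,k}y^{2k-1}$ produces even powers $y^{2k-2}$. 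The critical case is $k=1$: there $(\pry+\z/y)y=1+\z=2+O(y^2)$, a constant term which is \emph{not} of the form $y^{2j}$ with $j\ge1$. We expect this to be the main obstacle. We would handle it by noting that for $\mhamp$ the third component is set to zero, so the issue does not arise. For $\R\mham$, the third component is annihilated by $\R$, since $\R e_3=0$. For $\vla$ there is no issue, as $\vla y^m=my^m$.

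For the remainders we use the identity $\ham=-\pry^2-\frac1y\pry+\frac{V}{y^2}$ and the hypothesis $\smw\in\mcD_{p+1}$, which gives $|\pry^k r|\les\delta y^{2p+1-k}\jbk[\log y]$ for $k\le 2p+1$. Applying a second-order operator with coefficients of size $y^{-j}$ on $\pry^{2-j}$ loses two powers of $y$ and two derivatives. We thus obtain remainder bounds $y^{2p-1-k}\jbk[\log y]$ for $k\le 2p-1$, which is exactly order $p$; here $\frac1y\pry^{k'}$ terms are bounded using $|\pry^{k'}r|$ directly. Top-degree polynomial terms, of degree $2p+1$ and shifted down, are absorbed into the remainder.

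For~\ref{it:preserv=cross}, we check the parity of cross products. A product of two planar components, odd times odd, lands in the third slot and is even. A planar component times a vertical one, odd times even, lands in a planar slot and is odd. Hence $\vv\times\smw$ has the correct parity. Its lowest vertical degree is $y^2$, from $y\cdot y$, and its lowest planar degree is $y^3$. The coefficients are $O(\delta^2)\le C\delta$. Remainders are handled by Leibniz, with the loss of one order sufficing. The same argument applies to $\vv\times\mham\smw$, using~\ref{it:preserv=op}: in that case the constant term $2c_{1,1}$ in the third component of $\mham\smw$ is crossed with a planar component $O(y)$ of $\vv$, so it contributes to planar slots with odd degree $\ge1$, which is admissible.
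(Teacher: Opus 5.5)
Your proposal is correct and follows essentially the paper's argument: parity bookkeeping of the Dore expansion, the bounce-back identity $\ham y^{i}=(1-i^2)y^{i-2}+O(y^{i})$, removal by $\R$ of the constant term in the third component of $\mham\smw$, and Leibniz on the remainders (the paper handles $\mhamp\smw$ via $\mhamp=-\R\mham\R$ instead of your direct observation, and bounds products of two remainders using $y^{2p}\jbk[\log y]\les 1$). Two harmless slips: that constant is $8c_{1,1}-4c_{3,1}$ rather than $2c_{1,1}$, since you dropped $\zt(0)=4$ and the $-\Delta\gamma$ contribution; and for $\vv\times\mham\smw$ the cross product itself must cost no order. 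A direct Leibniz count does give this (the paper even obtains $\vv\times\smw\in\mcD_{p+1}$), so you should not rely on the one-order loss you allowed in the first claim of (iii).
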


\begin{proof}
  Proof of~\ref{it:preserv=mult}. Since $\R=e_3\times$, we have $\R\smw\in\mcD_{p+1}$ by permuting the first two components of $\smw$ up to sign and setting the third component to zero. Multiplying the Dore expansion of $\smw$ by the even series of $\z,\zt$ and applying Leibniz to the remainders, we obtain $\z\smw,\zt\smw\in\mcD_{p+1}$.

  Proof of~\ref{it:preserv=op}. Let $\smw=\alpha e_1+\beta e_2+\gamma e_3$. Then
  \[
    \mham\smw
    = \bigl(\ham\alpha -\zt\pry\gamma  \bigr)e_1
    +\ham\beta   e_2
    +\biggl(-\Delta\gamma +\zt\biggl(\pry+\frac{\z}{y}\biggr)\alpha \biggr)e_3.
  \]
  Since $V,\z,\zt$ are even for $y\le1$, we have polynomial parts of the same parity under $\ham,\Delta$ and of the opposite parity under $\zt\pry,\zt(\pry+\z/y)$. Moreover, for $i\ge0$,
  \[
    \ham y^i=(1-i^2)y^{i-2}+O(y^i)
    =\begin{cases}
      O(y^{i-2}), & \mbox{$i\neq1$},\\
      O(y^i), & \mbox{$i=1$}.
    \end{cases}
  \]
  Thus we have odd polynomial parts vanishing at least to order one in the first two components. We remove the possible constant term in the third component by applying $\R$. For $k\le2p-1$, we bound the derivatives of order $k$ of the first two remainders by $C\delta(b_1)y^{2p-1-k}\jbk[\log y]$, so $\R\mham\smw\in\mcD_p$. From $\mhamp=-\R\mham\R$, we have the same conclusion for $\mhamp\smw$. Finally, for $i=1,2,3$, we have
  \[
    \pry^k(\vla r_{i,p+1})
    =y\pry^{k+1}r_{i,p+1}+k\pry^kr_{i,p+1}, \quad 0\le k\le2p-1,
  \]
  from which we obtain $\vla\smw\in\mcD_p$.

  Proof of~\ref{it:preserv=cross}. For both cross products, we have odd polynomial parts in the first two components and an even polynomial part vanishing at least to order two in the third. By Leibniz, we bound the remainders through order $2p+1$ for $\vv\times\smw$ and order $2p-1$ for $\vv\times\mham\smw$. We control products of two remainders using $y^{2p}\jbk[\log y]\les1$. Thus we obtain $\vv\times\smw\in\mcD_{p+1}$ and $\vv\times\mham\smw\in\mcD_p$.
\end{proof}

\begin{remark}
  \label{re:preserv}
  (i) More generally, by the argument in the proof of Lemma~\itref{le:preserv}{it:preserv=mult}, we can multiply a function in $\mcD_p$ by any smooth even function (independent of $b_1$), retaining the same Dore order and factor.\\
  (ii) If $\smw\in\mcD_{p+1}$ has factor $\delta(b_1)$, then the functions in Lemma~\itref{le:preserv}{it:preserv=mult} belong to $\mcD_{p+1}$ with factor $\delta(b_1)$, and those in~\ref{it:preserv=op} belong to $\mcD_p$ with the same factor. For the two cross products in~\ref{it:preserv=cross}, both the coefficient bounds and the remainder bounds contain the product of the factors of $\vv$ and $\smw$.
\end{remark}

\begin{corollary}
  \label{co:dore}
  The following hold:
  \begin{enumerate}[label={\rm (\roman*)}]
    \item  $\w\in\mcD_{L+1}$ with factor $b_1^{L+1}\lgb^{-1}$;
      \label{it:dore=w}
    \item $\wzt\in\mcD_{L+2}$ with factor $b_1$;
      \label{it:dore=wzt}
    \item $\hw\in\mcD_{L+1}$ with factor $b_1$.
      \label{it:dore=hw}
  \end{enumerate}
\end{corollary}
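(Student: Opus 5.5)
The plan is to verify the three Dore expansions near the origin separately, proving (ii) first, then (i), and deducing (iii) from the other two.

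\emph{Part (ii).} On $y\le1$ the cutoff $\chi_{S_1}$ is identically $1$, so $\wzt=\wz=\vp+\x+\gamma_0e_3$ there. By Corollary~\ref{co:resona}, each $\phia[k]$ and $\phib[k]$ lies in $\mcA(2k+1,2k-1)$. Hence near $0$ these profiles are $y^{2k+1}$ times an even Taylor series. This gives odd polynomial parts in the first two components, with coefficients of size $|a_k|+|b_k|\les b_1^k\le b_1$.

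By \eqref{pf:app-sol=xk-deg}, $\xx[k]\in\mcHb(k,2k+1,2k-1)$. Definition~\itref{def:badb}{it:def-badb=1} then gives the same odd structure, with coefficient bounds $\wh\delta_i=O(1)$ multiplied by monomials of parameter degree $k\ge2$.

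The third component $\gamma_0$ is a polynomial in $|\wzp|^2$, by \eqref{pf:form-comp=gamma}. It is therefore even and starts at $y^{6}$, consistent with $\gamma_{0,k}\in\mcHg(k,2k+2,2k-2)$.

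Truncating each analytic expansion at order $2(L+2)-1$ produces remainders bounded by $b_1y^{2L+3-k}$ for the derivatives required. Since the expansions are analytic, differentiation of the remainder costs exactly one power of $y$. This yields $\wzt\in\mcD_{L+2}$ with factor $b_1$; in fact the factor is $b_1\cdot O(1)$, which we absorb as $\delta(b_1)$-compatible.

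\emph{Part (i).} This is the main obstacle, because $\w$ is only known through the bootstrap bounds \eqref{pro:boot-e2k-bnd}--\eqref{pro:boot-e2l2-bnd} and the orthogonality conditions \eqref{eq:ortho}. I would proceed as follows.

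First, I would use that the flow preserves smoothness and $1$-equivariance. Smooth $1$-equivariant maps force the Frenet coefficients to have the parity structure of $\mcD_p$: $\alpha,\beta$ odd and $\gamma$ even, with $\gamma(0)=0$ because $u(t,0)=e_3$ and $\hw$ vanishes at the origin.

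The quantitative content then comes from inverting $\mhamp^{L+1}$ near $y=0$. Set $f=\mhamp^{L+1}\wp$. Then $\|f\|_{L^2}^2=\E_{2L+2}\les Kb_1^{2L+2}\lgb^{-2}$. I would write $\wp$ as an $(L+1)$-fold application of the explicit inverse \eqref{eq:inverse-ham}, plus elements of $\ker\ham^{L+1}$. The orthogonality \eqref{eq:ortho} and the finite energy $\E_1$ exclude the singular kernel elements $\vg$, $\ham^{-j}\vg$, leaving only combinations of $T_j$ and $\lphi$. These are odd with Taylor expansions, and their coefficients are controlled through the lower energies $\E_{2k}$ by the coercivity estimates of Appendix~\ref{ap:coer-bnd} (for instance Lemma~\ref{le:coer-e-2k2}).

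For the particular part, I would apply Cauchy--Schwarz in \eqref{eq:inverse-ham} on $y\le1$, where $\lphi\sim y$ and $\vg\sim y^{-1}$. Each inversion gains $y^2$ from an $L^2$ datum, starting at $y\jbk[\log y]$ after the first step. After $L+1$ steps the remainder is $O(\sqrt{\E_{2L+2}}\,y^{2L+1}\jbk[\log y])$. Derivatives up to order $2L+1$ follow by using the ODE to trade two derivatives for $\ham$.

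This gives factor $\sqrt{K}b_1^{L+1}\lgb^{-1}$; the constant $K$ is absorbed into $C$, so the factor is $b_1^{L+1}\lgb^{-1}$. The third component $\gamma$ is recovered from the constraint $|e_3+\hw|=1$, namely $\gamma=-1+\sqrt{1-|\hw^\pp|^2}-\gamma_0$-type terms. Since $\sqrt{1-x}$ is analytic, this gives an even expansion with the needed vanishing, and the product rules of Lemma~\ref{le:preserv} control its remainder.

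\emph{Part (iii).} Write $\hw=\wzt+\w$. Both summands lie in $\mcD_{L+1}$, using the truncation inclusion $\mcD_{L+2}\subset\mcD_{L+1}$ for $\wzt$. Their factors are $b_1$ and $b_1^{L+1}\lgb^{-1}\le b_1$ respectively. Since $\mcD_{L+1}$ is a linear space with additive factors, $\hw\in\mcD_{L+1}$ with factor $b_1$.
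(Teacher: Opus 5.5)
Your parts (ii) and (iii) are correct and match the paper's argument. On $y\le1$ one has $\chi_{S_1}\equiv1$, so $\wzt=\wz$, and the Dore expansion of order $L+2$ is read off from the $\mcA$, $\mcHb$, $\mcHg$ structure of $\vp$, $\x$, $\gamma_0$; (iii) then follows by summation, using $\mcD_{L+2}\subset\mcD_{L+1}$. For (i) the paper does no new work. It quotes Lemma~\itref{le:interp-wp}{it:interp-wp=taylor-orig}, whose $\alpha,\beta$ part is the near-origin bound of \citep[Lemma~C.1]{raphael_quantized_2014} and whose $\gamma$ part is derived from the constraint in Step~2 of its proof. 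Your sketch reproves that lemma along the same lines, but it misidentifies the step that carries the quantitative content.

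\textbf{The gap.} You control the coefficients $c_0,\dots,c_L$ of the regular kernel elements $\lphi,T_1,\dots,T_L$ ``through the lower energies $\E_{2k}$''. These cannot give the factor $b_1^{L+1}\lgb^{-1}$. By \eqref{pro:boot-e2k-bnd} the best available bound is $\sqrt{\E_{2L}}\les b_1^{L}\lgb^{K/2}$, and $\sqrt{\E_2}$ is only bounded by $b_1^{\frac{L}{2L-1}}\lgb^{K/2}$. So, for instance, the coefficient of $y$ produced by $c_0\lphi$ would be far too large, and the factor $\sqrt K\,b_1^{L+1}\lgb^{-1}$ you announce does not follow from what you wrote.

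What is needed is $|c_j|\les C(M)\sqrt{\E_{2L+2}}$ for every $0\le j\le L$. This comes from the top-order coercivity, Lemma~\ref{le:coer-e-2k2} with $k=L$:
\begin{itemize}
\item Using all $L+1$ orthogonality conditions \eqref{eq:ortho}, it bounds $\int_{y\le1}|\ham^j\alpha|^2y^{-4}\jbk[\log y]^{-2}$ (and the same for $\beta$) by $C(M)\E_{2L+2}$ for each $0\le j\le L$.
\item This weight detects $c\,\lphi\sim 2cy$.
\item Since $\ham^j$ of the kernel part equals $\sum_{i\ge j}c_iT_{i-j}$ with $T_0=\lphi$, the coefficients are recovered successively from $j=L$ down to $j=0$.
\end{itemize}

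Three smaller points.
\begin{itemize}
\item Orthogonality enters only in the coercivity step above. It plays no role in excluding singular kernel elements. Finite $\E_1$ rules out only $\vg$: the singular element of $\ker\ham^2$ behaves like a multiple of $y\log y$ at the origin and has finite $\E_1$ there. Those elements are excluded by smoothness at the origin (Lemma~\ref{le:reg}), which you do invoke.
\item The first inversion cannot be estimated by Cauchy--Schwarz in \eqref{eq:inverse-ham} as written, because $\int_0^y\vg^2x\,\dd x=\infty$. Base the $\lphi$-part of the inversion at $y=1$ instead; equivalently, invert $\as$ from $0$ and then $\oa$ from $1$. The change is a multiple of $\lphi$, absorbed into the kernel coefficients. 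This gives the $O(y\jbk[\log y]\sqrt{\E_{2L+2}})$ remainder you state.
\item The simplest repair is to cite Lemma~\itref{le:interp-wp}{it:interp-wp=taylor-orig} directly, as the paper does.
\end{itemize}
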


\begin{proof}
  We have~\ref{it:dore=w} directly from Lemma~\itref{le:interp-wp}{it:interp-wp=taylor-orig}. For~\ref{it:dore=wzt}, we take the Dore expansion of the profile constructed in Lemma~\ref{le:app-sol} to order $L+2$, since the cutoff is identically one for $y\le1$. From these two claims, $\mcD_{L+2}\subset\mcD_{L+1}$ and $\hw=\wzt+\w$, we obtain~\ref{it:dore=hw}.
\end{proof}

Next, we introduce a notation that arises in the iterated operators $(\hj[\vv]\mham)^k$. Recall from the definition in~\eqref{eq:hj} that $\hj[\vv] = (e_3+\vv)\times = \R +\vv\times$. The vector $\vv$ here is always a small perturbation in our setting, and thus the dominating part of $\hj[\vv]\mham$ is just $\R\mham$. Therefore we separate the leading part of $(\hj[\vv]\mham)^k$ by defining
\[
  \Omega^{k}_+[\vv] = (\hj[\vv]\mham)^k - (\R\mham)^k.
\]
Letting $\rho\in\{0,1\}^k$, we further define $\Omega^{k}_{\rho}$ by
\begin{equation}
  \label{def:omega}
  \Omega^{k}_{\rho}[\vv]
  = \prod_{1\le i\le k} (\R\mham)^{1-\rho_{i}}(\vv\times\mham)^{\rho_{i}}.
\end{equation}
Then we have
\[
  (\R\mham)^k = \Omega^{k}_{(0,\dots,0)}[\vv]
  \andd
  \Omega^{k}_+[\vv] =  \sum_{|\rho|_1\ge 1}\Omega^{k}_{\rho}[\vv].
\]
The following pointwise bound holds for $\Omega^{k}_{\rho}$, whose proof is postponed to Appendix~\ref{ap:proof-omega-pw}.

\begin{lemma}[Pointwise bound for $\Omega^{k}_{\rho}$ for $y\ge 1$]
  \label{le:omega-pw-bnd}
  Let $j\in \mbN_+$, $\rho\in\{0,1\}^j$ and $n=|\rho|_1$. For any $\xi = (\xi_1,\dots,\xi_{n+2})\in\mbN^{n+2}$ we denote
  \[
    |\xi|_1 = \sum_{k=1}^{n+2}\xi_{k}
    \andd
    |\xi|_+ = \xi_{n+1} +\xi_{n+2}.
  \]
  Then for any $i\ge 0$ and any smooth vector-valued functions $\vv,\w$, there exists a constant $C(i,j)\ge 1$ such that for all $y\ge 1$,
  \[
    |\pry^{i}\Omega^{j}_{\rho}[\vv]\w|
    \le C(i,j) \sum_{\substack{|\xi|_1=i+2j\\|\xi|_{+}\ge 2}}
    \Biggl(\prod_{1\le k\le n}|\pry^{\xi_{k}}\vv|\Biggr)
    \frac{|\pry^{\xi_{n+1}}\w|}{y^{\xi_{n+2}}}.
  \]
  In addition, the constant $C(i,j)$, viewed as a map from $\mbN\times\mbN_+$ to $\mbR_+$, is monotone increasing in $i,j$.

\end{lemma}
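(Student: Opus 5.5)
We argue by induction on $j$, with the number $n=|\rho|_1$ of perturbative factors tracked along the way. The only structural input is the form of $\mham$ for $y\ge 1$:
\[
\mham\w=-\Delta\w+\frac{V}{y^2}\w+\zt\,(\text{first-order terms in }\w)+\frac{\z\zt}{y}(\w\cdot e_1)e_3 .
\]
The coefficients here satisfy $|\pry^m(V/y^2)|\les y^{-2-m}$, $|\pry^m\zt|\les y^{-2-m}$ and $|\pry^m(\z\zt/y)|\les y^{-3-m}$. Consequently, for any $i\ge0$ and $y\ge1$,
\[
|\pry^i\mham\w|\le C(i)\sum_{\xi_1+\xi_2=i+2,\ \xi_1\le i+2}\frac{|\pry^{\xi_1}\w|}{y^{\xi_2}}.
\]
The key feature is that the total weight $\xi_1+\xi_2$ always equals $i+2$. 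Since $\R$ is a constant matrix, $\R\mham$ obeys the same bound. For $\vv\times\mham$, Leibniz gives
\[
|\pry^i(\vv\times\mham\w)|\le C(i)\sum_{\xi_0+\xi_1+\xi_2=i+2}|\pry^{\xi_0}\vv|\,\frac{|\pry^{\xi_1}\w|}{y^{\xi_2}},
\]
again with total weight $i+2$. These two one-step estimates are the base case $j=1$. In both, the condition $|\xi|_+=\xi_1+\xi_2\ge2$ holds automatically: $\mham$ contributes two units of weight that fall either on $\w$ as derivatives or into the weight $y^{-\xi_2}$.

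For the induction step, write $\Omega^{j}_\rho[\vv]=\mathcal{O}_1\,\Omega^{j-1}_{\rho'}[\vv]$ with $\rho'=(\rho_2,\dots,\rho_j)$ and $\mathcal{O}_1\in\{\R\mham,\ \vv\times\mham\}$ (using the ordering in \eqref{def:omega}). Apply the one-step bound to $\mathcal{O}_1 g$ with $g=\Omega^{j-1}_{\rho'}[\vv]\w$. This reduces $|\pry^i\mathcal{O}_1 g|$ to terms of the form $(|\pry^{\xi_0}\vv|)\,|\pry^{m}g|\,y^{-\xi_2}$ with $m+\xi_0+\xi_2=i+2$. Then use the inductive hypothesis on $|\pry^m g|$, which produces products of $n'$ derivatives of $\vv$ times $|\pry^{\xi'_{n'+1}}\w|\,y^{-\xi'_{n'+2}}$, with total weight $m+2(j-1)$ and $|\xi'|_+\ge2$.

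Merging the two weights $y^{-\xi_2}$ and $y^{-\xi'_{n'+2}}$ into a single power yields the total $|\xi|_1=i+2j$. The number of $\vv$ factors becomes $n'+\rho_1=n$. The condition $|\xi|_+\ge2$ is inherited, because $\xi_{n+1}+\xi_{n+2}\ge\xi'_{n'+1}+\xi'_{n'+2}\ge 2$. The constant is $C(i,j)=\sum_{m\le i+2}C(i)C(m,j-1)$, up to the finitely many combinatorial Leibniz coefficients. If the constants are defined as suprema over all indices up to $(i,j)$, they become monotone in $i,j$ automatically, which gives the final claim.

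The main obstacle is bookkeeping rather than any analytic difficulty. One must verify that the coefficients of $\mham$ genuinely satisfy the symbol-type bounds $|\pry^m c|\les y^{-\deg-m}$ on $y\ge1$; this follows from the rational forms of $V,\z,\zt$. One must also ensure that pushing powers of $y^{-1}$ never produces a positive power, which is exactly why the estimate is restricted to $y\ge1$, where $y^{-a}\le y^{-b}$ for $a\ge b$. In particular, any decay beyond what is needed (for instance the $y^{-2}$ from $\zt$ multiplying a first derivative) can be discarded by bounding $y^{-3}\le y^{-1}$. This converts each term into the exact total weight $i+2j$, possibly at the cost of enlarging $C(i,j)$.
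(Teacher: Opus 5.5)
Your proof is correct and follows the paper's argument. It is an induction on $j$ whose base case is the pair of one-step bounds for $\R\mham$ and $\vv\times\mham$ (total weight $i+2$, with excess decay discarded on $y\ge1$), and monotonicity of the constants comes from taking maxima. The only difference is cosmetic: you peel off the outermost factor and apply the one-step bound before the inductive hypothesis, whereas the paper peels off the innermost factor, writing $\Omega^{j+1}_\rho[\vv]\w=\Omega^{j}_\sigma[\vv](\R\mham\w)$ or $\Omega^{j}_\sigma[\vv](\vv\times\mham\w)$, and applies the hypothesis first.
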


With these notions, we now prove Lemma~\ref{le:w2l2-e}. Note that the lemmas in Appendix~\ref{ap:interp-bnd} are also used.

\begin{proof}[Proof of Lemma~\ref{le:w2l2-e}]
  \label{pf:w2l2-e=pf}
  From the definition in~\eqref{def:w2k1}, we expand
  \[
    \w_{2L+2}^1 = (\R\mham)^{L+1}(\gamma e_3) +\Omega^{L+1}_{+}\w.
  \]
  We estimate the right-hand side for $y\le 1$ and $y\ge1$ respectively.

  \step[1, Estimate for $y\le 1$]
  By Lemma~\itref{le:interp-wp}{it:interp-wp=taylor-orig}, $\gamma e_3\in\mcD_{L+1}$ with factor $b_1^{L+2}\lgb^{-1}$. When applying $(\R\mham)^{L+1}$ to $\gamma e_3$, we use only derivatives up to order $2L+1$ of $\gamma$. By the remainder bound and Lemma~\itref{le:preserv}{it:preserv=op} for the polynomial part, we have
  \[
    |(\R\mham)^{L+1}(\gamma e_3)|
    \les C(M,K)\frac{b_1^{L+2}}{\lgb}y\jbk[\log y],
  \]
  and, after integration, we obtain
  \[
    \|(\R\mham)^{L+1}(\gamma e_3)\|_{L^2(y\le1)}^2
    =\|\mhamp^L(\zt\pry\gamma e_1)\|_{L^2(y\le1)}^2
    \les C(M,K)\frac{b_1^{2L+4}}{\lgb^2}.
  \]
  Next, by Corollary~\itref{co:dore}{it:dore=w}, \ref{it:dore=hw}, we have $\w,\hw\in\mcD_{L+1}$ with factors $b_1^{L+1}\lgb^{-1}$ and $b_1$, respectively. For $|\rho|_1\ge1$, we isolate the derivative of order $2L+2$. By Lemma~\itref{le:preserv}{it:preserv=op}--\ref{it:preserv=cross} and Remark~\ref{re:preserv}, we have
  \[
    |\Omega_\rho^{L+1}\w|
    \les b_1y|\pry^{2L+2}\w|
    +\frac{b_1^{L+2}}{\lgb}\jbk[\log y].
  \]
  Indeed, we use only derivatives up to order $2L$ of $\hw$, while the coefficient of $\pry^{2L+2}\w$ contains $\hw=O(b_1y)$. Applying the local elliptic estimate for $\ham^{L+1}$ and using Lemma~\itref{le:interp-gamma}{it:interp-gamma=sobolev} and~\eqref{pro:boot-e2l2-bnd}, we have
  \begin{align*}
    \|\Omega_\rho^{L+1}\w\|_{L^2(y\le1)}^2
    & \les b_1^2\|\pry^{2L+2}\w\|_{L^2(y\le1)}^2
    +C(M,K)b_1^{2L+4}\lgb^{-2} \\
    & \les C(M,K)\frac{b_1^{2L+4}}{\lgb^2}.
  \end{align*}
  Summing over $\rho$, we obtain the desired estimate for $y\le1$.

  \step[2, Estimate for $y\ge1$]
  By~\eqref{eq:mham-r}, the asymptotics $|\pry^\ell\zt|\les y^{-2-\ell}$ for $y\ge1$ and Leibniz,
  \begin{equation}
    \label{pf:w2l2-e=small-gamma-int}
    |(\R\mham)^{L+1}(\gamma e_3)|
    =|\mhamp^L(\zt\pry\gamma e_1)|
    \les \sum_{i=0}^{2L+2}\frac{|\pry^i\gamma|}{y^{2L+2-i}}.
  \end{equation}
  Thus, directly by Lemma~\ref{le:interp-gamma},
  \begin{align*}
    \|(\R\mham)^{L+1}(\gamma e_3)\|_{L^2(y\ge1)}^2
    \les\sum_{i=0}^{2L+2}\int_{y\ge1}\frac{|\pry^i\gamma|^2}{y^{4L+4-2i}}
    \les\dbs\E_{2L+2}+b_1^{\frac{1}{4L}}\frac{b_1^{2L+2}}{\lgb^2}.
  \end{align*}
  We now estimate $\Omega^{L+1}_{+}\w$. First, from the construction of $\wzt$ in Lemma~\ref{le:local-app-sol},
  \begin{equation}
    \label{pf:w2l2-e=wzt-l-infty}
    \bigl\|\jbk[\log y]^Cy^\ell\pry^\ell\wzt\bigr\|_{L^\infty(y\ge1)}
    \les b_1^{1/2}\lgb^C
    \for \ell\ge0.
  \end{equation}
  In view of Lemma~\ref{le:omega-pw-bnd}, for $y\ge1$,
  \begin{equation}
    \label{pf:w2l2-e=omega-l1-w}
    |\Omega^{L+1}_{+}\w|
    \les \sum_{n,\xi}
    \,\prod_{\ell=1}^{n}|\pry^{\xi_{\ell}}\hw|\,
    \frac{|\pry^{\xi_{n+1}}\w|}{y^{\xi_{n+2}}},
  \end{equation}
  where the sum is taken over
  \[
    1\le n\le L+1,\quad \xi\in\mbN^{n+2},\quad |\xi|_1=2L+2,\quad |\xi|_+\ge2.
  \]
  By the decomposition $\hw=\wzt+\w$, we split the right-hand side into various monomials. Each resulting monomial contains $m$ copies of the $\wzt$ factor for some $0\le m\le n$, and hence
  \[
    |\Omega^{L+1}_{+}\w| \les \sum_{n,m,\xi} \mcJ_{n,m,\xi},
  \]
  which together with~\eqref{pf:w2l2-e=wzt-l-infty} yields
  \[
    \mcJ_{n,m,\xi}
    = b_1^{\frac{m}{2}}\lgb^C
    \prod_{\ell=m+1}^{n}|\pry^{\xi_{\ell}}\w|
    \frac{|\pry^{\xi_{n+1}}\w|}{y^{\xi_1+\dots+\xi_{m}+\xi_{n+2}}\jbk[\log y]^{2m}}.
  \]
  The indices $(n,m,\xi)$ fall into the following cases:
  \begin{enumerate}[label={\rm (\roman*)}]
    \item $m\ge 1$; \label{pf:w2l2-e=case-1}
    \item $m=0$, $\xi_{n+2}\ge 1$; \label{pf:w2l2-e=case-2}
    \item $m=0$, $\xi_{n+2}=0$, $\xi_1+\dots+\xi_n=0$; \label{pf:w2l2-e=case-3}
    \item $m=0$, $\xi_{n+2}=0$, $\xi_1+\dots+\xi_n\ge 1$. \label{pf:w2l2-e=case-4}
  \end{enumerate}
  We estimate $\mcJ_{n,m,\xi}$ for each case.

  \emph{Case~\ref{pf:w2l2-e=case-1}}.
  Since $m\ge1$, we estimate using Lemma~\itref{le:interp-wp}{it:interp-wp=pw} and \ref{it:interp-wp=gen-lossy},
  \begin{align*}
    & \|\mcJ_{n,m,\xi}\|_{L^2(y\ge1)}^2                                                      \\
    & \les b_1^{m}\lgb^C
    \prod_{\ell=m+1}^{n} \|\pry^{\xi_{\ell}}\w\|_{L^\infty(y\ge1)}^2
    \int_{y\ge1}\frac{|\pry^{\xi_{n+1}}\w|^2}{y^{2(\xi_1+\dots+\xi_{m}+\xi_{n+2})}\jbk[\log y]^2} \\
    & \les b_1^{d_{n,m,\xi}} \lgb^C.
  \end{align*}
  The quantity $d_{n,m,\xi}$ is determined as
  \begin{itemize}
    \item for $\xi_{m+1}+\dots+\xi_n=0$:
      \[
        d_{n,m,\xi} = 2L+2+m \ge 2L+3;
      \]
    \item for $\xi_{m+1}+\dots+\xi_n=1$:
      \[
        d_{n,m,\xi} = \frac{2L}{2L-1} + 2L+1 +m\ge 2L+3;
      \]
    \item for $2\le \xi_{m+1}+\dots+\xi_n\le 2L-1$:
      \[
        d_{n,m,\xi} = (2L+1)\frac{2L}{2L-1} +m\ge 2L+3;
      \]
    \item for $\xi_{m+1}+\dots+\xi_n=2L$:
      \[
        d_{n,m,\xi} = \min\biggl(2L+1, \frac{(2L)^2}{2L-1}\biggr) + \frac{2L}{2L-1} +m\ge 2L+3.
      \]
  \end{itemize}

  \emph{Case~\ref{pf:w2l2-e=case-2}}.
  The $\wzt$ part is absent. For $|\xi|_+\le 2L+1$, Lemma~\itref{le:interp-wp}{it:interp-wp=pw} and \ref{it:interp-wp=gen-lossy} give
  \begin{align*}
    \|\mcJ_{n,m,\xi}\|^2_{L^2(y\ge1)}
    & \les \prod_{\ell=1}^{n}\|\pry^{\xi_{\ell}}\w\|^2_{L^\infty(y\ge1)}
    \int_{y\ge1}\frac{|\pry^{\xi_{n+1}}\w|^2}{y^{2\xi_{n+2}}}                                   \\
    & \les b_1^{2L+2+\frac{1}{2L-1}}\lgb^C.
  \end{align*}
  For the borderline case $|\xi|_{+}=2L+2$, we have $\xi_1 = \dots = \xi_n = 0$. Writing one undifferentiated factor as $y\frac{\w}{y}$, we obtain
  \begin{align*}
    \|\mcJ_{n,m,\xi}\|_{L^2(y\ge1)}^2
    & \les \|\w\|_{L^\infty(y\ge1)}^{2(n-1)}
    \biggl\|\frac{\w}{y}\biggr\|_{L^\infty(y\ge1)}^2
    \int_{y\ge1}\frac{|\pry^{\xi_{n+1}}\w|^2}{y^{2(2L+1-\xi_{n+1})}} \\
    & \les b_1^{2L+2+\frac{1}{2L-1}}\lgb^C.
  \end{align*}
  Here the unweighted endpoint $\xi_{n+1}=2L+1$ follows from the even-order expansion and interpolation in Case~\ref{pf:w2l2-e=case-4}.

  \emph{Case~\ref{pf:w2l2-e=case-3}}.
  In this case, $\xi_{n+1}=2L+2$. By the definition of $\w_{2L+2}^0$, we rewrite the integral as
  \begin{align*}
    & \|\mcJ_{n,m,\xi}\|^2_{L^2(y\ge1)}         \\
    & \les \prod_{\ell=1}^{n}\|\w\|^2_{L^\infty(y\ge1)}
    \int_{y\ge1}|\pry^{2L+2}\w|^2                        \\
    & \les \prod_{\ell=1}^{n}\|\w\|^2_{L^\infty(y\ge1)}
    \Biggl(\int|\w_{2L+2}^0|^2
    +\sum_{\ell=0}^{2L+1}\int_{y\ge1}\frac{|\pry^\ell\w|^2}{y^{2(2L+2-\ell)}}
    +\sum_{\ell=0}^{2L+2}\int_{y\ge1}\frac{|\pry^\ell\gamma|^2}{y^{2(2L+2-\ell)}}\Biggr),
  \end{align*}
  where $\gamma$ is the third component of $\w$.
  From \eqref{eq:w2k0-e2k}, the first integral is exactly $\E_{2L+2}$, and then by the smallness $\|\wp\|_{L^\infty} \les \dbs$, this part is bounded by $\dbs\E_{2L+2}$. The second integral is estimated as in Case~\ref{pf:w2l2-e=case-2}. The third integral yields the desired bound directly by Lemma~\ref{le:interp-gamma}.

  \emph{Case~\ref{pf:w2l2-e=case-4}}.
  Since $\xi_{n+2}=0$, there is no $y$-decay in the denominator in $\mcJ_{n,m,\xi}$,
  \[
    \|\mcJ_{n,m,\xi}\|^2_{L^2(y\ge1)}
    \les \prod_{\ell=1}^{n}\|\pry^{\xi_{\ell}}\w\|^2_{L^\infty(y\ge1)}
    \int_{y\ge1}|\pry^{\xi_{n+1}}\w|^2.
  \]
  From $|\xi|_+\ge 2$ and $\xi_1+\dots+\xi_n\ge 1$, we have $2\le \xi_{n+1} \le 2L+1$.
  If $\xi_{n+1}=2k$ is even, we expand $\ham^k$ to obtain
  \begin{equation}\label{pf:w2l2-e=even-expansion}
    \int_{y\ge1}|\pry^{2k}\wp|^2
    \les \E_{2k}+\sum_{i=0}^{2k-1}\int_{y\ge1}\frac{|\pry^i\wp|^2}{y^{2(2k-i)}}.
  \end{equation}
  This expansion holds also for $k=L+1$. Combining it with Lemma~\itref{le:interp-wp}{it:interp-wp=gen-lossy} and Lemma~\itref{le:interp-gamma}{it:interp-gamma=unweighted}, we obtain
  \[
    \prod_{\ell=1}^{n}\|\pry^{\xi_{\ell}}\w\|^2_{L^\infty(y\ge1)}
    \int_{y\ge1}|\pry^{2k}\w|^2
    \les b_1^{2L+2+\frac{1}{2L-1}}\lgb^C.
  \]
  If $\xi_{n+1}$ is odd, choose a fixed smooth cutoff $\chi_{\ge 1}$ with $\chi_{\ge 1}=0$ for $y\le1/2$ and $\chi_{\ge 1}=1$ for $y\ge1$. The interpolation
  \[
    \int_{y\ge1}|\pry^{2k+1}\w|^2
    \le \biggl(\int|\pry^{2k}(\chi_{\ge 1}\w)|^2\biggr)^{1/2}
    \biggl(\int|\pry^{2k+2}(\chi_{\ge 1}\w)|^2\biggr)^{1/2}
  \]
  reduces it to the preceding even orders. The cutoff errors on $1/2\le y\le1$ are controlled by Lemma~\itref{le:interp-wp}{it:interp-wp=taylor-orig}. To sum up, in all cases we have
  \begin{equation} \label{pf:w2l2-e=omega-l1-w-ygt1}
    \|\Omega^{L+1}_{+}\w\|^2_{L^2(y\ge1)}
    \les \dbs\E_{2L+2} + b_1^{\frac{1}{4L}}\frac{b_1^{2L+2}}{\lgb^2}.
  \end{equation}
  This completes the proof.
\end{proof}

\subsection{Direct energy inequality}
\label{sub:dire-eng-ineq}
This subsection is devoted to a preliminary energy estimate for $\E_{2L+2}$. It is the foundation of Proposition~\ref{pro:mix-eng}.

\begin{lemma}[Direct energy inequality]
  \label{le:eng}
  For $s\in[s_0,s^{\ast})$, there exists a constant $C > 1$ independent of $M$ such that
  \begin{equation}
    \begin{split}
      \label{le:eng=dire-eng}
      & \frac{\dd}{\dd t}\biggl\{\frac{1}{\la^{4L+2}}\biggl[\E_{2L+2} +O\biggl(\frac{b_1^{2L+2}}{\lgb^2}\biggr)\biggr]\biggr\}                         \\
      & \le
      \frac{2b_1}{\la^{4L+4}}\Biggl\{\int_{y\le R}|\w_{2L+2}^0|^2 +\sum_{j=0}^{L}\int\w_{2L+2}^0\cdot\R^{L+1}\mhamp^j\bigl(g_2\mhamp^{L-j}\wp\bigr)\Biggr\} \\
      & \quad +2\QQ_{\E}
      -\frac{2}{\la^{4L+4}}\int\w_{2L+2}\cdot(\hj\mham)^{L+1}\mF +\frac{C b_1}{\la^{4L+4}}\frac{b_1^{2L+2}}{\lgb^2},
    \end{split}
  \end{equation}
  where $\QQ_{\E}$ is the quadratic remainder given in~\eqref{pf:eng=q-expr}.

\end{lemma}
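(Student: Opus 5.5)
We will compute the time derivative of $\int|\W_{2L+2}|^2$ in the original variables, where $\W_{2L+2}=(\hj\mhaml)^{L+1}\W$. The key structure is that $\hj\mhaml$ is skew with respect to the $\mhaml$-twisted pairing. Rescaling gives $\int|\W_{2L+2}|^2=\la^{-(4L+2)}\|\w_{2L+2}\|_{L^2}^2$. By Remark~\ref{re:w2l2-e}, $\|\w_{2L+2}\|_{L^2}^2=\E_{2L+2}+O(b_1^{2L+2}\lgb^{-2})$, which accounts for the bracket on the left of \eqref{le:eng=dire-eng}.

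The main step is to differentiate using \eqref{eq:w2k-eq} with $k=L+1$:
\[
\frac12\frac{\dd}{\dd t}\int|\W_{2L+2}|^2=\int\W_{2L+2}\cdot\Bigl(-\W_{2L+4}+[\prt,(\hj\mhaml)^{L+1}]\W-(\hj\mhaml)^{L+1}\tfrac{1}{\la^2}\mF_\la\Bigr).
\]
\emph{The $\W_{2L+4}$ term.} Here $\W_{2L+4}=\hj\mhaml\W_{2L+2}$. Because $\hj$ is a cross product, $\int\W_{2L+2}\cdot\hj\mhaml\W_{2L+2}$ is not zero. It is only lower order: writing $\hj=\R+\hw_\la\times$ and using self-adjointness of $\mhaml$, we get $\int\W_{2L+2}\cdot(\hw\times\mhaml\W_{2L+2})$. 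Integrating by parts moves one derivative onto $\hw$, and the resulting quadratic remainder is collected into $\QQ_\E$.

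\emph{The commutator term.} This term carries the main scaling information. The operator $\prt$ hits three things:
\begin{itemize}
\item the $\la$-dependence of the potentials $V_\la/r^2$, $\zt_\la$, $\z_\la$, which gives a factor $\frac{\la_t}{\la}\,\frac{\vla V}{y^2}$ and similar;
\item the $\la$-dependence of $\ham_\la$, which gives the coefficient $b_1$ after using $\la_s/\la=-b_1+O(D)$ and Lemma~\ref{le:mod-eq};
\item $\prt\hj=\prt\hw\times$.
\end{itemize}
Expanding $[\prt,(\R\mhaml)^{L+1}]$ yields $\sum_j(\R\mhaml)^j[\prt,\R\mhaml](\R\mhaml)^{L-j}$. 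Here $[\prt,\ham_\la]=\frac{b_1}{\la^2}\bigl(\frac{2}{\la^2}\ham_\la\text{-scaling}+\frac{g_2}{\la^2}\bigr)$, where $g_2$ is the potential derivative $\vla V/y^2$ (up to normalization). The dilation part of $\prt$ is handled separately: the factor $\frac{\dd}{\dd t}\la^{-(4L+2)}$ is moved to the left side, producing the exact power $\la^{-(4L+4)}$ with coefficient $2b_1$.

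To obtain the localized term $\int_{y\le R}|\w^0_{2L+2}|^2$ rather than the full integral, I plan to decompose the scaling contribution as $\int|\w^0_{2L+2}|^2$ minus a positive contribution from $y>R$ that is dropped. This uses the sign of $\vla V$ at large $y$ and the fact that the top-order term has the structure $\w^0_{2L+2}\cdot\vla\w^0_{2L+2}$, which is expected; this requires care. The terms involving $g_2$ remain as the explicit sum $\sum_j\int\w^0_{2L+2}\cdot\R^{L+1}\mhamp^j(g_2\mhamp^{L-j}\wp)$.

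\emph{Replacing $\w_{2L+2}$ by $\w^0_{2L+2}$ and bounding the remainders.} In these leading terms I replace $\w_{2L+2}$ by $\w^0_{2L+2}$ via Lemma~\ref{le:w2l2-e}. The cross terms involving $\w^1_{2L+2}$, the terms from $\prt\hw$, and the commutator pieces from $\Omega^{L+1}_+$ are then bounded in two regions:
\begin{itemize}
\item for $y\le1$, by Dore expansions (Lemma~\ref{le:preserv}, Corollary~\ref{co:dore});
\item for $y\ge1$, by Lemma~\ref{le:omega-pw-bnd} together with the interpolation bounds of Appendix~\ref{ap:interp-bnd}.
\end{itemize}
Each of these is controlled by either $\dbs\,b_1\E_{2L+2}$ or $b_1\cdot b_1^{2L+2}\lgb^{-2}$, and is absorbed into $Cb_1^{2L+3}\lgb^{-2}$ via the bootstrap bound \eqref{pro:boot-e2l2-bnd}. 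The $\mF$ term is left untouched.

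\emph{Main obstacle.} I expect the hardest part to be extracting the exact leading coefficient $2b_1$ with only the localized integral over $y\le R$, with all other contributions either signed or genuinely of lower order. This hinges on the commutator algebra at top order. The issue is the quasilinear top-order term in $\int\W_{2L+2}\cdot(\hw\times\mhaml\W_{2L+2})$, which must be shown to be lower order after the integration by parts, without any loss of derivatives.
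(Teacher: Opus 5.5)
There is a genuine gap in how you treat $-2\int\W_{2L+2}\cdot\hj\mhaml\W_{2L+2}$ (your $\W_{2L+4}$ term).

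\textbf{This term is the leading one, not a remainder.} It is not lower order, and it cannot be placed in $\QQ_{\E}$, which by \eqref{pf:eng=q-expr} consists only of commutator remainders. It is precisely the source of $\frac{2b_1}{\la^{4L+4}}\int_{y\le R}|\w_{2L+2}^0|^2$. This is the leading quadratic term that the virial functional of Lemma~\ref{le:vr-est} is built to cancel; if it were lower order, no virial correction would be needed.

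The reason is that $\hj=\R+\hw\times$ with $\hw\approx b_1\phibt[1]=b_1\chi_{S_1}T_1e_2$. The integration by parts you propose turns the main part into roughly $\int\pry\hw\cdot(\pry\w_{2L+2}\times\w_{2L+2})$. This contains $2L+3$ derivatives of $\w$, which $\E_{2L+2}$ does not control. It is also weighted by $b_1\pry T_1\approx-b_1\log y$, which is not $o(b_1)$.

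\textbf{The missing ingredient} is the gain-of-two-derivatives estimate of \citep[Lemma~4.7 and Appendix~C]{merle_blowup_2013}. Apply it with $\smw=\w_{2L}$, so that $\hj\mham\smw=\w_{2L+2}$:
\[
  -\int\w_{2L+2}\cdot\hj\mham\w_{2L+2}
  \le b_1\int f\,|\w_{2L+2}|^2+b_1\dbs\|\mham\w_{2L}\|_{L^2}^2,
  \qquad f=(1+\z)\oa T_1 .
\]
Here $0\le f<1$ and $f\to0$ as $y\to\infty$. Using it requires checking that the remainders in that computation, which involve $\pry\hw$ and $\hw-b_1\phibt[1]$, are bounded by $b_1\dbs\|\mham\w_{2L}\|_{L^2}^2$. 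The remaining steps are:
\begin{enumerate}
\item Choose $R$ with $\|f\|_{L^\infty(y\ge R)}\le 1/K$.
\item Absorb $\frac{b_1}{K}\int_{y\ge R}|\w_{2L+2}|^2$ into $Cb_1^{2L+3}\lgb^{-2}$ using \eqref{pro:boot-e2l2-bnd}.
\item Prove separately that $\|\mham\w_{2L}\|_{L^2}^2\les C(M)(\E_{2L+2}+b_1^{2L+2}\lgb^{-2})$.
\item Pass from $\w_{2L+2}$ to $\w_{2L+2}^0$ by Lemma~\ref{le:w2l2-e}.
\end{enumerate}
So the localization to $y\le R$ comes from the decay of $f$, not from any sign of $\vla V$.

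\textbf{The dilation mechanism you invoke does not exist.} In the variables $(t,r)$, $\W$ solves \eqref{eq:W-eq}, which has no dilation term. Also $\int|\W_{2L+2}|^2=\la^{-(4L+2)}\|\w_{2L+2}\|_{L^2}^2$ identically. So there is no factor $\frac{\dd}{\dd t}\la^{-(4L+2)}$ to move to the left, and no expression $\w_{2L+2}^0\cdot\vla\w_{2L+2}^0$ appears.

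Since $-\Delta_r$ is time-independent, $[\prt,\mhaml]\W=-\frac{\la_s}{\la}\la^{-4}\mfg\w$ has no ``$\ham_\la$-scaling'' part. Equivalently, in self-similar variables the $\vla$ contributions cancel the derivative of $\la^{-(4L+2)}$ exactly, up to the potential commutators.

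Write $-\frac{\la_s}{\la}=b_1-(b_1+\frac{\la_s}{\la})$. The $b_1$ part of the commutator gives exactly the explicit $g_2$-sum. The following pieces make up $\QQ_{\E}$:
\begin{itemize}
\item the $(b_1+\frac{\la_s}{\la})$ part;
\item the $\hW\times[\prt,\mhaml]$ and $\prt\hW\times\mhaml$ pieces, with $\prt\hW$ substituted from \eqref{eq:vect-form};
\item the $\W^1$ remainders.
\end{itemize}
Your commutator analysis is therefore essentially right once the spurious scaling piece is removed. The coefficient $2b_1$ in front of $\int_{y\le R}|\w_{2L+2}^0|^2$ must come from the quasilinear term above.
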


\begin{proof}
  From the equation~\eqref{eq:w2k-eq} for $\W_{2k}$, we first derive the following energy identity:
  \begin{align}
    \frac{\dd}{\dd t}\int|\W_{2L+2}|^2
    & = -2\int\W_{2L+2}\cdot\hj\mhaml\W_{2L+2}
    \label{pf:eng=lead-quad-term}                                                 \\
    & \quad +2\int\W_{2L+2}\cdot\bigl[\prt, (\hj\mhaml)^{L+1}\bigr]\W
    \label{pf:eng=commu-term}                                                     \\
    & \quad -\frac{2}{\la^2}\int\W_{2L+2}\cdot(\hj\mhaml)^{L+1}\mF_{\la}. \notag
  \end{align}
  By the scaling law~\eqref{eq:scaling-w-w}, the integral on the left-hand side is
  \[
    \int|\W_{2L+2}|^2
    = \frac{\|\w_{2L+2}\|_{L^2}^2}{\la^{4L+2}}.
  \]
  From Lemma~\ref{le:w2l2-e} (and Remark~\ref{re:w2l2-e}), this yields the left-hand side of~\eqref{le:eng=dire-eng}. Similarly, by~\eqref{eq:scaling-w-w}, the last integral involving $\mF_{\la}$ above is just the $\mF$ term in~\eqref{le:eng=dire-eng}. It remains to treat~\eqref{pf:eng=lead-quad-term} and \eqref{pf:eng=commu-term}.

  \step[1, Leading quadratic term]
  By~\eqref{eq:scaling-w-w}, we have
  \[
    -\int\W_{2L+2}\cdot\hj\mhaml\W_{2L+2}
    = -\frac{1}{\la^{4L+4}}\int\w_{2L+2}\cdot\hj\mham\w_{2L+2}.
  \]
  We use the following weighted form of the gain-of-two-derivatives estimate in \citep[Lemma~4.7 and Appendix~C]{merle_blowup_2013}.

  \begin{lemma*}[Gain of two derivatives \citep{merle_blowup_2013}]
    For any vector $\smw$ with respect to the Frenet basis, we have
    \[
      -\int\hj\mham\smw\cdot(\hj\mham)^2\smw
      \le b_1\int|f|\,|\hj\mham\smw|^2 +b_1\dbs \|\mham\smw\|_{L^2}^2,
    \]
    where the function $f = (1+\z)\oa T_1$ satisfies $0\le f(y)<1$ for all $y\ge 0$ and $f(y)\to0$ as $y\to +\infty$.
  \end{lemma*}
  The properties of $f$ follow from \citep[Appendix~C, Step~3]{merle_blowup_2013}. To apply the same argument here, we check the bounds on the profile-dependent remainder terms. The leading part of $\wzt$ is $b_1\phibt[1] = b_1\chi_{S_1}T_1 e_2$. From Lemmas~\ref{le:local-app-sol} and~\ref{le:interp-wp}, together with the support of the cutoff error on $y\ge S_1$, we have
  \[
    \|\pry\hw\|_{L^\infty}^2
    +\biggl\|\frac{\pry(\hw-b_1\phibt[1])}{\jbk[y]^2}\biggr\|_{L^\infty}
    +\biggl\|\frac{\hw-b_1\phibt[1]}{\jbk[y]^4}\biggr\|_{L^\infty}
    \les b_1\dbs.
  \]
  These bounds control the remainder terms in the computation of \citep[Appendix~C]{merle_blowup_2013} by $b_1\dbs\|\mham\smw\|_{L^2}^2$, yielding the above lemma for the present profile.

  We now continue the proof of Lemma~\ref{le:eng}. From the estimate for $f$, there exists $R>1$ such that
  \[
    \|f\|_{L^{\infty}(y\ge R)} \le \frac{1}{K}.
  \]
  Here, $K$ is the constant in Proposition~\ref{pro:boot}. Taking $\smw = \w_{2L}$ in the above lemma, we have
  \[
    -\int\w_{2L+2}\cdot\hj\mham\w_{2L+2}
    \le b_1\int_{y\le R} |\w_{2L+2}|^2 + \frac{b_1}{K}\int_{y\ge R} |\w_{2L+2}|^2 + b_1\dbs \int|\mham\w_{2L}|^2.
  \]
  We claim that
  \begin{equation}
    \label{pf:eng=claim-w2l}
    \int|\mham\w_{2L}|^2
    \les C(M)\biggl(\E_{2L+2}+\frac{b_1^{2L+2}}{\lgb^2}\biggr).
  \end{equation}
  This, together with Lemma~\ref{le:w2l2-e}, Remark~\ref{re:w2l2-e} and the bootstrap bound~\eqref{pro:boot-e2l2-bnd}, yields
  \begin{align*}
    & -\int\w_{2L+2}\cdot\hj\mham\w_{2L+2}                                                                                                                  \\
    & \le b_1\int_{y\le R}|\w_{2L+2}^0|^2 +\frac{Cb_1^{2L+3}}{\lgb^2} + \biggl(\frac{b_1}{K}+C(M)b_1\dbs\biggr)\biggl(\E_{2L+2}+\frac{Cb_1^{2L+2}}{\lgb^2}\biggr) \\
    & \le b_1\int_{y\le R}|\w_{2L+2}^0|^2 +\frac{Cb_1^{2L+3}}{\lgb^2}.
  \end{align*}
  Here the constant $C$ is independent of $M$, since $\bs>0$ will be chosen sufficiently small so that $C(M)$ is absorbed by $\dbs$. Thus the estimate for~\eqref{pf:eng=lead-quad-term} follows:
  \[
    -\int\W_{2L+2}\cdot\hj\mhaml\W_{2L+2}
    \le \frac{b_1}{\la^{4L+4}}
    \Biggl\{ \int_{y\le R}|\w_{2L+2}^0|^2 +C\frac{b_1^{2L+2}}{\lgb^2} \Biggr\}.
  \]

  \emph{Proof of Claim~\eqref{pf:eng=claim-w2l}}. Using~\eqref{def:w2k1}, we expand:
  \begin{align*}
    \mham\w_{2L}
    & = -\R\w_{2L+2}^0 +\zt\bigl(\pry+\tfrac{\z}{y}\bigr)(\w_{2L}^{0}\cdot e_1)e_3 +\mham(\Omega_{+}^{L}\w)                       \\
    & \quad -\R(\R\mham)^{L+1}(\gamma e_3) +\zt\bigl(\pry+\tfrac{\z}{y}\bigr)((\R\mham)^L(\gamma e_3)\cdot e_1)             e_3 .
  \end{align*}
  We estimate each term on the right-hand side separately. For the first term, from~\eqref{eq:w2k0-e2k}, we note that its squared $L^2$ norm is exactly $\E_{2L+2}$. From~Lemma~\itref{le:interp-wp}{it:interp-wp=sobolev}, the second term admits the bound
  \[
    \int\bigl|\zt\bigl(\pry+\tfrac{\z}{y}\bigr)(\w_{2L}^{0}\cdot e_1)\bigr|^2
    \les \int\frac{|\wp_{(2L+1)}|^2 +\frac{|\wp_{(2L)}|^2}{y^2}}{y^2\jbk[\log y]^2}
    \les C(M) \E_{2L+2},
  \]
  where we use $(\partial_y+Z/y)f=-Af+2Zf/y$.
  The third term is estimated as in \eqref{pf:w2l2-e=omega-l1-w}, by applying Lemma~\ref{le:omega-pw-bnd} to $\Omega^L_+$ followed by a crude estimate for $\mham$.
  The fourth term has been treated in the proof of Lemma~\ref{le:w2l2-e}.
  Finally, by Lemma~\itref{le:interp-wp}{it:interp-wp=taylor-orig}, we have $\gamma e_3\in\mcD_{L+1}$ with factor $b_1^{L+2}\lgb^{-1}$. By Lemma~\itref{le:preserv}{it:preserv=op}, $(\R\mham)^L(\gamma e_3)\in\mcD_1$ with the same factor. Thus, for $y\le1$,
  \[
    |\pry(\R\mham)^L(\gamma e_3)|
    +\frac{|(\R\mham)^L(\gamma e_3)|}{y}
    \les C(M,K)\frac{b_1^{L+2}}{\lgb}\jbk[\log y].
  \]
  We thus obtain the squared $L^2$ bound
  \[
    \|\zt\bigl(\pry+\tfrac{\z}{y}\bigr)((\R\mham)^L(\gamma e_3)\cdot e_1)\|_{L^2(y\le 1)}^2
    \les C(M,K)\frac{b_1^{2L+4}}{\lgb^2}.
  \]
  For $y\ge1$, using Lemma~\ref{le:interp-gamma},
  \begin{align*}
    & \|\zt\bigl(\pry+\tfrac{\z}{y}\bigr)((\R\mham)^L(\gamma e_3)\cdot e_1)\|_{L^2(y\ge1)}^2 \\
    & \les \sum_{\ell=0}^{2L+1}  \int_{y\ge1} \frac{|\pry^{\ell}\gamma|^2}{y^{2(2L+2-\ell)}}
    \les \dbs \Eplus.
  \end{align*}
  Collecting these bounds proves the claim~\eqref{pf:eng=claim-w2l}.

  \step[2, Commutator term]
  For ease of notation, we define the functions:
  \begin{equation}
    \label{pf:eng=g2}
    g_1(y)=\frac{\vla \z}{y},
    \andd
    g_2(y)=\frac{\vla V}{y^2},
  \end{equation}
  and the corresponding rescaled forms
  \[
    G_1(r) =\frac{g_1(\tfrac{r}{\la})}{\la^3},
    \andd
    G_2(r) = \frac{g_2(\tfrac{r}{\la})}{\la^4}.
  \]
  In fact, the commutators between $\prt$ and linear operators $\oa_{\la}, \ham_{\la}$ can be written as
  \[
    \left[\prt, \oa_\la\right]
    = -\frac{\la_s}{\la}G_1, \quad
    \left[\prt, \ham_\la\right]
    = -\frac{\la_s}{\la}G_2.
  \]
  Hence, in view of~\eqref{eq:scale-w-w},
  \begin{equation}
    \label{pf:eng=ma-mham-comm}
    \left[\prt, \ma_\la\right]\W
    = -\frac{\la_s}{\la}G_1 \Wp, \quad
    \left[\prt, \mhaml\right]\W
    = -\frac{\la_s}{\la} \frac{\mfg\w}{\la^4},
  \end{equation}
  where the operator $\mfg$ is defined by
  \begin{align*}
    \mfg\w
    & = \bigl(g_2\alpha -(1+\vla)\zt\pry\gamma\bigr)e_1 +g_2\beta e_2               \\
    & \quad +\biggl((1+\vla)\zt\pry +\frac{1}{y}(1+\vla)(\zt \z)\biggr) \alpha e_3.
  \end{align*}
  Using the equation~\eqref{eq:vect-form} for $\hw$, we compute:
  \begin{equation}\begin{split}
      \label{pf:eng=hj-mhaml-comm}
      \left[\prt, \hj\mhaml\right]\W
      & = \R[\prt, \mhaml]\W +\hW\times[\prt, \mhaml]\W +\prt\hW\times\mhaml\W                                                          \\
      & = -\frac{\la_s}{\la} \frac{\hj \mfg\w}{\la^4} -\frac{1}{\la^4}\Bigl(\vtha_s\z\R\hw +\hj(\mham\hw +\hp\lphi)\Bigr)\times\mham\w.
    \end{split}\end{equation}
  Moreover, by induction, the high-order commutator is
  \begin{align*}
    \left[\prt, (\hj\mhaml)^{L+1}\right]\W
    & = \sum_{\ell=0}^L (\hj\mhaml)^\ell[\prt,\hj\mhaml](\hj\mhaml)^{L-\ell}\W                                                         \\
    & = \sum_{\ell=0}^L (\hj\mhaml)^\ell\Bigl(\R[\prt, \mhaml] + \hW\times [\prt, \mhaml]  + \prt\hW \times\mhaml\Bigr)\W_{2(L-\ell)}.
  \end{align*}
  With this, we compute using the commutator~\eqref{pf:eng=ma-mham-comm}, the decomposition~\eqref{def:w2k1}, and the diagonality~\eqref{eq:mham-r}, which yields the expansion of \eqref{pf:eng=commu-term}:
  \begin{align*}
    \int\W_{2L+2}\cdot\left[\prt,(\hj\mhaml)^{L+1}\right]\W
    & = \sum_{\ell=0}^{L} b_1\int\W_{2L+2}^0\cdot(\R\mhaml)^\ell\R \bigl(G_2\W_{2(L-\ell)}^{0}\bigr) +\QQ_{\E}                       \\
    & = \sum_{\ell=0}^{L} \frac{b_1}{\la^{4L+4}}\int\w_{2L+2}^0\cdot \R^{L+1}\mhamp^\ell\bigl(g_2\mhamp^{L-\ell}\wp\bigr) +\QQ_{\E},
  \end{align*}
  where the quadratic remainder $\QQ_{\E}$ is
  \begin{equation}
    \label{pf:eng=q-expr}
    \begin{aligned}
      \hs[-.5]\QQ_{\E}
      & = \sum_{\ell=0}^L \Biggl\{
      -\biggl(b_1+\frac{\la_s}{\la}\biggr)\int\W_{2L+2}^0\cdot (\R\mhaml)^\ell\R (G_2\W_{2(L-\ell)}^0)                                                                                     \\
      & \qquad +\int\W_{2L+2}^1\cdot(\hj\mhaml)^\ell[\prt, \hj\mhaml]\W_{2(L-\ell)}                                                                                                       \\
      & \qquad +\int\W_{2L+2}^0\cdot(\hj\mhaml)^\ell[\prt,\hj\mhaml]\W_{2(L-\ell)}^1                                                                                                      \\
      & \qquad +\int\W_{2L+2}^0\cdot\bigl((\hj\mhaml)^\ell-(\R\mhaml)^\ell\bigr)[\prt,\hj\mhaml]\W_{2(L-\ell)}^0                                                                          \\
      & \qquad +\int\W_{2L+2}^0 \cdot \hs[-.1] (\R\mhaml)^\ell\Bigl( \hW\hs[-.2]\times[\prt,\mhaml]\W_{2(L-\ell)}^0+ \prt\hW\hs[-.2]\times\mhaml\W_{2(L-\ell)}^0 \Bigr) \hs[-.1]\Biggr\}.
    \end{aligned}
  \end{equation}
  Inserting the estimates for~\eqref{pf:eng=lead-quad-term} and the expansion of~\eqref{pf:eng=commu-term} into the right-hand side of the energy identity, we obtain~\eqref{le:eng=dire-eng}. This completes the proof.
\end{proof}

\subsection{Virial correction}
\label{sub:vr}
In this subsection, we introduce the Virial functional $\vr$. Its time derivative eliminates the leading quadratic term in the direct energy inequality (see Lemma~\ref{le:eng}).

We begin by constructing a spatial weight $q(y)$ that dominates the dynamics of $\vr$. For any smooth function $f$, we define the operators
\begin{equation}
  \label{eq:t-p}
  \T f = \pry f -\frac{1+2\z}{y}f, \quad
  \PP f = \pry f +\frac{1+2\z}{y}f.
\end{equation}

\begin{lemma}[Virial weight]
  \label{le:vr-weig}
  For any $c \in (0, 1)$ and any sufficiently large $R > 1$, there exists a scalar function $q(y) = q_{c,R}(y) \in C^{3,1}(0, +\infty)$ such that:
  \begin{enumerate}[label={\rm (\roman*)}]
    \item $q(y) = \frac{1}{2}y^2$ for all $y \le R$; \label{le:vr-weig=approx-ym}
    \item $q''(y)\ge-c$, $\frac{1}{y}q'(y)\ge-c$ for all $y > 0$; \label{le:vr-weig=convex-ym}
    \item there exist $R_{1}>0$ and $C_1,C_2\in\mbR$ depending on $c, R$ such that $q(y)=C_1 + C_2\log y$ for all $y \ge R_{1}$; \label{le:vr-weig=support-ym}
    \item $(\T\pry)^2 q(y) \le c \jbk[y]^{-2}\jbk[\log y]^{-2}$ for a.e. $y>0$. \label{le:vr-weig=bilapl-ym}

  \end{enumerate}
\end{lemma}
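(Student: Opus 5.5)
The plan is to build $q$ by prescribing $q'$ through a slowly varying profile in the logarithmic variable $t=\log y$. In that variable the model operator $\Delta_{\rm rad}$ is explicit, and the true operator $(\T\pry)^2$ differs from $\Delta_{\rm rad}^2$ only by terms that decay fast enough to be absorbed for $R$ large.

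\emph{Step 1: the inner region $y\le R$.} For $q=\tfrac12y^2$ I compute exactly. We have $\pry q=y$ and $\T y=1-(1+2\z)=-2\z$. Then
\[
  (\T\pry)^2 q=\T(-2\z')=\pry\Bigl(\tfrac{8y}{(1+y^2)^2}\Bigr)-\tfrac{1+2\z}{y}\cdot\tfrac{8y}{(1+y^2)^2},
\]
which is an explicit rational function. It equals $-16$ at $y=0$ and is $O(y^{-4})$ as $y\to\infty$. I expect a direct check to show that it is nonpositive, or at least $\le C\jbk[y]^{-4}$, everywhere. If it is $\le C\jbk[y]^{-4}$, then it is $\le c\jbk[y]^{-2}\jbk[\log y]^{-2}$ for $y\ge y_c$, and on $y\le y_c$ I need the explicit sign. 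Items (i) and (ii) are immediate on $y\le R$, since $q''=q'/y=1$.

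\emph{Step 2: the model construction for $y\ge R$.} I write $y\,q'(y)=u(t)$ with $u_t=2y^2w(t)$, where $w\equiv1$ for $t\le T_0=\log R$. Then $\Delta_{\rm rad}q=2w$ and $\Delta_{\rm rad}^2q=2y^{-2}w_{tt}$. Condition (iv) in the model therefore becomes $w_{tt}\le \tfrac c2 t^{-2}$, with no restriction on negative curvature. For condition (ii), note that $q'/y=u/y^2$ and $q''=2w-u/y^2$, where $u/y^2=2\int_{-\infty}^t e^{2(\tau-t)}w(\tau)\,\dd\tau$ is a weighted average of $w$ over a unit window. So (ii) holds as long as $w\ge-c/4$ and $w$ varies slowly, say $|w_t|\le\ve$, on unit scales.

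I take the following profile for $w$. First, $w=1-\ve\log(t/T_0)$ with $\ve<c/2$, so that $w_{tt}=\ve t^{-2}$. Next, near the zero of this profile at $t_1=T_0e^{1/\ve}$, I continue $C^{1,1}$-smoothly with $w_{tt}\in[0,\ve t^{-2}]$ until $w_t=0$; this overshoots to a small negative minimum of size $O(\ve)$. Finally, I bring $w$ back up to $0$ using only negative curvature, which is allowed. Past that point $w\equiv0$, so $u$ is constant and $q=C_1+C_2\log y$ exactly, giving (iii). The smoothness $q\in C^{3,1}$ corresponds to $w\in C^{1,1}$, and all junctions are arranged $C^{1,1}$. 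The only sign-sensitive junctions are those with positive jumps in $w_{tt}$, and there I bound $w_{tt}$ a.e. by $\ve t^{-2}$.

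\emph{Step 3: from $\Delta_{\rm rad}$ to $\T\pry$.} We have $\T f=\pry f+f/y-\tfrac{2(1+\z)}{y}f$, and $1+\z=O(y^{-2})$. Hence $\T\pry=\Delta_{\rm rad}+E$ with $E f=O(y^{-3}|f|)$, with correspondingly decaying derivatives. Then
\[
  (\T\pry)^2q-\Delta_{\rm rad}^2q=O\Bigl(\sum_{j\le3}y^{-5+j}|\pry^j q'|\Bigr)=O(y^{-4}),
\]
because $|\pry^jq'|\les y^{1-j}$ on the whole construction. For $y\ge R$ and $R$ large, this is $\le \tfrac c2\jbk[y]^{-2}\jbk[\log y]^{-2}$. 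In the final region I can also verify directly that $(\T\pry)^2\log y=O(y^{-6})$.

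\emph{Main obstacle.} I expect the main obstacle to be the transition where $w$ reaches and crosses $0$. There I must keep $w_{tt}\le c\,t^{-2}$ a.e., keep $q''\ge-c$ despite the averaging lag in $u/y^2$, and maintain $C^{3,1}$ regularity, all at once. I would first try the explicit logarithmic profile described above with a quadratic-in-$t$ cap, and check the resulting bounds numerically against $c$. I expect the explicit sign check in Step 1 to be routine.
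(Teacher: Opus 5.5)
Your Steps~1 and~3 are correct, and Step~2's reduction is the same mechanism as the paper's, written in different coordinates. Writing $q(y)=p(\log y)$, the paper's coefficients give $w=\frac12\Delta_{\rm rad}q=2c_3+2c_4(1+x)$ with $w_{xx}=2F$, so (iv) becomes a one-sided curvature budget for $w$, just as in your Step~2. Step~1 is in fact exact: $(\T\pry)^2(\frac12 y^2)=-16(1+y^2)^{-2}\le 0$.

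The gap is the landing step: as you describe it, it cannot be carried out. Your first branch $w=1-\varepsilon\log(t/T_0)$ has $w_t=-\varepsilon/t$ and exactly saturates the budget $w_{tt}=\varepsilon t^{-2}$. If you keep $w_{tt}\le\varepsilon t^{-2}$ after $t_1$, then
\[
  w_t(t)\le-\frac{\varepsilon}{t_1}+\varepsilon\Bigl(\frac1{t_1}-\frac1t\Bigr)=-\frac{\varepsilon}{t}<0
  \quad\mbox{for all } t\ge t_1 .
\]
So $w_t$ never reaches $0$, and $w(t)\le-\varepsilon\log(t/t_1)\to-\infty$. This destroys the lower bound on $w$ needed for (ii) and rules out (iii); there is no small negative minimum.

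Your second claim also fails. Even at a genuine negative minimum, where $w(t_2)<0$ and $w_t(t_2)=0$, negative curvature alone cannot bring $w$ back to $0$. The function would be concave on $[t_2,\infty)$, hence $w\le w(t_2)<0$ there. Climbing back requires positive curvature first, to create upward slope.

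A smaller point: since $w\equiv 1$ before $T_0$, your first branch has a slope jump at $T_0$. You need a concave ramp there, which is harmless because negative curvature is unrestricted.

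The clean fix, which is what the paper does, is to make the value and the slope of $w$ vanish at the same point, so there is no overshoot. After the concave ramp $F=-\alpha_1X^{-2}$ on $(X,2X)$, the paper takes $w_{xx}=2\alpha_2(1+x)^{-2}$ on $(2X,X_1)$, in the variable $x$ (your $t$). Equivalently,
\[
  w_x=-2\alpha_2\Bigl(\frac{1}{1+x}-\frac{1}{1+X_1}\Bigr),
\]
which is a logarithmic branch corrected by a linear term so that $w_x(X_1)=0$. The two landing conditions $w(X_1)=w_x(X_1)=0$ are precisely the moment conditions $\int F=0$ and $\int(1+x)F=\frac12$. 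They are met by tuning $\alpha_2$ and $X_1$, using the auxiliary function $f$ and the intermediate value theorem. The resulting $w$ decreases monotonically from $1$ to $0$, never becomes negative, and $\alpha_2\lesssim\alpha_1\ll c$ keeps the positive curvature within budget.

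If you want to keep an overshoot instead, you must use the slack $\varepsilon<\frac c2$. After $t_1$, allow $w_{tt}$ up to $(\frac c2-\delta)t^{-2}$ rather than $\varepsilon t^{-2}$. Then $w_t$ vanishes at some $t_2\approx t_1 c/(c-2\varepsilon)$, with an undershoot of size $O(\varepsilon^2/c)$ when $\varepsilon\ll c$. Keep positive curvature past the minimum to gain upward slope, then finish with a negative-curvature deceleration tuned to land exactly at $(w,w_t)=(0,0)$. This also works, but it is more delicate than the monotone landing.
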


\begin{proof}
  We seek a weight function of the form
  \[
    q(y) = c_1 + c_2 \log y + c_3 y^2 + c_4 y^2 \log y,
  \]
  where the radial functions $1,\log y,y^2,y^2\log y$ belong to $\ker\Delta_{\mathrm{rad}}^2$, and $(c_1,c_2,c_3,c_4)$ are perturbed coefficients close to $(0,0,\tfrac12,0)$. We first specify these coefficients, and then verify the properties of $q(y)$.

  \step[1, Choice of coefficients] Let $\alpha_1 > 0$ be a small constant specified later. We set $X=\log R$ and consider the auxiliary function:
  \[
    f(x) = \frac 1X \biggl(\frac{1}{2X+1} - \frac{1}{x+1}\biggr)^{-1}\log\left(\frac{1+x}{1+2X}\right) -\frac{3X+2}{2X}.
  \]
  Given $X$ large enough, we have $f(4X+1) < 10$.
  Since $f(x)\to +\infty$ as $x\to+\infty$, there exists $X_{1} > 4X + 1$ such that $f(X_{1}) = \frac{1}{2\alpha_1}$. This implies
  \[
    2\biggl(1+O\biggl(\Bigl|\frac{1}{X}\Bigr| +\Bigl|\frac{X}{X_1}\Bigr| \biggr) \biggr) \log\biggl(\frac{1+X_1}{1+2X}\biggr) -\frac32 + O\biggl(\frac{1}{X}\biggr) = \frac{1}{2\alpha_1},
  \]
  and thus
  \[
    \log\left(\frac{1+X_{1}}{1+2X}\right) \simeq \frac{1}{4\alpha_1},
    \andd
    X_{1} \gg X.
  \]
  Let $\alpha_2$ be such that
  \[
    \frac{\alpha_1}{X} = \alpha_2\biggl(\frac{1}{2X+1} - \frac{1}{X_{1}+1}\biggr).
  \]
  Then from $f(X_1) = \frac{1}{2\alpha_1}$,
  \[
    \alpha_2\log\left(\frac{1+X_{1}}{1+2X}\right)  = \frac{1}{2X} \alpha_1(3X+2)+\frac 12.
  \]
  Note that $\alpha_2\les\alpha_1$. We define the function
  \[
    F(x) = \begin{cases}
      -\alpha_1X^{-2},    & x \in (X, 2X),     \\
      \alpha_2(1+x)^{-2}, & x \in (2X, X_{1}), \\
      0,                  & \mbox{elsewhere}.
    \end{cases}
  \]
  The following hold:
  \begin{align}
    \label{pf:vr-weig=int-f-1}
    \int_{-\infty}^\infty F(x) \dd x
    & = -\frac{\alpha_1}{X} + \alpha_2\biggl(\frac{1}{2X+1}-\frac{1}{X_{1}+1}\biggr) = 0,       \\
    \int_{-\infty}^\infty (1+x)F(x) \dd x
    & = -\frac{\alpha_1}{2X}(3X+2) + \alpha_2\log\left(\frac{1+X_{1}}{1+2X}\right)  = \frac 12.
    \label{pf:vr-weig=int-f-2}
  \end{align}
  Now we set
  \begin{align*}
    & c_1(x) = {-}\int_{-\infty}^x (z-1)e^{2z}F(z) \dd z,  &  &
    c_2(x) = \int_{-\infty}^x e^{2z} F(z) \dd z,                 \\
    & c_3(x) = \frac 12 -\int_{-\infty}^x (1+z)F(z) \dd z, &  &
    c_4(x) = \int_{-\infty}^x F(z) \dd z.
  \end{align*}
  These functions are smooth except at points $X$, $2X$, and $X_{1}$. More precisely, they are Lipschitz continuous. Note that since $F(x)=0$ for $x\le X$,
  \[
    (c_1, c_2, c_3, c_4)(x)
    = \bigl(0, 0, \tfrac12, 0\bigr)
    \for x\le X.
  \]
  Also, by definition, we have
  \begin{equation}
    \label{pf:vr-weig=bound-cj}
    |c_1(x)| \les \frac{\alpha_1e^{2x}}{\jbk[x]},\quad
    |c_2(x)| \les \frac{\alpha_1e^{2x}}{\jbk[x]^2}, \quad
    |c_4(x)| \les \frac{\alpha_1}{\jbk[x]}
    \for x\ge X.
  \end{equation}
  In addition, the function $c_3(x)$ is monotone increasing on $[X, 2X]$, reaching its maximum at $2X$:
  \[
    c_3(2X) = \frac{1}{2} + \frac{\alpha_1(3X+2)}{2X}.
  \]
  It then decreases until $c_3(X_{1}) = 0$ (by~\eqref{pf:vr-weig=int-f-2}).
  We then choose $\alpha_1$ sufficiently small, depending on $c$, so that $0\le c_3\le 1$ for all $x$.

  \step[2, Construction of $q(y)$] Consider the function
  \[
    p(x) = c_1(x)+c_2(x) x + c_3(x) e^{2x} + c_4(x) xe^{2x}.
  \]
  It is Lipschitz continuous. We compute:
  \begin{align*}
    p'(x) & = c_1'(x) + c_2'(x) x + c_3'(x) e^{2x} + c_4'(x) xe^{2x} \\
    & \quad + c_2(x) + c_3(x)2e^{2x} + c_4(x)(1+2x)e^{2x}.
  \end{align*}
  By the definitions of $c_j(x)$, the terms on the first line cancel:
  \[
    -(x-1)e^{2x}F(x) + x e^{2x}F(x) - (1+x)e^{2x}F(x) + xe^{2x}F(x) = 0,
  \]
  and thus
  \[
    p'(x) = c_2(x) + (2c_3(x)+c_4(x))e^{2x} + 2c_4(x)xe^{2x}.
  \]
  This function is Lipschitz. Repeating this procedure,
  \begin{align*}
    & \quad c_2'(x) + (2c_3'(x)+c_4'(x))e^{2x} + 2c_4'(x)xe^{2x} \\
    & = e^{2x}F(x) - (1+2x)e^{2x}F(x) + 2x e^{2x}F(x) = 0,
  \end{align*}
  and hence
  \[
    p''(x) = (4c_3(x)+4c_4(x))e^{2x} + 4c_4(x)x e^{2x}.
  \]
  It is again Lipschitz. Again,
  \[
    (4c_3'(x) + 4c_4'(x))e^{2x} + 4c_4'(x)xe^{2x}
    = -4xe^{2x}F(x) + 4xe^{2x}F(x) = 0,
  \]
  so that
  \[
    p^{(3)}(x) = (8c_3(x) + 12c_4(x))e^{2x} + 8c_4(x)xe^{2x},
  \]
  which is also Lipschitz. Finally,
  \begin{align*}
    p^{(4)}(x) & = (8c_3'(x) + 12c_4'(x))e^{2x} + 8c_4'(x) xe^{2x}    \\
    & \quad + (16c_3(x)+32c_4(x))e^{2x} + 16c_4(x)xe^{2x}.
  \end{align*}
  Set $x = \log y$ and $q(y) = p(x) = p(\log y)$. We now prove that $q(y)$ satisfies the required properties.

  \step[3, Verification of the properties]
  Proof of~\ref{le:vr-weig=approx-ym}. Since $y \le R$ if and only if $x \le X$, we have, by the definition of $p(x)$, that
  \[
    q(y) = p(x) = \frac{e^{2x}}{2} = \frac{y^2}{2}
    \for y\le R.
  \]

  Proof of~\ref{le:vr-weig=convex-ym}. The expression $q(y) = \frac{1}{2}y^2$ already satisfies \ref{le:vr-weig=convex-ym} on $x\le X$. It suffices to consider the case $x\ge X$. Note that
  \begin{align*}
    q''(y)         & = e^{-2x}(p''(x) - p'(x))
    = -c_2(x)e^{-2x} + 2c_3(x) + c_4(x)(3+2x),                                \\
    \frac 1y q'(y) & = e^{-2x}p'(x) = c_2(x)e^{-2x} + 2c_3(x) + c_4(x)(1+2x).
  \end{align*}
  Then, the bounds~\eqref{pf:vr-weig=bound-cj} and the non-negativity of $c_3$ yield~\ref{le:vr-weig=convex-ym}.

  Proof of~\ref{le:vr-weig=support-ym}. Let $R_{1} = e^{X_1}$. Then, the region $y\ge R_{1}$ is just $x\ge X_1$. In view of the definition of $c_3, c_4$ and~\eqref{pf:vr-weig=int-f-1}, \eqref{pf:vr-weig=int-f-2}, we have
  \[
    c_3(x) = c_3(X_1) = 0
    \andd
    c_4(x) = c_4(X_1) = 0
    \for x\ge X_1,
  \]
  and hence
  \[
    q(y) = p(x) = c_1(X_1) + c_2(X_1)x = c_1(X_1) + c_2(X_1) \log y.
  \]
  This yields~\ref{le:vr-weig=support-ym} upon setting $C_1 = c_1(X_1)$ and $C_2 = c_2(X_1)$.

  Proof of~\ref{le:vr-weig=bilapl-ym}. Since $(\T\pry)^2 (\frac12 y^2) = -16(1+y^2)^{-2}$ for $y\le R$, it suffices to consider the case $y\ge R$. An explicit computation gives
  \begin{align*}
    (\T\pry)^2 q(y)
    & = \frac{p^{(4)}(x)-4p^{(3)}(x) + 4p''(x)}{(1+y^2)^2} + O\biggl(\frac{1}{y^6}\sum_{j=1}^{4}|p^{(j)}(x)|\biggr) \\
    & = \frac{4y^2}{(1+y^2)^2}\Bigl(2c_3'(x) + 3c_4'(x) + 2c_4'(x)x\Bigr)+O\biggl(\frac{\log y}{y^4}\biggr),
  \end{align*}
  where the $4^{\rm th}$ derivative $p^{(4)}$ is understood in the distributional sense. Now, by the definitions of $c_j(x)$ and the fact that $\alpha_1\ll c$,
  \[
    2c_3'(x) + 3c_4'(x) + 2c_4'(x)x = F(x) \les \frac{\alpha_1}{\jbk[x]^2} \les \frac{c}{\jbk[\log y]^2}.
  \]
  Thus the leading term is bounded by $c\jbk[y]^{-2}\jbk[\log y]^{-2}$. Note that the remainder $O(\log y/y^4)$ can be absorbed by the leading term, thanks to the largeness of $R$. This yields~\ref{le:vr-weig=bilapl-ym}, and hence completes the proof.
\end{proof}

For the rescaled weight $q_{\la}(r) = q(\frac{r}{\la})$, we define the corresponding Virial functional by
\[
  \vr(t) = b_1 \int\ma_{\la}\W_{2L}^0\cdot \prr (q_{\la})\W_{2L+2}^0.
\]
Then we have the following estimate.

\begin{lemma}[Virial Correction]
  \label{le:vr-est}
  For any $c_0>0$ and any sufficiently large $R>1$, there exists a constant $c>0$ such that the Virial functional $\vr = \vr(t)$, defined with the weight $q=q_{c,R}$ of Lemma~\ref{le:vr-weig}, satisfies
  \begin{enumerate}[label={\rm (\roman*)}]
    \item boundedness of $\vr$:
      \label{le:vr-est=bdd}
      \[
        |\vr| \les \frac{b_1^{1/4}}{\la^{4L+2}}\frac{b_1^{2L+2}}{\lgb^2},
      \]
    \item cancellation of the leading quadratic term:
      \label{le:vr-est=eq}
      \[
        \frac{\dd\vr}{\dd t}
        \le \frac{b_1}{\la^{4L+4}} \biggl\{
        -2\int_{y\le R}|\w_{2L+2}^0|^2 +c_0\int|\w_{2L+2}^0|^2 \biggr\}
        +\sum_{j=1}^{3} \QQ_{\vr,j} +\mF_{\vr},
      \]
      where
      \begin{align}
        \begin{split}
          \label{le:vr-est=qq1}
          \QQ_{\vr,1}
          & = \prt b_1 \int\ma_{\la}\W_{2L}^0\cdot\prr(q_{\la})\W_{2L+2}^0        \\
          & \quad + b_1 \int\ma_{\la}\W_{2L}^0\cdot \prt\prr(q_{\la})\W_{2L+2}^0,
        \end{split}                                                                         \\
        \begin{split}
          \label{le:vr-est=qq2}
          \QQ_{\vr,2}
          & = b_1\int[\prt,\ma_\la]\W_{2L}^0\cdot\prr(q_{\la})\W_{2L+2}^0                      \\
          & \quad +b_1\int\ma_\la[\prt,(\R\mhaml)^L]\Wp\cdot \prr (q_{\la})\W_{2L+2}^0         \\
          & \quad +b_1\int\ma_{\la}\W_{2L}^0\cdot \prr (q_{\la})[\prt,(\R\mhaml)^{L+1}]\W^\pp,
        \end{split} \\
        \begin{split}
          \label{le:vr-est=qq3}
          \QQ_{\vr,3}
          & = b_1\int\ma_\la(\R\mhaml)^L(\R^2\W_2^1)\cdot \prr (q_{\la})\W_{2L+2}^0          \\
          & \quad +b_1\int\ma_{\la}\W_{2L}^0\cdot\prr (q_{\la})(\R\mhaml)^{L+1}(\R^2\W_2^1),
        \end{split}                                                              \\
        \begin{split}
          \label{le:vr-est=f}
          \mF_{\vr}
          & = -\frac{b_1}{\la^2} \int\ma_{\la}(\R\mhaml)^L\mF_{\la}^\pp\cdot \prr (q_{\la})\W_{2L+2}^0        \\
          & \quad -\frac{b_1}{\la^2} \int\ma_{\la}\W_{2L}^0\cdot \prr (q_{\la})(\R\mhaml)^{L+1}\mF_{\la}^\pp.
        \end{split}
      \end{align}
  \end{enumerate}
\end{lemma}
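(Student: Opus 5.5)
The plan has two parts: a direct bound for (i), and for (ii) a differentiation of $\vr$ followed by a virial integration by parts. For (i), rescale to self-similar variables. Since $\prr(q_\la)=\la^{-1}q'(y)$ and $\W_{2k}^0=\la^{-2k}\w_{2k}^0$, we get
\[
  \vr=\frac{b_1}{\la^{4L+2}}\int\ma\w_{2L}^0\cdot q'(y)\,\w_{2L+2}^0 .
\]
By Lemma~\ref{le:vr-weig}, $|q'(y)|\les y$ on the whole half-line (it equals $y$ for $y\le R$, stays $O(y)$ up to $R_1$, and equals $C_2/y$ beyond). Cauchy--Schwarz then gives $|\vr|\les b_1\la^{-4L-2}\|y\,\ma\w_{2L}^0\|_{L^2}\sqrt{\E_{2L+2}}$. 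The weighted factor is an $\dot H^{2L+1}$ quantity with one weight of $y$. I would control it by interpolating between $\E_{2L}$ and $\E_{2L+2}$ (Lemma~\ref{le:interp-wp} and the coercivity lemmas of Appendix~\ref{ap:coer-bnd}), inserting the bootstrap bounds \eqref{pro:boot-e2k-bnd}--\eqref{pro:boot-e2l2-bnd}. This yields $b_1\cdot b_1^{L+1}\lgb^{-1}\cdot b_1^{L-\frac34}$-type powers, up to $C(R,c)$ constants and log losses. Since $b_1\ll1$, these combine to $b_1^{1/4}b_1^{2L+2}\lgb^{-2}$.

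For (ii), differentiate $\vr$ in $t$. The derivatives falling on $b_1$ and on $q_\la$ make up $\QQ_{\vr,1}$. When the derivative falls on the $\W^0$ factors, we commute $\prt$ through $\ma_\la$ and $(\R\mhaml)^{k}$; the commutators give $\QQ_{\vr,2}$. We then use \eqref{eq:w2k-eq}, which gives $\prt\W_{2L}^0=-\W_{2L+2}^0+\ldots$ and $\prt\W_{2L+2}^0=-\W_{2L+4}^0+\ldots$ with $\W_{2L+4}^0=\R\mhaml\W_{2L+2}^0$. The $\W_2^1$ contributions are exactly $\QQ_{\vr,3}$, and the $\mF$ contributions are exactly $\mF_\vr$. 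What remains is the main bilinear term
\[
  -\frac{b_1}{\la^{4L+4}}\Bigl\{\int\ma\w_{2L+2}^0\cdot q'\w_{2L+2}^0+\int\ma\w_{2L}^0\cdot q'\R\mhamp\w_{2L+2}^0\Bigr\}.
\]
In the first integral, write $\ma=-\pry+\z/y$ and integrate by parts. This gives $\frac12\int(q''+q'/y)|\w_{2L+2}^0|^2+\int\frac{\z}{y}q'|\w_{2L+2}^0|^2$.

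In the second integral, set $u=\w_{2L}^0$, so that $\w_{2L+2}^0=\R\mhamp u$ and $\R\mhamp\w_{2L+2}^0=-\mhamp^2u$. The expression then becomes a Morawetz/virial identity for $\mhamp=\mas\ma$ against the weight $q'$. I would expand it as in \citep{merle_blowup_2013} and the first author's two-bubble works, using the factorization $\ma\mas\ma=\ti{\ham}\ma$ with the conjugate operator $\ti{\ham}$. Repeated integration by parts should produce three pieces:
\begin{itemize}
  \item a convexity term in $q''$ and $q'/y$ acting on $|\ma\w_{2L+2}^0|$-type quantities, which reproduces the $\int|\w_{2L+2}^0|^2$ terms;
  \item potential terms, which cancel against the $\z q'/y$ term above;
  \item a bilaplacian-type term $\int(\T\pry)^2q\,|\ma u|^2$, arising because $\T,\PP$ are the operators that appear after commuting $\ma$ with multiplication by $q'$.
\end{itemize}
On $y\le R$, where $q=y^2/2$, the exact virial identity gives $-2\int_{y\le R}|\w_{2L+2}^0|^2$. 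On $y>R$, property \ref{le:vr-weig=convex-ym} bounds the convexity contribution by $c\int|\w_{2L+2}^0|^2$. Property \ref{le:vr-weig=bilapl-ym} bounds the bilaplacian term by $c\int|\ma u|^2\jbk[y]^{-2}\jbk[\log y]^{-2}$. Lemma~\ref{le:coer-ti-ham} controls this by $C\|\ti{\ham}\ma u\|^2=C\|\ma\w_{2L+2}^0\|^2$, and hence by $C\int|\w_{2L+2}^0|^2$. Choosing $c$ small relative to $c_0$ then gives the stated inequality.

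The main obstacle is this last step: organizing the integration by parts for $\int\ma u\cdot q'\,\mhamp^2 u$ so that only the combination $(\T\pry)^2q$ survives with the weight matching the sharp coercivity of $\ti{\ham}$. Two further points need care. The boundary terms at $y=0$ must vanish, which should follow from the Dore expansions (Corollary~\ref{co:dore}). The terms at the $C^{3,1}$ break points $X,2X,X_1$ are handled by interpreting the fourth derivative distributionally; the sign of $F$ is already built into property \ref{le:vr-weig=bilapl-ym}. I would first carry out the computation for smooth $q$ and then pass to the limit by mollification.
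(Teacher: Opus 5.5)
\textbf{Part (i) fails as written.} Replacing $q'$ by $|q'|\les y$ throws away the property that makes the estimate work. By property~(iii) of Lemma~\ref{le:vr-weig}, $q'=C_2/y$ for $y\ge R_1$, so $\|q'\|_{L^\infty}\les R_1^{C}$. What you are left with, $\|y\,\ma\w_{2L}^0\|_{L^2}$, carries a \emph{growing} weight. It is not controlled by $\E_1$, $\E_{2L}$, $\E_{2L+2}$ or any bound in Lemma~\ref{le:interp-wp}, all of which have trivial or decaying weights, and no interpolation between such quantities can produce it. Your exponent count also falls short: $1+(L+1)+(L-\frac34)=2L+\frac54<2L+\frac94$.

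The paper's argument is direct. One has $\|q'\ma\w_{2L}^0\|_{L^2}^2\les R_1^{C}\|\ma\w_{2L}^0\|_{L^2}^2$, and
\[
  \|\ma\w_{2L}^0\|_{L^2}^2=(\w_{2L}^0,\mhamp\w_{2L}^0)\le\sqrt{\E_{2L}\E_{2L+2}} .
\]
Hence $|\vr|\les b_1\la^{-4L-2}R_1^{C}\E_{2L+2}^{3/4}\E_{2L}^{1/4}$. The bootstrap bounds then give $|\vr|\les\la^{-4L-2}b_1^{2L+\frac52}\lgb^{C}$, which leaves a spare factor $b_1^{1/4}$.

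\textbf{Part (ii): the key identity is missing.} You correctly isolate the main bilinear term; it is the paper's $-\frac{b_1}{\la^{4L+4}}\mcJ$ with $\w_{2L+1}^0=\ma\w_{2L}^0$. However, the identity for $\mcJ$, which is the entire content of (ii), is only anticipated, not derived. It takes a few lines and needs neither the splitting $\ma=-\pry+\z/y$ nor the operator $\ti{\ham}$. Write $\w_{2L+2}^0=\R\mas\w_{2L+1}^0$ and use adjointness of $\ma,\mas$ together with the product rules
\[
  \ma(fv)+f\mas v=-(\T f)v,\qquad \mas(fv)+f\ma v=(\PP f)v,\qquad \T+\PP=2\pry .
\]
These give $\mcJ=2\int q''|\w_{2L+2}^0|^2+\int\w_{2L+2}^0\cdot\R(\pry\T q')\w_{2L+1}^0$. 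Now rewrite the last integral as $\int\w_{2L+1}^0\cdot\ma\bigl((\pry\T q')\w_{2L+1}^0\bigr)$ and expand with the first rule. This reproduces the same integral with a minus sign plus $-\int(\T\pry\T q')|\w_{2L+1}^0|^2$, so
\[
  \mcJ=2\int q''|\w_{2L+2}^0|^2-\frac12\int(\T\pry)^2q\,|\w_{2L+1}^0|^2 .
\]
Only $q''$ survives, so of property~(ii) only $q''\ge-c$ is needed.

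\textbf{Misuse of Lemma~\ref{le:coer-ti-ham}.} That lemma bounds $\int|v|^2y^{-2}\jbk[\log y]^{-2}$ by the quadratic form $(\ti{\ham}v,v)=\|\mas v\|_{L^2}^2$. For $v=\w_{2L+1}^0$ this is exactly $\|\w_{2L+2}^0\|_{L^2}^2$. The lemma does not involve $\|\ti{\ham}v\|^2=\|\ma\w_{2L+2}^0\|^2$. Your final inequality $\|\ma\w_{2L+2}^0\|^2\les\int|\w_{2L+2}^0|^2$ is false, since the left side is one derivative higher. Used correctly, properties (i), (ii) and (iv) give
\[
  \mcJ\ge2\int_{y\le R}|\w_{2L+2}^0|^2-2c\bigl(1+\tfrac{1}{4C}\bigr)\int|\w_{2L+2}^0|^2 ,
\]
and choosing $c\le\frac{c_0}{2}\bigl(1+\frac{1}{4C}\bigr)^{-1}$ concludes.
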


\begin{proof}
  Proof of~\ref{le:vr-est=bdd}. By the scaling law~\eqref{eq:scale-w-w} and Cauchy-Schwarz,
  \[
    |\vr|
    \les \frac{b_1}{\la^{4L+2}} \|q' \ma\w_{2L}^0\|_{L^2} \sqrt{\E_{2L+2}}.
  \]
  By Lemma~\itref{le:vr-weig}{le:vr-weig=support-ym}, the scalar function $q'(y)$ has polynomial growth for $y\le R_1$ and decays as $1/y$ for $y\ge R_1$. Hence there exists $C>0$ such that
  \[
    \|q' \ma\w_{2L}^0\|_{L^2}^2
    \les R_{1}^C \|\ma\w_{2L}^0\|_{L^2}^2
    \les R_{1}^C \sqrt{\E_{2L+2}} \sqrt{\E_{2L}}.
  \]
  Using~\eqref{pro:boot-e2k-bnd}, \eqref{pro:boot-e2l2-bnd}, we obtain~\ref{le:vr-est=bdd}:
  \[
    |\vr|
    \les \frac{b_1}{\la^{4L+2}} R_{1}^C \E_{2L+2}^{3/4} \E_{2L}^{1/4}
    \les \frac{b_1^{1/4}}{\la^{4L+2}} \frac{ b_1^{2L+2}}{\lgb^2}.
  \]
  Proof of~\ref{le:vr-est=eq}. First, we recall from the definition of $\w_{2k}^0$ that $\w_{2k+2}^0 = \R\mhamp\w_{2k}^0$ for $k\ge 0$, where we use the notation $\w_0^0 =\wp$. We further define
  \begin{equation}
    \label{pf:vr-est=w2k1}
    \w_{2k+1}^0 = \ma\w_{2k}^0
    \for k\ge 0,
  \end{equation}
  which gives
  \[
    \w_{2k+2}^0 = \R\mas\w_{2k+1}^0.
  \]
  Then, a direct computation using~\eqref{eq:w2k-eq} yields the equation for $\vr$:
  \begin{align*}
    \frac{\dd\vr}{\dd t}
    & = -\frac{b_1}{\la^{4L+4}} \mcJ +\sum_{j=1}^{3} \QQ_{\vr,j} +\mF_\vr,      \\
    \mcJ & = \int\w_{2L+3}^0\cdot q'\w_{2L+2}^0 +\int\w_{2L+1}^0\cdot q'\w_{2L+4}^0.
  \end{align*}
  From the definition~\eqref{eq:t-p}, for any smooth vector $\smw$ with vanishing third component and any smooth scalar $f(y)$, we have
  \begin{equation}
    \label{pf:vr-est=a-t-p}
    \ma(f\smw) +f\mas\smw = -\T f \,\smw, \quad
    \mas (f\smw) +f\ma\smw = \PP f \,\smw.
  \end{equation}
  Next, from the adjointness of $\mas, \ma$ and the first identity of~\eqref{pf:vr-est=a-t-p},
  \begin{align*}
    \int \w_{2L+1}^0  \cdot  q'\w_{2L+4}^0
    & = \int \w_{2L+1}^0  \cdot  q' \R\mas\ma\w_{2L+2} ^0                                                    \\
    & = -\int \w_{2L+2}^0  \cdot  \R\mas\ma(q'\w_{2L+1}^0)                                                   \\
    & = \int\w_{2L+2}^0\cdot \R\mas \left(\T q'\w_{2L+1}^0 +q'\mas\w_{2L+1}^0 \right)                        \\
    & = \int\w_{2L+2}^0\cdot \left(\R\pry \T q'\w_{2L+1}^0 + \T q'\w_{2L+2}^0 + \mas(q'\w_{2L+2}^0) \right).
  \end{align*}
  Thus, by the second identity of~\eqref{pf:vr-est=a-t-p} and the fact that $\T + \PP = 2\pry$,
  \begin{align*}
    \mcJ
    & = \int\w_{2L+2}^0\cdot \left(q'\ma\w_{2L+2}^0 + \R\pry \T q'\w_{2L+1}^0 + \T q'\w_{2L+2}^0 + \mas(q'\w_{2L+2}^0) \right) \\
    & = \int\w_{2L+2}^0\cdot \left(\PP q'\w_{2L+2}^0 +\R\pry \T q'\w_{2L+1}^0 +\T q'\w_{2L+2}^0 \right)                        \\
    & = 2\int q''|\w_{2L+2}^0|^2 + \int \w_{2L+2}^0 \cdot \R\pry \T q'\w_{2L+1}^0.
  \end{align*}
  In view of \eqref{eq:t-p}, the last term above is
  \begin{align*}
    \int \w_{2L+2}^0 \cdot \R\pry \T q'\w_{2L+1}^0
    & =  \int  \w_{2L+1}^0  \cdot  \ma(\pry  \T q'\w_{2L+1}^0)                                \\
    & = \int\w_{2L+1}^0\cdot \left(-\pry \T q'\mas\w_{2L+1}^0 -\T\pry\T q'\w_{2L+1}^0 \right) \\
    & = -\int\ma(\pry \T q' \w_{2L+1}^0)\cdot\w_{2L+1}^0 -\int \T\pry\T q'|\w_{2L+1}^0|^2.
  \end{align*}
  Observe that the first term on the last line is a rewriting of the left-hand side, and thus
  \[
    \int  \w_{2L+2}^0 \cdot \R\pry \T q'\w_{2L+1}^0 = -\frac{1}{2}\int (\T\pry)^2q|\w_{2L+1}^0|^2.
  \]
  Inserting this into $\mcJ$, we choose a constant $R$ large enough, which yields
  \begin{align*}
    \mcJ
    & = 2\int q''|\w_{2L+2}^0|^2 -\frac{1}{2}\int(\T\pry)^2q|\w_{2L+1}^0|^2                                        \\
    & = 2\int_{y\le R}q''|\w_{2L+2}^0|^2 +2\int_{y>R}q''|\w_{2L+2}^0|^2 -\frac{1}{2}\int(\T\pry)^2q|\w_{2L+1}^0|^2.
  \end{align*}
  Finally, we choose a small constant $c>0$ such that $c \le \frac{c_0}{2}\bigl(1+\frac{1}{4C}\bigr)^{-1}$, where $C>0$ is the constant in Lemma~\ref{le:coer-ti-ham}, and then apply Lemma~\ref{le:vr-weig} with $c,R$ given as above. From Lemma~\itref{le:vr-weig}{le:vr-weig=approx-ym}, \ref{le:vr-weig=convex-ym}, \ref{le:vr-weig=bilapl-ym} and Lemma~\ref{le:coer-ti-ham}, we obtain
  \begin{align*}
    \mcJ
    & \ge 2\int_{y\le R}\Bigl(\frac{y^2}{2}\Bigr)''|\w_{2L+2}^0|^2 -2c\int_{y>R}|\w_{2L+2}^0|^2 -\frac{c}{2}\int\frac{|\w_{2L+1}^0|^2}{y^2\jbk[\log y]^2} \\
    & \ge 2\int_{y\le R}|\w_{2L+2}^0|^2 -2c \Bigl(1+\frac{1}{4C}\Bigr) \int|\w_{2L+2}^0|^2                                                                \\
    & \ge 2\int_{y\le R}|\w_{2L+2}^0|^2 -c_0\int|\w_{2L+2}^0|^2.
  \end{align*}
  Inserting this into the equation for $\vr$ yields~\ref{le:vr-est=eq}. This completes the proof.
\end{proof}

\subsection{Morawetz correction}
\label{sub:mora}
In this subsection, we introduce the Morawetz functional $\mora_L(t)$. Its time derivative cancels the commutator term in the direct energy inequality (see Lemma~\ref{le:eng}).

For any $0\le k\le L$, let the sequences $(C^{k}_{\ell})_{\ell=0}^{k}$ be such that
\[
  C^{k}_{\ell} = (-1)^{k-\ell}(\ell+1)
  \for 0\le \ell\le k.
\]
For ease of notation, we also use $C^{k}_{-1} = 0$.
Now we define the following Morawetz functional:
\begin{equation}
  \label{def:mora-func}
  \mora_{k}(t)
  = -C^{k}_{k}b_1\int\W_{2k}^0\cdot\R G_1\ma_{\la}\W_{2k}^0 +\sum_{\ell=0}^{k-1} C^{k}_{\ell} b_1\int\W_{2(2k-\ell)}^0\cdot\R G_2\W_{2\ell}^0.
\end{equation}
Here, the functions $G_1, G_2$ are the rescaled $g_1, g_2$ defined by~\eqref{pf:eng=g2}, and the above summation vanishes when $k=0$. The following holds for $k=L$.

\begin{lemma}[Morawetz Correction]
  \label{le:mora-est}
  Let $\mora = \mora_{L}(t)$. The following hold:
  \begin{enumerate}[label={\rm (\roman*)}]
    \item boundedness:
      \label{le:mora-est=bdd}
      \[
        |\mora| \les \frac{b_1^{1/2}}{\la^{4L+2}}\frac{b_1^{2L+2}}{\lgb^2}.
      \]
    \item cancellation of the commutator term:
      \label{le:mora-est=eq}
      \[
        \frac{\dd\mora}{\dd t}
        \le -\frac{b_1}{\la^{4L+4}} \sum_{\ell=0}^{L}\int\w_{2L+2}^0\cdot \R^{L+1}\mhamp^\ell\bigl(g_2\mhamp^{L-\ell}\wp\bigr) + \sum_{j=1}^{3} \QQ_{\mora,j} + \mF_{\mora},
      \]
      where
      \begin{align}
        \begin{split}
          \label{le:mora-est=qq1}
          \QQ_{\mora,1}
          & = -C^{L}_{L} \prt b_1\int\W_{2L}^0\cdot\R G_1\ma_{\la}\W_{2L}^0                              \\
          & \quad -C^{L}_{L} b_1\int\W_{2L}^0\cdot\R \prt G_1\ma_{\la}\W_{2L}^0                           \\
          & \quad +\sum_{\ell=0}^{L-1} C^{L}_{\ell} \prt b_1\int\W_{2(2L-\ell)}^0\cdot\R G_2\W_{2\ell}^0 \\
          & \quad +\sum_{\ell=0}^{L-1} C^{L}_{\ell} b_1\int\W_{2(2L-\ell)}^0\cdot\R\prt G_2\W_{2\ell}^0,
        \end{split}                                                       \\
        \begin{split}
          \label{le:mora-est=qq2}
          \QQ_{\mora,2}
          & = -C^{L}_{L}b_1\int\W_{2L}^0\cdot\R G_1[\prt, \ma_{\la}]\W_{2L}^0                                             \\
          & \quad -C^{L}_{L} b_1\int[\prt, (\R\mhaml)^{L}]\W^{\pp}\cdot\R G_1\ma_{\la}\W_{2L}^0                           \\
          & \quad -C^{L}_{L} b_1\int\W_{2L}^0\cdot\R G_1\ma_{\la}[\prt, (\R\mhaml)^{L}]\W^{\pp}                           \\
          & \quad +\sum_{\ell=0}^{L-1} C^{L}_{\ell} b_1\int[\prt, (\R\mhaml)^{2L-\ell}]\W^{\pp}\cdot\R G_2\W_{2\ell}^0    \\
          & \quad +\sum_{\ell=0}^{L-1} C^{L}_{\ell} b_1\int\W_{2(2L-\ell)}^0\cdot\R G_2[\prt, (\R\mhaml)^{\ell}]\W^{\pp},
        \end{split} \\
        \begin{split}
          \label{le:mora-est=qq3}
          \QQ_{\mora,3}
          & = -C^{L}_{L} b_1\int(\R\mhaml)^{L}(\R^2\W_2^1)\cdot\R G_1\ma_{\la}\W_{2L}^0                               \\
          & \quad -C^{L}_{L} b_1\int\W_{2L}^0\cdot\R G_1\ma_{\la}(\R\mhaml)^{L}(\R^2\W_2^1)                           \\
          & \quad +\sum_{\ell=0}^{L-1} C^{L}_{\ell} b_1\int(\R\mhaml)^{2L-\ell}(\R^2\W_2^1)\cdot\R G_2\W_{2\ell}^0    \\
          & \quad +\sum_{\ell=0}^{L-1} C^{L}_{\ell} b_1\int\W_{2(2L-\ell)}^0\cdot\R G_2(\R\mhaml)^{\ell}(\R^2\W_2^1),
        \end{split}                                          \\
        \begin{split}
          \label{le:mora-est=f}
          \mF_{\mora}
          & = \frac{C^{L}_{L} b_1}{\la^2} \int(\R\mhaml)^L\mF^{\pp}_{\la}\cdot\R G_1\ma_{\la}\W_{2L}^0                                \\
          & \quad +\frac{C^{L}_{L} b_1}{\la^2} \int\W_{2L}^0\cdot\R G_1\ma_{\la}(\R\mhaml)^L\mF^{\pp}_{\la}                           \\
          & \quad -\sum_{\ell=0}^{L-1} \frac{C^{L}_{\ell} b_1}{\la^2} \int(\R\mhaml)^{2L-\ell}\mF_\la^{\pp}\cdot\R G_2\W_{2\ell}^0    \\
          & \quad -\sum_{\ell=0}^{L-1} \frac{C^{L}_{\ell} b_1}{\la^2} \int\W_{2(2L-\ell)}^0\cdot\R G_2(\R\mhaml)^{\ell}\mF_\la^{\pp}.
        \end{split}
      \end{align}
  \end{enumerate}
\end{lemma}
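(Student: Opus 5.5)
The plan is to follow the template of the Virial correction, Lemma~\ref{le:vr-est}. For boundedness I would rescale and use Cauchy--Schwarz. For the cancellation, I would differentiate $\mora$ in time and use \eqref{eq:w2k-eq}, in the form $\prt\W_{2k}^0=-\W_{2k+2}^0+[\prt,(\R\mhaml)^k]\Wp+(\R\mhaml)^k(\R^2\W_2^1)-\la^{-2}(\R\mhaml)^k\mF^\pp_\la$. The commutator, $\W_2^1$ and $\mF$ pieces, together with the terms where $\prt$ hits $b_1$, $G_1$ or $G_2$, are by construction exactly $\QQ_{\mora,1}$--$\QQ_{\mora,3}$ and $\mF_{\mora}$. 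So the real content is an algebraic identity for the principal terms, those produced by replacing $\prt\W_{2k}^0$ with $-\W_{2k+2}^0$.

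\textbf{Boundedness.} By scaling, $\int\W_{2(2L-\ell)}^0\cdot\R G_2\W_{2\ell}^0=\la^{-(4L+2)}\int\w_{2(2L-\ell)}^0\cdot\R g_2\w_{2\ell}^0$, and the $G_1$ term scales in the same way. I would not estimate $\w_{2(2L-\ell)}^0$ directly, since its order $4L-2\ell$ may exceed $2L+2$. Instead I would integrate the operators $\R\mhamp$ by parts onto $g_2\w_{2\ell}^0$, using self-adjointness of $\mhamp$ and the fact that $\R$ commutes with $\mhamp$, until both factors have order at most $2L+1$. The function $g_2=\vla V/y^2=O(\jbk[y]^{-4})$ is smooth, and the same holds for $g_1$. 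So after the Leibniz expansion every term is a weighted pairing of $\w_{2L+2}^0$ or $\w_{2L+1}^0$ against lower derivatives with decaying weights. Lemma~\ref{le:coer-e-2k2} and interpolation between $\E_{2L}$ and $\E_{2L+2}$ then bound each term by $\E_{2L+2}^{1/2+\theta}\E_{2L}^{1/2-\theta}$, or better. Inserting \eqref{pro:boot-e2k-bnd}--\eqref{pro:boot-e2l2-bnd} gives the factor $b_1\cdot b_1^{-1/2}$ relative to $b_1^{2L+2}\lgb^{-2}$, up to logarithms, which are absorbed since $\E_{2L}$ has a surplus power $b_1^{\frac{2L}{2L-1}(2L-1)}$ against $b_1^{2L}$.

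\textbf{Principal terms.} Writing $\w_{2k+2}^0=\R\mhamp\w_{2k}^0=\R\mas\ma\w_{2k}^0$, the $\ell$-th summand contributes
\[
-C^L_\ell b_1\Bigl(\int\w^0_{2(2L-\ell)+2}\cdot\R g_2\w^0_{2\ell}+\int\w^0_{2(2L-\ell)}\cdot\R g_2\w^0_{2\ell+2}\Bigr).
\]
After moving $\R\mhamp$ across in the second integral, both pieces have the form $\int\w^0_{2L+2+2j}\cdot\R^{\cdot}g_2\w^0_{2L-2j}$ for shifted indices. The alternating weights $C^L_\ell=(-1)^{L-\ell}(\ell+1)$ are designed so that consecutive $\ell$ telescope. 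What survives is the sum $\sum_\ell\int\w_{2L+2}^0\cdot\R^{L+1}\mhamp^\ell(g_2\mhamp^{L-\ell}\wp)$ with coefficient $-1$, plus one boundary term at $\ell=L$ involving $\int\w^0_{2L+2}\cdot\R g_2\w^0_{2L}$. The $G_1$ term is tuned to cancel this boundary term: $\prt$ falling on the two $\W_{2L}^0$ factors produces $\int\w_{2L+2}^0\cdot\R g_1\ma\w_{2L}^0$ and a symmetric partner. Using $\mhamp=\mas\ma$ and the identity \eqref{pf:vr-est=a-t-p} with $f=g_1$, I expect the combination to reduce to $\int\w^0_{2L+2}\cdot\R(\T g_1)\w^0_{2L}$-type terms. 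These should match the leftover $g_2$ term through the relation $g_2=\vla V/y^2$, since $V=2\vla\z+1$ links $g_2$ to derivatives of $g_1$. Any residue with a sign should be nonpositive or of lower order, which explains the ``$\le$'' in the statement.

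\textbf{Main obstacle.} I expect the main obstacle to be this exact index bookkeeping: verifying that the telescoping with $C^k_\ell$ and the $G_1$ boundary correction produce precisely the commutator sum of Lemma~\ref{le:eng} with coefficient $-1$. My first step would be a direct check at $L=0$ and $L=1$. I would then carry out the induction in $k$, relating $\mora_k$ to $\mora_{k-1}$ through the recursion $C^k_\ell=-C^{k-1}_\ell$ for $\ell<k$. Since $\R$ is antisymmetric while $\mhamp$ is symmetric, I would also track the powers of $\R$ carefully to avoid sign errors.
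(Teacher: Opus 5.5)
Your overall route is the paper's. In (i) you rescale and use Cauchy--Schwarz. In (ii) you differentiate in time and isolate the principal terms. You then use the coefficient identity $C^L_{L-\ell}+C^L_{L-1-\ell}=(-1)^\ell$ to turn the $G_2$ contribution into minus the commutator sum plus the boundary term $C^L_L\int\w_{2L+2}^0\cdot\R g_2\w_{2L}^0$, and you cancel that term with the $G_1$ piece. Two of your specific claims are wrong, however, and need to be replaced.

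\textbf{Boundedness.} The logarithms cannot be absorbed by a surplus in $\E_{2L}$, because there is none: \eqref{pro:boot-e2k-bnd} gives $\E_{2L}\le b_1^{(2L-1)\frac{2L}{2L-1}}\lgb^K=b_1^{2L}\lgb^K$. Consequently the virial-type split $b_1\E_{2L+2}^{3/4}\E_{2L}^{1/4}$ you have in mind only yields $b_1^{2L+5/2}\lgb^{K/4-3/2}$, which exceeds the claimed $b_1^{2L+5/2}\lgb^{-2}$.

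What actually works is the decay $g_2=O(\jbk[y]^{-4})$, $g_1=O(\jbk[y]^{-3})$, which places \emph{both} factors at the $\E_{2L+2}$ level with no interpolation. The paper bounds the $G_2$ terms by $\|\w_{2L+2}^0\|_{L^2}\|\mhamp^{L-1-\ell}(g_2\mhamp^{\ell}\wp)\|_{L^2}$ via \eqref{pf:mix-eng=rep-ham-claim}. It bounds the $G_1$ term by the lossy bound for $\jbk[\log y]\jbk[y]^{-2}\w_{2L}^0$ times coercivity for $\ma\w_{2L}^0/(\jbk[y]\jbk[\log y])$. The prefactor $b_1$, against the target $b_1^{1/2}$, then absorbs every $C(M,K)$ and logarithmic loss.

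\textbf{Cancellation.} The term matching the boundary term is not a $\T g_1$ term. By \eqref{pf:vr-est=a-t-p},
\[
  \int\w_{2L+2}^0\cdot\R g_1\w_{2L+1}^0=-\frac12\int \T g_1\,|\w_{2L+1}^0|^2,
  \qquad
  \int\w_{2L}^0\cdot\R g_1\w_{2L+3}^0=-\int\w_{2L+2}^0\cdot\R(\PP g_1)\w_{2L}^0+\int\w_{2L+2}^0\cdot\R g_1\w_{2L+1}^0 .
\]
What you need are two exact facts:
\begin{itemize}
  \item $\PP g_1=g_2$, which is what cancels the boundary term;
  \item $\T g_1=-2\vla\z/y^2\ge0$, which fixes the sign of the residue.
\end{itemize}
Both follow from $\vla\z=\z^2-1$ and $V=2\vla\z+1=2\z^2-1$.

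The leftover is $-C^L_L\int\T g_1|\w_{2L+1}^0|^2$. Its weight is comparable to $\jbk[y]^{-4}$, so it is of the same size as $\E_{2L+2}$, not lower order. The inequality in (ii) therefore holds only because this term is nonpositive, which also uses $C^L_L=L+1>0$.

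Finally, no induction in $k$ is required: the computation is a direct one for each fixed $k$.
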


\begin{proof}
  Proof of~\ref{le:mora-est=bdd}. Using Cauchy-Schwarz together with Lemma~\itref{le:interp-wp}{it:interp-wp=lossy} and Lemma~\ref{le:coer-e-2k2},
  \begin{align*}
    \left|b_1\int\W_{2L}^{0}\cdot\R G_1\ma_{\la}\W_{2L}^0\right|
    & \les \frac{b_1}{\la^{4L+2}} \biggl\|\frac{\jbk[\log y]}{\jbk[y]^2}\w_{2L}^0\biggr\|_{L^2}
    \biggl\|\frac{\ma\w_{2L}^0}{\jbk[y]\jbk[\log y]}\biggr\|_{L^2}                               \\
    & \les \frac{b_1^{1/2}}{\la^{4L+2}}
    \biggl(\E_{2L+2} +\frac{b_1^{2L+2}}{\lgb^2}\biggr).
  \end{align*}
  From the self-adjointness of $\mhamp$ and the estimate~\eqref{pf:mix-eng=rep-ham-claim} with $\ell_1+\ell_2=L-1$, which we prove independently in Proposition~\ref{pro:mix-eng},
  \begin{align*}
    \left|b_1\int\W_{2(2L-j)}^0\cdot\R G_2\W_{2j}^0\right|
    & \les \frac{b_1}{\la^{4L+2}} \|\w_{2L+2}^0\|_{L^2} \bigl\|\mhamp^{L-1-j}\bigl(g_2\mhamp^j\wp\bigr)\bigr\|_{L^2} \\
    & \les \frac{b_1^{1/2}}{\la^{4L+2}}
    \biggl(\E_{2L+2} +\frac{b_1^{2L+2}}{\lgb^2}\biggr).
  \end{align*}
  Inserting these into $\mora$, we obtain~\ref{le:mora-est=bdd}.

  Proof of~\ref{le:mora-est=eq}.
  First, a direct computation using \eqref{eq:w2k-eq} and the definition in~\eqref{pf:vr-est=w2k1} yields the equation for $\mora$:
  \begin{align*}
    \frac{\dd\mora}{\dd t}
    & = \frac{b_1}{\la^{4L+4}} ( \mcJ_1 -\mcJ_2 )
    +\sum_{j=1}^{3}\QQ_{\mora,j} +\mF_{\mora},                                                                                                                \\
    \mcJ_1 & = C^{L}_{L} \left\{ \int\w_{2L+2}^0\cdot\R g_1\w_{2L+1}^0 +\int\w_{2L}^0\cdot\R g_1\w_{2L+3}^0 \right\},                                             \\
    \mcJ_2 & = \sum_{\ell=0}^{L-1}C^{L}_{\ell} \left\{ \int\w_{2(2L+1-\ell)}^0\cdot\R g_2\w_{2\ell}^0 +\int\w_{2(2L-\ell)}^0\cdot\R g_2\w_{2(\ell+1)}^0 \right\}.
  \end{align*}
  We now estimate $\mcJ_1$ and $\mcJ_2$. We begin by computing $\T g_1$ and $\PP g_1$. Using~\eqref{eq:vla-phi-z}, with $y$ in place of $r$, we have
  \[
    \vla \z=\z^2-1 =-4y^2(1+y^2)^{-2},
    \andd
    V=2\vla\z+1,
  \]
  which implies
  \[
    -\T g_1 = -\frac{\vla(\vla\z -2\z -\z^2)}{y^2} = \frac{2\vla\z}{y^2}\le 0, \quad
    \PP g_1 = \frac{\vla(\vla \z +\z^2)}{y^2} = \frac{\vla V}{y^2} = g_2.
  \]
  Then, for the first integral in $\mcJ_1$, from the adjointness of $\mas, \ma$ and the first identity of~\eqref{pf:vr-est=a-t-p},
  \begin{align*}
    \int \w_{2L+2}^0 \cdot \R g_1 \w_{2L+1}^{0}
    & = \int \w_{2L+1}^{0} \cdot \ma(g_1\w_{2L+1}^{0})                                            \\
    & = -\int \w_{2L+1}^{0} \cdot \T g_1\w_{2L+1}^{0} -\int \w_{2L+1}^{0}\cdot g_1\mas\w_{2L+1}^0 \\
    & = -\int \w_{2L+1}^{0}\cdot \T g_1\w_{2L+1}^{0} -\int\R g_1\w_{2L+1}^{0} \cdot \w_{2L+2}^0.
  \end{align*}
  Note that the second term on the last line above is the negative of the left-hand side.
  Moving this term to the left-hand side yields:
  \begin{align*}
    \int \w_{2L+2}^0 \cdot\R g_1 \w_{2L+1}^{0}
    = -\frac{1}{2} \int \w_{2L+1}^{0} \cdot \T g_1\w_{2L+1}^{0}
    = \int\frac{\vla \z}{y^2} |\w_{2L+1}^{0}|^2
    \le 0.
  \end{align*}
  Using this and the second identity of~\eqref{pf:vr-est=a-t-p}, we estimate the second integral in $\mcJ_1$:
  \begin{align*}
    & \quad \int \w_{2L}^0 \cdot \R g_1 \w_{2L+3}^0                                                                            \\
    & = -\int\w_{2L+2}^0 \cdot \R \mas(g_1\w_{2L}^0)                                                                           \\
    & = -\int\w_{2L+2}^0 \cdot \R\bigl(\mas(g_1\w_{2L}^0) +g_1\ma\w_{2L}^{0}\bigr) + \int\w_{2L+2}^0 \cdot \R g_1\w_{2L+1}^{0} \\
    & \le -\int\w_{2L+2}^0 \cdot \R\PP g_1\w_{2L}^0
    = -\int\w_{2L+2}^0\cdot\R g_2\w_{2L}^0.
  \end{align*}
  Therefore
  \[
    \mcJ_1 \le -C^{L}_{L}\int\w_{2L+2}^0\cdot\R g_2\w_{2L}^0
    = -C^{L}_{L}\int\w_{2L+2}^0\cdot\R^{L+1} g_2\mhamp^L\wp.
  \]
  Next, we treat $\mcJ_2$. By the definition of $C^{L}_{\ell}$, we have $C^{L}_{-1} = 0$ and
  \begin{align*}
    C^{L}_{L-\ell} +C^{L}_{L-1-\ell}
    = (-1)^{\ell}(L-\ell+1) +(-1)^{\ell+1}(L-\ell)
    = (-1)^{\ell}.
  \end{align*}
  Using these, the fact that $\R^2 = -\id$ for vectors whose third component vanishes, and also a change of summation index, we compute
  \begin{align*}
    \mcJ_2 & = \sum_{\ell=1}^{L} C^{L}_{L-\ell} \int\w_{2(L+1+\ell)}^{0} \cdot\R g_2\w_{2(L-\ell)}^{0} + \sum_{\ell=0}^{L-1} C^{L}_{L-1-\ell} \int\w_{2(L+1+\ell)}^{0} \cdot\R g_2\w_{2(L-\ell)}^{0} \\
    & = \sum_{\ell=1}^L (C^{L}_{L-\ell} + C^{L}_{L-1-\ell}) \hs[-.2]\int\hs[-.15] \w_{2(L+1+\ell)}^{0} \cdot \R g_2\w_{2(L-\ell)}^{0} + C^{L}_{L-1} \hs[-.2]\int\hs[-.15] \w_{2(L+1)}^{0} \cdot \R g_2\w_{2L}^{0}        \\
    & = \sum_{\ell=1}^{L} \int\mhamp^{\ell} \w_{2L+2}^{0} \cdot \R^{\ell+1} \bigl(g_2\w_{2(L-\ell)}^{0}\bigr) + C^{L}_{L-1} \int\w_{2L+2}^{0} \cdot\R g_2\w_{2L}^{0}                     \\
    & = \sum_{\ell=0}^{L} \int\w_{2L+2}^{0} \cdot \R^{L+1} \mhamp^\ell \bigl(g_2\mhamp^{L-\ell}\wp\bigr) -C^{L}_{L} \int\w_{2L+2}^{0} \cdot \R^{L+1} g_2\mhamp^{L}\wp.
  \end{align*}
  Here, we note that in the third line above, the sign $(-1)^\ell$ is canceled using the identity $(\R^\ell)^\ast=(-1)^\ell\R^\ell$. Finally, inserting the estimates of $\mcJ_1$, $\mcJ_2$ into the equation for $\mora$ yields~\ref{le:mora-est=eq}. This completes the proof.
\end{proof}

\subsection{Mixed energy estimate}
\label{sub:mix-eng}
In this subsection, we establish the mixed energy estimate based on Lemmas~\ref{le:eng}, \ref{le:vr-est} and~\ref{le:mora-est}. This estimate is the core of our analysis. It is crucial in the proof of Proposition~\ref{pro:boot}.

We first record the following elementary estimates, which are used repeatedly below.

\begin{lemma}
  \label{le:est-before-mix}
  The following estimates hold:
  \begin{gather*}
    |\prt b_1| \les \frac{b_1^2}{\la^2}, \quad
    |\prt G_1| \les \frac{b_1}{\la^5\jbk[y]^5}, \quad
    |\prt G_2| \les \frac{b_1}{\la^6\jbk[y]^6}, \\
    |[\prt,\ma_\la]| \les \frac{b_1}{\la^3\jbk[y]^3}, \quad
    |\prr(q_\la)| \les \frac{R_{1}^C}{\la\jbk[y]}, \quad
    |\prt\prr(q_\la)| \les \frac{b_1 R_{1}^C}{\la^3\jbk[y]},
  \end{gather*}
  where $C>0$ is a constant independent of $K, M$.

\end{lemma}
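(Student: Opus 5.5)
The plan is to derive each bound directly from the modulation equations of Lemma~\ref{le:mod-eq}, the explicit profile functions, and the properties of $q$ in Lemma~\ref{le:vr-weig}, using the chain rule $\prt = \la^{-2}\prs$ together with $y=r/\la$.

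\textbf{Parameter bounds.} For $\prt b_1$, write $\prt b_1 = \la^{-2}\prs b_1$. By \eqref{eq:msf-ab}, $\prs b_1 = -B_1 + (\prs b_1 + B_1)$ with $B_1 = (1+\cb)b_1^2 + a_1^2 - b_2$. In the bootstrap regime $|a_k|+|b_k|\les b_1^k$, which one reads off from \eqref{eq:ab-abe} and \eqref{eq:ae-be}: for $s\ge s_0$ large, $b_1\simeq c_1/s$ and $|b_k|,|a_k|\les s^{-k}$. Hence $|B_1|\les b_1^2$. Lemma~\ref{le:mod-eq} then bounds $|\prs b_1+B_1|$ by $b_1^{L+1}\lgb^{-1}$ (this uses $L\ge2$; when $L=1$ one uses \eqref{le:mod-eq-2} together with \eqref{pro:boot-e2l2-bnd}). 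This gives $|\prt b_1|\les b_1^2/\la^2$. In the same way, $|\la_s/\la|\le b_1 + D(s)\les b_1$.

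\textbf{Bounds on $G_1$, $G_2$ and $[\prt,\ma_\la]$.} Since $G_1(t,r)=\la^{-3}g_1(r/\la)$, we have
\[
\prt G_1 = -\frac{\la_s}{\la}\,\frac{1}{\la^{2}}\,\frac{(3+\vla)g_1}{\la^3}(y).
\]
From $\vla\z = -4y^2(1+y^2)^{-2}$ one gets $g_1 = -4y(1+y^2)^{-2}$, so $|(3+\vla)g_1|\les \jbk[y]^{-3}$. Strictly this yields $\jbk[y]^{-3}$ rather than $\jbk[y]^{-5}$. I would either check whether the combination $(3+\vla)g_1$ cancels the leading $y^{-3}$ tail (it does: $g_1\sim -4y^{-3}$ and $(3+\vla)y^{-3}=0$, so the next term is $O(y^{-5})$), or weaken the statement accordingly. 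Using the cancellation, $|\prt G_1|\les b_1\la^{-5}\jbk[y]^{-5}$.

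The same cancellation argument applies to $G_2$: we have $g_2=\vla V/y^2\sim 16y^{-4}$ from the computation in Lemma~\ref{le:upsilon-deg}, and $(4+\vla)y^{-4}=0$, so the remainder is $O(y^{-6})$, giving $|\prt G_2|\les b_1\la^{-6}\jbk[y]^{-6}$. For the commutator, \eqref{pf:eng=ma-mham-comm} gives $[\prt,\ma_\la] = -\frac{\la_s}{\la}G_1$ up to the time-rescaling factor $\la^{-2}$. Consequently $|[\prt,\ma_\la]|\les b_1\la^{-3}\jbk[y]^{-3}$ (as an operator of multiplication).

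\textbf{Bounds on the weight $q$.} We have $\prr(q_\la)=\la^{-1}q'(y)$. On $y\le R_1$, the function $q$ is an explicit combination of $1,\log y,y^2,y^2\log y$ with bounded coefficients $c_j(\log y)$, controlled by \eqref{pf:vr-weig=bound-cj}. Therefore $|q'(y)|\les R_1^{C}\jbk[y]^{-1}$, after trading the polynomial growth $y\le R_1$ for $R_1^{C}$. For $y\ge R_1$, $q'=C_2/y$ with $|C_2|\les R_1^{C}$. For the time derivative,
\[
\prt\prr(q_\la) = -\frac{\la_s}{\la}\,\la^{-3}(1+\vla)q'(y),
\]
and the same bound on $(1+\vla)q'$ holds, since $y q''$ is controlled in the same manner. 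The constants do not depend on $K$ or $M$ because $R$, $R_1$ and $c$ are fixed by Lemma~\ref{le:vr-weig} independently of these parameters.

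\textbf{Main obstacle.} The only delicate point is the decay rates $\jbk[y]^{-5}$ and $\jbk[y]^{-6}$. These rely on the cancellation $(k+\vla)y^{-k}=0$ acting on the leading tails of $g_1$ and $g_2$, which has to be checked explicitly from the formulas for $\z$ and $V$.
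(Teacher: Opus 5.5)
Your proposal is correct and follows the paper's proof essentially step by step: you use $\prt=\la^{-2}\prs$ together with the modulation bounds, the identities $(3+\vla)g_1=-16y(1+y^2)^{-3}$ and $(4+\vla)g_2=64(2y^2-1)(1+y^2)^{-4}$ (which you recover through the cancellation of the leading tails), and the structure of $q$ from Lemma~\ref{le:vr-weig}. There are two harmless slips. First, $[\prt,\oa_\la]=-\frac{\la_s}{\la}G_1$ holds exactly, with no extra factor $\la^{-2}$, and your final bound is the correct one. Second, $R$, and hence $R_1$, is actually chosen depending on $K$ in the proof of Lemma~\ref{le:eng}; this is precisely why the estimates keep the explicit factor $R_1^C$, with only the exponent and the implicit constant uniform in $K,M$.
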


\begin{proof}
  First, by the time-rescaling relation $\prt = \frac{1}{\la^2}\prs$, \eqref{le:mod-eq-1} and \eqref{le:mod-eq-2},
  \[
    |\prt b_1| \les \frac{|\prs b_1|}{\la^2} \les \frac{1}{\la^2}\biggl(b_1^2 +\frac{b_1^{L+1}}{\lgb}\biggr) \les \frac{b_1^2}{\la^2}.
  \]
  Then, we recall the definitions of $g_1$ and $g_2$ from~\eqref{pf:eng=g2}:
  \begin{align*}
    g_1 & = -\frac{4y}{(1+y^2)^2}
    = \begin{cases}
      -4y(1+O(y^2)) & \mbox{for $y\le 1$}, \\
      -\frac{4}{y^3}\bigl(1+O\bigl(\frac{1}{y^2}\bigr)\bigr)
      & \mbox{for $y\ge1$},
    \end{cases}, \\
    g_2 & = \frac{16(y^2-1)}{(1+y^2)^3}
    = \begin{cases}
      -16(1+O(y^2)) & \mbox{for $y\le 1$}, \\
      \frac{16}{y^4}\bigl(1+O\bigl(\frac{1}{y^2}\bigr)\bigr)
      & \mbox{for $y\ge1$}
    \end{cases}.
  \end{align*}
  A direct computation yields
  \[
    \prt G_1 = -\frac{\la_s}{\la^6} g_3
    \andd
    \prt G_2 = -\frac{\la_s}{\la^7} g_4,
  \]
  where
  \begin{align*}
    g_3 & = 3g_1 +\vla g_1 = \frac{-16y}{(1+y^2)^3}
    = \begin{cases}
      -16y(1+O(y^2)) & \mbox{for $y\le 1$}, \\
      -\frac{16}{y^5}\bigl(1+O\bigl(\frac{1}{y^2}\bigr)\bigr)
      & \mbox{for $y\ge1$},
    \end{cases} \\
    g_4 & = 4g_2 +\vla g_2 = \frac{64(2y^2-1)}{(1+y^2)^4}
    = \begin{cases}
      -64(1+O(y^2)) & \mbox{for $y\le 1$}, \\
      \frac{128}{y^6}\bigl(1+O\bigl(\frac{1}{y^2}\bigr)\bigr)
      & \mbox{for $y\ge1$}.
    \end{cases}
  \end{align*}
  This, together with~\eqref{le:mod-eq-1}, yields the bounds for $\prt G_1$ and $\prt G_2$. Next, from the definition of $\ma_{\la}$, we have $[\prt,\ma_\la]\W = \sum_{i=1,2} [\prt,\oa_\la]\W^{(i)}e_i$ for any vector $\W$, with
  \begin{align*}
    [\prt,\oa_\la]
    = \prt\biggl(\frac{\z_{\la}(r)}{r}\biggr)
    = O\Bigl(\frac{\la_s}{\la^4}\Bigr)\,
    \frac{\vla\z}{y}.
  \end{align*}
  The asymptotics of $\z$ yield the fourth estimate. Finally, note that
  \[
    |\prr(q_\la)| = \frac{|q'(y)|}{\la}
    \andd
    |\prt\prr(q_{\la})| = \Bigl|\frac{\la_s}{\la^4}\Bigr|\,|q'+\vla(q')|(y).
  \]
  From Lemma~\ref{le:vr-weig}, the function $q'(y)$ has polynomial growth for $y\le R_1$ and decays like $1/y$ for $y>R_1$. This yields the last two estimates, and hence completes the proof.
\end{proof}

\begin{proposition}[Mixed energy estimate]
  \label{pro:mix-eng}
  For any $c_0\in(0,1)$, there exists a constant $C > 1$ independent of $K,M$, such that
  \begin{align*}
    & \frac{\dd}{\dd t} \biggl\{\frac{1}{\la^{4L+2}}\biggl[\E_{2L+2} +O\biggl(\frac{b_1^{2L+2}}{\lgb^2}\biggr)\biggr] + \vr +2\mora \biggr\} \\
    & \le
    \frac{b_1}{\la^{4L+4}}\Biggl\{ \biggl(c_0 +\frac{C}{\sqrt{\log M}}\biggr)\E_{2L+2} +C(M)\frac{b_1^{2L+2}}{\lgb^2}\Biggr\}.
  \end{align*}
\end{proposition}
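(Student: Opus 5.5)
The plan is to add the three inequalities of Lemma~\ref{le:eng}, Lemma~\ref{le:vr-est}\ref{le:vr-est=eq} and twice Lemma~\ref{le:mora-est}\ref{le:mora-est=eq}, and then bound every remainder term. The leading terms cancel by design. Lemma~\ref{le:eng} contributes $\frac{2b_1}{\la^{4L+4}}\int_{y\le R}|\w_{2L+2}^0|^2$, and the virial inequality contributes $-\frac{2b_1}{\la^{4L+4}}\int_{y\le R}|\w_{2L+2}^0|^2$. This cancellation requires the same $R$ in both, so I fix $R$ first as in Lemma~\ref{le:eng} and then take $q=q_{c,R}$ with $c$ chosen from $c_0$. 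The commutator sum $\frac{2b_1}{\la^{4L+4}}\sum_j\int\w_{2L+2}^0\cdot\R^{L+1}\mhamp^j(g_2\mhamp^{L-j}\wp)$ is cancelled exactly by $2\times$ the Morawetz leading term. What remains of the leading part is $\frac{b_1}{\la^{4L+4}}c_0\E_{2L+2}$, since $\int|\w_{2L+2}^0|^2=\E_{2L+2}$ by \eqref{eq:w2k0-e2k}.

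Next I treat the quadratic remainders $\QQ_\E$, $\QQ_{\vr,j}$ and $\QQ_{\mora,j}$, aiming for the bound $\frac{b_1}{\la^{4L+4}}\bigl(\dbs\E_{2L+2}+C(M)b_1^{2L+2}\lgb^{-2}\bigr)$.

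\begin{itemize}
\item \emph{Terms $\QQ_{\vr,1},\QQ_{\vr,2},\QQ_{\mora,1},\QQ_{\mora,2}$.} These involve $\prt b_1$, $\prt G_i$, $\prt\prr q_\la$, $[\prt,\ma_\la]$ and $[\prt,(\R\mhaml)^k]$. Lemma~\ref{le:est-before-mix} shows each carries an extra factor $b_1$ together with decaying weights $\jbk[y]^{-k}$. Weighted Hardy/coercivity (Lemma~\ref{le:coer-e-2k2}) and the interpolation bounds of Appendix~\ref{ap:interp-bnd} then give $b_1\cdot\frac{b_1}{\la^{4L+4}}(\E_{2L+2}+b_1^{2L+2}\lgb^{-2})$, with $R_1^C$ absorbed by $b_1$.
\item \emph{Terms $\QQ_{\vr,3},\QQ_{\mora,3}$ and the nonleading parts of $\QQ_\E$.} These involve $\W_2^1$, $\W^1_{2L+2}$ and $\Omega_+$ terms. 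I follow the proof of Lemma~\ref{le:w2l2-e}: Dore expansions (Lemma~\ref{le:preserv}, Corollary~\ref{co:dore}) on $y\le1$, and Lemma~\ref{le:omega-pw-bnd} with the four-case analysis on $y\ge1$. This gives factors $\dbs$ or $b_1^{1/(4L)}$.
\item \emph{The $\prt\hW$ term in $\QQ_\E$.} Here I substitute \eqref{eq:vect-form} and use that $\wzt$ is small in weighted $L^\infty$.
\item \emph{The mixed products inside $\QQ_\E$.} The $(b_1+\la_s/\la)$ term is controlled by Lemma~\ref{le:mod-eq}. I also need the claim \eqref{pf:mix-eng=rep-ham-claim}, namely $\|\mhamp^{\ell_1}(g_2\mhamp^{\ell_2}\wp)\|_{L^2}^2\lesssim C(M)(\E_{2L+2}+b_1^{2L+2}\lgb^{-2})$, to be proved by commuting $\ham$ through $g_2$ via Leibniz and using weighted coercivity.
\end{itemize}

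The term I expect to be the main obstacle is the source contribution. It consists of $-\frac{2}{\la^{4L+4}}\int\w_{2L+2}\cdot(\hj\mham)^{L+1}\mF$ and $\mF_\vr,\mF_\mora$, with $\mF=\psizt+\modt+\vtha_s\z\R\w+\w\times(\mham\wzt+\hp\lphi)$. I would split $\mF$ as follows.

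\begin{itemize}
\item \emph{The $\psizt$ part.} By Cauchy--Schwarz and Lemma~\ref{le:local-app-sol}\ref{it:local-app-sol=sob}, it gives $\sqrt{\E_{2L+2}}\,b_1^{L+2}\lgb^{-1}\le b_1(c_0\E_{2L+2}+C b_1^{2L+2}\lgb^{-2})$.
\item \emph{The $\modt$ part.} This is the delicate one. The components with $k\le L-1$ have size $\dbs b_1^{L+1}\lgb^{-1}$ by \eqref{le:mod-eq-1}. For $k=L$, the leading piece $(\prs a_L+A_L)\phiat[L]$ satisfies $(\R\mham)^{L+1}\phia[L]=0$ on $y\le S_1$, so only the cutoff region $y\sim S_1$ contributes. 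That region yields $\|(\R\mham)^{L+1}\phiat[L]\|_{L^2}\lesssim b_1\lgb^{-1}$-type smallness. Multiplying by \eqref{le:mod-eq-2} then produces exactly the term $\frac{C}{\sqrt{\log M}}\E_{2L+2}$ after Young's inequality.
\end{itemize}

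The same structure handles $\mF_\vr$ and $\mF_\mora$, which carry an extra $b_1$ and weights. The remaining linear-in-$\w$ parts of $\mF$ are absorbed as quadratic remainders.

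Finally, I collect all the bounds and use the bootstrap bound \eqref{pro:boot-e2l2-bnd} so that $K$ is absorbed by $\dbs$. This yields the stated inequality with $C$ independent of $K,M$.
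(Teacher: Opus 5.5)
Your overall architecture is the paper's. Summing Lemma~\ref{le:eng}, Lemma~\ref{le:vr-est} and twice Lemma~\ref{le:mora-est} cancels the $\int_{y\le R}$ terms and the $g_2$-commutator sums and leaves $c_0\E_{2L+2}$. However, two of the remainder bounds you treat as routine fail as described.

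\textbf{First gap: $\QQ_{\vr,3}$ loses a derivative.} After rescaling it equals
\[
\frac{b_1}{\la^{4L+4}}\Bigl\{\int\R\ma\mhamp^L\w_2^1\cdot q'\mhamp^{L+1}\wp+\int\R\ma\mhamp^L\wp\cdot q'\mhamp^{L+1}\w_2^1\Bigr\}.
\]
Since $\w_2^1=\R\mham(\gamma e_3)+\hw\times\mham\w$ already carries two derivatives of $\w$, the factor $\ma\mhamp^L\w_2^1$ carries $2L+3$ of them, and $\mhamp^{L+1}\w_2^1$ carries $2L+4$. The total order is odd, so redistributing derivatives between the two radiation factors cannot bring both to order $\le 2L+2$. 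The method of Lemma~\ref{le:w2l2-e}, which you propose to follow, stops exactly at order $2L+2$: its top case $\xi_{n+1}=2L+2$ is closed by $\E_{2L+2}$, and nothing in the bootstrap controls order $2L+3$.

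The paper needs a structural argument here. It first reduces the second integral to the first via \eqref{pf:vr-est=a-t-p}. It then uses the constraint \eqref{eq:constra} to express the top derivatives of $\gamma$ through those of $\wp$, with coefficient $-\hw^\pp/(1+\wh\gamma)$. The top-order part becomes
\[
\int\frac{q'}{1+\wh\gamma}(\hw^\pp\cdot\vv)(\hw^\pp\cdot\pry\vv)-\int q'\wh\gamma\,\ma\vv\cdot\vv,\qquad \vv=\mhamp^{L+1}\wp .
\]
Both are of the form $\int g\,v\,\pry v$, so one integration by parts moves the extra derivative onto the coefficients and yields $\dbs\E_{2L+2}$.

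\textbf{Second gap: terms linear in $\w$ with coefficients of size $b_1$.} These are $\w\times(\mham\wzt+\hp\lphi)$ in $\mF$, which you call quadratic remainders, and $\prt\hW\times\mhaml\W$ in $\QQ_\E$, which you propose to control by smallness of $\wzt$. Both $\mham\wzt$ and $\hp\lphi$ have size $b_1\lphi$ at $y\sim1$, so neither is small on its own.

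For instance, the $e_3$-component of $\w\times\hp\lphi$ is $\lphi(\frac{\la_s}{\la}\alpha-\vtha_s\beta)\approx-b_1\lphi\alpha$. After $(\R\mham)^{L+1}$ and pairing with $\w_{2L+2}^0$, it produces a term of the form $b_1\int\ham^{L+1}\alpha\,\ham^L\bigl(\zt\pry(\lphi\alpha)\bigr)$. This is $b_1$ times an unsigned weighted quadratic form in $2L+1$ derivatives of $\alpha$. It is bounded by, but not small compared with, $\E_{2L+2}$, so it cannot be absorbed into $(c_0+C/\sqrt{\log M})\E_{2L+2}$.

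What rescues these terms is the cancellation built into the profile. With $\wzto=\chi_{S_1}(a_1\phia[1]+b_1\phib[1])$, one has on $y\le S_1$
\[
\mham\wzto+\hp\lphi=(\vtha_s+a_1)\lphi e_1+\Bigl(\frac{\la_s}{\la}+b_1\Bigr)\lphi e_2+a_1\zt\Bigl(\pry+\frac{\z}{y}\Bigr)(\chi_{S_1}T_1)e_3 .
\]
Its coefficients are $O(D(s))$ and $O(|a_1|)$, both $\ll b_1$; the latter holds because $a^e\equiv0$. Moreover, $\mham\wztt$ has parameter degree at least two, and on $y\ge S_1$ one gains from $\lphi\lesssim S_1^{-1}$. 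The same decomposition must be inserted into $\prt\hW$, which by \eqref{eq:vect-form} contains $\hj(\mham\hw+\hp\lphi)$.

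\textbf{A point on constants.} In $\mF_\mora$ only $\mhamp^L$ acts on $\modt$, and $\mhamp^L\phia[L]=\lphi\R^{L-1}e_1\neq0$, so there is no cutoff gain there. This is where the loss $\frac{C}{\sqrt{\log M}}\E_{2L+2}$ is unavoidable. To keep $C$ independent of $M$, you must bound $\|\ma\w_{2L}^0/(\jbk[y]\jbk[\log y])\|_{L^2}$ through Lemma~\ref{le:coer-ti-ham}, not through the $M$-dependent Lemma~\ref{le:coer-e-2k2}.
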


\begin{proof}
  A direct application of Lemma~\ref{le:eng}, Lemma~\itref{le:vr-est}{le:vr-est=eq}, and Lemma~\itref{le:mora-est}{le:mora-est=eq} gives
  \begin{equation}\begin{split}
      \label{pf:mix-eng=eng-expand}
      & \quad \frac{\dd}{\dd t}\biggl\{\frac{1}{\la^{4L+2}}\biggl[\E_{2L+2} +O\biggl(\frac{b_1^{2L+2}}{\lgb^2}\biggr)\biggr] + \vr +2\mora \biggr\} \\
      & \le
      \frac{b_1}{\la^{4L+4}} \Biggl\{ c_0 \int|\w_{2L+2}^0|^2 +\frac{Cb_1^{2L+2}}{\lgb^2} \Biggr\}
      +\sum_{j=1}^3 (\QQ_{\vr,j} +2\QQ_{\mora,j}) +2\QQ_{\E}                                                                                       \\
      & \quad +\mF_{\vr} +2\mF_{\mora} -\frac{2}{\la^{4L+4}}\int\w_{2L+2}\cdot(\hj\mham)^{L+1}\mF.
    \end{split}\end{equation}
  Here, the $\QQ$ terms are the small quadratic errors, and the $\mF$ terms are the errors coming from the force $\mF$. From Lemma~\itref{le:vr-est}{le:vr-est=bdd}, Lemma~\itref{le:mora-est}{le:mora-est=bdd}, and the smallness of $b_1$, we have
  \[
    |\vr| +|\mora|
    \les \frac{b_1^{1/4}}{\la^{4L+2}}\frac{b_1^{2L+2}}{\lgb^2}
    \les \frac{b_1^{\frac{1}{4L}}}{\la^{4L+2}}\frac{b_1^{2L+2}}{\lgb^2}.
  \]
  This gives the left-hand side of the mixed energy inequality. For the right-hand side, we recall from \eqref{eq:w2k0-e2k} that the integral of the squared $\w_{2L+2}^0$ gives $\E_{2L+2}$ directly. Thus it suffices to show
  \begin{align*}
    & \sum_{j=1}^3 \bigl(|\QQ_{\vr,j}| +|\QQ_{\mora,j}|\bigr) +|\QQ_{\E}|
    + |\mF_{\vr}| +|\mF_{\mora}|                                                      \\
    & \qquad +\left|\frac{1}{\la^{4L+4}}\int\w_{2L+2}\cdot(\hj\mham)^{L+1}\mF\right|
    \les \frac{b_1}{\la^{4L+4}} \biggl(\frac{\E_{2L+2}}{\sqrt{\log M}} +C(M)\frac{b_1^{2L+2}}{\lgb^2}\biggr),
  \end{align*}
  where the implicit constant is independent of $K,M$, while $C(M)$ may depend on $M$ but is independent of $K$. The bootstrap bounds~\eqref{pro:boot-e2k-bnd}, \eqref{pro:boot-e2l2-bnd} are extensively used below; we do not quote them explicitly.

  \step[1, Control of $\QQ_{\vr,1}, \QQ_{\mora,1}$]
  Recall the definitions~\eqref{le:vr-est=qq1} and \eqref{le:mora-est=qq1}. We first estimate $\QQ_{\vr,1}$. Using the bound for $\prr(q_{\la})$ in Lemma~\ref{le:est-before-mix} and Lemma~\itref{le:vr-est}{le:vr-est=bdd}, we obtain
  \[
    \left|\prt b_1\int\ma_{\la}\W_{2L}^0\cdot\prr(q_\la)\W_{2L+2}^0\right|
    \les \frac{b_1}{\la^2}|\vr|
    \les \frac{b_1\dbs}{\la^{4L+4}}\Eplus.
  \]
  By Lemma~\ref{le:est-before-mix} and Lemma~\itref{le:interp-wp}{it:interp-wp=sobolev},
  \[
    \left|b_1\int\ma_{\la}W_{2L}^0\cdot\prt\prr(q_\la)\W_{2L+2}^0\right|
    \les \frac{b_1^2 R_{1}^{C}}{\la^{4L+4}} \biggl\|\frac{\ma\w_{2L}^0}{\jbk[y]^2}\biggr\|_{L^2} \|\w_{2L+2}^0\|_{L^2} \les \frac{b_1\dbs}{\la^{4L+4}}\E_{2L+2}.
  \]
  We turn to $\QQ_{\mora,1}$. The first and the third terms in $\QQ_{\mora,1}$ are estimated as above by the smallness of $\prt b_1$ from Lemma~\ref{le:est-before-mix} and Lemma~\itref{le:mora-est}{le:mora-est=bdd}. The second term is controlled by the bound for $\prt G_1$ in Lemma~\ref{le:est-before-mix} together with Lemma~\itref{le:interp-wp}{it:interp-wp=sobolev}. For the last term, from the self-adjointness of $\mhamp$ and the bound for $\prt G_2$ in Lemma~\ref{le:est-before-mix},
  \begin{align*}
    & \left| b_1\int\W_{2(2L-\ell)}^0\cdot\R\prt G_2\W_{2\ell}^0 \right|
    \les \frac{b_1^2}{\la^{4L+4}} \sqrt{\E_{2L+2}}\,
    \Bigl\|\mhamp^{L-\ell-1}\bigl(g_4 \mhamp^\ell\wp\bigr)\Bigr\|_{L^2}.
  \end{align*}
  Here, the function $g_4$ is given in the proof of Lemma~\ref{le:est-before-mix}. By Corollary~\itref{co:dore}{it:dore=w}, $\wp\in\mcD_{L+1}$ with factor $b_1^{L+1}\lgb^{-1}$. Since $g_4$ admits an even series expansion for $y\le1$, by Lemma~\itref{le:preserv}{it:preserv=op} and Remark~\ref{re:preserv}, we have $\mhamp^{L-\ell-1}(g_4\mhamp^\ell\wp)\in\mcD_2$ with the same factor. We bound its remainder by $C(M,K)b_1^{L+1}\lgb^{-1}y^3\jbk[\log y]$, and hence
  \[
    |\mhamp^{L-\ell-1}(g_4\mhamp^\ell\wp)|
    \les C(M,K)\frac{b_1^{L+1}}{\lgb}y.
  \]
  We thus obtain the squared $L^2$ bound
  \[
    \Bigl\|\mhamp^{L-\ell-1}\bigl(g_4\mhamp^\ell\wp\bigr)\Bigr\|_{L^2(y\le 1)}^2
    \les C(M,K)\frac{b_1^{2L+2}}{\lgb^2}.
  \]
  For $y\ge1$, applying Lemma~\ref{le:omega-pw-bnd} twice and using Lemma~\itref{le:interp-wp}{it:interp-wp=sobolev},
  \[
    \Bigl\|\mhamp^{L-\ell-1}\bigl(g_4\mhamp^\ell\wp\bigr)\Bigr\|_{L^2(y\ge1)}^2
    \les \sum_{\ell=0}^{2L-2}\int_{y\ge1}\frac{|\pry^{\ell}\wp|^2}{y^{2(2L+4-\ell)}}
    \les C(M)\E_{2L+2}.
  \]
  Therefore,
  \begin{equation}
    \label{pf:mix-eng=vm-qq1-est}
    |\QQ_{\vr,1}| + |\QQ_{\mora,1}|
    \les \frac{b_1\dbs}{\la^{4L+4}} \Eplus.
  \end{equation}

  \step[2, Control of $\QQ_{\vr,2}, \QQ_{\mora,2}$]
  Recall the definitions~\eqref{le:vr-est=qq2}, \eqref{le:mora-est=qq2}. We first treat the terms involving $[\prt,\ma_{\la}]$. From Lemma~\ref{le:est-before-mix} and Lemma~\itref{le:interp-wp}{it:interp-wp=sobolev}, the one in $\QQ_{\vr,2}$ is bounded by
  \[
    \left|b_1\int[\prt,\ma_\la]\W_{2L}^0\cdot\prr(q_\la)\W_{2L+2}^0\right|
    \les \frac{R_{1}^C b_1^2}{\la^{4L+4}} \biggl\|\frac{\w_{2L}^0}{\jbk[y]^3}\biggr\|_{L^2} \|\w_{2L+2}^0\|_{L^2}
    \les \frac{b_1\dbs }{\la^{4L+4}}\E_{2L+2}.
  \]
  The one in $\QQ_{\mora,2}$ vanishes, due to the skew-symmetry of $\R$,
  \[
    b_1\int\W_{2L}^0\cdot\R G_1[\prt, \ma_{\la}]\W_{2L}^0
    = O(b_1^2)\int G_1^2 (\W_{2L}^0\cdot\R\W_{2L}^0)
    = 0.
  \]
  We now estimate the high-order commutator terms involving $[\prt, (\R\mhaml)^k]$. By \eqref{pf:eng=ma-mham-comm}, an induction argument yields
  \[
    \bigl[\prt, (\R\mhaml)^k\bigr]\Wp
    = \frac{O(b_1)}{\la^{2k+2}} \sum_{\ell_1+\ell_2=k-1} \R^{k}\mhamp^{\ell_1}\bigl(g_2\mhamp^{\ell_2}\wp\bigr)
    \for k\ge 1.
  \]
  We claim that there exists $C=C(M,K)>0$ such that
  \begin{equation}
    \label{pf:mix-eng=rep-ham-claim}
    \begin{split}
      \sum_{\ell_1+\ell_2 =L-1,L} \Bigl\|\mhamp^{\ell_1}\bigl(g_2\mhamp^{\ell_2}\wp\bigr) \Bigr\|_{L^2}^2
      & + \sum_{\ell_1+\ell_2=L-1} \Bigl\|\ma\mhamp^{\ell_1}\bigl(g_2\mhamp^{\ell_2}\wp\bigr) \Bigr\|_{L^2}^2  \\
      & + \sum_{\substack{\ell_1+\ell_2+\ell_3                                                        \\=L-2}} \Bigl\| \mhamp^{\ell_1} \bigl( g_2\mhamp^{\ell_2} \bigl( g_2\mhamp^{\ell_3}\wp \bigr) \bigr) \Bigr\|_{L^2}^2 \\
      & \les C(M,K) \Eplus.
    \end{split}
  \end{equation}
  Here, the first summation is taken as either $\ell_1+\ell_2=L$ or $\ell_1+\ell_2=L-1$.
  Then by \eqref{pf:mix-eng=rep-ham-claim} and Lemma~\ref{le:est-before-mix}, the second term of $\QQ_{\vr,2}$ is bounded by
  \begin{align*}
    & \left| b_1\int\ma_\la\bigl[\prt, (\R\mhaml)^L\bigr]\W^\pp\cdot\prr(q_\la)\W_{2L+2}^0 \right|                                                     \\
    & \les \frac{b_1^2}{\la^{4L+4}} \sum_{\ell_1+\ell_2=L-1} \Bigl\|\ma\mhamp^{\ell_1}\bigl(g_2\mhamp^{\ell_2}\wp\bigr)\Bigr\|_{L^2} \|q'\w_{2L+2}^0\|_{L^2} \\
    & \les \frac{b_1\dbs}{\la^{4L+4}} \Eplus.
  \end{align*}
  By \eqref{pf:mix-eng=rep-ham-claim} and Lemma~\itref{le:interp-wp}{it:interp-wp=sobolev}, similar estimates hold for the third term in $\QQ_{\vr,2}$ and those in $\QQ_{\mora,2}$. The fourth term in $\QQ_{\mora,2}$ requires an application of the self-adjointness of $\mhamp$, which splits the bound into two controllable ones:
  \begin{align*}
    & \left| b_1\int\bigl[\prt, (\R\mhaml)^{2L-\ell}\bigr]\W^{\pp}\cdot\R G_2\W_{2\ell}^0 \right|                                                                                                     \\
    & \les \frac{b_1^2}{\la^{4L+4}} \sum_{k=L+1}^{2L-\ell-1} \bigl\|\mhamp^{L+1}\wp\bigr\|_{L^2} \Bigl\|\mhamp^{k-(L+1)}\bigl( g_2\mhamp^{2L-\ell-1-k} \bigl(g_2\mhamp^{\ell}\wp\bigr) \bigr) \Bigr\|_{L^2} \\
    & \quad +\frac{b_1^2}{\la^{4L+4}} \sum_{k=0}^{L} \,\Bigl\|\mhamp^{L-k} \bigl( g_2\mhamp^{k}\wp \bigr) \Bigr\|_{L^2}  \Bigl\|\mhamp^{L-1-\ell} \bigl( g_2\mhamp^{\ell}\wp \bigr) \Bigr\|_{L^2}.
  \end{align*}
  The right-hand side is then bounded using \eqref{pf:mix-eng=rep-ham-claim} as before. Therefore, we obtain
  \begin{equation}
    \label{pf:mix-eng=vm-qq2-est}
    |\QQ_{\vr,2}| + |\QQ_{\mora,2}|
    \les \frac{b_1\dbs }{\la^{4L+4}} \biggl(\E_{2L+2}+\frac{b_1^{2L+2}}{\lgb^2}\biggr).
  \end{equation}
  \emph{Proof of the Claim~\eqref{pf:mix-eng=rep-ham-claim}}. We treat the first summation in~\eqref{pf:mix-eng=rep-ham-claim} with $\ell_1+\ell_2=L$. For $y\le1$, by Corollary~\itref{co:dore}{it:dore=w} and Lemma~\itref{le:preserv}{it:preserv=op}, we have $\mhamp^{\ell_1}(g_2\mhamp^{\ell_2}\wp)\in\mcD_1$ with factor $b_1^{L+1}\lgb^{-1}$, since $g_2$ admits an even series expansion. Hence
  \[
    \Bigl\| \mhamp^{\ell_1} \bigl( g_2\mhamp^{\ell_2} \wp \bigr) \Bigr\|_{L^2(y\le 1)}^2
    \le C(M,K)\frac{b_1^{2L+2}}{\lgb^2}.
  \]
  For $y \ge 1$, from the definition of $\Omega^K_{\rho}$,
  \[
    \bigl|\mhamp^{\ell_1}\bigl( g_2\mhamp^{\ell_2}\wp \bigr)\bigr|
    = \bigl|\Omega^{\ell_1}_{(0,\dots,0)} \bigl(g_2\,\Omega^{\ell_2}_{(0,\dots,0)}\wp\bigr)\bigr|.
  \]
  We apply Lemma~\ref{le:omega-pw-bnd} twice together with Leibniz, the asymptotics of $g_2$ and Lemma~\itref{le:interp-wp}{it:interp-wp=sobolev},
  \[
    \Bigl\| \mhamp^{\ell_1} \bigl( g_2\mhamp^{\ell_2}\wp\bigr) \Bigr\|_{L^2(y\ge1)}^2
    \les \sum_{\xi_1+\xi_2 = 2L} \biggl\|\frac{\pry^{\xi_1}\wp}{y^{\xi_2+4}}\biggr\|^2_{L^2(y\ge1)}
    \les C(M)\E_{2L+2}.
  \]
  We have the same squared $L^2$ bound for the remaining summations: by multiplying by functions admitting even series expansions, we retain the Dore order; for the extra $\ma$, we use only the available $1^{\rm st}$ derivative and quotient by $y$. We estimate the terms for $y\ge1$ as above and conclude~\eqref{pf:mix-eng=rep-ham-claim}.

  \step[3, Control of $\QQ_{\vr,3}, \QQ_{\mora,3}$]
  Recall the definitions \eqref{le:vr-est=qq3}, \eqref{le:mora-est=qq3}. From the definition of $\w_{2k}^0$,
  \begin{equation}
    \label{pf:mix-eng=vr-qq3-expand}
    \QQ_{\vr,3}
    = \frac{b_1}{\la^{4L+4}} \left\{ \int\R\ma\mhamp^L\w_2^1\cdot q'\mhamp^{L+1}\wp
    + \int\R\ma\mhamp^{L}\wp\cdot q'\mhamp^{L+1}\w_2^1 \right\}.
  \end{equation}
  Using the adjointness of $\mas, \ma$ and the first identity of~\eqref{pf:vr-est=a-t-p},
  \begin{align*}
    & \quad \int\R\ma(q'\ma\mhamp^L\wp)\cdot \ma\mhamp^L\w_2^1                                                    \\
    & = - \int \R q'\mas\ma\mhamp^L\wp \cdot \ma\mhamp^L\w_2^1 -\int\R\T q'\ma\mhamp^L\wp \cdot \ma\mhamp^L\w_2^1 \\
    & = \int q'\mhamp^{L+1}\wp \cdot \R\ma\mhamp^L\w_2^1
    -\int\R \T q'\mhamp^{L+1}\wp\cdot \mhamp^L\w_2^1                                                               \\
    & \quad -\int\R (\T q')'\ma\mhamp^{L}\wp \cdot \mhamp^L\w_2^1.
  \end{align*}
  The first integral on the right-hand side of the final equality coincides with the first integral in~\eqref{pf:mix-eng=vr-qq3-expand}. It is the most involved one to estimate. We treat it later on. First, we claim for $\ell_1+\ell_2 = L-1$ that
  \begin{equation}\begin{split}
      \label{pf:mix-eng=w21-claim}
      \bigl\| \mhamp^L\w_2^1 \bigr\|_{L^2}^2
      +\bigl\| \mhamp^{\ell_1} \bigl( g_2\mhamp^{\ell_2}\w_2^1 \bigr)\bigr\|_{L^2}^2
      \les \dbs\Eplus.
    \end{split}\end{equation}
  Then, other terms in $\QQ_{\vr,3}$ are bounded using the construction of $q(y)$:
  \[
    |\pry^\ell q| \les \frac{R_1^C}{\jbk[y]^\ell}, \quad 1\le\ell\le 3
    \quad\Longrightarrow\quad
    \frac{|\T q'|}{\jbk[y]} + |(\T q')'| \les \frac{R_1^C}{\jbk[y]^3},
  \]
  together with \eqref{pf:mix-eng=w21-claim}, Lemma~\itref{le:interp-wp}{it:interp-wp=sobolev}. For instance,
  \begin{align*}
    \left|\int\R (\T q')'\ma\mhamp^{L}\wp \cdot \mhamp^L\w_2^1\right|
    & \les R_{1}^C \biggl\|\frac{\ma\w_{2L}^0}{\jbk[y]^3}\biggr\|_{L^2} \|\mhamp^L\w_2^1\|_{L^2} \\
    & \les \dbs \Eplus.
  \end{align*}
  The terms in $\QQ_{\mora,3}$ are estimated similarly using \eqref{pf:mix-eng=w21-claim}, \eqref{pf:mix-eng=rep-ham-claim}, and the self-adjointness of $\mhamp$, yielding the same bound as above. For instance, the third term in $\QQ_{\mora,3}$ is bounded by
  \begin{align*}
    & \quad \left| b_1\int(\R\mhaml)^{2L-\ell}(\R^2\W_2^1)\cdot\R G_2\W_{2\ell}^0 \right|                                                   \\
    & = \frac{b_1}{\la^{4L+4}} \left| \int\R^{2L-\ell}\mhamp^{2L-\ell}\w_2^1 \cdot \R^{\ell+1}\bigl( g_2\mhamp^\ell\wp \bigr) \right|       \\
    & \les \frac{b_1}{\la^{4L+4}} \bigl\|\mhamp^L\w_2^1\bigr\|_{L^2} \Bigl\| \mhamp^{L-\ell} \bigl(g_2\mhamp^{\ell}\wp \bigr) \Bigr\|_{L^2}
    \les \frac{b_1\dbs }{\la^{4L+4}} \Eplus.
  \end{align*}

  Now we turn to the involved integral:
  \[
    \int\R\ma\mhamp^L\w_2^1\cdot q'\mhamp^{L+1}\wp.
  \]
  From the definition in~\eqref{def:w2k1}, we expand $\w_2^1$ into three parts:
  \[
    \w_2^1 = \R\mham(\gamma e_3) + \hw\times\mham\w
    = -\zt\pry\gamma e_2 + \hw\times(\mham\w-\mhamp\wp) +\hw\times\mhamp\wp.
  \]
  Set $\vv=\mhamp^{L+1}\wp$ and recall that $\wh\gamma=\ti\gamma_0+\gamma$. The first part involves derivatives up to order $2L+2$ of $\gamma$ and is controlled by Lemma~\ref{le:interp-gamma}. The difficulty in the other two parts is the occurrence of derivatives of order $2L+3$, which cannot be estimated directly by these lemmas. Using the constraint~\eqref{eq:constra}, we express the highest derivative of $\gamma$ in terms of the highest derivatives of $\wp$, with coefficient $-\hw^\pp/(1+\wh\gamma)$. Keeping only these highest-order terms gives
  \begin{align*}
    \int\R\ma\mhamp^L\w_2^1\cdot q'\vv
    &=\int\frac{q'}{1+\wh\gamma}
    (\hw^\pp\cdot\vv)(\hw^\pp\cdot\pry\vv)
    -\int q'\wh\gamma\ma\vv\cdot\vv \\
    &\quad+O\biggl(\dbs\Eplus\biggr).
  \end{align*}
  We control the remainder, which contains derivatives up to order $2L+2$, by Lemmas~\ref{le:interp-wp}, \ref{le:interp-gamma}. For $y\le1$, we use Corollary~\itref{co:dore}{it:dore=w}, \ref{it:dore=hw} for derivatives up to order $2L+1$ and keep the highest derivative in $L^2$, as in the proof of Lemma~\ref{le:w2l2-e}. For $y\ge1$, we use the weighted and pointwise bounds as in~\eqref{pf:w2l2-e=omega-l1-w-ygt1}. We control the contributions of the profiles by~\eqref{pf:form-comp=rrr-expr} and~\eqref{pf:app-sol=wz-l-infty}.

  For the first integral, an integration by parts yields
  \begin{align*}
    &\int\frac{q'}{1+\wh\gamma}(\hw^\pp\cdot\vv)(\hw^\pp\cdot\pry\vv) \\
    &=-\frac12\int\frac1y\pry\biggl(\frac{yq'}{1+\wh\gamma}\biggr)(\hw^\pp\cdot\vv)^2
    -\int\frac{q'}{1+\wh\gamma}(\hw^\pp\cdot\vv)(\pry\hw^\pp\cdot\vv).
  \end{align*}
  Note that no derivative now falls on $\vv$. By Corollary~\itref{co:dore}{it:dore=hw}, Lemma~\itref{le:interp-wp}{it:interp-wp=pw} and the profile bounds,
  \[
    |\hw^\pp\|_{L^\infty}\les\sqrt{\E_1}+b_1^{1/2}\lgb^C\ll1.
  \]
  Together with the bounds for $q$, this gives a bound of $\dbs\|\vv\|_{L^2}^2=\dbs\E_{2L+2}$. The second integral has the same bound after integration by parts, since $\ma=-\pry+\z/y$.
  Inserting this into \eqref{pf:mix-eng=vr-qq3-expand} finishes the estimate for $\QQ_{\vr,3}$. Therefore,
  \begin{equation}
    \label{pf:mix-eng=vm-qq3-est}
    |\QQ_{\vr,3}| +|\QQ_{\mora,3}|
    \les \frac{b_1\dbs }{\la^{4L+4}} \biggl(\E_{2L+2}+\frac{b_1^{2L+2}}{\lgb^2}\biggr).
  \end{equation}
  \emph{Proof of Claim~\eqref{pf:mix-eng=w21-claim}}. By~\eqref{def:w2k1}, $\w_2^1=\R\mham(\gamma e_3)+\hw\times\mham\w$. To use Lemma~\ref{le:w2l2-e}, we apply~\eqref{eq:mham-r}:
  \[
    \R^L\mhamp^L(\hw\times\mham\w)
    =\Omega_{(0,\dots,0,1)}^{L+1}\w
    +\R^{L-1}\mhamp^{L-1}
    \bigl[\zt\pry\bigl((\hw\times\mham\w)\cdot e_3\bigr)e_2\bigr].
  \]
  Thus the first part of the left-hand side is bounded by
  \begin{align*}
    |\mhamp^L\w_2^1|
    & \le |(\R\mham)^{L+1}(\gamma e_3)|+\bigl|\Omega_{(0,\dots,0,1)}^{L+1}\w\bigr| \\*
    & \quad+\bigl|\mhamp^{L-1}\bigl[\zt\pry\bigl((\hw\times\mham\w)\cdot e_3\bigr)e_2\bigr]\bigr|.
  \end{align*}
  The first two terms have been estimated in Lemma~\ref{le:w2l2-e}. For the last term, $(\hw\times\mham\w)\cdot e_3=\wh\alpha\ham\beta-\wh\beta(\ham\alpha-\zt\pry\gamma)$. By Corollary~\itref{co:dore}{it:dore=w}, \ref{it:dore=hw} and Lemma~\itref{le:preserv}{it:preserv=cross}, we have $\hw\times\mham\w\in\mcD_L$ with factor $b_1^{L+2}\lgb^{-1}$. Since we use only derivatives up to order $2L-1$ of its third component when applying $\mhamp^{L-1}\zt\pry$, we bound the squared $L^2$ norm on $y\le1$ by $C(M,K)b_1^{2L+4}\lgb^{-2}$. For $y\ge1$, by Leibniz,
  \[
    \bigl|\mhamp^{L-1}\bigl[\zt\pry\bigl((\hw\times\mham\w)\cdot e_3\bigr)e_2\bigr]\bigr|
    \les\sum_{0\le k\le 2L+1}^{} \sum_{\substack{k_1+k_2=k\\0\le k_1\le2L-1\\0\le k_2\le 2L+1}}
    \frac{|\pry^{k_1}\hw|\,|\pry^{k_2}\w|}{y^{2L+3-k}}.
  \]
  We split $\hw=\wzt+\w$, and use~\eqref{pf:w2l2-e=wzt-l-infty} and Lemma~\itref{le:interp-wp}{it:interp-wp=gen-lossy},~\ref{it:interp-wp=pw} (putting $\w/y$ in $L^\infty$ when $k_1=0$). For $k_2=2L,2L+1$, taking $k=L$ in~\ref{it:interp-wp=lossy} and $i+j=1$ in~\ref{it:interp-wp=pw} gives a squared $L^2$ bound of $b_1^{2L+3}\lgb^C$. Thus
  \begin{align*}
    \bigl\|\mhamp^{L-1}\bigl[\zt\pry\bigl((\hw\times\mham\w)\cdot e_3\bigr)e_2\bigr]\bigr\|_{L^2}^2
    & \les C(M,K)\frac{b_1^{2L+4}}{\lgb^2}+b_1^{2L+3}\lgb^C \\
    & \les\dbs\Eplus.
  \end{align*}
  Since $\ell_1+\ell_2=L-1$, the second part is estimated directly, keeping the cross product intact:
  \[
    \bigl| \mhamp^{\ell_1}\bigl(g_2\mhamp^{\ell_2}\w_2^1 \bigr)\bigr|
    \le \bigl|\mhamp^{\ell_1} \bigl(g_2\mhamp^{\ell_2}\R\mham(\gamma e_3 ) \bigr)\bigr| +\bigl|\mhamp^{\ell_1} \bigl(g_2\mhamp^{\ell_2}(\hw\times\mham\w) \bigr)\bigr|.
  \]
  For $y\le1$, by Corollary~\itref{co:dore}{it:dore=w}, \ref{it:dore=hw}, Lemma~\itref{le:preserv}{it:preserv=op}--\ref{it:preserv=cross} and the bound for $\gamma e_3$ in Lemma~\itref{le:interp-wp}{it:interp-wp=taylor-orig}, we have $\R\mham(\gamma e_3),\hw\times\mham\w\in\mcD_L$ with factor $b_1^{L+2}\lgb^{-1}$. Since $\ell_1+\ell_2=L-1$ and multiplication by $g_2$ preserves the Dore order, both terms belong to $\mcD_1$ with the same factor. Hence
  \[
    \bigl\|\mhamp^{\ell_1}\bigl(g_2\mhamp^{\ell_2}\w_2^1\bigr)\bigr\|_{L^2(y\le 1)}^2
    \les C(M,K)\frac{b_1^{2L+4}}{\lgb^2}.
  \]
  For $y\ge1$, by Lemma~\ref{le:omega-pw-bnd},
  \[
    \bigl|\mhamp^{\ell_1} \bigl( g_2\mhamp^{\ell_2}\R\mham(\gamma e_3) \bigr) \bigr|
    = \bigl| \mhamp^{\ell_1} \bigl( g_2\mhamp^{\ell_2} \bigl( \zt\pry\gamma e_2 \bigr) \bigr) \bigr|
    \les \sum_{\substack{k_1+k_2=2L+2\\1\le k_1\le 2L-1}} \frac{|\pry^{k_1}\gamma|}{y^{k_2+2}},
  \]
  which reduces to the estimate of \eqref{pf:w2l2-e=small-gamma-int}. Similarly,
  \[
    \bigl|\mhamp^{\ell_1} \bigl( g_2\mhamp^{\ell_2}(\hw\times\mham\w) \bigr) \bigr|
    \les \sum_{\substack{k_1+k_2+k_3 \\=2L+2}}
    \biggl(\bigl|y^{k_1}\pry^{k_1}\wzt\bigr| \frac{|\pry^{k_2}\w|}{y^{k_1+k_3}}
    +\bigl|\pry^{k_1}\w\bigr| \frac{|\pry^{k_2}\w|}{y^{k_3}} \biggr),
  \]
  which is estimated as in~\eqref{pf:w2l2-e=omega-l1-w}. Therefore we obtain
  \[
    \bigl\| \mhamp^{\ell_1} \bigl(g_2\mhamp^{\ell_2}\w_2^1 \bigr) \bigr\|_{L^2(y\ge1)}^2
    \les \dbs\Eplus,
  \]
  which finishes the proof of~\eqref{pf:mix-eng=w21-claim}.

  \step[4, Control of $\QQ_{\E}$]
  Recall the definition~\eqref{pf:eng=q-expr}. Using Cauchy-Schwarz, the modulation equation~\eqref{le:mod-eq-1} and the claim~\eqref{pf:mix-eng=rep-ham-claim} with $\ell_1+\ell_2=L$, we bound the first term in $\QQ_{\E}$ by
  \begin{align*}
    & \left|\Bigl(b_1+\frac{\la_s}{\la}\Bigr)\int\W_{2L+2}^0\cdot(\R\mhaml)^\ell\R (G_2\W_{2(L-\ell)}^0)\right|         \\
    & \les \frac{D(s)}{\la^{4L+4}}\, \|\w_{2L+2}^0\|_{L^2} \bigl\|\mhamp^\ell \bigl(g_2\mhamp^{L-\ell}\wp \bigr)\bigr\|_{L^2} \\
    & \les \frac{b_1\dbs }{\la^{4L+4}}\biggl(\E_{2L+2}+\frac{b_1^{2L+2}}{\lgb^2}\biggr).
  \end{align*}
  For the remaining terms in~\eqref{pf:eng=q-expr}, we repeat the preceding estimates separately for $y \le 1$ and $y \ge 1$. These yield the same bound as above. The required smallness of the second, third, and fourth terms follows from the bounds for $\w_{2L+2}^1$ and $\w_{2(L-\ell)}^1$, together with the estimates for $\Omega_+^\ell$ established above. Now let us consider the last term in \eqref{pf:eng=q-expr},
  \[
    \int\W_{2L+2}^0 \cdot(\R\mhaml)^\ell\Bigl(\hW\times[\prt,\mhaml]\W_{2(L-\ell)}^0 +\prt\hW\times\mhaml\W_{2(L-\ell)}^0\Bigr).
  \]
  Recalling \eqref{pf:eng=hj-mhaml-comm} and the scaling, we see from Cauchy-Schwarz that it suffices to show
  \begin{align*}
    & \Bigl|\frac{\la_s}{\la}\Bigr|^2\,\bigl\|(\R\mham)^\ell(\hw\times\mfg\w_{2(L-\ell)}^0)\bigr\|^2_{L^2}
    +|\vtha_s|^2 \bigl\|(\R\mham)^\ell[(\z\R\hw)\times\mham\w_{2(L-\ell)}^0]\bigr\|^2_{L^2}                 \\
    & \qquad\quad
    +\Bigl\|(\R\mham)^\ell\Bigl(\hj(\mham\wzt+\hp\lphi)\times\mham\w_{2(L-\ell)}^0\Bigr)\Bigr\|^2_{L^2}     \\
    & \qquad\qquad\qquad
    + \bigl\|(\R\mham)^\ell\bigl(\hj\mham\w\times\mham\w_{2(L-\ell)}^0\bigr)\bigr\|^2_{L^2}
    \les \dbs b_1^2 \Eplus.
  \end{align*}

  We provide details only for the first term. By Corollary~\itref{co:dore}{it:dore=w}, \ref{it:dore=hw}, $\w,\hw\in\mcD_{L+1}$ with factors $b_1^{L+1}\lgb^{-1}$ and $b_1$. The possible constant third component of $\mfg\w_{2(L-\ell)}^0$ causes no loss after taking the cross product with $\hw$, as in the proof of Lemma~\itref{le:preserv}{it:preserv=cross}. Since only derivatives up to order $2L+1$ occur, we have a pointwise bound of $C(M,K)b_1^{L+2}\lgb^{-1}\jbk[\log y]$ for $y\le1$, and hence
  \[
    \bigl\|(\R\mham)^\ell\bigl(\hw\times\mfg\w_{2(L-\ell)}^0\bigr)\bigr\|_{L^2(y\le1)}^2
    \les C(M,K)\frac{b_1^{2L+4}}{\lgb^2}.
  \]
  For $y\ge1$, from the definition of $\mfg$, Lemma~\ref{le:omega-pw-bnd} and Leibniz,
  \[
    \bigl|(\R\mham)^\ell\bigl(\hw\times\mfg\w_{2(L-\ell)}^0\bigr)\bigr|
    \les \sum_{\substack{\xi_1+\xi_2+\xi_3\\ =2\ell+1}} \hs[-.3] \frac{|\pry^{\xi_1}\hw|\,|\pry^{\xi_2}\w_{2(L-\ell)}^0|}{y^{\xi_3+2}}
    \les \hs[-.3] \sum_{\substack{\xi_1+\xi_2+\xi_3=2L+2\\\xi_2+\xi_3\ge 2}} \hs[-.8]
    \frac{|\pry^{\xi_1}\hw|\,|\pry^{\xi_2}\w|}{y^{\xi_3}},
  \]
  whose $L^2$ norm is estimated as~\eqref{pf:w2l2-e=omega-l1-w} with $n=1$. Therefore, it admits the same bound as in~\eqref{pf:w2l2-e=omega-l1-w-ygt1}. Together with the estimate for $y\le 1$, and also $|\la_s/\la| \les b_1$ (by the modulation equation~\eqref{le:mod-eq-1}), we have
  \[
    \Bigl|\frac{\la_s}{\la}\Bigr|^2 \bigl\|(\R\mham)^\ell \bigl(\hw\times\mfg\w_{2(L-\ell)}^0\bigr)\bigr\|^2_{L^2}
    \les b_1^2\dbs \E_{2L+2}.
  \]
  Therefore,
  \begin{equation}
    \label{pf:mix-eng=vm-qqe-est}
    \QQ_{\E} \les \frac{b_1\dbs}{\la^{4L+4}} \Eplus.
  \end{equation}

  \step[5, Control of $\mF$ terms]
  Recall the definitions of $\mF_{\vr}$ and $\mF_{\mora}$ in~\eqref{le:vr-est=f} and~\eqref{le:mora-est=f}, respectively. We claim that
  \begin{gather}
    \begin{split}
      \label{pf:mix-eng=mf-claim-1}
      \bigl\|\ma\mhamp^L\mF^\pp\bigr\|_{L^2}^2
      \les \dbs \Eplus,
    \end{split} \\
    \begin{split}
      \label{pf:mix-eng=mf-claim-2}
      \|(\R\mham)^{L+1}\mF\|_{L^2}^2 +\|(\hj\mham)^{L+1}\mF\|_{L^2}^2
      \les b_1^2 \biggl(\frac{\E_{2L+2}}{\log M} + \frac{b_1^{2L+2}}{\lgb^2}\biggr),
    \end{split} \\
    \begin{split}
      \label{pf:mix-eng=mf-claim-3}
      \bigl\|\mhamp^{L-j-1}\bigl( g_2\mhamp^j\mF^\pp \bigr)\bigr\|_{L^2}^2
      +\biggl\|\frac{\jbk[\log y]}{\jbk[y]^2}\mhamp^L\mF^\pp\biggr\|_{L^2}^2
      \les \frac{\E_{2L+2}}{\log M} + \frac{b_1^{2L+2}}{\lgb^2},
    \end{split}
  \end{gather}
  where $0\le j\le L-1$, and the implicit constants are independent of $M$. The first term in $\mF_{\vr}$ is directly bounded using~\eqref{pf:mix-eng=mf-claim-1} by
  \begin{align*}
    \left|\frac{b_1}{\la^2} \int\ma_{\la}(\R\mhaml)^L\mF_{\la}^\pp \cdot \prr (q_{\la})\W_{2L+2}^0\right|
    & \les \frac{b_1}{\la^{4L+4}} \|q'\w_{2L+2}^0\|_{L^2} \bigl\|\ma\mhamp^L\mF^\pp\bigr\|_{L^2} \\
    & \les \frac{b_1\dbs }{\la^{4L+4}} \Eplus.
  \end{align*}
  For the second integral, we use the identities in~\eqref{pf:vr-est=a-t-p}, Lemma~\ref{le:interp-wp}, and~\eqref{pf:mix-eng=mf-claim-1} to obtain
  \begin{align*}
    & \left|\frac{b_1}{\la^2} \int\ma_{\la}\W_{2L}^0\cdot \prr (q_{\la})(\R\mhaml)^{L+1}\mF_{\la}^\pp \right|                                               \\
    & = \frac{b_1}{\la^{4L+4}} \left|\int(q'\mas\w_{2L+1}^0 +\T q'\w_{2L+1}^0) \cdot\R^{L+1}\ma\mhamp^L\mF^\pp\right|                                             \\
    & \les \frac{b_1}{\la^{4L+4}} R_{1}^C \biggl(\|\w_{2L+2}^0\|_{L^2} +\biggl\|\frac{\ma\w_{2L}^0}{\jbk[y]^2}\biggr\|_{L^2} \biggr) \|\ma\mhamp^L\mF^\pp\|_{L^2} \\
    & \les \frac{b_1\dbs }{\la^{4L+4}} \biggl(\E_{2L+2} +\frac{b_1^{2L+2}}{\lgb^2}\biggr).
  \end{align*}
  Then for $\mF_{\mora}$, by~\eqref{pf:mix-eng=mf-claim-3} and Lemma~\ref{le:coer-ti-ham}, the first term is bounded by
  \[
    \left|\frac{b_1}{\la^2}\int(\R\mhaml)^L\mF_{\la}^\pp\cdot \R G_1\ma_\la\W_{2L}^0\right|
    \les \frac{b_1}{\la^{4L+4}} \EplusM.
  \]
  The second, third and fourth terms in $\mF_{\mora}$ are controlled in the same fashion by using the claims~\eqref{pf:mix-eng=mf-claim-1}, \eqref{pf:mix-eng=mf-claim-2} and \eqref{pf:mix-eng=mf-claim-3} respectively. Note that for the application of these claims, in the estimates for the third and fourth terms we also use the self-adjointness of the Hamiltonian $\mhamp$ and \eqref{pf:mix-eng=rep-ham-claim}. Finally, for the last integral in~\eqref{pf:mix-eng=eng-expand}, we apply
  \[
    \|\w_{2L+2}\|_{L^2}^2 \le \E_{2L+2} + \frac{Cb_1^{2L+2}}{\lgb^2},
  \]
  by Remark~\ref{re:w2l2-e}, together with \eqref{pf:mix-eng=mf-claim-2}, which gives
  \[
    |\mF_{\vr}| + |\mF_{\mora}| + \left|\frac{1}{\la^{4L+4}} \int\w_{2L+2}\cdot (\hj\mham)^{L+1}\mF\right|
    \les \frac{b_1}{\la^{4L+4}} \EplusM.
  \]

  Now, inserting \eqref{pf:mix-eng=vm-qq1-est}, \eqref{pf:mix-eng=vm-qq2-est}, \eqref{pf:mix-eng=vm-qq3-est}, \eqref{pf:mix-eng=vm-qqe-est}, and the above inequality into~\eqref{pf:mix-eng=eng-expand}, we obtain the desired mixed energy inequality. It remains to prove the claims~\eqref{pf:mix-eng=mf-claim-1}--\eqref{pf:mix-eng=mf-claim-3}. Recall from~\eqref{eq:w-eq} that
  \[
    \mF = \psizt +\modt +\vtha_s \z\R\w +\w\times(\mham\wzt +\hp\lphi).
  \]
  We prove~\eqref{pf:mix-eng=mf-claim-1}--\eqref{pf:mix-eng=mf-claim-3} for each term on the right-hand side above. The modulation terms determine the energy-dependent parts of these bounds; $\widetilde\Psi_0$ contributes the separate source term in the second claim.

  \step[6, Contribution of $\psizt$]
  Note that the claims follow directly from the bounds given in Lemma~\itref{le:local-app-sol}{it:local-app-sol=sob}--\ref{it:local-app-sol=weig}, except for the $(\hj\mham)^{L+1}$ part of~\eqref{pf:mix-eng=mf-claim-2}, since the complex structure
  \[
    \hj = (e_3 +\wzt +\w) \times
  \]
  contains the actions of $\wzt$ and $\w$. Thus it suffices to control $(\hj\mham)^{L+1}\psizt$. By the definition in~\eqref{def:omega}, we have
  \[
    \int|(\hj\mham)^{L+1}\psizt|^2
    \les \int|(\R\mham)^{L+1}\psizt|^2 + \int|\Omega^{L+1}_+[\wzt+\w]\psizt|^2,
  \]
  where we have used Lemma~\itref{le:local-app-sol}{it:local-app-sol=sob} for the first part. For $y\le1$, from the construction in Lemma~\ref{le:app-sol} and Corollary~\itref{co:dore}{it:dore=hw}, we have $\psiz\in\mcD_{L+2}$ and $\hj\mham\psiz\in\mcD_{L+1}$ with factor $b_1^{L+3}$. Then by Lemma~\itref{le:preserv}{it:preserv=op}--\ref{it:preserv=cross}, we have
  \[
    \|\Omega_+^{L+1}[\hw]\psizt\|_{L^2(y\le1)}^2
    \les Cb_1^{2L+6}.
  \]
  For $y\ge1$, from Lemma~\ref{le:omega-pw-bnd} and~\eqref{pf:app-sol=wz-l-infty},
  \begin{align*}
    & \int|\Omega^{L+1}_+[\wzt+\w]\psizt|^2 \\
    & \les \sum_{n,m,\xi} b_1^{m}\lgb^C \hs[-.4] \prod_{\ell=m+1}^{n} \hs[-.4] \|\pry^{\xi_{\ell}}\w\|_{L^\infty(y\ge1)}^2 \int_{y\ge1}\frac{|\pry^{\xi_{n+1}}\psizt|^2}{y^{2(\xi_1+\dots+\xi_{m}+\xi_{n+2})}},
  \end{align*}
  where the sum is taken over
  \[
    1\le n\le L+1,\quad 0\le m\le n,\quad \xi\in\mbN^{n+2},\quad |\xi|_1=2L+2,\quad |\xi|_+\ge2.
  \]
  If $m=0$ and $\xi_1+\dots+\xi_n=\xi_{n+2}=0$, then $\xi_{n+1}=2L+2$. By Lemma~\itref{le:local-app-sol}{it:local-app-sol=weig} and the explicit far-out error $O(b_1\lphi)$ in the proof of that lemma, we have
  \begin{align*}
    \int_{y\ge1}y^2|\pry^{2L+2}\psizt|^2
    & \les S_1^2\int_{1\le y\le2S_1}|\pry^{2L+2}\psizt|^2
    +b_1^2\int_{2S_1}^{\infty}\frac{y\dd y}{y^{4L+4}} \\
    & \les b_1^{2L+3}\lgb^2.
  \end{align*}
  Together with Lemma~\itref{le:interp-wp}{it:interp-wp=pw}, this gives the bound
  \[
    \|\w\|_{L^\infty}^{2(n-1)}
    \Bigl\|\frac{\w}{y}\Bigr\|_{L^\infty(y\ge1)}^2
    \int_{y\ge1}y^2|\pry^{2L+2}\psizt|^2
    \les b_1^{2L+4+\frac{1}{2L-1}}\lgb^C.
  \]
  For the remaining terms, either $m\ge1$, a derivative falls on a factor $\w$, or $\xi_{n+2}\ge1$. In the last case, we estimate one factor as $\w/y$. Lemma~\itref{le:local-app-sol}{it:local-app-sol=weig} and Lemma~\itref{le:interp-wp}{it:interp-wp=pw} then give
  \[
    \int|\Omega^{L+1}_+[\wzt+\w]\psizt|^2
    \les b_1^{2L+4+\frac{1}{2L-1}}\lgb^C,
  \]
  which yields \eqref{pf:mix-eng=mf-claim-2} for $\psizt$.

  \step[7, Contribution of $\modt$]
  Recall from \eqref{le:local-app-sol=modt} that
  \begin{align*}
    \modt & =
    \sum_{k=1}^{L}(\prs a_k +A_k)\biggl(\phiat[k] +\chi_{S_1}\biggl(\frac{\pr\x}{\pr a_k} +\frac{\pr\gamma_0}{\pr a_k}e_3\biggr)\biggr)                                                                 \\
    & \quad +\sum_{k=1}^{L}(\prs b_k +B_k)\biggl(\phibt[k] +\chi_{S_1}\biggl(\frac{\pr\x}{\pr b_k} +\frac{\pr\gamma_0}{\pr b_k}e_3\biggr) + \delta_{1,k}\frac{\pr\chi_{S_1}}{\pr b_1}\wz \biggr) \\
    & \quad +(\vtha_s+a_{1})\bigl(\lphi\bigl(\etwo -\eone\times\wzt \bigr) +\z\R\wzt \bigr)                                                                                                       \\
    & \quad -\Bigl(\frac{\la_s}{\la} +b_1\Bigr)\bigl(\lphi\bigl(\eone +\etwo\times\wzt\bigr) +\vla\wzt \bigr).
  \end{align*}
  Let us first show~\eqref{pf:mix-eng=mf-claim-1}. From the cancellation $\oa\ham^{L}T_{k}=0$ for $1\le k\le L$,
  \[
    \int|\ma\mhamp^{L}\phiat[k]|^2
    \les \int_{y\sim S_1} \biggl|\frac{y^{2k-1}\jbk[\log y]^{C}}{y^{2L+1}} \biggr|^2
    \les b_1\lgb^C.
  \]
  From the properties of the functions in $\mcHg$ and the fact that $\mhamp$ vanishes on the third component,
  \begin{align*}
    & \int \biggl| \ma\mhamp^L \biggl[ \chi_{S_1} \biggl(\frac{\pr\x}{\pr a_k}+ \frac{\pr\gamma_0}{\pr a_k} e_3 \biggr)\biggr] \biggr|^2 \\
    & \qquad \les \lgb^C\sum_{\ell=k+1}^{L+2}b_1^{2(\ell-k)} \Biggl(1+\int_{1}^{2S_1} \biggl|\frac{y^{2\ell-1}}{y^{2L+1}}\biggr|^2 \Biggr) \\
    & \qquad \les b_1\lgb^{C+6}.
  \end{align*}
  The same estimate holds for the correction involving the $b_k$ derivative. The additional cutoff term is supported on $y\sim S_1$ and satisfies
  \[
    b_1\frac{\pr\chi_{S_1}}{\pr b_1}
    =\Bigl(\frac12+\frac1{\lgb}\Bigr)\vla\chi_{S_1}
    = O(1) \one_{y\sim S_1}.
  \]
  Hence, by the same profile estimates,
  \[
    \int\biggl|\ma\mhamp^L\biggl(\frac{\pr\chi_{S_1}}{\pr b_1}\wz\biggr)\biggr|^2
    \les b_1\lgb^C, \quad
    \int\biggl|(\R\mham)^{L+1}\biggl(\frac{\pr\chi_{S_1}}{\pr b_1}\wz\biggr)\biggr|^2
    \les b_1^2.
  \]
  Moreover, since $\ham\lphi = 0$,
  \[
    \int|\ma\mhamp^L(\lphi\, e_i)|^2 = 0
    \for i=1,2.
  \]
  From the explicit construction of $\wzt$ given by Lemma~\ref{le:app-sol} and \ref{le:local-app-sol},
  \begin{align*}
    & \sum_{i=1}^2\bigl\|\ma\mhamp^L\bigl(\lphi(e_i\times\wzt)\bigr)\bigr\|_{L^2}^2 \\
    & +\|\ma\mhamp^L(\z\R\wzt)\|_{L^2}^2
    +\|\ma\mhamp^L(\vla\wzt)\|_{L^2}^2
    \les b_1\lgb^C.
  \end{align*}
  These together with \eqref{pf:mod-eq=d} yield \eqref{pf:mix-eng=mf-claim-1}:
  \[
    \|\ma\mhamp^L\modt\|_{L^2}^2 \les \dbs \Eplus.
  \]
  For~\eqref{pf:mix-eng=mf-claim-2}, from the cancellation $\ham^{L+1} T_{k} = 0$,  the function $(\R\mhamp)^{L+1}\phiat[k]$ is localized inside the region $y\sim S_1$, which yields
  \[
    \int|(\R\mham)^{L+1}\phiat[k]|^2
    \les \int_{y\sim S_1} \biggl|\frac{y^{2k-1}\jbk[\log y]}{y^{2L+2}} \biggr|^2
    \les b_1^2.
  \]
  By \eqref{pf:app-sol=xk-deg}, \eqref{pf:app-sol=gamma-0-k-deg} and Definition~\ref{def:badb}, using $\zt=O(y^{-2})$ for $y\ge1$
  and including the cutoff commutators on $y\sim S_1$,
  \[
    \int \biggl| (\R\mham)^{L+1}
    \biggl[ \chi_{S_1}
      \biggl(\frac{\pr\x}{\pr a_k}
      +\frac{\pr\gamma_0}{\pr a_k}e_3\biggr)\biggr]
    \biggr|^2
    \les \sum_{\ell=k+1}^{L+2} b_1^{2(\ell-k)}
    \biggl(1+\frac{\one_{\ell=L+2}}{b_1^2\lgb^2}\biggr)
    \les b_1^2.
  \]
  The same bound holds for the correction involving the $b_k$ derivative, while the cutoff derivative is covered by the second estimate above.
  At order $L+1$, the same argument gives
  \begin{align*}
    & \sum_{i=1}^2\bigl\|(\R\mham)^{L+1}\bigl(\lphi(e_i\times\wzt)\bigr)\bigr\|_{L^2}^2 \\
    & +\|(\R\mham)^{L+1}(\z\R\wzt)\|_{L^2}^2
    +\|(\R\mham)^{L+1}(\vla\wzt)\|_{L^2}^2
    \les b_1^2.
  \end{align*}
  Moreover, by the decomposition
  \[
    (\hj\mham)^{L+1} = (\R\mham)^{L+1} +\Omega^{L+1}_+,
  \]
  the $\Omega^{L+1}_+$ part involves $\hw$, which contributes an additional $b_1^2$ by Lemma~\ref{le:interp-wp}. Together with \eqref{pf:mod-eq=d}, these yield \eqref{pf:mix-eng=mf-claim-2}.
  Finally, to show~\eqref{pf:mix-eng=mf-claim-3}, we divide the integration domain into $y\le 1$ and $y\ge1$, and then apply Corollary~\itref{co:dore}{it:dore=wzt} and Lemma~\itref{le:preserv}{it:preserv=mult}--\ref{it:preserv=op} for $\wzt$ in the region $y\le 1$. For $y\ge1$, note that the cancellation $\ham^{L+1}T_{k} =0$ is no longer applicable, and thus we perform a crude computation, which produces no extra infinitesimal $\dbs$ or $b_1^2$. Hence, the resulting bound is merely due to the smallness of $D(s)$, as desired.

  \step[8, Contribution of $\vtha_s\z\R\w$]
  By Corollary~\itref{co:dore}{it:dore=w} and Lemma~\itref{le:preserv}{it:preserv=mult}--\ref{it:preserv=op}, we have $\mhamp^L(\z\R\w)\in\mcD_1$ with factor $b_1^{L+1}\lgb^{-1}$ for $y\le1$. Estimating its $1^{\rm st}$ derivative and quotient by $y$ and using~\eqref{le:mod-eq-1}, we obtain
  \begin{align*}
    \|\ma\mhamp^L(\vtha_s\z\R\w)\|_{L^2(y\le 1)}^2
    & \les C(M,K)(a_1^2+D(s)^2) b_1^{2L+2}\lgb^{-2} \\
    & \les C(M,K)\frac{b_1^{2L+4}}{\lgb^2}.
  \end{align*}
  For $y\ge1$, by Lemma~\itref{le:interp-wp}{it:interp-wp=gen-lossy},
  \begin{align*}
    & \|\ma\mhamp^L(\vtha_s\z\R\w)\|_{L^2(y\ge1)}^2 \\
    & \les (a_1^2+D(s)^2)
    \sum_{\xi_1+\xi_2 = 2L+1}\int_{y\ge1}\frac{|\pry^{\xi_1}\wp|^2}{y^{2\xi_2}} \\
    & \les \dbs\frac{b_1^{2L+2}}{\lgb^2} + (a_1^2+D(s)^2)
    \int_{y\ge1}|\pry^{2L+1}\wp|^2.
  \end{align*}
  For the last part, we use the standard interpolation together with~\eqref{pf:w2l2-e=even-expansion}, to recover the $\ham^L$, $\ham^{L+1}$ from even-order derivatives, which gives
  \begin{equation}
    \label{pf:mix-eng=odd-interpolation}
    \int_{y\ge1}|\pry^{2L+1}\wp|^2
    \les \bigl(b_1^{2L}\lgb^C\bigr)^{1/2}
    \bigl(b_1^{2L+2}\lgb^C\bigr)^{1/2}
    =b_1^{2L+1}\lgb^C.
  \end{equation}
  The weighted terms and cutoff errors are controlled by Lemma~\itref{le:interp-wp}{it:interp-wp=taylor-orig} and \ref{it:interp-wp=gen-lossy}. Together with the smallness $|a_1|^2 \les b_1^2\dbs$, this does not perturb the previous bound. These yield \eqref{pf:mix-eng=mf-claim-1}.

  The claim~\eqref{pf:mix-eng=mf-claim-3} is proved in the same way. It remains to show~\eqref{pf:mix-eng=mf-claim-2}. For $y\le1$, by Corollary~\itref{co:dore}{it:dore=w}, \ref{it:dore=hw} and the argument in the proof of Lemma~\ref{le:w2l2-e},
  \begin{align*}
    \|(\hj\mham)^{L+1}(\vtha_s\z\R\w)\|_{L^2(y\le1)}^2
    & \les C(M,K)(a_1^2+D(s)^2)\Eplus \\
    & \les b_1^2\dbs\Eplus.
  \end{align*}
  For $y\ge1$, we estimate as in~\eqref{pf:w2l2-e=omega-l1-w}:
  \begin{align*}
    & \|(\hj\mham)^{L+1}(\vtha_s \z\R\w)\|_{L^2(y\ge1)}^2 \\
    & \les (a_1^2+D(s)^2) \hs[-.4]\sum_{n,\xi}\prod_{\ell=1}^{n}
    \|\pry^{\xi_{\ell}}\hw\|_{L^\infty(y\ge1)}^2
    \int_{y\ge1} \frac{|\pry^{\xi_{n+1}}\w|^2}{y^{2\xi_{n+2}}} \\
    & \les b_1^2 \dbs \biggl(\E_{2L+2} +\frac{b_1^{2L+2}}{\lgb^2}\biggr),
  \end{align*}
  where the sum is taken over
  \[
    0\le n\le L+1,\quad \xi\in\mbN^{n+2},\quad |\xi|_1=2L+2,\quad |\xi|_+\ge2.
  \]
  Since $(\R\mham)^{L+1} =\Omega_{(0,\dots,0)}^{L+1}$ is merely a special case of general $\Omega_{\rho}^{L+1}$, this estimate also works for $(\R\mham)^{L+1}$. This proves~\eqref{pf:mix-eng=mf-claim-2} for $\vtha_s\z\R\w$.

  \step[9, Contribution of $\w\times(\mham\wzt+\hp\lphi)$]
  Recall the definition in~\eqref{eq:wz-form} of the approximate solution $\wz$ and Lemma~\ref{le:local-app-sol} for the localized profile. We decompose $\wzt$ as
  \begin{equation}
    \label{pf:mix-eng=wzto-wztt}
    \wzt = \wzto + \wztt
    \where
    \wz^1=a_1\phia[1]+b_1\phib[1], \quad
    \wzto = \chi_{S_1}\wz^1.
  \end{equation}
  From the definition of $\wzt$, the leading part of $\mham\wzt+\hp\lphi$ is
  \begin{align*}
    \mham\wzto +\hp\lphi
    & = O(D(s))\lphi e_1+ O(D(s))\lphi e_2 + a_1\zt\Bigl(\pry+\frac{\z}{y}\Bigr)(\chi_{S_1}T_1) e_3 \\
    & \quad +(\chi_{S_1}-1)\mhamp\wz^1 -2\pry\chi_{S_1}\pry\wz^1 -\Delta\chi_{S_1}\wz^1,
  \end{align*}
  where the second line vanishes for $y\le1$, due to the cutoff. The first line belongs to $\mcD_{L+1}$ with factor $D(s)+|a_1|$. By Corollary~\itref{co:dore}{it:dore=w} and the proof of Lemma~\itref{le:preserv}{it:preserv=cross}, its cross product with $\w$ belongs to $\mcD_{L+1}$ with factor $(D(s)+|a_1|)b_1^{L+1}\lgb^{-1}$. By Lemma~\itref{le:preserv}{it:preserv=op}, after applying $\ma\mhamp^L$, their squared $L^2$ norms on $y\le1$ are bounded by
  \[
    C(M,K)(D(s)^2+a_1^2)\frac{b_1^{2L+2}}{\lgb^2}
    \les \dbs\frac{b_1^{2L+4}}{\lgb^2}.
  \]
  For $y\ge1$, by Lemma~\itref{le:interp-wp}{it:interp-wp=gen-lossy},
  \begin{align*}
    & \left\| \ma\mhamp^L\Bigl[ \w\times\Bigl( O(D(s))\lphi e_1+ O(D(s))\lphi e_2 \Bigr)\Bigr] \right\|_{L^2(y\ge 1)}^2 \\
    & \qquad \les C(M)D(s)^2b_1^{2L+2}\lgb^C
    \les b_1^2\dbs \Eplus.
  \end{align*}
  Moreover, since
  \[
    \left|\pry^{\ell}\biggl[\zt\biggl(\pry+\frac{\z}{y}\biggr)(\chi_{S_1}T_1)\biggr]\right| \les
    \frac{\jbk[\log y]}{y^{2+\ell}},
    \quad 1\le y\le 2S_1,
  \]
  we have by Lemma~\itref{le:interp-wp}{it:interp-wp=sobolev} that
  \[
    \left\| \ma\mhamp^L\biggl[\w\times\biggl(a_1\zt\Bigl(\pry+\frac{\z}{y}\Bigr)(\chi_{S_1}T_1) e_3\biggr)\biggr]\right\|_{L^2(y\ge 1)}^2
    \les b_1^2\dbs \Eplus.
  \]
  For other terms, a straightforward estimate using the properties of $\mcHg$ yields the same bounds, which gives \eqref{pf:mix-eng=mf-claim-1}.

  The estimate~\eqref{pf:mix-eng=mf-claim-3} is obtained in the same fashion. To show~\eqref{pf:mix-eng=mf-claim-2}, we first consider $y\le1$. By the profile construction and the argument in the proof of Lemma~\ref{le:w2l2-e}, we keep the derivatives of order $2L+2$ of $\w$ in $L^2$ and use the Dore bounds for the remaining terms. Thus the squared $L^2$ norm on $y\le 1$ is bounded by
  \[
    C(M,K)(D(s)^2+a_1^2+b_1^4)\Eplus
    \les b_1^2\dbs\Eplus.
  \]
  For $y\ge1$, by Lemma~\ref{le:omega-pw-bnd},
  \begin{align*}
    & \bigl|(\hj\mham)^{L+1} \bigl(\w\times \bigl(\mham\wzt+\hp\lphi \bigr)\bigr)\bigr| \\*
    & \les \sum_{n,\xi}
    \biggl(\prod_{\ell=1}^{n-1}|\pry^{\xi_{\ell}}\hw|\biggr) |\pry^{\xi_n}\w|
    \frac{\bigl| \pry^{\xi_{n+1}} \bigl( \mham\wzt+\hp\lphi \bigr) \bigr|}{y^{\xi_{n+2}}},
  \end{align*}
  where the sum is taken over
  \[
    1\le n\le L+2,\quad \xi\in\mbN^{n+2},\quad |\xi|_1=2L+2.
  \]
  From the property of $\mcHg$, the numerator on the right-hand side admits
  \begin{equation}
    \begin{split}
      \label{pf:mix-eng=wzt-hp-lphi}
      \bigl|\pry^\ell \bigl( \mham\wzt+\hp\lphi \bigr)\bigr|
      & \les \frac{b_1 \jbk[\log y]^C}{y^{1+\ell}}\one_{y\gtr S_1}
      + \frac{|a_1|\jbk[\log y]^C}{y^{2+\ell}}\one_{y\les S_1} \\
      & \quad +\sum_{k=2}^{C_L} b_1^{k} \jbk[y]^{2k-3-\ell} \jbk[\log y]^C \one_{y\les S_1}.
    \end{split}
  \end{equation}
  We split the estimate into three parts. We decompose each $\pry^{\xi_{\ell}}\hw$ in the above product by $\hw=\wzt+\w$. Each resulting monomial contains $m$ copies of the $\wzt$ factor for some $0\le m\le n-1$. Choosing $\xi_n\ge\xi_\ell$ for $m+1\le\ell\le n-1$, we have $\xi_1+\dots+\xi_m+|\xi|_++\xi_n\ge2$. We use Lemma~\itref{le:interp-wp}{it:interp-wp=pw}, \ref{it:interp-wp=gen-lossy} and~\eqref{pf:w2l2-e=wzt-l-infty}. Note that
  \[
    \Bigl\|\frac{\jbk[\log y]^{C+2}}{y^2}\Bigr\|_{L^{\infty}(y\gtr S_1)}\les b_1\lgb^C,
  \]
  and the first part is bounded by
  \begin{align*}
    & b_1^{m+3}\lgb^C \prod_{\ell=m+1}^{n-1} \|\pry^{\xi_{\ell}}\w\|_{L^\infty(y\ge1)}^2
    \int_{y\ge 1} \frac{|\pry^{\xi_{n}}\w|^2}{y^{2(\xi_1+\dots+\xi_{m}+|\xi|_{+})}\jbk[\log y]^2} \\
    & \les b_1^{m+3} b_1^{\xi_{m+1}+\dots+\xi_{n-1}} b_1^{(\xi_1+\dots+\xi_{m}+\xi_{n}+|\xi|_{+}-1)+\frac{1}{2L-1}} \lgb^C \\
    & \les b_1^2 \dbs \frac{b_1^{2L+2}}{\lgb^2}.
  \end{align*}
  Similarly, for the second part, if $\xi_{m+1}+\dots+\xi_{n-1}\ge1$, then $\xi_1+\dots+\xi_{m}+\xi_n+|\xi|_{+}+1\le 2L+2$, so
  \begin{align*}
    & b_1^{m}\lgb^C
    \prod_{\ell=m+1}^{n-1} \|\pry^{\xi_{\ell}}\w\|_{L^\infty(y\ge1)}^2
    a_1^2 \int_{1\le y\les S_1} \frac{\jbk[\log y]^C\, |\pry^{\xi_n}\w|^2}{y^{2(2+\xi_1+\dots+\xi_{m}+|\xi|_{+})}}        \\
    & \les b_1^{m} b_1^{\xi_{m+1}+\dots+\xi_{n-1}} a_1^2 b_1^{\xi_1+\dots+\xi_{m}+\xi_{n}+|\xi|_{+}+1} \lgb^C \\
    & \les b_1^2 \dbs \frac{b_1^{2L+2}}{\lgb^2}.
  \end{align*}
  If $\xi_{m+1}+\dots+\xi_{n-1}=0$, then the integral part is controlled by $C(M)(\E_{2L+2}+b_1^{2L+2}\lgb^{-2})$ from Lemma~\itref{le:interp-wp}{it:interp-wp=sobolev} and Lemma~\itref{le:interp-gamma}{it:interp-gamma=sobolev}, which yields the bound
  \[
    a_1^2 C(M)\Eplus \les b_1^2 \dbs \frac{b_1^{2L+2}}{\lgb^2}.
  \]
  For the last part, note that
  \[
    \|b_1^k y^{2k-3}\|_{L^{\infty}(y\les S_1)} \les b_1^{\frac32}\lgb^C \for k\ge 2,
  \]
  then again from Lemma~\itref{le:interp-wp}{it:interp-wp=pw}, \ref{it:interp-wp=gen-lossy},
  \begin{align*}
    & b_1^{m+3}\lgb^C
    \prod_{\ell=m+1}^{n-1} \|\pry^{\xi_{\ell}}\w\|_{L^\infty(y\ge1)}^2
    \int_{1\le y\les S_1} \frac{\jbk[\log y]^C\, |\pry^{\xi_{n}}\w|^2}{y^{2(\xi_1+\dots+\xi_{m}+|\xi|_{+})}}        \\
    & \les b_1^{m+3} b_1^{\xi_{m+1}+\dots+\xi_{n-1}} b_1^{\xi_1+\dots+\xi_{m}+\xi_{n}+|\xi|_{+}-1+\frac{1}{2L-1}} \lgb^C \\
    & \les b_1^2 \dbs \frac{b_1^{2L+2}}{\lgb^2},
  \end{align*}
  which yields
  \[
    \|(\hj\mham)^{L+1}\bigl(\w\times(\mham\wzt+\hp\lphi)\bigr)\|_{L^2}^2
    \les b_1^2\dbs \Eplus.
  \]
  This gives~\eqref{pf:mix-eng=mf-claim-2}, and hence completes the proof.
\end{proof}

\section{Completion of the construction}
\label{sec:compl-constr}
\subsection{Closing the bootstrap}
\label{sub:clos-boot}
In this subsection, we prove Proposition~\ref{pro:boot}. This establishes the existence of $\w\in C([s_0,+\infty);\dot{H}^1)$.

\begin{proof}[Proof of Proposition~\ref{pro:boot}]
  We argue by contradiction, as in \citep{merle_blowup_2013,raphael_stable_2013,collot_type_2018,ghoul_construction_2018,xu_blowup_2025}. Recall from the last paragraph in Subsection~\ref{sub:mod-boot} that Proposition~\ref{pro:boot} is equivalent to the claim:
  \[
    \exists \bigl(U'_2, \dots, U'_L, V'_1, \dots, V'_L\bigr)(0) \in \bar{B}(0,1)
    \quad\mbox{such that}\quad
    s^{\ast}  = +\infty.
  \]
  Here, $\bar{B}(0,1)$ denotes the closed unit ball in $\mbR^{2L-1}$, and $s^{\ast}$, defined by \eqref{eq:exit-time}, is the maximal time up to which~\eqref{pro:boot-e1-bnd}--\eqref{pro:boot-u2-bnd} remain valid. We now assume for contradiction that
  \begin{equation}
    \label{pf:pro-boot=contra-ass}
    \forall \bigl(U'_2, \dots, U'_L, V'_1, \dots, V'_L\bigr)(0) \in \bar{B}(0,1),
    \quad
    s^{\ast}<+\infty.
  \end{equation}
  In what follows, we show that~\eqref{pro:boot-e1-bnd}--\eqref{pro:boot-u1-bnd} can be improved on $[s_0,s^{\ast})$, and thus the finiteness of $s^{\ast}$ is due to the saturation of \eqref{pro:boot-u2-bnd}. We will see that this leads to a contradiction.

  \step[1, Refined bound for $\E_1$]
  Recall from \eqref{eq:sol-form} that the solution is of the form
  \[
    u = \mcS z
    \with z = Q+\hv = (e_r, e_{\tau}, Q) ( e_3  +\hw), \quad |z|=1,
  \]
  with the Dirichlet energy
  \begin{align*}
    E(u)
    & = \frac12 \int |\nabla z|^2 \dd x
    = -\pi\int z\cdot\left(\Delta z+|\nabla Q|^2 z\right) y\dd y +\pi\int|\nabla Q|^2 y\dd y \\
    & = E(Q) +\pi(\hw, \mham\hw).
  \end{align*}
  From the explicit construction of $\wzt$ in Lemma~\ref{le:local-app-sol}, the quadratic term satisfies
  \begin{align*}
    (\hw, \mham\hw)
    & = (\wzt, \mham\wzt) +(\wzt, \mham\w) +(\w, \mham\wzt) +(\w, \mham\w) \\
    & = (\w, \mham\w) +O(\sqrt{b_1}).
  \end{align*}
  Thus from the conservation of energy and the asymptotics of $b_1$:
  \begin{equation}
    \label{pf:pro-boot=b1-fin}
    b_1 = b_1^e + \frac{U_1}{s(\log s)^{5/4}} = \frac{c_1}{s} + O\biggl(\frac{1}{s\log s}\biggr),
  \end{equation}
  we have
  \[
    \bigl|(\w, \mham\w)(s) - (\w, \mham\w)(s_0)\bigr| \les b_1^{1/2}(s_0).
  \]
  From the definition of $\mham,\mhamp$, we compute using integration by parts:
  \begin{align*}
    (\w, \mham\w) - (\w, \mhamp\w)
    & = (\gamma, -\Delta\gamma) + \int \zt\biggl( -\alpha \pry\gamma + \gamma \biggl(\pry+\frac{\z}{y}\biggr) \alpha\biggr) \\
    & = \|\pry\gamma\|_{L^2}^2 - 2\int\zt\alpha\pry \gamma.
  \end{align*}
  By Lemma~\itref{le:interp-gamma}{it:interp-gamma=h1} and Lemma~\itref{le:interp-wp}{it:interp-wp=gen-lossy}, the right-hand side is bounded by \(\dbs\E_1 +b_1^{1/2}\). Thus by the bootstrap bound~\eqref{pro:boot-e1-bnd}, we have
  \begin{equation}
    \label{pf:pro-boot=w-mhamp-w-prod}
    \bigl|(\w, \mhamp\w)(s) - (\w, \mhamp\w)(s_0)\bigr| \les b_1^{1/2}(s_0) + \dbs b_1^{1/4}(s_0).
  \end{equation}
  This implies a refined bootstrap bound for $\E_1$.
  Indeed, on the one hand, a straightforward computation yields
  \[
    (\w, \mhamp\w)
    \les \|\nabla\wp\|_{L^2}^2 +\bigg\|\frac{\wp}{y}\bigg\|_{L^2}^2
    \les \E_1.
  \]
  On the other hand, Lemma~\ref{le:coer-ham} and~\eqref{eq:ortho} provide the reverse:
  \[
    \E_1 \les C(M)\bigl[ (\alpha,\ham\alpha) + (\beta,\ham\beta) \bigr]
    = C(M) (\w, \mhamp\w).
  \]
  Hence there exists $C=C(M)>1$ such that
  \[
    \frac1C (\w, \mhamp\w)
    \le \E_1 \le C(\w, \mhamp\w).
  \]
  This, together with \eqref{pf:pro-boot=w-mhamp-w-prod} and the initial bound~\eqref{ini:e1-bnd}, implies for sufficiently large $s_0$ the refined bound:
  \begin{align*}
    \E_1(s)
    & \le C\bigl|(\w, \mhamp\w)(s) - (\w, \mhamp\w)(s_0)\bigr| +C^2 \E_1(s_0) \\
    & \le C^2 b_1^{1/2}(s_0) +(1+o(1))b_1^{1/4}(s_0)
    \le 2 b_1^{1/4}(s_0).
  \end{align*}

  \step[2, The scaling law]
  We now compute the scaling parameter $\la(s)$.
  From the modulation equation~\eqref{le:mod-eq-1}, the definitions of $c_1, d_1$ in \eqref{eq:c1} and of $U_1$ in \eqref{eq:ab-abe}, together with the bootstrap bound~\eqref{pro:boot-u1-bnd} for $U_1$, we find
  \[
    \frac{\la_s}{\la} = -b_1 +O\biggl(\frac{b_1^{L+1}}{\lgb}\biggr)
    = -\frac{c_1}{s} +\frac{|d_1|}{s\log s} +O\biggl(\frac{1}{s(\log s)^{5/4}}\biggr).
  \]
  By a straightforward computation, we obtain
  \[
    \left|\frac{\dd}{\dd s}\log \biggl(\frac{\la s^{c_1}}{(\log s)^{|d_1|}}\biggr)\right|
    \les \frac{1}{s (\log s)^{5/4}}.
  \]
  Integrating this bound from $s$ to $s^{\ast}$ gives
  \begin{equation}
    \label{pf:pro-boot=la-s}
    \la(s) = \kappa_0\bigl(1+o(1)\bigr)\frac{(\log s)^{|d_1|}}{s^{c_1}}
    \with
    \kappa_0 = \kappa_0(s_0) \in\mbR_+.
  \end{equation}
  Here, $o(1)$ is indeed $O((\log s)^{-1/4})$, which tends to zero uniformly for $s_0$ sufficiently large. In view of \eqref{eq:ab-abe}, we have the following:
  \begin{equation}
    \label{pf:pro-boot=la-b-rel}
    \biggl(\frac{b_1^{c_1}\lgb^{|d_1|}}{\la}\biggr)(s)
    \simeq \biggl(\frac{b_1^{c_1}\lgb^{|d_1|}}{\la}\biggr)(s_0).
  \end{equation}

  \step[3, Refined bound for $\E_{2L+2}$]
  We integrate the mixed energy inequality in Proposition~\ref{pro:mix-eng} using the bootstrap bound~\eqref{pro:boot-e2l2-bnd} for $\E_{2L+2}$,
  \begin{align*}
    & \biggl\{\E_{2L+2} +O\biggl(\frac{b_1^{2L+2}}{\lgb^2}\biggr)\biggr\}(s)                                              \\
    & \le \biggl(\frac{\la(s)}{\la(s_0)}\biggr)^{4L+2} \biggl\{\E_{2L+2} +O\biggl(\frac{b_1^{2L+2}}{\lgb^2}\biggr)\biggr\}(s_0) \\
    & \quad
    +\biggl(c_0+\frac{C}{\sqrt{\log M}}+\frac{C(M)}{K}\biggr)K \la^{4L+2}(s) \int_{s_0}^s \frac{b_1^{2L+3}}{\la^{4L+2}\lgb^2} \dd\sigma.
  \end{align*}
  Let us estimate the right-hand side. For the initial data part, using $\la(s_0)=1$ by~\eqref{ini:fixed-scaling}, the initial bound~\eqref{ini:e2k-bnd}, the relation~\eqref{pf:pro-boot=la-b-rel} and the largeness of $s_0$, we compute:
  \begin{align*}
    & \biggl(\frac{\la(s)}{\la(s_0)}\biggr)^{4L+2} \biggl\{b_1^{10L} +O\biggl(\frac{b_1^{2L+2}}{\lgb^2}\biggr)\biggr\}(s_0)                                                                                     \\
    & \le C \Biggl\{ \frac{\bigl(b_1^{c_1}\lgb^{|d_1|}\bigr)(s)}{\bigl(b_1^{c_1}\lgb^{|d_1|}\bigr) (s_0)}\Biggr\}^{4L+2} \frac{b_1^{2L+2}}{\lgb^2}(s_0)                                                               \\
    & \le C \Biggl\{ \frac{\bigl(b_1^{c_1-\frac{L+1}{2L+1}}\lgb^{|d_1|+\frac{1}{2L+1}}\bigr)(s)}{\bigl(b_1^{c_1-\frac{L+1}{2L+1}}\lgb^{|d_1|+\frac{1}{2L+1}}\bigr)(s_0)}\Biggr\}^{4L+2}\,\frac{b_1^{2L+2}}{\lgb^2}(s) \\
    & \le C\frac{b_1^{2L+2}}{\lgb^2}(s),
  \end{align*}
  where we used the identity:
  \[
    (4L+2)c_1 - (2L+2) = \frac{2}{2L-1}>0
    \quad\implies\quad
    c_1 > \frac{L+1}{2L+1}.
  \]
  For the integral part, we use again the relation~\eqref{pf:pro-boot=la-b-rel}, $b_1\simeq c_1/s$ from~\eqref{pf:pro-boot=b1-fin}, and $c_1=\frac{L}{2L-1}$ from~\eqref{eq:c1}, together with the above algebra ($(4L+2)c_1 - (2L+3)>-1$, more precisely) and find
  \begin{align*}
    & \la^{4L+2}(s) \int_{s_0}^s \frac{b_1^{2L+3}}{\la^{4L+2}\lgb^2} \dd\sigma                                       \\
    & \le (1+o(1)) \frac{(\log s)^{(4L+2)|d_1|}}{s^{(4L+2)c_1}} \int_{s_0}^s \frac{c_1^{2L+3} \sigma^{(4L+2)c_1-(2L+3)}}{(\log\sigma)^{(4L+2)|d_1|+2}} \dd\sigma \\
    & \le (1+o(1)) \frac{c_1^{2L+3}}{(4L+2)c_1-(2L+2)} \frac{1}{s^{2L+2}(\log s)^{2}}                                                                          \\
    & \le (1+o(1)) \frac{L}{2} \frac{b_1^{2L+2}}{\lgb^2}(s).
  \end{align*}
  Taking the coefficient into account, we choose \(c_0\) (given by Lemma~\ref{le:vr-est}) sufficiently small, then choose \(M\) sufficiently large first and \(K\) sufficiently large afterwards, so that
  \[
    \biggl(c_0+\frac{C}{\sqrt{\log M}}+\frac{C(M)}{K}\biggr)\frac{L}{2} \ll \frac14.
  \]
  Inserting these estimates back, we obtain the refined bootstrap bound for $\E_{2L+2}$:
  \[
    \E_{2L+2} \le \biggl(C+\frac{K}{4}\biggr) \frac{b_1^{2L+2}}{\lgb^2}
    \le \frac{K}{2} \frac{b_1^{2L+2}}{\lgb^2}.
  \]

  \step[4, Refined bound for $\E_{2k+2}$]
  We shall use the following energy estimate for the intermediate energies $\E_{2k+2}$, $0\le k\le L-1$, which is proved in Appendix~\ref{ap:proof-mix-low-eng}.

  \begin{lemma}[Mixed energy estimate for \texorpdfstring{$\E_{2k+2}$}{E_{2k+2}}]
    \label{le:mix-low-eng}
    For any $0\le k\le L-1$, there exists a constant $C > 1$ independent of $K, M$, such that
    \[
      \frac{\dd}{\dd t}\biggl\{\frac{1}{\la^{4k+2}}\biggl[\E_{2k+2} + O\Bigl(b_1^{\frac12+(2k+1)\frac{2L}{2L-1}}\Bigr)\biggr]\biggr\} \\
      \le \frac{C(M,K)}{\la^{4k+4}} b_1^{1+(2k+1)\frac{2L}{2L-1}}\lgb^{K/2+C}.
    \]
  \end{lemma}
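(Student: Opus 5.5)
We follow the derivation of the top-order estimate of Lemma~\ref{le:eng}, but at level $k+1\le L$, where no Virial or Morawetz correction is needed because the bootstrap bound \eqref{pro:boot-e2k-bnd} is far weaker than the sharp one. The starting point is the energy identity for $\W_{2k+2}^0$. From the second line of \eqref{eq:w2k-eq} with $k+1$ in place of $k$, and using $\frac{\dd}{\dd t}\int|\W_{2k+2}^0|^2=\frac{\dd}{\dd t}\{\la^{-(4k+2)}\E_{2k+2}\}$, we get three terms:
\[
  -2\int\W_{2k+2}^0\cdot\W_{2k+4}^0,\qquad 2\int\W_{2k+2}^0\cdot[\prt,(\R\mhaml)^{k+1}]\Wp,\qquad 2\int\W_{2k+2}^0\cdot(\R\mhaml)^{k+1}\Bigl(\R^2\W_2^1-\tfrac{1}{\la^2}\mF_\la^\pp\Bigr).
\]
We work with $\W^0_{2k+2}$ rather than $\W_{2k+2}$ so that the leading term vanishes exactly. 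Indeed, by the skew-symmetry of $\R$ and the self-adjointness of $\mhamp$ (with \eqref{eq:mham-r}), the first term is $-2\la^{-(4k+4)}\int\w_{2k+2}^0\cdot\R\mhamp\w_{2k+2}^0=0$.

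The remaining terms are handled by Cauchy--Schwarz against $\sqrt{\E_{2k+2}}\le b_1^{(2k+1)\frac{L}{2L-1}}\lgb^{K/2}$, which is where the exponent $K/2$ in the claimed bound comes from. For this it suffices to show
\begin{equation*}
  \bigl\|\mhamp^{\ell_1}(g_2\mhamp^{\ell_2}\wp)\bigr\|_{L^2}+\|\mhamp^{k}\w_2^1\|_{L^2}+\|(\R\mham)^{k+1}\mF^\pp\|_{L^2}\les C(M,K)\,b_1^{\frac12+(2k+1)\frac{L}{2L-1}}\lgb^{C}
\end{equation*}
for $\ell_1+\ell_2=k$. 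The commutator term carries an extra factor $b_1$ through \eqref{pf:eng=ma-mham-comm}, so for it we only need the weaker bound $b_1^{(2k+1)\frac{L}{2L-1}}$ without the $b_1^{1/2}$ gain. Each piece is split into $y\le1$ and $y\ge1$. For $y\le1$ we use the Dore expansions of Corollary~\ref{co:dore} and Lemma~\ref{le:preserv}, which even give the stronger factor $b_1^{L+1}$. For $y\ge1$ we use the pointwise bounds of Lemma~\ref{le:omega-pw-bnd} together with the weighted interpolation bounds of Lemma~\ref{le:interp-wp} at level $2k+2$. For the commutator term the potential $g_2=O(y^{-4})$ supplies the needed weight; for $\w_2^1$ we use the smallness of $\hw$.

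For the force $\mF$ we use the same decomposition as in Steps~6--9 of Proposition~\ref{pro:mix-eng}:
\begin{itemize}
  \item For $\psizt$, Lemma~\ref{le:local-app-sol} gives $\|\mhamp^{k+1}\psizt\|^2\les b_1^{2k+4}\lgb^C$. This is much smaller than required since $2k+4>1+2(2k+1)\frac{L}{2L-1}$ when $k\le L-1$.
  \item For $\modt$, since $\ham^{k+1}T_j\neq0$ for $j>k+1$, we do not get cancellation. Instead we use Lemma~\ref{le:mod-eq}, which gives $D(s)\les b_1^{L+1}/\lgb$, combined with the crude growth $\|(\R\mham)^{k+1}\phiat[j]\|_{L^2}\les b_1^{k+1-j}\lgb^C$ on $y\le 2S_1$. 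The product is $\les b_1^{k+1}\lgb^C$, which suffices provided $k+1\ge \frac12+(2k+1)\frac{L}{2L-1}$. This exponent check is the main obstacle. At $k=L-1$ it holds only marginally, so there we must keep the precise gain: Lemma~\ref{le:mod-eq} gives $\dbs b_1^{L+1}$ for all but the top mode, and for the top mode one uses orthogonality \eqref{eq:ortho} to move $\mhamp$ onto $\w$.
  \item The terms $\vtha_s\z\R\w$ and $\w\times(\mham\wzt+\hp\lphi)$ gain a factor $b_1$ times the bootstrap-controlled norm at level $2k+2$.
\end{itemize}

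Finally, the correction $O(b_1^{\frac12+(2k+1)\frac{2L}{2L-1}})$ inside the time derivative absorbs the difference between $\E_{2k+2}$ and any mixed quantities produced by integrating by parts in time. The contribution of $\W^1$ is bounded by the analogue of Lemma~\ref{le:w2l2-e} at level $2k+2$. Collecting the terms and multiplying by $\la^{-(4k+4)}$ gives the stated inequality.
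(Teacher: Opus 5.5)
\paragraph{Exponent bookkeeping and a missing cancellation.} Your architecture matches the paper's apart from the Morawetz point discussed at the end: the energy identity for $\W_{2k+2}^0$, the exact vanishing of the leading term, and Cauchy--Schwarz against $\sqrt{\E_{2k+2}}$. The size bookkeeping, however, is off by $b_1^{1/2}$, and this hides a missing cancellation.

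The $\W_2^1$ and $\mF$ terms carry no prefactor $b_1$. For them the companion factor must be $\les C(M,K)\,b_1^{1+(2k+1)\frac{L}{2L-1}}\lgb^{C}$ with $C$ independent of $K$, or else it must gain a strict extra power of $b_1$ to absorb the $K$-dependent logarithms of Lemma~\ref{le:interp-wp}. Your threshold $b_1^{\frac12+(2k+1)\frac{L}{2L-1}}$ only yields $\la^{-(4k+4)}b_1^{\frac12+(2k+1)\frac{2L}{2L-1}}\lgb^{K/2+C}$ on the right. After integration this loses a factor $s^{1/2}$ against the bootstrap bound.

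With the correct threshold, your estimate ``a factor $b_1$ times the bootstrap-controlled norm at level $2k+2$'' is exactly critical. It produces a term of size $b_1\E_{2k+2}/\la^{4k+4}$, which integrates to $b_1^{(2k+1)\frac{2L}{2L-1}}\lgb^{K+1}$.
\begin{itemize}
\item For $\w\times(\mham\wzt+\hp\lphi)$ this is repairable. The decay of $\mham\wzt+\hp\lphi$ raises the total order by at least one, and Lemma~\itref{le:interp-wp}{it:interp-wp=gen-lossy} then gives a strict power gain.
\item For $\vtha_s\z\R\w$ no decay is available, since $\z=-1+\frac12\zt$. You need the exact cancellation
\[
\int\w_{2k+2}^0\cdot(\R\mham)^{k+1}\R\wp=\int\w_{2k+2}^0\cdot\R\w_{2k+2}^0=0 .
\]
After it, only $\frac12\vtha_s\zt\R\w$ remains, which has two extra orders of decay. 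This is the paper's passage from $\mF$ to $\mFb$, and it is absent from your proposal.
\end{itemize}

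\paragraph{The $\modt$ step.} The same miscount breaks this step.
\begin{itemize}
\item With $D(s)\les b_1^{L+1}\lgb^{-1}$ and $\|(\R\mham)^{k+1}\phiat[j]\|_{L^2}\les b_1^{k+1-j}\lgb^{C}$, the worst product ($j=L$) is $b_1^{k+2}\lgb^{C}$, not $b_1^{k+1}\lgb^C$.
\item Your condition $k+1\ge\frac12+(2k+1)\frac{L}{2L-1}$ is false for every $k$, because $\frac{L}{2L-1}>\frac12$.
\item The correct check is $k+2\ge1+(2k+1)\frac{L}{2L-1}$, i.e.\ $k\le L-1$, with equality at $k=L-1$. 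This is the inequality $2k+4\ge2+(2k+1)\frac{2L}{2L-1}$ used in the paper.
\end{itemize}
Since the profile logarithms are $K$-independent, the crude bound already suffices for every $k$.

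The device you propose for the top mode is neither needed nor available. The conditions \eqref{eq:ortho} give orthogonality to $\ham^j\xm_M$, not to $\phiat[L]$. Moving $\mhamp^{k+1}$ onto $\w_{2k+2}^0$ would require $4k+4>2L+2$ derivatives of $\w$ when $k=L-1\ge1$.

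Similarly, $\psizt$ is not ``much smaller'' than needed. Its bound matches the threshold exactly at $k=L-1$, which is acceptable only because its logarithmic power is universal.

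\paragraph{Dropping the Morawetz correction.} This is legitimate, and simpler than the paper, which cancels the commutator with the low-order functional $\mora_k$ (Lemma~\ref{le:mora-est} with $L$ replaced by $k$). Your justification, however, must change.
\begin{itemize}
\item The lower bootstrap bounds are not ``far weaker'' than sharp. Since $(4k+2)c_1=(2k+1)\frac{2L}{2L-1}$ with $c_1=\frac{L}{2L-1}$, they sit exactly at the $\la^{4k+2}$ scaling, so any term of size $b_1\E_{2k+2}$ is critical.
\item Bounding $\mhamp^{\ell_1}(g_2\mhamp^{\ell_2}\wp)$, $\ell_1+\ell_2=k$, by the level-$2k+2$ weighted bounds, as you write, gives only $C(M)\E_{2k+2}$. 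That is precisely such a critical term.
\item What works is to use the two extra orders of decay of $g_2=O(y^{-4})$. On $y\ge1$ the quantity then has total order $2k+4$, and Lemma~\itref{le:interp-wp}{it:interp-wp=gen-lossy} with $i+j=2k+4$ gives $b_1^{(2k+3)\frac{2L}{2L-1}}\lgb^{C}$, or $b_1^{2L+2}\lgb^{C}$ if $k=L-1$. On $y\le1$ the Dore expansion gives $b_1^{2L+2}$.
\end{itemize}
This is a strict power gain over the required squared size $b_1^{(2k+1)\frac{2L}{2L-1}}$, and it absorbs all $K$-dependent logarithms. With this change your route closes, and the $O(\cdot)$ correction on the left can be taken to be zero.
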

  A direct integration yields
  \begin{align*}
    & \left\{\E_{2k+2} + O\Bigl(b_1^{\frac12+(2k+1)\frac{2L}{2L-1}}\Bigr) \right\}(s)                                              \\
    & \le \biggl(\frac{\la(s)}{\la(s_0)}\biggr)^{4k+2} \left\{\E_{2k+2} + O\Bigl(b_1^{\frac12+(2k+1)\frac{2L}{2L-1}}\Bigr) \right\}(s_0) \\
    & \quad +C(M,K)\la(s)^{4k+2} \int_{s_0}^s \frac{b_1^{1+(2k+1)\frac{2L}{2L-1}}\lgb^{K/2+C}}{\la^{4k+2}} \dd\sigma.
  \end{align*}
  Let us estimate the right-hand side. For the initial data part, we use the initial bound $\E_{2k+2}(s_0)\le b_1^{10L}(s_0)$ by~\eqref{ini:e2k-bnd}, the scaling relation~\eqref{pf:pro-boot=la-b-rel}, and choose $K>\max((4L-2)|d_1|+2,2C+4)$. This gives
  \begin{align*}
    & \biggl(\frac{\la(s)}{\la(s_0)}\biggr)^{4k+2} \left\{b_1^{10L} +O\Bigl(b_1^{\frac12+(2k+1)\frac{2L}{2L-1}}\Bigr)\right\}(s_0) \\
    & \le C\Biggl\{\frac{\bigl(b_1^{c_1}\lgb^{|d_1|}\bigr)(s)}{\bigl(b_1^{c_1}\lgb^{|d_1|}\bigr)(s_0)}\Biggr\}^{4k+2}
    b_1(s_0)^{\frac12+(2k+1)\frac{2L}{2L-1}} \\
    & \le C\biggl\{\frac{b_1^{\frac12}}{\lgb^{(4k+2)|d_1|}}\biggr\}(s_0)
    \biggl\{b_1^{(2k+1)\frac{2L}{2L-1}}\lgb^{(4k+2)|d_1|}\biggr\}(s) \\
    & \le \frac18 \biggl\{ b_1^{(2k+1)\frac{2L}{2L-1}} \lgb^{K} \biggr\}(s),
  \end{align*}
  where we used the identity
  \[
    (4k+2)c_1-(2k+1)\frac{2L}{2L-1}=0.
  \]
  For the integral part, using~\eqref{pf:pro-boot=la-b-rel}, $b_1\simeq c_1/s$, and the above identity,
  \begin{align*}
    & C(M,K)\la(s)^{4k+2} \int_{s_0}^s \la^{-(4k+2)} b_1^{1+(2k+1)\frac{2L}{2L-1}}\lgb^{K/2+C} \dd\sigma \\
    & \le C(M,K)\frac{(\log s)^{(4k+2)|d_1|}}{s^{(4k+2)c_1}} \int_{s_0}^s \frac{(\log\sigma)^{K/2+C-(4k+2)|d_1|}}{\sigma}\dd\sigma \\
    & \le C(M,K)b_1^{(2k+1)\frac{2L}{2L-1}}\Bigl(\lgb^{K/2+C+1}+\lgb^{(4k+2)|d_1|+1}\Bigr) \\
    & \le \frac18 \biggl\{b_1^{(2k+1)\frac{2L}{2L-1}}\lgb^{K}\biggr\}(s).
  \end{align*}
  Inserting these estimates back, we obtain the refined bootstrap bound for $\E_{2k+2}$:
  \[
    \E_{2k+2} \le \frac12 b_1^{(2k+1)\frac{2L}{2L-1}}\lgb^{K}.
  \]

  \step[5, Refined bound for the stable mode]
  Recall from Lemma~\ref{le:refined-mod-eq} that we defined the modified versions of the parameters $a_L, b_L$, namely $\bar{a}_L, \bar{b}_L$. We now let
  \[
    \bar{a} = (\bar{a}_k)_{k=1}^L, \quad \bar{b} = (\bar{b}_k)_{k=1}^L \in C^1([s_0,s^{\ast});\mbR^L),
  \]
  where $\bar{a}_L, \bar{b}_L$ are given by \eqref{eq:ti-b}, and
  \[
    \bar{a}_k = a_k, \quad
    \bar{b}_k = b_k, \quad 1\le k\le L-1.
  \]
  Linearizing $(\bar{a},\bar{b})$ near the approximate solution $(a^e, b^e)$ (see~\eqref{eq:ae-be}) to the ODE system~\eqref{eq:ode}, we define the vector-valued functions $\bar{U}, \bar{V}$ as
  \[
    \bar{b}_k = b^e_k +\frac{\bar{U}_k(s)}{s^k(\log s)^{5/4}}, \quad
    \bar{a}_k = a^e_k +\frac{\bar{V}_k(s)}{s^k(\log s)^{5/4}},
    \quad 1\le k\le L.
  \]
  We further let
  \[
    \bar{U}' = P_U \bar{U}, \quad \bar{V}' = P_V \bar{V},
  \]
  where $P_U, P_V\in \GL(L,\mbR)$ are given in Lemma~\itref{le:b-sys-lin}{it:b-sys-lin=diag} and \itref{le:a-sys-lin}{it:a-sys-lin=diag}. Now from Lemma~\itref{le:refined-mod-eq}{it:refined-mod-eq=diff} and $b_1\simeq c_1/s$ by \eqref{pf:pro-boot=b1-fin}, we have
  \begin{equation}
    \label{pf:mix-low-eng=diff}
    |\bar{U}'-U'| +|\bar{V}'-V'|
    \les s^{L}(\log s)^{\frac54} \left(|\bar{a}-a| + |\bar{b}-b|\right) \les s^{-\frac14}.
  \end{equation}
  Thus choosing $s_0$ sufficiently large and using the initial bound~\eqref{ini:modes-bnd}, we have
  \[
    |\bar{U}'_1|(s_0)
    \le |U'_1|(s_0) + |\bar{U}'-U'|(s_0)
    \le \frac14 + \frac14
    \le \frac12.
  \]
  On the other hand, we apply Lemma~\ref{le:b-sys-lin} for $\bar{b}$ and $\bar{U}$, and use Lemma~\ref{le:mod-eq} and Lemma~\itref{le:refined-mod-eq}{it:refined-mod-eq=eq}, which give the equation for $\bar{U}$:
  \begin{align*}
    & \left|s\prs\bar{U}_k - (M_U \bar{U})_k +O\biggl(\frac{1}{(\log s)^{1/4}}\biggr)\right|                                                                                                                                                                                          \\
    & \les s^{k+1}(\log s)^{5/4}\biggl\{ \bigl|\prs\bar{b}_k \mhso+\mhso (2k \mhso-\mhso 1\mhso+\mhso\cb)\bar{b}_1\bar{b}_k \mhso+\mhso \bar{a}_1\bar{a}_k \mhso-\mhso \bar{b}_{k+1}\bigr| +\biggl|\bar{b}_1\bar{b}_k \biggl( \cb \mhso-\mhso \frac{2}{\log s} \biggr)\biggr|\biggr\} \\
    & \les s^{k+1}(\log s)^{5/4} \biggl\{ b_1^{\frac{1}{4L-2}}b_1^{L+1} +(C+K)\frac{b_1^{L+1}}{\lgb^{3/2}} +\frac{b_1^{k+1}}{(\log s)^2} \biggr\}                                                                                                                                     \\
    & \les \frac{1}{(\log s)^{1/4}}.
  \end{align*}
  By the linear transformation $P_U$, this yields the equation for $\bar{U}'$; and also the equation for $\bar{V}'$, by a similar argument:
  \begin{equation}
    \label{pf:mix-low-eng=eq}
    s\prs\bar{U}' = D_U\bar{U}' +O\biggl(\frac{1}{(\log s)^{1/4}}\biggr),
    \quad
    s\prs\bar{V}' = D_V\bar{V}' +O\biggl(\frac{1}{(\log s)^{1/4}}\biggr),
  \end{equation}
  where $D_U, D_V$ are the diagonal matrices given in Lemma~\ref{le:b-sys-lin} and~\ref{le:a-sys-lin}. Note that the $(1,1)$ entry of $D_U$ is $-1$, and thus by a direct computation on \eqref{pf:mix-low-eng=eq}, we have the dynamics for the scalar $\bar{U}'_1$:
  \[
    \left|\prs(s\bar{U}'_1)\right| \les \frac{1}{(\log s)^{1/4}}.
  \]
  Since $s_0> 1$ is chosen sufficiently large,
  \[
    |\bar{U}'_1|(s) \le \frac{s_0}{s} |\bar{U}'_1|(s_0) +\frac{C}{s}\int_{s_0}^s \frac{\dd\sigma}{(\log\sigma)^{1/4}}
    \le \frac12 +\frac14
    =\frac34.
  \]
  Together with~\eqref{pf:mix-low-eng=diff}, this implies the refined bootstrap bound for $U'_1$:
  \[
    |U'_1(s)| < \frac78.
  \]

  \step[6, Reduction to a contradiction]
  Denote the associated unstable modes corresponding to $(\bar{a}, \bar{b})$ by
  \[
    X = \left(\bar{U}'_2,\dots,\bar{U}'_L,\bar{V}'_1,\dots,\bar{V}'_L\right) \in \mbR^{2L-1}.
  \]
  We take the initial data for $X$ to be
  \[
    X(s_0) \in \bar{B}(0,\tfrac12).
  \]
  Fixing $s_0, U'_1(s_0), \wp(s_0)$, we compute from \eqref{eq:ti-b} that
  \[
    |\pr_{b_1}(\bar a_L-a_L)|+|\pr_{b_1}(\bar b_L-b_L)| \les C(M,\eta)b_1^{-1-2\eta}\sqrt{\E_{2L+2}} \ll 1.
  \]
  Together with~\eqref{ini:e2k-bnd} and~\eqref{pf:mix-low-eng=diff}, this implies the map
  \[
    (U'_2,\dots,U'_L,V'_1,\dots,V'_L)\mapsto X
  \]
  is a small $C^1$ perturbation of the identity for large $s_0$, so each initial datum $X(s_0)$ corresponds to a unique choice of $(U'_2,\dots,U'_L,V'_1,\dots,V'_L)$ in Proposition~\ref{pro:boot}.
  Let the bootstrap bound for $X$ be
  \begin{equation}
    \label{pf:mix-low-eng=x-boot-bnd}
    X(s) \in \bar{B}(0,\tfrac12),
  \end{equation}
  and the associated exit time be
  \[
    \bar{s}^{\ast}
    = \sup\left\{s\ge s_0: \mbox{\eqref{pro:boot-e1-bnd}--\eqref{pro:boot-u1-bnd} and \eqref{pf:mix-low-eng=x-boot-bnd} hold on $[s_0, s]$}\right\}.
  \]
  Note that, from \eqref{pf:mix-low-eng=diff}, the bound~\eqref{pf:mix-low-eng=x-boot-bnd} implies the original bootstrap bound~\eqref{pro:boot-u2-bnd}. Hence $\bar{s}^{\ast} \le s^{\ast}$. Together with~\eqref{pf:pro-boot=contra-ass}, this implies
  \[
    \forall X(s_0) \in \bar{B}(0,\tfrac12),
    \quad \bar{s}^{\ast} <+\infty.
  \]
  Since \eqref{pro:boot-e1-bnd}--\eqref{pro:boot-u1-bnd} are improved in the previous steps, the finiteness of $\bar{s}^{\ast}$ indicates the saturation of \eqref{pf:mix-low-eng=x-boot-bnd}; in other words, we have
  \[
    |X|(\bar{s}^{\ast}) = \frac{1}{2}.
  \]
  Using this and~\eqref{pf:mix-low-eng=eq}, we obtain a strictly outward flow on the boundary $\pr\bar{B}(0,\tfrac12)$:
  \begin{align*}
    \frac{\dd |X|^2}{\dd s}(\bar{s}^{\ast})
    & = \frac{2}{\bar{s}^{\ast}} \left\{ \sum_{k=2}^{L} \frac{k}{2L-1}\bigl|\bar{U}'_k\bigr|^2(\bar{s}^{\ast}) + \sum_{k=1}^{L} \frac{k}{2L-1}\bigl|\bar{V}'_k\bigr|^2(\bar{s}^{\ast}) +O\biggl(\frac{1}{(\log \bar{s}^{\ast})^{1/4}}\biggr) \right\} \\
    & \ge \frac{2}{\bar{s}^{\ast}} \left\{\frac{1}{4L}\biggl(\frac{1}{2}\biggr)^2 +O\biggl(\frac{1}{(\log \bar{s}^{\ast})^{1/4}}\biggr)\right\}  >0                                                                                            .
  \end{align*}
  This, together with the continuous dependence of \eqref{pf:mix-low-eng=eq} on the initial data, yields a continuous retraction
  \[
    \bar{B}(0,\tfrac12) \to \pr\bar{B}(0,\tfrac12), \quad
    X(s_0) \mapsto X(\bar{s}^{\ast}),
  \]
  which, restricted to the boundary $\pr\bar{B}(0,\tfrac12)$, is the identity map. However, this contradicts the Brouwer fixed point theorem. Consequently, the claim~\eqref{pf:pro-boot=contra-ass} does not hold, and thus there exists an initial data $(U'_2, \dots, U'_L, V'_1, \dots, V'_L)(0)$ such that the corresponding $s^{\ast} = +\infty$. This concludes the proof of Proposition~\ref{pro:boot}.
\end{proof}

\begin{remark}[Choices of $M,K,s_0$]
  \label{re:k-m-choi}
  Note that in the previous proof and other contexts of Section~\ref{sec:energy-est}, we have several times used the largeness of the constants $M, K, s_0 > 1$. Although their values are not explicitly given, we point out that there is no circular dependence in the choice of these constants. In fact, the constants are chosen in the following order:
  \begin{enumerate}[label={\rm (\roman*)}]
    \item we fix the universal constant $L\in\mbN_+$ and also the explicit ($C>0,c_0>0$) or implicit ($\les$) constants that are independent of $M,K$;
    \item we choose $M$ sufficiently large first and then $K$ sufficiently large depending on $M$;
    \item we choose $s_0$ sufficiently large, and hence $\bs \ge \sup_{[s_0,s^{\ast})}|b_1(s)|=b_1(s_0)$ sufficiently small, so that the corresponding small constant $\delta(\bs)$ absorbs the constants in Steps~(i) and~(ii).
  \end{enumerate}
\end{remark}

\subsection{Refined bound for \texorpdfstring{$\E_2$}{E2}}
In this subsection, we derive a refined bound for $\E_2$, which is used in the proof of Theorem~\ref{th:main-thm}.

\begin{lemma}[Refined bound for $\E_2$]
  \label{pro:refined-e2}
  For $s\in[s_0,s^{\ast})$, we have
  \[
    \E_2\les C(u_0)\la^2.
  \]
\end{lemma}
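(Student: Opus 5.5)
The plan is to mirror the proof of Lemma~\ref{le:mix-low-eng} at the lowest level $k=0$, but in the original (unrescaled) variables, where the right-hand side becomes integrable in $t$ up to the blowup time. In original variables we have $\E_2=\la^2\|\W_2^0\|_{L^2}^2$, since $\W_2^0=\w_2^0/\la^2$ and the $L^2$ norm in two dimensions scales by a factor $\la^2$. The claim $\E_2\les C(u_0)\la^2$ is therefore equivalent to a uniform bound $\|\W_2^0(t)\|_{L^2}^2\les C(u_0)$ on $[0,T)$. Equivalently, the goal is
\[
  \frac{\dd}{\dd t}\Bigl\{\frac{\E_2+O(\cdot)}{\la^{2}}\Bigr\}\quad\text{integrable on }[0,T).
\]

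The first step is to write the energy identity for $\|\W_2\|_{L^2}^2$ using \eqref{eq:w2k-eq} with $k=1$. The leading quadratic term $-2\int\W_2\cdot\hj\mhaml\W_2$ vanishes at the level $\R$ by skew-symmetry, leaving only the perturbative parts involving $\hW\times$. The commutator term $[\prt,\hj\mhaml]\W$ is given by \eqref{pf:eng=hj-mhaml-comm}. Its leading part is $b_1\la^{-4}\int\w_2^0\cdot\R(g_2\wp)$, and I will handle it either by the $k=0$ Morawetz functional $\mora_0=-b_1\int\Wp\cdot\R G_1\ma_\la\Wp$, or simply by Cauchy--Schwarz using $\|g_2\wp\|_{L^2}^2\les\E_1$. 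I also need the analogue of Remark~\ref{re:w2l2-e} at level $2$ to replace $\|\w_2\|^2$ by $\E_2$ up to lower order. Combining these, the aim is an estimate of the form
\[
  \frac{\dd}{\dd t}\Bigl\{\frac{\E_2+O(b_1^{1/2}\E_2+\cdots)}{\la^{2}}\Bigr\}
  \les \frac{1}{\la^4}\Bigl(b_1\sqrt{\E_2}\sqrt{\E_1}+\sqrt{\E_2}\,\|\mham\mF\|_{L^2}\Bigr).
\]

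The second step is to bound the forcing. By Lemma~\ref{le:local-app-sol} and the bootstrap bounds, the quantity $\|\mhamp\mF\|_{L^2}$ is of size $b_1^2\lgb^C$ from $\psizt$. From the modulation terms, the contribution of $\|\mhamp\phiat[1]\|$ vanishes except near $y\sim S_1$, giving a factor $b_1^{1/2}$, which is multiplied by $D(s)\les b_1^{L+1}$. In every case the right-hand side is bounded by $\la^{-4}b_1\cdot b_1^{\theta}\la\,\cdot(\text{small})$ for some power of $b_1$. I will use $\prt=\la^{-2}\prs$ and the rates $\la\sim s^{-c_1}(\log s)^{|d_1|}$ and $b_1\sim c_1/s$ from \eqref{pf:pro-boot=la-s}, together with the bootstrap bounds $\E_2\le b_1^{\frac{2L}{2L-1}}\lgb^K$ and $\E_1\le 10b_1(0)^{1/4}$. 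Writing $X=\E_2/\la^2$, this yields a differential inequality
\[
  \frac{\dd X}{\dd s}\les b_1\sqrt{X}\sqrt{\E_1/\la^2}+b_1^{2}\lgb^C\frac{\sqrt X}{\la}.
\]

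The hard part is the $\E_1$ cross term. The ratio $\E_1/\la^2$ blows up, so Cauchy--Schwarz alone fails, and this term must be treated structurally. I will first absorb it through the Morawetz identity for $\mora_0$, as in Lemma~\ref{le:mora-est} with $L=0$, which cancels exactly $b_1\int\w_2^0\cdot\R g_2\wp$ up to a signed term and $\QQ$-type remainders of size $b_1^2\la^{-4}\E_2$. After that cancellation the remaining terms are $O(b_1\cdot b_1^{1/2}X)$ plus forcing terms integrable in $s$, since $\int b_1^{3/2}\,ds<\infty$ and the forcing is like $s^{-1-\epsilon}$ after tracking powers. Gronwall then gives $X(s)\les X(s_0)+C$, that is, $\E_2\les C(u_0)\la^2$. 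If the forcing power turns out to be too weak (for $L$ large, $\E_2$ decays only like $b_1^{2L/(2L-1)}$ while $\la^2\sim s^{-2L/(2L-1)}$), I will exploit the fact that these rates are comparable up to logarithms. In that case I would refine the Gronwall argument by using the improved bound for the $\psizt$ contribution at level $2$ coming from its $y\sim S_1$ localization.
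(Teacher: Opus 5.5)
\textbf{There is a genuine gap.} Put $X=\E_2/\la^2$, so $\frac{\dd X}{\dd s}=\la^2\frac{\dd X}{\dd t}$. Any contribution of size $\la^{-4}b_1\E_2$ to $\frac{\dd}{\dd t}\|\W_2\|_{L^2}^2$ gives $\frac{\dd X}{\dd s}\sim b_1X$. Since $\int b_1\,\dd s=\infty$, Gronwall then only yields polynomial growth of $X$ in $s$.

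\emph{The commutator.} It is such a term, but not because of $\E_1$: already $\|g_2\wp\|_{L^2}^2\les\E_2$ by \itref{le:interp-wp}{it:interp-wp=sobolev}. Its leading part can be cancelled by $\mora_0$, so this part of your plan is fine.

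\emph{The $\hW\times$ term.} The $\hW\times$ part of $-2\int\W_2\cdot\hj\mhaml\W_2$, which you call perturbative, has the same critical size. With $\hw\approx b_1T_1e_2$ it is exactly the gain-of-two-derivatives term from the proof of Lemma~\ref{le:eng}, of order $b_1\int|f|\,|\w_2|^2$ with $f=(1+\z)\oa T_1$. That is $b_1X$, not $b_1^{3/2}X$; at top order this very term is what forced the virial functional. It is harmless only if you bound $\int\w_2\cdot\w_4$ by $\|\w_2\|_{L^2}\|\w_4\|_{L^2}$ and use the already closed bound on $\E_4$ from \eqref{pro:boot-e2k-bnd} (or \eqref{pro:boot-e2l2-bnd} if $L=1$). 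Your proposal never uses this input.

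\emph{The lemma you mirror.} Lemma~\ref{le:mix-low-eng} at $k=0$ only gives the right-hand side $\la^{-4}b_1^{1+2c_1}\lgb^{K/2+C}$. In the variable $s$ this is $b_1\lgb^{C'}$, which is not integrable. Your claim that everything left after the Morawetz cancellation is $O(b_1^{3/2}X)$ plus integrable forcing is exactly what would need to be proved term by term, and it is not.

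\emph{The case $L=1$.} The borderline case is $L=1$, not large $L$. There $c_1=1$ and $\la\approx b_1\lgb^2$, so your forcing term $b_1^2\lgb^C\sqrt X/\la\approx b_1\lgb^{C-2}\sqrt X$ is integrable only if $C<1$. Lemma~\ref{le:local-app-sol} does not provide this, and you give no argument for this case. Also, $\mhamp\phiat[1]$ does not vanish away from $y\sim S_1$, since $\ham T_1=\lphi$; only $\ham^2T_1=0$.

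\textbf{The missing idea.} The paper never writes an energy identity for the radiation. It uses the exact identity for the full map, $\frac12\frac{\dd}{\dd t}\bigl\{\la^{-2}\int|\hj\mham\hw|^2\bigr\}=-\la^{-4}\int\hj\mham\hw\cdot(\hj\mham)^2\hw$, i.e.\ the evolution of $\|\prt u\|_{L^2}^2$ as in \citep{merle_blowup_2013}. This identity contains neither a scaling commutator nor any forcing $\mF$. Split $\hw=\wzt+\w$ and apply Cauchy--Schwarz: the right-hand side is at most $\la^{-4}\bigl(\|\w_2\|_{L^2}+\|\hj\mham\wzt\|_{L^2}\bigr)\bigl(\|\w_4\|_{L^2}+\|(\hj\mham)^2\wzt\|_{L^2}\bigr)$. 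For $L\ge2$, the bounds on $\w_2^1$, $\w_4^1$, the profile, $\E_2$ and $\E_4$ make this $\les\la^{-4}b_1^{4c_1}\lgb^C$. In the variable $s$ this is $s^{-2c_1}$ up to logarithms, which is integrable because $2c_1=\frac{2L}{2L-1}>1$. Moreover, $\int|\hj\mham\hw|^2$ differs from $\E_2$ by $O(\dbs\E_2+b_1^2\lgb^{4L-2})$, and $b_1^2\lgb^{4L-2}\ll\la^2$, so $\E_2\les C(u_0)\la^2$ follows. For $L=1$ the right-hand side is $\la^{-4}(b_1^2\sqrt{\E_2}+b_1^3\lgb)$, and the paper closes the argument by iterating on the logarithmic exponent in the bound for $\E_2$.
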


\begin{proof}
  \step[1, Estimates for $\w_2^1,\w_4^1$]
  By~\eqref{def:w2k1}, \eqref{eq:mham} and the definition of $\Omega_+^k$, we consider the remainders in $\w_2,\w_4$:
  \begin{equation}
    \label{pf:refined-e2=w2w4-expand}
    \begin{aligned}
      \w_2^1
      &=\R\mham(\gamma e_3)+\Omega_+^1[\hw]\w
      =-\zt\pry\gamma e_2+\hw\times\mham\w,\\
      \w_4^1
      &=(\R\mham)^2(\gamma e_3)+\Omega_+^2[\hw]\w
      =\ham(\zt\pry\gamma)e_1+\Omega_+^2[\hw]\w.
    \end{aligned}
  \end{equation}
  For $y\le1$ and $L\ge2$, from Corollary~\itref{co:dore}{it:dore=w}, \ref{it:dore=hw}, Lemma~\itref{le:preserv}{it:preserv=op}--\ref{it:preserv=cross}, and the bound for $\gamma e_3$ used in Lemma~\ref{le:w2l2-e}, we have $\w_2^1\in\mcD_L$ and $\w_4^1\in\mcD_{L-1}$ with factor $b_1^{L+2}\lgb^{-1}$. Hence we obtain
  \[
    \|\w_2^1\|_{L^2(y\le1)}^2+\|\w_4^1\|_{L^2(y\le1)}^2
    \les \frac{b_1^{2L+4}}{\lgb^2}.
  \]
  For $L=1$, the $4^{\rm th}$ derivative is controlled by $\E_4$ and Lemma~\ref{le:interp-gamma}, as in the proof of Lemma~\ref{le:w2l2-e}.
  For $y\ge1$, by Lemma~\itref{le:interp-wp}{it:interp-wp=gen-lossy}, and also Lemma~\ref{le:interp-gamma} when $L=1$, we have
  \begin{align*}
    \|\R\mham(\gamma e_3)\|_{L^2(y\ge1)}^2
    &\les\int_{y\ge1}\frac{|\pry\gamma|^2}{y^4}
    \les\begin{cases}
      b_1^3\lgb^C,&L=1,\\
      b_1^{4c_1}\lgb^C,&L\ge2,
    \end{cases}\\
    \|(\R\mham)^2(\gamma e_3)\|_{L^2(y\ge1)}^2
    &\les\sum_{i=1}^3\int_{y\ge1}\frac{|\pry^i\gamma|^2}{y^{2(5-i)}}
    \les\begin{cases}
      \dbs\E_4+b_1^{4+\frac14}\lgb^{-2},&L=1,\\
      b_1^{4+\frac{1}{4L}}\lgb^C,&L\ge2.
    \end{cases}
  \end{align*}
  For the $\Omega_+^k$ terms, from Lemma~\ref{le:omega-pw-bnd} and~\eqref{pf:w2l2-e=wzt-l-infty}, we have
  \begin{align*}
    |\Omega_+^1[\hw]\w|
    &\les|\hw|\sum_{i=0}^2\frac{|\pry^i\w|}{y^{2-i}},\\
    |\Omega_+^2[\hw]\w|
    &\les\sum_{n,m,\xi}b_1^{\frac{m}{2}}\lgb^C
    \prod_{\ell=m+1}^{n}|\pry^{\xi_\ell}\w|
    \frac{|\pry^{\xi_{n+1}}\w|}
    {y^{\xi_1+\dots+\xi_{m}+\xi_{n+2}}\jbk[\log y]^{2m}},
  \end{align*}
  where the sum is taken over
  \[
    1\le n\le2,\quad 0\le m\le n,\quad \xi\in\mbN^{n+2}, \quad
    |\xi|_1=4, \quad |\xi|_+\ge2.
  \]
  The terms containing $\wzt$ are estimated by Lemma~\itref{le:interp-wp}{it:interp-wp=gen-lossy}, \ref{it:interp-wp=pw}. For instance,
  \[
    \|\jbk[\log y]^C\wzt\|_{L^\infty}^2
    \sum_{i=0}^2\int_{y\ge1}
    \frac{|\pry^i\w|^2}{y^{2(2-i)}\jbk[\log y]^2}
    \les b_1^{1+2c_1}\lgb^C.
  \]
  Similarly, the squared $L^2$ norm of the terms with $m\ge1$ in the second line is bounded by $b_1^5\lgb^C$ for $L=1$, and by $b_1^{1+6c_1}\lgb^C$ for $L\ge2$.
  For the pure radiation terms, we interpolate between the small $L^\infty$ norm and the adapted energies to obtain $\dbs\E_2$ and $\dbs\E_4$. We first treat $\wp$. Since $\ham_0=\ham+\zt^2/2$, by radial integration by parts and Lemma~\itref{le:interp-wp}{it:interp-wp=sobolev}, we have, for $k=1,2$,
  \[
    \|\pry^{2k}[(1-\chi_{\frac12})\wp]\|_{L^2}^2
    \le\|\ham_0^k[(1-\chi_{\frac12})\wp]\|_{L^2}^2
    \les C(M)\E_{2k}.
  \]
  Here the cutoff terms are supported on a fixed annulus, and the lower-order terms are controlled by the same lemma using
  \[
    \ham_0^2=\ham^2+\zt^2\ham-(\pry\zt^2)\pry
    -\frac12\Delta(\zt^2)+\frac14\zt^4.
  \]
  From the radial Gagliardo--Nirenberg inequality, we have
  \begin{align*}
    \|\pry\wp\|_{L^4(y\ge1)}
    &\les C(M)\|\wp\|_{L^\infty}^{1/2}\E_2^{1/4},\\
    \|\pry^i\wp\|_{L^{8/i}(y\ge1)}
    &\les C(M)\|\wp\|_{L^\infty}^{1-i/4}\E_4^{i/8},
    \qquad i=1,2,3.
  \end{align*}
  Indeed, we apply the one-dimensional inequality to $(1-\chi_{\frac12})\wp$ on $(\eta,\infty)$, raise to the corresponding powers, and integrate in $\eta>0$. From Hardy's inequality applied to the same cutoff with exponents $4,8,8/3>2$, we also have the corresponding bounds for $\pry^i\wp/y^j$ of total order $i+j$.

  For example, taking all derivative norms below on $y\ge1$, we obtain from H\"older and the smallness of $\|\wp\|_{L^\infty}$ that
  \begin{align*}
    \bigl\||\pry\wp|\,|\pry^3\wp|\bigr\|_{L^2}^2
    &\le\|\pry\wp\|_{L^8}^2\|\pry^3\wp\|_{L^{8/3}}^2\\
    &\les C(M)\bigl(\|\wp\|_{L^\infty}^{3/2}\E_4^{1/4}\bigr)
    \bigl(\|\wp\|_{L^\infty}^{1/2}\E_4^{3/4}\bigr)
    \les\dbs\E_4,\\
    \bigl\||\pry^2\wp|^2\bigr\|_{L^2}^2
    &=\|\pry^2\wp\|_{L^4}^4
    \les C(M)\|\wp\|_{L^\infty}^2\E_4
    \les\dbs\E_4.
  \end{align*}
  From the first interpolation inequality, we similarly obtain $\dbs\E_2$ for products of total order two. For the remaining products, distribute the powers of $y^{-1}$ among the factors and apply H\"older with reciprocal exponents summing to $1/2$. If all derivatives fall on one factor, use its $L^2$ bound and put the remaining factors in $L^\infty$.
  To include $\gamma$, we use the constraint~\eqref{eq:constra} to reduce its derivatives to the preceding products of $\wp$ and terms containing the profiles. In particular, we have
  \[
    \Delta\gamma
    =-\frac{|\pry\hw^\perp|^2+\hw^\perp\cdot\Delta\hw^\perp}{1+\wh\gamma}
    -\frac{(\hw^\perp\cdot\pry\hw^\perp)^2}{(1+\wh\gamma)^3}
    -\Delta\widetilde\gamma_0.
  \]
  Using $\Delta\wp=-\ham_0\wp+\wp/y^2$, we estimate the terms containing only $\wp$ by the preceding interpolation, and those containing $\wztp$ by the profile bound and Lemma~\ref{le:interp-wp}. The pure profile difference consists of the cutoff discrepancy and the Taylor remainder; its derivative of order $m$ has squared $L^2$ norm bounded by $b_1^2S_1^{2-2m}\lgb^C\les b_1^{m+1}\lgb^C$ for $m=2,4$. Thus we obtain
  \begin{equation}
    \label{pf:refined-e2=gamma}
    \|\pry^2\gamma\|_{L^2}^2+\|\Delta\gamma\|_{L^2}^2
    +\|\zt\pry\gamma\|_{L^2}^2+\|\zt^2\gamma\|_{L^2}^2
    \les\dbs\E_2+b_1^{1+2c_1}\lgb^C.
  \end{equation}
  Treating $4^{\rm th}$ derivatives similarly, we obtain
  \[
    \|\pry^4[(1-\chi_{\frac12})\gamma]\|_{L^2}^2
    \les\begin{cases}
      \dbs\E_4+b_1^{4+\frac14}\lgb^{-2},&L=1,\\
      \dbs\E_4+b_1^{1+6c_1}\lgb^C,&L\ge2,
    \end{cases}
  \]
  where we have used Lemma~\ref{le:interp-gamma} for $L=1$. Using these derivative bounds in the same interpolation and H\"older estimates, together with the bounds for the profile terms, we obtain
  \begin{align*}
    \|\Omega_+^1[\hw]\w\|_{L^2}^2
    &\les\dbs\E_2+b_1^{1+2c_1}\lgb^C,\\
    \|\Omega_+^2[\hw]\w\|_{L^2}^2
    &\les\begin{cases}
      \dbs\E_4+b_1^{4+\frac14}\lgb^{-2},&L=1,\\
      \dbs\E_4+b_1^{1+6c_1}\lgb^C,&L\ge2.
    \end{cases}
  \end{align*}
  Therefore, from~\eqref{eq:w2k0-e2k} and~\eqref{pro:boot-e2k-bnd}, we obtain
  \begin{equation}
    \begin{split}
      \label{pf:e2-la2=w21-w41}
      \|\w_2^1\|_{L^2}^2
      & \les \dbs\E_2 + b_1^{1+2c_1}\lgb^C, \\
      \|\w_4^1\|_{L^2}^2
      & \les \dbs\E_4 + b_1^{4+\frac{1}{4L}}\lgb^C.
    \end{split}
  \end{equation}

  \step[2, Estimates for $\wzt$]
  Note that the identity $\ham T_k=T_{k-1}$, the $\mcHb,\mcHg$ classes of Lemma~\ref{le:app-sol}, and the cutoff commutators on $y\sim S_1$ give
  \begin{equation}
    \label{pf:e2-la2=jh-wzt}
    \|\hj\mham\wzt\|_{L^2}^2
    \les \|\mham\wzt\|_{L^2}^2
    \les b_1^2\lgb^{4L-2}.
  \end{equation}
  Similarly, we have
  \[
    \|(\R\mham)^2\wzt\|_{L^2}^2
    \les b_1^4\lgb^{\max\{4L-6,0\}}.
  \]
  For $y\le1$, by Corollary~\itref{co:dore}{it:dore=wzt}, \ref{it:dore=hw} and Lemma~\itref{le:preserv}{it:preserv=op}--\ref{it:preserv=cross}, we have $(\hj\mham)^2\wzt\in\mcD_L$ with factor $b_1^2$. Here the leading linear term in $(\R\mham)^2\wzt$ vanishes since $\ham^2T_1=\ham\lphi=0$. Thus the squared $L^2$ norm on $y\le1$ is bounded by $Cb_1^4$. For $y\ge1$, from Lemma~\ref{le:omega-pw-bnd}, Lemma~\itref{le:interp-wp}{it:interp-wp=pw} and Lemma~\ref{le:app-sol},
  \[
    \Bigl\|\frac{\w}{y}\Bigr\|_{L^\infty(y\ge1)}^4\les b_1^{4c_1}\lgb^C,
    \quad
    \|y^2\mham^2\wzt\|_{L^2(y\ge1)}^2\les b_1^2\lgb^C.
  \]
  Their product yields $b_1^{2+4c_1}\lgb^C$, which is smaller than $b_1^4$. The remaining radiation terms have a strictly positive gain in the power of $b_1$ by~\ref{it:interp-wp=gen-lossy}, and the pure profile terms are read from the same classes. Thus
  \begin{equation}
    \label{pf:e2-la2=jh2-wzt}
    \|(\hj\mham)^2\wzt\|_{L^2}^2
    \les b_1^4\lgb^{\max\{4L-6,0\}}.
  \end{equation}

  \step[3, Energy estimate for $\E_2$]
  As in~\citep[Step~3 of Section 5.1]{merle_blowup_2013}, we have
  \begin{align*}
    &\frac12\frac{\dd}{\dd t}\biggl\{\frac{1}{\la^2}\int|\hj\mham\hw|^2\biggr\}
    =-\frac{1}{\la^4}\int\hj\mham\hw\cdot(\hj\mham)^2\hw \\
    &=-\frac{1}{\la^4}\biggl\{ \int\w_2\cdot\w_4+\int\w_2\cdot(\hj\mham)^2\wzt
    + \int\hj\mham\wzt\cdot\w_4 +\int\hj\mham\wzt\cdot(\hj\mham)^2\wzt  \biggr\},
  \end{align*}
  where we use $\hj\mham\hw = \w_2 + \hj\mham\wzt$. Then by Cauchy-Schwarz, \eqref{pro:boot-e2k-bnd}, \eqref{pro:boot-e2l2-bnd} and the previous estimates \eqref{pf:e2-la2=w21-w41}--\eqref{pf:e2-la2=jh2-wzt},
  \begin{equation}
    \begin{split}
      \label{pf:e2-la2=e2-energy}
      & \frac12\frac{\dd}{\dd t}\biggl\{\frac{1}{\la^2}\int|\hj\mham\hw|^2\biggr\} \\
      & \les \frac{1}{\la^4}
      \Bigl(\|\w_2\|_{L^2}+\|\hj\mham\wzt\|_{L^2}\Bigr)
      \Bigl(\|\w_4\|_{L^2} + \|(\hj\mham)^2\wzt\|_{L^2}\Bigr) \\
      & \les \frac{1}{\la^4} \begin{cases}
        b_1^2\sqrt{\E_2} + b_1^3\lgb, & L=1,\\
        b_1^{4c_1}\lgb^C, & L\ge2.
      \end{cases}
    \end{split}
  \end{equation}
  In addition, we expand $\hw$ and get
  \begin{align*}
    \int|\hj\mham\hw|^2
    & = \int|\hj\mham\w|^2 +\int|\hj\mham\wzt|^2 +2\int\hj\mham\w\cdot\hj\mham\wzt \\
    & = \|\w_2^0\|_{L^2}^2 + \|\w_2^1\|_{L^2}^2 + 2\int\w_2^0\cdot\w_2^1 + \|\hj\mham\wzt\|_{L^2}^2 + 2\int \w_2\cdot \hj\mham\wzt.
  \end{align*}
  Thus by Cauchy-Schwarz, \eqref{pro:boot-e2l2-bnd}, and \eqref{pf:e2-la2=w21-w41}--\eqref{pf:e2-la2=jh2-wzt}, we have
  \[
    \left|\int|\hj\mham\hw|^2-\E_2\right|
    \les \dbs\E_{2} +b_1^2\lgb^{4L-2}.
  \]
  After direct integration of \eqref{pf:e2-la2=e2-energy}, the estimate splits into two cases. For $L\ge2$, we have
  \begin{align*}
    & \biggl\{(1+\dbs)\E_2+O(b_1^2\lgb^{4L-2})\biggr\} \\
    & \leq \biggl(\frac{\la(s)}{\la(s_0)}\biggr)^2\biggl\{(1+\dbs)\E_2+O(b_1^2\lgb^{4L-2})\biggr\}(s_0)
    + \la^{2}(s) \int_{s_0}^s \frac{b_1^{4c_1}\lgb^{C}}{\la^2} \dd\sigma.
  \end{align*}
  The initial value part is small by \eqref{ini:e2k-bnd}. For the integral part, from $b_1\simeq c_1/s$, the asymptotics~\eqref{pf:pro-boot=la-s} for $\la$, we have
  \[
    \int_{s_0}^s \frac{b_1^{4c_1}\lgb^{C}}{\la^2} \dd\sigma
    \les \int_{s_0}^{\infty} \frac{(\log \sigma)^{C}}{\sigma^{2c_1}} \dd\sigma
    \les 1,
  \]
  since $2c_1>1$. Thus we obtain $\E_2 \les \la^2$, where the implicit constant depends on $u_0$. For $L=1$, we have $c_1=1$, $d_1=-2$. The initial value part remains small by \eqref{ini:e2k-bnd}, but the integral part becomes
  \[
    \int_{s_0}^s \frac{1}{\la^2}\Bigl(
    b_1^2\sqrt{\E_2}+b_1^3\lgb\Bigr) \dd\sigma
    \les \int_{s_0}^s \frac{(\log\sigma)^{\frac{K}{2}-4}}{\sigma} \dd\sigma
    \les \begin{cases}
      (\log s)^{\frac{K}{2}-3}, &K>6,\\
      \log\log s, &K=6,\\
      1, &K<6.
    \end{cases}
  \]
  Since $K$ is chosen large enough, using \eqref{pf:pro-boot=la-b-rel} we have the improved bound
  \[
    \E_2 \les \la^2(1+ (\log s)^{\frac{K}{2}-3}) \les b_1^{2}\lgb^{\frac{K}{2}+1}.
  \]
  As $K/2+1<K$ for $K>2$, inserting this bound back into the preceding inequality strictly decreases the logarithmic exponent. After finitely many iterations, if the resulting $K$ satisfies $K<6$, then the desired bound follows
  \[
    \E_2 \les \la^2(1+1) \les \la^2;
  \]
  if $K=6$ after finitely many steps, then using $\log \log s \les \log s$ we have
  \[
    \E_2\les \la^2(1+\log s) \les b_1^{2}\lgb^{5}.
  \]
  Then one more iteration ensures the desired bound. This completes the proof.
\end{proof}

\subsection{Proof of the main theorem}
\label{sub:proof-main-thm}
We are now in a position to prove Theorem~\ref{th:main-thm}, and show the corresponding blowup behavior.

\begin{proof}[Proof of Theorem~\ref{th:main-thm}]
  \step[1, Finite time blowup] Using the asymptotic formula~\eqref{pf:pro-boot=la-s} for $\la$, the modulation equation~\eqref{le:mod-eq-1}, and $b_1\simeq c_1/s$ by \eqref{pf:pro-boot=b1-fin}, we have
  \begin{align*}
    -2L \frac{\dd}{\dd t} \bigl(\la^{\frac{1}{2L}}\bigr)
    & = \la^{\frac{1}{2L}-2}\biggl(b_1 + O\biggl(\frac{b_1^{L+1}}{\lgb}\biggr)\biggr)                                                                \\
    & = C(u_0)(1+o(1)) \biggl(\frac{(\log s)^{|d_1|}}{s^{c_1}}\biggr)^{\frac{1}{2L}-2} \biggl(\frac{c_1}{s} +O\biggl(\frac{1}{s\log s}\biggr)\biggr) \\
    & = C(u_0)(1+o(1)) s^{\frac{1}{4L-2}} (\log s)^{(\frac{1}{2L}-2)|d_1|}                                                                           \\
    & \ge C(u_0),\footnotemark
  \end{align*}
  which yields the uniform upper bound for the time derivative: $\frac{\dd}{\dd t}\la^{\frac{1}{2L}} \le -\frac{C(u_0)}{2L} <0$. Note that we used
  \[
    -1 -c_1\biggl(\frac{1}{2L} -2\biggr) = \frac{1}{4L-2}.
  \]
  Hence, there exists $T\in(0,+\infty)$ such that
  \[
    \la\downarrow 0 \quad\mbox{as}\quad t\uparrow T.
  \]
  Together with the decomposition~\eqref{eq:u-decomp} and the existence of $\w\in C([s_0,+\infty);\dot{H}^1)$, this shows that the constructed solution $u(t)$ to~\eqref{eq:sm} forms a concentrating energy bubble as $t\uparrow T$, and hence blows up in finite time.

  \footnotetext{$C(u_0)$ denotes constants depending on $u_0$, which may change from line to line.}

  \step[2, Quantized blowup rate] From~\eqref{pf:pro-boot=la-s} we have $\la\downarrow 0$ only if $s\uparrow\infty$, since $\la(s)>0$ for all $s\ge s_0$. Thus the rescaled time transformation
  \[
    [0,T) \to [s_0,\infty), \quad t\mapsto s(t) = s_0 +\int_{0}^{t}\frac{\dd\tau}{\la^2(\tau)}
  \]
  is well-defined and satisfies $s(t)\to\infty$ as $t\uparrow T$. Then from \eqref{pf:pro-boot=la-s},
  \[
    T-t
    = \kappa_0^2(1+o(1)) \int_{s}^{\infty} \frac{(\log\sigma)^{2|d_1|}}{\sigma^{2c_1}} \dd\sigma
    = C(u_0)(1+o(1)) \frac{(\log s)^{2|d_1|}}{s^{\frac{1}{2L-1}}},
  \]
  and hence
  \[
    \left|\log (T-t)\right| = C(u_0)(1+o(1)) \log s.
  \]
  Consequently, applying the above identities together with $c_1=\frac{L}{2L-1}$, $d_1=-\frac{2L}{(2L-1)^2}$ by~\eqref{eq:c1}, we obtain the following blowup rate:
  \begin{align*}
    \la(t)
    & = \kappa_0              (1+o(1)) \frac{(\log s)^{|d_1|}}{s^{c_1}}               \\
    & = C(u_0)(1+o(1)) \biggl(\frac{T-t}{(\log s)^{2|d_1|}}\biggr)^L (\log s)^{|d_1|} \\
    & = C(u_0)(1+o(1)) \frac{(T-t)^L}{\left|\log (T-t)\right|^{\frac{2L}{2L-1}}}.
  \end{align*}

  \step[3, Convergence of the phase] Let $0<t_1<t_2<T$. We denote the corresponding rescaled times by $s_1=s(t_1)$, $s_2=s(t_2)$. Now using the modulation equation~\eqref{le:mod-eq-1} for $\vtha$, $b_1\simeq c_1/s$ by~\eqref{pf:pro-boot=b1-fin}, and \eqref{pf:pro-boot=la-s},
  \begin{align*}
    |\vtha(t_2)-\vtha(t_1)|
    & =\left|\int_{s_1}^{s_2}
    \biggl(-a_1+O\biggl(\frac{b_1^{L+1}}{\lgb}\biggr)\biggr) \dd\sigma\right| \\
    & \les\int_{s_1}^{\infty}
    \biggl(\frac{1}{\sigma^{L+1}\log \sigma} +\frac{1}{\sigma(\log\sigma)^{5/4}} \biggr)\dd\sigma
    \les \frac{1}{(\log s_1)^{1/4}}
    \to 0
  \end{align*}
  as $s_1\uparrow \infty$, equivalently, as $t_1\uparrow T$. This implies that $\vtha(t)\to\vtha(T)\in\mbR$ as $t\uparrow T$.

  \step[4, Strong convergence of the excess energy]
  Recalling the decomposition~\eqref{eq:w}, we have
  \[
    u  = e^{\vtha \R} (Q+\hv)_{\la} =: e^{\vtha \R} Q_{\la} + \bar{u}.
  \]
  By scaling invariance and \eqref{pro:boot-e1-bnd}, we have
  \[
    \|\nabla \bar{u}\|_{L^2}^2 \les 1.
  \]
  In addition, note that
  \[
    \Delta\hv = (e_r,e_{\tau},Q)\Bigl(-\mham\hw-\frac{\zt^2}{2}\hw\Bigr),
  \]
  and thus from Lemma~\ref{le:coer-e-2k2}, Lemma~\ref{pro:refined-e2} and the explicit construction of $\wzt$,
  \begin{align*}
    \|\Delta \bar{u}\|_{L^2}^2
    \les \frac{\|\Delta \hv\|_{L^2}^2}{\la^2}
    & \les \frac{1}{\la^2} \Bigl(\|\mham\w\|_{L^2}^2 + \|\zt^2\w\|_{L^2}^2 + \|\mham\wzt\|_{L^2}^2 + \|\zt^2\wzt\|_{L^2}^2 \Bigr) \\
    & \les \frac{1}{\la^2}\bigl(\E_2 +b_1^{1+2c_1}\lgb^C +b_1^2\lgb^{4L-2}\bigr)
    \les 1.
  \end{align*}
  Then for any $R>0$, by Gagliardo--Nirenberg,
  \[
    \|\nabla\bar{u}\|_{L^2(r\le R)}^2
    \les \|\nabla\bar{u}\|_{L^4(r\le R)}^2 R
    \les \|\nabla\bar{u}\|_{L^2} \|\Delta\bar{u}\|_{L^2} R
    \les R.
  \]
  For the far-out region, since
  \[
    \|\Delta (Q_{\la})\|_{L^2(r\ge R)}^2
    \sim \la^4 \to 0
    \quad\mbox{as}\quad \la\downarrow 0,
  \]
  we have the uniform bound
  \[
    \sup_{0<t<T}\|\Delta u\|_{L^2(r\ge R)}^2 \les 1.
  \]
  Moreover, using the equation~\eqref{eq:sm} and $|u|=1$, we have for $0<t_1<t_2<T$ that
  \[
    \|u(t_2)-u(t_1)\|_{L^2(r\ge R)}
    \les \int_{t_1}^{t_2} \|\Delta u(\sigma)\|_{L^2(r\ge R)}\dd\sigma
    \les |t_2-t_1|.
  \]
  Then a standard interpolation yields for $t_1\uparrow T$ that
  \begin{align*}
    & \|\nabla[(1-\chi_{R/2})(u(t_2)-u(t_1))]\|_{L^2}^2 \\
    & \les \|(1-\chi_{R/2})(u(t_2)-u(t_1))\|_{L^2}
    \|\Delta[(1-\chi_{R/2})(u(t_2)-u(t_1))]\|_{L^2} \\
    & \les |t_2-t_1| \to 0.
  \end{align*}
  Hence
  \begin{align*}
    & \|\nabla\bar{u}(t_2) -\nabla\bar{u}(t_1)\|_{L^2(r\ge R)} \\
    & \les \|\nabla u(t_2)-\nabla u(t_1)\|_{L^2(r\ge R)} + \sum_{i=1,2}\|\nabla(Q_{\la(t_i)})\|_{L^2(r\ge R)} \to 0.
  \end{align*}
  By taking $R\to 0$, we also have $\|\nabla\bar{u}\|_{L^2(r\le R)}\to 0$, and thus
  \[
    \|\nabla\bar{u}(t_2) -\nabla\bar{u}(t_1)\|_{L^2} \to 0 \quad\mbox{as}\quad t_1\uparrow T.
  \]
  This yields the existence of $u^{\ast} = \lim_{t\uparrow T} \bar{u}(t)\in\dot{H}^1$, and hence completes the proof of Theorem~\ref{th:main-thm}.
\end{proof}

\appendix

\section{Geometric properties}
\label{ap:1-equivariance-geom}
This appendix is devoted to two basic geometric properties of the $1$-equivariant \sch\ maps.

\begin{lemma}
  \label{le:geo-prop}
  Let $u$ be a solution to~\eqref{eq:sm} with initial data $u_0$. We have:
  \begin{enumerate}[label={\rm (\roman*)}]
    \item if $u_0$ takes values in $\mbS^2$, then so does $u$;~\label{it:geo-prop=1}
    \item if $u_0$ is $1$-equivariant, then $u$ is also $1$-equivariant.~\label{it:geo-prop=2}
  \end{enumerate}
\end{lemma}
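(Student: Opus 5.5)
For~\ref{it:geo-prop=1}, I will argue via conservation of the pointwise length $|u|^2$. Assuming $u$ is a sufficiently smooth solution (smooth local solutions from the well-posedness theory, with $\dot H^1$ solutions handled by density and continuous dependence), I take the dot product of~\eqref{eq:sm} with $u$. Since $u\cdot(u\times\Delta u)=0$, this gives
\[
\prt |u|^2 = 2u\cdot\prt u = 2u\cdot(u\times\Delta u)=0
\]
pointwise in $(t,x)$. Hence $|u(t,x)|=|u_0(x)|=1$ for all $t$ in the lifespan. The only subtlety is regularity: I would first prove the identity for smooth data, where the computation is classical. I would then pass to the limit using the local well-posedness results cited in Subsection~\ref{sub:related-results}, since $|u_n|=1$ pointwise is preserved under $L^2_{\rm loc}$ convergence along a subsequence. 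If one instead works in an extrinsic formulation where $\mbS^2$-valuedness is built into the solution class, this step is essentially tautological.

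For~\ref{it:geo-prop=2}, I will use the symmetries of the equation together with uniqueness. For $\tha_0\in\mbR$, let $\rho_{\tha_0}$ denote the spatial rotation by angle $\tha_0$ in $\mbR^2$, and define
\[
v(t,x)=e^{-\tha_0\R}u(t,\rho_{\tha_0}x).
\]
Two facts make $v$ again a solution of~\eqref{eq:sm}. First, the Laplacian commutes with spatial rotations. Second, $e^{-\tha_0\R}\in\SO(3)$ preserves the cross product: $O(a\times b)=Oa\times Ob$ for $O\in\SO(3)$. Together these give $\prt v=v\times\Delta v$.

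The $1$-equivariance of $u_0$ means exactly that $u_0(\rho_{\tha_0}x)=e^{\tha_0\R}u_0(x)$, so $v(0)=u_0$. Uniqueness in the well-posedness class then yields $v\equiv u$, i.e.\ $u(t,\rho_{\tha_0}x)=e^{\tha_0\R}u(t,x)$ for every $\tha_0$. This is precisely the representation $u=e^{\tha\R}(u_1,u_2,u_3)^{T}(t,r)$.

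The main (mild) obstacle is specifying the solution class in which uniqueness holds. I would cite the local well-posedness theory of \citep{ding_local_2001,mcgahagan_approximation_2007} for smooth data, and the equivariant theory of \citep{bejenaru_equivariant_2013} for energy-class data. In both cases it remains only to check that the transformed map $v$ stays in the same class, which is clear because rotations are isometries of all the relevant norms.
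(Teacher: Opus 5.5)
Your proposal is correct and follows essentially the same route as the paper. For (i), both arguments use the conservation of $|u|^2$ coming from $u\cdot(u\times\Delta u)=0$. For (ii), both observe that $e^{-\theta_0\R}u(t,\rho_{\theta_0}x)$ is again a solution with the same initial data, because the Laplacian commutes with rotations and the cross product is rotation-invariant, so uniqueness in the well-posedness class forces equivariance. Your extra remarks on the solution class (smooth data first, then density) make explicit a point the paper leaves implicit.
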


\begin{proof}
  The property~\ref{it:geo-prop=1} is a direct result of~\eqref{eq:sm}:
  \[
    \prt|u|^2 = 2u\cdot (u\times\Delta u) =0
    \quad\Longrightarrow\quad
    |u|^2(t,x) \equiv |u_0|^2(x) = 1.
  \]
  To show \ref{it:geo-prop=2}, we let $R_\tha\in\SO(2)$ be the rotation by $\tha\in\mbR$ in the domain $\mbR^2$, and $\R_\tha = e^{\tha\R}\in\SO(3)$ be the corresponding rotation in the target $\mbS^2$ given by the standard inclusion $\SO(2)\hookrightarrow\SO(3)$. Then the $1$-equivariance of the initial data reads
  \[
    u_0(x) = \R_{-\tha} u_0(R_\tha x).
  \]
  By the invariance of the Laplacian under rotations of the domain and of the cross product under rotations of the target,
  \[
    \Delta (u\circ R_{\tha}) = (\Delta u)\circ R_{\tha},
    \andd
    \R_{\tha}\vv\times\R_{\tha}\w = \R_{\tha}(\vv\times\w)
    \;\;\mbox{for}\;\; \vv,\w\in\mbR^3,
  \]
  we find that $\R_{-\tha} u(t,R_\tha x)$ solves~\eqref{eq:sm} with the initial data $\R_{-\tha} u_0(R_\tha x)$. By the local well-posedness of the \sch\ map flow, this implies
  \[
    u(t,x) = \R_{-\tha} u(t,R_\tha x),
  \]
  and hence, by definition, the $1$-equivariance of the solution $u$.
\end{proof}

For consistency with the notation in Section~\ref{sec:app-sol}, we use $y$ rather than $r$ for the radial variable of equivariant maps.

\begin{lemma}[Regularity of $1$-equivariant maps]
  \label{le:reg}
  Let $u = Q +\vv$ be a smooth $1$-equivariant solution of~\eqref{eq:sm}, where $\vv = (e_r,e_{\tau},Q)(\alpha,\beta,\gamma)^{\rm T}$. If $u$ satisfies
  \[
    \sum_{\ell=1}^{N}\int|(-\Delta)^{\frac{\ell}{2}}u|^2 < +\infty
  \]
  for some $ N\gg L$, then the following estimates hold:
  \begin{enumerate}[label={\rm (\roman*)}]
    \item for every integer $1\le p\le 10L$, the components of $\vv$ admit the following Taylor expansions at the origin. More precisely, there exist sequences $(c_{\alpha,k})_{k=1}^{p}$, $(c_{\beta ,k})_{k=1}^{p}$, $(c_{\gamma,k})_{k=1}^{p} \subset \mbR$ such that \label{le:reg=app-taylor}
      \[
        \begin{pmatrix}
          \alpha \\ \beta\\ \gamma
        \end{pmatrix} = \sum_{k=1}^{p}\begin{pmatrix}
          c_{\alpha,k} \hs[.1] y^{2k-1} \\
          c_{\beta, k} \hs[.1] y^{2k-1} \\
          c_{\gamma,k} \hs[.1] y^{2k}
        \end{pmatrix}
        + O\hs[-.15]\begin{pmatrix}
          y^{2p+1} \\ y^{2p+1} \\ y^{2p+2}
        \end{pmatrix};
      \]
    \item let $\varepsilon$ be either $\alpha$ or $\beta$. If
      \[
        \|\nabla \vv\|_{L^2} + \Bigl\|\frac{\vv}{y}\Bigr\|_{L^2} \ll 1,
      \]
      then for $1 \le k\le L$,
      \label{le:reg=app-wei-bnd-finite}
      \begin{align*}
        \int|\varepsilon_{(2k+2)}|^2 +\int\frac{|\varepsilon_{(2k+1)}|^2}{y^2\jbk[y]^2}
        & +\sum_{\ell=1}^{k}\int
        \frac{|\varepsilon_{(2\ell-1)}|^2}{y^6\jbk[\log y]^2\jbk[y]^{4(k-\ell)}}                         \\
        & + \sum_{\ell=0}^{k}\int\frac{|\varepsilon_{(2\ell)}|^2}{y^4\jbk[\log y]^2\jbk[y]^{4(k-\ell)}}
        < +\infty.
      \end{align*}
  \end{enumerate}
\end{lemma}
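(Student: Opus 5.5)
The plan is to derive both statements from the smoothness of $u$ at the origin together with the equivariant symmetry, and then to turn the expansions into the weighted bounds.

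\textbf{Part~(i).} Write $u=(u_1,u_2,u_3)$ in polar form, $u(y,\tha)=e^{\tha\R}(u_1,u_2,u_3)(y)$. Smoothness of the map $x\mapsto u(x)\in\mbR^3$ near $x=0$, combined with the $1$-equivariance $u(R_\tha x)=\R_\tha u(x)$, forces a parity structure. Indeed, $u_1+iu_2=(x_1+ix_2)\,g(|x|^2)$ and $u_3=h(|x|^2)$ for smooth $g,h$; this is the standard Whitney-type characterization of equivariant smooth maps. So $u_1,u_2$ are odd in $y$ and $u_3$ is even, each to all orders. The Sobolev bound $\sum_{\ell\le N}\|(-\Delta)^{\ell/2}u\|_{L^2}<\infty$ with $N\gg L$ gives $C^{20L+2}$ regularity near $0$ by Sobolev embedding in $\mbR^2$, so Taylor's theorem applies to order $2p+2\le 20L+2$. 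Next I project onto the Frenet frame: $\alpha=u\cdot e_r-\z\cdot 0$, i.e.\ $\alpha=\z u_1-\lphi u_3$, $\beta=u_2$, and $1+\gamma=\lphi u_1+\z u_3$. Since $\z$ is even and $\lphi$ is odd with smooth Taylor series (from \eqref{eq:vla-phi-z}), products preserve the parities: $\alpha,\beta$ are odd and $\gamma$ is even. The constant term of $\gamma$ vanishes because $u(0)=e_3=Q(0)$, which gives $\gamma(0)=0$. This yields the expansion with remainders $O(y^{2p+1})$ and $O(y^{2p+2})$.

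\textbf{Part~(ii).} I split into $y\le1$ and $y\ge1$.

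For $y\le1$, I use (i) and compute the suitable derivatives $\varepsilon_{(j)}$ of the odd series $\sum c_k y^{2k-1}$. The operator $\oa=-\pry+\z/y$ annihilates $y$ to leading order, and in general $\oa$ maps odd powers to even powers while $\as$ maps even powers to odd powers, with $\as$ annihilating the constant-like part via $\as(y^0)\sim 2/y$. I expect this to be the main obstacle. The concern is that the weights $y^{-6}$ on $\varepsilon_{(2\ell-1)}$ and $y^{-4}$ on $\varepsilon_{(2\ell)}$ are integrable near $0$ only if $\varepsilon_{(2\ell-1)}=O(y^{2+})$ and $\varepsilon_{(2\ell)}=O(y^{1+})$. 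I will verify inductively that $\varepsilon_{(2\ell)}=\ham^\ell\varepsilon$ is odd and $O(y)$, using $\ham y^i=(1-i^2)y^{i-2}+O(y^i)$ as in Lemma~\ref{le:preserv}, so that the $y$ term is killed. I will also verify that $\varepsilon_{(2\ell-1)}=\oa\ham^{\ell-1}\varepsilon$ is even and vanishes to order $y^2$, because $\oa y=O(y^3)$ since $\oa\lphi=0$ and $\lphi=2y+O(y^3)$. Both integrands are then bounded near the origin by $y^{-2}\jbk[\log y]^{-2}\cdot O(1)$ or better, and the extra $\jbk[\log y]^{-2}$ together with the $y\dd y$ measure makes them integrable. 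The unweighted terms are finite by smoothness.

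For $y\ge1$, the weights are harmless: $\jbk[y]^{-4(k-\ell)}y^{-4}\le1$. So it suffices to bound $\varepsilon_{(j)}$ in $L^2(y\ge1)$ for $j\le 2k+2$. Expanding $\oa,\as$ gives combinations of $\pry^i\varepsilon/y^{j-i}$ with bounded coefficients. I will control these by $\|\nabla^{j}u\|_{L^2}$ plus lower-order terms. To do so, I write $\vv=u-Q$, use the Frenet derivative identities of Lemma~\ref{le:frenet-basis} (whose coefficients are all bounded with decaying derivatives for $y\ge1$), and use the Sobolev hypothesis. The low-order terms $\varepsilon/y$ and $\pry\varepsilon$ are controlled by the smallness assumption $\|\nabla\vv\|_{L^2}+\|\vv/y\|_{L^2}\ll1$. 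The higher-order terms are controlled by interpolation with $\|(-\Delta)^{N/2}u\|_{L^2}$.
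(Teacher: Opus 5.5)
Your proposal is correct. Part~(ii) follows the paper's proof. Near the origin you show $\varepsilon_{(2j)}=O(y)$ and $\varepsilon_{(2j+1)}=O(y^2)$ and reduce to $\int_0^1\frac{\dd y}{y\langle\log y\rangle^2}<\infty$. For $y\ge1$ you expand $\oa,\as$ into terms $\pry^i\varepsilon/y^{j-i}$ and pass to $\vv=u-Q$ through the Frenet frame, whose derivatives decay.

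Part~(i), however, uses a genuinely different route. The paper works through the operators. It computes $\nabla u$ and $\Delta u$ in the Frenet basis and extracts from equivariance at $y=0$ the boundary conditions $\oaz\alpha,\oaz\beta,\pry\gamma\to0$. It then integrates $\ham_0=\asz\oaz$ repeatedly, kills the even coefficients with $\ham_0y^\ell=-(\ell^2-1)y^{\ell-2}$, and treats $\gamma$ separately through $\gamma=\sqrt{1-\alpha^2-\beta^2}-1$.

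You instead read the parity directly off the symmetry. You then observe that $\z$ (even) and $\lphi$ (odd) preserve it in $\alpha=\z u_1-\lphi u_3$, $\beta=u_2$, $1+\gamma=\lphi u_1+\z u_3$, with $\gamma(0)=0$ coming from $u(0)=e_3$. This is shorter and never uses the constraint $|u|=1$. It does not even need a Whitney-type characterization: taking $\theta=\pi$ in the equivariance relation gives $u(-y,0)=\diag(-1,-1,1)\,u(y,0)$. Hence the profiles extend to smooth, exactly odd or even functions on $\mbR$, and so do $\alpha,\beta,\gamma$. The paper's route is longer, but it stays within the $\oaz,\asz$ and bounce-back calculus that reappears in the Dore-series analysis.

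Three small corrections.

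First, $\oa y=-1+\z=-\frac{2y^2}{1+y^2}$, so $\oa y=O(y^2)$, not $O(y^3)$. Only $O(y^2)$ is needed, so nothing breaks.

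Second, for $y\ge1$ you cannot reduce to unweighted $L^2(y\ge1)$ bounds on $\varepsilon_{(j)}$ for every $j\le 2k+2$. At $j=0$ this would require $\varepsilon\in L^2(y\ge1)$, which does not follow from the hypotheses; only $\varepsilon/y\in L^2$ is available. Keep the weight $y^{-4}$ on the $\ell=0$ term and bound it by $\int|\varepsilon|^2/y^2$, as the paper does. For $j\ge1$ the undifferentiated contribution is $\varepsilon/y^{j}$, which is controlled in the same way.

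Third, no interpolation is needed. The hypothesis controls each $\|(-\Delta)^{\ell/2}u\|_{L^2}$ with $1\le\ell\le N$ directly, and $\pry^\ell Q=O(y^{-\ell-1})$ lies in $L^2(y\ge1)$ for $\ell\ge1$.
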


\begin{proof}
  Proof of~(i). We compute $\nabla u$ using Lemma~\ref{le:frenet-basis},
  \begin{align*}
    \nabla u = (\pry u, \pr_{\tau} u)
    & = \biggl(
    \biggl( \pry\alpha +\frac{\zt}{2}(1+\gamma)\biggr) e_r
    +\pry\beta e_\tau
    + \biggl( \pry\gamma -\frac{\zt}{2}\alpha\biggr) Q, \\
    & \qquad\qquad\qquad
    -\frac{\z}{y}\beta e_r
    +\biggl( \frac{\z}{y}\alpha +\frac{\zt}{2}(1+\gamma) \biggr) e_\tau
    -\frac{\zt}{2}\beta Q
    \biggr).
  \end{align*}
  By $1$-equivariant symmetry, the directional derivatives of $u$ along $e_1$ and $e_2$ satisfy
  \[
    \R \frac{\pr u}{\pr e_1}\bigg|_{y=0}
    = \frac{\pr u}{\pr e_2}\bigg|_{y=0},
  \]
  and thus
  \begin{align*}
    & \biggl( -\z\pry+\frac{\z}{y} \biggr) \beta e_r + \biggl( \z\pry - \frac{\z}{y} - \frac{\zt}{2}\lphi \biggr) \alpha e_\tau                \\
    & + \biggl( \frac{(\z-1)\zt}{2}(1 + \gamma) + \lphi\pry\gamma \biggr) e_\tau + \biggl( -\lphi\pry\beta + \frac{\zt}{2}\beta \biggr) Q
    \to 0
    \quad\mbox{as}\quad y\to 0.
  \end{align*}
  Using the asymptotics~\eqref{eq:lphi}, we obtain
  \[
    \oaz\alpha \to 0, \quad
    \oaz\beta \to 0, \quad
    \pry \gamma \to 0
    \quad\mbox{as}\quad y\to 0,
  \]
  where $\oaz$ and its formal adjoint $\asz$ are given by
  \[
    \oaz = -\pry +\frac{1}{y} = -y\pry\biggl(\frac{\cdot}{y}\biggr), \quad
    \asz = \pry +\frac{2}{y} = \frac{1}{y^2}\pry(y^2 \cdot),
  \]
  and their composition by
  \[
    \ham_0 = \asz\oaz = -\Delta + \frac{1}{y^2}.
  \]
  Similarly, computing $\Delta u$ using Lemma~\ref{le:frenet-basis},
  \begin{align*}
    \Delta u
    = \bigl(-\ham_0\alpha +\zt\pry\gamma\bigr) e_r -\ham_0\beta e_\tau  +\biggl(-\frac{\zt^2}{2}+ \biggl(\Delta-\frac{\zt^2}{2}\biggr)\gamma -\zt\biggl(\pry+\frac{\z}{y}\biggr)\alpha \biggr) Q.
  \end{align*}
  Then the smoothness of the solution gives
  \[
    |\ham_0\alpha| +|\ham_0\beta| \les 1.
  \]
  Let us focus on $\alpha$. Note from the definition of $\asz$ that $\ham_0\alpha = y^{-2}\pry(y^2\oaz\alpha)$. A direct integration using the boundary condition $\oaz\alpha(0) = 0$ yields
  \[
    \oaz\alpha(y) = \frac{1}{y^2}\int_0^y\ham_0\alpha(\sigma)\sigma^2 d\sigma = O(y).
  \]
  In addition, we observe that $\ham_0\alpha = -\pry^2\alpha +y^{-1}\oaz\alpha$, and thus
  \[
    |\pry^2\alpha| \les 1.
  \]
  Repeating this argument for the corresponding components of the higher spatial derivatives of $u$, and using $N\gg L$, gives the $C^\ell$-regularity of $\alpha$ for $1\le \ell\le 2p+2$. Let
  \[
    \alpha_{[0]} = \alpha, \quad
    \alpha_{[\ell+1]} =
    \begin{cases}
      \asz\alpha_{[\ell]} & \mbox{for $\ell$ odd},  \\
      \oaz\alpha_{[\ell]} & \mbox{for $\ell$ even},
    \end{cases}
    \quad 0\le \ell\le 2p+1.
  \]
  A direct induction gives
  \begin{equation}
    \label{pf:reg=app-alpha-bnd}
    |\alpha_{[\ell]}| \les \begin{cases}
      y^2 & \mbox{for $\ell$ odd},  \\
      y   & \mbox{for $\ell$ even},
    \end{cases}
    \quad 1 \le \ell \le 2p+2.
  \end{equation}
  This implies the Taylor expansion near the origin
  \[
    \alpha(y)
    = \sum_{\ell=1}^{2p+1}c_{\ell}y^{\ell} +O(y^{2p+2})
    \with (c_\ell)_{\ell=1}^{2p+1}\subset\mbR.
  \]
  Since $\asz\oaz=\ham_0$ and
  \[
    \ham_0 y^{\ell} = -(\ell^2-1)y^{\ell-2}
    \for \ell\in\mbZ,
  \]
  applying $\oaz,\asz$ successively to the expansion and using~\eqref{pf:reg=app-alpha-bnd} yields, by induction,
  \[
    c_{2\ell} = 0 \for 1\le\ell\le p,
  \]
  and hence~\ref{le:reg=app-taylor} for $\alpha$.
  The same conclusion holds for $\beta$ by a similar induction. For $\gamma$, we note that it is a smooth function of $\alpha^2+\beta^2$ near the origin, vanishing to second order in $(\alpha,\beta)$, since by Lemma~\itref{le:geo-prop}{it:geo-prop=1},
  \[
    |u|^2 = 1 \quad\Longrightarrow\quad
    \alpha^2 + \beta^2 + (1+\gamma)^2 = 1.
  \]
  Together with $\gamma(0)=0$, this yields
  \[
    \gamma = \sqrt{1-\alpha^2-\beta^2}-1 = O(y^2).
  \]
  Since the expansions of $\alpha$ and $\beta$ contain only odd powers, the Taylor expansion of $\gamma$ contains only even powers,
  which establishes~\ref{le:reg=app-taylor}.

  Proof of~\ref{le:reg=app-wei-bnd-finite}. Here $\varepsilon_{(\ell)}$ denotes the suitable derivative of order $\ell$ defined in~\eqref{eq:adb-seq}. Since $\oa-\oaz=\as-\asz=(\z-1)/y$ is smooth and odd near the origin, with $(\z-1)/y=O(y)$, the Taylor expansions in~\ref{le:reg=app-taylor} give $\varepsilon_{(2j)}=O(y)$ and $\varepsilon_{(2j+1)}=O(y^2)$ for the orders appearing below. Hence the required integrability for $y\le1$ follows from
  \[
    \int_{y\le 1}
    \frac{|\varepsilon_{(2\ell-1)}|^2}{y^6\jbk[\log y]^2\jbk[y]^{4(k-\ell)}} + \int_{y\le 1}\frac{|\varepsilon_{(2\ell)}|^2}{y^4\jbk[\log y]^2\jbk[y]^{4(k-\ell)}}
    \les \int_{y\le 1}\frac{\dd y}{y\jbk[\log y]^2}
    < +\infty.
  \]
  For $y\ge1$, we consider $\varepsilon = \alpha$. By Leibniz,
  \[
    |\pry^k\alpha|  \les |\pry^k
    \vv \cdot e_r| + \sum_{\ell=0}^{k-1} |\pry^\ell \vv|\,|\pry^{k-\ell} e_r|.
  \]
  From Lemma~\ref{le:frenet-basis},
  \[
    \pry e_r = -\frac{\zt}{2} Q,\quad \pry Q = \frac{\zt}{2} e_r
    \quad\Longrightarrow\quad
    |\pry^k e_r| \les \frac{1}{\jbk[y]^{k+1}}
    \for k\ge 1.
  \]
  Consequently, for $\ell\le k+1$,
  \begin{align*}
    & \int_{y\ge 1}
    \frac{|\alpha_{(2\ell-1)}|^2}{y^6\jbk[\log y]^2\jbk[y]^{4(k-\ell)}} + \int_{y\ge 1}\frac{|\alpha_{(2\ell)}|^2}{y^4\jbk[\log y]^2\jbk[y]^{4(k-\ell)}} \\
    & \les \int\frac{|\alpha|^2}{y^2} + \sum_{\ell=1}^{2k+2}\int|\pry^{\ell}\alpha|^2
    \les \int\frac{|\vv|^2}{y^2} + \sum_{\ell=1}^{2k+2}\int|\pry^{\ell} \vv|^2
    < +\infty.
  \end{align*}
  The same conclusion holds for $\beta$ by a similar argument. This shows~\ref{le:reg=app-wei-bnd-finite} for $y\ge 1$, and hence completes the proof.
\end{proof}

\section{Operator actions on admissible spaces}
\label{ap:adm-lemma}
The goal of this appendix is to prove Lemmas~\ref{le:ham-act-adb} and~\ref{le:ham-act-badb}. The reader can refer to~\citep{raphael_quantized_2014} for more details.

\begin{proof}[Proof of Lemma~\ref{le:ham-act-adb}]
  Proof of~\ref{it:ham-act-adb=ham}. By induction, it suffices to show
  \[
    \ham f \in
    \begin{cases}
      \mcA(\max(p_1-2,1),p_2-2) & \mbox{for $p_1$ odd},  \\
      \mcA(p_1-2,p_2-2)         & \mbox{for $p_1$ even}.
    \end{cases}
  \]
  For $y\le 1$, from \eqref{eq:ham} we observe that for $i\ge 0$,
  \begin{equation}
    \label{pf:adm=bounce-back}
    \ham y^{i} = (1-i^2)y^{i-2} +O(y^{i})
    =\begin{cases}
      O(y^{i-2}), & \mbox{$i\neq 1$}, \\
      O(y^{i}),   & \mbox{$i=1$},
    \end{cases}
  \end{equation}
  which demonstrates the bounce-back effect: $\ham y=O(y)$, and hence yields the $y\le 1$ part of (i). For $y\ge 1$, let $q_2=p_2-2$. Since
  \[
    (\ham f)\sd[i] = f\sd[i+2],
    \quad\mbox{for}\quad i\in\mbN
  \]
  and $q_2-i=p_2-(i+2)$, the two cases in Definition~\itref{def:adb}{it:def-adb=2} coincide for $(\ham f)\sd[i]$ and $f\sd[i+2]$. This gives the desired asymptotics at infinity.

  Proof of~\ref{it:ham-act-adb=ham-inv}. Let $u=\ham^{-1}f$. Again by induction, it suffices to show $u$ is \adb\ of degree $(p_1+2,p_2+2)$. Observe that $u\sd[i]= f\sd[i-2]$ for $i\ge 2$. Since $(p_2+2)-i=p_2-(i-2)$, the two cases in Definition~\itref{def:adb}{it:def-adb=2} again coincide, and hence the admissibility bounds for $u_{(i)}$, $i\ge2$, follow directly from those for $f_{(i-2)}$. Thus it remains to check (ii) for $u_{(1)}=\oa u$ and $u_{(0)}=u$. Recalling the properties of $\ham$ provided in Subsection~\ref{sub:lin-ham},
  \[
    \oa\lphi=0, \quad
    \oa\Gamma = -\Gamma' +\frac{(\lphi)'}{\lphi}\Gamma = -\frac{1}{y\lphi}.
  \]
  These together with \eqref{eq:inverse-ham} yield
  \begin{equation}
    \label{pf:adm=au-u}
    \oa u(y) = \frac{1}{y\lphi(y)}\int_0^{y}f(x)\lphi(x) x \dd x,
    \quad
    u(y) = -\lphi(y) \int_0^y \frac{\oa u(x)}{\lphi(x)}\dd x.
  \end{equation}
  Together with the asymptotics~\eqref{eq:lphi}, this yields the Taylor expansion for $y\le 1$:
  \[
    \oa u(y) = y^{p_1+1}\left(\sum_{i=0}^{p}c'_iy^{2i} +O(y^{2p+2})\right),
    \quad u(y) = y^{p_1+2}\left(\sum_{i=0}^{p}c''_iy^{2i} +O(y^{2p+2})\right).
  \]
  For $y\ge1$, we first estimate $\oa u$. If $p_2\ge 0$, then Definition~\itref{def:adb}{it:def-adb=2} and~\eqref{pf:adm=au-u} yield
  \[
    |\oa u(y)|
    \les 1+\int_1^y x^{p_2}\jbk[\log x]\dd x
    \les y^{p_2+1}\jbk[\log y].
  \]
  In the borderline case $p_2=-1$, the same definition gives $|f(x)|\les x^{-1}$, and hence
  \[
    |\oa u(y)|
    \les 1+\int_1^y\frac{\dd x}{x}
    \les \jbk[\log y]
    =y^{p_2+1}\jbk[\log y].
  \]
  Thus, for all $p_2\ge-1$, the second identity in~\eqref{pf:adm=au-u} and the asymptotic behavior $\lphi(y)\sim 2/y$ imply
  \[
    |u(y)|
    \les \frac{1}{y}\left(1+\int_1^y x^{p_2+2}\jbk[\log x]\dd x\right)
    \les y^{p_2+2}\jbk[\log y].
  \]
  This concludes the proof.
\end{proof}

\begin{proof}[Proof of Lemma~\ref{le:ham-act-badb}]
  Proof of~\ref{it:ham-act-badb=ham}. By induction, it suffices to show
  \[
    \ham f \in
    \begin{cases}
      \mcAb(\max(p_1-2,1),p_2-2) & \mbox{for $p_1$ odd},  \\
      \mcAb(p_1-2,p_2-2)        & \mbox{for $p_1$ even}.
    \end{cases}
  \]
  First for $y\le 1$, by definition we have $\ham f = \sum_{i=1}^{I}\wh{\delta}_i \ham\wh{f}_i$, where $\wh{f}_{i}$ satisfies the series expansion as in Definition~\itref{def:badb}{it:def-badb=1}. Then, as in the proof of Lemma~\ref{le:ham-act-adb}, $\ham\wh{f}_{i}$ also admits the bounce-back effect \eqref{pf:adm=bounce-back}, which implies the asymptotics for $y\le 1$. For $y\ge1$, the result is straightforward since
  \[
    (\ham f)\sd[i] = f\sd[i+2]
    \quad\mbox{for}\quad i\in\mbN.
  \]

  Proof of~\ref{it:ham-act-badb=ham-inv}. Let $u=\ham^{-1}f$. Again by induction, it suffices to show $u\in\mcAb(p_1+2,p_2+2)$. Note that $u\sd[i] =f\sd[i-2]$ for $i\ge 2$, so it remains to prove the result for $u\sd[1] = \oa u$ and $u\sd[0]=u$. We use \eqref{pf:adm=au-u} again to compute the asymptotics of $\oa u$ and $u$ for $y\le 1$ and $y\ge1$. The case $y\le 1$ is similar to part (ii) in the proof of Lemma~\ref{le:ham-act-adb}, as the $\wh\delta_{i}$ are not actually involved in the computations. For $y\ge1$, we obtain
  \[
    |\oa u|
    \les 1 + \int_1^y\frac{x^{p_2}}{\lgb}(F_{p_2} +K_{p_2})\dd x + \int_1^y x^{p_2-2}\jbk[\log x]^{c_{p_2}}\dd x.
  \]
  First, for the $F_{p_2}$ part, since $p_2\ge 1$,
  \begin{align*}
    \int_1^y\frac{x^{p_2}}{\lgb} F_{p_2} \dd x
    & = \frac{1}{\lgb} \int_1^y x^{p_2}\jbk[\log(\sqrt{b_1}x)] \one_{x\le 2S_0}\dd x      \\
    & \les \frac{y^{p_2+1}}{\lgb} F_{p_2+1} + \frac{S_0^{p_2+1}}{\lgb} \one_{y\ge 2S_0}.
  \end{align*}
  Then, for the $K_{p_2}$ part, we note that $K_{p_2}$ vanishes if $p_2<2$, and thus
  \[
    \int_1^y\frac{x^{p_2}}{\lgb} K_{p_2} \dd x
    = \int_1^y \frac{x^{p_2}}{\lgb} \frac{\one_{x\ge 2S_0}}{x^2 b_1} \dd x\, \one_{p_2\ge 2}
    \les \frac{y^{p_2+1}}{\lgb} K_{p_2+1}.
  \]
  Next, for the remaining integral,
  \begin{align*}
    \int_1^y x^{p_2-2}\jbk[\log x]^{c_{p_2}}\dd x
    & \les y^{p_2-1}\jbk[\log y]^{c_{p_2}+1}.
  \end{align*}
  We insert these estimates back into $|\oa u|$. Note that the fact that $p_2\ge 1$ implies the constant $1$ and the $S_0$ power term are absorbed:
  \begin{align*}
    \one_{y\le 2S_0} \les y^{p_2-1}\jbk[\log y]^{c_{p_2}+1},
    \quad
    \one_{y\ge 2S_0} \les \frac{S_0^{p_2+1}}{\lgb} \one_{y\ge 2S_0}
    \les \frac{y^{p_2+1}}{\lgb} K_{p_2+1},
  \end{align*}
  and thus
  \[
    |\oa u| \les \frac{y^{p_2+1}}{\lgb}(F_{p_2+1} + K_{p_2+1}) + y^{p_2-1} \jbk[\log y]^{c_{p_2}+1}.
  \]
  Now to treat $u$, using~\eqref{pf:adm=au-u} together with $\lphi\sim 1/y$ for $y\ge1$, we obtain
  \[
    |u| \les \frac{1}{y} \biggl(1 + \int_1^y\frac{x^{p_2+2}}{\lgb}(F_{p_2+1} + K_{p_2+1})\dd x + \int_1^y x^{p_2}\jbk[\log x]^{c_{p_2}+1}\dd x \biggr),
  \]
  and the desired estimate follows from the same three-integral treatment as for $\oa u$, with the exponents shifted accordingly. Now let us check the estimates for the $b_1$ derivatives. From \eqref{pf:adm=au-u},
  \begin{align*}
    \bigg|\frac{\pr^\ell \oa u}{\pr b_1^{\ell}}\bigg|
    & \les \int_0^1 \bigg|\frac{\pr^\ell f}{\pr b_1^{\ell}}\bigg| x^2 \dd x
    + \int_1^y \bigg|\frac{\pr^\ell f}{\pr b_1^{\ell}}\bigg| \dd x                                                                                                                   \\
    & \les \frac{1}{b_1^{\ell}\lgb} + \int_1^y \frac{x^{p_2}}{b_1^{\ell}\lgb}(\bar{F}_{p_2} + K_{p_2})\dd x + \int_1^y \frac{x^{p_2-2}\jbk[\log x]^{c_{p_2}}}{b_1^{\ell}\lgb} \dd x.
  \end{align*}
  The $\bar{F}_{p_2}$ part is bounded by
  \begin{align*}
    \int_1^y\frac{x^{p_2}}{b_1^{\ell}\lgb} \bar{F}_{p_2} \dd x
    & = \frac{1}{b_1^{\ell}\lgb^2} \int_1^y x^{p_2}\jbk[\log x]\one_{x\le 2S_0} \dd x                              \\
    & \les \frac{y^{p_2+1}}{b_1^{\ell}\lgb} \bar{F}_{p_2+1} +\frac{S_0^{p_2+1}}{b_1^{\ell}\lgb} \one_{y\ge 2S_0}.
  \end{align*}
  The remaining integrals are bounded as in the estimates for $\oa u$, with an extra factor $b_1^{-\ell}$.
  Note that $p_2\ge 1$ allows the absorption of the constant $1$ and the $S_0$ term:
  \[
    1 \les y^{p_2-1}\jbk[\log y]^{c_{p_2}+1},
    \quad
    S_0^{p_2+1} \one_{y\ge 2S_0}
    \les y^{p_2+1} K_{p_2+1},
  \]
  and thus
  \[
    \bigg|\frac{\pr^\ell \oa u}{\pr b_1^{\ell}}\bigg|
    \les \frac{y^{p_2+1}}{b_1^{\ell}\lgb} (\bar{F}_{p_2+1} + K_{p_2+1}) + \frac{y^{p_2-1}\jbk[\log y]^{c_{p_2}+1}}{b_1^{\ell}\lgb}.
  \]
  The bound for $|\frac{\pr^{\ell} u}{\pr b_1^{\ell}}|$ follows from \eqref{pf:adm=au-u} and an analogous computation. This concludes (ii).

  Proof of~\ref{it:ham-act-badb=vla}. We rewrite the operator $\vla = y\pry$ so that
  \[
    \vla f = -f -y f_{(1)} +(1+\z)f.
  \]
  For $y\le 1$, the expansion of $\vla f$ in Definition~\itref{def:badb}{it:def-badb=1} follows from a direct computation using the near-origin asymptotics of $\z$. For $y\ge 1$, it suffices to show
  \[
    y f\sd[1],\, (1+\z)f \in \mcAb(p_1,p_2).
  \]
  Consider the $y f\sd[1]$ part. We use Leibniz for $\ham^i$ in \citep[Lemma D.1]{raphael_quantized_2014}, which ensures for all $i\ge 1$ that there exist constants $c_{i+1}, c_{i+2}$ and a sequence of functions ${(P_{j,i})}_{j=1}^{i}$ such that
  \[
    (y f_{(1)})\sd[i]
    = c_{i+1} y f\sd[i+1] +c_{i+2}f\sd[i] +\sum_{j=1}^{i}P_{j,i}(y)f_{(j)},
    \quad
    |P_{j,i}(y)| \les \frac{1}{\jbk[y]^{2+i-j}}.
  \]
  From the estimates for $f\sd[i+1], f\sd[i]$, and the first inequality of~\eqref{eq:badb-rough-bnd} for the summation part, we have
  \[
    |(y f_{(1)})\sd[i]|
    \les \frac{y^{p_2-i}}{\lgb}\Bigl(F_{p_2-(i+1)} +K_{p_2-(i+1)} +F_{p_2-i} +K_{p_2-i}\Bigr) +y^{p_2-i-2}\jbk[\log y]^{c_{p_2}}.
  \]
  By the monotonicity of $F_j, K_j$ in $j$, we have
  \[
    F_{p_2-(i+1)}\les F_{p_2-i},
    \quad K_{p_2-(i+1)}\les K_{p_2-i},
  \]
  which yields the bound for $|(yf\sd[1])\sd[i]|$. The $b_1$-derivative estimate follows analogously from the second inequality of~\eqref{eq:badb-rough-bnd} and the monotonicity of $\bar{F}_j$ (yielding the same bound with $b_1^{-\ell}\lgb^{-1}$ in place of $\lgb^{-1}$ and $\bar{F}_j$ in place of $F_j$). To treat $(1+\z)f$, we note that the function $(1+\z)$ decays as $O(y^{-2})$ as $y\to+\infty$, and hence $\pry^\ell(1+\z) \les y^{-(2+\ell)}$. By Leibniz and~\eqref{eq:badb-rough-bnd}, we have $(1+\z)f \in \mcAb(p_1,p_2)$. This concludes (iii).

  Proof of~\ref{it:ham-act-badb=b1}. Let $u = b_1\tfrac{\pr f}{\pr b_1}$. For $y\le 1$, the estimate for $u$ is straightforward from
  \[
    \bigg|b_1\frac{\pr \wh{\delta}_{i}}{\pr b_1}\bigg| \les \frac{1}{\lgb} \les 1,
    \andd
    \bigg|\frac{\pr^{\ell}}{\pr b_1^{\ell}}\biggl(b_1\frac{\pr \wh{\delta}_{i}}{\pr b_1}\biggr)\bigg|
    \les \frac{1}{b_1^{\ell}\lgb}, \quad \forall\ell\ge 1.
  \]
  For $y\ge 1$, from the smoothness of $f$, we have $u\sd[i]= b_1\frac{\pr f\sd[i]}{\pr b_1}$, which implies
  \[
    |u\sd[i]|
    \les \frac{y^{p_2-i}}{\lgb}(\bar{F}_{p_2-i} +K_{p_2-i}) + \frac{y^{p_2-i-2}\jbk[\log y]^{c_{p_2}}}{\lgb}.
  \]
  Note that $\bar{F}_{j} \les F_{j}$ for all $j\in\mbZ$, since
  \[
    \frac{\jbk[\log y]}{\lgb} \les \frac{\jbk[\log (\sqrt{b_1} y)]}{\lgb} + \frac{1}{2} \les \jbk[\log (\sqrt{b_1}y)]\one_{j\ge 0} +\one_{j<0},
    \quad 1\le y\le 2S_0,
  \]
  which gives the bound for $u\sd[i]$. Moreover, the bound for the $b_1$-derivatives of $u$ follows directly from the fact $f\in\mcAb(p_1,p_2)$ and Leibniz:
  \[
    \bigg|\frac{\pr^\ell u\sd[i]}{\pr b_1^{\ell}}\bigg|
    \les \sum_{\ell_1+\ell_2=\ell}  \bigg|\frac{\pr^{\ell_1} b_1}{\pr b_1^{\ell_1}}\bigg|\, \bigg|\frac{\pr^{\ell_2+1} f\sd[i]}{\pr b_1^{\ell_2+1}}\bigg|
    \les b_1\bigg|\frac{\pr^{\ell+1} f\sd[i]}{\pr b_1^{\ell+1}}\bigg| + \bigg|\frac{\pr^{\ell} f\sd[i]}{\pr b_1^{\ell}}\bigg|.
  \]
  This concludes the proof.
\end{proof}

\section{Coercivity bounds}
\label{ap:coer-bnd}
In this appendix, we recall some coercivity estimates for the linear operator $\ham$, its conjugate $\ti{\ham}$, and the iterates of $\ham$. These are standard results established in~\citep{raphael_stable_2012, raphael_quantized_2014}. In what follows, we let $u:\mbR^2\to\mbR$ be a radially symmetric scalar function and define $\xm_M$ by~\eqref{eq:local-reson}. The measure in the integrals below is $y\dd y$.

\begin{lemma}[Coercivity of $\ham$ \citep{raphael_quantized_2014}]
  \label{le:coer-ham}
  Let $M$ be a sufficiently large constant. Then there exists $C(M)>0$ such that for any radially symmetric $u$ with
  \[
    \int\biggl(|\pry u|^2 +\frac{|u|^2}{y^2}\biggr) <+\infty
  \]
  and $(u, \xm_M) = 0$, we have
  \[
    (\ham u,u) \ge
    C(M)\int\biggl(|\pry u|^2 +\frac{|u|^2}{y^2}\biggr).
  \]
\end{lemma}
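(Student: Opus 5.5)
The plan is the standard compactness-plus-contradiction argument from \citep{raphael_quantized_2014}, built on the factorization $\ham=\as\oa$ and the kernel structure $\ker\oa=\spann(\lphi)$. First, from $(\ham u,u)=\int|\oa u|^2$ the quadratic form is nonnegative. Its only degeneracy is the direction $\lphi$, which fails to be in the energy space only logarithmically. I would prove the weaker intermediate estimate
\[
  \int|\oa u|^2 \ge \frac{c(M)}{?}\int\frac{|u|^2}{y^2\jbk[\log y]^2}\quad\text{(schematically)},
\]
by arguing by contradiction. Take a sequence $u_n$ normalized by $\int(|\pry u_n|^2+|u_n|^2/y^2)=1$, with $(u_n,\xm_M)=0$ and $\int|\oa u_n|^2\to0$. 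By local compactness (Rellich on annuli, together with the weighted Hardy control near $0$ and $\infty$), extract a weak limit $u_\infty$ with $\oa u_\infty=0$, hence $u_\infty=c\lphi$. The orthogonality passes to the limit because $\xm_M$ is compactly supported in $y\le 2M$. By Lemma~\ref{le:local-reson}\ref{it:local-reson=non-deg}, $(\lphi,\xm_M)\neq0$, so $c=0$.

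Next I would upgrade this to the full $\dot H^1$-type norm. Writing $\oa u=-\pry u+\z u/y$ and expanding,
\[
  \int|\oa u|^2=\int|\pry u|^2+\int\frac{\z^2-\vla\z+?}{y^2}|u|^2 ,
\]
which by integration by parts gives $\int(|\pry u|^2+V|u|^2/y^2)$. Since $V\to1$ at both $0$ and $\infty$, the form controls $\int(|\pry u|^2+|u|^2/y^2)$ outside a compact set $\{\epsilon\le y\le R_0\}$, up to a constant. On that compact region the negative part of $V$ is handled by the weaker bound from the previous paragraph, because the weight $\jbk[\log y]^{-2}$ is bounded below there.

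The main obstacle is the loss of compactness at $y\to0$ and $y\to\infty$ in the contradiction step. There one must show that the $L^2(y^{-2})$ mass of $u_n$ cannot escape to the ends. The control I expect to use is the explicit one-dimensional Hardy inequality obtained by writing $u=\lphi\,v$, so that $\oa u=-\lphi\pry v$ and $\int|\oa u|^2=\int\lphi^2|\pry v|^2$. Both ends then reduce to logarithmic Hardy inequalities for $v$, which prevent concentration of mass near $0$ and near $\infty$. The remaining cost, a bound that depends on $M$ through $(\lphi,\xm_M)\sim 4\log M$, explains why the constant is $C(M)$.
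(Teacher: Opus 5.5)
The paper gives no proof of this lemma; it only cites \citep{raphael_quantized_2014}. Your skeleton (normalize, pass to a weak limit, use $\ker\oa=\spann(\lphi)$ and $(\lphi,\xm_M)\neq0$ from Lemma~\ref{le:local-reson}) is the standard compactness argument for this kind of statement. As written, however, the contradiction is never closed. After obtaining $u_\infty=c\lphi$ and $c=0$, you still have to show $u_\infty\neq0$, and this non-vanishing is the heart of the argument.

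It follows from the identity below, obtained by integrating by parts and using $V=2\z^2-1=1-8y^2(1+y^2)^{-2}$:
\[
  \int\biggl(|\pry u|^2+\frac{|u|^2}{y^2}\biggr)=(\ham u,u)+8\int\frac{|u|^2}{(1+y^2)^2}.
\]
Along your normalized sequence this forces $8\int|u_n|^2(1+y^2)^{-2}\to1$. The weight is $o(y^{-2})$ at both ends, so $\int_{y\le\epsilon}|u|^2\le\epsilon^2\int|u|^2/y^2$ and $\int_{y\ge R}|u|^2/y^4\le R^{-2}\int|u|^2/y^2$. Together with Rellich on $[\epsilon,R]$, this term passes to the weak limit, giving $8\int|u_\infty|^2(1+y^2)^{-2}=1$, which contradicts $c=0$. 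This yields the full bound directly. Your log-weighted intermediate estimate and the upgrade step are therefore unnecessary detours (harmless, since $(1+y^2)^{-2}\lesssim y^{-2}\jbk[\log y]^{-2}$).

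The reasons you give for the difficulty are also off.

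\emph{On $\lphi$.} $\lphi$ \emph{is} in the energy space: $\lphi\sim2y$ at $0$ and $\lphi\sim2/y$ at $\infty$, so $\int(|\pry\lphi|^2+|\lphi|^2/y^2)<\infty$. Only $\int|\lphi|^2$ diverges logarithmically. That is precisely why $(\ham\lphi,\lphi)=0$ forces an orthogonality condition, and your limit $u_\infty=c\lphi$ tacitly uses it.

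\emph{On the Hardy inequalities.} No escape of mass has to be ruled out separately, and the Hardy inequalities in $v=u/\lphi$ are not logarithmic. The form $\int|\oa u|^2=\int\lphi^2|\pry v|^2$ carries the weight $\lphi^2y$ against $\dd y$, which is $\sim4y^3$ near $0$ and $\sim4y^{-1}$ near $\infty$. Both are non-critical, so they control $\int|u|^2/y^2$ itself near the ends, up to boundary values and the constant mode $v=\mathrm{const}$ (that is, $u=\lphi$). The logarithmic loss belongs to $\as$ in Lemma~\ref{le:coer-ti-ham}, whose kernel $1/(y\lphi)$ tends to a constant at infinity; that is why no orthogonality is needed there.

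\emph{A constructive alternative.} Your $v$ route does work if you want an explicit $C(M)$. Write $v=\bar v+w$ with $\bar v$ the average of $v$ over $1\le y\le2$, and bound $w$ by the two non-critical Hardy inequalities. Then bound $\bar v$ through $0=(u,\xm_M)=\bar v(\lphi,\xm_M)+(\lphi w,\xm_M)$.
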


\begin{lemma}[Coercivity of $\ti{\ham}$ \citep{raphael_stable_2012}]
  \label{le:coer-ti-ham}
  Let $\ti{\ham}$ be the conjugate of $\ham$.
  There exists $C>0$ such that for any radial $u$ with
  \[
    \int\biggl(|\pry u|^2 +\frac{u^2}{y^2}\biggr) <+\infty,
  \]
  we have
  \[
    (\ti\ham\, u,u) = \|\as u\|_{L^2}^{2}
    \ge
    C\int\biggl(|\pry u|^2 +\frac{|u|^2}{y^2\jbk[\log y]^2}\biggr).
  \]
\end{lemma}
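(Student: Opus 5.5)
The statement to prove is Lemma~\ref{le:coer-ti-ham}. Here $\ti\ham=\oa\as$ is the conjugate of $\ham=\as\oa$. For radial $u$ of finite energy, the identity $(\ti\ham u,u)=\|\as u\|_{L^2}^2$ follows from integration by parts, using that $\oa$ is the formal adjoint of $\as$ with measure $y\dd y$. The boundary terms vanish at $0$ and at $\infty$ by the finiteness of $\int|\pry u|^2+u^2/y^2$, after a standard density argument with smooth compactly supported radial functions. The real content is the lower bound. I would prove it by contradiction-free direct estimates, in the spirit of \citep{raphael_stable_2012}, exploiting the explicit kernel $\ker\as=\spann(1/(y\lphi))$. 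Near $0$ this kernel element behaves like $1/y^2$, and at infinity like $1/2$. The first is not in the energy space near the origin; the second is not square-integrable against $y^{-2}$ at infinity, though only logarithmically. This is exactly why the weight $\jbk[\log y]^{-2}$ appears, and why no orthogonality condition is needed.

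The first step is a weighted Hardy inequality at the level of $\as$. Write $\as u=\frac{1}{y\lphi}\pry(y\lphi u)$ and set $v=y\lphi u$, so that $\pry v=y\lphi\,\as u$. Near the origin $y\lphi\sim 2y^2$, and the one-dimensional Hardy inequality with weight $y^{-3}$ gives $\int_{y\le1}u^2/y^2\les\int_{y\le1}|\as u|^2+(\text{boundary term at }y=1)$. Since $v(0)=0$ and the weight $y^{-3}$ is strongly singular, the estimate is in fact without loss. On $y\ge1$, $y\lphi\to2$, so $\pry v\approx 2\as u$ and $u\approx v/2$. A logarithmic Hardy inequality,
\[
\int_1^\infty\frac{v^2}{y^2\jbk[\log y]^2}\,y\dd y\les\int_1^\infty|\pry v|^2\,y\dd y+v(1)^2,
\]
then controls $\int_{y\ge1}u^2/(y^2\jbk[\log y]^2)$. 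This inequality is proved by integrating $\pry\bigl(v^2/\log(ey)\bigr)$ and applying Cauchy--Schwarz.

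The second step, which I expect to be the main obstacle, is to remove the boundary contribution $v(1)^2$, i.e.\ $u(1)^2$. If $u(1)$ is large relative to $\|\as u\|_{L^2}$, then $v$ is nearly constant on a long range, and $u$ is close to a multiple of $1/(y\lphi)$. Such a function violates finite energy near $0$, where it behaves like $1/y^2$. To make this quantitative, I would integrate from the origin: $v(y)=\int_0^y x\lphi\,\as u\,\dd x$, so that $|v(1)|\les\bigl(\int_0^1x^2\lphi^2\,x^{-1}\dd x\bigr)^{1/2}\|\as u\|_{L^2}\les\|\as u\|_{L^2}$, by Cauchy--Schwarz with respect to the measure $x\dd x$. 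This uses $v(0)=0$, which follows from the finite-energy assumption (one has $u/y\in L^2$, hence $\liminf_{y\to0}|u|y=0$), and it closes the weighted $L^2$ bound.

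The third step recovers the gradient. We have $\pry u=\as u-\frac{1+\z}{y}u$, and the coefficient $(1+\z)/y$ is bounded by $C/y$ near $0$ and by $C/y^3$ at infinity. Hence $\int|\pry u|^2\les\int|\as u|^2+\int_{y\le1}u^2/y^2+\int_{y\ge1}u^2/(y^6)$. The last two terms are controlled by the weighted bound already obtained, since $y^{-6}\les y^{-2}\jbk[\log y]^{-2}$ for $y\ge1$. Summing the three steps gives the claimed coercivity with a universal constant $C$.
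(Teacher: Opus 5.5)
The paper gives no proof of this lemma; it quotes it from \citep{raphael_stable_2012}. Your argument is a correct, self-contained proof, but it takes a longer route than necessary.

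The shortest route computes the conjugate operator explicitly. Since $\vla\z=\z^2-1$, one has
\[
\ti\ham=\oa\as=-\Delta+\frac{(1+\z)^2-\vla\z}{y^2}=-\Delta+\frac{\zt}{y^2},\qquad \zt=\frac{4}{1+y^2}>0,
\]
so that $\|\as u\|_{L^2}^2=\|\pry u\|_{L^2}^2+\int\frac{\zt}{y^2}u^2$. This gives the gradient bound at once. It also gives $\int_{y\le1}u^2/y^2\le\frac12\|\as u\|_{L^2}^2$, since $\zt\ge2$ there. The only remaining step is the logarithmic Hardy inequality on $y\ge1$, whose boundary term $u(1)^2$ is controlled by these two bounds.

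Your route uses the substitution $v=y\lphi u$ instead, and never needs the sign of the conjugate potential. It relies only on the explicit kernel $1/(y\lphi)$ of $\as$ and the representation $v(y)=\int_0^y x\lphi\,\as u\,\dd x$. This buys some insight: it shows why no orthogonality condition is needed, namely that the only obstruction is excluded by the behaviour at the origin. It also shows why the logarithmic loss at infinity cannot be avoided. The argument would carry over to factorizations whose conjugate potential is not manifestly positive. The cost is a chain of weighted Hardy inequalities, with the gradient recovered only at the end.

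One small point concerns your near-origin Hardy step. If you integrate by parts on $[\epsilon,1]$, the boundary term at $y=\epsilon$ is $\epsilon^{-4}v(\epsilon)^2\simeq 4u(\epsilon)^2$, and it enters with the unfavourable sign. You therefore need $\liminf_{y\to0}|u(y)|=0$, not merely $\liminf_{y\to0}|u|y=0$. This stronger condition does hold, because $\int_0 u^2\,\dd y/y<\infty$. Alternatively, insert the representation from your Step~2 into the integral form of Hardy's inequality, which only uses $v(0^+)=0$. The boundary term at $y=1$ in that step has the favourable sign and drops out.
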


\begin{lemma}[Weighted coercivity bound \citep{raphael_quantized_2014}]
  \label{le:wgt-coer-bnd}
  Let $L\ge 1$ and $0\le k\le L$. Let $M=M(L)$ be a sufficiently large constant. Then there exists $C(M)>0$ such that for all radial $u$ with
  \[
    \int\frac{|\oa u|^2}{y^6\jbk[y]^{4k}\jbk[\log y]^2} +\int\frac{|u|^2}{y^4\jbk[y]^{4k+4}\jbk[\log y]^2}< +\infty
  \]
  and $(u, \xm_M) = 0$, we have
  \[
    \int\frac{|\ham u|^2}{y^4\jbk[y]^{4k}\jbk[\log y]^2} \ge C(M) \left\{
    \int\frac{|\oa u|^2}{y^6\jbk[y]^{4k}\jbk[\log y]^2} +\int\frac{|u|^2}{y^4\jbk[y]^{4k+4}\jbk[\log y]^2} \right\}.
  \]
\end{lemma}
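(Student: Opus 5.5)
The proof goes by induction on $k$, peeling off one factor of $\ham=\as\oa$ at a time. The orthogonality $(u,\xm_M)=0$ is used only at the last step, where the kernel direction $\lphi$ of $\oa$ has to be excluded. Throughout I write $w_k(y)=y^{-4}\jbk[y]^{-4k}\jbk[\log y]^{-2}$.

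\textbf{Step 1: control $\oa u$ by $\ham u$.} Set $v=\oa u$, so that $\ham u=\as v=\frac{1}{y\lphi}\pry(y\lphi\, v)$. Since $\ker\as=\spann(1/(y\lphi))$, and $1/(y\lphi)$ behaves like $\frac{1}{2y^2}$ at the origin, the finiteness assumption on $\int |\oa u|^2 y^{-6}\cdots$ excludes any kernel component of $v$. One then proves a one-dimensional weighted Hardy inequality for $\as$:
\[
\int |\as v|^2 w_k \ge C\int \frac{|v|^2}{y^2}w_k .
\]
Near the origin, $y\lphi\sim 2y^2$, so $\as v\approx y^{-2}\pry(y^2 v)$, and the weight $y^{-4}$ is non-critical against the exponent $2$. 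Near infinity, $y\lphi\to 2$, so $\as v\approx \pry v$, and the weight $y^{-4-4k}\jbk[\log y]^{-2}$ is again non-critical, because $4+4k\neq 0$. The gluing region $y\sim1$ is handled by the usual compactness argument. This gives the $|\oa u|^2/(y^6\cdots)$ term with a universal constant.

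\textbf{Step 2: control $u$ by $\oa u$, modulo $\lphi$.} Write $\oa u=-\lphi\pry(u/\lphi)$, so that $u$ is determined by $\oa u$ up to adding a multiple of $\lphi$. The target weight is $|u|^2 y^{-4}\jbk[y]^{-4k-4}\jbk[\log y]^{-2}$. Near $0$, $\lphi\sim 2y$ and the weight is like $y^{-4}$, so $\lphi$ itself is not square integrable there. The finiteness hypothesis therefore forces $u/\lphi\to0$ at the origin. This suggests writing
\[
u=-\lphi\int_0^y \frac{\oa u}{\lphi}\,dx + c\lphi
\]
and proving a weighted Hardy bound for the integral term. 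At infinity, $\lphi\sim 2/y$ is square integrable against the weight. The constant $c$ is therefore not excluded by integrability, and removing it is exactly the role of $(u,\xm_M)=0$.

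\textbf{Step 3: the orthogonality argument.} I would argue by contradiction, in the standard way of \citep{raphael_quantized_2014}. Suppose there is a normalized sequence $u_n$, orthogonal to $\xm_M$, with left-hand side tending to $0$. By Step 1, $\oa u_n\to0$ in the weighted norm. Local compactness on compact subsets of $(0,\infty)$ then gives $u_n\to c\lphi$ locally. The weighted Hardy bound of Step 2 upgrades this to convergence in the full weighted norm, so $c\neq0$ by the normalization. But $(\lphi,\xm_M)=4(1+o(1))\log M\neq0$ by Lemma~\ref{le:local-reson}, which contradicts $(u_n,\xm_M)=0$ in the limit, since $\xm_M$ is compactly supported. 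The constant $C(M)$ comes out of this argument.

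\textbf{Main obstacle.} The delicate point is the Hardy inequalities near infinity with the logarithmic weight $\jbk[\log y]^{-2}$. For the $u$-estimate at $k=0$ the weight behaves like $y^{-8}$, which is fine. The log factor matters only in the potentially critical exponents, and I expect that checking that none of the relevant exponents $4k+4$ becomes critical is the main work.

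For the inductive structure over $k$, I would note that the statement is a single-step estimate for each $k$, so no actual induction is needed: $k$ enters only through the exponent of the weight. I would check that the boundary terms in the integrations by parts vanish using the finiteness hypotheses, taking the cutoff at $\varepsilon$ and $1/\varepsilon$ and sending $\varepsilon\to0$.
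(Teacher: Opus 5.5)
Your three-step scheme is the standard Rapha\"el--Schweyer argument behind this cited lemma: Hardy control of $\as$ with the kernel $1/(y\lphi)\sim 1/(2y^2)$ excluded at the origin, Hardy control of $\oa$ modulo $\lphi$, then compactness together with $(\lphi,\xm_M)\neq0$. Steps~1 and~3 are fine. Step~2, however, rests on a false claim. Near the origin the weight is $y^{-4}\jbk[\log y]^{-2}$, not $y^{-4}$. With the measure $y\,dy$, one has $\int_{y\le1}|\lphi|^2y^{-4}\jbk[\log y]^{-2}\,y\,dy\approx4\int_0^1\frac{dy}{y\jbk[\log y]^2}<\infty$, so $\lphi$ \emph{is} square integrable there. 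The hypothesis therefore does not force $u/\lphi\to0$. By Cauchy--Schwarz $\int_0^1|\oa u|/\lphi\,dy<\infty$, so $c=\lim_{y\to0}u/\lphi$ exists, and it is in general nonzero.

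If your claim were true, the $c$ in your representation would vanish and $\lphi$ would be excluded by integrability. The lemma would then hold without the orthogonality condition, which is false: $u=\lphi$ satisfies the finiteness hypothesis, gives $0$ on the left and a positive right-hand side. For the same reason, your sentence "the constant $c$ is therefore not excluded" does not follow from integrability at infinity alone. What is true is that $\lphi$ lies in the weighted space at \emph{both} ends. Your Step~3 silently uses this fact, since normalization gives $c\neq0$ only because $\lphi$ has finite weighted norm. With $c=\lim_{y\to0}u/\lphi$ your representation is correct, and Steps~2--3 then go through.

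Your ``main obstacle'' is also misplaced. At infinity every exponent is non-critical and the logarithm is harmless. At the origin, all three weights are exactly borderline for the generic behaviour $u\sim y$, $\oa u\sim y^2$, $\ham u\sim y$ (e.g.\ $\ham y=-8y+O(y^3)$). There, $\jbk[\log y]^{-2}$ is what makes these integrals finite, and it is also what puts $\lphi$ into the space.

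One technical caution for the Hardy bound on $\tilde u=-\lphi\int_0^y\oa u/\lphi$. At infinity the relevant inequality is of the type that admits constants. A pointwise Cauchy--Schwarz bound on $\int_0^y|\oa u|/\lphi$ therefore misses by a logarithm: it leads to $\int^\infty dy/y$. Instead, write $\tilde u=\lphi\tilde g$, so that the target is $\int|\tilde g|^2W\,dy$ with $W=\lphi^2y^{-3}\jbk[y]^{-4k-4}\jbk[\log y]^{-2}$. Then integrate by parts on $(0,\infty)$ against $G(y)=-\int_y^\infty W$.
\begin{itemize}
\item $G$ is finite at $0$, since $W\sim4/(y\jbk[\log y]^2)$ there.
\item The boundary term at $0$ vanishes because $\tilde g(0)=0$.
\item The boundary term at infinity is dropped using the a priori finiteness.
\end{itemize}
One checks $G^2/W\les\lphi^2y^{-5}\jbk[y]^{-4k}\jbk[\log y]^{-2}$ on $(0,\infty)$, and this closes the estimate.
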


\begin{lemma}[Coercivity of iterated $\ham$ \citep{raphael_quantized_2014}]
  \label{le:coer-e-2k2}
  Let $L\ge 1$ and $0\le k\le L$. Let $M=M(L)$ be a sufficiently large constant. Then there exists $C(M)>0$ such that for all radial $u$ with
  \begin{align*}
    \int|u_{(2k+2)}|^2 +\int\frac{|u_{(2k+1)}|^2}{y^2\jbk[y]^2}
    & +\sum_{j=1}^{k}\int \frac{|u_{(2j-1)}|^2}{y^6\jbk[y]^{4(k-j)}\jbk[\log y]^2} \\
    & +\sum_{j=0}^{k}\int\frac{|u_{(2j)}|^2}{y^4\jbk[y]^{4(k-j)}\jbk[\log y]^2}
    < +\infty
  \end{align*}
  and $(u, \ham^j\xm_M) = 0$ for $0\le j\le k$, we have
  \begin{align*}
    \int|u_{(2k+2)}|^2
    \ge C(M) \Biggl\{ \int \frac{|u_{(2k+1)}|^2}{y^2\jbk[\log y]^2}
    & +\sum_{j=1}^k\int \frac{|u_{(2j-1)}|^2}{y^6\jbk[y]^{4(k-j)}\jbk[\log y]^2}          \\
    & + \sum_{j=0}^k\int \frac{|u_{(2j)}|^2}{y^4\jbk[y]^{4(k-j)}\jbk[\log y]^2} \Biggr\}.
  \end{align*}
\end{lemma}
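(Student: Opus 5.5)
The plan is induction on $k$, reducing each step to Lemma~\ref{le:wgt-coer-bnd} and to a weighted Hardy-type coercivity bound for the first-order operators $\oa$ and $\as$. The orthogonality conditions $(u,\ham^j\xm_M)=0$ are used for one purpose: they make the kernel direction $\lphi$ of $\oa$ harmless at every level.

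\textbf{Setting up the reduction.} Write $v=u\sd[2]=\ham u$. Then $u\sd[2k+2]=v\sd[2k]$, and $(v,\ham^{j}\xm_M)=(u,\ham^{j+1}\xm_M)=0$ for $0\le j\le k-1$. So the inductive hypothesis (the case $k-1$, applied to $v$) gives
\[
\int|u\sd[2k+2]|^2\ge C(M)\Bigl\{\int\frac{|u\sd[2k+1]|^2}{y^2\jbk[\log y]^2}+\sum_{j=1}^{k-1}\int\frac{|u\sd[2j+1]|^2}{y^6\jbk[y]^{4(k-1-j)}\jbk[\log y]^2}+\sum_{j=0}^{k-1}\int\frac{|u\sd[2j+2]|^2}{y^4\jbk[y]^{4(k-1-j)}\jbk[\log y]^2}\Bigr\}.
\]
Two terms of the target bound remain. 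First, the lowest one, $\int|u|^2/(y^4\jbk[y]^{4k}\jbk[\log y]^2)$. Second, the odd term $\int|u\sd[1]|^2/(y^6\jbk[y]^{4(k-1)}\jbk[\log y]^2)$.

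\textbf{Recovering the two remaining terms.} Weights must be reindexed carefully between levels. The weight on $u\sd[2j]$ in the level-$k$ statement is $y^{-4}\jbk[y]^{-4(k-j)}$. In the hypothesis above, $u\sd[2j+2]$ carries $y^{-4}\jbk[y]^{-4(k-1-j)}$, which is exactly the level-$k$ weight for index $j+1$. So all even terms of index $\ge1$ are already present with the right weights.

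The term $\int|\ham u|^2/(y^4\jbk[y]^{4(k-1)}\jbk[\log y]^2)$ is the $j=0$ even term of the hypothesis. Apply Lemma~\ref{le:wgt-coer-bnd} with index $k-1$ and the orthogonality $(u,\xm_M)=0$. This yields both $\int|\oa u|^2/(y^6\jbk[y]^{4(k-1)}\jbk[\log y]^2)$ and $\int|u|^2/(y^4\jbk[y]^{4k}\jbk[\log y]^2)$, which are the two missing terms.

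The intermediate odd terms $u\sd[2j-1]$ with $2\le j\le k$ come directly from the hypothesis. The case $k=0$ is the base step.

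\textbf{Base case $k=0$.} We need $\int|\as\oa u|^2\gtrsim\int|\oa u|^2/(y^2\jbk[\log y]^2)+\int|u|^2/(y^4\jbk[\log y]^2)$ under $(u,\xm_M)=0$.
\begin{itemize}
\item The first term is Lemma~\ref{le:coer-ti-ham} applied to $\oa u$. That lemma needs no orthogonality, since $\ker\as=\spann(1/(y\lphi))$ is not in the admissible class.
\item The second term comes from a weighted Hardy bound for $\oa$ modulo $\lphi$, proved by contradiction and compactness. If it fails, there is a normalized sequence with $\oa u_n\to0$ in the weighted norm. The limit then lies in $\ker\oa=\spann(\lphi)$. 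It is nonzero because of the normalization and the weights: the logarithm at infinity is exactly what makes $\lphi$ borderline. Finally, $(\lphi,\xm_M)\neq0$ by Lemma~\ref{le:local-reson} contradicts $(u_n,\xm_M)=0$.
\end{itemize}

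\textbf{Main obstacle.} The delicate point is the compactness argument near the log-critical endpoints, at both $y\to0$ and $y\to\infty$. There the weights $y^{-4}\jbk[\log y]^{-2}$ are borderline and one must make sure no mass escapes. I would localize with $\chi_M$ at large $y$ and at small $y$, and use explicit one-dimensional Hardy inequalities there: integrate $\oa u=-\lphi\pry(u/\lphi)$ in $y$, where the $\jbk[\log y]$ factor produces exactly the $\jbk[\log y]^{-2}$ loss. This is the argument in \citep{raphael_quantized_2014}, and I would follow it, only checking that the finiteness assumptions of the statement justify the boundary terms in each integration by parts.
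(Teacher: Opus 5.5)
Your inductive step is sound. With $v=\ham u$, the level-$(k-1)$ hypotheses are inherited after the shift $j\mapsto j+1$. The induction hypothesis then gives every term of the level-$k$ bound except $\int|\oa u|^2/(y^6\jbk[y]^{4(k-1)}\jbk[\log y]^2)$ and $\int|u|^2/(y^4\jbk[y]^{4k}\jbk[\log y]^2)$. Lemma~\ref{le:wgt-coer-bnd} with index $k-1$ and $(u,\xm_M)=0$ recovers exactly these two from $\int|\ham u|^2/(y^4\jbk[y]^{4(k-1)}\jbk[\log y]^2)$. (The paper itself only cites the result.)

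\textbf{The gap is in the base case $k=0$.} Lemma~\ref{le:coer-ti-ham} gives you only $\int|\pry\oa u|^2+\int|\oa u|^2/(y^2\jbk[\log y]^2)$. Your second bullet asserts that this controls $\int|u|^2/(y^4\jbk[\log y]^2)$ under $(u,\xm_M)=0$, and that is false near $y=0$. Put $t=\log(1/y)$ and take $\psi$ smooth with
\begin{itemize}
\item $\psi=1$ for $t\le1$, $\psi=t^{1/2}$ for $2\le t\le T$, and $\psi$ constant for $t\ge T+1$;
\item $|\psi'|\lesssim t^{-1/2}$ and $\int|\psi''|^2\,dt\lesssim1$ uniformly in $T$.
\end{itemize}
Let $u=\lphi\,(\psi-c)$ with $c$ fixed by $(u,\xm_M)=0$. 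Since $\xm_M=\lphi$ on $y\le M$, one has $c=1+O(1/\log M)$ uniformly in $T$. Then $\oa u=(\lphi/y)\psi'$ is supported in $e^{-T-1}\le y\le e^{-1}$, $u$ meets all the finiteness hypotheses, and
\[
\int|\pry\oa u|^2+\int\frac{|\oa u|^2}{y^2\jbk[\log y]^2}\lesssim1,
\qquad
\int\frac{|u|^2}{y^4\jbk[\log y]^2}\gtrsim\int_2^T\frac{(t^{1/2}-c)^2}{1+t^2}\,dt\gtrsim\log T.
\]
Normalizing this family produces exactly the loss of mass at $y=0$ that your compactness argument must exclude. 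So no localization or contradiction argument can prove this intermediate inequality.

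The reason is that your Hardy step for $u/\lphi$ near the origin, integrating $\oa u=-\lphi\pry(u/\lphi)$, necessarily costs one logarithm. Its input on $\oa u$ must therefore carry no logarithm there, but Lemma~\ref{le:coer-ti-ham} as stated has already spent it. The missing ingredient is that $\|\as v\|_{L^2}$ controls $v$ at the origin without loss. Integrating by parts and using $(1+\z)^2-\vla\z=4/(1+y^2)$,
\[
\int|\as v|^2=\int|\pry v|^2+4\int\frac{|v|^2}{y^2(1+y^2)},
\]
that is, $\ti{\ham}=-\Delta+4/(y^2(1+y^2))$. To repair the base case:
\begin{itemize}
\item With $v=\oa u$, this identity gives $\int_{y\le1}|\oa u|^2/y^2\lesssim\int|u_{(2)}|^2$, which is what the two-dimensional log-Hardy inequality for $u/\lphi$ on $y\le1$ requires.
\item Use the log-weighted bound deduced from $\int|\pry\oa u|^2$ only on $y\ge1$. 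There the Hardy inequality for $u/\lphi$ has the non-critical weight $y^{-3}$ and costs nothing further.
\item The terms at $y\sim1$ and the direction $\lphi$ are then removed by $(u,\xm_M)=0$, as you planned.
\end{itemize}
In the example above, indeed $\int|\as\oa u|^2\gtrsim\log T$, consistent with the lemma.

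Finally, Lemma~\ref{le:coer-ti-ham} as stated requires $\int|\oa u|^2/y^2<\infty$. The hypotheses guarantee this only near the origin, so even your first bullet needs a truncation at infinity.
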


\begin{remark}
  \label{re:app-coerc=e-wp}
  By the definition~\eqref{eq:e2k} of the energy, we have
  \[
    \E_{2k+2}
    = \int|\ham^{k+1}\alpha|^2 +\int|\ham^{k+1}\beta|^2
    = \int|\alpha_{(2k+2)}|^2 +\int|\beta_{(2k+2)}|^2,
  \]
  where $\alpha, \beta$ are the first and second components of the radiation $\w$. This, together with the orthogonality~\eqref{eq:ortho}, implies that Lemma~\ref{le:coer-e-2k2} actually provides a coercive estimate for $\E_{2k+2}$ with $0\le k\le L$. In particular, it provides the coercive estimate for $\E_{2L+2}$, which is necessary for establishing the interpolation bounds in Appendix~\ref{ap:interp-bnd}.

\end{remark}

\section{Interpolation bounds}
\label{ap:interp-bnd}
In this appendix, we establish the interpolation bounds for the radiation $\w$, which are used extensively in Section~\ref{sec:energy-est}. These estimates follow from the bootstrap bounds~\eqref{pro:boot-e1-bnd}--\eqref{pro:boot-e2l2-bnd} and the coercivity estimate for $\E_{2L+2}$ in Lemma~\ref{le:coer-e-2k2}.

\begin{lemma}[Interpolation bounds for $\w$]
  \label{le:interp-wp}
  The following estimates hold. The explicit constant $C$ below and the implicit constant in $\les$ may depend on $M,K$.
  \begin{enumerate}[label={\rm (\roman*)}]
    \item weighted Sobolev bound.~\label{it:interp-wp=sobolev}
      For $0\le k\le L$,
      \[
        \sum_{i=0}^{2k+1}\int\frac{|\pry^{i}\wp|^2}{y^2\jbk[y]^{4k+2-2i}\jbk[\log y]^2} \les \E_{2k+2};\footnotemark
      \]
      \footnotetext{The implicit constant here is independent of $K$.}
    \item behavior near the origin.~\label{it:interp-wp=taylor-orig}
      For $y \le 1$, we have $\w=\ws +\rs$ with
      \[
        \ws = \sum_{k=1}^{L+1} \begin{pmatrix}
          c_{\alpha,k} \hs[.1] y^{2k-1} \\
          c_{\beta,k} \hs[.1] y^{2k-1}  \\
          c_{\gamma,k} \hs[.1] y^{2k}
        \end{pmatrix}
        \andd
        \rs = \begin{pmatrix}
          \ra \\ \rb\\ \rc
        \end{pmatrix},
      \]
      where for $1\le k\le L+1$, $0\le \ell\le 2L+1$,
      \begin{align*}
        |c_{\alpha,k}| +|c_{\beta,k}| +b_1^{-1}|c_{\gamma,k}|
        & \les \frac{b_1^{L+1}}{\lgb},                           \\
        |\pry^\ell r_{\alpha}| +|\pry^\ell r_{\beta}| +(b_1y)^{-1}|\pry^\ell r_{\gamma}|
        & \les y^{2L+1-\ell}\jbk[\log y] \frac{b_1^{L+1}}{\lgb}.
      \end{align*}

    \item lossy bound.~\label{it:interp-wp=lossy}
      \[
        \sum_{i=0}^{2k+1} \int\frac{\jbk[\log y]^C}{\jbk[y]^{4k+4-2i}}|\pry^i\w|^2
        \les \lgb^C \begin{cases}
          b_1^{(2k+1)\frac{2L}{2L-1}}, & 0\le k\le L-1,   \\
          b_1^{2L+2}                   & \mbox{for } k=L;
        \end{cases}
      \]
    \item general lossy bounds.~\label{it:interp-wp=gen-lossy}
      Let $(i,j)\in\mbN\times\mbN_{+}$,
      \[
        \int\frac{\jbk[\log y]^C}{\jbk[y]^{2j}}|\pry^{i}\w|^2
        \les \lgb^C \begin{cases}
          b_1^{(i+j-1)\frac{2L}{2L-1}}, & 2\le i+j\le 2L, \\
          b_1^{2L+1},                   & i+j =2L+1,      \\
          b_1^{2L+2},                   & i+j =2L+2;
        \end{cases}
      \]
      moreover,
      \[
        \int\frac{|\pry^i\w|^2}{\jbk[\log y]^2}
        \les \lgb^C \begin{cases}
          b_1^{(i-1)\frac{2L}{2L-1}}, & 2\le i\le 2L,      \\
          b_1^{2L+1},                & i=2L+1,             \\
          b_1^{2L+2},                  & i=2L+2;
        \end{cases}
      \]
    \item far-out pointwise bound.~\label{it:interp-wp=pw}
      Let $(i,j)\in\mbN\times\mbN$,
      \[
        \biggl\|\frac{\pry^i\w}{y^j}\biggr\|^2_{L^{\infty}(y\ge1)}
        \les \lgb^C \begin{cases}
          \dbs                        & \mbox{for $i+j =0$},    \\
          b_1^{(i+j)\frac{2L}{2L-1}}, & 1\le i+j\le 2L-1,       \\
          b_1^{2L+1}                  & \mbox{for $i+j =2L$},   \\
          b_1^{2L+2}                  & \mbox{for $i+j =2L+1$}.
        \end{cases}
      \]
  \end{enumerate}
\end{lemma}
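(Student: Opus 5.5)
The plan is to derive every item of Lemma~\ref{le:interp-wp} from the coercivity estimate Lemma~\ref{le:coer-e-2k2}, applied componentwise to $\alpha,\beta$ with the orthogonality conditions~\eqref{eq:ortho}. These inputs are combined with the bootstrap bounds \eqref{pro:boot-e1-bnd}--\eqref{pro:boot-e2l2-bnd}, with elementary interpolation between the energy levels, and with the constraint \eqref{eq:constra} for the third component $\gamma$.

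\textbf{Item (i).} By Remark~\ref{re:app-coerc=e-wp}, Lemma~\ref{le:coer-e-2k2} with the chosen $k$ controls all weighted suitable derivatives $\varepsilon_{(j)}$, $0\le j\le 2k+1$, by $C(M)\E_{2k+2}$. The required finiteness hypotheses come from the regularity Lemma~\ref{le:reg}. To pass from suitable derivatives to $\pry^i$, we write $\pry=-\oa+\z/y$ and $\pry=\as-(1+\z)/y$ and expand iteratively. Each lower-order term carries an extra power of $y^{-1}$, which the stronger weights absorb; for $y\ge1$ the coefficients are $O(1/y)$, and for $y\le 1$ they are handled with the $y^{-4},y^{-6}$ weights.

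\textbf{Item (ii).} For $y\le1$ we combine the Taylor structure of Lemma~\ref{le:reg} with the size bound. Integrating $\ham^{L+1}\varepsilon$ back using the formula \eqref{pf:adm=au-u} on $[0,y]$, Cauchy--Schwarz and $\E_{2L+2}\le Kb_1^{2L+2}\lgb^{-2}$ give pointwise control of $\varepsilon_{(j)}$ by $y^{2L+1-j}\jbk[\log y]b_1^{L+1}/\lgb$. Reading off the odd coefficients yields the expansion for $\alpha,\beta$. For $\gamma$, the relation $\gamma=\sqrt{1-\hat\alpha^2-\hat\beta^2}-1-\ti\gamma_0$ gives an expansion that is quadratic in $(\alpha,\beta)$ and linear through its cross terms with $\wzt=O(b_1y)$. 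This linear cross term produces the extra factor $b_1$ multiplying $y$.

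\textbf{Items (iii)--(v).} These follow by interpolation. The weighted integrals at level $k$ are controlled by $\E_{2k+2}$ via (i), up to the $\jbk[\log y]^C$ losses, which are absorbed by $\lgb^C$. The cutoff between $y\lesssim S_1$ and larger $y$ uses $\log y\lesssim\lgb$ on the relevant region; beyond it, we interpolate with $\E_1$ using the extra decay. The general $(i,j)$ bounds are obtained by the following steps:
\begin{itemize}
\item interpolating between neighbouring even levels with Hölder in the weights, in the form $\int f^2 w_1^{\theta}w_2^{1-\theta}\le(\int f^2w_1)^{\theta}(\int f^2w_2)^{1-\theta}$;
\item using integration by parts for odd derivative orders;
\item inserting the bootstrap exponents $b_1^{(2k+1)\frac{2L}{2L-1}}$.
\end{itemize}
The pointwise bound (v) follows from the one-dimensional inequality $|f(y)|^2\le 2\int_y^\infty|f||f'|\,dx\le \frac{2}{y}(\int|f|^2)^{1/2}(\int|f'|^2)^{1/2}$ applied to $\pry^i\w/y^j$, in the radial measure. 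This pairs two consecutive levels from (iv), and their exponents average to the stated ones. For $i+j=0$ we use $\E_1\le 10b_1(0)^{1/4}$, which gives $\dbs$.

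\textbf{Main obstacle.} I expect the hardest part to be the endpoint cases of (iv) and (v). At $i+j=2L+1$ and $2L+2$, the naive interpolation with $\E_{2L}\lesssim b_1^{(2L-1)\frac{2L}{2L-1}}\lgb^K$ only yields $b_1^{2L}$. This must be sharpened to $b_1^{2L+1}$ by interpolating directly between $\E_{2L}$ and the top bound $\E_{2L+2}\lesssim b_1^{2L+2}$, taking geometric means, so that $b_1^{(2L+2L+2)/2}=b_1^{2L+1}$. The $\gamma$ contributions to these estimates must also be kept under control; this is done through the constraint, which writes $\gamma$ as quadratic in $\hat\w^\perp$ and hence one order smaller.
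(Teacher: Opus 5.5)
Your overall route matches the paper's. For $\wp$ the paper quotes \citep[Lemma~C.1]{raphael_quantized_2014}, which rests on the same coercivity and bootstrap inputs you use, and for $\gamma$ it expands the constraint \eqref{eq:constra}. However, the step you give for (v) does not produce the stated exponents. With $f=\pry^i\w/y^j$, your inequality pairs $\int|f|^2=\int|\pry^i\w|^2y^{-2j}$ with $\int|f'|^2$. The first has order $i+j$, so (iv) gives only $\lgb^C b_1^{(i+j-1)\frac{2L}{2L-1}}$. The second has order $i+j+1$, giving $\lgb^C b_1^{(i+j)\frac{2L}{2L-1}}$. On $y\ge1$ the prefactor $2/y$ gains nothing. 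You therefore get at best $b_1^{(i+j-\frac12)\frac{2L}{2L-1}}$, and $b_1^{2L+\frac12}$, $b_1^{2L+\frac32}$ at the two endpoints. The exponents do not average to the stated ones. The loss is not harmless: later exponent counts use (v) at full strength; for instance $\|\pry\w\|_{L^\infty}^2\int|\pry^{\xi}\w|^2y^{-2\xi'}$ with $\xi+\xi'=2L+1$ then falls below $b_1^{2L+2}$ for $L\ge2$. The fix, which is Step~6 of the paper's proof, is to put the factor $x^{-1}$ on $f$ instead of pulling out $1/y$. For $y\ge1$,
\[
|f(y)|^2\le 2\int_y^\infty\frac{\jbk[\log x]\,|f|}{x}\cdot\frac{|f'|}{\jbk[\log x]}\,x\,\dd x
\les\int_{y\ge1}\frac{\jbk[\log y]^2|\pry^i\w|^2}{y^{2j+2}}+\int_{y\ge1}\frac{|\pry^{i+1}\w|^2}{y^{2j}\jbk[\log y]^2}.
\]
Both integrals have order $i+j+1$ and can be read off from (iv). The logarithmic weights are needed because, for $j=0$, the second integral is only available in the $\jbk[\log y]^{-2}$-weighted form of (iv).

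Second, your $\gamma$ argument misses the part of $\gamma$ that does not involve $\wp$ at all. Since $\ti\gamma_0=\chi_{S_1}f_{L+1}(|\wzp|^2)$ is a cut-off, truncated Taylor polynomial of $-1+\sqrt{1-x}$, $\gamma$ does not vanish when $\wp=0$. In the paper's splitting $\gamma=\rrr_1+\rrr_2+\rrr_3$, the term $\rrr_1=f_{L+1}(|\wztp|^2)-\chi_{S_1}f_{L+1}(|\wzp|^2)$ is a pure profile error supported in $S_1\le y\le 2S_1$. The Taylor remainder $\rrr_3$ contains pure-$\wztp$ monomials of degree at least $2L+4$. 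No energy controls these terms, so ``quadratic in $\hw^\perp$, hence one order smaller'' does not cover them. They must be estimated directly from \eqref{pf:app-sol=wz-l-infty}; for example, $\rrr_1$ contributes $b_1^{2k+3}\lgb^C$ to (iii), which is better than required. The terms containing $\wp$ are then handled by Fa\`a di Bruno using the $\wp$ versions of (iv)--(v), which therefore must be proved first.

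Three smaller steps also fail as written.
\begin{itemize}
\item In (i), termwise conversion near $y=0$ breaks down once $i\ge4$. Near the origin, $\pry^4\varepsilon=\varepsilon_{(4)}-\frac2y\varepsilon_{(3)}-\frac{3}{y^2}\varepsilon_{(2)}+\frac{12}{y^3}\varepsilon_{(1)}+\dots$. Against the weight $y^{-2}$ this would require $\int_{y\le1}y^{-6}|\varepsilon_{(2)}|^2$ and $\int_{y\le1}y^{-8}|\varepsilon_{(1)}|^2$. Lemma~\ref{le:coer-e-2k2} does not provide these, and they even diverge for generic smooth $\varepsilon$. Near the origin you need local elliptic regularity for $\ham^{k+1}$, plus a logarithmic Hardy inequality at the top order.
\item In (ii), Cauchy--Schwarz only gives $|\oa\ham^L\varepsilon|\les\|\ham^{L+1}\varepsilon\|_{L^2}$ on $y\le1$. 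Recovering $\ham^L\varepsilon=-\lphi\int\oa\ham^L\varepsilon/\lphi$ by integrating from $0$ then diverges like $\int_0^y\dd x/x$. Integrate from $y=1$ instead; this is where the $\jbk[\log y]$ comes from. The values at $y=1$ are bounded by (i).
\item In (iii), for $k\ge1$ the top derivatives $i=2k,2k+1$ in the region where $\jbk[\log y]\gg\lgb$ are not controlled by $\E_1$. Use the unweighted bounds coming from $\E_{2k}$ and $\E_{2k+2}$, and cut at $y\sim b_1^{-1}$ rather than at $S_1$. A cut at $S_1$ gains only a factor $b_1$, which for $k\le L-1$ leaves a loss of $b_1^{-\frac{1}{2L-1}}$.
\end{itemize}
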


\begin{proof}
  This result extends \citep[Lemma C.1]{raphael_quantized_2014} to the vector-valued setting considered here.
  Indeed, we recall from Remark~\ref{re:app-coerc=e-wp} that
  \[
    \E_{2L+2}[\w]
    = \int|\ham^{L+1}\alpha|^2 +\int|\ham^{L+1}\beta|^2,
  \]
  which coincides with the definition of the top energy $\E_{2L+2}$ in~\cite{raphael_quantized_2014}, if we consider only the first two components $\alpha, \beta$. Therefore, the estimates for \(\alpha\), \(\beta\), and hence for $\wp$, follow directly from \citep[Lemma C.1]{raphael_quantized_2014}. In what follows, we prove~\ref{it:interp-wp=taylor-orig}--\ref{it:interp-wp=pw} for \(\gamma\).

  \step[1, Expression of $\gamma$]
  By~\eqref{eq:constra} and $\hw=\wzt+\w$,
  \[
    \gamma = -(1+\ti{\gamma}_0) +\sqrt{1-|\wztp+\wp|^2}.
  \]
  Recall from~\eqref{pf:form-comp=f} that $f(x) = -1+\sqrt{1-x}$ and $f_{L+1}$ denotes its Taylor polynomial at the origin of degree $(L+1)$. Note from~\eqref{pf:form-comp=gamma} that $\gamma_0 = f_{L+1}(|\wzp|^2)$. Thus $\gamma$ can be rewritten as
  \[
    \gamma = \rrr_1 + \rrr_2 + \rrr_3,
    \with
    \begin{cases}
      \rrr_1 = f_{L+1}(|\wztp|^2) - \chi_{S_1}f_{L+1}(|\wzp|^2), \\
      \rrr_2 = f_{L+1}(|\wztp+\wp|^2) - f_{L+1}(|\wztp|^2),      \\
      \rrr_3 = f(|\wztp+\wp|^2) - f_{L+1}(|\wztp+\wp|^2).
    \end{cases}
  \]
  In addition,
  \begin{equation}
    \begin{split}
      \label{pf:interp-wp=mcj}
      \rrr_2
      & = (2\wztp\cdot\wp +|\wp|^2) \int_0^1 f'_{L+1}(|\wztp|^2 +\sigma(2\wztp\cdot\wp +|\wp|^2))\dd\sigma, \\
      \rrr_3
      & = \frac{|\wztp+\wp|^{2(L+2)}}{(L+1)!} \int_0^1 (1-\sigma)^{L+1} f^{(L+2)}(\sigma|\wztp+\wp|^2)\dd\sigma.
    \end{split}
  \end{equation}

  \step[2, Proof of~\ref{it:interp-wp=taylor-orig}]
  Note that $\rrr_1$ is supported in $y\sim 2S_1$, so for \(y\le 1\),
  \[
    \gamma = \rrr_2 +\rrr_3.
  \]
  The expression of $f_{L+1}$ and the decomposition $\wp = \ws^{\pp} + \rs^{\pp}$ give
  \begin{align*}
    \rrr_2
    & = \sum_{\ell=1}^{L+1} c_{\ell} \bigl(2\ws^{\pp}\cdot \wztp +|\ws^{\pp}|^2\bigr) \sum_{k=0}^{\ell-1} |\wztp+\ws^{\pp}|^{2k} |\wztp|^{2(\ell-1-k)} + \rrr_{21} \\
    & = \sum_{\ell=1}^{L+1} O(|\ws^\pp|) O((|\wztp| +|\ws^\pp|)^{2\ell-1}) + \rrr_{21}.
  \end{align*}
  Here, $\rrr_{21}$ denotes the nonlinear part involving $\rs^{\pp}$. Since $\wztp$ has an odd Taylor expansion and $\ws^{\pp}$ is an odd polynomial in $y$, with factors $b_1$ and $b_1^{L+1}\lgb^{-1}$ respectively, every monomial in the preceding expansion has size at most $b_1^{L+2}\lgb^{-1}$. Collecting these monomials according to their powers of $y$, we obtain
  \[
    \gamma = \sum_{\ell=1}^{L+1} c_{\gamma,\ell} \hs[.1] y^{2\ell} + r_{\gamma}
    \with
    |c_{\gamma,\ell}| \les \frac{b_1^{L+2}}{\lgb},
  \]
  where $r_{\gamma}$ contains three parts: $\rrr_3$, $\rrr_{21}$, and the terms of spatial order at least $2L+4$ in the Taylor expansion of $\rrr_2-\rrr_{21}$.
  It suffices to check that each part of $r_{\gamma}$ satisfies the derivative estimate in~\ref{it:interp-wp=taylor-orig}. The Taylor terms of spatial order at least $2L+4$ satisfy this directly. The same is true for $\rrr_{21}$, due to the regularity of $r_{\alpha}, r_{\beta}$ and the additional factor $b_1$ contributed by $\wztp$, $\wp$. It remains to consider $\rrr_3$. By the smoothness of $f$ and the bound
  \[
    \|\wztp +\wp\|_{L^{\infty}(y\le1)}\les b_1\ll1,
  \]
  the integral in $\rrr_3$ admits an expansion in even powers of $|\wztp+\wp|$. Thus,
  \[
    \rrr_3 = \sum_{k = L+2}^{N} C_{k}|\wztp+\wp|^{2k} + O(|\wztp+\wp|^{2(N+1)})
  \]
  for large enough $N$.
  Therefore, $\rrr_3$ carries a factor of size $b_1^{2(L+2)}$, negligible compared with $b_1^{L+2}\lgb^{-1}$. Moreover, since $\wztp+\ws^{\pp}=O(y)$ near the origin, $\rrr_3$ vanishes to order at least $2(L+2)$ in $y$. Therefore, $\rrr_3$ satisfies~\ref{it:interp-wp=taylor-orig}.

  \step[3, Proof of~\ref{it:interp-wp=lossy}] Denote
  \[
    \mcN_{k,i}[\eta] = \int \frac{\jbk[\log y]^C}{\jbk[y]^{4k+4-2i}}|\pry^i \eta|^2
    \for 0\le i\le2k+1.
  \]
  First, since $\rrr_1$ is supported in $y\sim S_1$, from~\eqref{pf:app-sol=wz-l-infty},
  \begin{equation}
    \label{pf:interp-wp=mcj1}
    \mcN_{k,i}[\rrr_1]
    \les b_1^2\lgb^C\int_{y\sim S_1}\frac{y\dd y}{y^{4k+4}}
    \les b_1^{2k+3}\lgb^C.
  \end{equation}
  Then in view of ~\ref{it:interp-wp=taylor-orig}, both $\rrr_2$ and $\rrr_3$ carry a factor of size $b_1^{L+2}$ for $y\le 1$, and thus
  \[
    \sum_{i=0}^{2k+1}\int_{y\le1}\jbk[\log y]^C
    \bigl(|\pry^i\rrr_2|^2+|\pry^i\rrr_3|^2\bigr)
    \les b_1^{2L+4}.
  \]
  It remains to estimate their $\mcN_{k,i}$ bounds on $y\ge1$. By the Fa\`{a} di Bruno formula and~\eqref{pf:interp-wp=mcj}, for $0\le i\le2L+2$, we have
  \begin{equation}\label{pf:interp-wp=rrr-deriv}
    |\pry^i\rrr_j|
    \les\sum_{n,m,\xi}\prod_{\ell=1}^{m}|\pry^{\xi_\ell}\wztp|
    \prod_{\ell=m+1}^{n}|\pry^{\xi_\ell}\wp|
    \for j=2,3,
  \end{equation}
  where the sum is taken over
  \[
    \begin{cases}
      2\le n\le2L+2,\quad 0\le m<n &\fors j=2,\\
      2L+4\le n\le2L+4+2i,\quad 0\le m\le n &\fors j=3,\\
    \end{cases}
  \]
  and
  \[
    \xi\in\mbN^n,\quad \xi_1+\dots+\xi_n=i.
  \]
  The terms containing only $\wztp$ cancel in the definition of $\rrr_2$, so $m=n$ occurs only for $\rrr_3$. Next, for the terms in $\rrr_3$ with $m=n$, from~\eqref{pf:app-sol=wz-l-infty}, we have
  \[
    \int_{y\ge1}\frac{\jbk[\log y]^C}{\jbk[y]^{4k+4-2i}}
    \prod_{\ell=1}^n|\pry^{\xi_\ell}\wztp|^2
    \les b_1^n\lgb^C\int_1^\infty\frac{\jbk[\log y]^C}{y^{4k+4}}y\dd y
    \les b_1^{2L+4}\lgb^C.
  \]
  Finally, we estimate $\rrr_2$ and the terms in $\rrr_3$ containing $\wp$, which correspond to $m<n$. Choosing $\xi_n\ge\xi_\ell$ for $m+1\le\ell\le n-1$, we have
  \[
    2\xi_\ell\le\xi_\ell+\xi_n\le i\le2k+1
    \quad\Longrightarrow\quad
    \xi_\ell\le k\le L\le2L-1.
  \]
  Thus each product satisfies
  \begin{equation}\begin{split}
      \label{pf:interp-wp=rrr}
      &\int_{y\ge1}\frac{\jbk[\log y]^C}{\jbk[y]^{4k+4-2i}}
      \prod_{\ell=1}^{m}|\pry^{\xi_\ell}\wztp|^2\prod_{\ell=m+1}^{n}|\pry^{\xi_\ell}\wp|^2 \\
      &\les b_1^m\lgb^C
      \prod_{\ell=m+1}^{n-1}\|\pry^{\xi_\ell}\wp\|_{L^\infty(y\ge1)}^2
      \int_{y\ge1}\frac{\jbk[\log y]^C|\pry^{\xi_n}\wp|^2}{\jbk[y]^{2(2k+2-i+\xi_1+\dots+\xi_m)}} \\
      &\les b_1^{d_{n,m,\xi}}\lgb^C,
    \end{split}\end{equation}
  where by~\ref{it:interp-wp=gen-lossy}--\ref{it:interp-wp=pw} for $\wp$, the exponent is given by
  \begin{itemize}
    \item for $k\le L-1$:
      \[
        d_{n,m,\xi}=(2k+1)\frac{2L}{2L-1}+m
        \ge(2k+1)\frac{2L}{2L-1};
      \]
    \item for $k=L$, $\xi_{m+1}+\dots+\xi_{n-1}=0$:
      \[
        d_{n,m,\xi}=2L+2+m\ge2L+2;
      \]
    \item for $k=L$, $\xi_{m+1}+\dots+\xi_{n-1}=1$:
      \[
        d_{n,m,\xi}=\frac{2L}{2L-1}+2L+1+m\ge2L+2;
      \]
    \item for $k=L$, $\xi_{m+1}+\dots+\xi_{n-1}\ge2$:
      \[
        d_{n,m,\xi}=(2L+1)\frac{2L}{2L-1}+m\ge2L+2.
      \]
  \end{itemize}
  Combining the above estimates with the bounds for $y\le1$, we obtain
  \[
    \sum_{i=0}^{2k+1}\bigl(\mcN_{k,i}[\rrr_2]+\mcN_{k,i}[\rrr_3]\bigr)
    \les \lgb^C
    \begin{cases}
      b_1^{(2k+1)\frac{2L}{2L-1}}, & 0\le k\le L-1,    \\
      b_1^{2L+2}                   & \fors k=L,
    \end{cases}
  \]
  Together with~\eqref{pf:interp-wp=mcj1}, this proves~\ref{it:interp-wp=lossy}.

  \step[4, Proof of~\ref{it:interp-wp=gen-lossy}, Part I]
  We first prove the second inequality in~\ref{it:interp-wp=gen-lossy}.
  For $y\le1$, from~\ref{it:interp-wp=taylor-orig}, we already have
  \[
    \sum_{i=0}^{2L+1}\int_{y\le1}\frac{|\pry^i\gamma|^2}{\jbk[\log y]^2}
    \les b_1^{2L+4}.
  \]
  For the top derivative, using the constraint~\eqref{eq:constra} we compute
  \[
    \biggl|\pry^{2L+2}\gamma
    +\frac{\hw^\pp\cdot\pry^{2L+2}\wp}{1+\wh\gamma}\biggr|
    \les \jbk[\log y] b_1^{L+2}\lgb^{-1},
  \]
  where the terms containing lower derivatives are controlled by~\ref{it:interp-wp=taylor-orig} for $\wp$. Writing $\pry^{2L+2}\wp$ in terms of $\ham^{L+1}\wp$ and lower-order derivatives, and using~\ref{it:interp-wp=sobolev}, we have
  \[
    \int_{y\le1}|\pry^{2L+2}\wp|^2
    \les\E_{2L+2}+\int_{y\le2}|\wp|^2
    \les \E_{2L+2}.
  \]
  Thus from~\ref{it:interp-wp=taylor-orig} and~\eqref{pro:boot-e2l2-bnd}, we obtain
  \[
    \int_{y\le1}|\pry^{2L+2}\gamma|^2
    \les b_1^2\E_{2L+2}+\frac{b_1^{2L+4}}{\lgb^2}
    \les \frac{b_1^{2L+4}}{\lgb^2}.
  \]
  For $y\ge1$, we use $\gamma=\rrr_1+\rrr_2+\rrr_3$ from Step~1. By~\eqref{pf:interp-wp=mcj}, every term in $\rrr_2$ contains at least one factor of $\wp$, and $\rrr_3$ contains at least $2L+4$ factors of either $\wztp$ or $\wp$. We therefore estimate $\rrr_1$, the terms in $\rrr_3$ containing only $\wztp$, and $\rrr_2$ together with the remaining terms in $\rrr_3$ respectively.

  First, since $\rrr_1$ is supported in $y\sim S_1$, from~\eqref{pf:app-sol=wz-l-infty}, as in Step~3, we have
  \[
    \int_{y\ge1}\frac{|\pry^i\rrr_1|^2}{\jbk[\log y]^2}
    \les b_1^2\lgb^C\int_{y\sim S_1}\frac{y\dd y}{y^{2i}}
    \les b_1^{i+1}\lgb^C.
  \]
  Next, we treat the terms in $\rrr_3$ containing only $\wztp$, which correspond to $m=n$ in~\eqref{pf:interp-wp=rrr-deriv}. Again by~\eqref{pf:app-sol=wz-l-infty}, we have
  \[
    \int_{y\ge1}\frac{1}{\jbk[\log y]^2}\prod_{\ell=1}^n|\pry^{\xi_\ell}\wztp|^2
    \les b_1^n\lgb^C\int_1^\infty \frac{y\dd y}{y^{2i}}
    \les b_1^{2L+4}\lgb^C.
  \]
  Finally, we treat $\rrr_2$ and the terms in $\rrr_3$ containing $\wp$, which correspond to $m<n$ in~\eqref{pf:interp-wp=rrr-deriv}. We choose $\xi_n\ge\xi_\ell$ for $m+1\le\ell\le n-1$.

  For $L\ge2$, this implies
  \[
    2\xi_{\ell} \le\xi_{\ell} +\xi_n\le i
    \quad\Longrightarrow\quad \xi_{\ell}\le\frac{i}{2}\le L+1\le2L-1.
  \]
  Then each product satisfies
  \begin{align*}
    &\int_{y\ge1}\frac{1}{\jbk[\log y]^2}\prod_{\ell=1}^m|\pry^{\xi_\ell}\wztp|^2
    \prod_{\ell=m+1}^n|\pry^{\xi_\ell}\wp|^2 \\
    &\les b_1^m\lgb^C
    \prod_{\ell=m+1}^{n-1}\|\pry^{\xi_\ell}\wp\|_{L^\infty(y\ge1)}^2
    \int_{y\ge1}\frac{|\pry^{\xi_n}\wp|^2}
    {y^{2(\xi_1+\dots+\xi_m)}\jbk[\log y]^2} \\
    &\les b_1^{d_{n,m,\xi}}\lgb^C,
  \end{align*}
  where by~\ref{it:interp-wp=gen-lossy}--\ref{it:interp-wp=pw} for $\wp$, the exponent is given by
  \begin{itemize}
    \item for $\xi_n+(\xi_1+\dots+\xi_m)=1$: this implies $\xi_n=1$ and $\xi_1=\dots=\xi_m=0$; using $\|\pry\wp\|_{L^2}^2\les\E_1\ll1$, we obtain
      \[
        d_{n,m,\xi}=\frac{2L}{2L-1}\sum_{\ell=m+1}^{n-1}\xi_{\ell} +m
        =(i-1)\frac{2L}{2L-1}+m;
      \]
    \item for $2\le\xi_n+(\xi_1+\dots+\xi_m)\le2L$:
      \[
        d_{n,m,\xi}=(i-1)\frac{2L}{2L-1}+m
        \ge(i-1)\frac{2L}{2L-1};
      \]
    \item for $\xi_n+(\xi_1+\dots+\xi_m)=2L+1$:
      \[
        d_{n,m,\xi}=\frac{2L}{2L-1}\sum_{\ell=m+1}^{n-1}\xi_{\ell} +2L+1+m
        \ge i;
      \]
    \item for $\xi_n+(\xi_1+\dots+\xi_m)=2L+2$: this implies $i=2L+2$ and $\xi_{m+1}=\dots=\xi_{n-1}=0$, and thus
      \[
        d_{n,m,\xi}=2L+2+m\ge i.
      \]
  \end{itemize}

  For $L=1$, the same estimates apply except when $i=4$ and two factors of $\wp$ each carry two derivatives. In that case, we have
  \[
    \|\pry^2\wp\|_{L^\infty(y\ge1)}^2
    \int_{y\ge1}\frac{|\pry^2\wp|^2}{\jbk[\log y]^2}
    \les b_1^3\lgb^C\,b_1^2\lgb^C
    \les b_1^5\lgb^C.
  \]
  Combining the above estimates and using
  $(i-1)\frac{2L}{2L-1}\ge i$ for $i=2L+1,2L+2$, we obtain
  \begin{equation}
    \label{pf:interp-wp=gamma-log}
    \int\frac{|\pry^i\gamma|^2}{\jbk[\log y]^2}
    \les\lgb^C\begin{cases}
      b_1^{(i-1)\frac{2L}{2L-1}},&2\le i\le2L,\\
      b_1^{2L+1},&i=2L+1,\\
      b_1^{2L+2},&i=2L+2,
    \end{cases}
  \end{equation}
  which proves the second inequality in~\ref{it:interp-wp=gen-lossy}.

  \step[5, Proof of~\ref{it:interp-wp=gen-lossy}, Part II]
  We now prove the first inequality in~\ref{it:interp-wp=gen-lossy}. If $i+j=2k+2$ for $0\le k\le L$, then $i\le2k+1$, and from~\ref{it:interp-wp=lossy} we obtain the desired bound:
  \[
    \int\frac{\jbk[\log y]^C|\pry^i\gamma|^2}{\jbk[y]^{2j}}
    =\int\frac{\jbk[\log y]^C|\pry^i\gamma|^2}{\jbk[y]^{4k+4-2i}}
    \les\lgb^C\begin{cases}
      b_1^{(2k+1)\frac{2L}{2L-1}},&k\le L-1,\\
      b_1^{2L+2} &\fors k=L.
    \end{cases}
  \]
  If $i+j=2k+1$ with $1\le k\le L$ and $j\ge2$, then by Cauchy--Schwarz,
  \begin{align*}
    \int\frac{\jbk[\log y]^C|\pry^i\gamma|^2}{\jbk[y]^{2j}}
    &\le\biggl(\int\frac{\jbk[\log y]^C|\pry^i\gamma|^2}{\jbk[y]^{2j+2}}\biggr)^{1/2}
    \biggl(\int\frac{\jbk[\log y]^C|\pry^i\gamma|^2}{\jbk[y]^{2j-2}}\biggr)^{1/2}.
  \end{align*}
  Now for $j=1$, we have $i=2k$ and thus
  \[
    \int\frac{\jbk[\log y]^C|\pry^{2k}\gamma|^2}{\jbk[y]^2}
    \le\biggl(\int\frac{\jbk[\log y]^{2C+2}|\pry^{2k}\gamma|^2}{\jbk[y]^4}\biggr)^{1/2}
    \biggl(\int\frac{|\pry^{2k}\gamma|^2}{\jbk[\log y]^2}\biggr)^{1/2}.
  \]
  From~\ref{it:interp-wp=lossy} and~\eqref{pf:interp-wp=gamma-log}, in both cases we obtain
  \begin{align*}
    \int\frac{\jbk[\log y]^C|\pry^i\gamma|^2}{\jbk[y]^{2j}}
    &\les\lgb^C\begin{cases}
      \Bigl(b_1^{(2k+1)\frac{2L}{2L-1}}b_1^{(2k-1)\frac{2L}{2L-1}}\Bigr)^{1/2},&k\le L-1,\\
      \left(b_1^{2L+2}b_1^{2L}\right)^{1/2} &\fors k=L,
    \end{cases}\\
    &=\lgb^C\begin{cases}
      b_1^{2k\frac{2L}{2L-1}},&k\le L-1,\\
      b_1^{2L+1} &\fors k=L,
    \end{cases}
  \end{align*}
  which proves the first inequality in~\ref{it:interp-wp=gen-lossy}.

  \step[6, Proof of~\ref{it:interp-wp=pw}]
  If $i+j\ge 1$, by Cauchy--Schwarz we have
  \begin{align*}
    \biggl\|\frac{\pry^i\gamma}{y^j}\biggr\|_{L^\infty(y\ge1)}^2
    &\les\int_{y\ge1}\frac{\jbk[\log y]^2|\pry^i\gamma|^2}{y^{2j+2}}
    +\int_{y\ge1}\frac{|\pry^{i+1}\gamma|^2}{y^{2j}\jbk[\log y]^2}\\
    &\les\lgb^C\begin{cases}
      b_1^{(i+j)\frac{2L}{2L-1}},&1\le i+j\le2L-1,\\
      b_1^{2L+1},&\fors i+j=2L,\\
      b_1^{2L+2},&\fors i+j=2L+1,
    \end{cases}
  \end{align*}
  where we use the first inequality in~\ref{it:interp-wp=gen-lossy} for the first integral and for the second when $j\ge1$, and~\eqref{pf:interp-wp=gamma-log} when $j=0$. If $i=j=0$, from~\eqref{eq:constra} and the smallness of $\wztp$ and $\wp$, we have directly
  \[
    \|\gamma\|_{L^\infty}^2
    \les\|\ti\gamma_0\|_{L^\infty}^2+\|\wztp+\wp\|_{L^\infty}^4
    \les\dbs,
  \]
  which proves~\ref{it:interp-wp=pw}.
  This completes the proof.
\end{proof}

\begin{lemma}[Refined interpolation for $\gamma$]
  \label{le:interp-gamma}
  The following estimates hold. The explicit constant $C$ below and the implicit constant in $\les$ may depend on $M,K$.
  \begin{enumerate}[label={\rm (\roman*)}]
    \item $\dot{H}^1$ bound:
      \label{it:interp-gamma=h1}
      \[
        \|\pry \gamma\|_{L^2}^2 \les \dbs\E_1 +b_1^2\lgb^C;
      \]
    \item unweighted bound:
      \label{it:interp-gamma=unweighted}
      \[
        \int_{y\ge1}|\pry^i\gamma|^2
        \les \lgb^C\begin{cases}
          b_1^{(i-1)\frac{2L}{2L-1}}, & 2\le i\le2L, \\
          b_1^{2L+1}, & i=2L+1, \\
          b_1^{2L+2}, & i=2L+2.
        \end{cases}
      \]
    \item weighted Sobolev bound:
      \label{it:interp-gamma=sobolev}
      \[
        \sum_{i=0}^{2L+2}\int\frac{|\pry^i\gamma|^2}{\jbk[y]^{4L+4-2i}} \les \dbs\E_{2L+2} +b_1^{\frac{1}{4L}}\frac{b_1^{2L+2}}{\lgb^2}.
      \]
  \end{enumerate}
\end{lemma}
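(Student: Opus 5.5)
The plan is to reuse the decomposition $\gamma=\rrr_1+\rrr_2+\rrr_3$ from Step~1 of the proof of Lemma~\ref{le:interp-wp}, together with the Fa\`a di Bruno bound \eqref{pf:interp-wp=rrr-deriv}. What is new compared with that lemma is that the bounds should involve the energies $\E_1$ and $\E_{2L+2}$ with a small factor $\dbs$, rather than pure powers of $b_1$. The mechanism is that every term of $\rrr_2$ is at least linear in $\wp$ and carries at least one other factor, either $\wztp$ or $\wp$, which is small in $L^\infty$. The terms of $\rrr_1$ and $\rrr_3$ built only from profiles are handled as in Steps~3--4 of that proof: they are supported in $y\sim S_1$ or have at least $2L+4$ profile factors, so by \eqref{pf:app-sol=wz-l-infty} they contribute $b_1^2\lgb^C$ to (i) and $O(b_1^{2L+3}\lgb^C)$ to (iii).

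\textbf{(i).} Differentiate \eqref{pf:interp-wp=mcj} once. Each term of $\pry\rrr_2$ is either $(\pry\wp)$ times a bounded factor containing $\wztp$ or $\wp$, or $\wp$ times $\pry\wztp$. The first type is bounded by $(\|\wztp\|_{L^\infty}+\|\wp\|_{L^\infty})^2\|\pry\wp\|_{L^2}^2\les\dbs\E_1$. For the second type, place $y\pry\wztp$ in $L^\infty$ using \eqref{pf:app-sol=wz-l-infty} and $\wp/y$ in $L^2$; this gives $b_1\lgb^C\E_1\les\dbs\E_1$. The terms of $\rrr_3$ containing $\wp$ are treated in the same way.

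\textbf{(ii).} Repeat Step~4 of the proof of Lemma~\ref{le:interp-wp} without the weight $\jbk[\log y]^{-2}$. The factor carrying the most derivatives, $\pry^{\xi_n}\wp$, is now estimated in unweighted $L^2(y\ge1)$. For even orders this comes from \eqref{pf:w2l2-e=even-expansion} and Lemma~\itref{le:interp-wp}{it:interp-wp=gen-lossy}; odd orders follow by interpolating between neighbouring even orders. In the borderline case $\xi_n=1$ we use $\|\pry\wp\|_{L^2}^2\les\E_1$. The exponent bookkeeping is identical to that proof.

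\textbf{(iii).} This is the main obstacle, because the bound must be sharp: it must involve $\E_{2L+2}$ with a small constant, and the remaining term must carry the gain $b_1^{1/(4L)}$. On $y\le1$, Lemma~\itref{le:interp-wp}{it:interp-wp=taylor-orig} gives $O(b_1^{2L+4}\lgb^{-2})$ for derivatives up to order $2L+1$. The top derivative is handled through the constraint, exactly as in Step~4 there, and is bounded by $b_1^2\E_{2L+2}+b_1^{2L+4}\lgb^{-2}$. On $y\ge1$ we sum over $i$ the weighted products. When $\wp$ appears together with at least one $\wztp$ factor, the extra $b_1^{m}$, $m\ge1$, gives the gain. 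When all factors are $\wp$, there are two cases:
\begin{itemize}
\item[(a)] All $2L+2-i$ weight powers and all $i$ derivatives fall on one factor. Then that factor is bounded by $\E_{2L+2}$ through Lemma~\itref{le:interp-wp}{it:interp-wp=sobolev}, with the other factors in $L^\infty$, which gives $\dbs\E_{2L+2}$.
\item[(b)] Derivatives are split among at least two $\wp$ factors. Then Lemma~\itref{le:interp-wp}{it:interp-wp=gen-lossy} and~\ref{it:interp-wp=pw} give $b_1^{2L+2+\frac{1}{2L-1}}\lgb^C$. Since $\frac1{2L-1}>\frac1{4L}$, this is at most $b_1^{\frac1{4L}}b_1^{2L+2}\lgb^{-2}$.
\end{itemize}
The delicate point is to check the exponent count $d_{n,m,\xi}\ge 2L+2+\frac1{2L-1}$ in every configuration, including the endpoint $i=2L+2$. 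I expect the case analysis of Cases (i)--(iv) in the proof of Lemma~\ref{le:w2l2-e} to carry over verbatim for this.
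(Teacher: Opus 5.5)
Your parts (i) and (ii), and the region $y\le1$ of (iii), are fine. For (i) you differentiate the decomposition $\rrr_1+\rrr_2+\rrr_3$, whereas the paper differentiates the constraint directly, $\pry\gamma=-\pry\ti{\gamma}_0-(\wztp+\wp)\cdot\pry(\wztp+\wp)/\sqrt{1-|\wztp+\wp|^2}$; this is shorter but equivalent.

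The genuine gap is your case (a) of (iii) on $y\ge1$. Lemma~\itref{le:interp-wp}{it:interp-wp=sobolev} with $k=L$ controls only $\int_{y\ge1}|\pry^i\wp|^2y^{-(4L+4-2i)}\jbk[\log y]^{-2}$, and only for $i\le 2L+1$. It therefore carries a logarithmic loss, and it does not cover $i=2L+2$ at all. To absorb $\jbk[\log y]^2$ into the $L^\infty$ factors you would need $\|\jbk[\log y]\wp\|_{L^\infty(y\ge1)}\les\dbs$, and nothing in the bootstrap provides that.

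Suppose you use the log-free bounds instead: Lemma~\itref{le:interp-wp}{it:interp-wp=gen-lossy} and the unweighted bound from your (ii). Then you get $\|\wp\|_{L^\infty}^2\lgb^Cb_1^{2L+2}$, and this is not $\les b_1^{\frac1{4L}}b_1^{2L+2}\lgb^{-2}$. The reason is that $\|\wp\|_{L^\infty}^2\les\E_1\le10b_1(0)^{1/4}$ is small but does not decay as $b_1(s)\to0$. The same obstruction appears at $i=2L+2$, where $\pry^{2L+2}\wp=\pm\ham^{L+1}\wp+\sum_{j\le2L+1}O(y^{j-2L-2})\pry^j\wp$ and only the first term is controlled by $\E_{2L+2}$. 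Your $m\ge1$ case works precisely because each $\wztp$ factor in $L^\infty$ carries a decaying $b_1^{1/2}\lgb^C$; case (a) has no such factor.

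The missing idea is this. In every $m=0$ term there is an undifferentiated factor $\wp$, since $n\ge2$, and one power of $y^{-1}$ from the weight should be moved onto it. By Lemma~\itref{le:interp-wp}{it:interp-wp=pw}, $\|\wp/y\|^2_{L^\infty(y\ge1)}\les b_1^{\frac{2L}{2L-1}}\lgb^C$, which does decay.
\begin{itemize}
\item For $i\le2L+1$ this gives $\|\wp/y\|^2_{L^\infty(y\ge1)}\int_{y\ge1}|\pry^i\wp|^2y^{-2(2L+1-i)}\les b_1^{2L+1+\frac{2L}{2L-1}}\lgb^C$. Use the general lossy bound here, or, at $i=2L+1$ where no power of $y$ remains, the unweighted bound for $\pry^{2L+1}\wp$.
\item For $i=2L+2$ you pay $\|\wp\|^2_{L^\infty}\E_{2L+2}\les\dbs\E_{2L+2}$ only for the $\ham^{L+1}$ part, and treat the lower-order terms in the same way as above.
\end{itemize}
Since $2L+1+\frac{2L}{2L-1}=2L+2+\frac{1}{2L-1}$ and $\frac1{2L-1}>\frac1{4L}$, this yields the required remainder.

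This is the mechanism the paper uses, and it also appears implicitly in Cases (ii)--(iii) of the proof of Lemma~\ref{le:w2l2-e}. Deferring to that case analysis would therefore repair your argument, but the mechanism you actually state for (a) does not close.
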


\begin{proof}
  We prove~\ref{it:interp-gamma=h1} by taking one derivative of the constraint~\eqref{eq:constra}. For~\ref{it:interp-gamma=unweighted} and~\ref{it:interp-gamma=sobolev}, we refine the estimates in the proof of Lemma~\ref{le:interp-wp}.

  \step[1, Proof of~\ref{it:interp-gamma=h1}]
  By the constraint~\eqref{eq:constra} and $\hw=\wzt+\w$,
  \[
    \pry\gamma
    =-\pry\ti{\gamma}_0
    -\frac{(\wztp+\wp)\cdot\pry(\wztp+\wp)}{\sqrt{1-|\wztp+\wp|^2}}.
  \]
  Note that the denominator is bounded away from zero by Lemma~\ref{le:interp-wp} and the bootstrap bounds. Moreover, from \eqref{pf:app-sol=wz-l-infty}, the construction of $\wzt$ and Lemma~\ref{le:l2-norm-homo},
  \[
    \|\wztp\|_{L^\infty}^2 + \|\pry\wztp\|_{L^2}^2\les b_1\lgb^C,
    \quad
    \|\pry\ti{\gamma}_0\|_{L^2}^2\les b_1^2\lgb^C.
  \]
  Together with $\|\wp\|_{L^\infty}^2\les \E_1$ by the radial Sobolev inequality, this gives
  \begin{align*}
    \|\pry\gamma\|_{L^2}^2
    & \les \|\pry\ti{\gamma}_0\|_{L^2}^2 + \bigl(\|\wztp\|_{L^\infty}^2 + \|\wp\|_{L^\infty}^2\bigr) \bigl(\|\pry\wztp\|_{L^2}^2 + \|\pry\wp\|_{L^2}^2\bigr) \\
    & \les \dbs\E_1+b_1^2\lgb^C.
  \end{align*}

  \step[2, Proof of~\ref{it:interp-gamma=unweighted}]
  We first establish the corresponding bounds for $\wp$. Writing $\pry^{2k}\wp$ in terms of $\ham^k\wp$ and lower-order derivatives, we have
  \[
    \int_{y\ge1}|\pry^{2k}\wp|^2
    \les\E_{2k}+\sum_{j=0}^{2k-1}\int_{y\ge1}\frac{|\pry^j\wp|^2}{y^{2(2k-j)}}
    \for 1\le k\le L+1.
  \]
  By Lemma~\itref{le:interp-wp}{it:interp-wp=gen-lossy} for $\wp$ and the bootstrap bound~\ref{pro:boot-e2k-bnd}--\ref{pro:boot-e2l2-bnd}, this gives the even orders in~\eqref{pf:interp-gamma=planar-unweighted} below. For $1\le k\le L$, integration by parts and Cauchy--Schwarz give
  \[
    \int_{y\ge1}|\pry^{2k+1}\wp|^2
    \les\biggl(\int_{y\ge1}|\pry^{2k}\wp|^2\biggr)^{1/2}
    \biggl(\int_{y\ge1}|\pry^{2k+2}\wp|^2\biggr)^{1/2}
    +\frac{b_1^{2L+2}}{\lgb^2},
  \]
  where the boundary terms at $y=1$ are controlled by Lemma~\itref{le:interp-wp}{it:interp-wp=taylor-orig} for $\wp$. Thus
  \begin{equation}\label{pf:interp-gamma=planar-unweighted}
    \int_{y\ge1}|\pry^i\wp|^2
    \les\lgb^C\begin{cases}
      b_1^{(i-1)\frac{2L}{2L-1}},&2\le i\le2L,\\
      b_1^{2L+1},&i=2L+1,\\
      b_1^{2L+2},&i=2L+2.
    \end{cases}
  \end{equation}
  For $\gamma$, we repeat the estimates for $y\ge1$ in Step~4 of the proof of Lemma~\ref{le:interp-wp}, omitting the factor $\jbk[\log y]^{-2}$. The only change is the estimate of
  \[
    \int_{y\ge1}\frac{|\pry^{\xi_n}\wp|^2}{y^{2(\xi_1+\dots+\xi_m)}}.
  \]
  For $\xi_1+\dots+\xi_m>0$, we use Lemma~\itref{le:interp-wp}{it:interp-wp=gen-lossy} for $\wp$. For $\xi_1+\dots+\xi_m=0$, we use~\eqref{pf:interp-gamma=planar-unweighted} if $\xi_n\ge2$, and $\|\pry\wp\|_{L^2}^2\les\E_1$ if $\xi_n=1$. The exponents $d_{n,m,\xi}$ and the bound for $L=1$ are unchanged, yielding~\ref{it:interp-gamma=unweighted}.

  \step[3, Proof of~\ref{it:interp-gamma=sobolev}]
  For $y\le1$, by Lemma~\itref{le:interp-wp}{it:interp-wp=taylor-orig} and the estimate for $\pry^{2L+2}\gamma$ in Step~4 of its proof, we have
  \begin{equation}\label{pf:interp-gamma=local-endpoint}
    \sum_{i=0}^{2L+2}\int_{y\le1}|\pry^i\gamma|^2
    \les\frac{b_1^{2L+4}}{\lgb^2}.
  \end{equation}
  For $y\ge1$, we again use $\gamma=\rrr_1+\rrr_2+\rrr_3$. First, from~\eqref{pf:app-sol=wz-l-infty} and the support of $\rrr_1$, we have
  \[
    \sum_{i=0}^{2L+2}\int_{y\ge1}\frac{|\pry^i\rrr_1|^2}{\jbk[y]^{4L+4-2i}}
    \les b_1^2\lgb^C\int_{y\sim S_1}\frac{y\dd y}{y^{4L+4}}
    \les b_1^{2L+3}\lgb^C.
  \]
  Next, for the terms in $\rrr_3$ containing only $\wztp$, which correspond to $m=n$ in~\eqref{pf:interp-wp=rrr-deriv}, we have
  \[
    \int_{y\ge1}\frac{1}{\jbk[y]^{4L+4-2i}}\prod_{\ell=1}^n|\pry^{\xi_\ell}\wztp|^2
    \les b_1^n\lgb^C\int_1^\infty\frac{y\dd y}{y^{4L+4}}
    \les b_1^{2L+4}\lgb^C.
  \]
  Finally, for $\rrr_2$ and the terms in $\rrr_3$ containing $\wp$, we use~\eqref{pf:interp-wp=rrr-deriv} with $m<n$. We choose $\xi_n\ge\xi_\ell$ for $m+1\le\ell\le n-1$.

  For $L\ge2$, we have
  \[
    2\xi_\ell\le\xi_\ell+\xi_n\le i
    \quad\Longrightarrow\quad
    \xi_\ell\le\frac{i}{2}\le L+1\le2L-1.
  \]
  Except when $m=0$ and $\xi_{m+1}+\dots+\xi_{n-1}=0$, each product integral satisfies
  \begin{align*}
    &\int_{y\ge1}\frac{1}{\jbk[y]^{4L+4-2i}}\prod_{\ell=1}^m|\pry^{\xi_\ell}\wztp|^2
    \prod_{\ell=m+1}^n|\pry^{\xi_\ell}\wp|^2\\
    &\les b_1^m\lgb^C
    \prod_{\ell=m+1}^{n-1}\|\pry^{\xi_\ell}\wp\|_{L^\infty(y\ge1)}^2
    \int_{y\ge1}\frac{|\pry^{\xi_n}\wp|^2}
    {y^{2(2L+2-i+\xi_1+\dots+\xi_m)}} \\
    &\les b_1^{d_{n,m,\xi}}\lgb^C.
  \end{align*}
  The last integral has total order
  \[
    1\le\xi_n+2L+2-i+(\xi_1+\dots+\xi_m)
    =2L+2-(\xi_{m+1}+\dots+\xi_{n-1})\le2L+2.
  \]
  For total order at least two, we use Lemma~\itref{le:interp-wp}{it:interp-wp=gen-lossy} for $\wp$ when the power of $y$ in the denominator is positive, and~\eqref{pf:interp-gamma=planar-unweighted} when it is zero. Together with Lemma~\itref{le:interp-wp}{it:interp-wp=pw} for $\wp$, this gives the following exponents:
  \begin{itemize}
    \item for $m\ge1$, $\xi_{m+1}+\dots+\xi_{n-1}=0$:
      \[
        d_{n,m,\xi}=2L+2+m\ge2L+3;
      \]
    \item for $\xi_{m+1}+\dots+\xi_{n-1}=1$:
      \[
        d_{n,m,\xi}=\frac{2L}{2L-1}+2L+1+m
        \ge2L+2+\frac1{2L-1};
      \]
    \item for $2\le\xi_{m+1}+\dots+\xi_{n-1}\le2L$:
      \[
        d_{n,m,\xi}=(2L+1)\frac{2L}{2L-1}+m
        \ge2L+2+\frac2{2L-1};
      \]
    \item for $\xi_{m+1}+\dots+\xi_{n-1}=2L+1$, we have $\xi_n=1$, $i=2L+2$ and $\xi_1=\dots=\xi_m=0$; using $\|\pry\wp\|_{L^2}^2\les\E_1\ll1$, we obtain
      \[
        d_{n,m,\xi}=(2L+1)\frac{2L}{2L-1}+m
        \ge2L+2+\frac2{2L-1}.
      \]
  \end{itemize}
  It remains to consider $m=0$ and $\xi_{m+1}+\dots+\xi_{n-1}=0$, so that $\xi_n=i$. We keep one factor of $\wp$ inside the integral and bound the other factors of order zero by $\|\wp\|_{L^\infty}\les1$. For $i=2L+2$, writing $\pry^{2L+2}\wp$ in terms of $\ham^{L+1}\wp$ and lower-order derivatives, we have
  \begin{align*}
    \int_{y\ge1}|\wp|^2|\pry^{2L+2}\wp|^2
    &\les\|\wp\|_{L^\infty}^2\E_{2L+2}
    +\left\|\frac{\wp}{y}\right\|_{L^\infty(y\ge1)}^2
    \sum_{j=0}^{2L+1}\int_{y\ge1}\frac{|\pry^j\wp|^2}{y^{2(2L+1-j)}}\\
    &\les\dbs\E_{2L+2}+b_1^{2L+1+\frac{2L}{2L-1}}\lgb^C.
  \end{align*}
  For $i\le2L+1$, we have
  \[
    \int_{y\ge1}\frac{|\wp|^2|\pry^i\wp|^2}{y^{2(2L+2-i)}}
    \le\left\|\frac{\wp}{y}\right\|_{L^\infty(y\ge1)}^2
    \int_{y\ge1}\frac{|\pry^i\wp|^2}{y^{2(2L+1-i)}}
    \les b_1^{2L+1+\frac{2L}{2L-1}}\lgb^C.
  \]

  For $L=1$, the same estimates apply except when $i=4$ and two factors of $\wp$ each carry two derivatives. As in Step~2, this contribution is bounded by $b_1^5\lgb^C$.
  Combining the above estimates with~\eqref{pf:interp-gamma=local-endpoint}, and using $\frac1{2L-1}>\frac1{4L}$, we obtain
  \[
    \sum_{i=0}^{2L+2}\int\frac{|\pry^i\gamma|^2}{\jbk[y]^{4L+4-2i}}
    \les\dbs\E_{2L+2}+b_1^{\frac1{4L}}\frac{b_1^{2L+2}}{\lgb^2},
  \]
  which proves~\ref{it:interp-gamma=sobolev}. This completes the proof.
\end{proof}

\section{Pointwise bounds for iterated operators}
\label{ap:proof-omega-pw}

\begin{proof}[Proof of Lemma~\ref{le:omega-pw-bnd}]
  We prove the lemma by induction on $j$.

  \emph{Case $j=1$}: By definition $\rho\in\{0,1\}$. If $\rho=0$, then $n=0$, $\xi=(\xi_1,\xi_2)\in\mbN^2$. From the definition~\eqref{eq:mham} of $\mham$,
  \[
    |\pry^{i}\Omega^{1}_{\rho}\w| = |\pry^{i}\R\mham\w|
    \le |\pry^{i}\ham\beta|  +|\pry^{i}(\ham\alpha -\zt\pry\gamma)|,
    \quad i\ge 0.
  \]
  Together with the asymptotics
  \begin{equation}\label{pf:omega-pw=asymp}
    |\pry^{\ell}\z| \les \frac{1}{y^{\ell}}, \quad
    |\pry^{\ell}\zt|+ |\pry^{\ell}(V-1)| \les \frac{1}{y^{2+\ell}},
    \quad \ell\ge 0, \; y\ge 1,
  \end{equation}
  this implies that there exists $C(i,1)\ge 1$ such that
  \[
    |\pry^{i}\Omega^{1}_{\rho}\w|
    \le C(i,1) \sum_{\ell=0}^{i+2}\frac{|\pry^{\ell}\w|}{y^{i+2-\ell}}.
  \]
  Setting $\xi_1=\ell$ and $\xi_2=i+2-\ell$ yields the desired estimate.

  If $\rho=1$, then $n=1$, $\xi=(\xi_1,\xi_2,\xi_3)\in\mbN^3$. Similarly, from \eqref{pf:omega-pw=asymp},
  \begin{align*}
    & |\pry^{i}\Omega^{1}_{\rho}\w|
    = |\pry^{i}(\vv\times\mham\w)|                                                                            \\
    & \le C(i,1) \sum_{i_1+i_2=i} |\pry^{i_1}\vv| \biggl\{
    |\pry^{i_2}\Delta\w|
    +\biggl|\pry^{i_2}\biggl(\frac{V\w}{y^2}\biggr)\biggr|
    +|\pry^{i_2}(\zt\pry\w)|
    +\biggl|\pry^{i_2}\biggl(\frac{\z\zt\w}{y}\biggr)\biggr| \biggr\}                                         \\
    & \le C(i,1) \sum_{i_1+i_2=i} |\pry^{i_1}\vv| \sum_{i_3=0}^{i_2+2} \frac{|\pry^{i_3}\w|}{y^{i_2+2-i_3}}.
  \end{align*}
  Setting $\xi_1=i_1$, $\xi_2=i_3$ and $\xi_3=i_2+2-i_3$ yields the desired estimate. This proves the case $j=1$.

  \emph{Case $j\to j+1$}: For any $\rho\in\{0,1\}^{j+1}$, we let $\sigma\in\{0,1\}^j$ be such that $\sigma_\ell = \rho_\ell$ for all $1\le \ell\le j$. Then one of the following two must hold:
  \[
    \rho = (\sigma_{1},\dots,\sigma_{j}, 0)
    \quad\mbox{or}\quad
    \rho = (\sigma_{1},\dots,\sigma_{j}, 1).
  \]
  If $\rho = (\sigma_{1},\dots,\sigma_{j}, 0)$, then we have $|\sigma|_1= |\rho|_1 =n$ and
  \[
    |\pry^{i}\Omega^{j+1}_{\rho}\w|
    = |\pry^{i}\Omega^{j}_{\sigma}(\R\mham\w)|
    \quad\mbox{for all}\quad i\ge 0.
  \]
  By induction, we have
  \[
    |\pry^{i}\Omega^{j}_{\sigma}(\R\mham\w)|
    \le C(i,j) \sum_{\xi}
    \Biggl(\prod_{1\le k\le n}|\pry^{\xi_{k}}\vv|\Biggr)
    \frac{|\pry^{\xi_{n+1}}(\R\mham\w)|}{y^{\xi_{n+2}}},
  \]
  where the sum is taken over
  \[
    \xi\in\mbN^{n+2},\quad |\xi|_1=i+2j,\quad |\xi|_+\ge2.
  \]
  From the result for $j=1$,
  \[
    |\pry^{\xi_{n+1}}(\R\mham\w)|
    \le C(\xi_{n+1},1) \sum_{\ell=0}^{\xi_{n+1}+2}\frac{|\pry^{\ell}\w|}{y^{\xi_{n+1}+2-\ell}}.
  \]
  We put these estimates together and use the following summation index:
  \[
    \zeta = (\xi_1, \dots, \xi_{n}, \ell, \xi_{n+1}+\xi_{n+2}+2-\ell)\in\mbN^{n+2},
  \]
  which satisfies
  \[
    |\zeta|_1 = |\xi|_1 +2 = i+2(j+1)
    \andd
    |\zeta|_+ = \xi_{n+1}+\xi_{n+2}+2 \ge 2.
  \]
  This yields
  \begin{align*}
    |\pry^{i}\Omega^{j+1}_{\rho}\w|
    \le C(i,j+1)
    \sum_{\substack{|\zeta|_1=i+2(j+1) \\ |\zeta|_{+}\ge 2}}
    \biggl(\prod_{1\le k\le n}|\pry^{\zeta_{k}}\vv|\biggr)
    \frac{|\pry^{\zeta_{n+1}}\w|}{y^{\zeta_{n+2}}},
  \end{align*}
  where $C(i,j+1)\ge1$ depends only on $i,j$. This yields the desired estimate for $\rho=(\sigma_1,\dots,\sigma_j,0)$.

  Finally, if $\rho=(\sigma_1,\dots,\sigma_j,1)$, then $|\sigma|_1=|\rho|_1-1=n-1$. Again by induction,
  \begin{align*}
    |\pry^{i}\Omega^{j+1}_{\rho}\w|
    & = |\pry^{i}\Omega^{j}_{\sigma}(\vv\times\mham\w)| \\
    & \le C(i,j) \sum_{\xi}
    \biggl(\prod_{1\le k\le n-1}|\pry^{\xi_{k}}\vv|\biggr) \frac{|\pry^{\xi_{n}}(\vv\times\mham\w)|}{y^{\xi_{n+1}}},
  \end{align*}
  where the sum is taken over
  \[
    \xi\in\mbN^{n+1},\quad |\xi|_1=i+2j,\quad |\xi|_+\ge2.
  \]
  From the result for $j=1$,
  \[
    |\pry^{\xi_{n}}(\vv\times\mham\w)|
    \le C(\xi_{n},1)
    \sum_{\ell_1+\ell_2=\xi_{n}} |\pry^{\ell_1}\vv|
    \sum_{\ell_3=0}^{\ell_2+2}
    \frac{|\pry^{\ell_3}\w|}{y^{\ell_2+2-\ell_3}}.
  \]
  We put these together and set the following summation index:
  \[
    \zeta=(\xi_1, \dots, \xi_{n-1}, \ell_1, \ell_3, \xi_{n+1}+\ell_2+2-\ell_3)\in\mbN^{n+2},
  \]
  which satisfies
  \[
    |\zeta|_1 = |\xi|_1 +2 = i+2(j+1)
    \andd
    |\zeta|_+ = \xi_{n+1} +\ell_2 +2 \ge 2.
  \]
  This yields
  \[
    |\pry^{i}\Omega^{j+1}_{\rho}\w|
    \le C(i,j+1)
    \sum_{\substack{|\zeta|_1=i+2(j+1)\\ |\zeta|_+\ge 2}}
    \biggl(\prod_{1\le k\le n} |\pry^{\zeta_{k}}\vv|\biggr)
    \frac{|\pry^{\zeta_{n+1}}\w|}{y^{\zeta_{n+2}}},
  \]
  where $C(i,j+1)$ is enlarged if necessary to cover both cases. Finally, redefining the constants by
  \[
    C(i,j) = \max_{0\le k\le i}\max_{1\le\ell\le j} C(k,\ell)
  \]
  ensures that $C(i,j)$ is monotone increasing in $i,j$. This completes the proof.
\end{proof}

\section{Lower-order mixed energy estimates}
\label{ap:proof-mix-low-eng}
This appendix is devoted to the proof of Lemma~\ref{le:mix-low-eng}, which is used for closing the bootstrap in the proof of Proposition~\ref{pro:boot}.
\begin{lemma*}[Mixed energy estimate for $\E_{2k+2}$]
  For any $0\le k\le L-1$, there exists a constant $C > 1$ independent of $K, M$, such that
  \[
    \frac{\dd}{\dd t}\biggl\{\frac{1}{\la^{4k+2}}\biggl[\E_{2k+2} + O\Bigl(b_1^{\frac12+(2k+1)\frac{2L}{2L-1}}\Bigr)\biggr]\biggr\}
    \le \frac{C(M,K)}{\la^{4k+4}} b_1^{1+(2k+1)\frac{2L}{2L-1}}\lgb^{K/2+C}.
  \]
\end{lemma*}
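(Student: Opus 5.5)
We follow the scheme of Lemma~\ref{le:eng}, but at level $k+1\le L$. Here the gain comes from the bootstrap slack rather than from a sharp cancellation, so no virial or Morawetz corrections are needed. We start from the energy identity for $\W_{2k+2}$ given by \eqref{eq:w2k-eq}:
\[
  \frac{\dd}{\dd t}\int|\W_{2k+2}|^2
  =-2\int\W_{2k+2}\cdot\hj\mhaml\W_{2k+2}
  +2\int\W_{2k+2}\cdot\bigl[\prt,(\hj\mhaml)^{k+1}\bigr]\W
  -\frac{2}{\la^2}\int\W_{2k+2}\cdot(\hj\mhaml)^{k+1}\mF_{\la}.
\]
After rescaling, each term picks up the factor $\la^{-(4k+4)}$. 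We then relate $\|\w_{2k+2}\|_{L^2}^2$ to $\E_{2k+2}$, exactly as in Lemma~\ref{le:w2l2-e} and Remark~\ref{re:w2l2-e}. The difference $\w_{2k+2}^1=(\R\mham)^{k+1}(\gamma e_3)+\Omega^{k+1}_+\w$ is estimated by Lemma~\ref{le:omega-pw-bnd}, the Dore expansions (Corollary~\ref{co:dore}, Lemma~\ref{le:preserv}) for $y\le1$, and Lemmas~\ref{le:interp-wp}, \ref{le:interp-gamma} for $y\ge1$. This gives $\|\w^1_{2k+2}\|^2\les \dbs\E_{2k+2}+b_1^{\frac12+(2k+1)\frac{2L}{2L-1}}$. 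That bound produces the $O(\cdot)$ correction inside the time derivative, where the extra $b_1^{1/2}$ comes from at least one factor of $\hw$.

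Next we bound the right-hand side crudely, using Cauchy--Schwarz throughout and spending the bootstrap bound~\eqref{pro:boot-e2k-bnd}.

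\emph{Leading quadratic term.} We use the gain-of-two-derivatives lemma with $\smw=\w_{2k}$. It gives a bound of $b_1\|\w_{2k+2}\|^2+b_1\dbs\|\mham\w_{2k}\|^2$, which is $\les b_1\,b_1^{(2k+1)\frac{2L}{2L-1}}\lgb^{K}$. Here we already see the issue: \eqref{pro:boot-e2k-bnd} only gives $\lgb^{K}$, not $\lgb^{K/2}$. To get the stated $\lgb^{K/2+C}$ we instead interpolate, writing $\E_{2k+2}\le \E_{2k}^{1/2}\E_{2k+4}^{1/2}$ (and, for $k=L-1$, using $\E_{2L+2}$ from \eqref{pro:boot-e2l2-bnd}). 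Equivalently, we use Lemma~\ref{le:interp-wp}\ref{it:interp-wp=lossy}, where the constant $C$ does not depend on $K$. This puts one factor in $\lgb^{K/2}$ and the rest in the $K$-independent powers of $\lgb$ coming from the top energy.

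\emph{Commutator term.} We expand as in Step~2 of Lemma~\ref{le:eng}. The main part is $b_1\int\w^0_{2k+2}\cdot\R^{k+1}\mhamp^j(g_2\mhamp^{k-j}\wp)$, which we bound by $b_1\sqrt{\E_{2k+2}}\,\|\mhamp^j(g_2\mhamp^{k-j}\wp)\|$. The second factor is controlled, via the analogue of \eqref{pf:mix-eng=rep-ham-claim} at level $k$, by the weighted lossy bounds of Lemma~\ref{le:interp-wp}\ref{it:interp-wp=lossy}. These carry only $\lgb^{C}$ with $C$ independent of $K$. The product is therefore $\les b_1\, b_1^{(2k+1)\frac{2L}{2L-1}}\lgb^{K/2+C}$. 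The analogue of the quadratic remainder $\QQ_\E$ is smaller by $\dbs$.

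\emph{Forcing term.} We need $\|(\hj\mham)^{k+1}\mF\|^2\les b_1^2\,b_1^{(2k+1)\frac{2L}{2L-1}}\lgb^{C}$. The profile error is handled by Lemma~\ref{le:local-app-sol}\ref{it:local-app-sol=sob}: it gives $\|\mhamp^{k+1}\psizt\|^2\les b_1^{2k+4}\lgb^C$, which is much smaller since $2k+4>1+(2k+1)\frac{2L}{2L-1}+1$ for $k\le L-1$. The modulation term $\modt$ is handled by Lemma~\ref{le:mod-eq}, since $D(s)\les b_1^{L+1}$. For this term we do not need the cancellation $\ham^{k+1}T_j=0$ for $j\le k+1$. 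For $j>k+1$ we get $\|\mhamp^{k+1}\phiat[j]\|^2\les b_1^{-(2j-2k-2)}\lgb^C$; multiplied by $D(s)^2$ this is still below $b_1^{2+(2k+1)\frac{2L}{2L-1}}$. The linear terms $\w\times\mham\wzt$ and $\vtha_s\z\R\w$ are treated as in Steps~8--9 of Proposition~\ref{pro:mix-eng}, at lower order.

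The main obstacle is the bookkeeping of the $\lgb$ powers. We must make sure every quadratic term contains at most one factor that is controlled only through the bootstrap bound~\eqref{pro:boot-e2k-bnd} (worth $\lgb^{K/2}$ after taking the square root), while all other factors are controlled by $K$-independent bounds. Those are the top-energy coercivity of Lemma~\ref{le:coer-e-2k2}, the bounds of Lemma~\ref{le:interp-wp}, or profile bounds. For the leading quadratic term, the first thing to try is the interpolation $\E_{2k+2}\le\E_{2k}^{1/2}\E_{2k+4}^{1/2}$.
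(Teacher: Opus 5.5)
\textbf{The gap is your treatment of the leading quadratic term.} At the lower levels there is no power slack. With $c_1=\frac{L}{2L-1}$ one has $(4k+2)c_1=(2k+1)\frac{2L}{2L-1}$, so a right-hand side $\la^{-(4k+4)}b_1^{1+(2k+1)\frac{2L}{2L-1}}\lgb^{A}$ integrates (as in Step~4 of the proof of Proposition~\ref{pro:boot}) to $b_1^{(2k+1)\frac{2L}{2L-1}}\lgb^{A+1}$. Closing the bootstrap therefore needs $A<K-1$, which is why the statement carries $\lgb^{K/2+C}$ with $C$ independent of $K$. At the top level a small multiple of $b_1\E_{2L+2}$ is affordable only because $(4L+2)c_1-(2L+2)=\frac{2}{2L-1}>0$; that slack is absent here.

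The gain-of-two-derivatives lemma leaves you with $b_1\int|f|\,|\w_{2k+2}|^2$, an $O(1)$ multiple of $b_1\E_{2k+2}$, i.e.\ of size $b_1^{1+(2k+1)\frac{2L}{2L-1}}\lgb^{K}$. Your interpolation $\E_{2k+2}\le\E_{2k}^{1/2}\E_{2k+4}^{1/2}$ does not repair this:
\begin{itemize}
\item for $1\le k\le L-2$, both factors are controlled only through \eqref{pro:boot-e2k-bnd}, and their product is again $b_1^{(2k+1)\frac{2L}{2L-1}}\lgb^{K}$;
\item for $k=L-1$, it gives $\E_{2L}\lesssim b_1^{2L-\frac{1}{2L-1}}\lgb^{K/2-1}$, which loses $b_1^{-\frac{1}{2L-1}}$;
\item for $k=0$, there is no $\E_0$.
\end{itemize}
Nor can you invoke Lemma~\itref{le:interp-wp}{it:interp-wp=lossy} with a $K$-independent $C$. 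The paper explicitly lets those constants depend on $K$: the lossy bounds are fed by \eqref{pro:boot-e2k-bnd}, and Appendix~\ref{ap:proof-mix-low-eng} writes $\lgb^{C(K)}$.

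The same objection applies to your other ``small'' terms. At lower order, smallness of type $\dbs$ is not enough. For example, $|\vtha_s|\lesssim b_1\lgb^{-5/4}$ in $\vtha_s\z\R\w$, treated ``as in Step~8'', still leaves $\lgb^{K-5/4}$. What is needed is either an honest extra power of $b_1$, or a pairing in which only one factor is controlled through \eqref{pro:boot-e2k-bnd} and the other by a $K$-independent bound.

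\textbf{How the paper avoids this.} It differentiates $\int|\W^0_{2k+2}|^2$ rather than $\int|\W_{2k+2}|^2$.
\begin{itemize}
\item The leading term then vanishes identically: $\int\ma\w^0_{2k+2}\cdot\R\ma\w^0_{2k+2}=0$.
\item The quasilinear part reappears only as the source $\int\w^0_{2k+2}\cdot(\R\mham)^{k+1}\R^2\w_2^1$. This is paired against one more energy level: via \eqref{pf:mix-eng=w21-claim} and $\E_{2L+2}$ when $k=L-1$, and with spare powers of $b_1$ when $k\le L-2$.
\item The leading part $-\vtha_s\R\w$ of the phase term cancels by the same skew-symmetry.
\item The main commutator term is removed by $\mora_k$, whose size \eqref{pf:low-eng=mora-bdd} is the $O(b_1^{\frac12+(2k+1)\frac{2L}{2L-1}})$ correction.
\end{itemize}

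\textbf{How to repair your own route.} The direct fix is to drop the gain-of-two-derivatives lemma. Since $\hj\mham\w_{2k+2}=\w_{2k+4}$, the leading term is bounded by $\|\w_{2k+2}\|\,\|\w_{2k+4}\|$. For $k\le L-2$ this is $b_1^{(2k+2)\frac{2L}{2L-1}}\lgb^{K}$, an extra $b_1^{\frac{1}{2L-1}}$. For $k=L-1$ it is $\sqrt{K}\,b_1^{2L+1}\lgb^{K/2-1}$.

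Similarly, the commutator term does not need the Morawetz correction, but not for the reason you give. The point is that $\mhamp^j(g_2\mhamp^{k-j}\wp)$ has total order $2k+4$. Lemma~\itref{le:interp-wp}{it:interp-wp=gen-lossy} then gives an extra factor $b_1^{\frac{2L}{2L-1}}$ in its norm (resp.\ $b_1$ when $k=L-1$), which absorbs $\lgb^{C(K)}$.

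Two further steps would remain. First, you must prove the near-cancellation of $\int\w_{2k+2}\cdot(\hj\mham)^{k+1}\R\w$ for the full complex structure. Second, you must check that $\|\w_{2k+2}\|^2-\E_{2k+2}$ is genuinely $O(b_1^{\frac12+(2k+1)\frac{2L}{2L-1}})$; an error of size $\dbs\,\E_{2k+2}$, as you state it, is not.
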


\begin{proof}
  From the equation~\eqref{eq:w2k-eq} for $\W_{2k}^0$, we compute
  \begin{align*}
    \frac{\dd}{\dd t} \int|\W_{2k+2}^0|^2
    & = -2\int\W_{2k+2}^0 \cdot \R\mhaml\W_{2k+2}^0
    +2\int\W_{2k+2}^0 \cdot \bigl[\prt, (\R\mhaml)^{k+1}\bigr]\Wp \\
    & \quad +2\int\W_{2k+2}^0\cdot (\R\mhaml)^{k+1}\R^2\W_2^1
    -\frac{2}{\la^2} \int\W_{2k+2}^0\cdot (\R\mhaml)^{k+1}\mF_{\la}^{\pp}.
  \end{align*}
  Note that the first term on the right-hand side vanishes
  \[
    \int\W_{2k+2}^0 \cdot \R\mhaml\W_{2k+2}^0
    =\frac{1}{\la^{4k+4}}\int\ma\w_{2k+2}^0\cdot \R\ma\w_{2k+2}^0 =0.
  \]
  It suffices to control the remaining terms.

  \step[1, Morawetz correction]
  Consider the commutator term. By induction, we have
  \[
    \bigl[\prt, (\R\mhaml)^{k+1}\bigr]
    = \sum_{\ell=0}^k (\R\mhaml)^{\ell}\bigl[\prt, \R\mhaml\bigr](\R\mhaml)^{k-\ell},
  \]
  which together with~\eqref{pf:eng=ma-mham-comm} yields
  \begin{align*}
    \int\W_{2k+2}^0 \cdot \bigl[\prt, & (\R\mhaml)^{k+1}\bigr]\Wp
    = \frac{b_1}{\la^{4k+4}} \sum_{\ell=0}^{k}\int\w_{2k+2}^0\cdot \R^{k+1}\mhamp^\ell\bigl(g_2\mhamp^{k-\ell}\wp\bigr)                                                                            \\
    & \quad -\frac{1}{\la^{4k+4}}\biggl(b_1+\frac{\la_s}{\la}\biggr) \sum_{\ell=0}^{k}\int\w_{2k+2}^0\cdot \R^{k+1}\mhamp^\ell\bigl(g_2\mhamp^{k-\ell}\wp\bigr).
  \end{align*}
  To treat the first term on the right-hand side above, we apply the low-order Morawetz $\mora_k$ defined by~\eqref{def:mora-func}. We note that the proof of Lemma~\itref{le:mora-est}{le:mora-est=eq} is purely algebraic: it uses only the equation~\eqref{eq:w2k-eq} and the structure~\eqref{pf:vr-est=a-t-p}, and no energy bound. Hence a direct replacement of $L$ with $k$ for Lemma~\itref{le:mora-est}{le:mora-est=eq} gives the Morawetz estimate corresponding to $\E_{2k+2}$:
  \begin{align*}
    \frac{\dd\mora_k}{\dd t}
    \le -\frac{b_1}{\la^{4k+4}} \sum_{\ell=0}^{k}\int\w_{2k+2}^0\cdot \R^{k+1}\mhamp^\ell\bigl(g_2\mhamp^{k-\ell}\wp\bigr) + \sum_{j=1}^{3} \QQ_{\mora_k,j} + \mF_{\mora_k},
  \end{align*}
  where $\QQ_{\mora_k,j}, \mF_{\mora_k}$ are defined by \eqref{le:mora-est=qq1}--\eqref{le:mora-est=f} with $L$ replaced by $k$. Putting this together with the energy equality of $\E_{2k+2}$ and using the scaling law~\eqref{eq:scaling-w-w}, we obtain
  \begin{equation}
    \begin{split}
      \label{pf:low-eng=expand}
      & \quad \frac{\dd}{\dd t} \biggl\{ \int|\W_{2k+2}^0|^2 + 2\mora_k \biggr\}                                                                                  \\
      & \le -\frac{2}{\la^{4k+4}}\biggl(b_1+\frac{\la_s}{\la}\biggr) \sum_{\ell=0}^{k}\int\w_{2k+2}^0\cdot \R^{k+1}\mhamp^\ell\bigl(g_2\mhamp^{k-\ell}\wp\bigr) \\
      & \quad + 2\sum_{j=1}^{3} \QQ_{\mora_k,j} +\frac{2}{\la^{4k+4}} \int\w_{2k+2}^0\cdot (\R\mham)^{k+1}\R^2\w_2^1                                            \\
      & \quad + 2\mF_{\mora_k} -\frac{2}{\la^{4k+4}} \int\w_{2k+2}^0\cdot (\R\mham)^{k+1}\mF^{\pp}.
    \end{split}
  \end{equation}
  We now estimate each part of this inequality in turn.

  \step[2, The energy functional]
  By the scaling law~\eqref{eq:scaling-w-w} and \eqref{eq:w2k0-e2k}, we have
  \[
    \int|\W_{2k+2}^0|^2  = \frac{\E_{2k+2}}{\la^{4k+2}}.
  \]
  We now show the smallness of $\mora_{k}$. Recall the definition in~\eqref{def:mora-func}. From the explicit expression of $g_1$ and Lemma~\itref{le:interp-wp}{it:interp-wp=sobolev}, \ref{it:interp-wp=lossy}, the first integral in $\mora_k$ is bounded by
  \begin{align*}
    \left|b_1\int\W_{2k}^{0}\cdot\R G_1\ma_{\la}\W_{2k}^0\right|
    & \les \frac{b_1}{\la^{4k+2}} \biggl\|\frac{\jbk[\log y]}{\jbk[y]^2}\w_{2k}^0\biggr\|_{L^2}
    \biggl\|\frac{\ma\w_{2k}^0}{\jbk[y]\jbk[\log y]}\biggr\|_{L^2}                               \\
    & \les \frac{1}{\la^{4k+2}}
    b_1^{1+(2k+1)\frac{2L}{2L-1}}\lgb^{C(K)},
  \end{align*}
  where we remark that Lemma~\itref{le:interp-wp}{it:interp-wp=lossy} used here provides different bounds in the cases $k\le L-2$ and $k=L-1$, but both of these serve our purpose.
  For the other integral, we note that it is non-vanishing only when $k\ge 1$. Thus for $k\ge 1$, we claim that
  \begin{equation}
    \label{pf:low-eng=rep-mham}
    \Bigl\|\mhamp^{\ell_1}\bigl(g_2\mhamp^{\ell_2}\wp\bigr)\Bigr\|_{L^2}^2
    \les b_1^{(2k+1)\frac{2L}{2L-1}}\lgb^{C(K)},
    \quad \ell_1+\ell_2 = k-1,k.
  \end{equation}
  This follows by repeating the proof of~\eqref{pf:mix-eng=rep-ham-claim}. Then from the self-adjointness of $\mhamp$,
  \begin{align*}
    \left| b_1\int\W_{2(2k-\ell)}^0\cdot\R G_2\W_{2\ell}^0 \right|
    & \les \frac{b_1}{\la^{4k+2}} \|\w_{2k+2}^0\|_{L^2} \Bigl\|\mhamp^{k-1-\ell} \bigl( g_2\mhamp^\ell\wp \bigr) \Bigr\|_{L^2} \\
    & \les \frac{1}{\la^{4k+2}}
    b_1^{1+(2k+1)\frac{2L}{2L-1}}\lgb^{C(K)}.
  \end{align*}
  Thus we obtain
  \begin{equation}
    \label{pf:low-eng=mora-bdd}
    \begin{split}
      |\mora_k| & \les \frac{1}{\la^{4k+2}}b_1^{1+(2k+1)\frac{2L}{2L-1}}\lgb^{C(K)} \\
      & \les \frac{1}{\la^{4k+2}}b_1^{\frac12+(2k+1)\frac{2L}{2L-1}}.
    \end{split}
  \end{equation}

  \step[3, The remaining quadratic terms] We estimate the first and second lines of \eqref{pf:low-eng=expand}. Using~\eqref{le:mod-eq-1},~\eqref{pro:boot-e2k-bnd} and \eqref{pf:low-eng=rep-mham} (or \ref{it:interp-wp=lossy} if $k=0$),
  \begin{equation}
    \begin{split}
      \label{pf:low-eng=qua-modu}
      & \quad \left|\frac{1}{\la^{4k+4}}\biggl(b_1+\frac{\la_s}{\la}\biggr) \sum_{\ell=0}^{k}\int\w_{2k+2}^0\cdot \R^{k+1}\mhamp^\ell\bigl(g_2\mhamp^{k-\ell}\wp\bigr)\right| \\
      & \les \frac{D(s)}{\la^{4k+4}} \|\w_{2k+2}^0\|_{L^2} \sum_{\ell=0}^{k} \Bigl\|\mhamp^{\ell} \bigl( g_2\mhamp^{k-\ell}\wp \bigr) \Bigr\|_{L^2}                           \\
      & \les \frac{1}{\la^{4k+4}} b_1^{2+(2k+1)\frac{2L}{2L-1}} \lgb^{C(K)}.
    \end{split}
  \end{equation}
  For $\QQ_{\mora_{k},j}$ ($j=1,2,3$), the estimate follows by the same argument as in Steps 1--3 of the proof of Proposition~\ref{pro:mix-eng}. We only demonstrate the details for $\QQ_{\mora_{k}, 1}$. By the modulation equation~\eqref{le:mod-eq-1} and Lemma~\itref{le:mora-est}{le:mora-est=bdd}, the first and third terms in $\QQ_{\mora_{k}, 1}$ are bounded by
  \begin{align*}
    & \Biggl|-C^{k}_{k}\prt b_1\int\W_{2k}^0\cdot \R G_1\ma_{\la}\W_{2k}^0
    +\sum_{\ell=0}^{k-1}C^{k}_{\ell}\prt b_1\int\W_{2(2k-\ell)}^0\cdot \R G_2\W_{2\ell}^0\Biggr| \\
    & \les \Bigl|\frac{\prt b_1}{b_1}\Bigr|\, |\mora_{k}|
    \les \frac{1}{\la^{4k+4}} b_1^{2+(2k+1)\frac{2L}{2L-1}} \lgb^{C(K)}.
  \end{align*}
  Note that by the definition in~\eqref{def:mora-func} the summation part above vanishes when $k=0$.
  For the second term, from Lemma~\ref{le:est-before-mix}, Lemma~\itref{le:interp-wp}{it:interp-wp=sobolev} and \eqref{pro:boot-e2k-bnd},
  \begin{align*}
    \left|b_1\int\W_{2k}^0\cdot\R\prt G_1\ma_{\la}\W_{2k}^0\right|
    & \les \frac{b_1^2}{\la^{4k+4}}
    \biggl\|\frac{\w_{2k}^0}{\jbk[y]^3}\biggr\|_{L^2}
    \biggl\|\frac{\ma\w_{2k}^0}{\jbk[y]^2}\biggr\|_{L^2}                 \\
    & \les \frac{1}{\la^{4k+4}} b_1^{2+(2k+1)\frac{2L}{2L-1}} \lgb^{C(K)}.
  \end{align*}
  For the fourth term (which vanishes when $k=0$), using the self-adjointness of $\mhamp$, and \eqref{pf:low-eng=rep-mham},
  \[
    \left|b_1\int\W_{2(2k-\ell)}^0\cdot\R\prt G_2\W_{2\ell}^0\right|
    \les \frac{b_1^2}{\la^{4k+4}} \|\w_{2k+2}^0\|_{L^2}
    \bigl\|\mhamp^{k-\ell-1}\bigl(g_4\mhamp^\ell\wp\bigr)\bigr\|_{L^2},
  \]
  where $g_4=4g_2+\vla g_2$ is defined in the proof of Lemma~\ref{le:est-before-mix}. For $y\le1$, from Corollary~\itref{co:dore}{it:dore=w}, Lemma~\itref{le:preserv}{it:preserv=op}, and the even series expansion of $g_4$, we have $\mhamp^{k-\ell-1}(g_4\mhamp^\ell\wp)\in\mcD_{L-k+2}$ with factor $b_1^{L+1}\lgb^{-1}$. Hence
  \[
    \Bigl\|\mhamp^{k-\ell-1}\bigl(g_4\mhamp^\ell\wp\bigr)\Bigr\|_{L^2(y\le 1)}^2
    \les C(M,K)\frac{b_1^{2L+2}}{\lgb^2}.
  \]
  For $y\ge1$, applying Lemma~\ref{le:omega-pw-bnd} twice and Lemma~\itref{le:interp-wp}{it:interp-wp=sobolev},
  \begin{align*}
    \Bigl\|\mhamp^{k-\ell-1}\bigl(g_4\mhamp^\ell\wp\bigr)\Bigr\|_{L^2(y\ge 1)}^2
    & \les \sum_{\ell=0}^{2k-2}\int_{y\ge 1}\frac{|\pry^{\ell}\wp|^2}{y^{2(2k+4-\ell)}} \\
    & \les C(M) b_1^{2+(2k+1)\frac{2L}{2L-1}} \lgb^{C(K)}.
  \end{align*}
  Inserting these back, we obtain
  \begin{equation}
    \label{pf:low-eng=qua-q-mora}
    |\QQ_{\mora_{k}, 1}|
    \les \frac{1}{\la^{4k+4}} b_1^{2+(2k+1)\frac{2L}{2L-1}} \lgb^{C(K)}.
  \end{equation}
  The same commutator expansion as in Step 2 of Proposition~\ref{pro:mix-eng} gives
  \[
    |\QQ_{\mora_k,2}|\les\frac{C(M,K)}{\la^{4k+4}}b_1^{2+(2k+1)\frac{2L}{2L-1}}\lgb^{C(K)}.
  \]
  For $\QQ_{\mora_k,3}$, we first integrate $\ma$ by parts and distribute the powers of $\mhamp$ between the two factors, as in Step 3 of that proof. Lemma~\itref{le:interp-wp}{it:interp-wp=taylor-orig}, \ref{it:interp-wp=gen-lossy}, \ref{it:interp-wp=pw}, and the definition of $\w_2^1$ give
  \[
    \left\|\frac{\mhamp^k\w_2^1}{\jbk[y]}\right\|_{L^2}^2
    +\left\|\mhamp^{k-j-1}(g_2\mhamp^j\w_2^1)\right\|_{L^2}^2
    \les C(M,K)b_1^{1+(2k+1)\frac{2L}{2L-1}}\lgb^{C(K)},
  \]
  where the second norm is included for $0\le j\le k-1$ only when $k\ge1$. Indeed, for $y\ge1$ the derivative orders and decay add up to at least $2k+3$; the general lossy bound gives $b_1^{(2k+2)\frac{2L}{2L-1}}$ for $k\le L-2$, and $b_1^{2L+1}$ for $k=L-1$. For $y\le1$, we use only derivatives up to order $2k+2\le2L$, so the Taylor bound applies. Pairing with the remaining radiation factor and keeping the prefactor $b_1$ therefore yields
  \[
    |\QQ_{\mora_k,3}|\les\frac{C(M,K)}{\la^{4k+4}}b_1^{\frac32+(2k+1)\frac{2L}{2L-1}}\lgb^{C(K)}.
  \]
  Now, we treat the integral involving $\w_2^1$. For $k=L-1$, we use~\eqref{pf:mix-eng=w21-claim} and the bootstrap bounds directly:
  \[
    \left|\int\w_{2L}^0\cdot(\R\mham)^L\R^2\w_2^1\right|
    \le\sqrt{\E_{2L}}\,\|\mhamp^L\w_2^1\|_{L^2}
    \les C(M,K)b_1^{2L+1}\lgb^{K/2-1}.
  \]
  It remains to consider $0\le k\le L-2$.
  Note that from the definition in~\eqref{def:w2k1}, we have
  \[
    \w_2^1 = \hw\times\mham\w + \R\mham(\gamma e_3).
  \]
  For $\hw\times\mham\w$, by Corollary~\itref{co:dore}{it:dore=w}, \ref{it:dore=hw} and Lemma~\itref{le:preserv}{it:preserv=op}--\ref{it:preserv=cross}, we have $\mhamp^{k+1}(\hw\times\mham\w)\in\mcD_{L-k-1}$ with factor $b_1^{L+2}\lgb^{-1}$. Since $k\le L-2$, we bound its squared $L^2$ norm on $y\le1$ by $C(M,K)b_1^{2L+4}\lgb^{-2}$. From this bound and~\eqref{pro:boot-e2k-bnd},
  \[
    \left| \int_{y\le 1}\w_{2k+2}^0\cdot(\R\mham)^{k+1}\R^2(\hw\times\mham\w) \right|
    \les C(M,K)b_1^{2+(2k+1)\frac{2L}{2L-1}} \lgb^{C(K)}.
  \]
  For $y\ge 1$, we split the integral according to $\hw = \wzt +\w$. The $\wzt$ part is controlled directly by a crude estimate using Lemma~\ref{le:omega-pw-bnd}, \eqref{pf:w2l2-e=wzt-l-infty}, and Lemma~\itref{le:interp-wp}{it:interp-wp=gen-lossy}. For the $\w$ part, we observe that
  \begin{align*}
    \mhamp^{k+1}\R^2(\w\times\mham\w)
    & = -\ham^{k+1} \Bigl\{ -\beta\Delta\gamma -\gamma\ham\beta +\beta\zt\Bigl( \pry+\tfrac{\z}{y} \Bigr)\alpha \Bigr\} e_1                              \\
    & \quad - \ham^{k+1} \Bigl\{ \gamma\ham\alpha +\alpha\Delta\gamma -\gamma\zt\pry\gamma -\alpha\zt\Bigl( \pry+\tfrac{\z}{y} \Bigr)\alpha \Bigr\} e_2,
  \end{align*}
  where the terms with undifferentiated $\gamma$ include $\gamma\ham^{k+2}\beta e_1-\gamma\ham^{k+2}\alpha e_2$. Since $k\le L-2$, their pairing with $\w_{2k+2}^0$ is bounded directly by
  \[
    \|\gamma\|_{L^\infty}\sqrt{\E_{2k+2}\E_{2k+4}}
    \les C(M,K)b_1^{(2k+2)\frac{2L}{2L-1}}\lgb^{C(K)}.
  \]
  For the terms containing derivatives of $\gamma$, we use its expression in the proof of Lemma~\ref{le:interp-gamma}. We control terms containing a derivative of order $2k+4$ of one factor of $\wp$ by $\E_{2k+4}$ as above, with the lower-order terms estimated by Lemma~\itref{le:interp-wp}{it:interp-wp=gen-lossy}. In all other products, we use derivatives up to order $2k+3\le2L-1$ of each factor; Leibniz and~\ref{it:interp-wp=pw}, with the derivative of lower order in $L^\infty$, give the same bound. Since $\frac{2L}{2L-1}>1$, the extra positive power of $b_1$ absorbs the $K$-dependent logarithmic loss.

  Recall that $\R\mham(\gamma e_3)=-\zt\pry\gamma\,e_2$. By Lemmas~\itref{le:interp-wp}{it:interp-wp=taylor-orig} and~\itref{le:preserv}{it:preserv=op}, we have $(\R\mham)^{k+2}(\gamma e_3)\in\mcD_{L-k-1}$ with factor $b_1^{L+2}\lgb^{-1}$. By this bound for $y\le1$ and Leibniz for $y\ge1$,
  \begin{align*}
    \|\ham^{k+1}(\zt\pry\gamma)\|_{L^2}^2
    &\les C(M,K)\frac{b_1^{2L+4}}{\lgb^2}
    +\sum_{i=0}^{2k+3}\int_{y\ge1}\frac{|\pry^i\gamma|^2}{y^{2(2k+5-i)}} \\
    &\les C(M,K)b_1^{3+(2k+1)\frac{2L}{2L-1}}\lgb^{C(K)}.
  \end{align*}
  Here $2k+5\le2L+1$, so the last inequality follows from Lemma~\itref{le:interp-wp}{it:interp-wp=gen-lossy}. Pairing with $\w_{2k+2}^0$ and using~\eqref{pro:boot-e2k-bnd} gives $C(M,K)b_1^{\frac32+(2k+1)\frac{2L}{2L-1}}\lgb^{C(K)}$. The extra factor $b_1^{1/2}$ absorbs the logarithmic loss. Combining these bounds with the case $k=L-1$ yields
  \begin{equation}
    \label{pf:low-eng=qua-w21}
    \left|\int \w_{2k+2}^0\cdot(\R\mham)^{k+1}\R^2\w_2^1\right|
    \les C(M,K)b_1^{1+(2k+1)\frac{2L}{2L-1}}\lgb^{K/2+C}.
  \end{equation}

  \step[4, The $\mF$ terms]
  Since $\R^{\ast} = -\R$, we have
  \begin{gather*}
    \int\mhamp^{k+1}\wp\cdot\R\mhamp\wp = 0,\\
    \int\R\w_{2k}^0\cdot\R g_1\ma\w_{2k}^0
    +\int\w_{2k}^0\cdot\R g_1\ma\R\w_{2k}^0=0,\\
    \int\R\w_{2(2k-j)}^0\cdot\R g_2\w_{2j}^0
    +\int\w_{2(2k-j)}^0\cdot\R g_2\R\w_{2j}^0=0.
  \end{gather*}
  In view of $\z = -1+\zt/2$, the leading part of $\vtha_s\z\R\w$ is just $-\vtha_s\R\w$, which is cancelled in $\mF_{\mora_{k}}$ and the pairing term of $\w_{2k+2}^0$ and $(\R\mham)^{k+1}\mF^{\pp}$. Thus it suffices to estimate \[
    \mFb = \mF + \vtha_s\R\wp
    = \psizt +\modt +\frac12 \vtha_s\zt\R\w +\w\times(\mham\wzt +\hp\lphi).
  \]
  We claim that
  \begin{equation}
    \begin{split}
      \label{pf:low-eng=mf-claim}
      & \bigl\|(\R\mham)^{k+1}\mFb^\pp\bigr\|_{L^2}^2
      +\bigl\|\mhamp^{k-j-1}\bigl(g_2\mhamp^j\mFb^\pp\bigr)\bigr\|_{L^2}^2       \\
      & +\biggl\|\frac{\jbk[\log y]}{\jbk[y]^2}\mhamp^k\mFb^\pp\biggr\|_{L^2}^2
      +\biggl\|\frac{\jbk[\log y]}{\jbk[y]}\ma\mhamp^k\mFb^\pp\biggr\|_{L^2}^2   \\
      & \quad \les C(M,K) b_1^{2+(2k+1)\frac{2L}{2L-1}}\lgb^{C}.
    \end{split}
  \end{equation}
  Here the second term is included for $0\le j\le k-1$ only when $k\ge1$. Recall the definition~\eqref{le:mora-est=f} for $\mora = \mora_{L}$, where we replace $L$ by $k$ to get the expression of $\mF_{\mora_k}$. The first term in $\mF_{\mora_k}$ is bounded using Lemma~\ref{le:coer-ti-ham}, \eqref{pf:low-eng=mf-claim}, and the bootstrap bound~\eqref{pro:boot-e2k-bnd},
  \begin{align*}
    \left|\frac{b_1}{\la^2}\int(\R\mhaml)^k\mFb_{\la}^\pp\cdot \R G_1\ma_\la\W_{2k}^0\right|
    & \les \frac{b_1}{\la^{4k+4}}
    \biggl\|\frac{\ma\w_{2k}^0}{\jbk[y]\jbk[\log y]}\biggr\|_{L^2} \biggl\|\frac{\jbk[\log y]}{\jbk[y]^2}\mhamp^k\mFb^\pp\biggr\|_{L^2} \\
    & \les \frac{C(M,K)}{\la^{4k+4}} b_1^{1+(2k+1)\frac{2L}{2L-1}}\lgb^{K/2+C}.
  \end{align*}
  For the second term,
  \begin{align*}
    \left|\frac{b_1}{\la^2}\int\W_{2k}^0\cdot \R G_1\ma_\la(\R\mhaml)^k\mFb_{\la}^\pp\right|
    & \les \frac{b_1}{\la^{4k+4}} \biggl\|\frac{\w_{2k}^0}{\jbk[y]^2\jbk[\log y]}\biggr\|_{L^2}
    \biggl\|\frac{\jbk[\log y]}{\jbk[y]}\ma\mhamp^k\mFb^\pp\biggr\|_{L^2}                           \\
    & \les \frac{C(M,K)}{\la^{4k+4}} b_1^{1+(2k+1)\frac{2L}{2L-1}}\lgb^{K/2+C}.
  \end{align*}
  Other terms in $\mF_{\mora_k}$ are treated similarly using~\eqref{pf:low-eng=rep-mham} and \eqref{pf:low-eng=mf-claim}. Here the prefactor $b_1$, together with the $b_1$ in the square root of~\eqref{pf:low-eng=mf-claim}, gives $b_1^{2+(2k+1)\frac{2L}{2L-1}}\lgb^{C(K)}$, which absorbs the logarithmic loss in~\eqref{pf:low-eng=rep-mham}. Thus we have
  \begin{equation}
    |\mF_{\mora_{k}}|
    \les \frac{C(M,K)}{\la^{4k+4}} b_1^{1+(2k+1)\frac{2L}{2L-1}}\lgb^{K/2+C}.
  \end{equation}
  The last term in~\eqref{pf:low-eng=expand} is treated using \eqref{pf:low-eng=mf-claim}, the identity $\|\w_{2k+2}^0\|_{L^2}=\sqrt{\E_{2k+2}}$, and the bootstrap bound~\eqref{pro:boot-e2k-bnd}. This gives the logarithmic factor $\lgb^{K/2+C}$. Therefore,
  \[
    |\mF_{\mora_{k}}| + \left|\frac{1}{\la^{4k+4}}\int\w_{2k+2}^0\cdot (\R\mham)^{k+1}\mFb^{\pp}\right|
    \les \frac{C(M,K)}{\la^{4k+4}} b_1^{1+(2k+1)\frac{2L}{2L-1}}\lgb^{K/2+C}.
  \]
  Inserting \eqref{pf:low-eng=mora-bdd}, \eqref{pf:low-eng=qua-modu}, the bounds for $\QQ_{\mora_k,j}$, \eqref{pf:low-eng=qua-w21}, and the above inequality into~\eqref{pf:low-eng=expand} yields the desired energy estimate for $\E_{2k+2}$. The extra powers of $b_1$ absorb the logarithmic losses in the quadratic terms. It remains to show~\eqref{pf:low-eng=mf-claim}. The contribution of $\psizt$ follows from Lemma~\itref{le:local-app-sol}{it:local-app-sol=sob} and \ref{it:local-app-sol=weig}. We estimate the other three terms below.

  \step[5, Contribution of $\modt$]
  Recall from~\eqref{le:local-app-sol=modt} that
  \begin{align*}
    \modt & =
    \sum_{k=1}^{L}(\prs a_k +A_k)\biggl(\phiat[k] +\chi_{S_1}\biggl(\frac{\pr\x}{\pr a_k} +\frac{\pr\gamma_0}{\pr a_k}e_3\biggr)\biggr)                                                                 \\
    & \quad +\sum_{k=1}^{L}(\prs b_k +B_k)\biggl(\phibt[k] +\chi_{S_1}\biggl(\frac{\pr\x}{\pr b_k} +\frac{\pr\gamma_0}{\pr b_k}e_3\biggr) + \delta_{1,k}\frac{\pr\chi_{S_1}}{\pr b_1}\wz \biggr) \\
    & \quad +(\vtha_s+a_{1})\bigl(\lphi\bigl(\etwo -\eone\times\wzt \bigr) +\z\R\wzt \bigr)                                                                                                       \\
    & \quad -\Bigl(\frac{\la_s}{\la} +b_1\Bigr)\bigl(\lphi\bigl(\eone +\etwo\times\wzt\bigr) +\vla\wzt \bigr).
  \end{align*}
  From the property of $\mcA$,
  \[
    \int|(\R\mham)^{k+1}\phiat[j]|^2
    \les 1 +\int_{1}^{2S_1} \biggl|\frac{y^{2j-1}\jbk[\log y]}{y^{2k+2}}\biggr|^2
    \les 1+b_1^{2(k+1-j)}\lgb^C.
  \]
  From the rough bound~\eqref{eq:badb-rough-bnd},
  \begin{align*}
    \int\biggl|(\R\mham)^{k+1}\biggl[ \chi_{S_1} \biggl(\frac{\pr\x}{\pr a_j}+ \frac{\pr\gamma_0}{\pr a_j} e_3 \biggr)\biggr]\biggr|^2
    & \les \sum_{\ell=j+1}^{L+2} b_1^{2(\ell-j)} \Biggl(1+\int_{1}^{2S_1}\biggl|\frac{y^{2\ell-1}}{y^{2k+2}}\biggr|^2\Biggr) \\
    & \les b_1^2 + b_1^{2(k+1-j)}\lgb^C.
  \end{align*}
  The counterpart involving the $b_j$ derivative yields the same bound. Recall that the additional cutoff term admits
  \[
    b_1\frac{\pr\chi_{S_1}}{\pr b_1}
    =\Bigl(\frac12+\frac1{\lgb}\Bigr)\vla\chi_{S_1}
    = O(1)\one_{y\sim S_1}.
  \]
  The profile estimates then give
  \[
    \biggl\|(\R\mham)^{k+1}\biggl(\frac{\pr\chi_{S_1}}{\pr b_1}\wz\biggr)\biggr\|_{L^2}^2
    \les 1+b_1^{2k}\lgb^C,
  \]
  which is the $j=1$ bound above. Moreover, since $\ham\lphi = 0$,
  \[
    \int|(\R\mham)^{k+1}(\lphi\, e_i)|^2 = 0
    \for i=1,2.
  \]
  From the explicit construction of $\wzt$ given by Lemma~\ref{le:app-sol} and \ref{le:local-app-sol},
  \begin{align*}
    & \sum_{i=1}^2\bigl\|(\R\mham)^{k+1}\bigl(\lphi(e_i\times\wzt)\bigr)\bigr\|_{L^2}^2 \\
    & +\|(\R\mham)^{k+1}(\z\R\wzt)\|_{L^2}^2
    +\|(\R\mham)^{k+1}(\vla\wzt)\|_{L^2}^2
    \les \lgb^C,
  \end{align*}
  which is again covered by the $j=1$ term.
  These together with~\eqref{pf:mod-eq=d} and the bootstrap bound~\eqref{pro:boot-e2l2-bnd} give
  \begin{align*}
    \bigl\|(\R\mham)^{k+1}\modt\bigr\|_{L^2}^2
    & \les \sum_{j=1}^L |D(s)|^2 \bigl(1+b_1^{2(k+1-j)}\lgb^C\bigr) \\
    & \les b_1^{2+(2k+1)\frac{2L}{2L-1}}\lgb^C,
  \end{align*}
  where we use the inequality:
  \[
    2k+4 \ge 2+ (2k+1)\frac{2L}{2L-1}
    \for k\le L-1.
  \]

  \step[6, Contribution of $\vtha_s\zt\R\w$]
  We again split the integral domain into the regions $y\le1$ and $y\ge1$ and estimate the two parts using Lemmas~\ref{le:preserv} and~\ref{le:omega-pw-bnd}, respectively.
  From \eqref{le:mod-eq-1},
  \[
    |(\R\mham)^{k+1}(\vtha_s \zt\R\w)|
    \les (|a_1| + D(s)) \sum_{\sigma}|\Omega^{k+1}_\sigma (\zt\R\w)|.
  \]
  For $y\le1$, by Corollary~\itref{co:dore}{it:dore=w} and Lemma~\itref{le:preserv}{it:preserv=mult}--\ref{it:preserv=op}, we have $(\R\mham)^{k+1}(\zt\R\w)\in\mcD_{L-k}$ with factor $b_1^{L+1}\lgb^{-1}$. Since $k\le L-1$, we have
  \begin{align*}
    \|(\R\mham)^{k+1}(\vtha_s \zt\R\w)\|_{L^2(y\le 1)}^2
    & \les C(M,K)(a_1^2 + D(s)^2)b_1^{2L+2}\lgb^{-2} \\
    & \les C(M,K)\frac{b_1^{2L+4}}{\lgb^2}.
  \end{align*}
  For $y\ge1$, we use Lemma~\itref{le:interp-wp}{it:interp-wp=gen-lossy}. Since $\ell+(2k+4-\ell)=2k+4$, and $a_1^2+D(s)^2\les b_1^2$, we obtain
  \begin{align*}
    & \|(\R\mham)^{k+1}(\vtha_s \zt\R\w)\|_{L^2(y\ge 1)}^2 \\
    & \les (a_1^2 + D(s)^2) \sum_{\ell=0}^{2k+2} \int_{y\ge 1}\frac{|\pry^{\ell}\wp|^2}{y^{2(2k+4-\ell)}} \\
    & \les C(M,K)\lgb^{C(K)}\begin{cases}
      b_1^{2+(2k+3)\frac{2L}{2L-1}},&0\le k\le L-2,\\
      b_1^{2L+4},&k=L-1,
    \end{cases}\\
    & \les C(M,K)b_1^{4+(2k+1)\frac{2L}{2L-1}}\lgb^{C(K)}.
  \end{align*}
  This yields the first estimate of~\eqref{pf:low-eng=mf-claim} for $\vtha_s\zt\R\w$.
  The remaining three squared norms in~\eqref{pf:low-eng=mf-claim} have the same total order $2k+4$, counting the decay of $\zt$ and the weights. Lemma~\itref{le:interp-wp}{it:interp-wp=gen-lossy} gives the same bound, including their logarithmic weights. The extra factor $b_1^2$ absorbs $\lgb^{C(K)}$.

  \step[7, Contribution of $\w\times(\mham\wzt+\hp\lphi)$]
  Recalling the decomposition~\eqref{pf:mix-eng=wzto-wztt}, we have $\wzt = \wzto + \wztt$ and the cancellation
  \begin{align*}
    \mham\wzto+\hp\lphi
    & = O(D(s))\lphi e_1 +O(D(s))\lphi e_2 + a_1\zt\Bigl(\pry+\frac{\z}{y}\Bigr)(\chi_{S_1}T_1)e_3 \\
    & \quad +(\chi_{S_1}-1)\mhamp\wz^1 -2\pry\chi_{S_1}\pry\wz^1 -\Delta\chi_{S_1}\wz^1.
  \end{align*}
  Note on the right-hand side that the second line vanishes in the region $y\le S_1$.
  For $y\le 1$, from the definition in~\eqref{def:omega},
  \begin{align*}
    & |(\R\mham)^{k+1}(\w\times(\mham\wzt+\hp\lphi))|    \\
    & \les D(s) |\Omega^{k+1}_{(0,\dots,0)}(\w\times\lphi e_1)| +D(s) |\Omega^{k+1}_{(0,\dots,0)}(\w\times\lphi e_2)| \\
    & \quad + |a_1|\,\Bigl|\Omega^{k+1}_{(0,\dots,0)}\Bigl[\w\times\zt\Bigl(\pry+\frac{\z}{y}\Bigr)(\chi_{S_1}T_1)e_3\Bigr]\Bigr|
    +|\Omega^{k+1}_{(0,\dots,0)}(\w\times\mham\wztt)|.
  \end{align*}
  The first line belongs to $\mcD_{L+1}$ with factor $D(s)+|a_1|$, while $\wztt\in\mcD_{L+2}$ with factor $b_1^2$. By Corollary~\itref{co:dore}{it:dore=w} and the smoothness of these profile factors, the cross products with $\w$ belong to $\mcD_{L+1}$ with factor $(D(s)+|a_1|+b_1^2)b_1^{L+1}\lgb^{-1}$, as in the proof of Lemma~\itref{le:preserv}{it:preserv=cross}. By Lemma~\itref{le:preserv}{it:preserv=op}, after $k+1$ applications of $\R\mham$, we have functions in $\mcD_{L-k}$ with the same factor, and hence
  \begin{align*}
    & \bigl\|(\R\mham)^{k+1}(\w\times(\mham\wzt+\hp\lphi))\bigr\|_{L^2(y\le 1)}^2 \\
    & \les C(M,K)(D(s)^2+a_1^2+b_1^4)\frac{b_1^{2L+2}}{\lgb^2}
    \les b_1^{2+(2k+1)\frac{2L}{2L-1}}.
  \end{align*}
  For $y\ge1$, the term with $a_1$ has the same decay as in Step 6, up to powers of $\jbk[\log y]$, and obeys the same bound. The term with $D(s)$ is smaller by~\eqref{pf:mod-eq=d}. For the remaining terms,~\eqref{pf:mix-eng=wzt-hp-lphi} and Leibniz bound each of the four squared norms in~\eqref{pf:low-eng=mf-claim} by
  \begin{align*}
    & Cb_1^3\lgb^C\sum_{i=0}^{2k+2}\int_{1\le y\le2S_1}\frac{\jbk[\log y]^C|\pry^i\w|^2}{y^{2(2k+2-i)}}
    +Cb_1^2\sum_{i=0}^{2k+2}\int_{y\ge S_1}\frac{\jbk[\log y]^C|\pry^i\w|^2}{y^{2(2k+3-i)}} \\
    & \qquad \les C(M,K)b_1^{3+(2k+1)\frac{2L}{2L-1}}\lgb^{C(K)}.
  \end{align*}
  Indeed, Lemma~\itref{le:interp-wp}{it:interp-wp=gen-lossy} applies with $i+j=2k+2$ and $2k+3$, respectively. For $i=2k+2$ in the first sum, its logarithmically weighted bound gives
  \[
    \int_{1\le y\le2S_1}\jbk[\log y]^C|\pry^{2k+2}\w|^2
    \les\lgb^{C+2}\int\frac{|\pry^{2k+2}\w|^2}{\jbk[\log y]^2}
    \les b_1^{(2k+1)\frac{2L}{2L-1}}\lgb^{C(K)},
  \]
  since $\jbk[\log y]\les\lgb$ on this interval. The extra factor $b_1$ absorbs $\lgb^{C(K)}$.
  This establishes~\eqref{pf:low-eng=mf-claim} for $\w\times(\mham\wzt+\hp\lphi)$, and hence completes the proof of Lemma~\ref{le:mix-low-eng}.
\end{proof}

\section*{Statements and declarations}
\noindent\textbf{Data Availability declaration.}
Data sharing is not applicable to this article as no datasets were generated or analyzed during the current study.

\medskip
\noindent\textbf{Funding declaration.}
J. J. was supported by ERC grant 101117126 (INSOLIT). L. Z. was supported by the National Natural Science Foundation of China under Grants 12341102 and 12271497.

\bibliographystyle{amsplain}
\bibliography{bib2609}

\end{document}